\def\url@leostyle{%
  \@ifundefined{selectfont}{\def\UrlFont{\sf}}{\def\UrlFont{\small\ttfamily}}}
\newtheorem{thm}{Theorem}[section]
\newtheorem{lem}[thm]{Lemma}
\newtheorem{prop}[thm]{Proposition}
\newtheorem{cor}[thm]{Corollary}
\newtheorem{fact}[thm]{Fact}
\newtheorem{clm}[thm]{Claim}
\newtheorem*{mthm*}{Main theorem}
\theoremstyle{definition}
\newtheorem{defn}[thm]{Definition}
\newtheorem{nrmk}[thm]{Remark}
\newtheorem{expl}[thm]{Example}
\newtheorem{expls}[thm]{Examples}
\newtheorem{nt}[thm]{Notations}
\newtheorem*{thm*}{Theorem}
\newcommand{\into}{\longrightarrow}
\renewcommand{\hat}{\widehat}
\renewcommand{\tilde}{\widetilde}
\renewcommand{\bar}{\overline}
\newcommand{\NN}{\mathbb{N}}
\newcommand{\ZZ}{\mathbb{Z}}
\newcommand{\QQ}{\mathbb{Q}}
\newcommand{\RR}{\mathbb{R}}
\newcommand{\CC}{\mathbb{C}}
\newcommand{\UU}{\mathbb{U}}
\newcommand{\VV}{\mathbb{V}}
\newcommand{\Aa}{\mathbb{A}}
\newcommand{\Pp}{\mathbb{P}}
\newcommand{\an}{\mathrm{an}}
\newcommand{\bF}{{\mathbf F}}
\newcommand{\bM}{{\mathbf M}}
\newcommand{\bI}{{\mathbf I}}
\newcommand{\cA}{\mbox{$\mathcal{A}$}}
\newcommand{\cB}{\mbox{$\mathcal{B}$}}
\newcommand{\cC}{\mbox{$\mathcal{C}$}}
\newcommand{\cL}{\mbox{$\mathcal{L}$}}
\newcommand{\K}{\mbox{$\mathcal{K}$}}
\newcommand{\cM}{\mbox{$\mathcal{M}$}}
\newcommand{\cR}{\mbox{$\mathcal{R}$}}
\newcommand{\cU}{\mbox{$\mathcal{U}$}}
\newcommand{\cV}{\mbox{$\mathcal{V}$}}
\newcommand{\cG}{\mbox{$\mathcal{G}$}}
\newcommand{\cF}{\mbox{$\mathcal{F}$}}
\newcommand{\cI}{\mbox{$\mathcal{I}$}}
\newcommand{\T}{\mbox{$\mathcal{T}$}}
\newcommand{\cO}{\mbox{$\mathcal{O}$}}
\newcommand{\bG}{\mbox{$\mathbf{\Gamma}$}}
\newcommand{\bGi}{\mbox{$\mathbf{\Gamma}_{\infty} $}}
\newcommand{\nG}{\mbox{$\Gamma$}}
\newcommand{\nGi}{\mbox{$\Gamma _{\infty} $}}
\newcommand{\nGm}{\mbox{$\Gamma^m$}}
\newcommand{\nGim}{\mbox{$\Gamma^m_{\infty} $}}
\newcommand{\nGil}{\mbox{$\Gamma^l_{\infty} $}}
\newcommand{\nGk}{\mbox{$\Gamma^k$}}
\newcommand{\nGik}{\mbox{$\Gamma^k_{\infty} $}}
\newcommand{\nGn}{\mbox{$\Gamma^n$}}
\newcommand{\nGin}{\mbox{$\Gamma^n_{\infty} $}}
\newcommand{\nGimk}{\mbox{$\Gamma^{m-k}_{\infty} $}}
\newcommand{\nGL}{\mbox{$\Gamma^{|L|}$}}
\newcommand{\nGiL}{\mbox{$\Gamma^{|L|}_{\infty} $}}
\newcommand{\nGipiL}{\mbox{$\Gamma^{|\pi(L)|}_{\infty} $}}
\newcommand{\nGiqiL}{\mbox{$\Gamma^{|\pi'(L)|}_{\infty} $}}
\newcommand{\bSg}{\mbox{$\mathbf{\Sigma}$}}
\newcommand{\nSg}{\mbox{$\Sigma$}}
\newcommand{\nSgm}{\mbox{$\Sigma ^m$}}
\newcommand{\nSgk}{\mbox{$\Sigma^k$}}
\newcommand{\nSgmk}{\mbox{$\Sigma ^{m-k}$}}
\newcommand{\fT}{{\mathfrak T}}
\newcommand{\tfT}{\widetilde{{\mathfrak T}}}
\renewcommand{\mod}{\mathrm{Mod}}
\newcommand{\cov}{\mathrm{Cov}}
\newcommand{\coh}{\mathrm{Coh}}
\newcommand{\op}{\mathrm{Op}}
\newcommand{\rc}{\mathbb{R}\textrm{-}\mathrm{c}}
\newcommand{\df}{\mathrm{def}}
\newcommand{\Df}{\mathrm{Def}}
\newcommand{\tDf}{\tilde{\mathrm{Def}}}
\newcommand{\hDf}{\hat{\mathrm{Def}}}
\newcommand{\vgs}{\mathrm{v\!+\!g}}
\newcommand{\hvgs}{\widehat{\mathrm{v\!+\!g}}}
\newcommand{\ho}{\mathcal{H}\mathit{om}}
\newcommand{\Ho}{\mathrm{Hom}}
\renewcommand{\top}{\mathrm{top}}
\newcommand{\id}{\mathrm{id}}
\newcommand{\val}{\mathrm{val}}
\newcommand{\res}{\mathrm{res}}
\newcommand{\tp}{\mathrm{tp}}
\newcommand{\iso}{\stackrel{\sim}{\to}}
\newcommand{\supp}{\mathrm{supp}}
\newcommand{\imin}[1]{#1^{-1}}
\newcommand{\lind}[1]{\underset{#1}{\underrightarrow{\lim}}}
\newcommand{\Lind}{\underrightarrow{\lim}}  
\newcommand{\indl}[1]{\underset{#1}{``\underrightarrow{\mathrm{lim}}\mbox{''}}}
\newcommand{\lpro}[1]{\underset{#1}{\underleftarrow{\lim}}}
\newcommand{\exs}[3]{0 \to {#1} \to {#2} \to {#3} \to 0}
\newcommand{\lexs}[3]{0 \to {#1} \to {#2} \to {#3}}
\begin{document}

\title{Cohomology of algebraic varieties over non-archimedean fields}

\author[Pablo Cubides Kovascics]{Pablo Cubides Kovacsics}
\address{Pablo Cubides Kovacsics, Mathematisches Institut der Heinrich-Heine-Universit\"at D\"usseldorf, 
Universit\"atsstr. 1, 40225 D\"usseldorf, Germany. }
\email{cubidesk@hhu.de}

\author[M\'ario Edmundo]{M\'ario Edmundo}
\address{M\'ario Edmundo, Departamento de Matem\'atica\\
Faculdade de Ci\^encias,  Universidade de Lisboa\\
Campo Grande, Edif\'icio C6\\ 
P-1749-016 Lisboa, Portugal.
}
\email{mjedmundo@fc.ul.pt}

\author[Jinhe Ye]{Jinhe Ye}
\address{Jinhe Ye, Institut de Math\'ematiques de Jussieu -- Paris Rive Gauche}
\email{jinhe.ye@imj-prg.fr}

\date{\today}

\keywords{Algebraically closed valued fields, rigid analytic spaces, sheaf cohomology, o-minimality, Berkovich spaces.}

\subjclass[2010]{Primary 55N30, 
12J25, 03C98, 03C64 Secondary 14G22, 14T05 }

\begin{abstract}
We develop a sheaf cohomology theory of algebraic varieties over an algebraically closed non-trivially valued non-archimedean field $K$ based on Hrushovski-Loeser's stable completion. In parallel, we develop a sheaf cohomology of definable subsets in o-minimal expansions of the tropical semi-group $\Gamma_\infty$, where $\Gamma$ denotes the value group of $K$. For quasi-projective varieties, both cohomologies are strongly related by a deformation retraction of the stable completion homeomorphic to a definable subset of $\Gamma_\infty$. In both contexts, we show that the corresponding cohomology theory satisfies the Eilenberg-Steenrod axioms, finiteness and invariance, and we provide natural bounds of cohomological dimension in each case. As an application, we show that there are finitely many isomorphism types of cohomology groups in definable families. Moreover, due to the strong relation between the stable completion of an algebraic variety and its analytification in the sense of V. Berkovich, we recover and extend results on the topological cohomology of the analytification of algebraic varieties concerning finiteness and invariance. 
\end{abstract}

\maketitle
\setcounter{tocdepth}{1}
{
  \hypersetup{linkcolor=black}
  \tableofcontents
}

\section{Introduction}\label{section intro}

Let $V$ be an algebraic variety over a rank 1 non-archimedean field $K$. There are two well-behaved cohomology theories which can be attached to the analytification $V^\an$ of $V$ in the sense of V. Berkovich: the singular cohomology and the \'etale cohomology. Both have been proven to carry interesting information about $V$ (see \cite{ber-weight0}, \cite{ber-etale}, \cite{ber-etaleII}, to cite just a few). 

When $K$ is a non-archimedean field of higher rank, E. Hrushovski and F. Loeser \cite{HrLo} introduced a topological space $\widehat{V}(K)$ called the \emph{stable completion of $V$}, which can be thought as a higher rank analogue of Berkovich's analytification. If $V$ is a quasi-projective variety, the main theorem in \cite{HrLo} shows a very deep connection between $\widehat{V}(K)$ and the tropical semi-group $\Gamma_\infty$ associated to the value group $\Gamma$ of $K$: there is a deformation retraction from $\widehat{V}(K)$ to a piece-wise semi-linear subset of a finite power of $\Gamma_\infty$. 
An analogous result was earlier proved by Berkovich \cite{Berkovich_contractible} for $V^\an$ under strong algebraic restrictions on $V$.  

The aim of this article is to introduce a well-behaved cohomology theory of the space $\widehat{V}(K)$ which coincides with the singular cohomology when $K$ has rank 1. In particular, we show our cohomology satisfies finiteness, invariance and has natural bounds on cohomological dimension. As an application of the above, we extend results of Berkovich on the singular cohomology, and obtain a tameness result in families showing there are only finitely many cohomology groups in fibers.   

Typically (e.g., when $K$ has rank bigger than one), the underlying topological space of $\widehat{V}(K)$ is not even locally compact (see later Remark \ref{rem:connected}). We bypass such a problem by introducing a suitable site on $\widehat{V}(K)$ (the $\hvgs$-site) and using a formalism of sheaves developed by M. Edmundo and L. Prelli (\cite{ep2}), which builds on a modification of M. Kashiwara and P. Schapira's (\cite{ks2}). Recent applications such as \cite{ba-bru-tsi} and \cite{benoi-witten-I,benoi-witten-II} are implicitly working within this formalism. 

 Our approach is reminiscent to J. Tate's initial attempt of studying rigid analytic spaces via $G$-topologies \cite{tate1971}. However, it is worthwhile to mention that, in contrast with cohomology theories defined via the analytification of algebraic varieties, our definition is intrinsically algebraic and makes no use of Tate algebras or affinoid domains. This confirms Hrushovski-Loeser's idea that a ``rigid \emph{algebraic} geometry exists as well'' (see \cite[Chapter 1]{HrLo}).

\section{Main results}

\subsection{Cohomology theories}

Let $K$ be an algebraically closed non-archimedean field of arbitrary rank, $\Gamma$ be its value group and $\Gamma_{\infty}=\Gamma\cup\{\infty\}$ the associated tropical semi-group. In order to fully exploit Hrushovski-Loeser's main theorem, part of the present article consists in developing in parallel a sheaf cohomology of definable (i.e. semi-linear) sets of $\Gamma_\infty$. More generally, we lay out a sheaf cohomology for definable sets in o-minimal expansions of $\Gamma_\infty$ over the site of open definable sets, extending  theorems from \cite{ejp,ep1,ep3,ep4} on sheaves on o-minimal expansions of $\Gamma$, including the formalism of the six Grothendieck operations. It is worth mentioning that the passage from $\Gamma$ to $\Gamma_\infty$ is more subtle than one might expect, as already noted by Hrushovski and Loeser (see the first paragraph in \cite[Section 4.1]{HrLo}). 

The following is the main theorem concerning our cohomology theories:

\begin{mthm*}
For algebraic varieties (possibly over a subfield of $K$), their stable completions and definable subsets of $\Gamma_\infty$, the corresponding cohomology theory satisfies: 
\begin{enumerate}
    \item the Eilenberg-Steenrod axioms (Theorem \ref{thm:Eilenberg-Steenrod});%
    \item finiteness (resp. Theorems \ref{thm finiteness in acvf}  and \ref{thm fg coho def groups}); 
    \item invariance (resp. Theorems \ref{thm inv in acvf} and \ref{thm bf inv gp-int});
    \item bounds on cohomological dimensions (resp. Theorems \ref{thm vanish vg-normal}, \ref{thm vanish vg-loc closed} and \ref{thm fg coho def groups}).
\end{enumerate}
\end{mthm*}


\subsection{Applications} We obtain the following applications:

\medskip
\noindent
(I) In rank 1, Hrushovski and Loeser show finiteness of homotopy types in definable families (see \cite[Theorems 14.3.1 and 14.4.4]{HrLo}). As they observed (see the beginning of \cite[Section 14.3]{HrLo}), such a result is no longer true in higher ranks. However, using invariance of cohomology with definably compact support, we obtain a tameness result showing that
given a definable family $Z_w$ of $\vgs$-locally closed  subsets  of a variety, as $w$ runs through $W$ there are finitely many isomorphisms types for the  $\hvgs$-cohomology $H^*_c(\widehat{Z}_{w};{\mathbb Q})$ with definably compact supports. (See later Theorem \ref{thm: finite cohom complexity hat}). 

Results of A. Abbes and T. Saito \cite{abbes-saito}, later generalized by J. Poineau \cite{poineau}, showed that for families parameterized by the norm of an analytic function, there is a finite partition of $\mathbb{R}$ into intervals such that on each piece $\pi_0$ of the fibers are in canonical bijections. For definable families parameterized by $\nGi$, we get analogues of such results for $H_c^*$ and $\pi_0^\df$ (both independently of the rank). See Corollaries \ref{cor: finite cohom complexity hat} and \ref{cor: finite pi0 complexity hat}.

It is worth pointing out that assuming finiteness and invariance of cohomology without support, similar methods lead to bounds on the $\vgs$-Betti numbers of a uniformly definable family of $\vgs$-locally closed subsets of a variety. Such a result will be an analogue, in all ranks, of a result obtained by S. Basu and D. Patel (\cite[Theorem 2]{basu}) in the case where the valued field has rank one, using the usual topological (singular) Betti numbers instead. In comparison with \cite[Theorem 2]{basu}, this approach will provide a much simpler proof by taking advantage of the fact that we can work in higher elementary extensions where we can endow the the tropical semi-group $\bGi$ with the structure of a real closed field, and therefore apply earlier results of Basu himself on bounds in o-minimal expansions of real closed fields (\cite[Theorem 2.2]{basu-o-minimal}).  

\medskip

\noindent
(II)  We recover results of Berkovich \cite{Berkovich_contractible} on the topological cohomology of $V^\an$ concerning finiteness and invariance (see later Theorems \ref{thm finiteness and inv for Bf} and \ref{thm finiteness and inv for Bf with comp supp} and Corollaries \ref{cor spec seq bf and hvgs} and \ref{cor spec seq bf and hvgs c-sup}). 

\medskip
\noindent
(III) We extend Berkovich's results in two different ways which we briefly explain. Let $F$ be a non-archimedean field of rank 1 (not necessarily complete) and $V$ be an algebraic variety over $F$. Hrushovski and Loeser also introduced a topological space, called the \emph{model-theoretic Berkovich space associated to $V$}, which they denoted by $B_\bF(V)$ (see later Section \ref{sec:rel-berkovich} for its formal definition). When $F$ is complete, $B_\bF(V)$ is homeomorphic to the underlying topological space of $V^\an$. Letting $F^{\max}$ be a maximally complete algebraically closed extension of $F$ with value group $\RR$, they show the existence of a closed continuous surjection $\pi\colon \widehat{V}(F^{\max})\to B_\bF(V)$. Such a map allowed them to transfer results from $\widehat{V}(F^{\max})$ to $B_\bF(V)$, and to draw conclusions about the homotopy type of $V^\an$ for $V$ quasi-projective. When $F=F^{\max}$, the topological spaces $\widehat{V}(F)$, $B_\bF(V)$ and $V^\an$ are all homeomorphic, and we show that, in this particular case, our cohomology coincides with the topological cohomology of $V^\an$ (Theorems \ref{thm spec seq bf and hvgs c-sup} and \ref{thm spec seq bf and hvgs}). Furthermore, the map $\pi$ allowed us to transfer results from the $\hvgs$-cohomology on $\widehat{V}(F^{\max})$ to the topological cohomology of $B_\bF(V)$ and hence, when $F$ is complete, to the topological cohomology of $V^\an$. This is mainly how we recover Berkovich's results. Note however, that our results hold more generally for $B_\bF(V)$, a space which is well-defined without assuming $F$ to be complete. 

A second way in which we extend Berkovich's results is that our theorems actually hold for semi-algebraic subsets of $B_\bF(V)$ (resp. for semi-algebraic subsets of $\widehat{V}(K)$, for any non-trivially valued algebraically closed field extension of $F$). Here, semi-algebraic subset should be understood in the sense of A. Ducros \cite{duc} (see later Fact \ref{fact B_F(V) and V^an}). Using that $\widehat{V}(K)$ has finitely many definable connected components (Lemma \ref{lem vgs components}), we also recover the analogue result due to Ducros \cite{duc} for $V^\an$. In a similar vein, but using completely different methods, F. Martin \cite{martin} extended results of Berkovich concerning the \'etale cohomology of analytic spaces to the larger category of their semi-algebraic subsets. We would like to explore in subsequent research if the techniques here employed can also be used to develop an analogue of the \'etale cohomology over the spaces $\widehat{V}(K)$ and $B_\bF(V)$.

\subsection{Layout of the article}

Throughout, we will use a formalism of sheaves developed by M. Edmundo and L. Prelli (\cite{ep2}), which builds on a modification of M. Kashiwara and P. Schapira's (\cite{ks2}) notion of $\T$-topology (a Grothendieck topology). The semi-algebraic site, the sub-analytic site, the o-minimal site and the $\hvgs$-site mentioned above will be examples of such $\T$-topologies. The needed background and properties of such a formalism are presented in Section \ref{section more on t-sheaves}. 

The novelty here is the introduction of the notions of $\T$-normality and of families of $\T$-normal supports on spaces equipped with a $\T$-topology. When we pass to the $\T$-spectrum of such Grothendieck topologies, the corresponding categories of sheaves are isomorphic, and we get normal spectral spaces (resp. families of normal and constructible supports on such spectral spaces). The later notion will play the role that paracompactifying families of supports plays in sheaf theory in topological spaces. We study the notion of $\Phi$-soft sheaves when $\Phi$ is a normal and constructible family of supports and setup the tools required to obtain Base change formula, Vietoris-Begle theorem and, later in Section \ref{section cohomo finiteness and invariance}, Mayer-Vietoris sequences and bounds on cohomological $\Phi$-dimension. These results were already known in the o-minimal case (\cite{ep1}) and in the semi-algebraic case (\cite{D3}) where $\T$-normality and  families of $\T$-normal supports corresponds to definably normal definable spaces and families of definably normal supports (resp. regular and paracompact (locally) semi-algebraic spaces and paracompactifying (locally) semi-algebraic families of supports). Here we present a unifying approach to this theory.

In Section \ref{section t-top in bGi}, we define the o-minimal site on a definable set in an o-minimal expansion $\bGi$ of $\nGi$, we recall the notions of definable connectedness and  definable compactness. Moreover, in order to be able to apply the tools of Section \ref{section more on t-sheaves}, we study the notion of definable normality in $\bGi$. Unlike in o-minimal expansions $\bG$ of $\nG$, in $\bGi$ there are open definable sets which are not definably normal. The main result of  Section \ref{section t-top in bGi}, which ensures that we can later apply the tools of Section \ref{section more on t-sheaves}, is  Theorem \ref{thm basis of open normal} showing that every  definably locally closed set in $\bGi$ is the union of finitely many relatively open definable subsets which are definably normal. The proof of this result is rather long and is based in ideas from \cite{ep4}. 

The $\hvgs$-site on the stable completion $\widehat{V}(K)$ of an algebraic variety $V$ over an algebraically closed non-archimedean field $K$ (and more generally on the stable completion $\widehat{X}$ of a definable subset $X\subseteq V\times \Gamma_\infty^m$) is defined and studied in Section \ref{section t-top in acvf}. Here, for the readers convenience, we recall some background from \cite{HrLo} on stable completions and on the notions of definable connectedness and definable compactness. In particular, we include a proof (missing in \cite{HrLo}) of the fact that the stable completion $\widehat{X}$ of a definable subset  $X\subseteq V\times \nGin$ has finitely many definably connected components (Lemma \ref{lem vgs components}). The main result of this section is Corollary \ref{cor:mix-normal} showing  that if $X$ is a  $\vgs$-locally closed subset, then $X$ is the union of finitely many basic $\vgs$-open subsets which are weakly $\vgs$-normal. When we pass to $\widehat{X}$ equipped with the $\hvgs$-site this gives the required conditions in order to apply the tools of Section \ref{section more on t-sheaves}.

Section \ref{section cohomo finiteness and invariance} is devoted to showing the Eilenberg-Steinrod axioms, finiteness,  invariance and vanishing results for the associated sheaf cohomologies in the algebraic and $\bGi$ cases. The main difficulty concerns the homotopy axiom which, as usual, is proved using the Vietoris-Begle theorem. However, extra work is needed to verify the assumptions of the version of this theorem presented in Section \ref{section more on t-sheaves}. The results concerning finiteness and invariance in $\bGi$ are obtained adapting the methods used in \cite{bf} and \cite{ep3}. After verifying that the strong deformation retraction given by the main theorem of \cite{HrLo} is a morphism of $\hvgs$-sites, we obtain finiteness and invariance for stable completions in the quasi-projective case. The general case is then obtained using  Mayer-Vietoris sequences based on the tools of Section \ref{section more on t-sheaves}.

Finally, Sections \ref{sec:rel-berkovich} and \ref{sec Betti} gather the above mentioned applications: the former is devoted to the relation with Berkovich spaces and the latter to the tameness results on cohomological complexitity in families. 

\section{\texorpdfstring{$\T$}{T}-spaces and~\texorpdfstring{$\T$}{T}-sheaves}\label{section more on t-sheaves}
In this section we will recall the notions of $\T$-space and $\T$-sheaves as well as some of the basic results obtained in \cite{ep2}. We then extend the theory of $\T$-sheaves by introducing the notion of families of $\T$-normal supports and proving several results in this context generalizing those already known in the o-minimal or locally semi-algebraic cases.

\begin{nt}
Below we let  $A$ be a commutative ring with unit.  If  $X$ is a topological space  (not necessarily Hausdorff) we denote by $\op(X)$ the category whose objects are the open subsets of $X$ and the morphisms are the inclusions, and we let  $\mod(A_X)$ be the category of sheaves of $A$-modules on $X$. 
The category $\mod(A_X)$  is a Grothendieck category, it admits enough injects, admits a family of generators, filtrant inductive limits are exact and we have on it  the classical operations
\[\ho_{A_X}(\bullet ,\bullet ), \,\, \bullet \otimes _{A_X}\bullet , \,\, f_*,\,\, f^{-1}, \,\, (\bullet )_Z, \,\,\Gamma _Z(X;\bullet ), \,\, \Gamma (X; \bullet )\]
where $Z\subseteq X$ is a locally closed subset. 

Recall also that if $Z\subseteq X$ is a locally closed subset and $i\colon Z\to X$ is the inclusion, then we have the operation 
\[i_{!}(\bullet )\colon\mod(A_Z)\to \mod(A_X)\]
 of {\it extension by zero} such that for $\cF\in \mod(A_Z),$ $i_{!}\cF$ is the unique sheaf in $\mod(A_X)$ inducing $\cF$ on $Z$ and zero on $X\setminus Z$. 
 If $\cG\in \mod(A_X),$ then
 \begin{equation}\label{loc closed rest}
\cG_Z\simeq  \cG\otimes A_Z\simeq i_{!}\circ i^{-1}\cG.
\end{equation}
If $f\colon X \to Y$ is a continuous map, $Z$ is a locally closed subset of $Y,$ 
\begin{equation*}
\xymatrix{
f^{-1}(Z)  \ar@{^{(}->}[r]^{j}  \ar[d]^{f_|} & X \ar[d]^{f} \\
Z \ar@{^{(}->}[r]^{i} & Y
}
\end{equation*}
is a commutative diagram and $\cF\in \mod (A_{Z}),$ then 
\begin{equation}\label{loc closed base change}
f^{-1}\circ i_{!}\cF\simeq  j_{!}\circ (f_|)^{-1}\cF.
\end{equation}
\end{nt}

Below we use freely these operations and refer the reader to \cite[Chapter II, Sections 2.1 - 2.4]{ks1} or to \cite[Chapter I, Sections 1 - 6]{b} (but the notation is different) for the details on  sheaves on topological spaces and on the properties of  these basic operations. 


\subsection{\texorpdfstring{$\T$}{T}-sheaves}\label{subsection t-sheaves}

Here we recall the definition of $\T$-space given in \cite{ep2}, adapting the construction of Kashiwara and Schapira \cite{ks2}, and we recall some of the results obtained in that paper for  the category of sheaves on $X_{\T}$. Those results generalize results from either the case of sub-analytic sheaves, semi-algebraic sheaves or and o-minimal sheaves. See Examples \ref{expls old tspaces} below.

\begin{defn}\label{def:T-topology} 
Let $X$ be a topological space and let $\T \subseteq \op(X)$ be a family of open subsets of $X$ such that: 
\begin{itemize}
    \item[(i)] 
    $\T$ is a basis for the topology of $X$, and $\varnothing \in \T$,
    \item[(ii)] 
    $\T$ is closed under finite unions and intersections.
\end{itemize}
Then we say that:
\begin{itemize}
\item
a $\T$-subset is a finite Boolean combination of elements of $\T$;
\item
a closed (resp. open) $\T$-subset is a $\T$-subset which is closed (resp. open) in $X$;
 \item
a $\T$-connected subset is a $\T$-subset which is not the disjoint union of two proper clopen $\T$-subsets (in the induced topology).
\end{itemize}
If  in addition $\T$  satisfies:
\begin{itemize}\label{hytau}
    \item[(iii)]  
    every $U \in \T$ has finitely many $\T$-connected components which are in $\T$,

\end{itemize}
then we say that $X$ is a {\it $\T$-space}.
\end{defn}

Let $X$ and $\T\subseteq \op (X)$ satisfying (i) and (ii) of Definition \ref{def:T-topology}. One can endow the category $\T$ with a Gro\-then\-dieck topology, called the {\it $\T$-topology}, in the following way: a
family $\{U_i\}_i$ in $\T$ is a covering of $U \in \T$ if it admits a finite subcover. We denote by $X_{\T}$ the associated site,  write for short $A_{\T}$ instead of $A_{X_{\T}}$, and let  $\rho\colon X \to X_{\T}$ be the natural morphism of sites.
We have functors
\begin{equation*}\label{rho}
\xymatrix{\mod(A_X)
\ar@ <2pt> [r]^{\mspace{0mu}\rho_*} &
  \mod(A_{\T}) \ar@ <2pt> [l]^{\mspace{0mu}\imin \rho} }
\end{equation*}
with  $\imin \rho \circ \rho_* \simeq \id$ (equivalently, the functor $\rho_*$ is fully faithful). See \cite[Proposition 2.1.6]{ep2}.\\

By \cite[Propositions 2.1.7 and 2.1.8]{ep2} we have:\\

\begin{fact}\label{UlimU} 
$\,$
\begin{enumerate}
\item
Let $\{\cF_i\}_{i \in I}$ be a filtrant inductive
system in $\mod(A_{\T})$ and let $U \in \T$. Then
\[\lind i\Gamma(U;\cF_i) \iso \Gamma(U;\lind i \cF_i).\]
\item
Let $\cF$ be a presheaf on $X_{\T}$ and
assume that
\begin{itemize}
\item[(i)] $\cF(\varnothing)=0,$
\item[(ii)] For any $U,V \in \T$ the sequence $\lexs{\cF(U\cup V)}{\cF(U) \oplus \cF(V)}{\cF(U \cap
V)}$is exact.
\end{itemize}
Then $\cF \in \mod(A_{\T})$. \qed
\end{enumerate}
\end{fact}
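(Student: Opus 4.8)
The plan is to extract both statements from the single combinatorial feature of the $\T$-topology: a family $\{U_i\}_i$ in $\T$ is declared to be a covering of $U\in\T$ exactly when it admits a finite subcovering, and $\T$ is closed under finite unions and intersections. I would prove the second statement first and then deduce the first from it.

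For the second statement, I want to verify that the stated conditions force $\cF$ to satisfy the sheaf axiom on $X_\T$ for an arbitrary covering $\{U_i\}_{i\in I}$ of some $U\in\T$. First, an easy induction on $n$ promotes hypothesis (ii) to the assertion that, for all $U_1,\dots,U_n\in\T$, the sequence $0\to\cF(U_1\cup\dots\cup U_n)\to\bigoplus_k\cF(U_k)\to\bigoplus_{k<l}\cF(U_k\cap U_l)$ is exact: setting $W'=U_1\cup\dots\cup U_{n-1}$, which lies in $\T$, one splices the inductive statement for the covering $\{U_1,\dots,U_{n-1}\}$ of $W'$, the inductive statement for the covering $\{U_1\cap U_n,\dots,U_{n-1}\cap U_n\}$ of $W'\cap U_n$, and hypothesis (ii) for the covering $\{W',U_n\}$ of $U_1\cup\dots\cup U_n$; closure of $\T$ under finite unions and intersections is precisely what keeps every intermediate set inside $\T$. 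The empty covering of $\varnothing$ is accounted for by hypothesis (i). Now, given an arbitrary covering $\{U_i\}_{i\in I}$ of $U$, choose a finite subcovering $U_{i_1},\dots,U_{i_n}$; a compatible family $(s_i)_{i\in I}$ restricts to a compatible family on this finite subcovering, which by the finite case glues to a unique $s\in\cF(U)$. The step that requires a little care — and which I regard as the main obstacle — is to check that $s|_{U_i}=s_i$ for \emph{every} $i\in I$, not only for the chosen indices: the sets $U_i\cap U_{i_1},\dots,U_i\cap U_{i_n}$ form a finite covering of $U_i$ by elements of $\T$, and on each of them both $s|_{U_i}$ and $s_i$ restrict to $s_{i_k}|_{U_i\cap U_{i_k}}$ (using $s|_{U_{i_k}}=s_{i_k}$ and the compatibility of $(s_i)_i$), so $s|_{U_i}=s_i$ by the separatedness half of the finite case; uniqueness of $s$ is obtained the same way.

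For the first statement, write $\cG=\lind i\cF_i$ for the inductive limit computed in $\mod(A_\T)$, that is, the sheafification of the presheaf $P\colon U\mapsto\lind i\cF_i(U)$. The plan is to show that $P$ is already a sheaf, so that $\cG=P$ and hence $\Gamma(U;\cG)=P(U)=\lind i\Gamma(U;\cF_i)$ for $U\in\T$. We have $P(\varnothing)=\lind i\cF_i(\varnothing)=0$; and for $U,V\in\T$ the sequence $\lexs{\cF_i(U\cup V)}{\cF_i(U)\oplus\cF_i(V)}{\cF_i(U\cap V)}$ is exact for each $i$, since $\{U,V\}$ is a covering of $U\cup V$ in $X_\T$ and $\cF_i$ is a sheaf. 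Because filtrant inductive limits are exact in the category of $A$-modules, passing to the colimit keeps $\lexs{P(U\cup V)}{P(U)\oplus P(V)}{P(U\cap V)}$ exact, so the second statement applies and gives $P\in\mod(A_\T)$, as wanted.

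I expect the whole argument to be essentially formal site theory; the only genuinely delicate point is the finite-to-arbitrary reduction in the second statement, and the conceptual crux is the observation that, all coverings being finitary, sheafifying a filtrant colimit of sheaves changes nothing — this is exactly what makes the first statement hold and what fails for more general Grothendieck topologies.
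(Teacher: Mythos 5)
Your argument is correct. The paper itself gives no proof of this Fact — it is quoted from \cite[Propositions 2.1.7 and 2.1.8]{ep2} — but the argument there (following Kashiwara--Schapira's treatment of the sub-analytic site) is exactly the one you give: induct to get the sheaf condition for finite coverings from the two-set hypothesis, reduce arbitrary coverings to finite subcovers (with the check that gluing over a subcover is compatible with \emph{all} members of the cover), and deduce (1) by observing that the presheaf filtrant colimit already satisfies these criteria, so sheafification does nothing.
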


Let us consider the category $\mod(A_X)$ of sheaves of $A_X$-modules on $X$, and denote by $\K$ the subcategory whose
objects are the sheaves $\cF=\oplus_{i \in I} A_{U_i}$ with $I$
finite and $U_i \in \T$ for each $i$. 

Following  \cite{ks2} one defines:

\begin{defn} 
Let $X$ be a $\T$-space, and let $\cF \in \mod(A_X)$. Then we say that:
\begin{itemize}
\item[(i)] $\cF$ is $\T$-finite if there exists an epimorphism
$\cG \twoheadrightarrow \cF$ with $\cG \in \K$.
\item[(ii)] $\cF$ is $\T$-pseudo-coherent if for any morphism
$\psi\colon \cG \to \cF$ with $\cG \in \K$, $\ker \psi$ is $\T$-finite.
\item[(iii)] $\cF$ is $\T$-coherent if it is both $\T$-finite
and $\T$-pseudo-coherent.
\end{itemize}
One denotes by
$\coh(\T)$ the full subcategory of $\mod(A_X)$ consisting of
$\T$-coherent sheaves. 
\end{defn}


By results in \cite[Subsection 2.2]{ep2} we have:

\begin{fact}\label{fact coh stable}
Suppose that $X$ is a $\T$-space. Then the category $\coh(\T)$ is additive and stable by finite sums, kernels, cokernels and  $\bullet \otimes_{A_X} \bullet $ in $\mod(A_X)$. In particular, it is abelian and the natural functor $\coh(\T)\to \mod(A_X)$ is exact. 

Moreover, the functor $\rho_*$ is fully faithfull and exact on $\coh(\T)$, $\coh(\T)$ contains $\K$ and every $\cF\in \coh(\T)$ admits a finite resolution 
\[
\cG^{\bullet}\coloneqq 0 \to \cG^1 \to \cdots \to \cG^n \to \cF \to 0
\]
consisting of objects belonging to $\mathcal{K}$.\qed
\end{fact}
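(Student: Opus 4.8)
The statement to prove is Fact \ref{fact coh stable}, which collects properties of $\coh(\T)$: that it is abelian and stable under the relevant operations, that $\rho_*$ is fully faithful and exact on it, that it contains $\K$, and that every coherent sheaf admits a finite resolution by objects of $\K$. Since this is attributed to \cite[Subsection 2.2]{ep2}, and the ambient framework closely mirrors Kashiwara–Schapira's treatment of $\R$-constructible / subanalytic sheaves, I would model the proof on the classical arguments (as in \cite{ks2}) and indicate the points where one checks they go through unchanged in the abstract $\T$-space setting.

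\medskip

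The plan is to proceed in the following order. First I would establish the elementary stability properties of $\coh(\T)$ as a subcategory of $\mod(A_X)$. Stability under finite direct sums is immediate from the definitions. For kernels and cokernels the key observation is the standard one: $\T$-pseudo-coherence of $\cF$ says precisely that the kernel of any map from an object of $\K$ is $\T$-finite, and one shows by a diagram chase — exactly as in the coherent-sheaf setting over a ringed space — that $\T$-finite subsheaves of a $\T$-pseudo-coherent sheaf are themselves $\T$-coherent, and that $\T$-pseudo-coherence passes to kernels and cokernels of maps between $\T$-coherent sheaves. The one structural input needed is that $\K$ is "large enough": given $\cG_1,\cG_2\in\K$ and a map, one can find a third object of $\K$ surjecting onto the relevant piece; this uses only that $\T$ is closed under finite unions and intersections (axiom (ii)) so that $A_{U}\otimes A_{V}\simeq A_{U\cap V}$ for $U,V\in\T$, together with the fact that $A_X$ itself is generated by the $A_U$, $U\in\T$, since $\T$ is a basis (axiom (i)). Stability under $\bullet\otimes_{A_X}\bullet$ follows because $A_U\otimes A_V\simeq A_{U\cap V}$ keeps $\K$ stable under $\otimes$, and then one reduces the general case to resolutions. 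Once kernels and cokernels are handled, abelianness of $\coh(\T)$ and exactness of the inclusion into $\mod(A_X)$ are formal.

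\medskip

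Next I would treat the assertions about $\rho_*$. We already know from \cite[Proposition 2.1.6]{ep2} that $\rho_*$ is fully faithful on all of $\mod(A_X)$ because $\imin\rho\circ\rho_*\simeq\id$; so what needs proof is exactness of $\rho_*$ when restricted to $\coh(\T)$. Since $\imin\rho$ is exact (it is the inverse image functor of a morphism of sites) and $\rho_*$ is always left exact, it suffices to show $\rho_*$ sends an epimorphism $\cF\twoheadrightarrow\cF''$ in $\coh(\T)$ to an epimorphism in $\mod(A_\T)$; equivalently, that $R^1\rho_*$ vanishes on the kernel. Here one uses the finite $\K$-resolution together with the fact that $\rho_*A_U$ for $U\in\T$ is the representable sheaf on the site and that $\Gamma(V;\bullet)$ is exact on such objects over covers by elements of $\T$ — this is where axiom (iii) ($\T$-connected components, finiteness) and Fact \ref{UlimU} enter, to guarantee the Čech computation behaves well and that higher cohomology of the $A_U$ on the $\T$-site vanishes. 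The containment $\K\subseteq\coh(\T)$ is nearly a tautology: an object $\oplus_{i\in I}A_{U_i}$ with $I$ finite is obviously $\T$-finite, and its $\T$-pseudo-coherence is checked using the same "$\K$ is large enough" lemma.

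\medskip

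Finally, for the existence of a finite $\K$-resolution of an arbitrary $\cF\in\coh(\T)$: one builds the resolution from the right by iterating the definition — choose $\cG^0\twoheadrightarrow\cF$ with $\cG^0\in\K$ (possible since $\cF$ is $\T$-finite), let $\cF_1=\ker$, which is $\T$-finite (as a $\T$-finite subsheaf of the $\T$-coherent $\cG^0$, by the lemma above) hence $\T$-coherent, and repeat. The only nontrivial point, and the one I expect to be the main obstacle, is \emph{termination}: that after finitely many steps the kernel becomes zero, yielding a \emph{finite} resolution. In the subanalytic/semialgebraic/o-minimal cases this rests on a dimension or Noetherianity argument specific to the geometry; in the abstract $\T$-space formalism of \cite{ep2} the corresponding input must be an axiom or a consequence of (i)–(iii) — plausibly a bound on the "combinatorial dimension" of $\T$-subsets, or a syzygy-type statement — and identifying exactly which property of the $\T$-topology forces the resolution to stop is the crux. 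I would isolate this as a lemma (citing the relevant statement in \cite[Subsection 2.2]{ep2}) and assemble the rest of Fact \ref{fact coh stable} around it as indicated above.
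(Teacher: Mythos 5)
The paper does not actually prove this statement: it is recorded as a Fact, closed with a \verb|\qed|, and attributed wholesale to \cite[Subsection 2.2]{ep2}, so the only argument to compare yours against is the one in that reference, which --- as you guessed --- is the Kashiwara--Schapira coherence formalism transplanted to $\T$-spaces. The formal part of your reconstruction is sound and matches that route: the Serre-type lemmas (a $\T$-finite subsheaf of a $\T$-pseudo-coherent sheaf is $\T$-coherent, whence stability under kernels and cokernels and abelianness), the verification that $\K\subseteq\coh(\T)$, and the reduction of $\otimes$-stability to $A_U\otimes A_V\simeq A_{U\cap V}$ (for which, note, a two-step presentation $\cG_1\to\cG_0\to\cF\to 0$ already suffices, so this particular item does not need the full finite resolution). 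Full faithfulness of $\rho_*$ is indeed already \cite[Proposition 2.1.6]{ep2} and needs no new argument.

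The genuine gap is the one you flagged yourself, and it should be stressed that it is a gap and not a routine citation to be filled in later: the iteration ``choose $\cG\twoheadrightarrow\cF$ in $\K$, pass to the kernel, repeat'' only shows that each successive kernel is again $\T$-coherent. Nothing in axioms (i)--(iii) of Definition \ref{def:T-topology}, as you have used them, forces a kernel to vanish after finitely many steps, so what your construction delivers is an a priori infinite resolution by objects of $\K$, not the finite one asserted. Moreover the missing statement is load-bearing elsewhere in your own text: your proof of exactness of $\rho_*$ on $\coh(\T)$ explicitly invokes the finite $\K$-resolution, so that assertion is also left unproved. The extra input cannot be purely formal; in the subanalytic case of \cite{ks2} it comes from the triangulation theorem, and in the abstract setting of \cite{ep2} it has to be extracted from condition (iii) (the finiteness of $\T$-connected components, which is what controls morphisms and kernels between objects of $\K$ and makes the d\'evissage terminate). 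As a blind reconstruction your proposal identifies the correct skeleton and the correct pressure point, but without that termination argument it does not establish the Fact --- though, in fairness, the paper itself supplies no proof either.
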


As in \cite{ks2}, we can define the indization of the category $\coh(\T)$. Recall that the category 
\[
{\rm Ind}(\coh(\T)),
\] 
of ind-$\T$-coherent sheaves is the category whose objects are filtrant inductive limits of functors
\[
\lind i \Ho_{\coh(\T)}(\bullet,\cF_i) \ \ \ \ \text{($\indl i \cF_i$ for short)},
\]
where $\cF_i \in \coh(\T)$, and the morphisms are the natural transformations of such functors. Note that since $\coh(\T)$ is a small category, ${\rm Ind}(\coh(\T))$ is equivalent to the category of $A$-additive left exact contravariant functors from $\coh(\T)$ to $\mod(A)$.  See \cite{ks3} for a complete exposition on indizations of categories.

We can extend the functor $\rho_*\colon\coh(\T)\to \mod(A_{\T})$ to 
\[\lambda:{\rm Ind}(\coh(\T)) \to \mod(A_{\T})\]
 by setting $\lambda(\indl i \cF_i)\coloneqq \lind i \rho_* \cF_i$.\\

By \cite[Corollary 2.2.10]{ep2} we have:\\

\begin{fact}
Suppose that $X$ is a $\T$-space. Then the functor  $\lambda:{\rm Ind}(\coh(\T)) \to \mod(A_{\T})$ is an equivalence of categories. In particular, for every $\cF \in \mod(A_{\T})$ there exists a small filtrant
inductive system $\{\cF_i\}_{i \in I}$ in $\coh(\T)$ such
that $\cF \simeq \lind i \rho_*\cF_i$.\qed
\end{fact}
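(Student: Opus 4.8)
The plan is to recognize the statement as the $\T$-site incarnation of the classical fact that a Grothendieck category possessing a small generating family of compact objects which is closed under kernels and cokernels is canonically equivalent to the ind-category of that family; here $\coh(\T)$ plays the role of the family and $\lambda$ is the comparison functor extending $\rho_*\colon\coh(\T)\to\mod(A_{\T})$. I would argue in three steps: (1) $\rho_*\cG$ is a compact — in fact finitely presented — object of $\mod(A_{\T})$ for every $\cG\in\coh(\T)$; (2) $\lambda$ is fully faithful; (3) $\lambda$ is essentially surjective, which will simultaneously give the ``in particular'' clause. For (1): for $U\in\T$ one has $\Ho_{A_{\T}}(\rho_*A_U,\bullet)\simeq\Gamma(U;\bullet)$, which commutes with filtrant inductive limits by Fact \ref{UlimU}(1), so every object of $\K$ and every finite direct sum of such is compact; for a general $\cG\in\coh(\T)$ I would extract from the finite $\K$-resolution of Fact \ref{fact coh stable} a two-step presentation $\cG^{1}\to\cG^{0}\to\cG\to 0$ with $\cG^{0},\cG^{1}\in\K$, push it through the exact functor $\rho_*$, and compare $\Ho_{A_{\T}}(\rho_*\cG,\lind i\cF_i)$ with $\lind i\Ho_{A_{\T}}(\rho_*\cG,\cF_i)$ for a filtrant system $\{\cF_i\}$: left-exactness of $\Ho$, exactness of filtrant inductive limits (in $\mod(A_{\T})$ and in $\mod(A)$), and compactness of $\cG^{0}$ and $\cG^{1}$ then force the comparison map to be an isomorphism.

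For (2), given $\cG\in\coh(\T)$ and a filtrant system $\{\cF_i\}$ in $\coh(\T)$, compactness of $\rho_*\cG$ and full faithfulness of $\rho_*$ on $\coh(\T)$ (Fact \ref{fact coh stable}) yield
\begin{align*}
\Ho_{A_{\T}}\bigl(\lambda\cG,\lambda(\indl i\cF_i)\bigr)
&=\Ho_{A_{\T}}\bigl(\rho_*\cG,\lind i\rho_*\cF_i\bigr)\\
&\simeq\lind i\,\Ho_{A_{\T}}(\rho_*\cG,\rho_*\cF_i)
\;\simeq\;\lind i\,\Ho_{\coh(\T)}(\cG,\cF_i)=\Ho_{{\rm Ind}(\coh(\T))}(\cG,\indl i\cF_i).
\end{align*}
Writing an arbitrary source object as $\indl j\cG_j$ and using that $\Ho$ out of a filtrant colimit is the corresponding filtrant projective limit — valid on both sides — upgrades this to full faithfulness of $\lambda$.

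For (3), fix $\cF\in\mod(A_{\T})$ and let $\cC$ be the comma category of pairs $(\cG,u)$ with $\cG\in\coh(\T)$ and $u\colon\rho_*\cG\to\cF$, the morphisms being the maps in $\coh(\T)$ compatible with the structure maps. Since $\coh(\T)$ is small and, by Fact \ref{fact coh stable}, stable under finite sums and cokernels with $\rho_*$ exact on it, the category $\cC$ is small and filtrant, so the functor $(\cG,u)\mapsto\cG$ defines a genuine object $\indl{\cC}\cG$ of ${\rm Ind}(\coh(\T))$. I would then check that the tautological morphism $\lind{\cC}\rho_*\cG\to\cF$ is an isomorphism: it is an epimorphism because the sheaves $\rho_*A_U$ ($U\in\T$) generate $\mod(A_{\T})$ and each of them, together with all its maps to $\cF$, occurs among the pairs $(\cG,u)$ (as $A_U\in\K\subseteq\coh(\T)$); and it is a monomorphism because a section $s$ of some $\rho_*\cG$ over $U\in\T$ that dies in $\cF$ is already killed on passing from $(\cG,u)$ to $(\cG/\cI,\bar u)$, where $\cI\subseteq\cG$ is the image of the map $A_U\to\cG$ attached to $s$; and $\cI$, being the cokernel of a map between the coherent sheaves $\ker(A_U\to\cG)$ and $A_U$, again lies in $\coh(\T)$. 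Hence $\cF\simeq\lind{\cC}\rho_*\cG=\lambda\bigl(\indl{\cC}\cG\bigr)$, which is essential surjectivity and, unwound, the displayed assertion.

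The main obstacle is the interface appearing in step (3): making sure that the stability properties of $\coh(\T)$ recorded in Fact \ref{fact coh stable} are exactly what is needed, i.e. that the subsheaf of a $\T$-coherent sheaf generated by a single section over some $U\in\T$ is again $\T$-coherent and that the relevant quotients are compatible with $\rho_*$. Granting that, the rest is diagram-chasing together with the two exactness properties of filtrant inductive limits used in steps (1) and (2).
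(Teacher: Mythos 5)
Your proof is correct and follows essentially the same route as the source the paper cites for this statement ([ep2, Corollary 2.2.10], itself modelled on Kashiwara--Schapira's argument for the subanalytic site): compactness of $\rho_*\cG$ for $\cG\in\coh(\T)$ via Fact \ref{UlimU}(1) and a two-term $\K$-presentation, full faithfulness of $\lambda$ from compactness plus full faithfulness of $\rho_*$ on $\coh(\T)$, and essential surjectivity via the filtrant comma category of coherent objects over $\cF$. The one point you flag as an obstacle does go through: $\ker(A_U\to\cG)$ is $\T$-finite by pseudo-coherence of $\cG$ and $\T$-pseudo-coherent as a subobject of $A_U\in\K\subseteq\coh(\T)$, so the image $\cI$ is indeed a cokernel in $\coh(\T)$ and the quotient step in your monomorphism argument is legitimate.
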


Recall that any Boolean algebra $\cA$ has an associated topological space, that we denote by $S(\cA)$, called its Stone space. The points in $S(\cA)$ are the ultrafilters $\alpha$ on $\cA$.
The topology on $S(\cA)$ is generated by a basis of open and closed sets consisting of all sets of the form
\[
\widetilde{C} = \{\alpha \in S(\cA): C \in \alpha \},
\]
where $C\in \cA$. The space
$S(\cA)$ is a compact\footnote{Throughout the paper, for topological spaces, we say compact to mean quasi-compact and Hausdorff.} totally disconnected space. Moreover, for each $C \in \cA$, the subspace $\widetilde{C}$ is compact. 
See \cite{Jo82} for an introduction to this subject. \\

Let $X$ and $\T\subseteq \op (X)$ satisfying (i) and (ii) of Definition \ref{def:T-topology}. Then as in \cite{ep2}  we let
\begin{equation*}\label{Tloc}
\T _{loc}=\{U\in \op(X): U \cap W \in \T\,\,\, \textrm{for every} \,\,\, W \in \T\}
\end{equation*}
and we make the following definitions:
\begin{itemize}
\item
a subset $S$ of $X$ is a $\T_{loc}$-subset if and only if  $S\cap V$ is a $ \T$-subset for every $V\in \T$;

\item
a closed (resp. open) $\T_{loc}$-subset is a $\T_{loc}$-subset which is closed (resp. open) in $X$;

\item
a $\T_{loc}$-connected subset is a $\T_{loc}$-subset which is not the disjoint union of two proper clopen $\T_{loc}$-subsets (in the induced topology).
\end{itemize}

Clearly, $\varnothing, X \in \T_{loc}$, $\T\subseteq \T_{loc}$ and $\T_{loc}$ is closed under finite intersections. Moreover, if $\{S_i\}_i$ is a family of $\T_{loc}$-subsets such that $\{i:S_i\cap W\neq \emptyset \}$ is finite for every $W\in \T$, then the union and the intersection of the family $\{S_i\}_i$ is a $\T_{loc}$-subset. Also the complement of a $\T_{loc}$-subset is a $\T_{loc}$-subset. Therefore the $\T_{loc}$-subsets  form a Boolean algebra.

\begin{defn}\label{def:tilde}
Let $X$ be a topological space with $\T\subseteq \op (X)$ as above and let $\cA$ be the Boolean algebra of $\T_{loc}$-subsets of $X$. The topological space $\widetilde{X}_{\T}$ is the data consisting of:
\begin{itemize}
\item the points of $S(\cA)$ such that $U \in \alpha$ for some $U \in \T$,
\item a basis for the topology is given by  the family of subsets $\{\widetilde{U}:U\in\T\}$.
\end{itemize}
We call $\tilde{X}_{\T}$ the $\T$-spectrum of $X$.
\end{defn}


With this topology, for $U \in \T$, the set $\widetilde{U}$ is quasi-compact in $\widetilde{X}_{\T}$ since it is quasi-compact in $S(\cA)$. Hence:

\begin{fact}\label{fact spec spaces} 
$\widetilde{X}_{\T}$ has a basis of quasi-compact open subsets given by $\{\widetilde{U}:U\in\T\}$ closed under finite intersections. Moreover, if $X\in \T$, then $\widetilde{X}_{\T}=\tilde{X}$  is a spectral topological space.\qed
\end{fact}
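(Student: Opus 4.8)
The plan is to treat the two assertions separately. The claim about the basic opens is essentially formal: for $U\in\T$ the set $\widetilde U$ is open in $\widetilde X_\T$ by Definition \ref{def:tilde}, and since $U\in\T\subseteq\cA$ it is a clopen — hence compact — subset of the Stone space $S(\cA)$; because the $\T$-topology on $\widetilde X_\T$ is coarser than the topology induced from $S(\cA)$ (each $\widetilde U$ with $U\in\T$ being a basic clopen of $S(\cA)$), $\widetilde U$ is quasi-compact in $\widetilde X_\T$. Closure under finite intersections is the identity $\widetilde U\cap\widetilde V=\widetilde{U\cap V}$, valid because ultrafilters are filters, together with $U\cap V\in\T$ from condition (ii) of Definition \ref{def:T-topology}; moreover $\widetilde\varnothing=\varnothing$ lies in the family since $\varnothing\in\T$, and that $\{\widetilde U:U\in\T\}$ is a basis is built into Definition \ref{def:tilde}.

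For spectrality one assumes $X\in\T$, so that $\T$ is a bounded distributive sublattice of $\op(X)$ and every ultrafilter on $\cA$ — it contains the top element $X$ — is a point of $\widetilde X_\T$. The plan is to prove a \emph{trace lemma}: the map $\alpha\mapsto\alpha\cap\T$ is a bijection from the points of $\widetilde X_\T$ onto the proper prime filters of $\T$. For injectivity, fix a point $\alpha$ and some $U_0\in\T\cap\alpha$; for every $C\in\cA$ the set $C\cap U_0$ is a $\T$-subset (this is the defining property of $\T_{loc}$-subsets) and $C\in\alpha\iff C\cap U_0\in\alpha$ since $\alpha$ is a filter, so $\alpha$ is determined by its restriction to the Boolean subalgebra $\cB\subseteq\cA$ of $\T$-subsets, which is in turn determined by its values on the generating lattice $\T$. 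For surjectivity, a proper prime filter $F$ of $\T$ has complementary order ideal $I=\T\setminus F$, and the subsets of $X$ dominating some $U\cap(X\setminus V)$ with $U\in F$, $V\in I$ form a proper filter of $\cA$ (proper because $U\not\subseteq V$ for such $U,V$), which extends to an ultrafilter $\alpha$ of $\cA$ with $\alpha\cap\T=F$. Since the bijection carries $\widetilde U$ to $\{F:U\in F\}$, it is a homeomorphism of $\widetilde X_\T$ onto the prime spectrum of the bounded distributive lattice $\T$, and the latter is spectral by Stone duality for distributive lattices (see \cite{Jo82}).

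Alternatively, one can check the usual characterisation of a spectral space — quasi-compact, $T_0$, sober, with a basis of quasi-compact opens closed under finite intersection — directly: $\widetilde X_\T=\widetilde X$ is quasi-compact because $X\in\T$; the quasi-compact opens are the finite unions of the $\widetilde U$ and form a basis closed under finite intersection by the first paragraph; $T_0$ is the injectivity half of the trace lemma; and for sobriety, given an irreducible closed set $Z$, the family $\{U\in\T:\widetilde U\cap Z\neq\varnothing\}$ is a proper prime filter of $\T$ (using $\widetilde{U\cap V}=\widetilde U\cap\widetilde V$, $\widetilde{U\cup V}=\widetilde U\cup\widetilde V$, and that two nonempty open subsets of an irreducible space meet), whose associated point is the unique generic point of $Z$. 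I expect the only real obstacle to be the bookkeeping in the trace lemma — verifying that passing from $\T$ to the Boolean algebra $\cA$ of $\T_{loc}$-subsets creates no points beyond the prime filters of $\T$ — which is precisely where condition (ii) of Definition \ref{def:T-topology} and the definition of $\T_{loc}$-subsets enter.
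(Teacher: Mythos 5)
Your proposal is correct, and the first paragraph is exactly the paper's (one-line) argument: $\widetilde U$ is clopen, hence compact, in $S(\cA)$, and the $\T$-topology on $\widetilde X_\T$ is coarser, so $\widetilde U$ remains quasi-compact; closure under intersection is immediate from Definition \ref{def:T-topology}(ii). For the spectrality claim the paper offers no proof at all (it is stated with a ``Hence''), and what you supply --- the identification of $\widetilde X_\T$ with the prime filter spectrum of the bounded distributive lattice $\T$, or equivalently the direct check of quasi-compactness, $T_0$, sobriety, and the basis condition --- is the standard verification being left implicit; note that your injectivity argument simplifies further since $X\in\T$ forces $\cA$ to equal the Boolean algebra of $\T$-subsets, so the auxiliary $U_0$ can just be taken to be $X$.
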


Furthermore, by \cite[Proposition 2.6.3]{ep2} we also have:

\begin{fact}\label{isosites} 
There is an equivalence of categories 
\begin{equation*}\label{sites iso}
\pushQED{\qed} 
\mod(A_{\T}) \simeq \mod(A_{\widetilde{X}_{\T}}).
\qedhere
\popQED
\end{equation*}
\end{fact}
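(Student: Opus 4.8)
The statement to prove is Fact~\ref{isosites}: the equivalence of categories $\mod(A_\T)\simeq\mod(A_{\widetilde X_\T})$. Since the excerpt explicitly cites \cite[Proposition 2.6.3]{ep2} for this, the natural plan is to reconstruct that argument in the present setting, which amounts to producing a morphism of sites relating $X_\T$ and $\widetilde X_\T$ and checking it induces an equivalence on sheaves. So here is how I would proceed.

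First, I would set up the comparison functor. The points of $\widetilde X_\T$ are certain ultrafilters on the Boolean algebra $\cA$ of $\T_{loc}$-subsets, and the distinguished basis is $\{\widetilde U : U\in\T\}$, which by Fact~\ref{fact spec spaces} consists of quasi-compact opens closed under finite intersections. The key combinatorial observation is that the assignment $U\mapsto\widetilde U$ is a bijection between $\T$ and this basis that preserves finite unions, finite intersections, and inclusions — and, crucially, it respects coverings in the Grothendieck-topology sense: a finite family $\{U_i\}$ covers $U$ in $X_\T$ (i.e.\ $\bigcup U_i = U$) if and only if $\{\widetilde U_i\}$ covers $\widetilde U$ in the topological space $\widetilde X_\T$. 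The forward direction is immediate; the reverse is where quasi-compactness of $\widetilde U$ does the work, since an arbitrary open cover of $\widetilde U$ by basic opens already admits a finite subcover, and one then checks set-theoretically (using that the $\widetilde{(\cdot)}$ operation is a Boolean-algebra embedding of $\T$ into the clopen algebra of $S(\cA)$) that $\bigcup\widetilde U_i=\widetilde U$ forces $\bigcup U_i = U$. This identifies the site $X_\T$ with the site whose underlying category is the basis $\{\widetilde U\}$ of $\widetilde X_\T$ equipped with its induced (open-cover) topology.

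Second, I would invoke the general principle — the comparison lemma for sites — that for a topological space $Y$ with a basis $\cB$ closed under finite intersections, the restriction functor $\mod(A_Y)\to\mod(A_\cB)$ (sheaves on $Y$ to sheaves on the site $\cB$) is an equivalence of categories; one direction is restriction along $\cB\hookrightarrow\op(Y)$, the other is sheafification/left Kan extension, and the sheaf condition on $\cB$ suffices because every open of $Y$ is a union of basic opens. Applying this with $Y=\widetilde X_\T$ and $\cB=\{\widetilde U:U\in\T\}$, combined with the site identification from the first step, yields $\mod(A_\T)\simeq\mod(A_{\widetilde X_\T})$. Concretely, the equivalence sends $\cF\in\mod(A_\T)$ to the sheaf on $\widetilde X_\T$ whose sections over $\widetilde U$ are $\cF(U)$ (and extends by the gluing/sheaf property to general opens), with quasi-inverse given by restriction to the basis.

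Third, I would check the two hypotheses that make this legitimate: that $\{\widetilde U:U\in\T\}$ genuinely is closed under finite intersections (this is part of Fact~\ref{fact spec spaces}) and that the presheaf obtained on $\widetilde X_\T$ is actually a sheaf — for which one reduces, via the basis, to the two-element cover condition and uses Fact~\ref{UlimU}(2), together with the fact that finite covers suffice on both sides by quasi-compactness. The main obstacle I anticipate is precisely the bookkeeping around coverings: making sure that the Grothendieck topology on $X_\T$ (finite covers in $\T$) matches the topology on the basis of $\widetilde X_\T$ \emph{inherited from an honest topological space} (where arbitrary open covers are allowed but collapse to finite ones by quasi-compactness of each $\widetilde U$), and that no information is lost or gained when passing between the Boolean algebra $\cA$ of $\T_{loc}$-subsets and the sub-family $\T$. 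Once that equivalence of sites is nailed down, the statement is a formal consequence of the comparison lemma, so I would keep the write-up short and refer to \cite[Proposition 2.6.3]{ep2} for the details.
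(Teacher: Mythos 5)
Your proposal is correct and follows essentially the same route as the source the paper cites for this fact ([ep2, Proposition 2.6.3]): the paper's own remark after the statement confirms that the only inputs are Fact \ref{fact spec spaces} (the quasi-compact basis $\{\widetilde U:U\in\T\}$ closed under finite intersections) together with a sheaf-comparison result for that basis, which is exactly the identification of sites plus the comparison lemma you describe. The only minor imprecision is your parenthetical "(i.e.\ $\bigcup U_i=U$)" for a finite covering family — coverings in $X_\T$ are arbitrary families admitting a finite subcover — but you handle this correctly later via quasi-compactness, so nothing is lost.
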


Note that in the statement of \cite[Proposition 2.6.3]{ep2} $X$ is assumed to be a $\T$-space, but this is not necessary: all that is used is Fact \ref{fact spec spaces} and \cite[Corollary 1.2.11]{ep2} for $\tilde{V}$ with $V\in \T$.\\

We now recall the main  known examples of $\T$-spaces. 

\begin{expls}\label{expls old tspaces}
$\,$
\begin{enumerate}
\item
Let $R=(R,<, 0,1,+,\cdot )$ be a real closed field, $X$ be a locally semi-algebraic space and  $\T= \{U \in \op(X): U \,\, \text{is  semi-algebraic}\}$. Then $X$ is a $\T$-space, the associated site $X_{\T}$ is the semi-algebraic site on $X$ of \cite{D3,dk5} (the $\T$-subsets of $X$ are  the semi-algebraic subsets of $X$ (see \cite{BCR}) and the $\T_{loc}$-subsets of $X$ are  the locally semi-algebraic subsets of $X$ (see \cite{dk5}). When $X$ is semi-algebraic, then:
\begin{itemize}
\item[(i)] $\widetilde{X}_{\T} =\widetilde{X}$ is the semi-algebraic spectrum of $X$ from \cite{cr}. For example, if $V\subseteq R^n$ is an affine real algebraic variety over $R$, then $\tilde{V}$ is homeomorphic to ${\rm Sper}\,R[V]$, the real spectrum of the coordinate ring $R[V]$ of $V$ (see \cite[Chapter 7, Section 7.2]{BCR} or \cite[Chapter I, Example 1.1]{D3}).
\item[(ii)]
there is an equivalence of categories $\mod(A_{\T}) \simeq \mod(A_{\widetilde{X}})$ (see \cite[Chapter 1, Proposition 1.4]{D3})
\end{itemize}
\item
Let $X$ be a real analytic manifold and consider $\T=\{U \in \op(X): U\,\, $is  sub-analytic relatively compact$\}$. Then $X$ is a $\T$-space and the associated site $X_{\T}$ is the sub-analytic site $X_{\mathrm{sa}}$ of \cite{ks2,Pr1}. In this case the $\T_{loc}$-subsets are the sub-analytic subsets of $X$.

Moreover, the family $\coh(\T)$ corresponds to the family $\mod_{\rc}^c(A_X)$ of ${\mathbb R}$-constructible sheaves with compact support, and for any $\cF \in \mod(A_{X_{\mathrm{sa}}})$ there exists a filtrant inductive system $\{\cF_i\}_{i\in I}$ in $\mod_{\rc}^c(A_X)$ such that $\cF \simeq \lind i \rho_* \cF_i$.\\

Another example in the sub-analytic context is the conic sub-analytic site (\cite{Pr07}).

\item
Let $\bG= (\nG,<, \ldots )$ be an arbitrary o-minimal structure without end points, $X$ a definable space and  $\T=\{U \in \op(X): U\,\, \text{is  definable}\}$. Then $X$ is a $\T$-space and the associated site $X_{\T}$ is the o-minimal site $X_{\rm def}$ of \cite{ejp}. Note also that the $\T$-subsets of $X$ are exactly the definable subsets of $X$ (see Remark \ref{nrmk local def comp}). Therefore, $\widetilde{X}_{\df} =\widetilde{X}$ is the o-minimal spectrum of $X$ from \cite{p,ejp} (i.e., the points are types over $\nG$ concentrated on $X$). Moreover,  there is an equivalence of categories $\mod(A_{X_{\df}}) \simeq \mod(A_{\widetilde{X}})$ (\cite{ejp}).\\

\end{enumerate}
\end{expls}

\subsection{\texorpdfstring{$\T$}{T}-normality and families of \texorpdfstring{$\T$}{T}-supports}\label{subsection t-supp}
Let $\fT$ be the category whose objects are pairs $(X,\T)$ with $X$ a topological space, $\T \subseteq \op (X)$ is a family of open subsets of $X$ satisfying conditions (i) and (ii) of Definition \ref{def:T-topology} and such that $X\in \T$ and,  morphisms $f\colon(X, \T)\to (Y, \T')$ are (continuous) maps $f\colon X\to Y$ such that $f^{-1}(\T')\subseteq \T$.

Note that  a morphism $f\colon(X,\T)\to (Y,\T')$ of $\fT$ defines a morphism of sites 
\[f\colon X_{\T}\to Y_{\T'}\]
which extends to a homomorphism from the boolean algebra of $\T'$-subsets of $Y$ to the boolean algebra of $\T$-subsets of $X,$ namely, $f^{-1}(C)$ is a $\T$-subset of $X$ whenever $C$ is a $\T'$-subset of $Y$. It follows that  we have an induced map 
\[\tilde{f}\colon \widetilde{X}_{\T}\to \widetilde{Y}_{\T'}\]
between the corresponding spectral topological spaces, where $\tilde{f}(\alpha )$ is the ultrafilter in $\widetilde{Y}_{\T'}$ given by the collection 
 \[\{C: C\,\, \textrm{is a}\,\, \T'\textrm{-subset and}\,\, f^{-1}(C)\in \alpha \}.\] 
 The map $\tilde{f}\colon\tilde{X}_{\T}\to \tilde{Y}_{\T'}$ is continuous since 
if  $V\in \T',$  then clearly $\tilde{f}^{-1}(\tilde{V})=\tilde{f^{-1}(V)}$.

Later we will also require the following generalization of the above constructions:

\begin{nrmk}\label{nrmk site iso tilde}  
Let  $\zeta \colon X_{\T}\to Y_{\T'}$ be a morphism of sites which extends to a homomorphism from the boolean algebra of $\T'$-subsets of $Y$ to the boolean algebra of $\T$-subsets of $X$. If $\zeta (Y)=X$ and $\zeta (A)\subseteq \zeta (B)$ whenever $A$ and $B$ are $\T'$-subsets such that $A\subseteq B,$ then $\zeta $ induces a continuous map $\tilde{\zeta}\colon\tilde{X}_{\T}\to \tilde{Y}_{\T'}$ where $\tilde{\zeta }(\alpha )$ is the ultrafilter of $\T'$-subsets given by the collection 
 \[\{C: C\,\, \textrm{is a}\,\, \T'\textrm{-subset and}\,\, \zeta (C)\in \alpha \}.\] 
If in addition $\zeta \colon X_{\T}\to Y_{\T'}$ is an isomorphism of sites, then $\tilde{\zeta }$ is a homeomorphism.  
\end{nrmk}

Recall also the following facts:

\begin{nrmk}[Constructible subsets and topology]\label{nrmk constructible}
Given $(X,\T)$  an object of $\fT,$ then $\widetilde{X}$ equipped with the constructible topology is a compact totally disconnected  space in which each constructible subset is clopen. Recall that by a {\it constructible subset of $\tilde{X}$} we mean a subset of the form $\tilde{Z}$ where $Z$ is a $\T$-subset of $X$ and,  the {\it constructible topology on $\tilde{X}$} is the topology generated by the constructible subsets of $\tilde{X}$.  \\
\end{nrmk}
 
\begin{nrmk}\label{tilde on sets and maps}
Let $(X, \T)$ be an object of $\fT$. Then the restriction of the tilde functor $\fT \to \tfT $ induces an isomorphism between the boolean algebra of $\T$-subsets of $X$ and  the boolean algebra of constructible subsets of its $\T$-spectrum $\tilde{X}$ preserving open (resp. closed) and, $X$ is $\T$-connected if and only if its  $\T$-spectrum $\tilde{X}$ is connected.  Moreover,  if $f\colon(X,\T)\to (Y,\T')$ is a morphism of $\fT$ then:
\begin{itemize}
\item
if  $C$ is a $\T'$-subset of $Y,$ then $\tilde{f^{-1}(C)}=\tilde{f}^{-1}(\tilde{C});$
\item
if $\beta \in \tilde{Y}$ then $\tilde{f}^{-1}(\beta )$ is a quasi-compact subset of $\tilde{X}$.
\end{itemize}
For the later observe that $\tilde{f}^{-1}(\beta )=\bigcap \{\tilde{f^{-1}(C)}:C\in \beta \}$ and so it is compact in the constructible topology.\\
\end{nrmk}

Below we denote by 
\[\fT \to \tfT \]
the functor sending an object $(X,\T)$ of $\fT$ to $\tilde{X}_{\T}$ and sending a morphism $f\colon(X,\T)\to (Y,\T')$ of $\fT$ to $\tilde{f}\colon\tilde{X}_{\T}\to \tilde{Y}_{\T'}$.  

The goal of this subsection is to extend to  $(\fT, \tfT)$ some results from \cite{ejp} and \cite{ep1}  proved for the pair $(\Df, \tDf)$ where $\Df$ is the subcategory of $\fT$ of Example \ref{expls old tspaces} (3), i.e. is the category of definable spaces in some fixed arbitrary o-minimal structure $\bG =(\nG, <,\ldots )$ without end points, with morphisms being continuous definable maps between such definable spaces,  and $\tDf$ is the image of $\Df$ under  $\fT \to \tfT$. 

For $(\fT, \tfT),$ just like in the case of $(\Df, \tDf),$ normality will play the role that  paracompactness plays in sheaf theory on topological spaces \cite[Chapter I, Section 6 and Chapter II, Sections 9 - 13]{b} or the role that  regular and paracompact plays in sheaf theory on locally semi-algebraic spaces \cite[Chapter II]{D3}. 

So we introduce and study a notion of normality adapted to $\fT:$

\begin{defn}\label{defn tnormal}
Let $(X, \T)$ be an object of $\fT$. We say that $X$ is {\it $\T$-normal} 
if and only if one of the following equivalent conditions holds:
\begin{enumerate}
\item
for every disjoint, closed $\T$-subsets $C$ and $D$ of $X$ there are disjoint, open $\T$-subsets $U$ and $V$ of $X$ such that $C\subseteq U$ and $D\subseteq V$. 
\item
for every  $S\subseteq X$ closed $\T$-subset  and $U\subseteq X$ open $\T$-subset such that $S\subseteq U$, there are an open $\T$-subset $W$  of $X$ and a closed $\T$-subset $K$ of $X$ such that $S\subseteq W\subseteq K\subseteq U$.
\item
for every open $\T$-subsets $U$ and $V$ of $X$ such that $X=U\cup V$ there are closed $\T$-subsets $A\subseteq U$ and $B\subseteq V$ of $X$ such that $X=A\cup B$.\\
\end{enumerate}
\end{defn}

We left the reader convince her/himself of the equivalence of (1)-(3). 


As in the case of $(\Df, \tDf)$ (\cite[Theorem 2.13]{ejp}) we have the following. Here we give  a simpler proof.

\begin{prop}\label{prop main normal def normal}
Let $(X,\T)$ be an object of $\fT$. Then  $X$ is $\T$-normal if and only if its $\T$-spectrum $\tilde{X}$ is a normal topological space.
\end{prop}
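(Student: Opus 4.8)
The plan is to exploit the dictionary, supplied by the tilde functor, between $\T$-subsets of $X$ and constructible subsets of $\tilde{X}$. Recall from Remark~\ref{tilde on sets and maps} that $C\mapsto\tilde{C}$ is an isomorphism of Boolean algebras from the $\T$-subsets of $X$ onto the constructible subsets of $\tilde{X}$ carrying open (resp.\ closed) $\T$-subsets to open (resp.\ closed) constructible subsets; being a Boolean isomorphism, it is injective and it preserves and reflects inclusions. Recall also that, since $X\in\T$, the space $\tilde{X}$ is spectral (Fact~\ref{fact spec spaces}), hence quasi-compact, with $\{\tilde{U}:U\in\T\}$ a basis of quasi-compact open subsets, and that $\tilde{X}$ is compact in the finer constructible topology, in which every constructible subset is clopen (Remark~\ref{nrmk constructible}). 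I will prove the two implications separately, each time using characterization~(1) of Definition~\ref{defn tnormal}.

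\emph{Assume $\tilde{X}$ is normal, and let $C,D$ be disjoint closed $\T$-subsets of $X$.} Then $\tilde{C}$ and $\tilde{D}$ are disjoint closed subsets of $\tilde{X}$, so there are disjoint open subsets $\mathcal{U},\mathcal{V}$ of $\tilde{X}$ with $\tilde{C}\subseteq\mathcal{U}$ and $\tilde{D}\subseteq\mathcal{V}$. Being closed in the quasi-compact space $\tilde{X}$, the set $\tilde{C}$ is quasi-compact; covering it by the basic open sets $\tilde{U}$ ($U\in\T$) that are contained in $\mathcal{U}$ and extracting a finite subcover, we find $U_1,\dots,U_k\in\T$ with $\tilde{C}\subseteq\tilde{U_1}\cup\dots\cup\tilde{U_k}\subseteq\mathcal{U}$. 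Then $U:=U_1\cup\dots\cup U_k\in\T$ is an open $\T$-subset with $\tilde{U}=\tilde{U_1}\cup\dots\cup\tilde{U_k}$; symmetrically we get an open $\T$-subset $V$ with $\tilde{D}\subseteq\tilde{V}\subseteq\mathcal{V}$. As tilde reflects inclusions, $C\subseteq U$ and $D\subseteq V$; and since $\widetilde{U\cap V}=\tilde{U}\cap\tilde{V}\subseteq\mathcal{U}\cap\mathcal{V}=\varnothing$, injectivity of tilde gives $U\cap V=\varnothing$. Hence $X$ is $\T$-normal.

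\emph{Assume $X$ is $\T$-normal, and let $\mathcal{C},\mathcal{D}$ be disjoint closed subsets of $\tilde{X}$.} I first observe that any closed subset $\mathcal{E}$ of $\tilde{X}$ is the intersection of the closed constructible subsets containing it: if $\beta\notin\mathcal{E}$, then since $\tilde{X}\setminus\mathcal{E}$ is open there is $U\in\T$ with $\beta\in\tilde{U}\subseteq\tilde{X}\setminus\mathcal{E}$, and $\widetilde{X\setminus U}=\tilde{X}\setminus\tilde{U}$ is then a closed constructible set containing $\mathcal{E}$ and missing $\beta$. Applying this to $\mathcal{C}$ and $\mathcal{D}$, and throwing $X$ into each family for convenience, write $\mathcal{C}=\bigcap_{i}\tilde{C_i}$ and $\mathcal{D}=\bigcap_{j}\tilde{D_j}$ with the $C_i,D_j$ closed $\T$-subsets of $X$. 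The members of $\{\tilde{C_i}\}_i\cup\{\tilde{D_j}\}_j$ are clopen in the constructible topology and have empty intersection, so by compactness of $\tilde{X}$ in that topology finitely many of them, among which at least one $\tilde{C_i}$ and at least one $\tilde{D_j}$, already intersect in $\varnothing$. Letting $C$ be the intersection of the finitely many $C_i$'s that occur and $D$ that of the finitely many $D_j$'s, we obtain closed $\T$-subsets $C,D$ of $X$ with $\mathcal{C}\subseteq\tilde{C}$, $\mathcal{D}\subseteq\tilde{D}$, and $\widetilde{C\cap D}=\tilde{C}\cap\tilde{D}=\varnothing$, hence $C\cap D=\varnothing$. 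By $\T$-normality there are disjoint open $\T$-subsets $U\supseteq C$ and $V\supseteq D$; then $\tilde{U}$ and $\tilde{V}$ are disjoint open subsets of $\tilde{X}$ with $\mathcal{C}\subseteq\tilde{C}\subseteq\tilde{U}$ and $\mathcal{D}\subseteq\tilde{D}\subseteq\tilde{V}$. Thus $\tilde{X}$ is normal.

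The whole argument is compactness bookkeeping, and the one point requiring attention is that arbitrary open (resp.\ closed) subsets of $\tilde{X}$ need not be constructible: in the first implication one uses quasi-compactness of $\tilde{X}$ to squeeze a constructible open between $\tilde{C}$ and $\mathcal{U}$, while in the second one uses compactness of $\tilde{X}$ in the constructible topology to reduce an infinite intersection of closed constructible sets to a finite one. Apart from keeping track of which of the two topologies on $\tilde{X}$ is in play at each step, I foresee no real difficulty; this is also what makes the proof shorter than that of the special case in \cite{ejp}, since no explicit manipulation of types is needed.
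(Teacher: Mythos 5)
Your proof is correct and follows essentially the same strategy as the paper's: the tilde dictionary together with quasi-compactness of $\tilde{X}$ in one direction, and in the other direction the approximation of closed subsets of $\tilde{X}$ by closed constructible subsets followed by a compactness reduction to a finite intersection, to which $\T$-normality is then applied. The only difference is that the paper first invokes the standard fact that a spectral space is normal iff any two distinct closed points can be separated, and so runs the constructible-approximation argument only on singletons, whereas you run it directly on arbitrary disjoint closed subsets; both versions are sound.
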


\begin{proof}
By Remark \ref{tilde on sets and maps} and quasi-compactness of $\tilde{X}$ it is immediate that if $\tilde{X}$ is normal then $X$ is $\T$-normal. It is a standard fact that a spectral topological space is normal if and only if any two distinct closed points can be separated by disjoint open subsets (see \cite[Proposition 2]{cr}). So let $\alpha $ and $\beta $ be two distinct closed points in $\tilde{X}$. Since $\alpha $ and $\beta $ are closed points,  we have $\{\alpha \}=\bigcap _{i\in I}C_i$ and $\{\beta \}=\bigcap _{j\in J}D_j$ where $C_i$'s and $D_j$'s are closed constructible subsets of $\tilde{X}$. Since $\alpha $ and $\beta $ are distinct, we have 
\[\bigcap \{C_i\cap D_j: i\in I, j\in J\}=\emptyset .\]
By quasi-compactness of $\tilde{X},$ there are $C=C_{i_1}\cap \ldots \cap C_{i_k}$ and $D=D_{j_1}\cap \ldots \cap D_{j_l}$ such that $C\cap D=\emptyset$. Since $C$ and $D$ are disjoint, closed constructible subsets of $\tilde{X},$ by Remark \ref{tilde on sets and maps},  we can use the $\T$-normality of $X$ to find disjoint, open constructible subsets of $\tilde{X}$ separating $C$ and $D,$ hence $\alpha $ and $\beta $.
\end{proof}

As usual normality implies the shrinking lemma, whose proof is standard, see for example, \cite[Proposition 2.17]{ejp} or \cite[Chapter 6, (3.6)]{vdd}. In the $\T$-spectra due to the quasi-compactness of the base, we get a stronger result:

\begin{cor}[Shrinking lemma]\label{cor shrinking lemma}
Suppose that $(X, \T)$ is an object of $\fT$ which is $\T$-normal. If $\{U_i:i=1,\dots ,n\}$ is a covering of $X$ (resp. of $\tilde{X}$) by open $\T$-subsets of $X$ (resp. open subsets of $\tilde{X}$), then there are open $\T$-subsets (resp. open constructible subsets) $V_i$ and closed $\T$-subsets (resp.
closed constructible subsets) $K_i$ of $X$ (resp. of $\tilde{X}$) ($1\leq i\leq n$) with $V_i\subseteq K_i\subseteq U_i$ and $X=\cup \{V_i:i=1,\dots, n\}$ (resp. $\tilde{X}=\cup \{V_i:i=1,\dots, n\}$).\qed
\end{cor}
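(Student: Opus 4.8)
The plan is to deduce the statement from the corresponding statement on the spectral space $\tilde{X}$, reducing the topological-space version to the standard shrinking lemma for normal spaces via Proposition \ref{prop main normal def normal}, and then transferring back to $X$ through the tilde dictionary of Remark \ref{tilde on sets and maps}. The two parenthetical assertions (one about covers of $X$ by open $\T$-subsets, the other about covers of $\tilde{X}$ by open subsets) are in fact equivalent, so I would prove the $\tilde X$-version and read off the $X$-version by taking tildes.

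First I would treat the $\tilde X$-version. By Proposition \ref{prop main normal def normal}, $\T$-normality of $X$ gives that $\tilde X$ is a normal topological space, and by Fact \ref{fact spec spaces} (recall $X \in \T$ since $(X,\T) \in \fT$), $\tilde X$ is a spectral space, hence quasi-compact. Given an open cover $\{U_i : 1 \le i \le n\}$ of $\tilde X$ by open subsets, I first replace each $U_i$ by a finite union of basic opens $\tilde V$ with $V \in \T$, and using quasi-compactness pass to a finite refinement; after reindexing one may assume each $U_i$ is already a finite union of basic constructible opens, in particular open and constructible. Now apply the classical shrinking lemma for normal spaces (finite version): there exist open sets $W_i$ with $\overline{W_i} \subseteq U_i$ and $\tilde X = \bigcup_i W_i$. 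To make the $W_i$ and their closures constructible, shrink once more: inside each $W_i$ choose a constructible open $V_i$ and a constructible closed $K_i$ with $V_i \subseteq K_i \subseteq W_i$ — this is possible because $\tilde X$ has a basis of quasi-compact (hence clopen in the constructible topology) opens and $\overline{W_i} \subseteq U_i$ gives room; one does this inductively over $i$, at stage $i$ replacing the already-shrunk $V_1,\dots,V_{i-1}$ together with $W_i,\dots,W_n$ and checking the union is still everything, exactly as in the usual inductive proof of the shrinking lemma. This yields constructible open $V_i \subseteq$ constructible closed $K_i \subseteq U_i$ with $\tilde X = \bigcup_i V_i$.

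Then I transfer to $X$. By Remark \ref{tilde on sets and maps}, the tilde functor is an isomorphism between the Boolean algebra of $\T$-subsets of $X$ and the Boolean algebra of constructible subsets of $\tilde X$, preserving open and closed subsets; so writing $V_i = \tilde{V_i'}$, $K_i = \tilde{K_i'}$ for $\T$-subsets $V_i', K_i'$ of $X$, we get open $\T$-subsets $V_i'$ and closed $\T$-subsets $K_i'$ with $V_i' \subseteq K_i' \subseteq U_i$ (the last from $U_i = \tilde{U_i''}$ for some open $\T$-subset $U_i''$, using that tilde preserves inclusions), and $\widetilde{\bigcup_i V_i'} = \bigcup_i V_i = \tilde X = \tilde{X}$ forces $\bigcup_i V_i' = X$ since tilde is injective on $\T$-subsets. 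Conversely, starting from a cover of $X$ by open $\T$-subsets $U_i$, apply tildes to get a cover of $\tilde X$ by the basic opens $\tilde U_i$ (which cover because $\tilde X = \bigcup \tilde U_i$ translates the equality $X = \bigcup U_i$), run the $\tilde X$-version just proved, and pull the resulting $V_i, K_i$ back to $X$ as above.

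The one genuine point requiring care — and the step I expect to be the main obstacle — is the extra "constructibility" shrinking: the classical shrinking lemma only produces open sets with closure inside $U_i$, not constructible ones, so I must argue that between $W_i$ and $\overline{W_i} \subseteq U_i$ there is room for a constructible open and a constructible closed set, and that one can do this simultaneously for all $i$ while preserving the covering property. This is handled by the inductive shrinking argument together with the fact (Fact \ref{fact spec spaces}) that $\tilde X$ has a basis of quasi-compact opens closed under finite intersection, so that any open set in $\tilde X$ is a union of constructible clopens and any point of $\overline{W_i}$ lying in $W_i$ has a constructible open neighbourhood inside $W_i$; quasi-compactness of the relevant closed sets then lets one extract the finite constructible approximations. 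Everything else is the standard normality-implies-shrinking bookkeeping, which I would only sketch, citing \cite[Proposition 2.17]{ejp} or \cite[Chapter 6, (3.6)]{vdd} for the classical statement.
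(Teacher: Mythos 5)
Your proof is correct, but it takes a more roundabout route than the one the paper intends. The paper omits the argument, citing the standard inductive proof of the shrinking lemma from normality: assuming $V_1,\dots,V_{i-1},U_i,\dots,U_n$ cover, the residual set $S=X\setminus(V_1\cup\dots\cup V_{i-1}\cup U_{i+1}\cup\dots\cup U_n)$ is a closed $\T$-subset contained in $U_i$, and condition (2) of Definition \ref{defn tnormal} hands you $V_i\subseteq K_i\subseteq U_i$ with $S\subseteq V_i$ directly; the $\tilde X$-version then follows from the quasi-compactness reduction plus the tilde dictionary. Your detour through the classical topological shrinking lemma on $\tilde X$ forces you to confront the constructibility-repair step, which you correctly flag as the delicate point; it does go through, but only via the quasi-compactness argument that is essentially the content of Proposition \ref{normal supp}($\ast$): a closed $S\subseteq\tilde X$ is an intersection of constructible closed sets (its complement is a union of basic quasi-compact opens), so for $S$ inside a constructible open $U$ one extracts by quasi-compactness of $\tilde X\setminus U$ a constructible closed $S'$ with $S\subseteq S'\subseteq U$ and then applies constructible normality to $S'\subseteq U$. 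Two small improvements to your write-up: (i) rather than shrinking \emph{inside} $W_i$ and re-running the induction to preserve the cover, apply the above to $S=\overline{W_i}\subseteq U_i$ to get constructible $V_i,K_i$ with $\overline{W_i}\subseteq V_i\subseteq K_i\subseteq U_i$ --- then $W_i\subseteq V_i$ makes the covering automatic and no second induction is needed; (ii) the direct induction from Definition \ref{defn tnormal}(2) avoids the repair step altogether, since the residual sets are Boolean combinations of $\T$-subsets and hence automatically constructible. What your approach buys is that it isolates cleanly where quasi-compactness of the spectrum is used, which is the ``stronger result'' the paper alludes to for the $\tilde X$-version with non-constructible covers.
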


Later we will require the following weaker notion:

\begin{defn}\label{defn wtnormal}
Let $(X, \T)$ be an object of $\fT$. We say that $X$ is {\it weakly $\T$-normal} 
if and only if one of the following equivalent conditions holds:
\begin{enumerate}
\item
for every disjoint, $\T$-closed subsets $C$ and $D$ of $X$ there are disjoint, $\T$-open subsets $U$ and $V$ of $X$ such that $C\subseteq U$ and $D\subseteq V$. 
\item
for every  $S\subseteq X$ $\T$-closed subset  and $U\subseteq X$ $\T$-open subset such that $S\subseteq U$, there are an $\T$-open subset $W$  of $X$ and a $\T$-closed subset $K$ of $X$ such that $S\subseteq W\subseteq K\subseteq U$.
\item
for every $\T$-open subsets $U$ and $V$ of $X$ such that $X=U\cup V$ there are $\T$-closed subsets $A\subseteq U$ and $B\subseteq V$ of $X$ such that $X=A\cup B$.
\end{enumerate}
\end{defn}

We continue with the $\fT$ versions of  some  definitions  from \cite{ep1} that are needed in the sequel.

\begin{defn}\label{defn def supports}
Let $(X,\T)$ be an object of $\fT$. A {\it family of $\T$-supports on $(X,\T)$} is a  family of closed $\T$-subsets of $X$ such that:

\begin{enumerate}
\item
every closed $\T$-subset contained in a member of $\Phi$ is in $\Phi $;

\item
$\Phi $ is closed under finite unions.

\vspace{.15in}
\noindent
$\Phi  $ is said to be a {\it family of  $\T$-normal supports} if in addition:

\item
each element of $\Phi $ is $\T$-normal;

\item
for each element $S$ of $\Phi  $, if $U\subseteq X$ is an open $\T$-subset such that $S\subseteq U,$  then there are an open $\T$-subset $W$ of $X$ and a closed $\T$-subset $K$ of $X$ such that $S\subseteq W\subseteq K\subseteq U$ and $K\in \Phi$.
\end{enumerate}
\end{defn}

\begin{expl}\label{expl supp c}
Let $(X,\T)$ be an object of $\fT$. Then the family of all closed $\T$-subsets of $X$ is a family of $\T$-supports on $(X,\T)$. Moreover, if $X$ is $\T$-normal, then this family is a family of  $\T$-normal supports on $(X,\T)$. For other examples that play a crucial role in the paper see Remarks \ref{nrmk c in Gi} and \ref{nrmk c in ACVF and hats}.
\end{expl}

Let $(X,\T)$ be an object of $\fT$. If $Z$ is a $\T$-subset of  $X$  and $\Phi  $ is a family of $\T$-supports on $(X,\T),$ then we have families of $\T\cap Z$-supports
\[\Phi  \cap Z=\{S\cap Z:S\in \Phi \}\]
and
\[\Phi _{|Z}=\{S\in \Phi : S\subseteq Z\}\]
on $(Z,\T \cap Z)$.

If $f\colon(X,\T)\into (Y,\T')$ is a morphism in $\fT$  and $\Psi $ is a family of $\T'$-supports on $(Y,\T'),$
then we have a family of $\T$-supports
\[f^{-1}\Psi  =\{S\subseteq X: S\,\,\textrm{is a closed}\,\, \T\textrm{-subset  and}\,\,\exists B\in \Psi \,\,(S\subseteq f^{-1}(B)\}\]
on $(X,\T)$.

\begin{nrmk}[Constructible family of supports]\label{nrmk def supp and cons supp}
Note that a family of $\T$-supports $\Phi $ on an object $(X,\T)$ of  $\fT$ determines a family of supports
\[\tilde{\Phi }=\{A\subseteq \tilde{X}: A\,\,\textrm{ is closed and}\,\,\exists B\in \Phi \,\,(A\subseteq \tilde{B})\}\]
on the topological space $\tilde{X}=\tilde{X_{\T}}$.  By Remark \ref{tilde on sets and maps}  it follows that
\[\tilde{\Phi  \cap Z}=\tilde{\Phi }\cap \tilde{Z},\,\, \tilde{\Phi  _{|Z}}=\tilde{\Phi }_{|\tilde{Z}}\,\,{\rm and}\,\,\tilde{f^{-1}\Psi }=\tilde{f}^{-1}\tilde{\Psi }.\]
We will say that the family of supports $\Psi$ on $\tilde{X}$ is a {\it constructible family of supports}  on $\tilde{X}$ if $\Psi=\tilde{\Phi }$ for some family of $\T$-supports on $(X,\T)$.
\end{nrmk}

\begin{nrmk}[Normal families of supports]\label{nrmk paracompact and normal supp}
Let us call a family $\Psi$ of supports on a topological space $Z$ a {\it normal family of supports} if: 
\begin{enumerate}
\item
each element of $\Psi $ is  normal;

\item
each element $S$ of $\Psi $ has a fundamental system of  normal (closed) neighborhood in $\Psi ,$ i.e., if $U\subseteq Z$ is an open  subset such that $S\subseteq U,$  then there are an open  subset $W$ of $Z$ and a closed subset $K$ of $Z$ such that $S\subseteq W\subseteq K\subseteq U$ and $K\in \Psi$.
\end{enumerate}

We point out that instead of (2) above, just like for the notion of paracompactifying family of supports in  topology (\cite[Chapter I, Section 6 and Chapter II, Sections 9 - 13]{b}) or the notion of regular and paracompact family of supports in sheaf theory on locally semi-algebraic spaces (\cite[Chapter II]{D3}), we could have taken the following  apparently weaker  condition:
\begin{itemize}
\item[(2)*]
each element $S$ of $\Psi  $ has a (closed) neighborhood  which is in $\Psi ,$ i.e. there are an open subset $W$ of $Z$ and a closed subset $K$ of $Z$ such that $S\subseteq W\subseteq K$ and $K\in \Psi $.
\end{itemize}
These conditions however are equivalent. Assume (2)*. Let $S\in \Psi$  and let $U\subseteq Z$ be an open subset such that $S\subseteq U$. Then there are an open subset $W'$ of $Z$ and a closed subset $K'$ of $Z$ such that $S\subseteq W'\subseteq K'$ and $K'\in \Psi $.  Then $S\subseteq W'\cap U\subseteq K',$ $S$ is closed in $K'$ and $K'$ is normal by (1). Therefore, there are $W\subseteq K'$ an open subset in $K'$ and $K\subseteq K'$ a closed subset in $K'$ such that $S\subseteq W\subseteq K\subseteq W'\cap U$.  It follows that $W$ is open in $Z,$  $K$ is closed in $Z$ and we get the stronger (2).



Observe also that as in the case of a paracompact family of supports (\cite[Chapter I, Section 6, Proposition 6.5]{b}) if $Y$ is a locally closed subset of $Z$ and $\Psi $ is a normal family of supports on $Z,$ then $\Psi _{|Y}$ is also a normal family of supports on $Y$. Indeed, $Y$ is closed in an open $U$ and $\Psi_{|Y}=(\Psi _{|U})_{|Y}=(\Psi _{|U})\cap Y$. 
\end{nrmk}

By Proposition \ref{prop main normal def normal} it follows that:

\begin{prop}\label{normal supp}
Let $(X,\T)$ be an object of $\fT$ and $\Phi$ a family of $\T$-supports on $(X,\T)$. Then $\Phi $ is $\T$-normal if and only if $\tilde{\Phi}$ is normal. 

Moreover, if $\Phi$ is $\T$-normal, then the following holds: 
\begin{itemize}
\item[($\ast$)]
each element $S$ of $\tilde{\Phi  }$ has a fundamental system of normal and constructible (closed) neighborhoods in $\tilde{\Phi},$ i.e.,  if $U\subseteq \tilde{X}$ is an open  subset such that $S\subseteq U,$  then there are an open constructible subset $W$ of $\tilde{X}$ and a closed constructible subset $K$ of $\tilde{X}$ such that $S\subseteq W\subseteq K\subseteq U$ and $K\in \Psi$.
\end{itemize}
\end{prop}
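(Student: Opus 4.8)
The plan is to reduce everything to Proposition \ref{prop main normal def normal} together with the dictionary between $\T$-subsets of $X$ and constructible subsets of $\tilde X$ recorded in Remark \ref{tilde on sets and maps}. First I would prove the equivalence ``$\Phi$ is $\T$-normal $\iff$ $\tilde\Phi$ is normal''. For the forward direction: an element $S\in\Phi$ is by definition a closed $\T$-subset, and $\tilde\Phi$ consists of the closed subsets of $\tilde X$ contained in some $\tilde B$ with $B\in\Phi$. Since each $B\in\Phi$ is $\T$-normal, Proposition \ref{prop main normal def normal} gives that each $\tilde B$ is a normal topological space; and a closed subspace of a normal space is normal, so condition (1) in Remark \ref{nrmk paracompact and normal supp} holds for $\tilde\Phi$. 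For condition (2), take $S\in\tilde\Phi$ and an open $U\subseteq\tilde X$ with $S\subseteq U$; enlarging, we may assume $S=\tilde B$ for some $B\in\Phi$ (a closed subset of $\tilde B$ has the same normal-neighbourhood behaviour inside $\tilde B$, which is closed in $\tilde X$). Here the key point is that $\tilde B$ is quasi-compact (Fact \ref{fact spec spaces}), so since the $\widetilde V$, $V\in\T$, form a basis, finitely many of them, say $\widetilde{V_1},\dots,\widetilde{V_k}$, cover $\tilde B$ and lie inside $U$; then $U':=\widetilde{V_1\cup\cdots\cup V_k}$ is an open \emph{constructible} set with $\tilde B\subseteq U'\subseteq U$. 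Now $\tilde B\subseteq U'\cap\tilde X$, and I apply condition (4) of Definition \ref{defn def supports} to the closed $\T$-subset $B$ and the open $\T$-subset $V_1\cup\cdots\cup V_k$ (shrunk by $X$, or rather working inside the closed $\T$-subset $B$ which is $\T$-normal): this produces an open $\T$-subset $W$ and a closed $\T$-subset $K\in\Phi$ with $B\subseteq W\subseteq K\subseteq V_1\cup\cdots\cup V_k$; passing through tilde gives $\tilde B\subseteq\widetilde W\subseteq\widetilde K\subseteq U$ with $\widetilde K$ closed constructible and $\widetilde K\in\tilde\Phi$. This simultaneously establishes condition (2) of Remark \ref{nrmk paracompact and normal supp} \emph{and} the sharper statement $(\ast)$.

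For the converse direction of the equivalence, suppose $\tilde\Phi$ is normal. Each $B\in\Phi$ gives $\tilde B\in\tilde\Phi$, which is then normal as a topological space, and Proposition \ref{prop main normal def normal} yields that $B$ is $\T$-normal, so condition (3) of Definition \ref{defn def supports} holds. For condition (4), let $S\in\Phi$ and let $U$ be an open $\T$-subset with $S\subseteq U$. Then $\tilde S\in\tilde\Phi$ and $\tilde U$ is open in $\tilde X$ with $\tilde S\subseteq\tilde U$; by normality of $\tilde\Phi$ (condition (2)*, equivalently (2), of Remark \ref{nrmk paracompact and normal supp}) there are an open $W'\subseteq\tilde X$ and a closed $K'\in\tilde\Phi$ with $\tilde S\subseteq W'\subseteq K'$. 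Intersecting with $\tilde U$ and using quasi-compactness of $\tilde S$ exactly as above, I may replace $W'$ by an open \emph{constructible} set $\widetilde{W_0}$ with $\tilde S\subseteq\widetilde{W_0}\subseteq\tilde U$, and since $K'\in\tilde\Phi$ is contained in some $\widetilde C$ with $C\in\Phi$, normality of $\widetilde C$ (hence of $K'\cap\widetilde C$, \dots) plus the standard argument that inside a normal space one may interpolate a closed neighbourhood gives a closed constructible $\widetilde K$ with $\tilde S\subseteq\widetilde K\subseteq\tilde U$ and $\widetilde K\subseteq\widetilde C$, so $\widetilde K\in\tilde\Phi$. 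Untilding via Remark \ref{tilde on sets and maps}, $K$ is a closed $\T$-subset, $K\in\Phi$ (as $K\subseteq C\in\Phi$ and $\Phi$ is downward closed among closed $\T$-subsets), and one also extracts the required intermediate open $\T$-subset $W$ with $S\subseteq W\subseteq K\subseteq U$ by one more application of the interpolation inside the normal constructible set. This gives condition (4), so $\Phi$ is $\T$-normal.

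The main obstacle I anticipate is purely bookkeeping: every time one wants to pass from an arbitrary open subset of $\tilde X$ to a \emph{constructible} one (so that it corresponds to an honest $\T$-subset of $X$ and the normality conditions of Definition \ref{defn def supports}, which are phrased in terms of $\T$-subsets, can be invoked), one must invoke quasi-compactness of the relevant $\tilde B$ or $\tilde S$ to replace a basic open cover by a finite subcover. This is where Fact \ref{fact spec spaces} is essential and where the hypothesis that $\Phi$ consists of \emph{closed $\T$-subsets} (not arbitrary closed sets) does the real work. Once this replacement is built into the argument, the statement $(\ast)$ is not an extra theorem but simply the by-product of running the ``only if'' direction carefully, since the neighbourhoods produced are constructible by construction. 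No genuinely new idea beyond Proposition \ref{prop main normal def normal} and the tilde dictionary is needed.
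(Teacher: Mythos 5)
Your overall architecture (Proposition \ref{prop main normal def normal} plus the tilde dictionary of Remark \ref{tilde on sets and maps}, with quasi-compactness used to replace arbitrary open sets by constructible ones) is the same as the paper's, and your converse direction is essentially correct modulo routine bookkeeping. But the forward direction has a genuine gap at its key step. Condition (2) of Remark \ref{nrmk paracompact and normal supp} must be verified for an \emph{arbitrary} element $S$ of $\tilde{\Phi}$, i.e.\ an arbitrary closed subset of $\tilde{X}$ merely contained in some $\tilde{B}$ with $B\in \Phi$. You dispose of this by saying ``enlarging, we may assume $S=\tilde{B}$'' and then cover $\tilde{B}$ by basic opens lying inside $U$. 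That presupposes $\tilde{B}\subseteq U$, which is false in general: $U$ is only assumed to contain $S$, and $\tilde{B}$ may stick out of $U$. The parenthetical justification (that a closed subset of $\tilde{B}$ has the same neighbourhood behaviour inside $\tilde{B}$) does not repair this, because the neighbourhoods demanded by condition (2) must be open in $\tilde{X}$, not in $\tilde{B}$, and Definition \ref{defn def supports}(4) can only be invoked for honest $\T$-subsets.

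What is actually needed --- and what the paper's proof spends most of its effort on --- is that such an $S$ can be approximated from outside by a \emph{constructible} member of $\tilde{\Phi}$ lying inside the given neighbourhood. Concretely: first shrink $U$ to an open constructible $U'$ with $S\subseteq U'\subseteq U$ (quasi-compactness of $S$); then observe that $S=\bigcap\{S'\in\tilde{\Phi}: S'\ \text{constructible},\ S\subseteq S'\}$ (every closed subset of a spectral space is an intersection of closed constructibles, and intersecting these with $\tilde{B}$ keeps them in $\tilde{\Phi}$); finally use compactness of the constructible topology, applied to the closed constructible set $\tilde{X}\setminus U'$, to extract finitely many $S'_1,\ldots,S'_l$ whose intersection $S'$ satisfies $S\subseteq S'\subseteq U'$. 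Only then can Definition \ref{defn def supports}(4) be applied to the untilde of $S'$ and of $U'$ to produce the constructible $W$ and $K$, which simultaneously yields $(\ast)$. This intersection-plus-compactness argument is the real content of the forward direction; once it replaces your ``enlarging'' step, the rest of your plan goes through.
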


\begin{proof}
Suppose that $\Phi$ is $\T$-normal. Let $S\in \tilde{\Phi }$ and let $U\subseteq \tilde{X}$ be an open  subset such that $S\subseteq U$. Since $S$ is quasi-compact and $U$ is a union of constructible open subsets of $\tilde{X},$ there is an open constructible subset $U'$ of $\tilde{X}$ such that $S\subseteq U'\subseteq U$. Since $S\in \tilde{\Phi},$ there is $S'\in \tilde{\Phi}$ constructible such that $S\subseteq S'$. Hence $S=\bigcap \{S'\in \tilde{\Phi}: S'\,\,\textrm{is constructible and}\,\,S\subseteq S'\}$. Since $(\tilde{X}\setminus U')\cap S=\emptyset ,$ by quasi-compactness of $\tilde{X}\setminus U'$ there are $S'_1, \ldots, S'_l\in \tilde{\Phi},$ constructible with $S\subseteq S_i'$  such that $(\bigcap _{i=1}^lS'_i)\cap (\tilde{X}\setminus U')=\emptyset$. Let $S'=\bigcap _{i=1}^lS'_i$. Then $S\subseteq S'\subseteq U'\subseteq U$. Since $\Phi$ is $\T$-normal and $S'$ and $U'$ are constructible, there are an open constructible subset $W$ of $\tilde{X}$ and a closed constructible subset $K$ of $\tilde{X}$ such that $S\subseteq S'\subseteq W\subseteq K\subseteq U'\subseteq U$. This also shows $(\ast)$.

Conversely, suppose $\tilde{\Phi }$ is normal. If $S\in \tilde{\Phi }$ is constructible and  $U\subseteq \tilde{X}$ is an open constructible subset such that $S\subseteq U,$ then there are an open subset $W'\subseteq \tilde{X}$ and a closed subset $K'\subseteq \tilde{X}$ such that $S\subseteq W'\subseteq K'\subseteq U$. By quasi-compactness as above, there are   an open constructible subset $W\subseteq \tilde{X}$ and a closed constructible subset $K\subseteq \tilde{X}$ such that $S\subseteq W\subseteq W'\subseteq K'\subseteq K\subseteq U$.
\end{proof}

Below we say that $\Psi $ is  a {\it family of  normal and constructible supports} on the spectral topological space $\tilde{X}$  if $\Psi=\tilde{\Phi }$ for some family of $\T$-normal supports on  $(X,\T)$.

\begin{expl}\label{expl supp tilde c}
The main example of a family of normal and constructible supports on a spectral topological space $\tilde{X}$ is the family of all closed subsets of $\tilde{X}$ when $X$ is $\T$-normal. See also Remarks \ref{nrmk c in Gi} and \ref{nrmk c in ACVF and hats}.
\end{expl}

\subsection{Sheaf cohomology with \texorpdfstring{$\T$}{T}-supports}\label{subsection cohomo with t-supp}
Let $(X,\T)$ be an object of $\fT$. Due to the isomorphism 
\[\mod (A_{X_{\T}})\simeq \mod (A_{\tilde{X}}),\] 
in analogy to what happened in  the case $(\Df, \tDf)$  (\cite{ejp} for  o-minimal sheaf cohomology without supports and in  \cite{ep1} in  the presence of  families of definable supports), in this subsection we will  develop sheaf  cohomology on $X_{\T}$  via this tilde isomorphism. \\

\begin{defn}\label{defn  sheaf cohomo Phi}
Let $(X,\T)$ be an object of $\fT,$ $\Phi $ a family of $\T$-supports in $(X,\T)$  and $\cF \in \mod (A_{X_{\T}})$.  We define the {\it $\fT$-sheaf cohomology groups with $\T$-supports in $\Phi$ } via the tilde isomorphism by
\[H^*_{\Phi}(X;\cF)=H^*_{\tilde{\Phi}}(\tilde{X};\tilde{\cF}),\]
where $\tilde{\cF}$ is the image of $\cF$ via the isomorphism $\mod (A_{X_{\T}})\simeq \mod (A_{\tilde{X}})$.\\

If $f\colon(X,\T)\into (Y,\T')$ is a morphism in $\fT,$ we define the induced homomorphism
\[f^*\colon H^*_{\Phi }(Y;\cF)\into H^*_{f^{-1}\Phi }(X;f^{-1}\cF)\]
in cohomology to be the
same as the  homomorphism
\[\tilde{f}^*\colon H^*_{\tilde{\Phi }}(\tilde{Y};{\tilde \cF})\into
H^*_{\tilde{f}^{-1}\tilde{\Phi }}(\tilde{X};\tilde{f}^{-1}{\tilde \cF})\]
in cohomology induced by the continuous map
$\tilde{f}\colon \tilde{X}\into \tilde{Y}$ of topological spaces. \\
\end{defn}

Below we shall use a few  facts about sheaf cohomology in topological spaces that can be found, for example, in \cite[Chapter II, Sections 1 to 8]{b}.
We will also need the following $\fT$ versions of \cite[Chapter II, 9.5]{b} and  \cite[Chapter II, 9.21]{b} respectively.  

\begin{lem}\label{clm extending sections no supports}
Let $X$ be an object of $\tfT$. Let  $Z$ be a subspace of $X$  and $Y$  a quasi-compact subset of $Z$ having a fundamental system of normal and constructible locally closed neighborhoods in $X$.  Then for every $\cG\in {\mod}(A_Z)$ the canonical morphism
\[\lind {Y\subseteq U}\Gamma(U\cap Z;\cG)
\into \Gamma(Y;\cG_{|Y})\,\,\,\]
where $U$ ranges through the family of open constructible subsets of $X$, is an isomorphism.
\end{lem}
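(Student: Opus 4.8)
The plan is to reduce this statement to the well-known topological analogue \cite[Chapter II, 9.5]{b}, using the basic properties of $\tfT$ and the quasi-compactness built into the $\T$-spectrum. First I would recall the meaning of a \emph{fundamental system of normal and constructible locally closed neighborhoods of $Y$ in $X$}: a cofinal (under reverse inclusion) family $\{N\}$ of locally closed constructible subsets of $X$, each normal as a subspace, with $Y\subseteq \mathrm{int}_X(N)$, such that for every open $U\supseteq Y$ some $N$ from the family is contained in $U$. Since $Y$ is quasi-compact and the constructible open subsets of $X$ form a basis of quasi-compact open sets (Fact \ref{fact spec spaces}), any open $U\supseteq Y$ already contains an open \emph{constructible} $U'$ with $Y\subseteq U'\subseteq U$; so the inductive limit over open constructible $U\supseteq Y$ appearing in the statement is cofinal in the inductive limit over all open $U\supseteq Y$, and it suffices to prove the isomorphism with $U$ ranging over all open neighborhoods.

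Next, the map is the obvious restriction map, and the two things to check are injectivity and surjectivity of $\varinjlim_{Y\subseteq U}\Gamma(U\cap Z;\cG)\to\Gamma(Y;\cG_{|Y})$. For injectivity: if a section $s\in\Gamma(U\cap Z;\cG)$ restricts to $0$ on $Y$, then its support (a closed subset of $U\cap Z$) misses $Y$; using a normal constructible locally closed neighborhood $N$ of $Y$ contained in $U$, one separates $Y$ from $\supp(s)\cap N$ inside $N$ by the $\T$-normality (equivalently, normality of the relevant spectral spaces via Proposition \ref{prop main normal def normal}) and shrinks $U$ to kill $s$. For surjectivity: given $t\in\Gamma(Y;\cG_{|Y})$, each point of $Y$ has an open constructible neighborhood on which $t$ extends; by quasi-compactness of $Y$ finitely many such $U_1,\dots,U_k$ cover $Y$ with extensions $t_i\in\Gamma(U_i\cap Z;\cG)$. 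On overlaps the $t_i$ and $t_j$ agree on $Y\cap U_i\cap U_j$, hence — after shrinking, again using normality of a constructible neighborhood and the Shrinking Lemma \ref{cor shrinking lemma} applied in $\tilde{N}$ — one can arrange the $t_i$ to agree on an open constructible neighborhood of $Y$, and glue. This is precisely the argument of \cite[Chapter II, 9.5]{b}, the only new input being that every neighborhood can be taken constructible (hence quasi-compact), which is what lets the finite-cover and shrinking steps go through cleanly.

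Concretely, I would phrase the proof as: \emph{replace $X$ by a normal constructible locally closed neighborhood $N$ of $Y$}. Since $\cG_{|Y}=(\cG_{|N})_{|Y}$ and open constructible subsets of $N$ are traces of open constructible subsets of $X$, we may assume $X=N$ is normal, and then invoke \cite[Chapter II, 9.5]{b} directly for the spectral (in particular paracompact, since compact Hausdorff) space $\tilde{N}$ with the subspace $Z\cap N$ and the compact subset $Y$; the topological statement there gives exactly $\varinjlim_{Y\subseteq U}\Gamma(U\cap Z;\cG)\iso\Gamma(Y;\cG_{|Y})$ with $U$ ranging over open subsets of $\tilde{N}$, and a final cofinality remark (open constructible neighborhoods are cofinal among all open neighborhoods of the quasi-compact $Y$) upgrades this to the stated form.

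The main obstacle I anticipate is bookkeeping around the word ``neighborhood'' in a non-Hausdorff / spectral setting: one must be careful that the normal constructible locally closed set $N$ genuinely contains $Y$ in its interior, that restriction of constructible opens behaves well, and that $\cG$ being only a sheaf on the subspace $Z$ (not on $X$) does not cause trouble — but this is handled exactly as in \cite{b} since all the arguments are local near $Y$ and only use opens of $X$ cut down to $Z$. Once $X$ is reduced to a normal constructible neighborhood, everything is a citation; the content is entirely in justifying that reduction, which rests on Proposition \ref{prop main normal def normal}, Fact \ref{fact spec spaces}, Remark \ref{tilde on sets and maps} and the hypothesis that $Y$ admits such a fundamental system of neighborhoods.
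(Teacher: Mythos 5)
Your second paragraph sketches essentially the right argument (and essentially the paper's argument): injectivity from quasi-compactness of $Y$ plus the basis of quasi-compact constructible opens, surjectivity from a finite constructible cover, the shrinking lemma inside a normal constructible neighborhood, and a stalkwise gluing step. However, the ``concrete'' formulation you then settle on — replace $X$ by a normal constructible locally closed neighborhood $N$ of $Y$ and \emph{invoke \cite[Chapter II, 9.5]{b} directly for the spectral (in particular paracompact, since compact Hausdorff) space $\tilde{N}$} — contains a genuine error. Objects of $\tfT$ are spectral spaces: quasi-compact, $T_0$, with a basis of quasi-compact opens, but they are essentially never Hausdorff (a spectral space is Hausdorff only when it is a Stone space). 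So $\tilde{N}$ is not compact Hausdorff, is not paracompact, and $Y$ is only quasi-compact, not compact; none of the hypotheses of Bredon's II.9.5 (which are variations on ``$Y$ compact in $X$ Hausdorff'' or ``$Y$ closed in $X$ paracompact'') is satisfied. This is not a bookkeeping issue: the entire point of Section \ref{section more on t-sheaves} is that the classical paracompactness-based results cannot be cited for these non-Hausdorff spectra and must be re-proved with quasi-compactness of constructible sets and $\T$-normality (via Proposition \ref{prop main normal def normal} and the shrinking lemma, Corollary \ref{cor shrinking lemma}) standing in for compactness and paracompactness.

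The fix is to drop the citation and carry out the Bredon-style argument directly, which is what the paper does. For surjectivity one takes a finite cover $\{U_j\}$ of $Y$ by open constructibles carrying local extensions $s_j$, uses the hypothesis to interpose $Y\subseteq O'\subseteq X'\subseteq\bigcup_j U_j$ with $O'$ open constructible and $X'$ normal constructible locally closed, applies the shrinking lemma in $X'$ to get $V_j\subseteq K_j\subseteq U_j$, and then runs the pointwise argument with the sets $J(x)=\{j:x\in K_j\}$ and the constructible neighborhoods $W_x$ to exhibit an open neighborhood $W$ of $Y$ on which the $s_j$ have matching stalks and hence glue. Note also that the $s_i$ agree with $s_j$ only stalkwise at points of $Y\cap U_i\cap U_j$ (not on all of $U_i\cap U_j\cap Z$), so the gluing genuinely requires this $W_x$/$J(x)$ bookkeeping rather than a one-line overlap check; and since $X'$ is only locally closed, one must first pass to a constructible open subset of $X$ in which $X'$ is closed (and intersect the shrunken sets with $O'$) so that the $V_j$ are actually open in $X$. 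Your own first sketch already contains all the right ingredients for this; only the shortcut through \cite[Chapter II, 9.5]{b} must be abandoned.
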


\begin{proof}
Since $Y$ is quasi-compact, the family of open neighborhoods of $Y$ in  $Z$ of the form $V\cap Z$, where $V$ is an open constructible subset of $X$, is a fundamental system  of neighborhoods of $Y$ in $Z$. Hence, the morphism of the lemma is certainly injective.

To prove that it is surjective, consider a section $s\in \Gamma (Y;\cG _{|Y})$.
There is a covering $\{U_j:j\in J\}$ of $Y$ by open constructible subsets of
$X$ and sections $s_j\in \Gamma (U_j\cap Z; \cG_{|U_j\cap Y})$, $j\in J$, such
that $s_{j|U_j\cap Y}=s_{|U_j\cap Y}$. Since $Y$ is quasi-compact, we can
assume that $J$ is finite, and so $\cup \{U_j:j\in J\}$ is an open
constructible subset of $X$. Therefore, there are an open constructible subset $O'$ in $X$ and  a normal, constructible locally closed subset $X'$  in $X$ such that $Y\subseteq O'\subseteq X'\subseteq \cup \{U_j:j\in J\}$. For each $j\in J$ let $U'_j=U_j\cap X'$. 
Since $X'$ is normal and constructible, by the shrinking lemma, there are open constructible subsets
$\{V'_j:j\in J\}$ of $X'$ and closed constructible subsets $\{K'_j:j\in J\}$ of $X'$ such that $V'_j\subseteq K'_j\subseteq U'_j$ for
every $j\in J$ and $X'= \cup \{V'_j:j\in J\}$. Since $X'$ is a constructible locally closed subset of $X,$ it is a  constructible closed subset of an open  subset of $X,$ which by quasi-compactness we may assume is also constructible. Replacing $X$ by that constructible open subset, we may assume that $X'$ is a constructible closed subset of $X$. For each $j\in J$ let $V_j=V'_j\cap O'$ and $K_j=K'_j$. Then for each $j\in J,$ $V_j$ is an open constructible subset of $X$ and $K_j$ is a closed constructible subset of $X$ with $V_j\subseteq K_j\subseteq U_j$ and $Y\subseteq \cup \{V_j:j\in J\}$. 

For $x \in Z\cap O'$ set $J(x)=\{j\in J:x\in K_j\}$ and let
\[
W_x = (\bigcap_{x
\in V_l}V_l \cap \bigcap_{j\in J(x)}U_j) \setminus \bigcup_{k \notin J
(x)}K_k.
\] 
Then  $W_x$ is a constructible neighborhood of $x$ in $X$ such that  if $y\in W_x$ then $J(y)\subseteq J(x)$. Indeed, suppose that $y\in W_x$ and let $i\in J(y)$. Then $y\in K_i$ and either $x\in K_i$ in which case $i\in J(x)$ or $x\not \in K_i$ in which case $i\not \in J(x)$ and so $y\in W_x\cap K_i=\emptyset$.

Observe that for all $i,j \in J(x)$ we have that $W_x$ is an open subset of both $U_i$ and  $U_j$. Hence, for every $i,j \in J(x)$ we have $s_{i|W_x\cap Y}= s_{|W_x\cap Y}=s_{j|W_x\cap Y}$. So, for  $y \in W_x \cap Y$, we have  $(s_i)_y=(s_j)_y$ for any $i,j \in J
(x)$.
This implies that the set
\[W=\{z\in (\bigcup_{j\in J}V_j)\cap Z: (s_i)_z=(s_j)_z\,\,\textrm{for any}\,\, i,j\in J(z)\}\]
contains $Y$ (clearly $Y\subseteq \bigcup _{x\in Z} W_x\cap Y\subseteq (\bigcup_{j\in J}V_j)\cap Z$). On the other hand,  if $z\in W$ then for any $i,j\in J(z)$ we have $(s_i)_z=(s_j)_z$ and so $s_i=s_j$  in an open neighborhood of $z$. Since $J(z)$ is finite, $z$ has an open  neighborhood in $Z$  on which $s_i=s_j$ for any $i,j \in J(z)$. Thus $W$ is an open neighborhood of $Y$ in $Z$.
Since $Y$ is quasi-compact we may assume that  $W$ is of the form $U\cap Z$ for
some open constructible subset $U$ of $X$. Since $s_{i_{|W \cap V_i \cap V_j}}
=s_{j_{|W \cap V_i \cap V_j}}$
there exists $t\in \Gamma (W;\cG )$ such that $t_
{|W \cap V_j}=s_{j|W \cap V_j}$. This proves that the morphism is surjective.
\end{proof}

A general form of Lemma \ref{clm extending sections no supports} is:

\begin{lem}\label{lem extending sections}
Assume that $X$ is an object of $\tfT$. Let $Z$ be a subspace of $X$, $\Phi $ a normal and constructible family of supports on $X$ and $Y$ a subset of $Z$ such that  for every constructible $D\in \Phi ,$ $D\cap Y$ is a quasi-compact  subset of $Z$ having a fundamental system of normal and constructible locally closed neighborhoods in $X$. Then for every $\cG \in \mod (A_Z)$, the canonical morphism
\[\lind {Y\subseteq U}\Gamma_{\Phi \cap U \cap Z}(U\cap Z;\cG)
\into \Gamma_{\Phi \cap Y}(Y;\cG_{|Y})\,\,\,\]
where $U$ ranges through the family of open constructible subsets of $X$, is an isomorphism.
\end{lem}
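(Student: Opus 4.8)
The plan is to reduce Lemma \ref{lem extending sections} to the support-free case already established in Lemma \ref{clm extending sections no supports}, exploiting the fact that sections with support in $\Phi$ can be described locally once we restrict to a constructible member of the family. First I would use the definition of $\Gamma_{\Phi\cap Y}(Y;\cG_{|Y})$: a section $s\in\Gamma(Y;\cG_{|Y})$ lies in this group precisely when its support is contained in some $D\cap Y$ with $D\in\Phi$; since $\tilde\Phi$ is a constructible family of supports, we may take $D$ to be a \emph{constructible} member of $\Phi$. By hypothesis $D\cap Y$ is then quasi-compact with a fundamental system of normal and constructible locally closed neighborhoods in $X$, so Lemma \ref{clm extending sections no supports} applies verbatim with $Y$ replaced by $D\cap Y$: the section $s$ (or rather its restriction to a neighborhood of $\supp s$) extends to some $\Gamma(U\cap Z;\cG)$ over an open constructible $U\supseteq D\cap Y$.

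The work then is bookkeeping the supports. After extending $s$ near $D\cap Y$, I would shrink the extension so that it has support inside $D$ on the $X$-side. Concretely: $s$ has support in $D\cap Y$ inside $Y$, so on a small enough constructible neighborhood the extended section $\tilde s$ has support in a constructible neighborhood $K$ of $D\cap Y$; using normality of $D$ (or rather that $\Phi$ is a normal family and $D\in\Phi$), one squeezes $D\cap Y\subseteq V\subseteq K\subseteq$ (the interior of $D$ relative to a suitable ambient open), multiply the section by the structure giving $\cG_V$, i.e. replace $\tilde s$ by its image under $\Gamma(U\cap Z;\cG)\to\Gamma(U\cap Z;\cG)$ cutting down to support in $K\subseteq D$. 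This produces an element of $\Gamma_{\Phi\cap U\cap Z}(U\cap Z;\cG)$ mapping to $s$, giving surjectivity. Injectivity is even easier: if an element of $\Gamma_{\Phi\cap U\cap Z}(U\cap Z;\cG)$ restricts to $0$ on $Y$, then it restricts to $0$ on $D\cap Y$ for the relevant constructible $D$, and by the injectivity part of Lemma \ref{clm extending sections no supports} (applied to $D\cap Y$) it already vanishes on a smaller open constructible neighborhood of $D\cap Y$; since outside $D$ the section was zero anyway, it vanishes on a smaller $U'\cap Z$, so it is $0$ in the colimit.

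The only genuinely delicate point — and the one I'd expect to be the main obstacle — is ensuring that the ``cutting down the support to $D$'' step produces a genuinely \emph{constructible} open neighborhood $U'$ on which the new section lives, and that this is compatible with taking the filtered colimit over all open constructible $U\supseteq Y$. This is where Proposition \ref{normal supp} and its clause $(\ast)$ (fundamental system of normal \emph{and constructible} neighborhoods) do the real job: they guarantee that the neighborhoods $V\subseteq K$ interpolating $D\cap Y$ inside a prescribed open can be chosen constructible, so the shrunk section indeed defines an element of $\Gamma_{\Phi\cap U'\cap Z}(U'\cap Z;\cG)$ for an open constructible $U'$. One must also check that different choices of the constructible $D\in\Phi$ bounding $\supp s$ give the same element of the colimit, which is immediate since any two such $D$ are dominated by their union, still in $\Phi$ and (after intersecting with a constructible member) constructible.

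Thus the structure is: (1) rewrite both sides using that $\tilde\Phi$-supported sections are exactly those supported in some constructible $D\in\Phi$; (2) apply Lemma \ref{clm extending sections no supports} with $Y\rightsquigarrow D\cap Y$ to get the support-free extension/uniqueness statement; (3) use the normality clause $(\ast)$ of Proposition \ref{normal supp} to shrink the extended section so its support sits inside $D$ over an open constructible set; (4) pass to the colimit. I would present steps (1) and (4) briefly as formal manipulations and spend the detailed writing on step (3), since that is where the constructibility hypotheses are essential and where the proof could go wrong if one is careless.
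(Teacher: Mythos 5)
Your overall route is the paper's: reduce to Lemma \ref{clm extending sections no supports} applied to $D\cap Y$ for a constructible $D\in\Phi$, use the normality of $\Phi$ to interpolate constructible neighborhoods, and extend by zero outside $D$; the injectivity argument (vanish near $D\cap Y$ by quasi-compactness, glue with the open set $U\setminus D$ where the section is already zero) is also exactly what the paper does.

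The one step that does not work as you describe it is your step (3). There is no endomorphism of $\Gamma(U\cap Z;\cG)$ that ``cuts down the support'' of a section to a prescribed closed set $K$ — for a general sheaf you cannot multiply by a cutoff, and the image of $\tilde s$ in $\Gamma(U\cap Z;\cG_V)$ or $\Gamma(U\cap Z;\cG_K)$ only lifts back to a section of $\cG$ if you already know $\tilde s$ vanishes on a neighborhood of the relevant boundary. The paper arranges this \emph{before} extending: starting from $\supp s\subseteq C\cap Y$ with $C\in\Phi$ constructible, it uses normality of $\Phi$ to interpolate $C\subseteq O\subseteq D$ with $O$ open constructible and $D\in\Phi$ constructible, and applies Lemma \ref{clm extending sections no supports} to $D\cap Z$ and $D\cap Y$ to get $t$ on $V\cap D\cap Z$ with $t_{|D\cap Y}=s_{|D\cap Y}$. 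Because $s$ is supported in $C\cap Y\subseteq O$, the extension $t$ vanishes at every point of $(D\setminus O)\cap Y$, hence (pointwise vanishing plus quasi-compactness of $(D\setminus O)\cap Y$) on $W\cap D\cap Z$ for a constructible open neighborhood $W$ of $(D\setminus O)\cap Y$. The desired section is then obtained by gluing $t$ on $V\cap O\cap Z$, $0$ on $W\cap Z$, and $0$ on $(X\setminus D)\cap Z$; the overlaps are controlled precisely because $t$ vanishes on $W\cap D\cap Z$ and $U_1\cap(X\setminus D)\subseteq W$. So the ``shrinking'' is really a gluing along a region where the extension is already shown to vanish, and the interpolation $C\subseteq O\subseteq D$ must be chosen before invoking the support-free lemma, not after.
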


\begin{proof}
Let us prove injectivity. Let $s \in \Gamma_{D\cap U \cap Z}(U\cap Z;\cG)$, with $D \in \Phi$ and $U \supset Y$ open constructible subset of $X$ and such that $s_{|D \cap Y}=0$. By quasi-compactness of $D \cap Y$,  there exists an open constructible neighborhood $V$ of $D\cap Y$ in $X$ such that $s_{|V\cap D\cap Z}=0$. Replacing $V$ with its intersection with $U$ if necessary we may assume that  $V \subseteq U$.
 Set $W=V \cup(U \setminus D)$. Then  $W$ is open constructible in $X$, $Y\subseteq W\subseteq U$ and $s_{|W\cap Z}=0$.

Let us prove that the morphism is surjective. Let $s\in \Gamma _{\Phi \cap Y}(Y;\cG_{|Y})$ and consider a closed, normal and constructible set $C\in \Phi$ such that the support of $s$ is contained in $C\cap Y$. Since $\Phi$ is normal and constructible, there exist a closed, normal  and constructible set $D\in \Phi$ and an open constructible set $O\subseteq X$ such that $C\subseteq O\subseteq D$. 
We shall find $\widetilde{t} \in \Gamma_D(U \cap Z;\cG)$ such that
$\widetilde{t}_{|Y}=s$. 
After applying Lemma \ref{clm extending sections no
supports} above to $X,$ $D\cap Z$ and $D \cap Y$
we see that there exists an open, constructible neighborhood $V$ of $D\cap Y$ in $X$ and a section
$t\in \Gamma (V\cap D\cap Z;{\mathcal G})$ such that $t_{|D\cap Y}=s_{|D\cap Y}$.
Since $t_{|(D\setminus O)\cap Y}=0$, then each point $x$ of $(D\setminus O) \cap Y$ has
an open constructible neighborhood $W_x \subseteq V$ such that $t_{|W_x\cap D\cap Z}=0$. Using
quasi-compactness of $(D\setminus O)\cap Y$ (it is closed on the quasi-compact set $D\cap Y$), there exists a finite number of points
$x_1,\ldots,x_n$ such that $(D\setminus O) \cap Y \subseteq \bigcup_{i=1}^nW_{x_i}$.  Then $W\coloneqq \bigcup_{i=1}^nW_{x_i}$ is an open constructible subset and 
$t_{|W\cap D \cap Z}=0$. Let $U_1=(V\cap O)\cup W$. Then $U_1$ is open constructible and $D\cap
Y\subseteq U_1\subseteq V$. Define $t' \in \Gamma (U_1\cap Z; {\mathcal G})$ by: $t'_{|V\cap O\cap Z}=t_{|V\cap O\cap Z}$ and $t'_{|W\cap Z}=0$. This is well defined since $(V\cap O\cap Z)\cap (W\cap Z)\subseteq W\cap D \cap Z$ and $t_{|W\cap D \cap Z}=0$. Observe also that $t'_{|U_1\cap D\cap Z}=t$. Let $U_2=X\setminus D$. Then $U=U_1\cup U_2$ is
open constructible, $Y\subseteq U$, $U_1\cap U_2\subseteq W$ and we can define
$\widetilde{t} \in \Gamma(U\cap Z;\mathcal{G})$ in the following way:
$\widetilde{t}_{|U_1\cap Z}=t'_{|U_1\cap Z}$, $\widetilde{t}_{|U_2\cap Z}=0$. It
is well defined since  $U_1 \cap U_2 \subseteq W$ and $t'_{|W\cap Z}=0$. Moreover
$\mathrm{supp}\,\widetilde{t} \subseteq D$ and $\widetilde{t}_{|Y}=s$ as
required. 
\end{proof}

For the locally semi-algebraic analogues of Lemmas \ref{clm extending sections no supports} and \ref{lem extending sections} see \cite[Chapter II, Lemma 3.1 and Proposition 3.2]{D3}.  Note that in the locally semi-algebraic case one only requires that $Y$ has a paracompact and regular locally semi-algebraic neighborhood in $X,$ instead of a fundamental system of such neighborhoods, since under this assumption the family of all open locally semi-algebraic neighborhoods of $Y$ in $X$ is a fundamental system of neighborhoods of $Y$ in $X$ (\cite[Chapter I, Lemma 5.5]{D3}) and moreover these are paracompact and regular  i.e., they are normal and one can apply the shrinking lemma (\cite[Chapter I, Proposition 5.1]{D3}). The assumption for $\Phi$ implies that the same holds for each $D\cap Y$.

The  o-minimal versions are \cite[Lemmas 3.2 and 3.3]{ep1}. Unfortunately in the statement of these lemmas, and also in \cite[Proposition 3.7]{ep1}, the assumption that $Y$ (resp. $D\cap Y$) has a fundamental system of normal and constructible locally closed neighborhoods in $X$ is missing. Note however that this does not affect the main results of that/the paper since  in those results  $Y=X$ and each $D$ satisfies the hypothesis as $\Phi$ is normal and constructible. The only results affected are the Vietoris-Begle theorem and the homotopy axiom (\cite[Theorems 2.13 and 2.14]{ep1}) which we fix below.\\

Recall that a  sheaf $\cF$ on a topological space $X$ with a family of supports $\Phi $ is {\it $\Phi $-soft} if and only if  the restriction $\Gamma (X;\cF)\into \Gamma (S;\cF_{|S})$ is surjective for every   $S\in \Phi $.
If $\Phi $ consists of all closed subsets of $X$, then $\cF$ is simply called {\it soft}.\\

The topological analogue of the next result is  \cite[Chapter II, 9.3]{b}.

\begin{prop}\label{prop phi soft}
Let $X$ be a topological space and  $\cF$ be a sheaf in $\mod (A_X)$. Suppose that $\Phi$ is a family of supports on $X$ such that  every $C\in \Phi$ has a neighborhood $D$ in $X$ with $D\in \Phi$. Then the following are equivalent:
\begin{enumerate}
\item
 $\cF$ is $\Phi $-soft;
 \item
 $\cF_{|S}$ is soft for every  $S\in \Phi $;
  \item
 $\Gamma _{\Phi }(X;\cF)\into \Gamma _{\Phi _{|S}}(S; \cF_{|S})$ is surjective  for every closed subset $S$ of $X$;

 \medskip
 \noindent
If  in addition $X$ is an object  of $\tfT$ and $\Phi $ is a  constructible family of supports on $X$, then the above are also equivalent to:

\medskip
 \item
 $\cF_{|Z}$ is soft for every constructible subset $Z$ of $X$ which is in $\Phi $;

 \medskip
 \noindent
If  moreover  $\Phi $ is a  normal and constructible family of supports on $X$, then the above are also equivalent to:

\medskip

  \item
 $\Gamma (X;\cF)\into \Gamma (Z; \cF_{|Z})$ is surjective  for every constructible subset $Z$ of $X$ which is in $\Phi $;
\end{enumerate}
\end{prop}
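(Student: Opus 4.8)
The plan is to establish the cycle of implications
$(1)\Rightarrow(2)\Rightarrow(3)\Rightarrow(1)$ first, which is essentially the classical argument of \cite[Chapter II, 9.3]{b} adapted to a family of supports satisfying the local-neighborhood condition, and then splice in $(4)$ and $(5)$ under the additional hypotheses. For $(1)\Rightarrow(2)$: given $S\in\Phi$, any closed subset $C$ of $S$ need not lie in $\Phi$, but $C$ is closed in $S$ which is closed in $X$, so $C$ is closed in $X$; using the neighborhood $D\in\Phi$ of $S$ and the fact that $\Phi$ is closed under passing to closed subsets, one reduces extending a section over $C$ to extending over $D\cap(\text{something in }\Phi)$, and then applies $\Phi$-softness of $\cF$. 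For $(2)\Rightarrow(3)$: a section in $\Gamma_{\Phi_{|S}}(S;\cF_{|S})$ has support in some $T\in\Phi$ with $T\subseteq S$; since $\cF_{|T}$ is soft by $(2)$ one extends it (after the usual step of extending by zero off a slightly larger closed set inside the neighborhood $D\in\Phi$ of $T$) to a global section with support in $\Phi$. For $(3)\Rightarrow(1)$: take $S\in\Phi$; a section $s\in\Gamma(S;\cF_{|S})$ is trivially in $\Gamma_{\Phi_{|S}}(S;\cF_{|S})$ since $S\in\Phi$, so $(3)$ gives a global extension, proving $\Phi$-softness.

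For the second block, assume $X$ is an object of $\tfT$ and $\Phi$ is constructible, so every $S\in\Phi$ is contained in some constructible $\tilde{B}\in\Phi$. The implication $(4)\Rightarrow(2)$ is immediate since every $S\in\Phi$ sits inside a constructible $Z\in\Phi$ and a restriction of a soft sheaf to a closed subspace is soft. For $(2)\Rightarrow(4)$: a constructible $Z\in\Phi$ is in particular closed in $X$ (constructible closed subsets of a spectral space are closed), so $(2)$ applies directly. Thus $(4)$ is equivalent to $(1)$--$(3)$ under the constructibility hypothesis, essentially without new content.

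For the last block, assume in addition $\Phi$ is normal and constructible. The implication $(5)\Rightarrow(3)$: given closed $S\subseteq X$ and a section in $\Gamma_{\Phi_{|S}}(S;\cF_{|S})$ with support in $T\in\Phi$, first shrink to a constructible $T'\in\Phi$ containing $T$ (using Proposition \ref{normal supp}$(\ast)$ and quasi-compactness of $\tilde X$, exactly as in the proof of Lemma \ref{lem extending sections}), extend the section over $S\cap T'$ first to a section over $T'$ using that $\cF_{|T'}$ is soft — here is where one invokes $(5)$ with $Z=T'$, a constructible set in $\Phi$ — and then extend by zero off $T'$ to get a global section; one must check the support stays in $\Phi$, which holds because $T'\in\Phi$. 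For the converse $(3)\Rightarrow(5)$ (or more efficiently $(1)\Rightarrow(5)$): given a constructible $Z\in\Phi$, a section $s\in\Gamma(Z;\cF_{|Z})$ lies in $\Gamma_{\Phi_{|Z}}(Z;\cF_{|Z})$ since $Z\in\Phi$, and now to produce a \emph{global} section (not merely one with support in $\Phi$) one uses that $Z$ is normal and constructible: by Lemma \ref{clm extending sections no supports} applied with $Y=Z$ one first extends $s$ to a section over an open constructible neighborhood $U$ of $Z$, then uses the shrinking lemma and $\Phi$-softness to glue it to the zero section outside a closed constructible neighborhood $K\in\Phi$ of $Z$, producing a genuinely global section restricting to $s$.

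The main obstacle I anticipate is the bookkeeping in $(5)\Rightarrow(3)$ and $(1)\Rightarrow(5)$: one must repeatedly pass between a support $T\in\Phi$ and a constructible $T'\in\Phi$ containing it, and between $S$ (an arbitrary closed set) and its intersections with such constructible supports, all while tracking that the "extend by zero" operations keep supports inside $\Phi$ and produce \emph{constructible} intermediate neighborhoods so that the shrinking lemma (Corollary \ref{cor shrinking lemma}) and Lemmas \ref{clm extending sections no supports}--\ref{lem extending sections} apply. The conceptual content is small — it is the same softness-gluing argument as in \cite{b} — but verifying that each auxiliary open and closed set can be chosen constructible, using Proposition \ref{normal supp}$(\ast)$ and quasi-compactness of $\tilde X$, is the delicate part and should be stated carefully.
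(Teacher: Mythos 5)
Your treatment of (1)--(4) matches the paper's: the equivalence of (1), (2), (3) is Bredon's argument under the local-neighborhood hypothesis on $\Phi$, and (2)$\Leftrightarrow$(4) is the trivial observation that every $S\in\Phi$ sits inside a constructible member of $\Phi$. The problem is with (5), which is the only part of the proposition with new content, and there your argument has a genuine gap. In your proof of (5)$\Rightarrow$(3) you extend the given section from $S\cap T'$ to $T'$ ``using that $\cF_{|T'}$ is soft --- here is where one invokes (5) with $Z=T'$.'' But (5) with $Z=T'$ only says that the restriction $\Gamma(X;\cF)\to\Gamma(T';\cF_{|T'})$ is surjective, i.e.\ sections defined on \emph{all} of $T'$ extend outward to $X$; it says nothing about extending a section from a closed subset of $T'$ (such as $S\cap T'$) into $T'$. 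Softness of $\cF_{|T'}$ is precisely statement (4), and deducing it from (5) is the hard implication --- so as written the step is circular. The subsequent ``extend by zero off $T'$'' is also not well defined (the extension need not vanish near the boundary of $T'$), and the support of an extension produced by (5) is not controlled, so the claim that ``the support stays in $\Phi$ because $T'\in\Phi$'' does not follow.

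You have also inverted which direction is trivial. Since $\Phi$-softness is by definition surjectivity of $\Gamma(X;\cF)\to\Gamma(S;\cF_{|S})$ for \emph{every} $S\in\Phi$, the implication (1)$\Rightarrow$(5) is a literal specialization and needs none of the gluing machinery you describe (your worry about producing ``a global section, not merely one with support in $\Phi$'' is backwards: an element of $\Gamma_\Phi(X;\cF)$ already is a global section). The real work is (5)$\Rightarrow$(1), and the paper's route is the one you should follow: given $S\in\Phi$ and $s\in\Gamma(S;\cF_{|S})$, use normality and constructibility of $\Phi$ to find $S\subseteq V\subseteq D$ with $V$ open constructible and $D\in\Phi$ closed constructible; apply Lemma \ref{clm extending sections no supports} (this is where the fundamental system of normal constructible neighborhoods is actually used) to extend $s$ to a section $t$ over an open constructible $W$ with $S\subseteq W\subseteq V$; apply the shrinking lemma inside $D$ to get a closed constructible $Z$ with $S\subseteq Z\subseteq W$, so $Z\in\Phi$; then $t_{|Z}$ extends to $X$ by (5). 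Note that this proves (5)$\Rightarrow$(1) directly, and (3) then follows from the already-established equivalence (1)$\Leftrightarrow$(3), so no support bookkeeping is needed at all.
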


\begin{proof} 
By our hypothesis on $\Phi$, the arguments in \cite[Chapter II, 9.3]{b} show the equivalence of  (1), (2) and (3). The equivalence of (2) and (4) is obvious since every $S\in \Phi $ is contained in some constructible subset  $Z$ of $X$ which is in $\Phi $. Also, (1) implies (5) is obvious. 

Assume (5) and let $S\in \Phi $ and $s\in \Gamma (S; \cF_{|S})$. Since $\Phi $ is normal and constructible, $S$ has a fundamental system of normal and constructible closed neighborhoods in $\Phi. $ In particular, there is an open constructible subset $V$ of $X$ and a closed constructible subset  $D$ of $X$ such that $S\subseteq V\subseteq D$ and  $D\in \Phi $. By Lemma \ref{clm extending sections no supports}, $s$ can be extended to a section $t\in \Gamma (W;\cF)$ of ${\mathcal F}$ over an open constructible subset $W$ of $X$ such that  $S\subseteq W\subseteq V$. Applying  the shrinking lemma in $D$ we find a closed constructible neighborhood $Z$ of $S$ in $W$. Since $D\in \Phi $ we have $Z\in \Phi $. So $t_{|Z}\in \Gamma (Z;\cF_{|Z})$, $(t_{|Z})_{|S}=s$ and $t_{|Z}$ can be extended to $X$ by (5).  
\end{proof}

The locally semi-algebraic analogue of Proposition \ref{prop phi soft} is \cite[Chapter II, Propositions 4.1 and 4.2]{D3}, and the o-minimal version is \cite[Proposition 3.4]{ep1}.\\

The following topological result is often useful. It is  an immediate consequence of  Proposition \ref{prop phi soft}  and we omit the proof and refer the reader to \cite[Proposition 3.6]{ep1} (compare also with \cite[Chapter II, Propositions 9.2 and 9.12 and Corollary 9.13]{b}).

\begin{fact}\label{fact soft loc and phi-dim}
Let $X$ be a topological space and  $\Phi $  is a family of supports on $X$ such that  every $C\in \Phi$ has a neighborhood $D$ in $X$ with $D\in \Phi$.
Let $W$ be a locally closed subset of $X$. The following hold:

\begin{itemize}
\item[(i)]
if $\cF \in \mod(A_X)$ is $\Phi$-soft, then $\cF_{|W}$ is $\Phi_{|W}$-soft.

\item[(ii)]
$\cG$ in $\mod(A_W)$ is $\Phi _{|W}$-soft if and only if  $i_{W!}\cG$ is $\Phi $-soft.

\item[(iii)]
if $\cF\in \mod(A_X)$ is $\Phi $-soft, then $\cF_W$ is $\Phi $-soft.
\end{itemize}
\end{fact}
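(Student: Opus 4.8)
The plan is to reduce everything to the topological statement \cite[Chapter II, 9.2, 9.12 and Corollary 9.13]{b}, using that under the hypothesis on $\Phi$ we already have Proposition \ref{prop phi soft} available, in particular the characterizations of $\Phi$-softness via restriction to closed subsets and via restriction to constructible subsets in $\Phi$.

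First I would prove (i). Let $S\in \Phi_{|W}$; then $S$ is a closed $\T$-subset (equivalently a closed constructible subset of $\tilde X$) with $S\subseteq W$ and $S\in \Phi$. Since $S\subseteq W$ and $S$ is closed in $X$, we have $S$ closed in $W$, hence $\cF_{|W}$ restricted to $S$ equals $\cF_{|S}$, which is soft by Proposition \ref{prop phi soft}(2). Applying the equivalence (1)$\Leftrightarrow$(2) of Proposition \ref{prop phi soft} now \emph{on $W$} — noting that $\Phi_{|W}$ again satisfies the standing hypothesis (each element of $\Phi_{|W}$ has a neighborhood in $W$ lying in $\Phi_{|W}$, which follows from the corresponding property of $\Phi$ together with normality/constructibility when applicable, or more elementarily from Remark \ref{nrmk paracompact and normal supp}) — gives that $\cF_{|W}$ is $\Phi_{|W}$-soft. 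Here one must be slightly careful that $\Phi_{|W}$ is a legitimate family of supports on $W$ to which Proposition \ref{prop phi soft} applies; this is exactly the content of the last paragraph of Remark \ref{nrmk paracompact and normal supp} (with $Y=W$).

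Next, (ii): the sheaf $i_{W!}\cG\in\mod(A_X)$ vanishes outside $W$, so for $S$ a closed $\T$-subset, $(i_{W!}\cG)_{|S}$ is supported on $S\cap W$ and equals $i_{(S\cap W)\hookrightarrow S, !}$ of $\cG_{|S\cap W}$. Using Proposition \ref{prop phi soft}(3) on $X$ for $i_{W!}\cG$ and Proposition \ref{prop phi soft}(3)/(2) on $W$ for $\cG$, together with the fact that extension by zero along an open (resp. closed) inclusion preserves, respectively is detected by, softness, one gets the equivalence; this is precisely the reduction carried out in \cite[Chapter II, 9.12]{b} with ``paracompactifying'' replaced by our hypothesis on $\Phi$, and it is formally identical once the relevant instances of Proposition \ref{prop phi soft} are in hand. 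Finally (iii) follows from (i) and (ii): by \eqref{loc closed rest} we have $\cF_W \simeq i_{W!}i_W^{-1}\cF$ where $i_W\colon W\to X$; by (i) $i_W^{-1}\cF = \cF_{|W}$ is $\Phi_{|W}$-soft, and by (ii) $i_{W!}$ of a $\Phi_{|W}$-soft sheaf is $\Phi$-soft.

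The routine obstacle will be bookkeeping: verifying in each case that the ``auxiliary'' family of supports ($\Phi_{|W}$, or the pushed/pulled families along the inclusions) again satisfies the hypothesis of Proposition \ref{prop phi soft} so that the cited equivalences are legitimately applicable, and checking the compatibility of restriction with $i_!$ stated in \eqref{loc closed rest} and \eqref{loc closed base change}. None of this is deep — it is the same argument as \cite[Proposition 3.6]{ep1} and \cite[Chapter II, Propositions 9.2, 9.12 and Corollary 9.13]{b} — which is why the statement is recorded as a \emph{Fact} with the proof omitted. The only genuinely new point over the topological case is that softness can be detected on \emph{constructible} closed subsets in $\Phi$ rather than on all closed subsets in $\Phi$, but that is already absorbed into Proposition \ref{prop phi soft}(4)--(5), so it propagates through the three items without extra work.
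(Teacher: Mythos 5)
Your proposal is correct and follows exactly the route the paper intends: the paper omits the proof of this Fact, stating that it is an immediate consequence of Proposition \ref{prop phi soft} and pointing to \cite[Proposition 3.6]{ep1} and \cite[Chapter II, 9.2, 9.12, 9.13]{b}, which is precisely the reduction you carry out (including item (iii) as a formal consequence of (i), (ii) and \eqref{loc closed rest}). The only cosmetic remark is that the Fact is stated for an arbitrary topological space, so the constructibility considerations and items (4)--(5) of Proposition \ref{prop phi soft} that you invoke at the end are not needed here; only the equivalence of (1)--(3) is used.
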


We now came to the main result here. Its topological analogue is \cite[Chapter II, 9.6, 9.9 and 9.10]{b} and the locally semi-algebraic analogue is \cite[Chapter II, Propositions 4.8, 4.12 and Corollary 4.13]{D3}. The o-minimal version is \cite[Proposition 3.7]{ep1} (as mentioned above, the assumption that $D\cap Y$ has a fundamental system of normal and constructible locally closed neighborhoods in $X$ is missing).

\begin{prop}\label{prop injective for soft and flabby}
Assume that $X$ is an object  in $\tfT$. Let $Z$ be a subspace of $X,$ $\Phi $ be a normal and constructible family of supports on $X$ and $Y$ be a subspace of $Z$ such that for every constructible $D\in \Phi$, $D\cap Y$ is a quasi-compact subset of $Z$ having a fundamental system of normal and constructible locally closed neighborhoods in $X$. Then the
full additive subcategory of $\mod (A_Y)$ of $\Phi \cap Y$-soft sheaves
is $\Gamma _{\Phi \cap Y}(Y;\bullet )$-injective, i.e.:
\begin{enumerate}
\item
For every $\cF\in \mod (A_Y)$ there exists a  $\Phi \cap Y$-soft
$\cF'\in \mod (A_Y)$ and an exact sequence
$0\rightarrow \cF\rightarrow \cF'$.
\item
If $0\rightarrow \cF'\stackrel{\alpha}\rightarrow \cF\stackrel{\beta}\rightarrow \cF''
\rightarrow 0$ is an exact sequence in $\mod (A_Y)$ and $\cF'$ is
 $\Phi \cap Y$-soft, then
\[0\into \Gamma  _{\Phi \cap Y}(Y;\cF')\stackrel{\alpha}\into \Gamma  _{\Phi \cap Y}(Y;\cF)\stackrel{\beta}\into
\Gamma  _{\Phi \cap Y}(Y;\cF'')\into 0\] is an exact sequence.
\item
If $0\rightarrow \cF'\rightarrow \cF\rightarrow \cF''
\rightarrow 0$ is an exact sequence in $\mod(A_Y)$ and $\cF'$ and
$\cF$  are $\Phi \cap Y$-soft, then $\cF''$ is  $\Phi \cap Y$-soft.
\end{enumerate}
\end{prop}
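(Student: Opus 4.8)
The plan is to follow the classical Godement-style argument for $\Gamma_\Phi$-acyclicity of soft sheaves (as in \cite[Chapter II, 9.6, 9.9, 9.10]{b} and \cite[Chapter II, Propositions 4.8, 4.12 and Corollary 4.13]{D3}), but to replace every appeal to ``extension of sections from $Y$ to a neighbourhood'' with Lemma \ref{lem extending sections}, and every appeal to the shrinking lemma with Corollary \ref{cor shrinking lemma}. The three assertions are proved in the order (1), (2), (3), since the proof of (2) will use (1) to reduce to the case of short exact sequences of soft sheaves, and (3) is then a formal consequence of (2).

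For (1), I would embed $\cF$ into its Godement sheaf $\cF'=\prod_{y\in Y}(i_y)_*\cF_y$ (the sheaf of possibly discontinuous sections), which is flabby; flabby sheaves on any topological space are soft, hence $\Phi\cap Y$-soft by Proposition \ref{prop phi soft} (its hypothesis holds because $\Phi\cap Y$ inherits the ``neighbourhood in the family'' property from the fact that $\Phi$ is normal and constructible, using Remark \ref{nrmk paracompact and normal supp} applied after passing to $\tilde X$). This gives the required monomorphism $0\to\cF\to\cF'$.

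For (2), the only non-formal point is surjectivity of $\Gamma_{\Phi\cap Y}(Y;\cF)\to\Gamma_{\Phi\cap Y}(Y;\cF'')$; exactness elsewhere is the usual left-exactness of sections with supports. Given $s''\in\Gamma_{\Phi\cap Y}(Y;\cF'')$ with support in some $C\cap Y$, $C\in\Phi$, I would first enlarge $C$ to a closed, normal, constructible $D\in\Phi$ with $C\subseteq O\subseteq D$ for an open constructible $O$ (possible since $\Phi$ is normal and constructible, Proposition \ref{normal supp}$(\ast)$). On the quasi-compact paracompact-like space $D\cap Y$ one lifts $s''|_{D\cap Y}$ locally, patches using softness of $\cF'$ over $D\cap Y$ (Proposition \ref{prop phi soft}(2) together with Fact \ref{fact soft loc and phi-dim}(i)) exactly as in the topological proof, obtaining $t\in\Gamma(D\cap Y;\cF)$ lifting $s''|_{D\cap Y}$; then one modifies $t$ near $(D\setminus O)\cap Y$ to kill it there (again using softness and the shrinking lemma in $D$), extends by zero across $Y\setminus D$, and checks the resulting global section of $\cF$ over $Y$ has support in $D\in\Phi$ and maps to $s''$. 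Here Lemma \ref{lem extending sections} is invoked to pass from sections over neighbourhoods to sections over $Y$ and back — this is precisely the place where the ``fundamental system of normal and constructible locally closed neighbourhoods'' hypothesis on each $D\cap Y$ is used, and where the earlier o-minimal statement had the gap. Finally (3) follows: given $0\to\cF'\to\cF\to\cF''\to0$ with $\cF',\cF$ soft, for any $S\in\Phi\cap Y$ restrict to get $0\to\cF'|_S\to\cF|_S\to\cF''|_S\to0$ with the first two soft (Fact \ref{fact soft loc and phi-dim}(i)); by (2) applied on $S$ (whose support family is all closed subsets, the ``absolute'' case) $\Gamma(S;\cF|_S)\to\Gamma(S;\cF''|_S)$ is onto, and combined with surjectivity of $\Gamma(Y;\cF)\to\Gamma(S;\cF|_S)$ (softness of $\cF$) we get surjectivity of $\Gamma(Y;\cF)\to\Gamma(S;\cF''|_S)$, i.e.\ $\cF''$ is $\Phi\cap Y$-soft.

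I expect the main obstacle to be the bookkeeping in the surjectivity step of (2): one must carefully track which sets are constructible, which are open/closed, and which lie in $\Phi$, so that every application of Lemma \ref{lem extending sections}, Corollary \ref{cor shrinking lemma}, and Fact \ref{fact soft loc and phi-dim} is legitimate, and so that the final section genuinely has its support inside a member of $\Phi$. The delicate replacement ``$C\subseteq O\subseteq D$'' with $D$ closed, normal and constructible — rather than merely a closed neighbourhood — is what makes the shrinking lemma available on $D$ and what guarantees $D\cap Y$ has the required fundamental system of neighbourhoods, so the passage through $D$ (rather than directly through $C$) is essential and should be emphasised. Everything else is a routine transcription of Bredon's argument into the spectral setting, using that $\tilde X$ is spectral with a basis of quasi-compact constructible opens (Fact \ref{fact spec spaces}).
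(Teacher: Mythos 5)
Your overall strategy is the same as the paper's: (1) by embedding $\cF$ into a flabby sheaf, (2) by enlarging the support of $s''$ to a normal constructible $C\in\Phi$ with $\supp s''\subseteq U\subseteq C$, reducing to exactness of $0\to\Gamma(C\cap Y;\cF')\to\Gamma(C\cap Y;\cF)\to\Gamma(C\cap Y;\cF'')\to 0$, and then lifting locally, shrinking, and patching by a finite induction using softness of $\cF'$, and (3) as a formal consequence of (2); the paper embeds into an injective sheaf rather than the Godement sheaf, which is immaterial.

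The one step that would fail as written is your justification of (1): flabby sheaves are \emph{not} soft on an arbitrary topological space, and Proposition \ref{prop phi soft} does not assert that they are — it only gives equivalent reformulations of $\Phi$-softness. What softness requires is that a section over a closed member $S$ of the support family be the restriction of a section over some open neighbourhood (a tautness property that fails in general). That is exactly what Lemmas \ref{clm extending sections no supports} and \ref{lem extending sections} supply here: because each $D\cap Y$ with $D\in\Phi$ constructible is quasi-compact and has a fundamental system of normal and constructible locally closed neighbourhoods, one has $\Gamma(S;\cG_{|S})\simeq\lind{S\subseteq U}\Gamma(U\cap Y;\cG)$, so a section over $S\in\Phi\cap Y$ extends to an open subset of $Y$ and then, by flabbiness, to all of $Y$. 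This is precisely how the paper argues, and it is the only place in part (1) where the standing hypotheses on $Y$ enter; citing a general ``flabby $\Rightarrow$ soft'' fact hides the very point the hypotheses are there to secure. With that substitution the rest of your outline goes through as in the paper.
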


\begin{proof}
Point (1) follows since on any topological space $Y,$ the full additive subcategory of $\mod(A_Y)$ of injective (and flabby) $A$-sheaves  is co-generating (see for example \cite[Proposition 2.4.3]{ks1}) and injective $A$-sheaves are flabbly and so $\Phi \cap Y$-soft by Lemma \ref{lem extending sections}. On the other hand,  (3) follows as usual from (2) by a simple diagram chase using Proposition  \ref{prop phi soft} (3). 

We now prove (2). Let $s'' \in \Gamma_{\Phi\cap Y}(Y,\cF'')$. We have to find  $s \in \Gamma_{\Phi\cap Y}(Y,\cF)$ such that $\beta (s)=s''$. Since $\Phi$ is normal and constructible, there exists an  open constructible subset $U$ of $X$ and a closed constructible subset $C$ of $X$ such that $\supp\,s''  \subseteq U\subseteq C$ and  $C \in \Phi$. 

Suppose there is $t\in \Gamma (C\cap Y; \cF)$ such that $\beta (t)=s''_{\,\,|C\cap Y}$. Then $\beta (t_{|(C\setminus U)\cap Y})=0,$ and so $t_{|(C\setminus U)\cap Y}=\alpha (s')$ for some $s'\in \Gamma ((C\setminus U)\cap Y;\cF')$. Since $\cF'$ is $\Phi \cap Y$-soft, $s'$ can be extended to a section $s'\in \Gamma (C\cap Y; \cF')$.
Then $(t-\alpha (s'))_{|(C\setminus U)\cap Y}=0$ and so can be extended, by $0,$ to a section $s\in \Gamma (Y;\cF)$ with $\supp \,s\subseteq C\cap Y\in \Phi\cap Y$ and $\beta (s)=s''$.
 
Now,  considering the exact sequence
\[
0\to\cF'_{C \cap Y}\stackrel{\alpha}\to\cF_{C \cap Y}\stackrel{\beta}\to\cF''_{C\cap Y}\to 0,
\]
since by Fact \ref{fact soft loc and phi-dim} (iii)  $\cF'_{C \cap Y}$ is still $\Phi \cap Y$-soft, replacing $\cF',\cF,\cF''$ with $\cF'_{C \cap Y},$ $\cF_{C \cap Y}$ and $\cF''_{C\cap Y}$ respectively, we are reduced to proving that the sequence
\[
0\to\Gamma(C\cap Y;\cF')\stackrel{\alpha}\to\Gamma(C\cap Y;\cF)\stackrel{\beta}\to\Gamma(C\cap Y;\cF'')\to 0
\]
is exact.\\

Let $s'' \in \Gamma(C\cap Y;\cF'')$. There is a covering $\{U_j:j\in J\}$ of $C\cap Y$ by open constructible subsets of
$X$ and sections $s_j\in \Gamma (U_j\cap (C\cap Y); \cF)$, $j\in J$, such
that $\beta (s_{j})=s''_{\,\,|U_j\cap (C\cap Y)}$. By the assumptions on $C\cap Y$ we can
assume that $J$ is finite, and so $\cup \{U_j:j\in J\}$ is an open
constructible subset of $X,$ and furthermore, there are an open constructible subset $O'$ in $X$ and  a normal, constructible locally closed subset $X'$  in $X$ such that $C\cap Y\subseteq O'\subseteq X'\subseteq \cup \{U_j:j\in J\}$. For each $j\in J$ let $U'_j=U_j\cap X'$. 
Since $X'$ is normal and constructible, by the shrinking lemma, there are open constructible subsets
$\{V'_j:j\in J\}$ of $X'$ and closed constructible subsets $\{K'_j:j\in J\}$ of $X'$ such that $V'_j\subseteq K'_j\subseteq U'_j$ for
every $j\in J$ and $X'= \cup \{V'_j:j\in J\}$. Now we may replace $X$ by a constructible open subset and assume that $X'$ is a constructible closed subset of $X$. For each $j\in J$ let $V_j=V'_j\cap O'$ and $K_j=K'_j$. Then for each $j\in J,$ $V_j$ is an open constructible subset of $X$ and $K_j$ is a closed constructible subset of $X$ with $V_j\subseteq K_j\subseteq U_j$ and $C\cap Y\subseteq \cup \{V_j:j\in J\}$. 

We now proceed by induction on $\#J$. Suppose that $J=I\cup \{j\}$ and let $V_I=\cup \{V_i:i\in I\},$ $K_I=\cup \{K_i:i\in I\}$ and $U_I=\cup \{U_i:i\in I\}$. By induction hypothesis let $s_I\in \Gamma (K_I\cap (C\cap Y); \cF)$ be such
that $\beta (s_{I})=s''_{\,\,|K_I\cap (C\cap Y)}$.  Since $\beta ((s_I-s_j)_{|K_I\cap K_j})=0,$ there is $s'\in \Gamma(K_I \cap K_j\cap (C\cap Y);\cF')$ such that $\alpha (s')=(s_I-s_j)_{|K_I\cap K_j}$ which extends to $s' \in \Gamma(C\cap Y;\cF')$ since $\cF'$ is $\Phi \cap Y$-soft. Replacing $s_I$ with
$s_I-\alpha (s'_{\,|K_I\cap (C\cap Y)})$ we may suppose that $s_I=s_j$ on $K_I \cap K_j\cap (C\cap Y)$. Then, since $(K_I\cup K_j)\cap (C\cap Y)= C\cap Y,$
there exists $s \in \Gamma(C\cap Y;\cF)$ such that $s_{| K_I\cap (C\cap Y)}=s_I$ and $s_{|K_j\cap (C\cap Y)}=s_j$. Thus the induction proceeds.
\end{proof}

We now include several corollaries that will be useful later.

\begin{cor}\label{cor hphi and !}
Assume that $X$ is an object  in $\tfT$. Suppose either that   $\Phi $ is a normal and  constructible family of supports on $X$ and  $Z$ is a (constructible) locally closed subset of $X$ or that $\Phi$ is any family of supports on $X$ and $Z$ is a closed subset of $X$. If $\cF\in \mod (A_Z)$, then
\[H_{\Phi }^*(X;i_{Z!}\cF)=H_{\Phi |Z}^*(Z; \cF).\]
\end{cor}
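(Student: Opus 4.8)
The statement is the standard identity $H^*_\Phi(X; i_{Z!}\cF) = H^*_{\Phi|Z}(Z;\cF)$, which in the purely topological setting is \cite[Chapter II, 9.6(b)]{b} (or derived from it), and the plan is to reduce to that topological statement by working on the spectrum $\tilde X$, where we already have the machinery of Proposition \ref{prop injective for soft and flabby}. Recall that by definition $H^*_\Phi(X;\cG) = H^*_{\tilde\Phi}(\tilde X; \tilde\cG)$ and $H^*_{\Phi|Z}(Z;\cG) = H^*_{\widetilde{\Phi|Z}}(\tilde Z; \widetilde{\cG})$ with $\widetilde{\Phi|Z} = \tilde\Phi_{|\tilde Z}$ by Remark \ref{nrmk def supp and cons supp}; moreover under the tilde isomorphism $\mod(A_{X_\T})\simeq\mod(A_{\tilde X})$ the extension-by-zero functor $i_{Z!}$ on $X_\T$ corresponds to the topological $i_{\tilde Z!}$ on $\tilde X$ (this compatibility should be recorded or is immediate from the fact that $i_{Z!}\cF$ is characterized by inducing $\cF$ on $Z$ and $0$ off $Z$, a property preserved by the isomorphism of categories). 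So it suffices to prove $H^*_{\tilde\Phi}(\tilde X; i_{\tilde Z!}\tilde\cF) = H^*_{\tilde\Phi_{|\tilde Z}}(\tilde Z;\tilde\cF)$ as a statement about the spectral space $\tilde X$, the constructible (and, in the first case, normal) family of supports $\tilde\Phi$, and the locally closed (constructible) subset $\tilde Z$.

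First I would handle the \emph{closed} case, where $\Phi$ is arbitrary: here $\tilde Z$ is closed in $\tilde X$, $i_{\tilde Z!} = i_{\tilde Z *}$ is exact, it sends injectives to $\Gamma_{\tilde\Phi}(\tilde X;\bullet)$-acyclic sheaves (flabby sheaves are sent to flabby sheaves), and $\Gamma_{\tilde\Phi}(\tilde X; i_{\tilde Z*}\bullet) = \Gamma_{\tilde\Phi_{|\tilde Z}}(\tilde Z;\bullet)$ on the level of sections; deriving gives the claim. This is exactly the argument of \cite[Chapter II, 9.6]{b} and needs no normality. For the \emph{locally closed constructible} case with $\tilde\Phi$ normal and constructible, I would compute the right-hand side using a $\tilde\Phi_{|\tilde Z}$-soft resolution $\cF \to \cF^\bullet$ on $\tilde Z$, which exists and computes $H^*_{\tilde\Phi_{|\tilde Z}}(\tilde Z;\cF)$ by Proposition \ref{prop injective for soft and flabby} (applied with $Y=Z$, so that the hypothesis on $D\cap Y = D\cap Z$ is automatically met because $\tilde\Phi$ is normal and constructible and $\tilde Z$ is constructible locally closed — one checks that $D\cap\tilde Z$ is quasi-compact with a fundamental system of normal constructible locally closed neighborhoods in $\tilde X$). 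Then $i_{\tilde Z!}\cF \to i_{\tilde Z!}\cF^\bullet$ is still exact ($i_{\tilde Z!}$ is exact for $\tilde Z$ locally closed), and by Fact \ref{fact soft loc and phi-dim}(ii) each $i_{\tilde Z!}\cF^j$ is $\tilde\Phi$-soft on $\tilde X$, hence $\Gamma_{\tilde\Phi}(\tilde X;\bullet)$-acyclic (again by Proposition \ref{prop injective for soft and flabby}, now with $Y=X$). Finally $\Gamma_{\tilde\Phi}(\tilde X; i_{\tilde Z!}\cG) = \Gamma_{\tilde\Phi_{|\tilde Z}}(\tilde Z;\cG)$ for any $\cG\in\mod(A_{\tilde Z})$ — this is the sheaf-theoretic adjunction/section identity for extension by zero, valid because a section of $i_{\tilde Z!}\cG$ supported in some $D\in\tilde\Phi$ is the same as a section of $\cG$ over $\tilde Z$ supported in $D\cap\tilde Z\in\tilde\Phi_{|\tilde Z}$. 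Applying $\Gamma_{\tilde\Phi}(\tilde X;\bullet)$ to the acyclic resolution $i_{\tilde Z!}\cF^\bullet$ therefore yields the complex $\Gamma_{\tilde\Phi_{|\tilde Z}}(\tilde Z;\cF^\bullet)$, whose cohomology is $H^*_{\tilde\Phi_{|\tilde Z}}(\tilde Z;\cF)$, giving the identification.

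\textbf{Main obstacle.} The routine homological algebra (acyclic resolutions compute derived functors, $\Gamma_\Phi(X; i_{Z!}\bullet) = \Gamma_{\Phi|Z}(Z;\bullet)$) is not where the difficulty lies; the crux is verifying that in the locally closed case one may genuinely invoke Proposition \ref{prop injective for soft and flabby} in \emph{both} directions — once on $\tilde Z$ to get a soft resolution, once on $\tilde X$ to know the pushed-forward sheaves are acyclic — which requires checking its hypotheses, i.e.\ that $\tilde Z$ (as $Y$, over itself) and $\tilde X$ (as $Y=Z=X$) both satisfy the "for every constructible $D\in\tilde\Phi$, $D\cap Y$ is quasi-compact with a fundamental system of normal and constructible locally closed neighborhoods'' condition. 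For $Y = X$ this is the definition of $\tilde\Phi$ being normal and constructible (Proposition \ref{normal supp}); for $Y=\tilde Z$ locally closed constructible it follows because $D\cap\tilde Z$ is then a constructible subset of the normal constructible set $D$ and one uses Remark \ref{nrmk paracompact and normal supp} (restriction of a normal family of supports to a locally closed subset) together with the quasi-compactness of constructibles in $\tilde X$. I expect this bookkeeping, plus making the compatibility of $i_{Z!}$ with the tilde isomorphism fully explicit, to be the only genuinely delicate points; everything else is the classical argument of \cite[Chapter II, 9.6]{b} transported to the spectral setting.
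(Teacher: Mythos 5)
Your proposal is correct and follows essentially the same route as the paper: the closed case is the standard flabby/pushforward argument (the paper simply cites Bredon, Chapter II, 10.1), and the locally closed case is handled exactly as in the paper via a $\Phi_{|Z}$-soft resolution from Proposition \ref{prop injective for soft and flabby}, Fact \ref{fact soft loc and phi-dim}(ii) to transport softness along $i_{Z!}$, and the section identity $\Gamma_{\Phi}(X;i_{Z!}\cF)\simeq\Gamma_{\Phi|Z}(Z;\cF)$; the paper verifies the hypotheses of the proposition by passing to an open constructible $U$ in which $Z$ is closed and using $\Phi_{|U}$, which is the same bookkeeping you sketch. The only superfluous part of your write-up is the preliminary reduction through the tilde functor: the corollary is already stated for $X$ an object of $\tfT$, so no such reduction is needed.
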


\begin{proof}
The second case is covered by \cite[Chapter II, 10.1]{b}.  If $Z$ is closed in an open subset $U$ of $X$, then $\Phi _{|U}$ is a normal and constructible family of supports on $U$ and $\Phi _{|Z}=\Phi _{|U}\cap Z$. In particular, if $D\in \Phi_{|U},$ then $D\cap Z\in \Phi _{|U}$ and so $D\cap Z$ is a quasi-compact  subset of $Z$ having a fundamental system of normal and constructible locally closed neighborhoods in $U$. Now the result follows from Proposition \ref{prop injective for soft and flabby}, Fact \ref{fact soft loc and phi-dim} (ii) and the fact that $\Gamma _{\Phi}(X;i_{Z!}\cF)\simeq \Gamma _{\Phi |Z}(Z;\cF)$ (\cite[Chapter I, Proposition 6.6]{b}). \end{proof}

It follows also that if $X,$ $Z,$ $Y$ and $\Phi$ are as in Proposition \ref{prop injective for soft and flabby}, then $H^q(Y; \cG )$ is the $q$-th cohomology  of the cochain complex  
 \[0\rightarrow  \Gamma (Y; \cI ^0)\rightarrow \Gamma (Y;\cI^1)\rightarrow \ldots \]
 where
 \[0\rightarrow \cG \rightarrow \cI ^0_{|Y}\rightarrow \cI^1_{|Y}\rightarrow \ldots \]
is a resolution  of $\cG $ by $\Phi\cap Y$-soft sheaves in $\mod (A_Y)$. 

Moreover, if $\cG$ is any flabby sheaf in $\mod (A_X)$ then the restriction $\Gamma _{\Phi }(X; \cG)\to \Gamma _{\Phi \cap Y}(Y;\cG _{|Y})$ is surjective (by Lemma \ref{lem extending sections}) and $H^p_{\Phi \cap Y}(Y;\cG_{|Y})=0$ for all $p>0$ (since $\cG _{|Y}$ is $\Phi \cap Y$-soft by Lemma \ref{clm extending sections no supports}). This means that $Y$ is $\Phi$-taut in $X$ (see \cite[Chapter II, Definition 10.5]{b}).\\

Since on any topological space $X$ an open subset is $\Psi $-taut in $X$ for any family of supports $\Psi$ on $X,$ it follows from \cite[Chapter II, Theorem 10.6]{b} that:

\begin{cor}\label{cor coho around}
Assume that $X$ is an object  in $\tfT$. Let $Z$ be a subspace of $X,$ $\Phi $  a normal and constructible family of supports on $X$ and $Y$ a subspace of $Z$ such that for every constructible $D\in \Phi ,$ $D\cap Y$ is a quasi-compact  subset of $Z$ having a fundamental system of normal and constructible locally closed neighborhoods in $X$. Then for every $\cG\in \mod (A_Z)$ the canonical homomorphism
\[\lind {Y\subseteq U}H^q_{\Phi \cap U\cap Z}(U\cap Z;\cG) \into H^q_{\Phi \cap Y}(Y;\cG_{|Y})\,\,\,\]
where $U$ ranges through the family of open constructible subsets of $X,$ is an isomorphism for every $q\geq 0$.
\end{cor}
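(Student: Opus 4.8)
The plan is to deduce the statement from the general machinery of $\Phi$-tautness developed in \cite[Chapter II, Section 10]{b}, together with the two extension lemmas (Lemmas \ref{clm extending sections no supports} and \ref{lem extending sections}) and the injectivity result (Proposition \ref{prop injective for soft and flabby}) already established in this subsection. The key observation is that the hypotheses placed on $Y$ (namely, that for every constructible $D\in\Phi$ the set $D\cap Y$ is quasi-compact with a fundamental system of normal and constructible locally closed neighborhoods in $X$) are exactly what is needed to run the arguments of \cite[Chapter II, Theorem 10.6]{b} in our setting. Recall that \cite[Chapter II, Theorem 10.6]{b} asserts precisely this kind of ``cohomology in a neighborhood'' isomorphism whenever $Y$ is $\Phi$-taut in $X$ (or, more precisely, when one has the appropriate tautness and the direct limit of neighborhoods computes the restriction).

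The first step is to record that, under our hypotheses, $Y$ is $\Phi\cap\,\bullet$-taut inside $X$ in the relevant sense: this is exactly the remark made just after Corollary \ref{cor hphi and !}, where it is observed that if $\cG$ is flabby on $X$ then $\Gamma_\Phi(X;\cG)\to\Gamma_{\Phi\cap Y}(Y;\cG_{|Y})$ is surjective (by Lemma \ref{lem extending sections}) and $H^p_{\Phi\cap Y}(Y;\cG_{|Y})=0$ for $p>0$ (since $\cG_{|Y}$ is $\Phi\cap Y$-soft by Lemma \ref{clm extending sections no supports}, combined with Proposition \ref{prop injective for soft and flabby}). Thus $Y$ is $\Phi$-taut in $X$ in the sense of \cite[Chapter II, Definition 10.5]{b}.

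The second step is the actual limit computation. Take a flabby (e.g., Godement) resolution $0\to\cG\to\cI^0\to\cI^1\to\cdots$ of the extension $i_{Z*}\cG$, or rather work on $Z$ with a flabby resolution $0\to\cG\to\cI^0\to\cI^1\to\cdots$ in $\mod(A_Z)$; by Fact \ref{fact soft loc and phi-dim} and the tautness just established, each $\cI^n_{|Y}$ is $\Phi\cap Y$-soft and each $\cI^n_{|U\cap Z}$ is $\Phi\cap U\cap Z$-soft for $U$ open constructible, so both sides of the claimed isomorphism are computed by applying the respective global-sections-with-supports functors to these resolutions. By Corollary \ref{cor coho around}'s predecessor, namely the version of Lemma \ref{lem extending sections} applied degreewise, one gets
\[
\lind{Y\subseteq U}\Gamma_{\Phi\cap U\cap Z}(U\cap Z;\cI^n)\;\iso\;\Gamma_{\Phi\cap Y}(Y;\cI^n_{|Y})
\]
for every $n$, where $U$ ranges over open constructible subsets of $X$. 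Since filtrant inductive limits are exact (Fact \ref{UlimU}, or simply exactness of filtered colimits of $A$-modules), taking cohomology commutes with the colimit, and we obtain the desired isomorphism $\lind{Y\subseteq U}H^q_{\Phi\cap U\cap Z}(U\cap Z;\cG)\iso H^q_{\Phi\cap Y}(Y;\cG_{|Y})$ for all $q\geq 0$.

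The main obstacle, as the text itself flags, is purely bookkeeping: one must be careful that the hypothesis on $Y$ is inherited by each $U\cap Z$ appearing in the limit and by each $D\cap Y$ in the way required to apply Lemma \ref{lem extending sections} and Proposition \ref{prop injective for soft and flabby} uniformly, and that the cofinal system of neighborhoods of the form $U\cap Z$ with $U$ open constructible in $X$ is genuinely a fundamental system of neighborhoods of $Y$ in $Z$ (this uses the quasi-compactness of $D\cap Y$, exactly as in the proof of Lemma \ref{clm extending sections no supports}). Once these compatibilities are in place, the argument is the standard one of \cite[Chapter II, Theorem 10.6]{b} transported to $\tfT$, and no new ideas are needed.
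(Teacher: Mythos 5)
Your proposal is correct and follows essentially the same route as the paper: the paper's proof consists precisely of the remark preceding the corollary (that the hypotheses make $Y$ $\Phi$-taut via Lemma \ref{lem extending sections} and Lemma \ref{clm extending sections no supports}, while the open sets $U\cap Z$ are automatically taut) followed by an appeal to \cite[Chapter II, Theorem 10.6]{b}. Your additional unfolding of that citation via a flabby resolution, degreewise application of Lemma \ref{lem extending sections}, and exactness of filtrant colimits is just the standard proof of that theorem transported to this setting, so nothing is missing.
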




For the locally semi-algebraic analogue of the above compare with \cite[Chapter II, Theorem 5.2]{D3}. 

Since  fibers of morphisms in $\tfT$ are quasi-compact (Remark \ref{tilde on sets and maps}) applying Corollary \ref{cor coho around} we obtain the following result (compare with \cite[Chapter II, Theorem 7.1]{D3}). 

\begin{thm}[Base change formula]\label{thm base change}
Let $f\colon X\to Y$ be a morphism in $\tfT$. Assume that $f$ maps  constructible closed subsets of $X$ to closed subsets of $Y$ and that  every $\alpha \in Y$ has an open constructible neighborhood $W$ in $Y$ such that $\alpha $ is closed in $W$ and $f^{-1}(W)$ is a normal and constructible subset of $X$. Let $\cF \in \mod (A_X)$. Then, for every $\alpha \in Y,$ the canonical homomorphism
\[(R^qf_*\cF)_{\alpha } \into H^q(f^{-1}(\alpha );\cF_{|f^{-1}(\alpha )})\,\,\,\]
is an isomorphism for every $q\geq 0$.
\end{thm}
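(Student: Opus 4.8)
The plan is to derive the Base change formula as a direct application of Corollary \ref{cor coho around}, taking $Z=X$ and $Y=f^{-1}(\alpha)$ viewed as a subspace of $X$. First I would verify that the hypotheses of Corollary \ref{cor coho around} hold in this setting. By Remark \ref{tilde on sets and maps}, the fiber $f^{-1}(\alpha)$ is a quasi-compact subset of $X$. The assumption that $\alpha$ has an open constructible neighborhood $W$ with $f^{-1}(W)$ normal and constructible, combined with the fact that $f^{-1}(\alpha)=\bigcap\{f^{-1}(W'): W' \text{ open constructible}, \alpha\in W'\}$, should let me show that the sets $f^{-1}(W')\cap W''$ (for suitable constructible opens) furnish a fundamental system of normal and constructible locally closed neighborhoods of $f^{-1}(\alpha)$ in $X$; here I would use that a constructible open subset of a normal constructible set is again normal and constructible, together with the shrinking-type arguments already employed in Lemmas \ref{clm extending sections no supports} and \ref{lem extending sections}. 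I take $\Phi$ to be the family of all closed subsets of $X$, which is a normal and constructible family of supports since $X$ (equivalently $f^{-1}(W)$, hence locally $X$) is $\T$-normal in the relevant charts — more carefully, I only need $\Phi\cap f^{-1}(\alpha)$ to behave well, and since $f^{-1}(\alpha)$ is quasi-compact and sits inside the normal constructible $f^{-1}(W)$, the family of all closed subsets restricts appropriately.

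Granting the hypotheses, Corollary \ref{cor coho around} gives, for every $\cG\in\mod(A_X)$ and every $q\geq 0$, an isomorphism
\[
\lind{f^{-1}(\alpha)\subseteq U}H^q_{\Phi\cap U}(U;\cG)\iso H^q(f^{-1}(\alpha);\cG_{|f^{-1}(\alpha)})
\]
where $U$ ranges over open constructible subsets of $X$. Next I would identify the left-hand side with the stalk $(R^qf_*\cG)_\alpha$. By definition of the direct image on sites and of higher direct images, $(R^qf_*\cG)_\alpha$ is the stalk at $\alpha$ of the sheafification of $W'\mapsto H^q(f^{-1}(W');\cG)$, and since $\widetilde Y_{\T'}$ has a basis of quasi-compact opens given by constructible opens (Fact \ref{fact spec spaces}), this stalk is computed as the filtered colimit $\lind{\alpha\in W'}H^q(f^{-1}(W');\cG)$ over open constructible neighborhoods $W'$ of $\alpha$ in $Y$. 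The hypothesis that $f$ maps constructible closed subsets to closed subsets is what ensures that $f^{-1}$ of the constructible neighborhoods of $\alpha$ is cofinal, among constructible opens of $X$, in the neighborhoods of $f^{-1}(\alpha)$: given any constructible open $U\supseteq f^{-1}(\alpha)$, the set $f(X\setminus U)$ — wait, $X\setminus U$ need not be constructible closed, but one can cover it; more robustly, $f(X\setminus U)$ being built from images of constructible closed sets is closed (or one argues directly that $Y\setminus f(X\setminus U)$ is an open neighborhood of $\alpha$ contained in the set of $\beta$ with $f^{-1}(\beta)\subseteq U$), so there is a constructible open $W'\ni\alpha$ in $Y$ with $f^{-1}(W')\subseteq U$. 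This cofinality lets me replace the colimit over $U\subseteq X$ by the colimit over $W'\subseteq Y$, matching the two expressions.

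Finally I would assemble these identifications: the colimit over constructible opens $W'$ of $\alpha$ of $H^q(f^{-1}(W');\cF)$ equals, via cofinality, the colimit over constructible opens $U$ of $f^{-1}(\alpha)$ of $H^q_\Phi(U;\cF)$ (noting $\Phi\cap U$ is the family of all closed subsets of $U$, so $H^q_{\Phi\cap U}(U;\cF)=H^q(U;\cF)$), which by Corollary \ref{cor coho around} equals $H^q(f^{-1}(\alpha);\cF_{|f^{-1}(\alpha)})$, while by the stalk description it equals $(R^qf_*\cF)_\alpha$. The main obstacle, I expect, is the bookkeeping around the two cofinality claims — verifying that $f^{-1}$ of constructible open neighborhoods of $\alpha$ is cofinal among constructible open neighborhoods of the fiber (this is exactly where the closed-map hypothesis is used and must be handled with care regarding constructibility), and verifying that the $f^{-1}(W)\cap W''$ genuinely form a \emph{fundamental system} of normal constructible locally closed neighborhoods rather than just a single such neighborhood, so that Corollary \ref{cor coho around} applies verbatim. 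Both are of the same flavor as the quasi-compactness arguments already used repeatedly above, so I would import those techniques directly.
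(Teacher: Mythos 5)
Your proposal follows essentially the same route as the paper: describe $(R^qf_*\cF)_\alpha$ as a filtered colimit of $H^q(f^{-1}(V);\cF)$ over constructible open neighborhoods $V$ of $\alpha$, use the closed-map hypothesis together with quasi-compactness of the fiber to show the $f^{-1}(V)$ are cofinal among constructible open neighborhoods of $f^{-1}(\alpha)$, and conclude by Corollary \ref{cor coho around}. Two points need tightening. First, your hesitation about $X\setminus U$ is unnecessary: objects of $\fT$ satisfy $X\in\T$, so the complement of a constructible open is constructible closed and the hypothesis on $f$ applies verbatim to $f(X\setminus U)$. Second, and more substantively, you cannot take $\Phi$ to be the family of all closed subsets of $X$: by Proposition \ref{normal supp} that is a normal and constructible family only when $X$ itself is $\T$-normal, which is not among the hypotheses. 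The correct move (and the paper's) is to apply Corollary \ref{cor coho around} with ambient space $Z=X=f^{-1}(W)$, the open, normal and constructible set guaranteed by hypothesis, taking $\Phi$ to be all closed subsets of $f^{-1}(W)$ and $Y=f^{-1}(\alpha)$; here the hypothesis that $\alpha$ is closed in $W$ (which your sketch never invokes) is what makes $f^{-1}(\alpha)$ closed in $f^{-1}(W)$, so that each $D\cap f^{-1}(\alpha)$ lies in $\Phi$ and inherits the required fundamental system of normal constructible neighborhoods. Since $f^{-1}(W)$ is itself one of the constructible opens containing the fiber, the colimit over constructible opens $U$ with $f^{-1}(\alpha)\subseteq U\subseteq f^{-1}(W)$ is cofinal in the one over all constructible opens of $X$ containing the fiber, so nothing is lost by this restriction. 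With that substitution your argument is exactly the paper's.
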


\begin{proof}
Fix $\alpha \in Y$. First notice that 
\[\{f^{-1}(V): V\subseteq W\,\,\textrm{an open constructible neighborhood of}\,\,\alpha \}\]
is a fundamental system of open neighborhoods of $f^{-1}(\alpha )$. 
Indeed, let $U$ be an open neighborhood of $f^{-1}(\alpha)$. Since $f^{-1}(\alpha )$ is quasi-compact, we may assume that  $U$ is an open constructible neighborhood of $f^{-1}(\alpha)$. Then by assumption, $f(X\setminus U)$ is a closed subset of $Y$ not containing $\alpha$. Let $V$ be an open constructible neighborhood of $\alpha $ in $W$ contained in $Y\setminus f(X\setminus U)$. Then $f^{-1}(V)\subseteq U$. 

Notice also that $R^qf_*\cF$ is the sheaf associated to the pre-sheaf sending $V$ to $H^q(f^{-1}(V);\cF)$. So 
\[(R^qf_*\cF)_{\alpha }=\lind {\alpha \subseteq V\subseteq W}H^q(f^{-1}(V);\cF)=\lind {f^{-1}(\alpha )\subseteq U}H^q(U;\cF)\]
where $V$ (resp. $U$) ranges through the family of open constructible subsets of $Y$ (resp. $X$).

Now by assumption, $\alpha \in W$ is closed in $W$ and $f^{-1}(W)$ is an open  normal and constructible subset of $X$ containing $f^{-1}(\alpha)$. So the family of all closed subset of $f^{-1}(W)$ is a normal and constructible family of supports on $f^{-1}(W)$ (Example \ref{expl supp tilde c}) and,  since $f^{-1}(\alpha )$ is closed in $f^{-1}(W),$ for every constructible closed subset $D\subseteq  f^{-1}(W),$ $D\cap f^{-1}(\alpha )$ is a quasi-compact  subset of $f^{-1}(W)$ having a fundamental system of normal and constructible locally closed neighborhoods in $f^{-1}(W)$.

Therefore the result follows from Corollary \ref{cor coho around} applied to $X=Z=f^{-1}(W),$ $\Phi$ the family of all closed subsets of $f^{-1}(W)$ and $Y=f^{-1}(\alpha )$.
\end{proof}

Using classical arguments the Base change formula implies  the following form of the  Vietoris-Begle theorem:

\begin{thm}[Vietoris-Begle theorem]\label{thm vietoris-b}
Let $f\colon X\into Y$ be a surjective  morphism  in $\fT$. Assume that $f$ maps  constructible closed subsets of $X$ onto  closed subsets of $Y$ and that  every $\alpha \in Y$ has an open constructible neighborhood $W$ in $Y$ such that $\alpha $ is closed in $W$ and $f^{-1}(W)$ is a normal and constructible subset of $X$.    Let $\cF\in \mod (A_Y),$ and suppose that  $f^{-1}({\alpha  })$ is connected and $H^q(f^{-1}({\alpha  }); f^{-1}\cF_{|f^{-1}({\alpha   })})$ $=0$ for $q>0$ and all $\alpha \in Y$. Then for any   constructible family of supports $\Psi $ on $Y$ the induced map
\[f^*\colon H^*_{\Psi }(Y;\cF)\into H^*_{f^{-1}\Psi }(X;f^{-1}\cF)\]
is an isomorphism.
\end{thm}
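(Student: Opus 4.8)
The plan is to reduce the statement to the $\T$-spectra and then invoke the Base change formula (Theorem~\ref{thm base change}) by a standard Leray argument. First I would reduce: by Definition~\ref{defn sheaf cohomo Phi} and Remark~\ref{nrmk def supp and cons supp}, replacing the datum $(X,Y,f,\cF,\Psi)$ by $(\tilde X,\tilde Y,\tilde f,\tilde\cF,\tilde\Psi)$ changes neither side of the asserted isomorphism nor the map $f^{*}$ between them, and leaves the hypotheses intact once the words ``constructible'', ``closed'', ``connected'', ``normal'' are read on the spectra (Remark~\ref{tilde on sets and maps}). So I may and do assume that $X$ and $Y$ are objects of $\tfT$, that $f$ is a continuous map of spectral spaces taking constructible closed subsets onto closed subsets, that every $\alpha\in Y$ has an open constructible neighbourhood $W$ in which $\alpha$ is closed and with $f^{-1}(W)$ normal and constructible, and that $f$ is surjective with each fibre $f^{-1}(\alpha)$ nonempty, connected, and with $H^q\big(f^{-1}(\alpha);(f^{-1}\cF)_{|f^{-1}(\alpha)}\big)=0$ for $q>0$.

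The second step is to show that the adjunction morphism $\cF\to Rf_*f^{-1}\cF$ is an isomorphism in $\der^{+}(A_Y)$. Fix $\alpha\in Y$. By Theorem~\ref{thm base change}, $(R^qf_*f^{-1}\cF)_\alpha\simeq H^q\big(f^{-1}(\alpha);(f^{-1}\cF)_{|f^{-1}(\alpha)}\big)$, which vanishes for $q>0$ by hypothesis. For $q=0$: since the composite $f^{-1}(\alpha)\hookrightarrow X\to Y$ factors through the point $\alpha$, functoriality of inverse image identifies $(f^{-1}\cF)_{|f^{-1}(\alpha)}$ with the constant sheaf on $f^{-1}(\alpha)$ of stalk $\cF_\alpha$, whose module of global sections is $\cF_\alpha$ because $f^{-1}(\alpha)$ is nonempty and connected. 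Hence $\cF\to f_*f^{-1}\cF$ is an isomorphism on every stalk and $R^qf_*f^{-1}\cF=0$ for $q>0$, which is the claim.

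The third step is the Leray argument for the family of supports $f^{-1}\Psi$. I would first check that $\Gamma_{f^{-1}\Psi}(X;\,\cdot\,)=\Gamma_{\Psi}(Y;\,\cdot\,)\circ f_*$ as functors on $\mod(A_X)$: indeed $\Gamma(X;\cG)=\Gamma(Y;f_*\cG)$, and a computation on germs gives $\supp_Y(f_*s)=\overline{f(\supp_X s)}$, so, the members of $\Psi$ being closed and $\Psi$ being stable under passing to closed subsets, $\supp_X s\in f^{-1}\Psi$ if and only if $\supp_Y(f_*s)\in\Psi$. Since $f_*$ carries injective (hence flabby) sheaves to flabby ones, and flabby sheaves are $\Gamma_\Psi(Y;\,\cdot\,)$-acyclic (\cite[Chapter~II]{b}), the Grothendieck composition theorem gives a natural isomorphism $R\Gamma_{f^{-1}\Psi}(X;\,\cdot\,)\simeq R\Gamma_{\Psi}(Y;Rf_*(\,\cdot\,))$. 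Applying this to $f^{-1}\cF$ and combining with the second step,
\[
R\Gamma_{f^{-1}\Psi}(X;f^{-1}\cF)\;\simeq\;R\Gamma_{\Psi}(Y;Rf_*f^{-1}\cF)\;\simeq\;R\Gamma_{\Psi}(Y;\cF),
\]
the composite being induced by $\cF\to Rf_*f^{-1}\cF$; on cohomology this composite is exactly $f^{*}$, which is therefore an isomorphism.

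The step I expect to be the main obstacle is the Leray identification $R\Gamma_{f^{-1}\Psi}\simeq R\Gamma_{\Psi}\circ Rf_*$: it rests on the support computation $\Gamma_{f^{-1}\Psi}(X;\,\cdot\,)=\Gamma_{\Psi}(Y;\,\cdot\,)\circ f_*$ and on $f_*$ sending the injectives computing $Rf_*$ to $\Gamma_\Psi$-acyclic sheaves. The fibrewise input, by contrast, is delivered directly by Theorem~\ref{thm base change} --- together with surjectivity and connectedness, which are precisely what force the degree-$0$ direct image to be $\cF$ --- and, once $R^qf_*f^{-1}\cF=0$ for $q>0$, the degeneration of the resulting spectral sequence is automatic.
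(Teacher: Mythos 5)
Your proposal is correct and follows essentially the same route as the paper: the paper factors $f^*$ as the edge homomorphism of the Leray spectral sequence $H^p_{\Psi}(Y;R^qf_*(f^{-1}\cF))\Rightarrow H^{p+q}_{f^{-1}\Psi}(X;f^{-1}\cF)$ composed with the map induced by the adjunction $\cF\to f_*f^{-1}\cF$, and shows both are isomorphisms via Theorem~\ref{thm base change} and the connectedness/acyclicity of the fibres, exactly as you do. Your derived-category packaging of the Leray step (the support identity $\Gamma_{f^{-1}\Psi}=\Gamma_{\Psi}\circ f_*$ plus acyclicity of direct images of injectives) is just the explicit form of the spectral sequence the paper invokes, and the reduction to the $\T$-spectra is already built into Definition~\ref{defn  sheaf cohomo Phi}.
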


\begin{proof}
The homomorphism $f^*\colon H^*_{\Psi}(Y;\cF)\into H^*_{f^{-1}\Psi}(X;f^{-1}\cF)$ is the composition $\epsilon \circ \eta $ where 
$$\epsilon \colon  H^*_{\Psi }(Y; f_*(f^{-1}\cF))\to H^*_{f^{-1}\Psi}(X,f^{-1}\cF)$$
is the canonical edge homomorphism $E_2^{*,0}\to E^*$ in the Leray spectral sequence 
\[H^p_{\Psi}(Y;R^qf_*(f^{-1}\cF))\Rightarrow H^{p+q}_{f^{-1}\Psi }(X; f^{-1}\cF)\]
of $f^{-1}\cF$ with respect to $f$ and $\eta \colon  H^*_{\Psi }(Y;\cF)\to H^*_{f^{-1}\Psi }(X;f_*(f^{-1}\cF))$ is induced by the canonical adjunction homomorphism 
\[\cF \to f_*f^{-1}\cF.\]

By Theorem \ref{thm base change} and the  hypothesis, $R^qf_*(f^{-1}\cF)=0$ for all $q>0$ and so the Leray sequence splits and $\epsilon $ is an isomorphism. On the other hand, by Theorem \ref{thm base change} and since $f^{-1}(\alpha )$ is connected, we have:
\begin{eqnarray*}
(f_*f^{-1}\cF)_{\alpha } &=&(R^0f_*f^{-1}\cF)_{\alpha }\\
&\simeq & H^0(f^{-1}(\alpha ); (f^{-1}\cF)_{|f^{-1}(\alpha )})\\
 &\simeq & H^0(f^{-1}(\alpha ); \cF_{\alpha })\\
&\simeq & \cF _{\alpha}.
\end{eqnarray*}
Hence adjunction homomorphism $\cF \to f_*f^{-1}\cF$ is an isomorphism and $\eta $ is also an isomorphism.
\end{proof}

We end this section with the following, which follows quickly from previous results is exactly the same way as its topological analogue (\cite[Chapter II, 16.1]{b}): 

\begin{prop}\label{prop soft open and phi-dim}
Assume that $X$ is an object  in $\tfT$,  $\Phi $ is a normal and  constructible family of supports on $X$ and ${\mathcal F}$ is a sheaf in $\mod (A_X)$. Then the  following are equivalent:
\begin{enumerate}
\item
${\mathcal F}$ is $\Phi $-soft;
\item
${\mathcal F}_U$ is $\Gamma _{\Phi }$-acyclic for all open and constructible $U\subseteq X$;
\item
$H^1_{\Phi }(X;{\mathcal F}_U)=0$ for all open and constructible $U\subseteq X$;
\item
$H^1_{\Phi |U}(U,{\mathcal F}_{|U})=0$ for all open and constructible $U\subseteq X$.
\end{enumerate}
\end{prop}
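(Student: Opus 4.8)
The plan is to mimic the classical topological argument from \cite[Chapter II, 16.1]{b}, the key point being that the hypotheses put us exactly in the situation where Proposition \ref{prop phi soft} and Fact \ref{fact soft loc and phi-dim} are available. First I would prove $(1)\Rightarrow(2)$: if $\cF$ is $\Phi$-soft and $U\subseteq X$ is open and constructible, then by Fact \ref{fact soft loc and phi-dim}(iii) the sheaf $\cF_U$ is again $\Phi$-soft; since $\Phi$ is a normal and constructible family of supports, every $C\in\Phi$ has a neighbourhood in $\Phi$, so Proposition \ref{prop injective for soft and flabby}(2) applied with $Z=Y=X$ shows that $\Phi$-soft sheaves are $\Gamma_\Phi(X;\bullet)$-acyclic (one resolves $\cF_U$ by $\Phi$-soft sheaves and observes that the resolution stays exact after applying $\Gamma_\Phi$, so all higher cohomology vanishes). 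Hence $\cF_U$ is $\Gamma_\Phi$-acyclic, which is $(2)$.

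Next, $(2)\Rightarrow(3)$ is trivial, and $(3)\Leftrightarrow(4)$ follows from Corollary \ref{cor hphi and !}: for $U\subseteq X$ open constructible we have $H^*_\Phi(X;\cF_U)\cong H^*_\Phi(X;i_{U!}(\cF_{|U}))\cong H^*_{\Phi|U}(U;\cF_{|U})$, using the identification $\cF_U\simeq i_{U!}\circ i_U^{-1}\cF$ from \eqref{loc closed rest} and the fact that $U$ is a (constructible) locally closed subset of $X$ while $\Phi|U$ is again normal and constructible.

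The substantive implication is $(3)\Rightarrow(1)$ (equivalently $(4)\Rightarrow(1)$), and this is where I expect the main work. Here I would argue as in the topological case: by Proposition \ref{prop phi soft}(5) it suffices to show that $\Gamma(X;\cF)\to\Gamma(Z;\cF_{|Z})$ is surjective for every constructible $Z\in\Phi$. Given $s\in\Gamma(Z;\cF_{|Z})$, use that $\Phi$ is normal and constructible to find open constructible $V$ and closed constructible $C\in\Phi$ with $Z\subseteq V\subseteq C$, and apply Lemma \ref{clm extending sections no supports} to extend $s$ to a section over an open constructible $U$ with $Z\subseteq U\subseteq V$. Then $Z$ is closed constructible in $U$, so one forms the short exact sequence $0\to \cF_{U\setminus Z}\to \cF_U\to \cF_Z\to 0$ (all terms supported in $C$, hence with supports in $\Phi$); the extended section of $\cF_Z$ lifts to $\cF_U$ iff the connecting map into $H^1_\Phi(X;\cF_{U\setminus Z})$ kills its class, and since $U\setminus Z$ is open constructible, hypothesis $(3)$ gives $H^1_\Phi(X;\cF_{U\setminus Z})=0$. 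Therefore the section lifts to $\Gamma_\Phi(X;\cF_U)\subseteq\Gamma(X;\cF)$, proving softness. The delicate point — the one requiring the constructibility and normality hypotheses throughout — is ensuring at each step that the auxiliary open and closed sets produced are \emph{constructible}, so that Lemmas \ref{clm extending sections no supports}, \ref{lem extending sections} and the shrinking lemma (Corollary \ref{cor shrinking lemma}) genuinely apply; this is handled exactly as in the proof of Proposition \ref{prop injective for soft and flabby}, replacing $X$ by a constructible open subset when necessary so that the relevant locally closed sets become closed.
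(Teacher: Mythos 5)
Your proposal is correct and takes essentially the same approach as the paper: $(1)\Rightarrow(2)$ via Fact \ref{fact soft loc and phi-dim}(iii) and Proposition \ref{prop injective for soft and flabby}, $(3)\Leftrightarrow(4)$ via Corollary \ref{cor hphi and !}, and the key implication via a short exact sequence $0\to\cF_{\mathrm{open}}\to\cF_{\mathrm{closed\ complement}}\to 0$ combined with the vanishing hypothesis and Proposition \ref{prop phi soft}(5). The only difference is that the paper simply takes the sequence $0\to\cF_{X\setminus C}\to\cF\to\cF_C\to 0$ for a constructible $C\in\Phi$ (i.e.\ your $U$ can be taken to be all of $X$), so your preliminary extension of $s$ via Lemma \ref{clm extending sections no supports} and the localization to a constructible open neighbourhood of $Z$ are redundant, though harmless.
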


\begin{proof}
(1) $\Rightarrow $ (2) follows from Proposition \ref{prop injective for soft and flabby} and  Fact \ref{fact soft loc and phi-dim} (iii). (2) $\Rightarrow $ (3) is trivial. (3) is equivalent to (4) by Corollary \ref{cor hphi and !}. For (4) $\Rightarrow $  (1), consider a constructible closed set $C$ in $\Phi $ and the exact sequence $0\into {\mathcal F}_{X\setminus C}\into {\mathcal F}\into {\mathcal F}_C\into 0$.
The associated long exact cohomology sequence
$$\dots \rightarrow \Gamma _{\Phi }(X;{\mathcal F})\rightarrow \Gamma _{\Phi |C}(X;{\mathcal F}_{|C})\rightarrow H_{\Phi |X\setminus C}^1(X\setminus C;{\mathcal F}_{|X\setminus C})\rightarrow \dots $$
shows that $\Gamma _{\Phi }(X;{\mathcal F})\into  \Gamma _{\Phi |C}(X;{\mathcal F}_{|C})$ is surjective. Hence ${\mathcal F}$ is $\Phi $-soft by Proposition \ref{prop phi soft} (5).
\end{proof}

\section{A site on definable sets in~\texorpdfstring{$\bGi$}{G}}\label{section t-top in bGi}

We will assume some familiarity with o-minimality. We refer the reader to classical texts like \cite{vdd}. Note that we allow o-minimal structures to have end points, which is a subtle difference from \cite{vdd}. Most results from \cite{vdd} still hold in this context.  

Let $\bG=(\nG , < , \ldots)$ be an  o-minimal structure without end points.  Let $\infty $ be a new symbol and set $\nGi=\nG \cup \{\infty \}$ with $x<\infty $ for all $x\in \nG$.  The example that we have in mind is the case where $\bG=(\nG, <, +)$ is the value group of algebraically closed field and $\infty $ is the valuation of $0$. 

When $(\nG, <)$ is non-archimedean as a linear order (i.e., does not embed into the reals), then infinite definable sets in the structure $\bGi$ induced by $\bG$ on $\nGi$ with their natural topology are in general totally disconnected and not locally compact. The goals in  this section are: (i) to introduce an appropriate site which will replace the topology just mentioned and show that definable sets with this site are $\T$-spaces (Subsection \ref{subsection o-min site in bGi}); (ii) to show that for the associated notion of $\T$-normality,  when $\bG$ is an o-minimal expansion of an ordered group, $\T$-locally closed subsets are finite unions of $\T$-open subsets which are $\T$-normal (Subsection \ref{subsection  def-comp and def-normal in bGi}). We point out  that normality in $\bGi$ is more complicated than in $\bG$ (see Example \ref{expl def normal} below), which justifies the extra work that will be done below.

\subsection{The o-minimal site on~\texorpdfstring{$\bGi$}{G}-definable sets}\label{subsection o-min site in bGi}
Here we introduce the natural first-order logic structure $\bGi$ induced by $\bG$ on $\nGi$ and show that $\bGi$-definable sets are equipped with a site making them $\T$-spaces.\\ 
 
For $x=(x_1, \ldots, x_m)\in \nGim,$ the {\it support of $x$}, denoted $\supp\, x,$ is defined by 
\[
\supp \, x=\{i\in \{1, \ldots, m\}:  x_i\neq \infty \}.
\]
For $L\subseteq \{1,\ldots , m\}$ let 
\[
(\nGim )_{L}=\{x\in \nGim: \supp \, x= L\}.
\]
Then $\nGim=\bigsqcup _{L\subseteq \{1,\ldots , m\}}(\nGim )_{L}$,  $(\nGim )_{\{1,\ldots, m\}}=\nGm$ and,  if $\tau  _{L}\colon \nGim\to \nGiL$ is the projection onto the $|L|$ coordinates  in $L,$ then the restriction  $\tau  _{L|}\colon (\nGim)_{L}\to \nGL$ is a bijection and $\tau  _{\{1,\ldots,m\} |}\colon \nGm \to \nGm$ is the identity.
 
If $\pi \colon \nGim \to \nGik$  is the projection onto the first $k$ coordinates, $\pi '\colon \nGim \to \Gamma ^{m-k}_{\infty }$  is the projection onto the last $m-k$ coordinates,  and for $L\subseteq \{1,\ldots, m\},$ $\pi (L)=\{1,\ldots, k\}\cap  L$ and $\pi '(L)=-k+\{k+1,\ldots, m\}\cap L,$ then:
\begin{itemize}
 \item[(*)]
\[\supp \,x=\supp \, \pi (x)\sqcup (k+\supp \, \pi'(x))\] 
and moreover,
\item[]
\[
\xymatrix{
(\nGim )_L\ar@{^{(}->}[r] \ar[d]^{\pi_|} &\nGim \ar[r]^{\tau _L} \ar[d]^{\pi } & \nGiL \ar[d]^{\pi ^L_{k}}  \\
(\nGik )_{\pi (L)}\ar@{^{(}->}[r]  &\nGik \ar[r]^{\tau _{\pi (L)}\,\,} & \nGipiL
}
\]
where $\pi ^L_k$ is projection onto the first $\# \pi (L)$ coordinates and 
\[
\xymatrix{
(\nGim )_L\ar@{^{(}->}[r] \ar[d]^{\id } &\,\,\,\,\nGim \,\,\,\, \ar[r]^{\tau _L} \ar[d]^{\id } & \,\,\,\,\nGiL \ar[d]^{\id}  \\
(\nGik )_{\pi (L)}\times (\nGimk )_{\pi '(L)}\ar@{^{(}->}[r]  &\,\,\,\, \nGik \times \nGimk \,\,\,\, \ar[r]^{\tau _{\pi (L)}\times \tau _{\pi '(L)}} & \,\,\,\,\,\,\,\,\,\,\,\,\nGipiL\times \nGiqiL
}
\]
are commutative diagrams.\\
\end{itemize}

If for $X\subseteq \nGim$ and for $L\subseteq \{1,\ldots , m\}$ we set 
\[X_{L}=X\cap (\nGim)_L\]
then $X=\bigsqcup _{L\subseteq \{1,\ldots , m\}}X_{L}$ and  $X_{\{1,\ldots, m\}}=X\cap \Gamma ^m$. Furthermore, the restriction 
$\tau  _{L|}\colon X_{L}\to \tau _{L}(X_{L})$
 is a bijection, $\tau _L(X_L)\subseteq \nG ^{|L|}$ and $\tau  _{\{1,\ldots,m\} |}\colon X\cap \nGm\to X\cap \nGm$ is the identity.\\

For each $m$ let 
\[
\mathfrak{G}_m=\{X\subseteq \nGim:   \tau _{L}(X_{L})\subseteq \nG ^{|L|}\,\,\textrm{is $\bG$-definable}\,\,\textrm{for every}\,L\subseteq \{1, \ldots ,m\}\}.
\] 

Recall that an o-minimal structure (possibly with end points) has definable Skolem functions if given a definable family $\{Y_t\}_{t\in T}$, there is a definable function $f\colon T\to \bigcup_{t\in T} Y_t$ such that $f(t)\in Y_t$ for each $t\in T$. By the (observations before the) proof of \cite[Chapter 6, (1.2)]{vdd} (see also Comment (1.3) there), the o-minimal structure has definable Skolem functions if and only if every nonempty definable set $X$  has a definable  element  $e(X)\in X$.  

The following Proposition is left to the reader. 

\begin{prop}\label{prop bGi-def}
$\bGi =(\nGi,  (\mathfrak{G}_m)_{m \in {\mathbb N}})$ is an o-minimal structure  with right end point $\infty$. Moreover
if $\bG$ has definable Skolem functions then $\bGi$ has definable Skolem functions. \qed
\end{prop}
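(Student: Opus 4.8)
The plan is to verify directly that $\bGi = (\nGi, (\mathfrak{G}_m)_{m \in \NN})$ satisfies the axioms of an o-minimal structure with right end point $\infty$, working coordinate-by-coordinate via the decomposition $\nGim = \bigsqcup_{L \subseteq \{1,\dots,m\}} (\nGim)_L$ and the bijections $\tau_{L|}\colon (\nGim)_L \to \nG^{|L|}$. First I would check that $(\mathfrak{G}_m)_m$ is a structure: closure under Boolean operations is immediate since for $X, Y \in \mathfrak{G}_m$ one has $(X \cap Y)_L = X_L \cap Y_L$ and $(X \setminus Y)_L = X_L \setminus Y_L$, and $\tau_L$ being a bijection on $(\nGim)_L$ commutes with these operations, so $\bG$-definability of each $\tau_L(X_L), \tau_L(Y_L)$ transfers. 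For closure under projections $\pi\colon \nGim \to \nGik$ (say onto the first $k$ coordinates), I would use the commutative diagrams marked $(*)$ in the excerpt: $\pi(X_L) \subseteq (\nGik)_{\pi(L)}$ and under $\tau_{\pi(L)}$ this becomes the image of $\tau_L(X_L)$ under the coordinate projection $\pi^L_k\colon \nG^{|L|} \to \nG^{|\pi(L)|}$, which is $\bG$-definable; then $\pi(X)_{M} = \bigcup_{L : \pi(L) = M} \pi(X_L)$ is a finite union of $\bG$-definable sets. One also checks $<$ and the diagonal are in $\mathfrak{G}$, which is routine.

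Next I would verify o-minimality: every $\bGi$-definable subset of $\nGi$ is a finite union of points and open intervals (where now intervals may have $\infty$ as right end point). A definable $X \subseteq \nGi$ decomposes as $X_\emptyset \sqcup X_{\{1\}}$ where $X_{\{1\}} = X \cap \nG$ is $\bG$-definable hence a finite union of points and intervals of $\nG$ by o-minimality of $\bG$, and $X_\emptyset \subseteq \{\infty\}$ is a single point or empty; combining, $X$ is a finite union of points and intervals in $\nGi$, so $\bGi$ is o-minimal, and it has right end point $\infty$ by construction. The main obstacle here — the point Hrushovski and Loeser flag as subtle — is being careful that the topology/order-theoretic structure near $\infty$ behaves well: intervals $(a, \infty]$ must count as definable intervals, and one must confirm that adding the single extra point does not create new definable subsets of higher-dimensional spaces beyond those captured by the $\tau_L$-decomposition. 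This is precisely what the definition of $\mathfrak{G}_m$ is engineered to handle, so the verification reduces to bookkeeping over the $2^m$ strata.

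For the final clause, suppose $\bG$ has definable Skolem functions; by the cited criterion (before \cite[Chapter 6, (1.2)]{vdd}) it suffices to show every nonempty $\bGi$-definable $X \subseteq \nGim$ has a $\bGi$-definable element. Pick the least $L$ (say lexicographically, or any fixed definable choice) with $X_L \neq \emptyset$; then $\tau_L(X_L) \subseteq \nG^{|L|}$ is nonempty and $\bG$-definable, so it has a $\bG$-definable element $e$, and $\tau_{L|}^{-1}(e) \in X_L \subseteq X$ is the desired point, which is $\bGi$-definable since filling in $\infty$ at the coordinates outside $L$ is a $\bGi$-definable operation. I expect the whole proof to be straightforward but somewhat tedious; the one place demanding genuine care is closure under projection, since that is where the strata $(\nGim)_L$ with different $L$ get merged and one must track how $\supp$ interacts with $\pi$ via the identity $\supp x = \supp \pi(x) \sqcup (k + \supp \pi'(x))$.
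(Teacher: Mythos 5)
The paper leaves this proposition to the reader, and your verification is precisely the one its preparatory material is set up for: stratification of a definable set by the support sets $(\nGim)_L$, transfer through the bijections $\tau_{L|}$, the commutative diagrams $(*)$ for closure under projection, and the cited criterion reducing definable Skolem functions to the existence of a definable point in every nonempty definable set. Your argument is correct as it stands (the only step deserving the care you already flag is projection, where strata with different $L$ merge), so there is nothing to add.
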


Results in o-minimality usually are stated and proved for o-minimal structures without end points. Nearly all of those results can be checked to hold with exactly the same proof when there is an endpoint. Nevertheless,  for convenience,  we will now introduce a new structure without end points which contains as a substructure a copy of $\bGi$.\\

Let $\nSg=\{0\}\times \nGi \cup \{1\}\times \nG$ be equipped with the natural order extending $<$ on $\nG$ and on $\nGi$ such  that  $(0,x) < (1,y)$ for all $x\in \nGi$ and all $y\in \nG$. 

Set $\nSg _0=\{0\}\times \nGi,$ $\nSg _1=\{1\}\times \nG$ and for $\alpha  \in 2^m,$  which below we identify with a sequence of 0's and 1's of length $m,$ let
\[
\nSg _{\alpha }=\Pi _{i=1}^m\nSg _{\alpha (i)}.
\]
Then $\nSgm=\bigsqcup _{\alpha  \in 2^m}\nSg _{\alpha },$ $\nSg _{\bar{0}}=\nSg _0^m$ and $\nSg _{\bar{1}}=\nSg _1^m$. If 
\[\sigma _m\colon \nSgm \to \nGim \colon  ((\alpha (1),x_1), \ldots, (\alpha  (m),x_m))\mapsto (x_1,\ldots ,x_m)\]
 is the natural projection, then the restriction  $\sigma _{m|}\colon \nSg _{\alpha }\to \nGim$
 is injective, $\sigma _{m|}\colon \nSg _{\bar{0}}\to \nGim$ is a bijection with inverse the natural inclusion $\iota _m\colon \nGim \to \nSg _0^m\colon  (x_1,\ldots , x_m)\mapsto ((0,x_1), \ldots , (0,x_m))$ and $\sigma _{m|}\colon \nSg _{\bar{1}}\to \nGm$ is a bijection with inverse the natural inclusion $\iota _m\colon \nGm \to \nSg _1^m\colon  (x_1,\ldots , x_m)\mapsto ((1,x_1), \ldots , (1,x_m))$\\
 
If  $\pi \colon \nSgm \to \nSgk$  is the projection onto the first $k$ pairs of coordinates,  $\pi '\colon \nSgm \to \nSgmk$  is the projection onto the last $m-k$ pairs of coordinates, for $\alpha \in 2^m,$   $\pi (\alpha )\in 2^k$ is given by $\pi (\alpha  )=\alpha  _{|\{1,\ldots, k\}}$ and $\pi '(\alpha)\in 2^{m-k}$ is  given by $\pi '(\alpha  )(s)=\alpha (s+k),$ then: 
\begin{itemize}
 \item[(**)]
\[\alpha =\pi (\alpha )* \pi'(\alpha ),\] 
where $*$ is concatenation of sequences, and moreover,  
\[
\xymatrix{
\nSg _{\alpha }\ar@{^{(}->}[r] \ar[d]^{\pi_|} &\nSgm \ar[r]^{\sigma _m} \ar[d]^{\pi } & \nGim \ar[d]^{\pi }  \\
\nSg _{\pi (\alpha )}\ar@{^{(}->}[r]  &\nSgk \ar[r]^{\sigma _{k}\,\,} & \nGik
}
\]
and 
\[
\xymatrix{
\nSg _{\alpha }\ar@{^{(}->}[r] \ar[d]^{\id } &\,\,\,\,\nSgm \,\,\,\, \ar[r]^{\sigma _m} \ar[d]^{\id } & \,\,\,\,\nGim \ar[d]^{\id}  \\
\nSg _{\pi (\alpha )}\times \nSg _{\pi '(\alpha )}\ar@{^{(}->}[r]  &\,\,\,\, \nSgk \times \nSgmk \,\,\,\, \ar[r]^{\sigma  _{k}\times \sigma _{m-k}} & \,\,\,\,\,\,\,\,\,\,\,\,\nGik\times \nGimk
}
\]
are commutative diagrams.\\
\end{itemize}

If for $X\subseteq \nSgm$ and for $\alpha \in 2^m$ we set 
\[X_{\alpha }=X\cap \nSg _{\alpha }\]
then $X=\bigsqcup _{\alpha \in  2^m}X_{\alpha }$. Furthermore, the restriction 
$\sigma   _{m|}\colon X_{\alpha }\to \sigma _{m}(X_{\alpha })$
 is a bijection, $\sigma  _m(X_{\alpha })\subseteq \nGim$.\\

For each $m$ let 
\[
\mathfrak{S}_m=\{X\subseteq \nSgm:   \sigma _{m}(X_{\alpha })\subseteq \nGim\,\,\textrm{is $\bGi$-definable}\,\,\textrm{for every}\,\sigma \in 2^m\}.
\]

Similarly to Proposition \ref{prop bGi-def} we have:

\begin{prop}\label{prop bSg-def}
$\bSg=(\nSg,  (\mathfrak{S}_m)_{m\in {\mathbb N}})$ is an o-minimal structure. Moreover, if $\bG$ has definable Skolem functions then $\bSg$ has definable Skolem functions. \qed
\end{prop}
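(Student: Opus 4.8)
The claim is the exact analogue of Proposition \ref{prop bGi-def}, with $\bSg$ playing the role there played by $\bGi$ and the ground structure being $\bGi$ itself rather than $\bG$; so the plan is to mirror the (omitted) argument for Proposition \ref{prop bGi-def}, replacing the two-element index set $\{L,\text{no }L\}$ hidden in ``$\supp x=L$'' by the genuinely product-indexed family $\{\nSg_\alpha\}_{\alpha\in 2^m}$. First I would verify that $\bSg$ is a structure, i.e. that each $\mathfrak{S}_m$ is a boolean algebra of subsets of $\nSgm$ closed under the coordinate projections and permutations, containing the graph of $+$ restricted appropriately, and the diagonal; this is immediate once one checks that, for a fixed $\alpha\in 2^m$, the map $\sigma_{m|}\colon\nSg_\alpha\to\nGim$ identifies $\nSg_\alpha$ with $\nGim$ in a way compatible with the coordinatewise operations — which is exactly the content of the two commutative diagrams in $(**)$. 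Since $\nSgm=\bigsqcup_{\alpha\in 2^m}\nSg_\alpha$ is a \emph{finite} disjoint union and each piece is glued, via $\sigma_m$, to $\nGim$, a subset $X$ lies in $\mathfrak{S}_m$ iff each $\sigma_m(X_\alpha)$ is $\bGi$-definable, and finiteness of $2^m$ makes $\mathfrak{S}_m$ closed under all the boolean and projection operations, using closure of the $\bGi$-definable sets under the corresponding operations on the $\nGim$ side together with the diagrams in $(**)$.

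For o-minimality of $\bSg$ I would argue on $\nSg$ directly: a definable subset $Y\subseteq\nSg$ decomposes as $Y_0\sqcup Y_1$ with $Y_0\subseteq\nSg_0=\{0\}\times\nGi$ and $Y_1\subseteq\nSg_1=\{1\}\times\nG$, and $\sigma_1(Y_0)$ is a $\bGi$-definable subset of $\nGi$ while $\sigma_1(Y_1)$ is a $\bGi$-definable subset of $\nG\subseteq\nGi$. Each of these is a finite union of points and open intervals in the respective order, by o-minimality of $\bGi$ (Proposition \ref{prop bGi-def}); pulling back along $\sigma_1$ and using that the order on $\nSg$ places all of $\nSg_0$ below all of $\nSg_1$, one sees $Y$ is a finite union of singletons and intervals of $\nSg$. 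The only point requiring a remark is that $\nGi$ has a right endpoint $\infty$, so an interval of $\nSg_0$ of the form $(a,\infty]$ is allowed; but $\nSg$ itself has no endpoints because $\nSg_1\cong\nG$ is unbounded above and $\nSg_0$ is unbounded below (as $\nG$ has no left endpoint). Hence $\bSg$ is an o-minimal structure without endpoints.

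Finally, for the transfer of definable Skolem functions: assuming $\bG$ has them, Proposition \ref{prop bGi-def} already gives that $\bGi$ has them, equivalently every nonempty $\bGi$-definable set has a $\bGi$-definable point; I would then show the same for $\bSg$ by, given a nonempty definable $X\subseteq\nSgm$, choosing the least $\alpha\in 2^m$ (in, say, lexicographic order on $2^m$) with $X_\alpha\neq\varnothing$, taking a $\bGi$-definable point $p\in\sigma_m(X_\alpha)\subseteq\nGim$, and returning $\sigma_{m|}^{-1}(p)\in\nSg_\alpha$; this is $\bSg$-definable because the choice of $\alpha$ is definable (finitely many cases) and $\sigma_{m|}^{-1}$ on $\nSg_\alpha$ is the obvious definable bijection. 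I expect the only mildly delicate step to be the bookkeeping in the structure axioms: one must be careful that the projections $\pi,\pi'$ on $\nSgm$ genuinely respect the decomposition into the $\nSg_\alpha$'s via $\alpha=\pi(\alpha)*\pi'(\alpha)$ and the second diagram of $(**)$, so that definability of $\sigma_m(X_\alpha)$ for all $\alpha$ really is preserved under projection — but this is exactly why the diagrams were set up, and there is no substantive obstacle beyond routine verification.
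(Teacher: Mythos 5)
Your proposal is correct and is exactly the routine verification the paper leaves to the reader (the proposition is stated with the proof omitted, in parallel with Proposition~\ref{prop bGi-def}): the finite decomposition $\nSgm=\bigsqcup_{\alpha\in 2^m}\nSg_\alpha$ together with the diagrams in $(**)$ reduces every structure axiom, o-minimality on the line, and the Skolem-function transfer to the corresponding facts in $\bGi$. The only superfluous item is your mention of the graph of $+$: $\bG$ is an arbitrary o-minimal structure (not necessarily a group), and the axioms for a structure require only the boolean operations, products, diagonals, projections and the order, all of which your argument handles.
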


The following remarks are obvious from the constructions above:

\begin{nrmk}[Natural embeddings]\label{nrmk embeddings}
We have:
\begin{itemize}
\item[(a)]
The structure $\bG$ and the substructure $(\nG,  (\mathfrak{G}_m \cap \nGm )_{m\in {\mathbb N}})$ of $\bGi$ have the same definable sets.
\item[(b)]
Under the inclusion $\iota _1\colon \nGi\to \nSg _0,$ the structure $\bGi$ and the substructure $(\nSg _0,  (\mathfrak{S}_m \cap \nSg _0^m )_{m\in {\mathbb N}})$ of $\bSg$ have the same definable sets.
\end{itemize}
So when convenient, below  we will often identify $\bGi$ with its copy in $\bSg$.
\end{nrmk}

We now make a couple of observations comparing $\bG,$ $\bGi $ and $\bSg$.

\begin{nrmk}[The topologies]\label{nrmk top}
Let $\bG=(\nG, <,\ldots )$ be an arbitrary o-minimal structure without end points. Then the topology of $\nG$ is generated by the open intervals $(-\infty , a), (a,b)$ and $(b, \infty )$ with $a,b\in \nG$  and the topology of $\nG ^n$ is generated by the products of $n$ such open intervals. 

In $\bGi$ the topology of $\nGi$ is generated by the open intervals $(-\infty , a), (a,b)$ and $(b, \infty ]$ with $a,b\in \nG$ and the topology of $\nGin$ is generated by the products of $n$ such open intervals.

It follows that:  
\begin{itemize}
\item
$\nGm$ is open in $\nGim$ and the topology on $\nGm$ is the induced topology from $\nGim;$ 
\item
$\nGim$ is closed in $\nSgm$ and the topology on $\nGim$ is the induced topology from $\nSgm$.
\end{itemize}
\end{nrmk}

Recall the notion of {\it cell in $\nGin$} defined inductively as follows:
\begin{itemize}
 \item
 a  cell in $\nGi$ is either a singleton $\{a\}$ or $\{\infty \},$  or an open interval of the form $(-\infty , a)$ or $(a,b)$  for $a,b\in \nG$ with $a<b,$
 \item
 a cell in $\Gamma ^{n+1}_{\infty }$ is a set of the form 
 \[\Gamma(h)=\{(x,y)\in \Gamma ^{n+1}_{\infty }: x\in X\,\,\textrm{and}\,\,y=h(x)\},\] 
 or 
 \[(f,g)_X=\{(x,y)\in \Gamma ^{n+1}_{\infty }: x\in X\,\,\textrm{and}\,\,f(x)<y<g(x)\},\] 
 or
 \[(-\infty , g)_X=\{(x,y)\in \Gamma ^{n+1}_{\infty }: x\in X\,\,\textrm{and}\,\,y<g(x)\},\] 
 for some $\bGi$-definable and continuous maps $f, g,h\colon X\to \nGi$ with $f<g$, where $X$ is a  cell in $\nGin$.
\end{itemize}
In either case, $X$ is called \emph{the domain} of the defined cell. 

\begin{nrmk}[Cell decompositions]\label{nrmk cd}
As in \cite[Chapter 3, (2.11)]{vdd} (the proof also works in the case with end points) we have:
\begin{itemize}
\item
Given any $\bGi$-definable sets $A_1, \dots, A_k\subseteq  \nGin$, there is a decomposition $\cC$ of $\nGin$ that partitions each $A_i$.
\item
Given a $\bGi$-definable map $f\colon A\to  \nGi$, there is a decomposition $\cC$ of $\nGin$ that partitions $A$ such that the restriction $f_{|B}$ to each $B\in \cC$ with $B\subseteq  A$ is continuous.
\end{itemize}
Here (see \cite[Chapter 3, (2.10)]{vdd}) a decomposition of $\nGi$ is a partition of $\nGi$ into cells and a decomposition of $\Gamma ^{k+1}_{\infty }$ is a partition of $\Gamma ^{k+1}_{\infty }$ into cells such that its projection  to $\nGik$ is a decomposition of $\nGik$. \\

By the inductive construction of cells it is easy to see that:
\begin{itemize}
\item[(a)] If $\cC$ is a cell decomposition of $\nGim$ then: (i) $\cC$ partitions the sets in $\{(\nGim )_L : L\subseteq \{1,\ldots, m\}\},$ in particular, $\cC _{|\nGm}$ is a cell decomposition of $\nGm;$ (ii) the cells in $\cC$ are built using $\bGi$-definable continuous maps $f\colon X\to \nGi$ with $X\subseteq (\nGim)_L$ a cell and such that $f$ is constant with value $\infty$, or 
\[
f=f_L\circ\tau_L
\]
where $f_L\colon \tau _L(X)\to \nG$ is a $\bG$-definable continuous map. 
\item[(b)]
If $\cB$ is a cell decomposition of $\nSgm$ that partitions the sets in $\{(\nGim )_L : L\subseteq \{1,\ldots, m\}\}$ then $\cB _{|\nGim}$ is a cell decomposition of $\nGim$.\\
\end{itemize}
\end{nrmk}


In an arbitrary o-minimal structure a definable set $X$  is {\it definably connected} if and only if the only clopen definable subsets of $X$ are $\emptyset$ and $X$. We say that a $\bGi$-definable subset of $\nGim$ is  {\it $\bGi$-definably locally closed} if and only if it is the intersection of an open $\bGi$-definable subset of $\nGim$ and a closed $\bGi$-definable subset of $\nGim$ or equivalently if and only if it  is open in its closure  in $\nGim$. A similar definition applies in $\bG$.

\begin{nrmk}[Definable connectedness]\label{nrmk def conn}
From the above remark about the topologies,  a $\bG$-definable subset of $\nGm$ is $\bG$-definably connected if and only if it is $\bGi$-definably connected and a $\bGi$-definable subset of $\nGim$ is $\bGi$-definably connected if and only if it is $\bSg$-definably connected. Hence, just like in $\bG,$ by \cite[Chapter 3, Proposition 2.18]{vdd}:
\begin{itemize}
\item
 every $\bGi$-definable subset of $\nGim$ has finitely many $\bGi$-definably connected components which are clopen and partition the $\bGi$-definable set.
 \end{itemize}
\end{nrmk}

\begin{nrmk}[Definably locally closed subsets]\label{nrmk def loc closed}
Since cells in $\nGm$ are $\bG$-definably locally closed (\cite[page 51]{vdd}), by cell decomposition theorem, every $\bG$-definable subset of $\nGm$ is a finite union of $\bG$-definably locally closed sets. Working in $\bSg$ it follows, by Remark \ref{nrmk top}, that:
\begin{itemize}
\item 
every $\bGi$-definable subset of $\nGim$ is a finite union of $\bGi$-definably locally closed sets.
\end{itemize}
\end{nrmk}

Just like in $\bG,$ if  $(\nG,<)$ is non-archimedean as a linear order, then infinite $\bGi$-definable spaces with the topology generated by open definable subsets are not in general totally disconnected or locally compact. So, as in Example \ref{expls old tspaces} (3), to develop cohomology one studies $\bGi$-definable spaces $X$ equipped with the o-minimal site:
 
\begin{defn}[O-minimal site on $\bGi$-definable subsets]\label{def T-space in Gi}
Let $\bG= (\nG,<, \ldots )$ be an arbitrary o-minimal structure without end points.  If $X$ is a $\bGi$-definable subset of $\nGim,$ then the o-minimal site $X_{\df}$ on $X$  is the category $\op (X_{\df})$ whose objects are open (in the topology of $X$ mentioned above) $\bGi$-definable subsets of $X$, the morphisms are the  inclusions and the admissible covers $\cov (U)$ of $U\in \op (X_{\df})$ are covers by open $\bGi$-definable subsets of $X$ with  finite sub-covers.
\end{defn}

\begin{prop}\label{prop def t-space in Gi}
Let $\bG= (\nG,<, \ldots )$ be an arbitrary o-minimal structure without end points.  If $X$ is a $\bGi$-definable subset of $\nGim,$ let 
\[
\T=\{U \in \op(X): U\,\, \text{is $\bGi$-definable}\}.
\]
Then $X$ is a $\T$-space, $X_{\T}=X_{\df}$ and $\widetilde{X}_{\T}$ is the o-minimal spectrum $\widetilde{X}$ of $X$ i.e., its points are types over $\nGi$ concentrated on $X$. Furthermore,  there is  an equivalence of categories 
\[
\mod(A_{X_{\df}}) \simeq \mod(A_{\widetilde{X}}).
\]
\end{prop}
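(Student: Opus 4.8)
The plan is to verify the three conditions of Definition \ref{def:T-topology} for the family $\T$, then to recognise $X_{\T}$ as the o-minimal site $X_{\df}$ and $\widetilde{X}_{\T}$ as the o-minimal spectrum $\widetilde{X}$ of $X$, and finally to read off the equivalence of categories from Fact \ref{isosites}.

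First I would dispatch conditions (i) and (ii). By Remark \ref{nrmk top} the subspace topology on $X$ is generated by the intersections with $X$ of finite products of intervals with endpoints in $\nG\cup\{\infty\}$, which are open $\bGi$-definable subsets; hence $\T$ is a basis for the topology of $X$ with $\varnothing,X\in\T$, and the open $\bGi$-definable subsets of $X$ are visibly closed under finite unions and intersections. For condition (iii) the crucial step is to identify the $\T$-subsets of $X$ with the $\bGi$-definable subsets of $X$: a $\T$-subset is $\bGi$-definable, being a finite Boolean combination of open $\bGi$-definable sets; conversely, by Remark \ref{nrmk def loc closed} every $\bGi$-definable subset of $X$ is a finite union of $\bGi$-definably locally closed subsets of $X$, and each of these has the form $U\setminus V$ with $U,V$ open $\bGi$-definable in $X$, hence is a $\T$-subset. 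It follows that for any $\T$-subset $Y$ the clopen $\T$-subsets of $Y$ are exactly the clopen $\bGi$-definable subsets of $Y$, so $\T$-connectedness of a $\T$-subset coincides with its $\bGi$-definable connectedness. Then Remark \ref{nrmk def conn} gives that every $U\in\T$ has finitely many $\bGi$-definably connected components, which are clopen in $U$, hence open in $X$ and $\bGi$-definable, i.e.\ they lie in $\T$; this is (iii), and in particular $X\in\T$, so $(X,\T)$ is an object of $\fT$.

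Next, that $X_{\T}=X_{\df}$ is a direct comparison of definitions (Definition \ref{def T-space in Gi} versus the definition of the $\T$-topology): both sites have the open $\bGi$-definable subsets of $X$ as objects, inclusions as morphisms, and the covers admitting a finite subcover as admissible covers. For $\widetilde{X}_{\T}$, since $X\in\T$ a subset $S\subseteq X$ is a $\T_{loc}$-subset if and only if $S=S\cap X$ is a $\T$-subset, so the Boolean algebra $\cA$ of Definition \ref{def:tilde} is the algebra of $\bGi$-definable subsets of $X$. By the standard identification of the Stone space of the Boolean algebra of definable subsets of $X$ with the space of complete types over $\nGi$ concentrated on $X$, $S(\cA)$ is the o-minimal spectrum $\widetilde{X}$ of \cite{p,ejp}; and since $X$ is the greatest element of $\cA$, every ultrafilter of $\cA$ contains the member $X$ of $\T$, so the defining condition in Definition \ref{def:tilde} discards no point and $\widetilde{X}_{\T}=S(\cA)$ as sets. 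Finally the prescribed basis $\{\widetilde{U}:U\in\T\}$ of $\widetilde{X}_{\T}$ is precisely the basis of the (spectral) topology on $\widetilde{X}$, so the two spaces coincide.

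The equivalence of categories then follows from Fact \ref{isosites} (whose proof, as noted just after its statement, does not require $X$ to be a $\T$-space): $\mod(A_{X_{\df}})=\mod(A_{X_{\T}})\simeq\mod(A_{\widetilde{X}_{\T}})=\mod(A_{\widetilde{X}})$; alternatively one may cite directly the o-minimal case recorded in Example \ref{expls old tspaces}(3) and \cite{ejp}. I expect the only step that goes beyond unwinding definitions to be the identification of the $\T$-subsets with the $\bGi$-definable subsets of $X$ — and hence of $\widetilde{X}_{\T}$ with the o-minimal spectrum — which rests on cell decomposition in $\bGi$ and on cells being $\bGi$-definably locally closed; everything else is bookkeeping.
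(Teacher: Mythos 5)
Your proposal is correct and follows essentially the same route as the paper's proof: conditions (i)–(ii) from Remark \ref{nrmk top}, the identification of $\T$-subsets with $\bGi$-definable subsets via Remark \ref{nrmk def loc closed}, condition (iii) via Remark \ref{nrmk def conn}, and the remaining identifications and the equivalence of categories from the definitions and Fact \ref{isosites}. You merely spell out in more detail the steps the paper compresses into ``the rest follows from the definitions and results in Subsection \ref{subsection t-sheaves}.''
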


\begin{proof}
By Remark \ref{nrmk top} $\T$ is a basis for a topology of $X,$ and $\emptyset \in \T;$ it is clear that $\T$ is closed under finite unions and intersections; by Remark \ref{nrmk def loc closed} the $\T$-subsets of $X$ are exactly the $\bGi$-definable subsets of $X$. Therefore from Remark \ref{nrmk def conn} it follows that every $U\in \T$ has finitely many $\T$-connected components. The rest follows from  the definitions and results in Subsection \ref{subsection t-sheaves}.
\end{proof}

The following remark will allow us to work in $\bGi$ instead of in $\bG$ or in $\bSg$ instead of in $\bGi$ when convenient:

\begin{nrmk}\label{nrmk top and cohom}
Let $\bG =(\nG, <,\ldots )$ be an arbitrary o-minimal structure without end points. By Remark \ref{nrmk top} we have:
\begin{itemize}
\item
If  $X\subseteq \nGm$ is a $\bG$-definable subset, then the o-minimal site of $X$ in $\bG$ is the same as the o-minimal site of $X$ in $\bGi$ and so  the o-minimal sheaf and cohomology theories of $X$  in $\bG$ are the same as the o-minimal sheaf and cohomology theories of $X$ in $\bGi$. 
\item
If  $X\subseteq \nGim$ is a $\bGi$-definable subset, then the o-minimal site of $X$ in $\bGi$ is the same as the o-minimal site of $X$ in $\bSg$ and so  the o-minimal sheaf and cohomology theories of $X$  in $\bGi$ are the same as the o-minimal sheaf and cohomology theories of $X$ in $\bSg$. 
\end{itemize}
\end{nrmk}

\subsection{Definable compactness and definable normality}\label{subsection  def-comp and def-normal in bGi}
In this subsection we recall  the notion of definable compactness  for  definable sets in $\bGi$ (defined in analogy to the case $\bG$) and make a couple of remarks about locally definably compact and definable completions that will be used later.  We introduce the notion of definable normality (which corresponds to $\T$-normality for the associated $\T$-topology) and prove that, when $\bG$ is an o-minimal expansion of an ordered group, then every definably locally closed  subsets of $\nGin$ is the union of finitely many open definable subsets which are definably normal (Theorem \ref{thm basis of open normal}).

As in \cite{ps} we say that a $\bGi$-definable subset $X\subseteq \nGim$ is {\it $\bGi$-definably compact} if and only if for every $\bGi$-definable and continuous map from an open interval in $\nGi$ into $X$ the limits at the endpoints of the interval exist in $X$. Recall  that $X$ is  {\it locally $\bGi$-definably compact} if and only if  every point in $X$ has a $\bGi$-definably compact neighborhood.

\begin{nrmk}[Definable compactness]\label{nrmk def comp}
Note that in the above definition it is enough to consider open intervals of the form $(-\infty ,a), (a,b)$ or $(b, \infty )$ where $a,b \in \nG$ and $a<b$. Therefore:
\begin{itemize}
\item
a $\bG$-definable subset $X\subseteq \nGm$ is $\bG$-definably compact if and only if it is $\bGi$-definably compact.
\end{itemize} 
On the other hand, since for any interval $I$ in $\nSg=\nSg _0\sqcup \nSg _1$ we have that $I\cap \nSg _0$ and $I\cap \nSg _1,$ if non empty, are intervals in $\nGi,$ it follows that:
\begin{itemize}
\item
a $\bGi$-definable subset $X\subseteq \nGim$ is $\bGi$-definably compact if and only if it is $\nSg$-definably compact.
\end{itemize} 
In particular, by \cite[Theorem 2.1]{ps} we have:
\begin{itemize}
\item
a $\bGi$-definable subset $X\subseteq \nGim$ is $\bGi$-definably compact if and only if it is closed and {\it bounded in $\nGim$}, i.e.,  $X\subseteq \Pi _{i=1}^m[c_i, \infty ]$ for some $c_i\in \Gamma $ ($i=1,\ldots, m$). 
\end{itemize}

\end{nrmk}
Recall that a  type $\beta $ on $X$ (i.e. an ultrafilter of definable subsets of $X$) is a {\it definable type on $X$} if  and only if for every uniformly definable family $\{Y_t\}_{t\in T}$ of definable subsets of $X,$ the set $\{t\in T: Y_t\in \beta \}$ is a definable set. And we say a definable type $\beta$ on $X$ has limit $a \in X$ if for any definable neighborhood $a\in U$, we have $\beta$ concentrates on $U$. It is worth pointing out that the definition in \cite{HrLo} of definable compactness of $X$ is that any definable type on $X$ has a limit in $X$. At first glance, this seems to be a stronger criterion than defined above. However, both of them turns out to be the same as being closed and bounded. The main advantage of the definable type definition is that one cannot use definable path on $X$ to describe the topological closure of $X$, yet the set of limits of definable types on $X$ is the topological closure of $X$. This failure is also noted in \cite[Theorem 2.3]{ps}. Nonetheless, this does not affect the characterization of definable compactness in $\bGi$.

\begin{nrmk}[Locally definably compact]\label{nrmk local def comp}
As in topology, locally definably compact is equivalent to definably locally closed.  
Suppose that $X$ is locally definably compact. Let $a\in X$ and $K$ be a definably compact definable neighborhood of $a$ in $X$. Let $U$ be an open definable neighborhood of $a$ such that $U\cap X\subseteq K$. Taking closures in $\bar{X}$ and noting that $\overline{U\cap \bar{X}}=\overline{U\cap X}$ we obtain
\[
U\cap \bar{X}\subseteq \overline{U\cap X} \subseteq \bar{K} = K \subseteq X, 
\]
which shows that $X$ is open in $\bar{X}$. The other implication is also easy.

\end{nrmk}


\begin{nrmk}[Definable completions]\label{nrmk def completions}
Let $X\subseteq \nGim$ be a $\bGi$-definable set which  is bounded in $\bGi$  and is locally $\bGi$-definably compact. Then the inclusion $i\colon X\hookrightarrow \bar{X}$ into the closure of $X$ in $\nGim$ is a $\bGi$-definable completion of $X,$ i.e.: 
\begin{itemize}
\item[(i)] 
$\bar{X}$ is  $\bGi$-definably compact; 
\item[(ii)]  
 $i\colon X\hookrightarrow \bar{X}$ is a $\bGi$-definable open immersion (i.e. $i(X)$ is open in $\bar{X}$ and $i\colon X\to i(X)$ is a $\bGi$-definable homeomorphism);
 \item[(iii)]  
 the image $i(X)$ dense in $\bar{X}$. 
 \end{itemize}
Everything here is  clear, and $i(X)$ is open in $\bar{X}$ by Remark \ref{nrmk local def comp}. 

It follows that:
\begin{itemize}
\item
If $\bG=(\nG, <, +,  \ldots )$ is an o-minimal expansion of an ordered group $(\nG , <, + ),$ then every $\bGi$-definably locally closed subset of $\nGim$ is $\bGi$-definably completable.
\end{itemize}
Indeed, let $p\colon \nGi \to \nGi \times \nGi$ be the $\bGi$-definable map given by 
\begin{equation*}
p(x)=
\begin{cases}
(-x,0) \qquad  \textrm{if} \,\,x<0\\
\\
(0,x) \qquad \,\,\,\, \textrm{otherwise.}
\end{cases}
\end{equation*}
Then $p(\nGi )\subseteq [0,\infty ]\times [0,\infty ]$ and $p\colon \nGi\to p(\nGi)$ is a $\bGi$-definable homeomorphism.  So every $\bGi$-definable subset of $\nGim$ is $\bGi$-definably homeomorphic to a $\bGi$-definable subset of $[0,\infty ]^{2m}$ (which is  in particular bounded in $\bGi$).

Note however that  $\bG$-definable completions do not exist in general  for locally $\bG$-definably  compact spaces.  Suppose that $\bG=(\nG, <, +,  \ldots )$ is a semi-bounded o-minimal expansion of an ordered group $(\nG , <, +)$ (see \cite[Definition 1.5]{e1}). Let $X=\nG$  and suppose that there is a $\bG$-definable completion $\iota \colon X\hookrightarrow P$  of $X$. Since $P$ is $\bG$-definably compact, the limit $\lim _{t\to +\infty}\iota (t)$ exists in $P$, call it $a$. Let $(P_i, \theta _i)$ be a $\bG$-definable chart of $P$ such that $a\in P_i$. Let $\Pi _{j=1}^{n_i}(c_j^-, c_j^+)$ be an open and bounded box in $\Gamma ^{n_i}$ containing $\theta _i(a)$. By continuity, there exists $b\in \nG$ be such that $\theta _i\circ \iota ((b, +\infty ))\subseteq \Pi _{j=1}^{n_i}(c_j^-, c_j^+)$. Hence, $\theta _i\circ \iota _{|}\colon (b, +\infty )\to \Gamma ^{n_i}$ has bounded image and so, by  \cite[Fact 1.6 and Proposition 3.1 (1)]{e1}, this map must be eventually constant. This contradicts the fact that the map  is injective. 
\end{nrmk}





Recall that, in an arbitrary o-minimal structure,  a  definable subset $X$ with the induced topology is \textit{definably normal} if and only if  every two disjoint closed definable subsets of $X$ can be separated by disjoint open definable subsets of $X$. This corresponds to  $X$ being  $\T$-normal (Definition \ref{defn tnormal}) for the $\T$-topology given by the o-minimal site on $X$.

\begin{nrmk}[Definable normality]\label{nrmk def normal}
By Remark \ref{nrmk top} definable normality of a definable set does not change when moving between $\bG,$ $\bGi$ and $\bSg$. Therefore, by Remark \ref{nrmk def comp} and \cite[Theorem 2.11]{emp} we have:
\begin{itemize}
\item
If $\bG$ has definable Skolem functions, then every $\bGi$-definably compact set is $\bGi$-definably normal. 
\end{itemize}

On the other hand, if $\bG=(\nG, <, +,  \ldots )$ is an o-minimal expansion of an ordered group $(\nG , <, + ),$ then by \cite[Chapter 6 (3.5)]{vdd}, every $\bG$-definable set is  $\bG$-definably normal. However,  this is not true in $\bGi$ (and so it is also not true in $\bSg$), as shows the following example.
\end{nrmk}
 
\begin{expl}\label{expl def normal}
Let $a\in \nG$ and let $U=\Gamma ^2_{\infty }\setminus \{(\infty, a)\}$. Let $C=\ \nG \times \{a\}$ and let $D=\{\infty \}\times (\nGi \setminus \{a\})$. Then $U$ is an open $\bGi$-definable subset of $\Gamma ^2_{\infty }$ and  $C$ and $D$ are closed and disjoint $\bGi$-definable subsets of $U$. We claim that there are no disjoint open $\bGi$-definable subsets $V$ and $W$ of $U$ such that $C\subseteq V$ and $D\subseteq W$.

For a contradiction, suppose that such $V$ and $W$ exist. Let $b\in \nG$ be such that $b<a$. Let $f\colon \Gamma \to [b, a)$ be the $\bG$-definable function given by $f(t)=\inf \{x\in (b, a): \{t\}\times (x, a)\,\, \subseteq V\cap \nG ^2\}$. By \cite[Fact 1.6 and Proposition 3.1 (1)]{e1}, there is $s\in \nG$ and $c\in [b,a)$ such that $f_{|\,(s,\infty )}$ is constant and equal to $c$. So $(s,\infty )\times (c,a) \,\, \subseteq V$. Let $(\infty , u)\in \{\infty \}\times (c,a) \,\, \subseteq D$ be any point.  Since $W$ is an open $\bGi$-definable neighborhood of $D$ in $U$ it is an open $\bGi$-definable neighborhood of $(\infty ,u)$ in $U$. So there are  $c<u_1<u<u_2<a$ and $v<\infty $ such that $(v,\infty]\times (u_1,u_2)\,\, \subseteq W$. Let $w=\max \{s,v\}$. Then $(w,\infty)\times (u_1,u_2) \,\, \subseteq V\cap W,$ contradicting the fact that $V$ and $W$ are 
disjoint. 
\end{expl}





Let us now recall the following result from \cite{ep4} which will be true in $\bGi$ by working in $\bSg$. After a couple of lemmas we will prove a result (Theorem \ref{thm def normal ext} below) generalizing this fact.

\begin{fact}\cite[Theorem 2.20]{ep4}\label{fact CxN is normal}
Let $Z$ and $K$ be definable spaces in an o-minimal structure with definable Skolem functions with $Z$ definably normal and $K$ Hausdorff and definably compact. Then $Z\times K$ is definably normal.\qed
\end{fact}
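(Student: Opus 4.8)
The plan is to reduce to the case $K=[0,1]$ and then to adapt the classical argument that a normal \emph{and countably paracompact} space has normal product with the unit interval; the point is that in the o-minimal setting a definably normal definable space never behaves like a Dowker space, so the countable paracompactness needed in that argument comes for free from o-minimal finiteness.

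First I would reduce both $Z$ and $K$ to affine definable sets. By o-minimal triangulation (\cite{vdd}), a Hausdorff definably compact definable space is definably homeomorphic to a closed and bounded definable subset of some $\nG^m$, hence, after one further definable homeomorphism, to a closed definable subset of a box $\prod_{i=1}^{m}[a_i,b_i]$; since the statement only needs to be proved up to definable homeomorphism, and in the applications of this paper $Z$ is already a definable subset of affine space, I would treat $Z$ as affine as well. Then $Z\times K$ is a closed definable subspace of $Z\times\prod_{i=1}^{m}[a_i,b_i]$, and a closed definable subspace of a definably normal definable space is definably normal; so it suffices to prove that $Z\times\prod_{i=1}^{m}[a_i,b_i]$ is definably normal, and by iterating one interval factor at a time this comes down to the single statement: \emph{if $Z$ is definably normal then $Z\times[0,1]$ is definably normal.}

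For the latter, given disjoint closed definable $A,B\subseteq Z\times[0,1]$, I would first build a fibrewise separation. For $z\in Z$ the fibres $A_z=\{t:(z,t)\in A\}$ and $B_z=\{t:(z,t)\in B\}$ are disjoint closed definable subsets of $[0,1]$, each a finite union of points and closed intervals with uniformly bounded combinatorics (o-minimal uniform finiteness). Using the definable Skolem functions of $\bG$, I would choose, uniformly in $z$, two points in the interior of each gap separating an $A_z$-piece from a $B_z$-piece --- one on each side --- and thereby definable, relatively open $V_z\supseteq A_z$ and $W_z\supseteq B_z$ in $[0,1]$ with $\overline{V_z}\cap\overline{W_z}=\varnothing$. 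Setting $\widehat V=\{(z,t):t\in V_z\}$ and $\widehat W=\{(z,t):t\in W_z\}$ produces disjoint definable sets with $A\subseteq\widehat V$ and $B\subseteq\widehat W$; the trouble is that $\widehat V,\widehat W$ need not be open in $Z\times[0,1]$, openness in the $Z$-direction failing along the lower-dimensional locus of $Z$ where the combinatorial type of $(A_z,B_z)$ jumps. The remaining, and main, task is to thicken $\widehat V,\widehat W$ to genuinely open disjoint definable sets still containing $A$ and $B$. I would do this by applying definable cell decomposition to $Z$ compatibly with $A$ and $B$, so that over each of finitely many cells $C_i$ all relevant boundary functions and chosen gap points are continuous and $\widehat V\cap(C_i\times[0,1])$, $\widehat W\cap(C_i\times[0,1])$ are already relatively open with disjoint relative closures (exploiting the slack $\overline{V_z}\cap\overline{W_z}=\varnothing$); then patch the finitely many cells together by induction on $\dim Z$, separating over a definable open dense subset $Z^{\circ}$ consisting of top-dimensional cells, dealing with $Z\setminus Z^{\circ}$ (which has strictly smaller dimension, so $(Z\setminus Z^{\circ})\times[0,1]$ is definably normal by the inductive hypothesis on $\dim Z$) together with a suitable definably normal open neighbourhood of it, and gluing the two separations using the definable normality of $Z$ and the shrinking lemma (Corollary \ref{cor shrinking lemma}).

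The hard part will be precisely this patching step. The bare topological assertion ``normal $\times$ compact Hausdorff is normal'' is \emph{false} (Dowker spaces), so the proof must genuinely exploit a tameness of definably normal definable spaces absent in general topology: every definable open cover admits a definable, locally finite refinement of uniformly bounded multiplicity, which annihilates the Dowker obstruction and supplies the countable-paracompactness input of the classical argument automatically. Verifying that the fibrewise separation above can be upgraded, cell by cell and then globally, is where the ideas of \cite{ep4} are really used, and is the source of the length of the argument.
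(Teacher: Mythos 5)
This statement is imported verbatim from \cite[Theorem 2.20]{ep4}; the paper gives no proof of it, but the route actually used there is visible in Subsection \ref{subsection  def-comp and def-normal in bGi} of this paper (Lemmas \ref{lem pii closed} and \ref{lem closed fiber in open}, Proposition \ref{prop def normal ext}), and your proposal diverges from it at the very first step in a way that creates a genuine gap. Your reduction to $K=[0,1]$ rests on o-minimal triangulation and on the affineness of Hausdorff definably compact definable spaces. Both of these are theorems about o-minimal expansions of ordered \emph{fields}; the Fact is stated for an \emph{arbitrary} o-minimal structure with definable Skolem functions, and that generality is exactly what the paper needs, since it applies the result in structures ($\bM$, $\bGi$, $\bSg$) that expand at most an ordered group (the value group of an ACVF). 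In a pure ordered group, or a general o-minimal structure, there is no triangulation theorem, no embedding theorem for definably compact Hausdorff definable spaces into affine space, and no $0$, $1$ or rescaling of $[a,b]$ to $[0,1]$. So the reduction collapses precisely in the cases the paper cares about. The correct substitute, which is what \cite{ep4} does, is to work directly with the projection $\pi\colon Z\times[a,b]\to Z$ (and more generally $\pi_i\colon[f,g]^i_Z\to Z$) and prove a definable tube lemma: $\pi$ is a closed definable map, and for closed $S$, open $W$ with $S\cap\pi^{-1}(F)\subseteq W$ one finds an open definable $O\supseteq F$ with $S\cap\pi^{-1}(\bar O)\subseteq W$. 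The proof of closedness and of the tube lemma uses definable types on definable downward-directed families of neighborhoods (\cite[Lemma 4.2.18]{HrLo}) together with definable Skolem functions --- not cell decomposition or triangulation --- and is therefore available in the required generality.

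Even granting affine $Z$ and $K=[a,b]$, your core separation argument is only a plan: you explicitly defer ``the hard part'' of upgrading the fibrewise separation $\widehat V,\widehat W$ to genuinely open sets, and the mechanism you propose for it (a definable, locally finite refinement ``of uniformly bounded multiplicity'' of an arbitrary definable open cover) is not a theorem in an arbitrary o-minimal structure --- the shrinking lemma (Corollary \ref{cor shrinking lemma}) applies only to \emph{finite} covers of definably normal sets, and general definable open covers of non-compact definable sets need not admit finite or definably locally finite refinements. The step where the Dowker obstruction is actually killed is the tube lemma above, and your sketch never supplies an argument for it. Your instinct that o-minimal tameness must enter, and your awareness that ``normal $\times$ compact'' fails classically, are both correct, but as written the proof has a missing key lemma at exactly that point, in addition to the unavailable reduction.
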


So from now on let $\bM =(M, <, \ldots )$ be an arbitrary o-minimal structure  without end points and with definable Skolem functions.

\begin{lem}\label{prop basis of open normal}
If $I\subseteq M$ is an interval, then $I$ is definably normal. In particular, every open definable subset of $M$ has a finite cover by open, definable subsets which are definably normal. \footnote{Here the existence of definable Skolem functions is not needed.}
\end{lem}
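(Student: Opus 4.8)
The plan is to derive the ``in particular'' clause immediately from the first assertion: an open definable $U\subseteq M$ is a finite union of open intervals by o-minimality, each of which is an open definable subset of $M$ (hence of $U$) and is definably normal by the first assertion, so this finite family of open intervals is the required cover.

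For the first assertion, that an interval $I$ is definably normal (Definition \ref{defn tnormal}), I would fix disjoint closed definable subsets $C,D\subseteq I$ and run the classical ``shrink, then take the complement of a closure'' argument. First I would decompose $I\setminus C$ (which is open and definable in $I$) into its finitely many definably connected components $O_1,\dots,O_s$; each $O_k$ is an interval relatively open in $I$, and since $C\cap D=\varnothing$ we have $D=\bigsqcup_k(D\cap O_k)$. The key observation is that the boundary $\partial O_k$ of $O_k$ in $I$ is contained in $C$ (a point of $I$ adjacent to $O_k$ that is not in $C$ would lie in $O_k$), hence disjoint from the closed set $D$; therefore, inside $O_k$, the set $D\cap O_k$ stays bounded away from those endpoints of $O_k$ that belong to $I$, while at an endpoint of $O_k$ lying outside $I$ (possibly $\pm\infty$) there is nothing to avoid. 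Using that $M$ is densely ordered I would pick truncation points in $M$ and let $W_k$ be $O_k$ truncated slightly on each such ``bad'' side, so that $D\cap O_k\subseteq W_k$ while the closure of $W_k$ in $I$ remains inside $O_k$, in particular disjoint from $C$. Then $W:=\bigcup_k W_k$ is open and definable, $D\subseteq W$, and its closure $\overline{W}$ in $I$ is disjoint from $C$; so $V:=I\setminus\overline{W}$ is open and definable, contains $C$, and is disjoint from $W$, as required.

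The step needing the most care is the construction of the $W_k$: one has to match ``untruncated side of $O_k$'' with ``that side contributes no point of $I$ to $\partial O_k$'', and to check the truncation points can actually be chosen in $M$ — this is where density of the order enters, together with the triviality that $I\setminus D$ is a definable neighbourhood in $I$ of each point of $C\cap\partial O_k$. All remaining verifications (finiteness of the family $\{O_k\}$, that closures of definable sets are definable, closure of open/definable sets under finite unions) are routine. (If one allows definable Skolem functions, the subcase where $I$ is a closed bounded interval could instead be disposed of at once, since then $I$ is definably compact and hence definably normal by \cite[Theorem 2.11]{emp}; but, as the footnote indicates, this is unnecessary.)
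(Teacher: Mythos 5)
Your proof is correct, but it is organized differently from the paper's. The paper decomposes $C$ into its finitely many component intervals and inducts on their number: for a single interval $C$ it takes $d=\sup\{x\in D:x<c\}$ and $d'=\inf\{x\in D:c'<x\}$ (where $c,c'$ are the endpoints of $C$), picks points strictly between, and separates directly; the inductive step then combines the separating pairs by intersecting the neighbourhoods of $D$ and uniting those of $C$. You instead decompose the complement $I\setminus C$ into its finitely many definably connected components, observe that their boundaries in $I$ lie in $C$ and hence away from $D$, truncate each component to an open definable $W_k\supseteq D\cap O_k$ with $\overline{W_k}\subseteq O_k$, and take $V=I\setminus\overline{W}$. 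Both arguments are elementary consequences of o-minimal finiteness and density of the order, and neither uses Skolem functions. The paper's induction keeps each separation step one-dimensional and trivial to check; your one-shot construction avoids induction at the cost of the boundary observation and the side-by-side matching of which ends of each $O_k$ need truncating, both of which you correctly identify and justify. The treatment of the ``in particular'' clause is identical in the two arguments.
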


\begin{proof}
Let $C$ and $D$ be disjoint, closed definable subsets of $I$. Then $C$ is a finite union of intervals. The result follows by induction on the number of such intervals. Suppose that $C$ is an interval. Let $c$ be the left end point of $C$ and $c'$ the right end point. Let $d=\sup \{x\in D:x<c\}$ and let $d'=\inf \{x\in D:c'<x\}$. Since $C$ and $D$ are closed in $I$ and disjoint, we have $d<c\leq c'<d'$. Now take $d<u<v<c$ and $c'<v'<u'<d'$. Then $U=((-\infty , u)\cup (u',\infty ))\cap I$ and $V=(v,v')$ are disjoint, open definable subset of $I$ such that $D\subseteq U$ and $C\subseteq V$. 

If $C$ is a union of $n+1$ intervals, let $C'$ be the union of the leftmost $n$ intervals and let $C''$ be the rightmost interval. By induction, there are open definable subsets $U', V'$ and $U'', V''$ of $I$ such that $D\subseteq U',$ $C'\subseteq V',$ $D\subseteq U'',$ $C''\subseteq V'',$  $U'\cap V'=\emptyset$ and $U''\cap V''=\emptyset $. Let $U=U'\cap U''$ and $V=V'\cup V''$. Then $U$ and $V$ are disjoint, open definable subset of $I$ such that $D\subseteq U$ and $C\subseteq V$.
\end{proof}

Below we use the following notation: for each $i\in \{1, \ldots, m\}$ let 
\[\pi_i\colon M^m\to M^{m-1}\]
be the projection  omitting the $i$'th coordinate and let
\[\pi '_i\colon M^m\to M\]
be the projection onto the $i$'th coordinate.
If $Z\subseteq M^{m-1}$ is a definable subset and  $f, g\colon Z\to M$ are continuous definable maps with $f<g$ 
then for each $i\in \{1,\ldots, m\},$ we let
\[[f, g]^i_{Z}=\{x\in M^m: \pi _i(x)\in Z\,\,\textrm{and}\,\,f\circ \pi _i(x)\leq x_i\leq g\circ \pi _i(x) \}\]
and we define 
\[(f,g)^i_Z, \,\,\,(f,g]^i_Z\,\,\,\textrm{and}\,\,\,[f,g)^i_Z\]
in a similar way.





\begin{lem}\label{lem pii closed}
The restriction 
\[\pi_{i|}\colon [f, g]^i _Z\to Z\]
 is a continuous, closed definable map.
\end{lem}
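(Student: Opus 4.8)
The plan is to verify continuity and definability (both routine) and then concentrate on the closedness of $\pi_{i|}$. After permuting coordinates one may assume $i=m$, so that $\pi_m\colon M^m\to M^{m-1}$ is the projection onto the first $m-1$ coordinates, $Z\subseteq M^{m-1}$, and
\[
[f,g]^m_Z=\{(z,t)\in M^{m-1}\times M: z\in Z,\ f(z)\le t\le g(z)\},
\]
with $\pi_{m|}(z,t)=z$. Continuity is immediate because $\pi_{m|}$ is a restriction of a coordinate projection, and definability is clear; note also that $\pi_{m|}$ is surjective, since $f<g$ makes each fibre $\{z\}\times[f(z),g(z)]$ nonempty. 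To show $\pi_{m|}$ is closed it suffices to prove that the image of any definable $A\subseteq[f,g]^m_Z$ which is closed in $[f,g]^m_Z$ is closed in $Z$.

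So fix such an $A$ and a point $z_0\in Z$ lying in the closure of $\pi_m(A)$ (taken in $M^{m-1}$); I will produce a point of $A$ over $z_0$. If $z_0\in\pi_m(A)$ there is nothing to do, so assume not. By the curve selection lemma there is a definable continuous curve $\sigma\colon(0,\varepsilon)\to\pi_m(A)$ with $\sigma(s)\to z_0$ as $s\to 0^+$. Since $\bM$ has definable Skolem functions, choose a definable $t\colon(0,\varepsilon)\to M$ with $(\sigma(s),t(s))\in A$ for all $s$; then $f(\sigma(s))\le t(s)\le g(\sigma(s))$. By the monotonicity theorem, after shrinking $\varepsilon$ the function $t$ is monotone, so $\lim_{s\to 0^+}t(s)$ exists in $M\cup\{\pm\infty\}$; as $f,g$ are continuous at $z_0$ we have $f(\sigma(s))\to f(z_0)$ and $g(\sigma(s))\to g(z_0)$, and the squeeze forces this limit $t_0$ to be finite with $f(z_0)\le t_0\le g(z_0)$. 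Hence the definable curve $s\mapsto(\sigma(s),t(s))$ lies in $A$ and converges to $(z_0,t_0)\in[f,g]^m_Z$; since $A$ is closed in $[f,g]^m_Z$ this gives $(z_0,t_0)\in A$, so $z_0=\pi_m(z_0,t_0)\in\pi_m(A)$, a contradiction. Therefore $\pi_m(A)$ is closed in $Z$.

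The step I expect to be the main obstacle is the passage to the limit of the lifted curve, i.e.\ guaranteeing that $(\sigma(s),t(s))$ converges to a point that \emph{still lies in} $[f,g]^m_Z$; the squeeze $f(\sigma(s))\le t(s)\le g(\sigma(s))$ together with continuity of $f,g$ and the monotonicity theorem is exactly what rules out the limit escaping to an endpoint, while definable Skolem functions are what let $t$ be chosen definably. One must also be careful that $\bM$ is only assumed o-minimal (no ordered group or field), so one invokes the versions of curve selection and definable choice valid in that generality.

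An alternative route, avoiding curve selection, is a direct definable tube lemma. If $z_0\notin\pi_m(A)$, write $A=C\cap[f,g]^m_Z$ with $C$ closed in $M^m$; then $\{z_0\}\times[f(z_0),g(z_0)]$ is disjoint from $C$, and one covers this closed bounded interval by finitely many open boxes contained in $M^m\setminus C$ — using definable choice to make the covering family definable and definable compactness of a closed bounded interval (\cite[Theorem 2.1]{ps}) to extract a finite subcover. Intersecting the first-factor parts of these boxes with the preimages $f^{-1}(J)$ and $g^{-1}(J)$ of the interval component $J$ of that cover which engulfs $[f(z_0),g(z_0)]$, and using $f<g$ and continuity, one obtains an open neighbourhood $W$ of $z_0$ in $Z$ with $\pi_{m|}^{-1}(W)\cap A=\varnothing$, whence again $\pi_m(A)$ is closed in $Z$.
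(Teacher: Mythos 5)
Your reduction to closedness and the squeeze argument controlling the last coordinate are the right ideas, but the proof hinges on ``the curve selection lemma'', and that is precisely the step that fails in the generality of this lemma. Here $\bM$ is only assumed to be an arbitrary o-minimal structure with definable Skolem functions; it need not expand an ordered group (the lemma is ultimately applied in $\bSg$, which is not a group), and the Curve Selection Lemma of \cite[Chapter 6]{vdd} is established for o-minimal expansions of ordered groups -- its proof uses the group operation to parametrize the neighbourhoods of $z_0$ by a single variable. In the structures relevant here the closure of a definable set is \emph{not} captured by limits of definable curves: this failure is recorded in Remark \ref{nrmk def comp} (citing \cite[Theorem 2.3]{ps}) and is exactly why the paper's proof replaces curves by definable types. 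Concretely, the paper parametrizes the box-neighbourhoods of the point by a definable downward-directed set $D(z)$, invokes \cite[Lemma 4.2.18]{HrLo} (or \cite[Lemma 2.19]{Hr04}) to produce a definable type cofinal in that order, pushes it forward along a definable Skolem function into the closed set $S$, and then shows in Claim \ref{clm limit of def type} that the resulting definable type has a limit $(z,c)$ with $f(z)\le c\le g(z)$, using that a definable $1$-type in an o-minimal structure is not a cut (\cite[Lemma 2.3]{MarStein}) in place of your monotonicity argument. Your squeeze is thus the analogue of the paper's Claim, but the existence of the definable path $\sigma$ is an unjustified, and in general false, premise.

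Your alternative ``tube lemma'' route avoids curves but rests on its own unjustified step: extracting a finite subcover from a definable open cover of $[f(z_0),g(z_0)]$. The cited \cite[Theorem 2.1]{ps} only identifies closed and bounded sets with definably compact ones in the curve sense; the finite-subcover property for definable families is a separate (true but nontrivial) statement of Peterzil--Pillay type, which the paper itself only invokes, via \cite[Corollary 2.2]{PePi07}, in the ordered-group setting of $\bGi$. If you supply that statement in the required generality the tube argument can likely be completed, but as written neither of your two routes closes the gap that the definable-type argument is designed to handle.
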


\begin{proof}
For $z\in Z$ let 
\[D(z)=\{((d_1^-,d_1^+),\ldots ,(d^-_{m-1},d^+_{m-1})\in M^{2(m-1)}: z\in \Pi _{i=1}^{m-1}(d^-_i,d^+_i)\cap Z\}\]
and for $d\in D(z)$ let
\[
U(z,d)=\Pi _{i=1}^{m-1}(d^-_i,d^+_i)\cap Z.
\]
Then $\{U(z,d)\}_{d\in D(z)}$ is a uniformly definable system of fundamental open neighborhoods of $z$ in $Z$. Moreover, since the relation $d\preceq d'$ on $D(z)$ given by $U(z,d)\subseteq U(z,d')$ is a definable downwards directed order on $D(z),$ by \cite[Lemma 4.2.18]{HrLo} (or \cite[Lemma 2.19]{Hr04}), there is a definable type $\beta$ on $D(z)$ such that for every $d\in D(z)$ we have $\{d' \in D(z): d'\preceq d\}\in \beta $. 

Let $S\subseteq [f, g]^i _Z$ be a closed definable subset. Suppose that $\pi_i(S)$ is not closed in $Z$. Then there is $z\in Z\setminus \pi_i(S)$ such that for all $d\in D(z)$ we have $U(z,d)\cap \pi_i(S)\neq \emptyset $. Then by definable Skolem functions, there is a definable map 
\[
h\colon D(z)\to S\subseteq [f,g]^i_Z
\]
 such that for every $d\in D(z)$ we have $\pi _i(h(d))\in U(z,d)\cap \pi _i(S)$. 
 
 Let $\alpha $ be the definable type on $S$ determined by the collection $\{A\subseteq S: h^{-1}(A)\in \beta \}$. Let $\alpha _1$ be the  definable type on $\pi _i(S)$ determined by the collection $\{A\subseteq \pi _i(S): (\pi _i)^{-1}(A)\in \alpha \}$ and let $\alpha _2$ be the  definable type on $M$ determined by the collection $\{A\subseteq M: (\pi '_i)^{-1}(A)\in \alpha \}$.

\begin{clm}\label{clm limit of def type}
The limit of $\alpha $ exists, it is of the form $(z,c)$ and belongs to $[f,g]^i_Z$.
\end{clm}

\begin{proof}
We have that $z$ is the limit of $\alpha _1$ i.e., for every open definable subset  $V$ of $Z$ such that $z\in V$ we have $V\in \alpha _1$. Indeed, given any such $V$ there is $d'$ such that $U(z,d')\subseteq V$ and 
\begin{eqnarray*}
h^{-1}((\pi _i)^{-1}(V))&\supseteq &h^{-1}((\pi _i)^{-1}(\pi _i(S)\cap U(z,d')))\\
&\supseteq & h^{-1}((S\cap (\pi _i)^{-1}(U(z,d'))))\\
&\supseteq & \{d''\in D(z):d''\preceq d'\}.
\end{eqnarray*}

Since $[f,g]^i_Z\in \alpha $ it follows that for every open definable subset  $V$ of $Z$ such that $z\in V$ we have $[f,g]^i_V=(\pi _i)^{-1}(V)\cap [f, g]^i_Z\in \alpha $.
On the other hand, since $\alpha _2$ is a definable type on  $M,$ by \cite[Lemma 2.3]{MarStein},  $\alpha _2$ is not a cut  and so $\alpha _2$ is determined by either : (i)  $\{b<x: b\in M\};$ (ii) $x=a; $ (iii) $\{b<x<a: b\in M, \,\,b<a\};$  (iv) $\{a<x<b: b\in M, \,\,a<b\};$ (v) $\{x<b:b\in M\}$ where $a\in M$. In the case (i) limit of $\alpha _2$ is $+\infty ,$ in the  cases (ii), (iii) and (iv)  $a$ is the limit of $\alpha _2$ and in case (v) $-\infty $ is the limit of $\alpha _2$.  Let $c$ be the limit of $\alpha _2$. 

We show that $f(z)\leq c\leq g(z)$. If $g(z)< c$ let $l$ be such that $g(z)<l<c$. Then since $g$ is continuous, there is an open definable subset  $V$ of $Z$ such that $z\in V$ and $g(v)<l$ for all $v\in V$. Case (v) does not occur since we cannot have $g(z)<-\infty ;$ in  the remaining cases  we would have $\emptyset =[f,g]^i_V \cap (\pi '_i)^{-1}((l, +\infty )) \in \alpha $ which is a absurd. If $c<f(z)$  let $l$ be such that $c<l<f(z)$. Then since $f$ is continuous, there is an open definable subset  $V$ of $Z$ such that $z\in V$ and $c<l<f(v)$ for all $v\in V$. Case (i)  does not happen; in the remaining  cases we would have $\emptyset =[f,g]^i_V \cap (\pi '_i)^{-1}((-\infty , l)) \in \alpha $ which is a absurd. 
\end{proof} 

It follows that  $(z,c)\in [f,g]^i _Z$ is the limit of $\alpha $. Since $S$ is closed in $[f, g]^i _Z$ and $\alpha $ is a definable type on $S,$ its limit  $(z,c)$ is in $S$. But then $z\in \pi _i(S)$ which is a contradiction. 
\end{proof}

The previous result allow us to obtain a slight generalization of \cite[Lemma 2.23]{ep4}. In that Lemma instead of $[f,g]^i _Z$ we have $Z\times [a,b]$. The proof of this new version is exactly the same using Lemma \ref{lem pii closed} instead of the fact that the projection $\pi \colon Z\times [a,b]\to Z$ is a continuous, closed definable map. For the readers convenience we include the details.

\begin{lem}\label{lem closed fiber in open}
Let $Z\subseteq M^{m-1}$ be a definably normal definable subset. Let $S\subseteq [f,g]^i_Z$ be a closed definable subset and $W\subseteq [f,g]^i_Z$ an open definable subset. Then for every closed definable subset $F\subseteq \pi _i(S)$ such that $S\cap (\pi _i)^{-1}(F)\subseteq W$ there is an open definable neighborhood $O$ of $F$ in $Z$ such that $O\subseteq \bar{O}\cap Z\subseteq \pi _i(W)$ and  $S\cap (\pi _i)^{-1}(\bar{O}\cap Z)\subseteq W$.
\end{lem}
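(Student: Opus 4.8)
The statement is a "normality along fibers" lemma: given a closed definable $S$ and an open definable $W$ inside the band $[f,g]^i_Z$, and a closed definable $F\subseteq\pi_i(S)$ with $S\cap(\pi_i)^{-1}(F)\subseteq W$, we must fatten $F$ to an open definable $O$ in $Z$ whose closure stays inside $\pi_i(W)$ and still has $S\cap(\pi_i)^{-1}(\overline O\cap Z)\subseteq W$. The idea, following \cite[Lemma 2.23]{ep4}, is to replace the closed projection $\pi\colon Z\times[a,b]\to Z$ used there by the closed projection $\pi_{i|}\colon[f,g]^i_Z\to Z$ supplied by Lemma \ref{lem pii closed}, and otherwise to run the same argument, exploiting definable normality of $Z$.

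First I would set $T=S\setminus W$, a closed definable subset of $[f,g]^i_Z$ disjoint from $S\cap(\pi_i)^{-1}(F)$; hence $\pi_i(T)$ is a closed definable subset of $Z$ (by Lemma \ref{lem pii closed}, since $\pi_{i|}$ is a closed map) which is disjoint from $F$. Also $Z\setminus\pi_i(W)$ is closed definable, and it too is disjoint from $F$: indeed if $z\in F$ then the fiber $\{z\}\times[f(z),g(z)]$ meets $S$ and is contained in $W$ by hypothesis, so $z\in\pi_i(W)$. Thus $F$ is a closed definable subset of $Z$ disjoint from the closed definable set $C\coloneqq\pi_i(T)\cup(Z\setminus\pi_i(W))$. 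By definable normality of $Z$ (here we use the hypothesis), choose disjoint open definable sets $O\supseteq F$ and $O'\supseteq C$ in $Z$; shrinking once more by normality (or by the shrinking lemma) I may further arrange $\overline O\cap Z\subseteq Z\setminus O'\subseteq\pi_i(W)$, which is the second containment required. It remains to check $S\cap(\pi_i)^{-1}(\overline O\cap Z)\subseteq W$: if some point of $S$ lay over $\overline O\cap Z$ but outside $W$, that point would be in $T$, so its base point would be in $\pi_i(T)\subseteq C\subseteq O'$, contradicting $\overline O\cap Z\subseteq Z\setminus O'$.

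The main obstacle is purely bookkeeping: making sure at each step that the sets produced are \emph{definable} (not merely topological) and \emph{closed} in the right ambient space, so that definable normality of $Z$ actually applies. The one genuine input beyond cell-decomposition-style manipulations is Lemma \ref{lem pii closed}, which guarantees $\pi_i(T)$ is closed definable — this is exactly the place where the band geometry of $[f,g]^i_Z$ (as opposed to an arbitrary definable set) is used, and it is why the whole argument is stated for bands rather than for general locally closed sets. Once that is in hand, the separation of $F$ from $C$ and the two shrinking steps are standard applications of definable normality of $Z$ in the o-minimal structure $\bM$; I would simply cite \cite[Chapter 6, (3.5)]{vdd}-style arguments and the shrinking lemma, and present the fiber-wise verification $S\cap(\pi_i)^{-1}(\overline O\cap Z)\subseteq W$ as the short diagram chase above.
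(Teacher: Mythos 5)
Your argument is correct, and it is genuinely shorter than the one in the paper. The paper's proof re-runs, fibrewise over each $z\in F$, the definable-types argument (via the uniformly definable neighborhoods $U(z,d)$ and Claim \ref{clm limit of def type}) and then invokes definable Skolem functions to produce a uniform open neighborhood $U(F,\epsilon)$ of $F$ with $S\cap(\pi_i)^{-1}(U(F,\epsilon))\subseteq W$, before finishing with one application of definable normality. You observe instead that a single application of Lemma \ref{lem pii closed} to the closed definable set $T=S\setminus W$ already yields the closed definable set $\pi_i(T)$ disjoint from $F$, and that $Z\setminus\pi_i(T)$ is precisely the largest open set $V$ with $S\cap(\pi_i)^{-1}(V)\subseteq W$; separating $F$ from $C=\pi_i(T)\cup(Z\setminus\pi_i(W))$ by definable normality then finishes (your ``shrinking once more'' is not even needed: $O\cap O'=\emptyset$ already gives $\bar O\cap Z\subseteq Z\setminus O'$ since $Z\setminus O'$ is closed in $Z$). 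This buys a proof with no second use of Skolem functions and no Skolemized family $U(F,\epsilon)$. Two small points to tighten. First, your phrase ``the fiber $\{z\}\times[f(z),g(z)]$ \ldots is contained in $W$'' is not what the hypothesis gives — only $S\cap(\pi_i)^{-1}(F)$ lies in $W$ — but your conclusion $F\subseteq\pi_i(W)$ is still correct, since $z\in F\subseteq\pi_i(S)$ gives a point of $S$ over $z$ which must lie in $W$. Second, both your proof and the paper's use that $\pi_i(W)$ is open in $Z$ (the paper needs $U(F,\epsilon)\cap\pi_i(W)$ to be an open neighborhood of $F$); this is true because $\pi_{i|}\colon[f,g]^i_Z\to Z$ is an open map (an easy consequence of the continuity of $f$ and $g$), but it deserves a line of justification in either write-up.
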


\begin{proof}
Let  $W^c=([f,g]^i_Z)\setminus W$. If $S\subseteq W$ then since $\pi _i(S) \subseteq \pi _i(W)$ is closed in $Z$ (by Lemma \ref{lem pii closed}) and $Z$ is definably normal, there is an open definable neighborhood $O$ of $\pi _i(S)\supseteq F$ in $Z$ such that $O\subseteq \bar{O}\cap Z\subseteq \pi _i(W)$ and  so $S\cap (\pi _i)^{-1}(\bar{O}\cap Z)=S\subseteq W$. So we may  suppose that $S\cap W^c\neq \emptyset $.

For $z\in Z$ let $\{U(z,d)\}_{d\in D(z)}$ be the uniformly definable system of fundamental open neighborhoods of $z$ in $Z$ given above. Recall that by \cite[Lemma 4.2.18]{HrLo} (or \cite[Lemma 2.19]{Hr04}), there is a definable type $\beta $ on $D(z)$ such that for every $d\in D(z)$ we have $\{d' \in D(z): d'\preceq d\}\in \beta $. 

Suppose that $z\in F$ and  for all $d\in D(z),$ we have $(S\cap (\pi_i)^{-1}(U(z,d)))\cap W^c\neq \emptyset $.  Then by definable Skolem functions, there is a definable map \[h\colon D(z)\to S\cap W^c\subseteq [f,g]^i_Z\]
 such that for every $d\in D(z)$ we have $h(d)\in (S\cap (\pi_i)^{-1}(U(z,d)))\cap W^c$. 
 
Let $\alpha$ be the definable type on $S\cap W^c$ determined by the collection $\{A\subseteq S\cap W^c: h^{-1}(A)\in \beta \}$. We are in the set up of Claim \ref{clm limit of def type}, so the limit of $\alpha $ exists, it is of the form $(z,c)\in [f,g]^i_Z$. Since $S\cap W^c$ is closed and $\alpha$ is a definable type on $S\cap W^c$, its limit $(z,c)$ is in $S\cap W^c$. But then $(z,c)\in S\cap (\pi _i)^{-1}(z)\subseteq W^c$, which contradicts the assumption on $F$.

So for each $z\in F$ there is $d\in D(z)$ such that $S\cap (\pi _i)^{-1}(U(z,d))\subseteq W$. By definable Skolem functions there is a definable map $\epsilon \colon F\to M^{2(m-1)}$ such that for each $z\in F$ we have $\epsilon (z)\in D(z)$ and $S\cap (\pi _i)^{-1}(U(z,\epsilon (z)))\subseteq W$.  Then 
\[U(F,\epsilon )=\bigcup _{z\in F}U(z,\epsilon (z))\]
 is an open definable neighborhood of $F$ in $Z$ such that 
\[
S\cap (\pi _i)^{-1}(U(F,\epsilon )) =\bigcup _{z\in F}S\cap (\pi _i)^{-1}(U(z,\epsilon (z)))\subseteq W.
\]

Since $U(F,\epsilon )\cap \pi _i(W)$ is an open definable neighborhood of $F$ in $Z,$ $F$ is closed in $Z$ and $Z$ is definably normal, there is an open definable neighborhood $O$ of $F$ in $Z$ such that $O\subseteq \bar{O}\cap Z\subseteq U(F,\epsilon )\cap \pi _i(W)\subseteq \pi _i(W)$ and  $S\cap (\pi _i)^{-1}(\bar{O}\cap Z)\subseteq W$.
\end{proof}

We now obtain the following generalization of the fact that if $Z\subseteq M^{m-1}$ is definably normal, then $Z\times [a,b]$ is definably normal. We omit the proof since it is exactly the same as in \cite[Lemma 2.24, proof of Proposition 2.21]{ep4} using Lemma \ref{lem closed fiber in open} and basic o-minimality from \cite{vdd}.

\begin{prop}\label{prop def normal ext}
If $Z\subseteq M^{m-1}$ is definably normal, then $[f,g]^i_Z$ is also definably normal.\qed
\end{prop}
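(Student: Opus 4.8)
The plan is to run the proof of \cite[Lemma~2.24 and Proposition~2.21]{ep4} — that $Z\times[a,b]$ is definably normal whenever $Z$ is — with the ``variable box'' $[f,g]^i_Z$ in place of the product $Z\times[a,b]$ throughout. That argument uses $Z\times[a,b]$ only through three properties: the projection to $Z$ is a continuous closed definable map, each of its fibres is the definably normal space $[a,b]$, and the tube lemma \cite[Lemma~2.23]{ep4} holds for it. In the present situation these are supplied, respectively, by Lemma~\ref{lem pii closed}, by Lemma~\ref{prop basis of open normal} (the fibre of $\pi_{i|}\colon[f,g]^i_Z\to Z$ over $z$ is the closed interval $[f(z),g(z)]$, which is definably normal), and by Lemma~\ref{lem closed fiber in open}. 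Since the proof in \cite{ep4} never touches $Z\times[a,b]$ except through these three facts, substituting our three lemmas makes it go through unchanged; below I only indicate the shape of the argument.

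Write $X=[f,g]^i_Z$ and $\pi=\pi_{i|}\colon X\to Z$. By the formulation (2) of definable normality (condition~(2) of Definition~\ref{defn tnormal}) it suffices to show: given a closed definable $C\subseteq X$ and an open definable $U\subseteq X$ with $C\subseteq U$, there are an open definable $W$ and a closed definable $K$ with $C\subseteq W\subseteq K\subseteq U$. \emph{Fibrewise step.} For each $z\in Z$ the fibre $C_z=C\cap\pi^{-1}(z)$ is a closed definable, and $U_z=U\cap\pi^{-1}(z)$ an open definable, subset of the interval $[f(z),g(z)]$; as that interval is definably normal (Lemma~\ref{prop basis of open normal}), there are a relatively open $W_z$ and a relatively closed $K_z$ with $C_z\subseteq W_z\subseteq K_z\subseteq U_z$. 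Using o-minimal cell decomposition applied to the definable family $\{(C_z,U_z)\}_{z\in Z}$, together with definable Skolem functions in $\bM$ to choose separating points in the gaps, these can be taken uniformly in $z$, so that $W_0=\bigcup_{z\in Z}(\{z\}\times W_z)$ and $K_0=\bigcup_{z\in Z}(\{z\}\times K_z)$ are definable subsets of $X$ with $C\subseteq W_0\subseteq K_0\subseteq U$ whose fibres are respectively relatively open and relatively closed.

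\emph{Globalisation step.} It remains to thicken $W_0$ to a set that is open in $X$ and $K_0$ to one that is closed in $X$. One proceeds by induction on the (uniformly bounded) number of components of the fibres, at each stage invoking Lemma~\ref{lem closed fiber in open} with $S$ a suitable closed definable subset built from $K_0$, with $W=U$, and with $F\subseteq\pi(S)$ a closed definable set such that $S\cap\pi^{-1}(F)\subseteq U$, to obtain an open definable neighbourhood $O$ of $F$ in $Z$ with $\overline{O}\cap Z\subseteq\pi(U)$ and $S\cap\pi^{-1}(\overline{O}\cap Z)\subseteq U$; enlarging $W_0$ over such sets $O$ — which is exactly where the definable normality of $Z$ and the closedness of $\pi$ (Lemma~\ref{lem pii closed}) enter, in the same way \cite[Lemma~2.24]{ep4} is used in the proof of \cite[Proposition~2.21]{ep4} — yields the open $W$, and taking the complement relative to a corresponding closed thickening yields $K$. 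The two ``boundary faces'' $x_i=f(\pi_i(x))$ and $x_i=g(\pi_i(x))$ of $X$ need no special treatment beyond the continuity of $f$ and $g$, which is already built into the cell structure used in the fibrewise step.

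The step I expect to be the real obstacle is this globalisation: keeping the combinatorial type of the pair $\bigl(C_z,U_z\subseteq[f(z),g(z)]\bigr)$ under control as $z$ ranges over the cells of $Z$ (especially where $f(z)=g(z)$, or where components of $C_z$ or of $U_z$ appear, merge, or vanish), and then orchestrating the finitely many applications of Lemma~\ref{lem closed fiber in open} so that the resulting thickenings glue to a single globally open $W$ and globally closed $K$ rather than merely to fibrewise-open and fibrewise-closed sets. This is precisely the content of \cite[Lemma~2.24 and the proof of Proposition~2.21]{ep4}, and it transfers verbatim once Lemmas~\ref{lem pii closed}, \ref{prop basis of open normal} and \ref{lem closed fiber in open} are available.
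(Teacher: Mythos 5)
Your proposal matches the paper's treatment: the paper omits the proof entirely, stating that it is ``exactly the same as in \cite[Lemma 2.24, proof of Proposition 2.21]{ep4}'' once Lemma \ref{lem closed fiber in open} (the generalized tube lemma, proved via Lemma \ref{lem pii closed}) replaces its product-case analogue, which is precisely the substitution you describe. Your additional sketch of the fibrewise/globalisation structure is consistent with that cited argument, so this is the same route, just spelled out in slightly more detail.
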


We now extend this result a bit further. First we introduce some notation. Given  $I\subseteq \{1,\ldots, m\},$ we let $I'=\{1,\ldots ,m\}\setminus I,$ we let 
\[\pi _I\colon M\to M ^{m-|I|}\]
be the projection omitting the coordinates in $I$ and we let 
\[\pi '_I\colon M\to M ^{|I|}\]
be  the projection onto the coordinates in $I$ (so $\pi '_I=\pi _{I'}$). Given  $Z\subseteq M ^{|I|}$ a definable set and families $\{f^l\colon Z\to M\}_{l\in I'}$ and $\{g^l\colon Z\to M\}_{l\in I'}$ of continuous definable functions, we set
{\small
\[
[\{f^l\}_{l\in I'}, \{g^l\}_{l\in I'}]^I_{Z}=\{x\in M^m: \pi _{I'}(x)\in Z\,\,\textrm{and}\,\,f^l\circ \pi _{I'}(x)\leq x_l\leq g^l\circ \pi _{I'}(x), \,\,\forall l\in I'\}.
\]
}
Similarly, we define
\[
(\{f^l\}_{l\in I'},\{g^l\}_{l\in I'})^I_Z, \,\,\,(\{f^l\}_{l\in I'},\{g^l\}_{l\in I'}]^I_Z\,\,\,\textrm{and}\,\,\,[\{f^l\}_{l\in I'},\{g^l\}_{l\in I'})^I_Z.
\]
If $g^l=g$ for all $l\in I'$ we write
\[
[\{f^l\}_{l\in I'}, g]^I_{Z}, (\{f^l\}_{l\in I'},g)^I_Z, \,\,\,(\{f^l\}_{l\in I'},g]^I_Z\,\,\,\textrm{and}\,\,\,[\{f^l\}_{l\in I'},g)^I_Z
\]
instead and similarly for the case $f^l=f$ for all $l\in I'$ (though we will not need this other case).

Note that if $I=\{1,\ldots, m\},$ then 
\[
[\{f^l\}_{l\in I'}, \{g^l\}_{l\in I'}]^I_Z=(\{f^l\}_{l\in I'}, \{g^l\}_{l\in I'}]^I_Z=\ldots =Z.
\]

\begin{thm}\label{thm def normal ext}
If $Z\subseteq M^{|I|}$ is definably normal, then $[\{f^l\}_{l\in I'},\{g^l\}_{l\in I'}]^I_Z$ is also definably normal.
\end{thm}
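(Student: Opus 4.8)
The plan is to deduce Theorem~\ref{thm def normal ext} from Proposition~\ref{prop def normal ext} by an induction on $|I'| = m - |I|$, the point being that Proposition~\ref{prop def normal ext} is exactly the case of adjoining one twisted closed band to a definably normal set, and the general statement is obtained by iterating it one coordinate at a time.

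For the base case $|I'| = 0$ one has $I = \{1,\ldots,m\}$ and, as noted just before the statement, $[\{f^l\}_{l\in I'},\{g^l\}_{l\in I'}]^I_Z = Z$, which is definably normal by hypothesis. For the inductive step, assume $|I'|\ge 1$ and that the result holds for all smaller cardinalities. First I would fix an index $i\in I'$ and set $J = I\cup\{i\}$ and $J' = I'\setminus\{i\}$, so that $|J'| = |I'|-1$ and $J' = \{1,\ldots,m\}\setminus J$. Next I would form the closed band over $Z$ obtained by adjoining the coordinate indexed by $i$, bounded below and above by $f^i$ and $g^i$; this is precisely a set of the form treated in Proposition~\ref{prop def normal ext} (with $Z\subseteq M^{|J|-1}$ in the role of the base), so it is a definably normal subset $Z'\subseteq M^{|J|}$. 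Letting $p\colon Z'\to Z$ be the coordinate projection that forgets the newly adjoined coordinate — a continuous definable surjection — I would set $\tilde f^l = f^l\circ p$ and $\tilde g^l = g^l\circ p$ for $l\in J'$, which are continuous definable on $Z'$, and then verify the identity
\[
[\{f^l\}_{l\in I'},\{g^l\}_{l\in I'}]^I_Z \;=\; [\{\tilde f^l\}_{l\in J'},\{\tilde g^l\}_{l\in J'}]^J_{Z'}.
\]
This is pure unwinding of definitions: for $x\in M^m$ the condition $\pi_{J'}(x)\in Z'$ simultaneously encodes that $\pi_{I'}(x)\in Z$ and the inequality $f^i(\pi_{I'}(x))\le x_i\le g^i(\pi_{I'}(x))$ (since $p$ applied to $\pi_{J'}(x)$ returns $\pi_{I'}(x)$), while $\tilde f^l(\pi_{J'}(x)) = f^l(\pi_{I'}(x))$ and likewise for $\tilde g^l$ shows the remaining inequalities, for $l\in J'$, match those of the left-hand side. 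Applying the induction hypothesis to the right-hand side, where the base $Z'$ is definably normal and $|J'| = |I'|-1$, would then give the theorem.

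I do not expect a genuine obstacle: all the real work is in Proposition~\ref{prop def normal ext}, and what remains is bookkeeping. The one point requiring a little care is keeping the coordinate identifications straight — viewing $Z$ inside $M^{|J|-1}$, then $Z'$ inside $M^{|J|}$, and matching the projections $\pi_{I'}$, $\pi_{J'}$ and the forgetful map $p$ — so that the displayed set-equality holds literally rather than merely up to a permutation of coordinates. (One may also assume throughout, as in Proposition~\ref{prop def normal ext}, that $f^l < g^l$; if some $f^l$ equals $g^l$ the corresponding coordinate is pinned to a graph, and forgetting it is a definable homeomorphism onto a set of the same shape with $|I'|$ decreased, so the induction hypothesis together with the invariance of definable normality under definable homeomorphisms dispose of that case.)
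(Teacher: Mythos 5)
Your proof is correct and takes essentially the same route as the paper's: an induction that peels off one coordinate of $I'$ at a time and rests entirely on Proposition~\ref{prop def normal ext}. The only (immaterial) difference is the order of operations --- you apply Proposition~\ref{prop def normal ext} first, enlarging the base from $Z$ to $Z'$ and then invoking the inductive hypothesis over $Z'$, whereas the paper first applies the inductive hypothesis to the band $[\{f^l\}_{l\in J'},\{g^l\}_{l\in J'}]^{J}_Z\subseteq M^{m-1}$ omitting the chosen coordinate $i$ and then uses Proposition~\ref{prop def normal ext} to re-adjoin it.
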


\begin{proof}
The proof is by induction on $m$. The case $m=0$ is clear, so assume $m>0$ and the result holds for $m-1$.  If $|I'|=0,$ then $I=\{1,\ldots, m\}$ and so  $[\{f^l\}_{l\in I'}, \{g^l\}_{l\in I'}]^I_Z=Z$. So suppose that $|I'|>0$ and choose $i\in I'$ and set $J=I$ and   $J'=I'\setminus \{i\}$. Then $X=[\{f^l\}_{l\in J'},\{g^l\}_{l\in J'}]^J_Z\subseteq M^{m-1}$ is definably normal by the induction hypothesis. 

Now let $F\colon X\to M$ be given by $F(u)=f^{i}\circ \pi _{J'}(u)$ and $G\colon X\to M$ be given by $G(u)=g^i \circ \pi _{J'}(u)$. Then $F$ and $G$ are continuous definable functions, $F\circ \pi _{i}(x)=f^i\circ \pi _{J'}(\pi _{i}(x))=f^i\circ \pi _{I'}(x)$ and $G\circ \pi _{i}(x)=g^i\circ \pi _{I'}(x)$. Therefore, 
\[[\{f^l\}_{l\in I'}, \{g^l\}_{l\in I'}]^I_Z=[F,G]^i_X\]
and so, by Proposition \ref{prop def normal ext}, this set is definably normal.
\end{proof}

To proceed we need to recall the following results: 

\begin{fact}\cite[Theorem 2.2]{Edal17}\label{thm open cells}
Let $U$ be an open definable subset of $M^m$. 
Then $U$ is a finite union of open definable sets definably homeomorphic, by reordering of coordinates,  to open cells. \qed
\end{fact}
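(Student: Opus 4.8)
The plan is to deduce the statement from the cell decomposition theorem together with the following auxiliary claim, which I would prove by descent on dimension: \emph{if $U\subseteq M^m$ is open and definable and $C\subseteq U$ is a cell (in some ordering of the coordinates), then $C$ is contained in the union of finitely many open definable subsets of $U$, each of which is an open cell after a reordering of the coordinates.} Granting the claim, the Fact follows at once: by cell decomposition choose a decomposition $\cC$ of $M^m$ partitioning $U$; then $U$ is the union of those cells of $\cC$ that are contained in $U$, and there are finitely many of them, so applying the claim to each writes $U$ as a finite union of sets of the required form.

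To prove the claim I would induct on $\dim C$. If $\dim C=m$ then $C$ is a top-dimensional cell, hence open, and is already an open cell after reordering, so there is nothing to do. Suppose $k=\dim C<m$. Using the standard fact that every cell is, after a suitable permutation of coordinates, the graph of a continuous definable map over an open cell (a consequence of cell decomposition, see \cite[Chapter 3]{vdd}), we may assume, after replacing $U$ and $C$ by their images under that permutation, that $C=\{(x,\psi(x)):x\in D\}$ with $D\subseteq M^k$ an open cell and $\psi=(\psi_{k+1},\dots,\psi_m)\colon D\to M^{m-k}$ continuous definable. Since $U$ is open, for each $x\in D$ the fibre $U_x=\{y\in M^{m-k}:(x,y)\in U\}$ is an open neighbourhood of $\psi(x)$, hence contains an open box around $\psi(x)$; as $\bM$ has definable Skolem functions and the family of admissible boxes is manifestly definable, we may choose a definable map $x\mapsto(a_{k+1}(x),b_{k+1}(x),\dots,a_m(x),b_m(x))$ on $D$ with $a_l(x)<\psi_l(x)<b_l(x)$ for all $l$ and $\{x\}\times\prod_{l=k+1}^m(a_l(x),b_l(x))\subseteq U$. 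By cell decomposition, refine $D$ into cells on each of which all the $a_l,b_l$ are continuous.

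Over each top-dimensional (hence open) cell $D'$ of this refinement, the ``tube''
\[
T_{D'}=\bigl\{(x,y)\in M^k\times M^{m-k} : x\in D',\ a_l(x)<y_l<b_l(x)\ \text{for}\ k<l\le m\bigr\}
\]
is contained in $U$ by the choice of the $a_l,b_l$, contains $\{(x,\psi(x)):x\in D'\}$, and is an open cell in the present coordinate order: building over the open cell $D'$, the band $(a_{k+1},b_{k+1})_{D'}$ is an open cell in $M^{k+1}$, then $(a_{k+2},b_{k+2})$ over it (with $a_{k+2},b_{k+2}$ pulled back continuously) is an open cell in $M^{k+2}$, and so on up to $T_{D'}$. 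Over each lower-dimensional cell $D''$ of the refinement, the set $\{(x,\psi(x)):x\in D''\}$ is again a cell, is contained in $U$, and has dimension $\dim D''<k$; by the induction hypothesis it is covered by finitely many open cells inside $U$. Since $C$ is the union of the finitely many graphs $\{(x,\psi(x)):x\in D'\}$ and $\{(x,\psi(x)):x\in D''\}$, the claim follows, and the descent terminates because $\dim C$ strictly decreases.

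The crux of the argument — and the reason a single application of cell decomposition to $U$ does not suffice — is the treatment of the cells $C\subseteq U$ of dimension $<m$: these lie inside the open set $U$ but are not themselves open, so they must be fattened to open pieces that remain inside $U$. The tube construction achieves this, but it forces a further cell decomposition of the base $D$ in order to make the fattening data continuous, and this step leaves the lower-dimensional graphs over the non-open cells of the refinement uncovered — which is exactly why the proof has to be organised as a descent on dimension. The remaining points to verify are routine: that a tube over an open cell is an open cell (checked above), that the tube data can be chosen definably (definable Skolem functions applied to the definable family of admissible boxes), and that a cell is, up to a coordinate permutation, a graph of continuous definable functions over an open cell.
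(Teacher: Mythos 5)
This statement is imported into the paper as a Fact from \cite{Edal17} and is given no proof in the text, so there is nothing in the paper itself to compare your argument against. Your proof is correct and is the standard argument for the ``up to reordering of coordinates'' version of covering by open cells: decompose $U$ into cells, keep the top-dimensional ones, realize each lower-dimensional cell (after a permutation) as a graph over an open cell, fatten it into a definable tube inside $U$, and handle the lower-dimensional leftovers created by the refinement of the base via descent on dimension. You also correctly identify where the standing hypotheses on $\bM$ enter: definable Skolem functions to choose the box data definably, and the coordinate permutation both to write a cell as a graph over an open cell and to make the tube an open cell.
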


Let  $\pi \colon M^m \to M^{m-1}$ be the projection onto the first $m-1$ coordinates. Clearly the following also holds with $\pi $ replaced by each $\pi _i\colon M^m\to M^{m-1}$ as defined above.

\begin{fact}\cite[Theorem 3.4]{dem}\label{fact continuous-choice}
Let $U$ be an open definable subset of $M^m$.  Then there is a finite cover $\{U_j:j=1,\ldots, l\}$ of $\pi (U)$ by open definable subsets such that for each $i$ there is a continuous definable section $s_j\colon U_j\to U$  of $\pi $ (i.e.  $\pi\circ s_j={\rm id}_{U_j}$).\qed
\end{fact}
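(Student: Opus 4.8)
The plan is to prove, by induction on $d\ge 0$, the following reformulation, from which the statement follows by taking $d=m-1$ and setting $s_i=(\mathrm{id},\theta_i)$: \emph{for every open definable $U\subseteq M^{d+1}$, writing $\mathrm{pr}\colon M^{d+1}\to M^d$ for the projection onto the first $d$ coordinates and $W=\mathrm{pr}(U)$, there are finitely many open definable sets $W_1,\dots,W_l$ with $W=\bigcup_i W_i$ and continuous definable maps $\theta_i\colon W_i\to M$ such that $(y,\theta_i(y))\in U$ for all $y\in W_i$.} Note that $W$ is open, since projections are open maps, and that $U_y:=\{t:(y,t)\in U\}$ is a nonempty open subset of $M$ for each $y\in W$. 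The case $d=0$ is immediate: $U$ is then a nonempty open subset of $M$, so definable Skolem functions supply a definable point $e(U)\in U$.

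For the inductive step fix $d\ge 1$. Applying definable Skolem functions to the family $\{U_y\}_{y\in W}$ produces a definable section $\theta\colon W\to M$ with $(y,\theta(y))\in U$ for all $y$; this $\theta$ need not be continuous. By cell decomposition one decomposes $M^d$ compatibly with $W$ so that $\theta$ is continuous on each cell contained in $W$. Let $E'$ be the union of the cells of dimension $<d$ that are contained in $W$; it is a closed definable subset of $W$ (the complement of the $(d-1)$-skeleton in $M^d$ is the union of the open cells, hence open), $\dim E'<d$, and $\theta$ restricts to a continuous section over the open set $W_0:=W\setminus E'$. It thus remains to cover $E'$ by finitely many open definable subsets of $W$, each carrying a continuous definable section into $U$.

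Each cell $K\subseteq E'$ is, after a permutation of the coordinates of $M^d$, the graph of a continuous definable map over an open cell $K^\flat\subseteq M^{\dim K}$; let $q\colon K^\flat\iso K$ be the resulting definable homeomorphism. Then $\widehat U:=\{(w,t)\in K^\flat\times M:(q(w),t)\in U\}$ is open in $M^{\dim K+1}$ and projects onto $K^\flat$, so the inductive hypothesis (available since $\dim K<d$) yields a finite open cover $\{K^\flat_j\}_j$ of $K^\flat$ and continuous definable maps $\mu_j\colon K^\flat_j\to M$ with $(q(w),\mu_j(w))\in U$. Over $K_j:=q(K^\flat_j)$, which is relatively open in $K$, the map $\nu_j:=\mu_j\circ q^{-1}$ is then a continuous definable section.

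The last step, and the only one that is not routine, is to \emph{fatten} $\nu_j$, which lives over the lower-dimensional set $K_j$, to a section over an \emph{open} subset of $M^d$. From the graph presentation of $K$ one obtains, by projecting off the graph coordinates and re-inserting them, a definable open neighbourhood $T$ of $K$ in $M^d$ together with a continuous definable retraction $\rho\colon T\to K$. Set
\[
\widehat T_j:=\{\,y\in T\cap W:\ \rho(y)\in K_j\ \text{and}\ (y,\nu_j(\rho(y)))\in U\,\}.
\]
Because $U$ is open and $\rho,\nu_j$ are continuous, $\widehat T_j$ is open in $M^d$; it contains $K_j$, since $\rho$ fixes $K$ pointwise and $(y,\nu_j(y))\in U$ for $y\in K_j$; and $\nu_j\circ\rho$ is a continuous definable section over it with $(y,(\nu_j\circ\rho)(y))\in U$. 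Ranging over the finitely many cells $K\subseteq E'$ and the finitely many indices $j$, the set $W_0$ together with the $\widehat T_j$ form the required finite open cover of $W=(W\setminus E')\cup E'$. The main obstacle is precisely this fattening: cell decomposition only delivers continuous sections over the cells of a decomposition, whose top-dimensional pieces are open but whose lower-dimensional pieces are not, so one genuinely needs the retraction-and-shrink argument — crucially exploiting that $U$ itself is open, which is what keeps $\widehat T_j$ open — to convert a cover by locally closed pieces into a cover by open sets without losing continuity.
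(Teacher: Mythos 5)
The paper does not actually prove this statement: it is imported as a Fact from \cite{dem} (Theorem 3.4 there), so there is no internal argument to compare yours with, and I can only judge your proposal on its own terms. It is correct. Your route is the natural one, and the delicate points are all handled: definable Skolem functions (which are indeed part of the standing hypotheses on $\bM$ in this section, and are genuinely needed) give a possibly discontinuous section $\theta$ over $W=\mathrm{pr}(U)$; a cell decomposition makes $\theta$ continuous on the union $W_0$ of the $d$-dimensional cells inside $W$, which is open; and the cells $K$ of dimension $<d$ are treated by induction — note that your induction is really strong induction on $d$, since $\dim K$ can be any value $<d$, so you should say the hypothesis is assumed for all smaller values — by pulling $U$ back along the graph homeomorphism $q\colon K^\flat\to K$ (the standard fact that, after a coordinate permutation, a cell is the graph of a continuous definable map over an open cell) and then fattening the resulting continuous sections via the retraction $\rho\colon T=K^\flat\times M^{d-\dim K}\to K$. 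The two facts carrying the weight are exactly the ones you verify: $\rho^{-1}(K_j)$ is open because $K_j$ is open in $K$ and $\rho$ is continuous as a map into $K$ (concretely it is $K^\flat_j\times M^{d-\dim K}$ in the permuted coordinates), and $\widehat T_j$ is open, contains $K_j$, and carries the continuous definable section $y\mapsto (y,\nu_j(\rho(y)))$ precisely because $U$ is open and $\rho$ fixes $K$ pointwise; since the $K^\flat_j$ cover $K^\flat$, the sets $\widehat T_j$ cover $E'$, and together with $W_0$ they give the required finite cover of $W$. I have no corrections beyond making the strong-induction phrasing explicit.
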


We now go back to the setting $\bG,$ $\bGi$ and $\bSg$. Note that by the several remarks made previously  regarding definability and definable topological notions (including connectedness, compactness, normality) we may safely omit the prefix $\bG$ (resp. $\bGi$ and $\bSg$) and simply say  definable (resp. definably connected, definably compact, definably normal) when talking about subsets of $\nGm$ or $\nGim$ and $\nSgm$.\\

Recall that given $I\subseteq \{1,\ldots, m\},$  $I'=\{1,\ldots ,m\}\setminus I,$
\[\pi _I\colon \nSgm\to \Sigma ^{m-|I|}\]
is the projection omitting the coordinates in $I$ and
\[\pi '_I\colon \nSgm\to \Sigma ^{|I|}\]
is the projection onto the coordinates in $I$ (so $\pi '_I=\pi _{I'}$). If $Z\subseteq \Sigma ^{|I|}$ is a definable set and $\{f^l\colon Z\to \nSg\}_{l\in I'}$ a family of continuous definable functions, we set
\[
(\{f^l\}_{l\in I'}, \infty]^I_{Z}=\{x\in \nSg^m: \pi _{I'}(x)\in Z\,\,\textrm{and}\,\,f^l\circ \pi _{I'}(x)< x_l\leq \infty \,\,\textrm{for all}\,\,l\in I'\}.
\]
Recall also that, if $I=\{1,\ldots, m\},$ then $(\{f^l\}_{l\in I'},\infty ]^I_Z=Z$. \\

\begin{prop}\label{prop infty open gen cells}
Let $O\subseteq \nGim$ be an open definable subset. Then $O$ is a finite union of open definable subsets of the form $(\{f^l_V\}_{l\in I'},\infty ]^I_{V}$ with $I\subseteq \{1,\ldots, m\}$ and $V$ an open definable subset of $\Gamma ^{|I|}$. 
\end{prop}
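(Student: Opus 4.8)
The plan is to cover $O$ one support stratum at a time. For $L\subseteq\{1,\dots,m\}$ recall that $(\nGim)_L=\{x:\supp x=L\}$ is carried homeomorphically onto $\Gamma^{|L|}$ by $\tau_L$, and note that it is locally closed in $\nGim$, being the intersection of the open set $\bigcap_{i\in L}\{x_i\neq\infty\}$ with the closed set $\bigcap_{j\notin L}\{x_j=\infty\}$ (Remark \ref{nrmk top}). Since $\nGim=\bigsqcup_L(\nGim)_L$ and every open definable set produced below will be contained in $O$, it suffices to cover, for each $L$, the piece $O\cap(\nGim)_L$ by finitely many open definable subsets of $O$ of the allowed form. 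For $L=\{1,\dots,m\}$ this is immediate: $O\cap\nGm$ is open in $\nGim$ (since $\nGm$ is open in $\nGim$ by Remark \ref{nrmk top}) and is already of the required form, with $I=\{1,\dots,m\}$, $I'=\varnothing$ and $V=O\cap\nGm$ (an open definable subset of $\Gamma^m$, hence $\bG$-definable by Remark \ref{nrmk embeddings}).

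Now fix $L\subsetneq\{1,\dots,m\}$, put $I=L$, $I'=\{1,\dots,m\}\setminus L$, $k=|L|$, and (permuting coordinates) assume $L=\{1,\dots,k\}$. By local closedness, $P_L:=\tau_L(O\cap(\nGim)_L)$ is an open definable subset of $\Gamma^k$, and $O\cap(\nGim)_L$ is precisely the set of points whose first $k$ coordinates form a tuple in $P_L$ and whose remaining coordinates all equal $\infty$. For $x'\in\Gamma^k$ I would set $E_{x'}=\{t\in\Gamma:\{x'\}\times(t,\infty]^{m-k}\subseteq O\}$, an upward closed subset of $\Gamma$, and $\ell(x')=\inf E_{x'}\in\Gamma\cup\{-\infty\}$; then $E_{x'}\neq\varnothing$ iff $x'\in P_L$, and I would work with the definable set $D=\{(x',t)\in\Gamma^{k+1}:t>\ell(x')\}$, which is $\bG$-definable by Remark \ref{nrmk embeddings} and whose image under the projection $\Gamma^{k+1}\to\Gamma^k$ onto the first $k$ coordinates is exactly $P_L$.

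The step I expect to be the main obstacle is showing that $D$ is \emph{open} in $\Gamma^{k+1}$, equivalently that $\ell$ is upper semicontinuous on $P_L$. For this I would argue: given $x_0'$ and $t'>\ell(x_0')$, the set $\{x_0'\}\times[t',\infty]^{m-k}$ is closed and bounded in $\nGim$, hence definably compact (Remark \ref{nrmk def comp}), and it is contained in the open set $O$; so by the o-minimal tube lemma it admits an open neighbourhood $U_0\times[t',\infty]^{m-k}\subseteq O$, which forces $\ell(x')\le t'$ for all $x'\in U_0\cap\Gamma^k$, and hence the openness of $D$. Everything else is comparatively formal.

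Granting openness of $D$, I would apply Fact \ref{fact continuous-choice} to the open definable set $D$ and the coordinate projection $\Gamma^{k+1}\to\Gamma^k$ onto the first $k$ coordinates (whose image on $D$ is $P_L$): this produces a finite cover $\{V_1,\dots,V_r\}$ of $P_L$ by open definable subsets of $\Gamma^k$ together with continuous definable sections, i.e.\ continuous definable maps $f_s\colon V_s\to\Gamma$ with $f_s(x')>\ell(x')$, hence $f_s(x')\in E_{x'}$, for every $x'\in V_s$. Setting $f^l_{V_s}:=f_s$ for all $l\in I'$, the set $Q_s:=(\{f^l_{V_s}\}_{l\in I'},\infty]^I_{V_s}$ is open in $\nGim$ (by continuity of $f_s$ and openness of $V_s$ in $\nGm\subseteq\nGim$), is contained in $O$ (any of its points $x$ has $\pi_{I'}(x)=x'\in V_s$ and $\{x'\}\times(f_s(x'),\infty]^{m-k}\subseteq O$), and $\bigcup_s Q_s\supseteq O\cap(\nGim)_L$ (a point of the latter has its first $k$ coordinates in $P_L=\bigcup_s V_s$ and all remaining coordinates equal to $\infty>f_s(x')$). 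Taking the union of $O\cap\nGm$ with all the sets $Q_s$ over all $L\subsetneq\{1,\dots,m\}$ then yields a finite union of sets of the required form equal to $O$; the remaining verifications (openness of each $Q_s$, definability) are routine.
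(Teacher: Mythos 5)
Your proof is correct and its overall architecture matches the paper's: stratify $O$ by support, build for each $I$ an open definable subset of $\tau_I(O_I)\times\Gamma$ whose points $(a,b)$ certify that the tail box above $b$ at $a$ lies in $O$, and apply Fact \ref{fact continuous-choice} to extract finitely many continuous definable sections giving the functions $f^l_V$. The difference is in where the work is placed. You define your auxiliary set $D$ fiberwise, via $\ell(x')=\inf E_{x'}$, so that $\pi(D)=P_L$ is immediate but openness of $D$ becomes the real issue, which you settle with a tube-lemma argument using definable compactness of $\{x_0'\}\times[t',\infty]^{m-k}$. The paper instead bakes the neighborhood requirement into the definition of its auxiliary set $U_I$ (membership of $(a,b)$ requires a whole neighborhood $U$ of $a$ and some $c<b$ with $U\star_I(c,\infty]^{|I'|}\subseteq O$), so openness is immediate from the definition and the surjectivity $\pi(U_I)=\tau_I(O_I)$ is the (one-line) check; no compactness input is needed. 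Your route is therefore slightly less elementary: the tube lemma for definably compact fibers in $\bGi$ is true, but it is not a free citation here — it rests on closedness of projections along definably compact factors, which in this setting is exactly the content of Lemma \ref{lem pii closed} (proved via $\bSg$ and definable types), so you are importing machinery the paper's own proof of this proposition avoids. Two small points to tighten: (i) as written, your semicontinuity step only yields $\ell\le t'$ on $U_0$, which does not give a full neighborhood of $(x_0',t')$ in $D$; you should first pick $t''$ with $\ell(x_0')<t''<t'$ (possible since the order is dense) and run the tube argument at level $t''$, so that $U_0\times(t'',\infty)\subseteq D$ contains $(x_0',t')$; (ii) your claim that $f_s(x')>\ell(x')$ implies $f_s(x')\in E_{x'}$ deserves the one-word justification that $E_{x'}$ is upward closed. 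With these repairs the argument is sound.
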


\begin{proof} Let $I\subsetneq \{1,\ldots, m\}$ be such that $O_I\neq \emptyset$.   For a $A\subseteq \tau_I(O_I)$ and $B\subseteq \Gamma_\infty^{|I'|}$, we let $A\star_I B$ denote the set defined by 
\[
(a_1,\ldots, a_m)\in A\star _I B \Leftrightarrow 
\begin{cases}
a_i\in B & \text{ if $i\in I'$ }\\
a_i\in \pi_i'(A) & \text{ if $i\in I$}
\end{cases}
\] 
where $\pi_i'\colon \tau_I(O_I)\to \Gamma$ is the restriction of the projection onto the $i$'th coordinate. 
Consider the definable set
\[
U_I\coloneqq \left\{(a,b)\in \tau_I(O_I)\times \Gamma: \begin{array}{l}
\text{there is an open neighborhood $U$ of $a$ in $\tau_I(O_I)$}\\
\text{and $c\in \Gamma$ such that $c<b$ and $U\star _I (c,\infty]^{|I'|}\subseteq O$}
\end{array}
\right\}.
\]
Let $\pi\colon U_I\to \tau_I(O_I)\subseteq \Gamma ^{|I|}$ be the projection to the first $|I|$-coordinates. 

\begin{clm} $\pi(U_I)=\tau_I(O_I)$.  
\end{clm} 

The left-to-right inclusion is trivial. From right-to-left, let $a$ be an element of $\tau_I(O_I)$. Then there is $a'\in O$ such that $\tau_I(a')=a$. Since $O$ is open, let $V=\prod_{i=1}^m J_i\subseteq O$ be a product of basic open sets containing $a'$. Then $U\coloneqq \tau_I(V)$ is a neiboourhood of $a$ in $\tau_I(O_I)$. Moreover, for $i\in I'$, we may suppose $J_i=(b_i,\infty]$ for some $b_i\in \Gamma$. Let $c=\max\{b_i: i\in I'\}$. Then $U\star _I (c,\infty]^{|I'|}\subseteq O$. In particular, $(a,b)\in U_I$ for every element $b\in (c,\infty)$, which shows that $a\in \pi(U_I)$. This proves the claim. 

\begin{clm} 
The set $U_I$ is open. 
\end{clm} 

Let $(a,b)$ be an element in $U_I$. By definition, let $U$ be a neighborhood of $a$ in $\tau_I(O_I)$ and $c\in \Gamma$ such that $c<b$ and $U\star _I (c,\infty]^{|I'|}\subseteq O$. We let the reader verify that $U\times (c,\infty)\subseteq U_I$ which shows the claim.

By Fact \ref{fact continuous-choice} applied to $U_I$ and the projection $\pi\colon U_I\to \Gamma^{|I|}$, there exists a finite cover $\cV_I$ of $\pi(U_I)=\tau _I(O_I)$ by open definable subsets and for each $V\in \cV_I$ there is a continuous definable section $s_V\colon V\to U_I$ of $\pi$. We let the reader verify that the previous claims imply that the family
\begin{equation}\label{eq:family}\tag{$\ast$}
\{(\{s_V\}_{i\in I'},\infty ]^I_V\}_{V\in \cV_I}
\end{equation} 
is a finite family of open definable subsets of $O$ covering $O_I$. 

The union of $O_{\{1\,\ldots,m\}}$ and all families as in \eqref{eq:family} for all $I\subsetneq \{1,\ldots,m\}$ provide the required family of open sets. 
\end{proof}

We are finally ready to prove the main goal of this subsection:

\begin{thm}\label{thm basis of open normal}
Suppose that  $\bG=(\nG, <, +,  \ldots )$ is an o-minimal expansion of an ordered group $(\nG , <, + )$. Let $Z$ be a definably locally closed  subset of $\nGim$. Then $Z$ is the union of finitely many relatively open, definable subsets which are definably normal. 
\end{thm}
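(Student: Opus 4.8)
The plan is to reduce to the case of an open definable subset and then use the structural results just proved, combining Theorem \ref{thm def normal ext} with Proposition \ref{prop infty open gen cells}.

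\begin{proof}[Proof plan]
First I would reduce to the case where $Z$ is open. Since $Z$ is definably locally closed in $\nGim$, it is open in its closure $\bar Z$, so $Z = O\cap \bar Z$ for some open definable $O\subseteq \nGim$. Arguing inside $\bSg$ (using Remark \ref{nrmk top and cohom} and Remark \ref{nrmk top}) we may also work with $\bSg$-definable sets, where $\bSg$ has definable Skolem functions provided $\bG$ does; since $\bG=(\nG,<,+,\dots)$ expands an ordered group, it has definable Skolem functions. The point of passing to $\bSg$ is that we then have access to Theorem \ref{thm def normal ext}, Fact \ref{thm open cells} and Fact \ref{fact continuous-choice}, all stated for an arbitrary o-minimal $\bM$ with definable Skolem functions. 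It therefore suffices to show: every open definable subset $O$ of $\nGim$ is a finite union of relatively open definable subsets which are definably normal. Indeed, if $O=\bigcup_{j} W_j$ with each $W_j$ open in $O$ (hence open in $\nGim$) and definably normal, then $Z = O\cap\bar Z = \bigcup_j (W_j\cap\bar Z)$, and each $W_j\cap\bar Z$ is closed in the definably normal set $W_j$, hence definably normal, and relatively open in $Z$. (One uses here that a closed definable subset of a definably normal definable set is definably normal.)

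Next I would apply Proposition \ref{prop infty open gen cells}: the open definable set $O\subseteq\nGim$ is a finite union of open definable sets of the form $(\{f^l_V\}_{l\in I'},\infty]^I_V$, where $I\subseteq\{1,\dots,m\}$, $V\subseteq\Gamma^{|I|}$ is open definable (in the group sort, not touching $\infty$), and the $f^l_V\colon V\to\Gamma$ are continuous definable. Note the sets in this decomposition are built as ``boxes over $V$'' with open $(c,\infty]$-type fibers in the $I'$-directions; in particular they sit naturally inside $\bSg$-definable ``$[\,\cdot\,,\infty]$''-type sets once we replace the strict lower bounds by non-strict ones after a further decomposition. So it remains to handle a single piece $P=(\{f^l\}_{l\in I'},\infty]^I_V$ with $V$ open definable in $\Gamma^{|I|}$, i.e.\ to write it as a finite union of relatively open definable subsets that are definably normal.

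For such a $P$, the strategy is: (1) reduce the base $V$ to a definably normal base; (2) bound the $\infty$-fibers to a definably compact ``interval'' so that Theorem \ref{thm def normal ext} applies. For (1): by Fact \ref{thm open cells}, $V$ is a finite union of open definable subsets $V_k\subseteq\Gamma^{|I|}$ each definably homeomorphic, by permutation of coordinates, to an open cell; open cells in $\Gamma^{|I|}$ are definably normal because $\bG$ expands an ordered group (by \cite[Chapter 6 (3.5)]{vdd}, every $\bG$-definable subset of $\nGm$ is definably normal — in particular the cells), so each $V_k$ is definably normal. Restricting $P$ over each $V_k$ gives finitely many pieces, so we may assume the base $V$ itself is definably normal. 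For (2): the obstruction to applying Theorem \ref{thm def normal ext} directly is that the fibers $(f^l(v),\infty]$ are half-open and reach $\infty$, while Theorem \ref{thm def normal ext} wants closed ``interval'' fibers $[f^l,g^l]$ between continuous definable bounds. Here I would use the change of variable $p\colon\Gamma_\infty\to[0,\infty]\times[0,\infty]$ from Remark \ref{nrmk def completions} (and in $\bSg$ the analogous normalization) to realize $\Gamma_\infty$ definably homeomorphically as a ``closed bounded'' piece; combined with continuity of the $f^l$, the set $P$ becomes definably homeomorphic to a set of the form $[\{\tilde f^l\}_{l\in J'},\tilde g]^J_{\tilde V}$ over a definably normal base $\tilde V$ (possibly after one more finite decomposition to make the relevant bounds continuous definable functions, using cell decomposition, Remark \ref{nrmk cd}). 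Then Theorem \ref{thm def normal ext} gives that $P$ is definably normal.

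The main obstacle I anticipate is step (2): matching the half-open, $\infty$-reaching fibers of the sets produced by Proposition \ref{prop infty open gen cells} with the closed-fiber format $[\{f^l\},\{g^l\}]^I_Z$ required by Theorem \ref{thm def normal ext}, in a way that is \emph{uniformly} definable over the base and compatible with continuity. This is where the bookkeeping is delicate: one must be careful that after the coordinate change $p$ the lower bound functions $\tilde f^l$ remain continuous (they do, being composites of continuous maps) and that the ``upper bound'' $\infty$ becomes a continuous definable function (the constant $\infty$, which is fine in $\bGi$/$\bSg$), and that bounding below by a continuous function rather than a strict inequality only shrinks $P$ to a relatively open definable subset, so that finitely many such shrinkings still cover $P$ — this last point again uses Fact \ref{fact continuous-choice}-style section-choosing, exactly as in the proof of Proposition \ref{prop infty open gen cells}. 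Once these compatibilities are arranged, the rest is a direct assembly of the cited facts.
\end{proof}
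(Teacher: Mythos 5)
Your reduction to open sets and your use of Proposition \ref{prop infty open gen cells} match the paper exactly, and your observation that the base $V\subseteq\Gamma^{|I|}$ is automatically definably normal (so Fact \ref{thm open cells} is not needed there) is correct. The gap is in your step (2), and it is precisely the crux of the theorem. A piece $P=(\{f^l\}_{l\in I'},\infty]^I_V$ has fibers $(f^l(v),\infty]$ that are half-open and reach $\infty$; no definable homeomorphism can turn these into closed fibers $[\tilde f^l,\tilde g]$, since $[\tilde f^l,\tilde g]$ is definably compact and $(f^l(v),\infty]$ is not, and definable compactness is a definable-homeomorphism invariant. The map $p$ of Remark \ref{nrmk def completions} is a homeomorphism onto its (non-closed) image, so it cannot repair this. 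Your fallback of replacing the strict lower bound by a non-strict one produces $[\,g^l,\infty]^I_V$, which is indeed definably normal by Theorem \ref{thm def normal ext}, but such a set is relatively \emph{closed}, not relatively open, in $P$; and any relatively open subset of $P$ containing a point with some coordinate equal to $\infty$ must again contain a half-open, $\infty$-reaching set of the same shape, so no finite cover of $P$ by relatively open closed-fiber sets exists. Example \ref{expl def normal} is exactly the warning that sets mixing $\Gamma$ and $\infty$ in this way are where normality can fail, so this step cannot be waved through.

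The paper closes this gap by proving directly that the single piece $P$ is definably normal, via a gluing argument: it writes $P$ as the union of $X=(\{f^l\},\{2f^l\})^I_V$ (contained in $\Gamma^m$, hence definably normal since $\bG$ expands an ordered group) and $Z=[\{2f^l\},\infty]^I_V$ (definably normal by Theorem \ref{thm def normal ext}), together with a buffer $Y=(\{\tfrac32 f^l\},\{\tfrac52 f^l\})^I_V$ straddling the seam $Q=\{x_l=2f^l\circ\pi_{I'}(x)\}$. Given disjoint closed definable $C,D\subseteq P$, it separates them inside each of $X$, $Y$, $Z$, shrinks the resulting open sets using definable normality of each piece, deletes $Q$ from the $Z$-part, and checks that the patched sets are open in $P$, disjoint, and contain $C$ and $D$ respectively. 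If you want to complete your proof, this (or some equivalent patching across the seam between the $\Gamma^m$-part and the closed $[\,\cdot\,,\infty]$-part) is the missing argument; the rest of your outline is sound.
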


\begin{proof}
Since a definably locally closed set is of the form $O\cap S$ with $O$ open definable and $S$ closed definable, it is a closed definable subset of an  open definable subset. Therefore it is enough to prove the result for an open definable  subset $O$ of $\nGim$.

By Proposition \ref{prop infty open gen cells} it is enough to show that  open definable subsets of $O$ of the form $(\{f^l_V\}_{l\in I'},\infty ]^I_{V}$ with $I\subseteq \{1,\ldots, m\}$ and $V$ an open definable subset of $\Gamma ^{|I|}$ are definably normal. So fix $I\subseteq \{1,\ldots, m\}$ and $V$ an open definable subset of $\Gamma ^{|I|}$. If $I'=\emptyset ,$ the $(\{f^l_V\}_{l\in I'},\infty ]^I_{V}=V\subseteq \nGm$ is definably normal (Remark \ref{nrmk def normal}). Suppose otherwise and let $P=(\{f^l_V\}_{l\in I'},\infty ]^I_{V}$ and consider
\[X=(\{f^l_V\}_{l\in I'}, \{2f^l_V\}_{l\in I'})^I_V,\,\,\,\,\,Y=(\{\frac{3}{2}f^l_V\}_{l\in I'}, \{\frac{5}{2}f^l_V\}_{l\in I'})^I_V\]
and
\[
Z=[\{2f^l_V\}_{l\in I'}, \infty ]^I_V.
\]
Since $X, Y\subseteq \nGm,$ by Remark \ref{nrmk def normal} they are both definably normal, and by Theorem \ref{thm def normal ext}, $Z$ is also definably normal.

Let $C,D\subseteq P$ be closed, disjoint definable subsets.  Since $C\cap Y, D\cap Y\subseteq Y$ are closed, disjoint definable subsets and $Y$ is definably normal, there are $U_Y,V_Y\subseteq Y$ open, disjoint definable subsets of $Y$ such that $C\cap Y\subseteq U_Y$ and $D\cap Y\subseteq V_Y$. Similarly,  there are $U_Z, V_Z\subseteq Z$ open, disjoint  definable subsets of $Z$ such that $C\cap Z\subseteq U_Z$ and  $D\cap Z\subseteq V_Z$ and there are $U_X, V_X\subseteq X$ open, disjoint  definable subsets of $X$ such that $C\cap X\subseteq U_X$ and  $D\cap X\subseteq V_X$.  Again by definable normality, let $U'_X\subseteq X$ be an open definable subset of $X$ such that $C\cap X\subseteq U'_X\subseteq \bar{U'_X}\subseteq U_X$ and similarly let $U'_Z\subseteq Z$ be an open definable subset of $Z$ such that $C\cap Z\subseteq U'_Z\subseteq \bar{U'_Z}\subseteq U_Z$ and  let $U'_Y\subseteq Y$ be an open definable subset of $Y$ such that $C\cap Y\subseteq U'_Y\subseteq \bar{U'_Y}\subseteq U_Y$.

Let  $V'_Y=V_Y\setminus (\bar{U'_Z}\cup \bar{U'_X}),$ $V'_X=V_X\setminus (\bar{U'_Y}\cup \bar{U'_Z})$ and $V'_Z=V_Z\setminus (\bar{U'_Y}\cup \bar{U'_X})$.  Let 
\[
Q=\{x\in \nGm:\pi _{I'}(x)\in V\,\,\textrm{and}\,\,x_l=(2f^l_V)\circ \pi _{I'}(x)\,\,\textrm{for all}\,\, l\in I'\}.
\] 
We have that $Q$ is a closed definable subset of $P$ and of $Y$ (the $f^l_V$'s are continuous). Let $U''_Z=U'_Z\setminus Q$ and let $V''_Z=V'_Z\setminus Q$. 

Let $U=U'_Y\cup U''_Z\cup U'_X$ and let  $V=V'_Y\cup V''_Z\cup V'_X$. Since $C\cap Q\subseteq U'_Y$ we clearly have $C\subseteq U$. On the other hand, $D\cap \bar{U'_X}=D\cap \bar{U'_Z}=D\cap \bar{U'_Y}=\emptyset ,$ otherwise $D\cap U_X\neq \emptyset$ and $U_X\cap V_X\neq \emptyset $ or similarly $U_Z\cap V_Z\neq \emptyset $ or $U_Y\cap V_Y\neq \emptyset $.  Thus, since $D\cap Q\subseteq V'_Y,$ we also  have $D\subseteq V$. By construction we  have $U\cap V=\emptyset $. Now $U'_Y$ is open in $Y$ and $Y$ is open in $P,$ $U'_X$ is open in $X$ and $X$ is open in $P$ and $U''_Z$ is the interior of $U'_Z$ in $P,$ so $U$ is open in $P$. Similarly, $V$ is open in $P$.
\end{proof}


\section{Sites on definable sets in ACVF}\label{section t-top in acvf}

In this section we will introduce the $\vgs$-site on definable sets of algebraically closed non-trivially valued fields and the $\hvgs$-site on their corresponding stable completion. We will further show that the latter forms a $\T$-space in the sense of Section \ref{section more on t-sheaves}. Definable compactness and normality will be discussed in Subection \ref{subsection  vgs-comp and vgs-normal}. 

We begin by recalling the needed model-theoretic background on ACVF. Some familiarity with valued fields and their model-theory will be however assumed. For further references we refer the reader to \cite{pre-ro-84} or \cite{HHM}.

\subsection{Preliminaries on ACVF}\label{sec prelim acvf}

Given $K$ an algebraically closed field with a nontrivial valuation $\val \colon K^{\times }\to \nG$ we let $\nG =(\nG,<, 0, -, +)$ denote the value group, $\cR$ the valuation ring, $\cM$ the maximal ideal, $k$ the residue field and $\res \colon \cR\to k$ the residue map. We let $\nGi$ be defined as in Section \ref{section t-top in bGi} and extend the valuation  $\val \colon K\to \nGi$ by setting $\val (0)=\infty$.

Let $\cL_{k, \nG}$ be the three sorted language with sorts $(K,\cL_{\mathrm{ring}})$, $(k,\cL_{\mathrm{ring}})$ and $(\Gamma_\infty$, $\cL_{\mathrm{og}}^\infty)$ (i.e., the language of ordered groups with an additional constant symbol for~$\infty$), together with symbols for the valuation $\val$ and the map ${\rm Res}\colon K^2\to k$ sending $(x,y)$ to ${\rm res}(xy^{-1})$ if $\val(x)\geq \val(y)$ and $y\neq 0$, and to $0$ otherwise. Both the residue field and the value group are stably embedded, that is, every definable subset of $\nGi$ (resp. $k_K^n$) is already definable in $(\Gamma_\infty$, $\cL_{\mathrm{og}}^\infty)$ (resp. $(k,\cL_{\mathrm{ring}})$).

Let $\cL_{\cG}$ be the geometric language extending $\cL_{k, \nG}$ with the following new sorts: for each positive integer $n$ a sort $S_n=\{\Lambda\subseteq K^n:\,\,\cR\textrm{-submodule definably isomorphic to}\,\, \cR^n\}$ of lattices in $K^n,$ and a sort $T_n=  \{{\rm red}(\Lambda):\Lambda\in S_n\}$ of $k$-vector spaces where ${\rm red}(\Lambda)$ is the reduction modulo $\cM$ of $\Lambda$. By  \cite[Theorem 2.1.1]{HHM}, the theory ACVF of algebraically closed fields with a nontrivial valuation has quantifier elimination in the language $\cL _{k, \nG}$ and by \cite[Theorem 7.3]{HHM} it has quantifier elimination and elimination of imaginaries in the language $\cL_{\cG}$.

Below we fix  a monster model  $\UU$ of ACVF in the language $\cL _{\cG},$ and assume that all sets of parameters we consider are small substructures $C$ of  $\UU$ (i.e.  subsets $C$ of $\UU$ such that ${\rm dcl}(C)=C$ and $|C|<|\UU|$),  and all models $K$ of ACVF considered are elementary substructures of $\UU,$ again of smaller cardinality. 

If $C$ is a small substructure of $\UU,$ by an algebraic variety over $C$ we mean a separated reduced scheme of finite type over the valued field sort of $C$. Note that if $V$ is an algebraic variety over $C$ and $K$ is a model of ACVF containing $C$, we can view the $K$-rational points $V(K)$ of $V$ as a constructible subset of some affine $n$-space of $K$ via some affine charts. 

By a $C$-definable set we mean a $C$-definable subset of some product of sorts and of varieties over $C$. We denote by 
\[\Df _C\]
the category whose objects are $C$-definable sets and whose morphisms are $C$-definable maps between $C$-definable sets (i.e. maps between $C$-definable sets whose graphs are $C$-definable sets). Given a substructure $F$ containing $C$ we have an inclusion functor $\Df _C\to \Df _F$ since every $C$-definable set is also $F$-definable.

Given an algebraic variety $V$ over a small model $K$ we can put on $V(K)$, besides the Zariski topology, the {\it valuation topology}, which is induced via charts by the valuation topology on $\Aa ^n_K(K)$. Recall that the  valuation topology on $\Aa ^1(K)$ is generated by the open balls
\[B^{\circ}(a,\gamma)=\{x\in \Aa^1_K(K):\val(x-a)>\gamma \}\]
centered at $a\in \Aa^1_K(K)$ and with radius $\gamma \in \nGi$ and, the valuation topology on $\Aa ^n_K(K)$ is the product topology. Since regular functions are continuous with respect to the valuation topology in affine space, the valuation topology on an arbitrary variety does not depend on the choice of the embedding. The valuation topology however is not suitable for developing cohomology as it is totally disconnected (the open ball are also closed) and not necessarily locally compact. 

In Subsection \ref{subsection vgs-sites} we will introduce an appropriate site on a definable set which will replace the valuation topology. However, with this site, definable sets do not form a $\T$-space. To obtain results for the cohomology theory on such sites  we have to go through another category associated to $\Df _C,$ the category of the stable completions of objects of $\Df_C$ with morphisms the $C$-pro-definable morphisms between such objects. We then introduce another site, isomorphic to the previous one, in the stable completion of a definable set. In Subsection \ref{subsection  vgs-sites are t-top} we show, using Hrushovski and Loeser's main theorem, that the open sets of the site make the stable completion of a definable set into a  $\T$-space. Finally in Subsection \ref{subsection  vgs-comp and vgs-normal} we  show that in the stable completion of a definable set, for the associated notion of $\T$-normality, $\T$-locally closed subsets are finite unions of $\T$-open subsets which are $\T$-normal.  \\

In what follows, for simplicity and when there is no risk of confusion, the $K$ in $\Aa^n_K(K),$ $\Pp^n_K(K),$ $\Gamma (K), V(K)$ etc., will be omitted. Also, different concepts from \cite{HrLo} will not be introduced with their original definition but by an equivalent characterization better suiting the purposes of the present article. 

\subsection{The~\texorpdfstring{$\vgs$}{v}-sites}\label{subsection vgs-sites}
We start with the definition of the site replacing the valuation topology. As we will explain below this site is  based on the work of Hrushovski and Loeser \cite{HrLo}. We then recall Hrushovski and Loeser's category of the stable completions of objects of $\Df_C$ and the topology on such spaces. In general, this topology is also not suitable for cohomology (it is not locally compact) and we will replace it  by a site as well.  We work over some model $K$ of ACVF.





\begin{defn}[The $\vgs$-site]\label{def vg site}
Let $V$ be an algebraic variety over $K$. A subset of $V$ is a {\it basic} $\vgs$-{\it open subset} if it is  of the form
\[\bigcap _{j\in J}\{u\in U_j:\val (f_j(u))<\val (g_j(u))\}\]
where $J$ is a finite set and $f_j,g_j\in \cO _V(U_j)$ are regular functions on a Zariski open subset $U_j\subseteq V$. A subset of $V$ is a $\vgs$-{\it open subset} if it is a finite union of basic $\vgs$-open subsets; it is $\vgs$-{\it closed} if it is the complement of a $\vgs$-open subset.

A subset of $V\times \nGim$ is a \emph{basic $\vgs$-open subset} if and only if its pullback under $\id \times \val\colon V\times {\mathbb A} ^m\to V\times \nGim$ is a  basic $\vgs$-open subset of $V\times {\mathbb A}^m;$ $\vgs$-open and $\vgs$-closed subsets of $V\times \nGim$ are defined analogously.   

If $X\subseteq V\times \nGim$ is a definable subset, we say that a subset of $X$ is $\vgs$-open (resp. $\vgs$-closed) if and only if it is of the form $X\cap O$ (resp. $X\cap D$) where $O$ (resp. $D$) is a $\vgs$-open (resp. $\vgs$-closed) subset of $V\times \nGim$. Similarly, a \emph{basic $\vgs$-open subset of $X$} is a set of the form $X\cap O$ where $O$ is basic $\vgs$-subset of $V\times \nGim$. 

The {\it $\vgs$-site on $X$},  denoted $X_{\vgs},$ is the category $\op(X_{\vgs})$ whose objects are the $\vgs$-open subsets of $X,$ the morphisms are the inclusions and the admissible covers $\cov(U)$ of $U\in \op (X_{\vgs})$ are covers by $\vgs$-open subsets of $X$ with finite subcovers. 
\end{defn}


\begin{nrmk}[v-open and g-open subsets]\label{nrmk v+g open}
Note that if $V$ is an  algebraic variety over $K$, then:
\begin{itemize}
\item[-]
since regular functions are continuous for the valuation topology, a $\vgs$-open subset is v-{\it open} i.e. it is a definable subset which is open in the valuation topology;
\item[-]
since the constant function zero is a regular function  and $\val (f(u))<\infty $ is the same as $f(u)\neq 0,$ it follows that a $\vgs$-open subset is g-{\it open} i.e. it is a positive finite Boolean combination of Zariski closed sets, Zariski open sets and sets of the form $\{u\in U:\val (f(u))<\val (g(u))\}$ with $f, g\in \cO_V(U)$.
\end{itemize}
Thus it follows from the characterization of subsets which are both v-closed and g-closed when $V$ is affine or projective (\cite[Proposition 3.7.3]{HrLo}, see also below) that the $\vgs$-open subsets as described above are exactly the subsets which are both v-open  and g-open. The sets that are both  v-open  and g-open (resp. v-closed and g-closed) are called in \cite{HrLo} $\vgs$-open subsets (resp. v+g-closed subsets). 
\end{nrmk}

The following will be useful later:

\begin{fact}[{\cite[Proposition 3.7.3]{HrLo}}]\label{fact:vg_closed} 
Let $V$ be an an affine (resp. projective)  algebraic variety over $K$. Then a subset of $V$ is $\vgs$-closed if and only if it is 
of the form 
\[\bigcup _{i\in I}\bigcap _{j\in J}\{x\in V:h_{ij}(x)=0,\,\,\val(f_{ij}(x))\leq \val(g_{ij}(x))\}\]
where $I,J$ are finite and $h_{ij}, f_{ij}, g_{ij}\in \cO_V(V)$ are regular functions on $V$ (resp. homogeneous polynomials on $V$).\qed
\end{fact}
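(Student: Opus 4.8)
\emph{Plan.} The equivalence is due to Hrushovski and Loeser; here is the shape of the argument I would give. The implication $(\Leftarrow)$ is the easy one: each building block $\{x\in V: h(x)=0,\ \val(f(x))\le \val(g(x))\}$ is $\vgs$-closed. Indeed $\{h=0\}$ is Zariski closed, hence g-closed, and also v-closed because regular functions are continuous for the valuation topology, so $\{h=0\}$ is the preimage of a closed point. And $\{\val(f)\le\val(g)\}$ is g-closed by the very definition of the g-topology, since its complement $\{\val(f)<\val(g)\}$ is a basic $\vgs$-open subset (take $U=V$ in Definition \ref{def vg site}); it is also v-closed, because $\{\val(f)>\val(g)\}$ is contained in the Zariski open $\{g\neq 0\}$ and there equals $\{f/g\in\cM\}$, which is v-open since $\cM$ is open and $f/g$ is v-continuous. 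As the class of $\vgs$-closed subsets of $V$ is closed under finite unions and finite intersections (basic $\vgs$-opens being closed under finite intersection), any set of the stated form is $\vgs$-closed.

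For $(\Rightarrow)$ the plan is as follows. First reduce to $V=\Aa^n$ (resp. $V=\Pp^n$) by embedding $V$ as a closed subvariety and noting that the v- and g-topologies on $V$ are the subspace topologies. Next, since $S$ is g-closed, its complement is g-open, so distributing the positive Boolean combination and clearing denominators of the regular functions occurring on the various Zariski opens $U_j$, one writes $S$ as a finite union
\[
S=\bigcup_{i}\Bigl(Z_i\cap O_i\cap\bigcap_{j}\{\val(f_{ij})\le\val(g_{ij})\}\Bigr),
\]
with $Z_i$ Zariski closed, $O_i$ Zariski open, and $f_{ij},g_{ij}$ polynomials (homogeneous of a common degree in the projective case); here one uses the identity $\{s\neq 0\}=\{\val(s)<\val(0)\}$ to put the Zariski-open conditions produced by those denominators into the same format, and one may further assume each $Z_i$ is irreducible with $O_i\cap Z_i$ dense in it. The point is then to \emph{eliminate the factors $O_i$}. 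Using that a proper subvariety is nowhere dense for the valuation topology, $S=\overline S^{\,v}=\bigcup_i\overline{Z_i\cap O_i\cap A_i}^{\,v}$ with $A_i=\bigcap_j\{\val(f_{ij})\le\val(g_{ij})\}$, so it suffices to compute each closure $\overline{Z_i\cap O_i\cap A_i}^{\,v}$ and check it is again admissible. That closure sits inside $Z_i\cap A_i$, and the part of $Z_i\cap A_i$ that is lost is controlled by the ``boundary'' of $A_i$ relative to $Z_i$ (loci where some $f_{ij}$ or $g_{ij}$ vanishes, or where some inequality $\val(f_{ij})\le\val(g_{ij})$ is an equality); stratifying along these loci — each again cut out by a Zariski-closed condition or a valuation inequality — one writes the closure explicitly in the required normal form. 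In the projective case the affine-chart descriptions are then reassembled into a genuinely homogeneous one, which is where properness of $\Pp^n$ (i.e. $\vgs$-closedness being controlled ``at infinity'') enters.

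\textbf{Main obstacle.} The delicate step is precisely the absorption of the Zariski-open conditions $O_i$, that is, the explicit computation of $\overline{Z_i\cap O_i\cap A_i}^{\,v}$: dropping $O_i$ can genuinely shrink the set — an isolated point of $A_i$ lying off $O_i$ need not survive in the closure — so one cannot just replace the piece by $Z_i\cap A_i$, and a careful analysis is needed to see that what is lost is still describable by conditions of the form $h=0$ and $\val(f)\le\val(g)$. This is exactly what the theory of the v- and g-topologies in \cite{HrLo} supplies (equivalently, one may phrase it via the description of $\vgs$-closed subsets of $\Pp^n$ as preimages of closed sets under the finite-level tropicalization maps, whose closed sets are cut out precisely by inequalities $\val(f)\le\val(g)$), and for this reason we simply invoke \cite[Proposition 3.7.3]{HrLo}.
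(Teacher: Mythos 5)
The paper offers no proof of this statement at all --- it is quoted verbatim as \cite[Proposition 3.7.3]{HrLo} with an immediate \textsc{qed} --- and your proposal ultimately does the same for the substantive direction, deferring the delicate elimination of the Zariski-open loci to that reference, so the approaches coincide. Your verification of the easy direction is correct (modulo the slip where the complement of $\{\val(f)\le\val(g)\}$ is written as $\{\val(f)<\val(g)\}$ instead of $\{\val(g)<\val(f)\}$; the set is still a basic $\vgs$-open, so nothing is affected).
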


\begin{nrmk}[$\vgs$-opens and valuation topology]\label{nrmk v+g-topology}
The $\vgs$-open subsets of an algebraic variety are a basis for the valuation topology. However, not every definable open subset is a $\vgs$-open. For example in $\Aa ^1,$ the valuation ring $\cR=\bigcup _{a\in \cR}(a+\cM)$ is definable, open in the valuation topology but it is not $\vgs$-open.

In $\nGim,$ by Fact \ref{fact:vg_closed}, the $\vgs$-open subsets coincide with the open definable subsets of $\nGim$ for the product of the order topology on $\nGi$ (Remark \ref{nrmk top}). So the $\vgs$-site $X_{\vgs}$ on a definable subset $X\subseteq \nGim$ is the o-minimal site $X_{\df}$ as defined in Definition \ref{def T-space in Gi}. 
\end{nrmk}

\begin{nrmk}[Definable sets with the $\vgs$-site are not $\T$-spaces]
Let us say that a subset of a definable set of $V\times \nGin$ is a {\it $\vgs$-subset} if it is a finite boolean combination of $\vgs$-open subsets. Note that by quantifier elimination, the $\vgs$-subsets are exactly the definable subsets. 

It follows that the $\vgs$-open subsets of a definable set of $V\times \nGin$ do not form a $\T$-topology unless $V$ is finite, since condition (iii) of Definition \ref{def:T-topology} fails: any $\vgs$-open is always a disjoint union of clopen $\vgs$-subsets (i.e. definable subsets).
\end{nrmk} 


For the readers convenience, below we recall the definition of the stable completion $\widehat{X}$ of a definable set $X$. When $X$ is moreover a subset of $V\times\nGin$ for an algebraic variety $V$, we will equip $\widehat{X}$ with a topology and a site, making $\widehat{X}$ into a $\T$-space.

Let $B$ be any subset of $\UU$ and $x$ be a tuple of variables with length $|x|$. We denote as usual by $S_x(B)$ the space of types over $B$ in the variables $x,$ that is the Stone space of the Boolean algebra of formulas with free variables contained in $x$ and parameters from $B$ up to equivalence over ACVF. If $B$ is a model of ACVF, then $S_x(B)$ can be characterized  as the set of ultrafilters of definable subsets of $B^{|x|}$ over $B$. 

Types in $S_x(\UU)$ are called {\it global types}. If $C$ is a small substructure, a {\it global type $p\in S_x(\UU)$ is $C$-definable} (or definable over $C$) if and only if for every $\cL_{\cG}$-formula $\phi (x,y)$ there is an $\cL_{\cG}$-formula $d_p(\phi )(y)$ over $C$ such that for all $b\in \UU ^{|y|}$ we have $\phi (x,b)\in p$ if and only if $d_p(\phi )(b)$ holds in $\UU$. Equivalently, since $\UU$ is a model, a global type $p\in S_x(\UU)$ is $C$-definable if and only if for every $\emptyset $-definable family $\{X_t\}_{t\in T}$ of definable subsets of $\UU ^{|x|}$ there is a $C$-definable subset $S\subseteq T$ such that $X_t\in p$ if and only if $t\in S$.

A type $p\in S_x(B)$ {\it concentrates on} a $C$-definable set $X$ if it contains a formula defining $X$. The notation $S_X(B)$ is used to denote the subset of $S_x(B)$ of all such types. Note if $B$ is a model, then $S_X(B)$ can be identified with $\tilde{X(B)},$ the set of ultrafilters of definable subsets of $X(B)$ over  $B$ - this notation was used in previous sections in the o-minimal setting. Note for example, if $K$ is an algebraically closed field (a model of the theory ACF), then by quantifier elimination in ACF,  $\Aa_K^n=\mathrm{Spec}(K[X_1, \ldots, X_n])$ can be canonically identified with $\tilde{K^n}$. 


The {\it stable completion of $X$ over $C$}, denoted by $\widehat{X}(C),$ can be characterized in ACVF as the set of $C$-definable global types  concentrated on $X$ which are orthogonal to $\Gamma $.  Recall that a global type $p$  is {\it orthogonal} to $\Gamma $ if and only if  for every definable function $h\colon X\to \nGi $ the pushforward $h_*(p)$ is a global type  concentrated on a point of $\Gamma _{\infty };$ the pushforward $h_*(p)$ is the global type given by $\{Z\subseteq \Gamma : Z\,\,\,\textrm{definable and}\,\,\,h^{-1}(Z)\in p\}$. Recall also that, by (\cite[Proposition 2.9.1]{HrLo}, a $C$-definable global type $p$ is orthogonal to $\Gamma $ if and only if $p$ is stably dominated if and only if $p$ is generically stable. These other characterizations of orthogonality to $\Gamma $ of an $C$-definable type will be used when needed. 

If $f\colon X\to Y$ is a $C$-definable map, then we have a map $\hat{f}\colon \hat{X}(C)\to \hat{Y}(C)$ given by $\hat{f}(p)=f_*(p)$. Indeed, $f_*(p)$ is clearly an $C$-definable global type on $Y$ and for any definable $h\colon Y\to \Gamma$, we have $h\circ f\colon X\to \Gamma$ is definable and $(h\circ f)_*(p)=h_*(f_*(p))$ is a global type concentrated on a point of $\Gamma$. However there are more maps that one has to consider between the stable completions of definable sets. 
To introduce these maps we need to introduce the pro-definable structure associated to the stable completion of a definable set.

Let $\mathrm{Pro}(\Df _C)$ be the category of {\it $C$-pro-definable sets} i.e. the category whose objects are  filtrant projective limits of functors
\[
\lpro i \Ho_{\Df_C}(\bullet,X_i),
\]
where $(X_i)_{i\in I}$ is a cofiltering system in $\Df_C$ 
indexed by a small directed partially ordered set, and the morphisms are the natural transformations of such functors. By a result of Kamensky \cite{Ka07}, the functor of ``taking $\UU$-points'' induces an equivalence of categories between the category $\mathrm{Pro}(\Df_C)$ and the sub-category of the category of sets whose objects and morphisms are projective limits of $\UU$-points of definable sets indexed by a small directed partially ordered set. So on can identify a pro-definable set $X$ represented by $(X_i)_{i\in I}$ with $X(\UU)=\lpro i X_i(\UU)$. 

The sets of morphisms are related by
\[\Ho_{\mathrm{Pro}(\Df_C)}(X,Y) \simeq \,\, \lpro j \lind i \Ho_{\Df _C}(X_i,Y_j)\]
and we call the elements of $\Ho_{\mathrm{Pro}(\Df_C)}(X,Y)$ the \emph{$C$-pro-definable morphisms} between $X$ and $Y$. \\

Below, by  {\it strict $C$-pro-definable} set we mean a  $C$-pro-definable set $X$ for which there is  a cofiltering system $(X_i)_{i\in I}$ in $\Df_C$ representing $X$ with the transition maps $\pi _{i,i'}\colon X_i\to X_{i'}$ being surjective for all $i\geq i'$ in $I,$ or  equivalently, with the $C$-pro-definable projection maps $\pi _{i}\colon X\to X_i,$ represented by the transition maps $\pi _{i,i'}\colon X_i\to X_{i'}$ for all $i\geq i',$ being surjective for all $i\in I$.\\

\begin{fact}\cite[Theorem 3.1.1]{HrLo}\label{fact hat is pro-def}
 For every $C$-definable set $X$ there is a canonical strict $C$-pro-definable set $E$ and a canonical identification $\widehat{X}(F)=E(F)$ for every substructure $F$ containing $C$. Moreover, if $f\colon X\to Y$ is a morphism in $\Df _C,$ then the induced map $\hat{f}\colon \widehat{X}(F)\to \widehat{Y}(F)$ is a morphism of $C$-pro-definable sets. \qed
\end{fact}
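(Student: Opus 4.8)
The plan is to reproduce Hrushovski and Loeser's argument for the pro-definability of the stable completion (their Theorem 3.1.1). It has three stages: a reduction to affine space, the case of $\Aa^n$ via the structure theory of stably dominated types, and the passage to arbitrary definable sets together with strictness and functoriality.

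\emph{Reduction.} The assignment $X\mapsto\widehat X$ carries a closed (resp. open) definable subset to a closed (resp. open) subset, and it commutes with finite unions, because a global type is an ultrafilter of definable sets and so concentrates on a single member of a finite cover. Hence a canonical pro-definable structure on every $\widehat{\Aa^n}$ glues, along an affine open cover, to one on $\widehat V$ for a variety $V$. For a definable $X\subseteq V\times\nGim$ one uses that a stably dominated type concentrating on $X$ projects to a stably dominated type on $V$ and to a \emph{realised} point of $\nGim$ — orthogonality to $\Gamma$ forces each projection to $\nGi$ to push forward to a point — so $\widehat X$ is the subset of $\widehat V\times\nGim$ cut out by the pro-definable condition ``$p$ concentrates on the fibre $X_\gamma$''; general definable sets are handled the same way. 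Canonicity is inherited from the affine case.

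\emph{The affine case.} Here I would invoke \cite{HHM}: a $C$-definable global type orthogonal to $\Gamma$ is stably dominated, hence generically stable and in particular definable, and it is controlled by its stable part, i.e.\ there is a definable map $f$ from (the domain of) the type into a product of residue-field sorts and sorts $T_n$ such that the type is dominated by $f_*p$. This is uniform: for a fixed formula $\phi(x,y)$ the $\phi$-definition $d_p\phi$ of a stably dominated $p$ is governed by finitely much residue-field data, and over the stably embedded stable sort $k$ (a model of ACF) the space of types on a variety is pro-definable — it is the associated scheme, a projective limit of Grassmannians. Assembling these definition schemes over the countably many relevant $\phi$ realises $\widehat{\Aa^n}$ as a projective limit of definable sets. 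As a sanity check and a building block one recovers the concrete one-dimensional picture: $\widehat{\Aa^1}$ is in definable bijection with the honestly definable set of closed balls $b$, via $b\mapsto p_b$ the generic type of $b$ (points being degenerate balls); one checks uniformly in the code of $b$ that $p_b$ is $C$-definable and orthogonal to $\Gamma$, and conversely that a stably dominated $1$-type, being pinned down by the nested family of balls it lies in, either has a smallest such ball (a point) or stabilises to a closed ball of which it is the generic type — and an induction on $n$ via $\Aa^n=\Aa^{n-1}\times\Aa^1$, relativising the rank-one case to the field generated by the image on $\Aa^{n-1}$, gives an alternative route.

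\emph{Strictness and functoriality.} Strictness follows by checking that the transition maps are surjective: every consistent finite-level fragment of the coding data is realised by an actual stably dominated type, already because the realised types of $X$ supply enough points. For a morphism $f\colon X\to Y$ in $\Df_C$ one sets $\widehat f(p)=f_*p$; this is stably dominated because domination is preserved under pushforward, and $p\mapsto f_*p$ is computed levelwise by composing the relevant definition schemes, so it is a morphism of pro-definable sets. The main obstacle is the uniformity used in the affine case: showing that ``$p$ is stably dominated'' and the domination datum itself vary definably in parameters. This is precisely where one leans on the HHM analysis of stable domination and on metastability of ACVF (and on the pro-definability of spaces of types over the residue field); the bookkeeping needed to patch the one-dimensional description up to $\Aa^n$ — base change to a residual parameter, fibrewise application of the rank-one case — is the genuinely delicate part.
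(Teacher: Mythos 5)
The paper offers no proof of this statement: it is imported verbatim as a Fact from \cite[Theorem 3.1.1]{HrLo} and stamped with a \qed, so there is no in-paper argument to compare yours against. Judged as a reconstruction of the Hrushovski--Loeser proof, your outline has the right architecture (glue along an affine cover; treat $V\times\nGim$ via $\widehat{V}\times\nGim$ using orthogonality to $\Gamma$; code a stably dominated type by its definition scheme, controlled by residue-field data over which type spaces are pro-definable; pushforward for functoriality), and the reduction and functoriality steps are essentially correct.

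There are, however, two places where the proposal does not close the argument. First, the heart of the theorem is the uniformity you flag at the end: one must show that, formula by formula, the set of codes of $\phi$-definitions arising from $C$-definable types orthogonal to $\Gamma$ is a \emph{definable} (not merely ind- or $\infty$-definable) family, i.e. that ``$p$ is generically stable and concentrates on $X$'' is a definable condition on the coding parameters. This is exactly the content of the uniform-definability/metastability lemmas in \cite{HHM} and \cite[Section 2]{HrLo}, and pointing at them is not the same as deriving the pro-definable presentation; your sanity check via $\widehat{\Aa^1}=\{$closed balls$\}$ and induction on $n$ is a genuinely different (and workable, but also nontrivial) route that Hrushovski--Loeser use for the explicit description of $\widehat{\Aa^1}$, not for Theorem 3.1.1 itself. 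Second, your strictness argument is not right as stated: that ``realised types supply enough points'' gives density of simple points in $\widehat{X}$, but strictness requires that the image of $\widehat{X}$ under each projection to a finite level be a definable set onto which the transition maps surject. Density of a sub-collection of types does not yield definability of that image; in \cite{HrLo} this again comes from the definability-in-families of the class of stably dominated types together with elimination of imaginaries. So the proposal is a faithful road map with the two hardest waypoints left as citations rather than proofs.
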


Thus we have a category 
\[\hDf _C\] 
whose objects are of the form $\widehat{X}(C)$ for $X$ a $C$-definable set and whose morphisms are the $C$-pro-definable morphisms between such objects. Furthermore, for every substructure $F$ containing $C$, we have a functor
\begin{align*}
\Df _C\to \hDf _F \\
X\mapsto \widehat{X}(F).  
\end{align*}

\begin{expl}[The stable completion of the affine line]\label{exa:gen_ball}   
Let us describe the stable completion $\widehat{\Aa ^1}(K)$ of the affine line.  Let $\eta_{\cR}$ denote the \emph{generic type of the valuation ring} $\cR,$ that is, the global type in $S_x(\UU)$ where $|x|=1,$ determined by the following rule: a definable set $X\subseteq \UU$ is in $\eta_{\cR}$ if and only if there is $m\geq 1$ and there are $b_1,\ldots,b_m\in \cR$  such that $\val (b_i-b_j)=0$ for all $i\neq j$ and 
\[
\cR\setminus (\bigcup_{i=1}^m b_i+\cM) \subseteq X.
\]
By quantifier elimination and first-order logic compactness, for every formula $\phi(x,y)$ there is an integer $m_{\phi}$ such that for all $t\in \UU^{|y|}$, if $X_t$ is defined by $\phi(x,t)$, then for all $m>m_{\phi }$ and all $b_1,\ldots,b_m\in \cR$  such that $\val (b_i-b_j)=0$ for all $i\neq j$ we have
\[
\cR\setminus (\bigcup_{i=1}^m b_i+\cM) \not\subseteq X_t.\]
Therefore, $\eta_{\cR}$ is  a $\emptyset$-definable global type (see also \cite[Lemma 2.3.8]{HHM06}). We let the reader convince her/himself that $\eta_{\cR}$ is orthogonal to $\nG$ (see \cite[Lemma 2.5.5]{HHM06}). 

Given any closed ball 
\[B(a,\gamma)=\{x\in \Aa ^1:\val(x-a)\geq \gamma \}\]
centered at $a$ with radius $\gamma \in \nGi$, by moving the center and rescaling, there is an affine transformation $f$ such that $f(\cR)=B(a,\gamma)$. We set  $\eta_{B(a,\gamma )}=f_*(\eta_{\cR})$ and we call it \emph{the generic type of the closed ball $B(a,\gamma )$}. Thus  $\eta_{B(a,\gamma )}$ is orthogonal to $\Gamma $. Furthermore, it follows from  \cite[Lemma 2.3.3]{HHM06} that we have 
\[\widehat{\Aa^1}(K)=\{\eta_{B(a,\gamma)}: a\in K, \gamma\in \nGi  \},\]
where $B(a,\infty),$ the closed ball with radius $\infty $ centered at $a,$ is identified with $a$. 
\end{expl} 

As a set, $\widehat{\Pp ^1}$ consists of the disjoint union of $\widehat{\Aa^1}$ and the definable type concentrating on the point at infinity in $\mathbb{P}^1$. The description of $\widehat{\Aa ^n}$ for $n>1$ is more complicated (see \cite[Example 3.2.3]{HrLo}).\\


For the rest of this section, whenever there is no risk of confusion, $K$ will be omitted in $\hat{X}(K)$. In this respect, results below stated for $\widehat{X}$ hold for $\widehat{X}(K)$ for every model $K$ of ACVF over which all the objects of the statement are defined.

%
%

Let $V$ be an algebraic variety over $K$ and let  $\cO_V$ be the sheaf of regular functions on $V$.  Let $\mathcal{O}_V^{\val}$ be the sheaf of $\nGi$-valued functions defined by  
\[
\mathcal{O}_V^{\val}(U)=\{\val \circ f\mid f\in \mathcal{O}_V(U)\}. 
\]
The {\it topology on the stable completion} $\widehat{V}$ is the topology having as a basis finite unions of finite intersections of sets of the following form
\[
\{p\in \widehat{U}\mid f_*(p)\in I\}, 
\]
where $U$ is a Zariski open set of $V$, $f\in \cO_V^{\val}(U)$ and $I$ is an open  interval on $\nGi$. $\widehat{V\times \nGin}$ is  equipped  with the quotient topology induced by the map 
\[\widehat{\id\times \val }\colon \widehat{V\times \Aa^n}\to \widehat{V\times \nGin}\] 
and, for a definable subset $X\subseteq V\times \nGim$, we put on $\widehat{X}$ the induced topology from $\widehat{V\times \nGim}$. \\

By \cite[Lemma 3.5.3]{HrLo} we have:

\begin{fact}\label{fact top on hat}
If $Y\subseteq \nGim$ is a definable subset, then $\widehat{Y}=Y$. The topology on $\widehat{\nGi}= \nGi$ coincides with the order topology on $\nGi$ and the topology on $\widehat{\nGin}= \nGin$ is the product topology. 

If $X$ is a definable subset of an algebraic variety $V,$ then there is a canonical bijection from $\widehat{X}\times Y$ to $\widehat{X\times Y}$ such that the topology on $\widehat{X\times Y}=\widehat{X}\times Y$ is the product topology. \qed
\end{fact}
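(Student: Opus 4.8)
The plan is to separate the set-level identification from the topological statements, and to deduce the latter by exhibiting continuous sections of the quotient maps that define the topologies on $\widehat{V\times\nGim}$. For the set-level claim, recall that $\widehat{Y}$ consists of the definable global types concentrating on $Y$ that are orthogonal to $\nG$. If $Y\subseteq\nGim$ and $p\in\widehat{Y}$, then each coordinate projection $\pi_i\colon Y\to\nGi$ is a definable function, so orthogonality to $\nG$ forces $(\pi_i)_*(p)$ to concentrate on a single point $a_i\in\nGi$; hence $p$ contains the formulas $x_i=a_i$ and, concentrating on $Y$, is the realized type of $(a_1,\dots,a_m)\in Y$. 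Conversely the realized type of a point of $Y$ is definable over that point and trivially orthogonal to $\nG$, so $\widehat{Y}=Y$ and in particular $\widehat{\nGin}=\nGin$ as sets. The product decomposition for $Y\subseteq\nGim$ follows the same way: given $q\in\widehat{X\times Y}$, its pushforward along the projection to $Y$ lies in $\widehat{Y}=Y$, say it is the point $b$; then $q$ concentrates on $X\times\{b\}$ and is thus the same datum as a definable $\nG$-orthogonal type on $X$ together with $b$. One checks that this assignment $\widehat{X\times Y}\to\widehat{X}\times Y$ is a bijection, compatible with $\UU$-points and base change, hence canonical.

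For the topology, consider first $\widehat{\nGin}=\nGin$, which carries the quotient topology along $\widehat{\val}\colon\widehat{\Aa^n}\to\nGin$ (the hat of the coordinatewise valuation; this is well defined since pushforwards of $\nG$-orthogonal definable types concentrate on points of $\nGin$). On one hand $\widehat{\val}$ is continuous into the product of the order topologies: the preimage of a subbasic open set $\{\bar\gamma:\gamma_i<c\}$ is $\{p\in\widehat{\Aa^n}:(\val\circ x_i)_*(p)\in(-\infty,c)\}$, a basic open set of $\widehat{\Aa^n}$ obtained from the regular coordinate function $x_i$ and the interval $(-\infty,c)$, and likewise for sets $\{\gamma_i>c\}$; so the product topology is contained in the quotient topology. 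For the reverse inclusion I would exhibit a continuous section $s\colon\nGin\to\widehat{\Aa^n}$ of $\widehat{\val}$, namely $s(\bar\gamma)=\bigotimes_{i=1}^n\eta_{B(0,\gamma_i)}$, which is a well-defined element of $\widehat{\Aa^n}$ because the generic types of balls are generically stable, definable types admit tensor products, and tensor products of $\nG$-orthogonal (equivalently stably dominated) definable types are again $\nG$-orthogonal (here $\eta_{B(0,\infty)}$ is the realized type of $0$). One has $\widehat{\val}\circ s=\id$ since the valuation of the $i$-th coordinate of the generic point of $\prod_j B(0,\gamma_j)$ is $\gamma_i$, and $s$ is continuous because for a basic open $\{p:(\val\circ h)_*(p)\in I\}$ of $\widehat{\Aa^n}$ the composite $\bar\gamma\mapsto(\val\circ h)_*(s(\bar\gamma))$ is the piecewise-linear continuous tropicalization map $\mathrm{trop}(h)$ evaluated at $\bar\gamma$, whose preimage of the open interval $I$ is order-open. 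Then any quotient-open $A\subseteq\nGin$ satisfies $A=s^{-1}(\widehat{\val}^{-1}(A))$, which is order-open; so the two topologies agree, giving for $n=1$ the statement about $\widehat{\nGi}$.

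Finally, for $\widehat{X\times Y}=\widehat{X}\times Y$ as topological spaces, the topology on $\widehat{V\times\nGim}$ is the quotient along $\widehat{\id_V\times\val}\colon\widehat{V\times\Aa^m}\to\widehat{V\times\nGim}$, and I would run the same two-sided argument: the map $\widehat{\id_V\times\val}$ is continuous into $\widehat{V}\times\nGim$ because the basic opens of $\widehat{V\times\Aa^m}$ are cut out by $\val\circ h$ for $h$ regular on a Zariski open of $V\times\Aa^m$, and Zariski-locally such an $h$ is a polynomial in the affine coordinates with coefficients regular on $V$, so its defining condition splits into an open condition on $\widehat{V}$ and an order-open condition on $\nGim$; for the reverse inclusion the map $\sigma(q,\bar\gamma)=q\otimes s(\bar\gamma)$ is a section of $\widehat{\id_V\times\val}$ over $\widehat{V}\times\nGim$ whose continuity reduces, via the same Zariski-local polynomial description, to continuity of tropicalization. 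Restricting the resulting homeomorphism $\widehat{V\times\nGim}\iso\widehat{V}\times\nGim$ to $\widehat{X\times Y}$ then yields the claim. I expect the main obstacle to be the analysis behind the section $s$: computing the $\widehat{\val}$-pushforwards of the generic types of (products of) balls and recognizing the resulting definable maps as the continuous piecewise-linear tropicalization functions — this is essentially the structure theory of $\widehat{\Aa^n}$ developed in \cite{HrLo}; once that is in hand the rest is a formal manipulation of quotient topologies and continuous sections.
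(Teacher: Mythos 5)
Your argument is correct and follows essentially the same route as the source the paper cites for this statement (\cite[Lemmas 3.5.2 and 3.5.3]{HrLo}): orthogonality to $\Gamma$ collapses types on $\nGim$ to realized points, and the two topologies are compared via continuity of $\widehat{\id\times\val}$ on one side and the continuous section $q\otimes\eta_{\gamma_1}\otimes\cdots\otimes\eta_{\gamma_n}$ on the other, with continuity of the section reduced to the tropical computation $\val(\sum a_jb^j)=\min_j\val(a_jb^j)$. The paper itself reproduces exactly this section-plus-tropicalization machinery in its Lemmas \ref{lem:val_basis} and \ref{lem:section}, so your proof is consistent with the intended one and introduces no circularity.
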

 
By \cite[Proposition 4.2.21 and Corollary 4.2.22]{HrLo} the topology on the stable completion is related to the $\vgs$-site by:

\begin{fact}\label{fact top on hat and vgs}
Let $V$ be an algebraic variety over $K$ and let $W$ be a definable subset of $V\times \nGim$. Then $W$ is $\vgs$-open (resp. $\vgs$-closed) if and only if $\widehat{W}$ is open (resp. closed) in $\widehat{V}\times \nGim$. Moreover, a basis for the topology on the stable completion $\widehat{X}$ of a definable subset of $V\times \nGim$ is given by 
\[
\pushQED{\qed} 
\{\widehat{U}: U\in \op(X_{\vgs})\}.
\qedhere
\popQED
\] 
\end{fact}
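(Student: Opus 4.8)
The plan is to prove the ``$\vgs$-open'' equivalence; the ``$\vgs$-closed'' one then follows by taking complements, since a definable type orthogonal to $\nG$ concentrated on a definable set $A\cup B$ is concentrated on exactly one of $A,B$ (and similarly for intersections and complementation), so passing to stable completions is a homomorphism of Boolean algebras from the definable subsets of a fixed ambient set to the subsets of its stable completion; in particular $\widehat{W^c}=\widehat{(V\times\nGim)}\setminus\widehat W$. First I would reduce to the case of an honest variety: by Fact \ref{fact top on hat} one identifies $\widehat{V\times\nGim}=\widehat V\times\nGim$, the map $\widehat{\id\times\val}\colon\widehat{V\times\Aa^m}\to\widehat{V\times\nGim}$ is a quotient map by the very definition of the topology on its target, and for definable $W\subseteq V\times\nGim$ one checks directly that $(\widehat{\id\times\val})^{-1}(\widehat W)=\widehat{(\id\times\val)^{-1}(W)}$; since $W$ is $\vgs$-open in $V\times\nGim$ iff $(\id\times\val)^{-1}(W)$ is $\vgs$-open in the variety $V\times\Aa^m$ (Definition \ref{def vg site}), the statement for $V\times\nGim$ reduces to the one for $V\times\Aa^m$. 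So I may assume $W\subseteq V$ with $V$ a variety, and, as every $\vgs$-open is a finite union of basic ones and stable completion commutes with finite unions, it suffices to treat a basic $\vgs$-open $W$.

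The crux is a bookkeeping identity relating basic $\vgs$-open sets to the basic open sets defining the topology of $\widehat V$. Write $W=\bigcap_{j\in J}\{u\in U_j:\val(f_j(u))<\val(g_j(u))\}$; on the $j$-th factor $f_j(u)\neq0$, so that factor equals $\phi_j^{-1}((0,\infty])$ where $\phi_j=\val\circ(g_j/f_j)\in\mathcal{O}_V^{\val}(U_j\setminus Z(f_j))$ and $U_j\setminus Z(f_j)$ is Zariski open. For a definable type $p$ concentrated on $U_j\setminus Z(f_j)$, orthogonality to $\nG$ forces $(\phi_j)_*(p)$ to be the type at a single point $\gamma\in\nGi$, and $p$ concentrates on $\phi_j^{-1}((0,\infty])$ exactly when $\gamma\in(0,\infty]$; hence $\widehat{\phi_j^{-1}((0,\infty])}=\{p\in\widehat{U_j\setminus Z(f_j)}:(\phi_j)_*(p)\in(0,\infty]\}$, one of the sets declared open in $\widehat V$. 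Intersecting over $j$ (stable completion commutes with finite intersections) shows $\widehat W$ is open. Reversing the computation, each declared-open set $\{p\in\widehat U:(\val\circ f)_*(p)\in I\}$ equals $\widehat{\{u\in U:\val(f(u))\in I\}}$, and the set $\{u\in U:\val(f(u))\in I\}$ is basic $\vgs$-open because its two defining inequalities are of the admissible form (the endpoints of $I$ lie in $\val(K^\times)\cup\{\pm\infty\}$, using the constant functions, and ``$\val(f)<\val(0)$'' just says ``$f\neq0$''). With the further remarks that $\{\widehat O:O\ \vgs\text{-open}\}$ covers $\widehat V$ and is closed under finite intersections, this proves the ``$\Rightarrow$'' implication and the basis statement; for a general definable $X\subseteq V\times\nGim$ one then pulls the basis of $\widehat V\times\nGim$ back along $\widehat X\hookrightarrow\widehat V\times\nGim$ and uses $\widehat X\cap\widehat U=\widehat{X\cap U}$ with $X\cap U\in\op(X_{\vgs})$.

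For the converse, assume $\widehat W$ is open. Restricting the declared-open sets of $\widehat V$ to realized types realizes $V(K)$, with its valuation topology, as a subspace of $\widehat V$, so $W(K)$ is open in the valuation topology, i.e.\ $W$ is v-open in the sense of Remark \ref{nrmk v+g open}. It then remains to show that $W$ is also g-open, for then Remark \ref{nrmk v+g open} (and Fact \ref{fact:vg_closed} in the affine and projective local pictures) gives that $W$ is $\vgs$-open, completing the proof. I expect this last point to be the main obstacle: openness of $\widehat W$ only yields $\widehat W=\bigcup_\alpha\widehat{O_\alpha}$ for a possibly infinite family of basic $\vgs$-opens $O_\alpha$, and this must be upgraded to a finite Boolean combination of the syntactic shape required for g-openness — essentially the content of \cite[Proposition 4.2.21]{HrLo}. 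I would attempt it by testing $W$ along the $\nG$-valued definable functions appearing in the basic opens of $\widehat V$ and using that the resulting fibered families are themselves definable, so that a genuine first-order compactness argument lets one take the covering finite.
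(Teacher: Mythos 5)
This statement is quoted in the paper as a Fact with no proof: it is imported directly from \cite[Proposition 4.2.21 and Corollary 4.2.22]{HrLo}. So the relevant comparison is against Hrushovski--Loeser's argument rather than anything in the paper itself.

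Your forward implication and basis claim are essentially correct and complete: the reduction to the variety $V\times \Aa^m$ via the quotient topology and the identity $(\widehat{\id\times\val})^{-1}(\widehat W)=\widehat{(\id\times\val)^{-1}(W)}$ is sound, the Boolean-algebra observation legitimately reduces closed to open, and the identification of the finite intersections of the declared sub-basic opens of $\widehat V$ with the sets $\widehat O$, $O$ basic $\vgs$-open, uses orthogonality to $\nG$ exactly as it should. One point you gloss over: pulling ``the basis of $\widehat V\times\nGim$'' back to $\widehat X$ presupposes you already know a basis of $\widehat{V\times\nGim}$, which is carried by the quotient topology rather than by your variety-case computation; this is fixable either via the product-topology assertion of Fact \ref{fact top on hat} or via the continuous section of Lemma \ref{lem:section}, but it needs to be said.

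The genuine gap is the one you flag yourself: ``$\widehat W$ open $\Rightarrow W$ g-open,'' which is the substantive content of \cite[Proposition 4.2.21]{HrLo}, and your proposed repair would not go through as stated. Openness of $\widehat W$ gives $W=\bigcup_\alpha O_\alpha$ with each $O_\alpha$ basic $\vgs$-open, but a first-order compactness argument requires the cover to hold on the full Stone space $S_W(K)$ --- i.e.\ that \emph{every} type concentrated on $W$, not merely the realized and the stably dominated ones, contain some $O_\alpha$ --- and that is exactly what is not known; $\widehat W$ itself is not compact, which is the raison d'\^etre of the whole paper. Hrushovski and Loeser prove this direction by a different and genuinely harder route, through a characterization of $\vgs$-closed sets in terms of limits of definable types (their Lemma 4.2.19 and Proposition 4.2.20). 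So your proposal correctly reproves the easy half and the basis statement from the definitions, but the hard half remains, as in the paper, an appeal to \cite{HrLo}.
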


As we pointed out already, in general, the stable completion topology is not necessarily locally compact:

\begin{nrmk}[The stable completion topology and local compactness]\label{nrmk hat top not loc compact}

The topological space $\widehat{X}(F)$ heavily depends on $F$. Even when $F$ is an algebraically closed valued field, $\widehat{X}(F)$ might fail to be locally compact. This is why, in order to do useful cohomology theory, we will replace the above topology on the stable completions by a suitable site. 

For example, consider the affine line over $\CC_p$ (the completion of the algebraic closure of $\QQ _p$). The space $\widehat{\Aa ^1}(\CC_p)$ is \emph{not} locally compact. Indeed, suppose for a contradiction there is a compact neighbourhood $U$ of $\eta _{\cR}$, the generic type of the valuation ring (the Gauss point). Then, there are rational numbers $\gamma _1\leqslant 0 <\gamma _2\in \QQ$ and $a\in \CC_p$ such that 
\[
J\coloneqq \{\eta _B \in \widehat{\mathbb{A}^1}(\mathbb{C}_p) \mid B(0,\gamma _1)  \subseteq B \subseteq  B(a,\gamma _2) \},
\]
is contained in $U$. Here $\eta _B$ denotes the generic type of the closed ball $B$. It is not difficult to see that $J$ is closed, hence also compact. Given that the valuation $\widehat{\val }\colon  \widehat{\Aa^1}(\CC_p) \to \nGi(\CC_p)$ is continuous, $\widehat{\val } (J)=[\gamma _1,\gamma _2]$ will be a compact subset of $\nGi(\CC_p) = \QQ _\infty$, a contradiction. Similar examples can be given for models of ACVF of rank higher than 1, as in such fields no closed infinite interval of the value group is compact.
\end{nrmk}

Due to the failure of local compactness of the stable completion's topology, in order to develop a cohomology theory in the category of stable completions of definable sets, we will replace the stable completion's topology by a site:
 
 \begin{defn}[$\hvgs$-site on stable completions]\label{defn vgs site on hat}
 If $X\subseteq V\times \nGim$ is a definable subset, the {\it $\hvgs$-site on $\widehat{X}$},  denoted $\widehat{X}_{\hvgs},$ is the category $\op(\widehat{X}_{\hvgs})$ whose objects are of the form $\widehat{W}$ with $W\in \op(X_{\vgs}),$ the morphisms are the inclusions and the admissible covers $\cov(\widehat{U})$ of $\widehat{U}\in \op (\widehat{X}_{\hvgs})$ are covers by objects of $\op (\widehat{X}_{\hvgs})$ with finite subcovers. 
 \end{defn}

Of course the $\vgs$-site $X_{\vgs}$ on a definable set $X$ and the $\hvgs$-site $\widehat{X}_{\hvgs}$ on its stable completion are isomorphic categories so we will often move from on site to the other whenever convenient. 


\subsection{The~\texorpdfstring{$\hvgs$}{h}-site and~\texorpdfstring{$\T$}{T}-spaces}\label{subsection  vgs-sites are t-top}
Here we show that the stable completion of a definable subset of $V\times \nGin$ equipped with the $\hvgs$-site is a  $\T$-space. For that we use the following special case of Hrushovski and Loeser's main theorem:

\begin{fact}\cite[Theorem 11.1.1]{HrLo}\label{fact main HrLo}
Let $V$ be a quasi-projective variety over $K$ and let $X$ be a definable subset of $V\times \nGin$. Then there exists a (continuous) pro-definable deformation retraction $H\colon I\times \widehat{X}\to \widehat{X}$ with image ${\mathfrak X}$ an iso-definable subset definably homeomorphic to a definable subset of some $\nGik$. Furthermore, given finitely many definable subsets $X_1, \ldots, X_n$ of $X$ the pro-definable deformation retraction $H$ can be constructed 
preserving each of the definable subsets i.e. the restriction $H_{|}:I\times \widehat{X_i}\to \widehat{X_i}$ is still a continuous pro-definable retraction.\qed
\end{fact}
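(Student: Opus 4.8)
This statement is \cite[Theorem~11.1.1]{HrLo}, the main output of \cite{HrLo}, and I only indicate the strategy one would follow. The plan is to produce $H$ as a composition of finitely many pro-definable homotopies, arguing by induction on $\dim V$. First I would reduce to $V$ projective and fix a \emph{curve fibration} $\pi\colon V\to W$ with $\dim W=\dim V-1$; after subdividing $V$ into finitely many locally closed pieces, $\pi$ can be chosen so that it interacts well with the prescribed subsets $X_1,\dots,X_n$ (each becoming, over $W$, a controlled union of ``relatively unary'' sets), and this compatibility is what will ultimately force $H$ to preserve them.

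Next I would set up the fiberwise move, whose model case is $\widehat{\Aa^1}$: by Example~\ref{exa:gen_ball} every point there is a generic type $\eta_{B(a,\gamma)}$ of a closed ball, and the \emph{standard homotopy} $(t,\eta_{B(a,\gamma)})\mapsto\eta_{B(a,\min(\gamma,t))}$ continuously inflates balls, retracting $\widehat{\Aa^1}$ onto a segment inside $\nGi$. Relativizing this over $W$ along $\pi$ (inflated balls remain orthogonal to $\Gamma$, hence stay in $\widehat{X}$) yields a fiberwise homotopy. One then assembles $H$ by composing, in a suitable order and after suitable adjustments, this fiberwise homotopy, the homotopy on $\widehat W$ given by the inductive hypothesis and transported along $\pi$, and the piecewise-linear retraction on the $\nGin$-factor, the last being legitimate since $\widehat{X\times\nGin}=\widehat{X}\times\nGin$ by Fact~\ref{fact top on hat}. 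Verifying pro-definability, continuity for the (non-locally-compact) topology of Fact~\ref{fact top on hat}, and invariance of the eventual image under $H$ is the bookkeeping part of the argument.

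The part I expect to be the real obstacle, and the reason this occupies most of \cite{HrLo}, is showing that the image ${\mathfrak X}$ is \emph{iso-definable} and \emph{definably homeomorphic to a definable subset of some $\nGik$}. This rests on the finiteness theorems of \cite{HrLo}, which ensure that the combinatorial data generated by the construction (which balls get inflated, along which radii) is uniformly finite, so that points of ${\mathfrak X}$ are separated by finitely many $\nGi$-valued invariants; this yields the desired definable homeomorphism. Preservation of $X_1,\dots,X_n$ is then inherited from the fact that every step was arranged to respect the chosen subdivision. For the full argument I refer to \cite[Chapters~3--11]{HrLo}.
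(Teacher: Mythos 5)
The paper states this result as a Fact quoted from \cite[Theorem 11.1.1]{HrLo} with no proof of its own, so there is nothing to compare beyond noting that your sketch faithfully reproduces the known strategy: the decomposition of $H$ into an inflation homotopy, a relative curve homotopy, a base homotopy obtained by induction on $\dim V$, and a tropical homotopy is exactly the structure the paper itself recalls later in the proof of Lemma \ref{lem retration is vgs}. Your outline is accurate and appropriately defers the substantial work (iso-definability of the image via the finiteness theorems for curves) to \cite{HrLo}.
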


The $I$ in the theorem is a {\it generalized interval}  i.e. a one-dimensional $\bGi$-definable space obtained by considering  finitely many oriented closed sub-interval of $\nGi$ and gluing them together two by two end-to-end respecting the orientations. The set ${\mathfrak X}$ is called a {\it skeleton}  of $\widehat{X}$ and is often  identified, under the definable homeomorphism, with a definable subset of $\nGik$.

Recall also (\cite[Definition 2.2.2]{HrLo}) that if $X$ is a pro-definable set, then  $Z$ is a {\it pro-definable subset} of $X$ if and only there are  cofiltering systems of definable sets $(X_i)_{i\in I}$ and  $(Z_i)_{i\in I}$ representing $X$ and $Z$ respectively such that  for each $i,$  $Z_i\subseteq X_i$ and for all $i\geq i'$ the transition maps $Z_i\to Z_{i'}$ are the restrictions of the transitions maps $X_i\to X_{i'};$ we say that  $Z$ is an {\it iso-definable subset} of $X$ if furthermore there is $i_0$ such that restriction maps $Z_i\to Z_{i'}$ are bijections for all $i\geq i'\geq i_0,$ or equivalently, the projection maps $Z\to Z_i$ are bijections for all $i\geq i_0$.

\

Let $X$ be a pro-definable set. A pro-definable subset $Y$ of $X$ is said to be \emph{relatively definable} if for some cofiltering system $(X_i)_{i\in I}$ of definable sets representing $X,$   with transitions $\pi _{i,i'}\colon X_i\to X_{i'}$ for all $i\geq i'$ in $I,$  there is $i_0\in I$ and a definable subset $Y_{i_0}\subseteq X_{i_0}$ such that $(Y_i)_{i\geq i_0}$ where  $Y_i=\pi _{i,i_0}^{-1}(Y_{i_0}),$  represents $Y,$ or equivalently, $Y=\pi_{i_0}^{-1}(Y_{i_0})$ where $\pi_{i_0}\colon X\to X_{i_0}$ denotes the natural (pro-definable) projection map represented by the transition maps $\pi _{i,i_0}\colon X_i\to X_{i_0}$ for all $i\geq  i_0$. \\

The main examples of relatively definable subsets are obtained when we have $W\subseteq X\subseteq V\times \nGim$ are definable sets, since by construction of the pro-definable structure on the stable completions, $\widehat{W}$ is a relatively definable subset of $\widehat{X}$. The following gives another kind of examples that we require below:

\begin{nrmk}[Simple points]\label{nrmk simple pts}
Given $X\subseteq V\times \nGim$ a definable subset there is a canonical embedding $\iota_X\colon X\to \widehat{X},$ taking a point $a$ to the global type $\tp (a/\UU)$ concentrating on $a$. The points of the image are called the {\it simple} points of $\widehat{X}$ and the set $\iota_X(X)$ will be denoted by $\widehat{X}_s$. Note that $\iota_X\colon X \to \widehat{X}_s$ is a pro-definable bijection. By working on affine charts and using the fact that the induced topology from $\widehat{V\times\Gamma_\infty^m}=\widehat{V}\times\Gamma_\infty^m$ on $V\times\Gamma_\infty^m$ via $\iota_{V\times\Gamma_\infty^m}^{-1}$ coincides with the product topology between the valuation topology on $V$ and the (product) order topology on $\Gamma_\infty^m,$ we see that \cite[Lemma 3.6.1]{HrLo} can be extended also to definable subsets of $V\times \nGim:$
\begin{itemize}
\item[$\bullet $]
the set $\widehat{X}_s$ of simple points of $\widehat{X}$ is a dense, iso-definable and relatively definable subset of $\widehat{X}$.
\end{itemize}
\end{nrmk}

\begin{lem}\label{lem cap with rel-iso-def} 
Let $X$ be a pro-definable set, $Z$ be a strict pro-definable subset of $X$ and $W$ be an iso-definable subset of $X$. Then:  
\begin{enumerate}
\item[(a)] If $Y$ is a relatively definable subset of $X$, then $Y\cap Z$ is strict pro-definable. 
\item[(b)] If $Y$ is an iso-definable and relatively definable subset of $X$, then $Y\cap Z$ is iso-definable. 
\item[(c)] If $Y$ is a relatively definable subset of $X$, then $Y\cap W$ is iso-definable.
\end{enumerate}
\end{lem}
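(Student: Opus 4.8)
The plan is to work at the level of representing cofiltering systems and simply match up the data. Fix a cofiltering system $(X_i)_{i\in I}$ of definable sets representing $X$, with transition maps $\pi_{i,i'}\colon X_i\to X_{i'}$ for $i\geq i'$, and write $\pi_i\colon X\to X_i$ for the projections. Since $Z$ is strict pro-definable, we may arrange (after replacing the system by a cofinal subsystem) that the transition maps of a representing system $(Z_i)_{i\in I}$ of $Z$ are surjective, and that $Z_i\subseteq X_i$ with the $Z$-transitions being restrictions of the $X$-transitions; equivalently each projection $Z\to Z_i$ is onto. Since $W$ is iso-definable there is $i_0$ such that the projections $W\to W_i$ are bijections for $i\geq i_0$, where $W_i\subseteq X_i$ represents $W$.

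For part (a): since $Y$ is relatively definable, there is $i_1$ and a definable $Y_{i_1}\subseteq X_{i_1}$ with $Y=\pi_{i_1}^{-1}(Y_{i_1})$. For $i\geq i_1$ put $Y_i=\pi_{i,i_1}^{-1}(Y_{i_1})$; then $(Y_i\cap Z_i)_{i\geq i_1}$ is a representing system for $Y\cap Z$. It remains to check the transition maps $Y_i\cap Z_i\to Y_{i'}\cap Z_{i'}$ are surjective for $i\geq i'\geq i_1$: given $w\in Y_{i'}\cap Z_{i'}$, surjectivity of $Z_i\to Z_{i'}$ gives $z\in Z_i$ mapping to $w$, and since $w\in Y_{i'}$ we get $\pi_{i,i'}(z)\in Y_{i'}$, hence $z\in \pi_{i,i_1}^{-1}(Y_{i_1})=Y_i$ because $Y_i=\pi_{i,i'}^{-1}(Y_{i'})$ (using $Y_i=\pi_{i,i_1}^{-1}(Y_{i_1})$ and compatibility of the transitions). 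So $z\in Y_i\cap Z_i$ maps to $w$, proving surjectivity; thus $Y\cap Z$ is strict pro-definable.

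For part (b): by (a), $Y\cap Z$ is strict pro-definable, represented by $(Y_i\cap Z_i)_{i\geq i_1}$. Since $Y$ is iso-definable there is $i_2\geq i_1$ such that the projections $Y\to Y_i$ are bijective for $i\geq i_2$; hence for $i\geq i'\geq i_2$ the transition maps $Y_i\to Y_{i'}$ are bijective, and therefore their restrictions $Y_i\cap Z_i\to Y_{i'}\cap Z_{i'}$ are injective; combined with the surjectivity from (a) they are bijective, so $Y\cap Z$ is iso-definable. For part (c): here $W$ is iso-definable (not assumed strict) and $Y$ relatively definable, say $Y=\pi_{i_1}^{-1}(Y_{i_1})$. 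Take $i\geq\max(i_0,i_1)$; the projection $W\to W_i$ is a bijection, and under this bijection $Y\cap W$ corresponds to $W_i\cap \pi_{i,i_1}^{-1}(Y_{i_1})$, a definable subset of $W_i$. So for all $j\geq i$ the projection $Y\cap W\to (Y\cap W)_j$ is a bijection (it factors through the bijection $W\to W_j$), hence $Y\cap W$ is iso-definable. I expect the only delicate point to be the bookkeeping in (a)—namely justifying that one may pass to a cofinal system on which $Z$ has surjective transitions while simultaneously $Y$ is cut out by a single definable set at some index—but this is a standard manipulation of pro-objects (cf. the discussion of strict pro-definable sets and Kamensky's equivalence recalled above in Section \ref{section t-top in acvf}), so no real obstacle arises.
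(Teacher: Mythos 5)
Parts (a) and (c) are essentially correct and follow the same route as the paper (work with a common representing system, use surjectivity of $Z\to Z_i$ for (a) and the eventual bijectivity of $W\to W_i$ for (c)); the bookkeeping you flag at the end is indeed the standard passage to a cofinal common refinement and is not a problem.

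Part (b), however, contains a step that fails. You claim that, because $Y$ is iso-definable, the projections $Y\to Y_i$ are eventually bijective and hence the transition maps $Y_i\to Y_{i'}$ are eventually bijective, where $Y_i=\pi_{i,i_1}^{-1}(Y_{i_1})$ is the representing system coming from relative definability. Iso-definability only guarantees the existence of \emph{some} representing system with eventually bijective projections; equivalently, it gives that $\pi_i$ restricted to $Y$ is eventually \emph{injective}, i.e.\ a bijection onto the image $\pi_i(Y)$, which may be a proper subset of the full preimage $Y_i$. Since the lemma does not assume $X$ itself is strict, $\pi_i(Y)\subsetneq Y_i$ can occur, and then $Y\to Y_i$ is not surjective and $Y_i\to Y_{i'}$ need not be injective. (Concretely: take $X_n=K^2$ with transitions $(x,y)\mapsto (x,0)$ and $Y_{1}=\{(0,0)\}$; then $Y$ is a single point, hence iso-definable and relatively definable, but $Y_n=\{0\}\times K$ and the transitions $Y_{n+1}\to Y_n$ are constant.) The conclusion you want — injectivity of $Y_i\cap Z_i\to Y_{i'}\cap Z_{i'}$ — is still true, but for a different reason: by the computation in (a) one has $Y_i\cap Z_i=\pi_i(Y\cap Z)\subseteq\pi_i(Y)$, and the transition maps restricted to $\pi_i(Y)$ are injective for large $i$ because $\pi_{i'}$ is injective on $Y$. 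Alternatively, argue as the paper does: after shrinking $Y_{i_0}$ to $\pi_{i_0}(Y)$ one shows $Y\cap Z=\pi_{i_0}^{-1}(Y_{i_0}\cap Z_{i_0})$ directly (using surjectivity of $Z\to Z_{i_0}$ and the bijection $Y\to Y_{i_0}$), exhibiting $Y\cap Z$ in pro-definable bijection with the definable set $Y_{i_0}\cap Z_{i_0}$. Either repair is short, but as written the injectivity step in your (b) rests on a false claim.
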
 

\begin{proof} 
Suppose that  $(X_i)_{i\in I}$ is a cofiltering system of definable sets representing $X,$  with transitions $\pi _{i,i'}\colon X_i\to X_{i'}$ for all $i\geq i'$ in $I$. Let $i_0\in I$ be such that $\pi_{i_0}^{-1}(Y_{i_0})=Y$ for $Y_{i_0}$ a definable subset of $X_{i_0}$, so that if for all $i\geq i_0$,  $Y_i\coloneqq \pi_{i,i_0}^{-1}(Y_{i_0}),$  then  $Y$ is represented by $(Y_i)_{i\geq i_0}$. Without loss of generality we may also suppose that if for all $i\geq i_0,$ $Z_i\coloneqq \pi_i(Z),$ then $Z$ is represented by $(Z_i)_{i\geq i_0}$ and, if for all $\geq i_0,$ $W_i=\pi _i(Z)$ then $W$ is represented by $(W_i)_{i\geq i_0}$.  

(a) We show that for all $i\geq i_0,$ $\pi_i(Y\cap Z)=Y_i\cap Z_i$. The inclusion from left-to-right is immediate. For the converse, let $a\in Y_i\cap Z_i$. By assumption, there is $x\in Z$ such that $\pi_i(x)=a$. Since $Y$ is relatively definable, $\pi_i^{-1}(Y_i)=Y$, so $x\in Y$ and therefore $a\in \pi_i(Y\cap Z)$.

(b) 
It suffices to show that $Y\cap Z=Y\cap \pi_{i_0}^{-1}(Z_{i_0})$, since the right-hand side set $\pi _{i_0}^{-1}(Y_{i_0}\cap Z_{i_0}),$ being in pro-definable bijection under $\pi _{i_0}$ with a definable set, is iso-definable  by \cite[Corollary 2.2.4]{HrLo}. The left-to-right inclusion is clear. For the converse, suppose $a\in Y$ and $\pi_{i_0}(a)\in Z_{i_0}$. By surjectivity of $\pi_{i_0}\colon Z\to Z_{i_0},$  there is $b\in Z$ such that $\pi_{i_0}(b)=\pi_{i_0}(a)\in Y_{i_0}$. It follows that $b\in \pi _{i_0}^{-1}(Y_{i_0})=Y$. Therefore, since $\pi_{i_0}\colon Y\to Y_{i_0}$ is a bijection, $a=b$ and $a\in Y\cap Z$.

(c) We have $\pi _{i_0}^{-1}(Y_{i_0}\cap W_{i_0})=\pi _{i_0}^{-1}(Y_{i_0})\cap \pi _{i_0}^{-1}(W_{i_0})=Y\cap W$ since $\pi _{i}:W\to W_i$ is a bijection for all $i\geq i_0$.
\end{proof}

The following lemma is easy and left to the reader. 

\begin{lem}\label{lem images and pullbacks}
Let $X$ and $Y$ be pro-definable sets and $f\colon X\to Y$ be a pro-definable map. 
If $U$ is a relatively definable subset of $Y,$ then $f^{-1}(U)$ is a relatively definable subset of $X$.\qed
\end{lem}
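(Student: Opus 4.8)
The plan is to unwind the definitions of ``relatively definable subset'' and ``pro-definable map'' and verify that the pullback satisfies the required representability condition. First I would fix a cofiltering system $(Y_j)_{j\in J}$ of definable sets representing $Y$, with transition maps $\varpi_{j,j'}\colon Y_j\to Y_{j'}$ for $j\geq j'$ and projections $\varpi_j\colon Y\to Y_j$. Since $U$ is relatively definable, there is an index $j_0\in J$ and a definable subset $U_{j_0}\subseteq Y_{j_0}$ with $U=\varpi_{j_0}^{-1}(U_{j_0})$. Similarly, fix a cofiltering system $(X_i)_{i\in I}$ representing $X$ with projections $\pi_i\colon X\to X_i$.

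Next I would use the description of pro-definable morphisms recalled in the excerpt, namely $\Ho_{\mathrm{Pro}(\Df_C)}(X,Y)\simeq \lpro j\lind i\Ho_{\Df_C}(X_i,Y_j)$: the map $f$ is determined by a compatible family of definable maps, and in particular the composite $\varpi_{j_0}\circ f\colon X\to Y_{j_0}$ is represented by some definable map $f_{i_0}\colon X_{i_0}\to Y_{j_0}$ for a suitable index $i_0\in I$, meaning $\varpi_{j_0}\circ f=f_{i_0}\circ\pi_{i_0}$. Set $V_{i_0}\coloneqq f_{i_0}^{-1}(U_{j_0})\subseteq X_{i_0}$, which is a definable subset of $X_{i_0}$. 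The claim is then that $f^{-1}(U)=\pi_{i_0}^{-1}(V_{i_0})$, which exhibits $f^{-1}(U)$ as relatively definable. To check this equality, for $x\in X$ one has $x\in f^{-1}(U)\iff f(x)\in U\iff \varpi_{j_0}(f(x))\in U_{j_0}\iff f_{i_0}(\pi_{i_0}(x))\in U_{j_0}\iff \pi_{i_0}(x)\in V_{i_0}$, using only the two displayed identities $U=\varpi_{j_0}^{-1}(U_{j_0})$ and $\varpi_{j_0}\circ f=f_{i_0}\circ\pi_{i_0}$.

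There is essentially no hard part here: the only mild subtlety is confirming that a pro-definable map, which a priori is only a natural transformation between pro-representable functors, does restrict to a genuine definable map $X_{i_0}\to Y_{j_0}$ after composing with a single transition/projection map on the target side and passing to a cofinal index on the source side. This is exactly the content of the colimit-over-$i$ in the formula $\lpro j\lind i\Ho_{\Df_C}(X_i,Y_j)$, together with the observation that one may replace $I$ by a cofinal subsystem so that the chosen index $i_0$ dominates whatever was needed; alternatively one invokes Kamensky's equivalence and works with $\UU$-points directly, where the statement is immediate from set-theoretic considerations. I would present the argument in the $\UU$-points picture for brevity: $f^{-1}(U)(\UU)=\{x\in X(\UU): f(x)\in U(\UU)\}$, and since $U(\UU)=\varpi_{j_0}^{-1}(U_{j_0}(\UU))$ and the composite $\varpi_{j_0}\circ f$ factors through $\pi_{i_0}$ via a definable map, the preimage is the pullback along $\pi_{i_0}$ of the definable set $f_{i_0}^{-1}(U_{j_0})$, hence relatively definable. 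This completes the proof.
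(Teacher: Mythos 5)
Your proof is correct, and it is exactly the routine unwinding of definitions that the paper has in mind when it says the lemma is "easy and left to the reader": represent $U$ as $\varpi_{j_0}^{-1}(U_{j_0})$, factor $\varpi_{j_0}\circ f$ through some $X_{i_0}$ via a definable map $f_{i_0}$ (using the ind-limit description of pro-definable morphisms, or equivalently Kamensky's equivalence on $\UU$-points), and observe $f^{-1}(U)=\pi_{i_0}^{-1}(f_{i_0}^{-1}(U_{j_0}))$. Nothing is missing.
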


There is a notion of definable connectedness in the stable completion introduced in the beginning of \cite[Section 10.4]{HrLo}. Note that it is introduced there only for stable completions of definable subsets of an algebraic variety $V$ over a valued field but this notion can be extended to  stable completions of definable subsets of $V\times \nGim$ in the following way:

\begin{defn}[Definable connectedness in the stable completion]\label{defn def connected in hat}
Let $V$ be an algebraic variety over $K$. A  strict pro-definable subset $Z$ of $\widehat{V}$ is {\it definably connected} if and only if the only strict pro-definable clopen subsets of $Z$ are $\emptyset $ and $Z$. 

If $X\subseteq V\times \nGim$ is a definable subset, we say that $\widehat{X}$ is {\it definable connected} if and only if the pullback of $\widehat{X}$ under $\widehat{{\rm id}\times \val }: \widehat{V\times \Aa ^m}\to \widehat{V\times \nGim}$ is definably connected.
\end{defn}

We have the following characterization of definable connectedness (slightly extending the observation at the beginning of \cite[Section 10.4]{HrLo} for the case of definable subsets of $V$). 

\begin{lem}\label{lem vgs and def conn}
Let $V$ be an algebraic variety over a $K$ and let $X$ be a definable subset of $V\times \nGim$. Then $\widehat{X}$ is definably connected if and only if the only $\vgs$-clopen subsets of $X$ are $\emptyset$ and $X$. In fact, the definably connected components of $\widehat{X}$ are of the form $\widehat{U}$ for some $\vgs$-clopen subset $U$ of $X$ such that $\widehat{U}$ is definably connected.
\end{lem}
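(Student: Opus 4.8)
The plan is to reduce the statement about $\widehat{X}$ to a statement about the $\vgs$-site on $X$, using the fact that $\widehat{W}\subseteq\widehat{X}$ is a relatively definable (indeed strict pro-definable) subset whenever $W\subseteq X$ is definable, together with the correspondence between $\vgs$-open subsets of $X$ and open subsets of $\widehat{X}$ from Fact \ref{fact top on hat and vgs}. The two halves of the proof are: (i) a clopen strict pro-definable subset $Z$ of $\widehat{X}$ (or rather of its preimage in $\widehat{V\times\Aa^m}$) must be of the form $\widehat{U}$ for a $\vgs$-clopen $U$; (ii) conversely a $\vgs$-clopen $U\subseteq X$ gives a clopen strict pro-definable $\widehat{U}$. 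Once (i) and (ii) are in place, the chain ``$\widehat{X}$ definably connected $\iff$ no nontrivial strict pro-definable clopen in its preimage $\iff$ no nontrivial $\vgs$-clopen in $X$'' is immediate, and the description of the definably connected components follows by applying the equivalence to each component: since $X$ has only finitely many $\vgs$-components (this should be recorded, or taken from the cell/finiteness machinery already set up), one writes $X$ as a finite disjoint union of $\vgs$-clopen pieces, each of which is $\vgs$-connected, hence each $\widehat{U_i}$ is definably connected and clopen in $\widehat{X}$, giving exactly the components.

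For direction (ii), if $U\subseteq X$ is $\vgs$-clopen then both $U$ and $X\setminus U$ are $\vgs$-open, so by Fact \ref{fact top on hat and vgs} both $\widehat{U}$ and $\widehat{X\setminus U}$ are open in $\widehat{X}$; since they partition $\widehat{X}$ (a type in $\widehat{X}$ concentrates on exactly one of the two definable sets $U$, $X\setminus U$ because they partition $X$), $\widehat{U}$ is clopen. It is moreover strict pro-definable because $\widehat{U}$ is a relatively definable subset of the strict pro-definable set $\widehat{X}$ cut out by $U$ — one checks this directly from the pro-definable structure of the stable completion, or cites the relevant facts about $\widehat{(\cdot)}$ preserving definable subsets. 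Passing to the preimage under $\widehat{\id\times\val}$ is harmless since $\val$ is continuous and surjective onto $\nGi$, so preimages of $\vgs$-clopen sets are $\vgs$-clopen and the pro-definable structure is compatible.

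For direction (i), which I expect to be the main obstacle, suppose $Z\subseteq\widehat{X}$ (work after pulling back to $\widehat{V\times\Aa^m}$, so we may as well assume $X\subseteq V\times\Aa^m$, or even $X$ an honest algebraic variety after absorbing the $\nGi$-coordinates via $\val$) is a strict pro-definable subset that is clopen. The goal is to produce a definable $U\subseteq X$ with $Z=\widehat{U}$. The natural candidate is $U=\iota_X^{-1}(Z\cap\widehat{X}_s)$, i.e. the set of simple points lying in $Z$; by Remark \ref{nrmk simple pts} this is a well-defined subset of $X$, and one must argue it is \emph{definable}. Here is where one uses that $Z$ is strict pro-definable and clopen: $Z$ and its complement are both strict pro-definable, and density of $\widehat{X}_s$ forces $Z=\overline{Z\cap\widehat{X}_s}$ while $\widehat{X}_s\subseteq Z\sqcup(\widehat{X}\setminus Z)$; the separation of $\widehat{X}_s$ into the two open pieces is governed, at each level $X_i$ of the pro-system representing $\widehat{X}$, by a definable subset $Z_i\subseteq X_i$, and since the simple points are dense and the $Z_i$ are definable, $U$ is cut out at a single finite level, hence definable. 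Then one shows $U$ is $\vgs$-open: because $Z$ is open, each point of $Z$ has a basic $\vgs$-open neighbourhood $\widehat{O}$ contained in $Z$ (Fact \ref{fact top on hat and vgs}), and tracking simple points shows $O\cap X\subseteq U$, so $U$ is a union of $\vgs$-opens and definable, hence $\vgs$-open; applying the same to $X\setminus U$ (which equals $\iota_X^{-1}((\widehat X\setminus Z)\cap\widehat X_s)$) shows $U$ is $\vgs$-clopen. Finally $Z=\widehat{U}$: both are strict pro-definable with the same (dense) set of simple points, and a strict pro-definable subset of $\widehat{X}$ is determined by its simple points — this last uniqueness is the technical crux and should be extracted from the iso-definability/relative-definability bookkeeping of Lemma \ref{lem cap with rel-iso-def} together with density of $\widehat{X}_s$. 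The genuinely delicate point throughout is making sure ``strict pro-definable clopen'' really does descend to a single definable level; I would handle it by fixing a cofiltering system representing $\widehat{X}$, using that the complement is also strict pro-definable so the decomposition $\widehat{X}=Z\sqcup(\widehat{X}\setminus Z)$ is visible at some finite level, and invoking surjectivity of the transition maps to pull the level-wise decomposition back coherently.
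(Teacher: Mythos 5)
Your overall skeleton is the right one and matches the paper's: the easy direction via Fact \ref{fact top on hat and vgs}, and for the converse the candidate $U=\iota_X^{-1}(Z\cap\widehat{X}_s)$ together with density of simple points. But two steps as written do not work. First, the inference ``$U$ is a definable union of basic $\vgs$-opens, hence $\vgs$-open'' is false: the valuation ring $\cR=\bigcup_{a\in\cR}(a+\cM)$ is definable and is a union of basic $\vgs$-opens, yet it is not $\vgs$-open (Remark \ref{nrmk v+g-topology}); likewise the maximal ideal is a clopen $\vgs$-subset that is not $\vgs$-clopen. Being $\vgs$-open is a finite positive Boolean condition (g-openness) and is not detected pointwise, so this step genuinely fails. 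The correct order is the paper's: first prove $Z=\widehat{U}$ (which needs only that $U$ is definable, via $(\widehat{U})_s=Z\cap\widehat{X}_s$, density, and the symmetric argument for the complement), and only then read off $\vgs$-clopenness of $U$ from Fact \ref{fact top on hat and vgs}, since $\widehat{U}=Z$ is clopen.

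Second, your route to definability of $U$ --- that the clopen decomposition of $\widehat{X}$ ``is visible at some finite level'' of the pro-system because $Z$ and its complement are strict pro-definable --- is not a valid principle: a strict pro-definable subset need not be relatively definable, and the images $\pi_i(Z)$ and $\pi_i(\widehat{X}\setminus Z)$ may overlap at every finite level, so surjectivity of the transition maps does not let you pull back a level-wise partition. The paper gets definability directly from Lemma \ref{lem cap with rel-iso-def}(b): $\widehat{X}_s$ is iso-definable \emph{and} relatively definable (Remark \ref{nrmk simple pts}) and $Z$ is strict pro-definable, so $Z\cap\widehat{X}_s$ is iso-definable and $\iota_X^{-1}(Z\cap\widehat{X}_s)$ is definable; for the complement one observes $(\widehat{X}\setminus Z)\cap\widehat{X}_s=\widehat{X}_s\setminus(Z\cap\widehat{X}_s)$ is in pro-definable bijection with $X\setminus U$. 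Finally, a smaller ordering issue: you invoke finiteness of the $\vgs$-clopen decomposition of $X$ for the ``in fact'' clause, but that finiteness (Lemma \ref{lem vgs components}) is proved \emph{after}, and using, the present lemma, so it cannot be assumed here; the description of the components follows instead from applying the identity $Z=\widehat{U\cap X}$ to an arbitrary clopen strict pro-definable subset.
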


\begin{proof} 
The left-to-right implication follows directly from Fact \ref{fact top on hat and vgs}. For the converse, suppose that the only $\vgs$-clopen subsets of $X$ are $\emptyset$ and $X$ and assume for a contradiction that there is a proper strict pro-definable clopen subset $U$ of $\widehat{X}$. By Remark \ref{nrmk simple pts} and Lemma \ref{lem cap with rel-iso-def}(b), the set $U\cap \widehat{X}_s$ is an iso-definable subset of $\widehat{X}$. Furthermore, by the density of $\widehat{X}_s$ in $\widehat{X}$ and the fact that $U$ is clopen, we have ${\rm cl}(U\cap \widehat{X}_s)=U\cap \widehat{X}=U$, where ${\rm cl}$ denotes the closure in $\widehat{X}$. Abusing of notation, let $U\cap X$ denote $\iota_X^{-1}(U\cap \widehat{X}_s)$. Since $\widehat{X}_s$ is iso-definable, $U\cap X$ is a definable subset of $X$. 

\begin{clm}
$\widehat{U\cap X}=U$.
\end{clm}
\begin{proof} 
First note that $(\widehat{U\cap X})_s= U\cap \widehat{X}_s$. Indeed, 
\begin{align*}
p\in (\widehat{U\cap X)}_s 		& \Leftrightarrow  \exists a\in U\cap X \text{ such that } p=\tp(a/\UU)\\
							 	& \Leftrightarrow  \exists a\in \iota_X^{-1}(U) \text{ such that } p=\tp(a/\UU)\\
							 	& \Leftrightarrow  p\in U\cap \widehat{X}_s. 
\end{align*}
By the density of simple points this yields that 
\[
\widehat{U\cap X}\subseteq {\rm cl}(\widehat{U\cap X})={\rm cl}(\widehat{U\cap X})_s)={\rm cl}(U\cap \widehat{X}_s)=U,
\]
which shows that $\widehat{U\cap X}\subseteq U$. For the converse inclusion, consider $W\coloneqq \widehat{X}\setminus U$. The set $W$ is clopen and $W\cap \widehat{X}_s$ is also iso-definable since $W\cap \widehat{X_s} = \widehat{X}_s\setminus U\cap \widehat{X}_s$ is in pro-definable bijection with the definable set $X\setminus U\cap X$. Let $W\cap X$ denote $\iota_X^{-1}(W\cap \widehat{X}_s)$. Then, the sets $W\cap X$ and $U\cap X$ form a definable partition of $X$. By the same argument above, $\widehat{W\cap X}\subseteq W$. This shows the converse inclusion: if there were $x\in U\setminus \widehat{U\cap X}$, since $\widehat{X}=\widehat{U\cap X}\cup \widehat{W\cap X}$, we must have that $x\in \widehat{W\cap X}$, which implies $x\in W$, a contradiction. \end{proof} 

The claim, together with Fact \ref{fact top on hat and vgs}, contradict the assumption on $X$.   
\end{proof}


A first consequence of Fact \ref{fact main HrLo} is the following:

\begin{lem}\label{lem vgs components}
Let $V$ be an algebraic variety over $K$ and let $X \subseteq V\times \nGin$ be a definable set. Then $\widehat{X}$ has finitely many definably connected components. 
\end{lem}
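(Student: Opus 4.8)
The plan is to reduce the general quasi-projective-type situation to the case handled by Hrushovski--Loeser's main theorem (Fact~\ref{fact main HrLo}) and then to transport connectedness along the deformation retraction. First I would reduce to the quasi-projective case: a general algebraic variety $V$ over $K$ admits a finite open affine (hence quasi-projective) cover, and correspondingly $X\subseteq V\times\nGin$ is a finite union of definable subsets $X_j\subseteq V_j\times\nGin$ with each $V_j$ quasi-projective. By Lemma~\ref{lem vgs and def conn}, the definably connected components of $\widehat X$ correspond to the $\vgs$-clopen $\vgs$-connected subsets of $X$; and a $\vgs$-clopen subset of $X$ restricts to a $\vgs$-clopen subset of each $X_j$, so it suffices to show each $\widehat{X_j}$ has finitely many definably connected components, and then observe that a finite union of sets each with finitely many definably connected components again has finitely many (any $\vgs$-connected $\vgs$-clopen piece of the union is a finite union of pieces of the individual $X_j$, and there are only finitely many such pieces in total).

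Next, for the quasi-projective case, apply Fact~\ref{fact main HrLo} to obtain a pro-definable deformation retraction $H\colon I\times\widehat X\to\widehat X$ onto a skeleton $\mathfrak X$ which is definably homeomorphic to a definable subset of some $\nGik$. The key point is that $\mathfrak X$, being (definably homeomorphic to) a $\bGi$-definable subset of $\nGik$, has finitely many definably connected components: this is Remark~\ref{nrmk def conn}, since a $\bGi$-definable subset of $\nGik$ has finitely many $\bGi$-definably connected components (which partition it into clopen pieces). So I would then argue that the definably connected components of $\widehat X$ are in bijection with those of $\mathfrak X$. For this, one uses that the retraction $H$ together with the inclusion $\mathfrak X\hookrightarrow\widehat X$ induce inverse bijections on the set of definably connected components: the retraction $r=H(e_I,-)\colon\widehat X\to\mathfrak X$ is a pro-definable map which is surjective (it is a retraction) and whose fibers are ``connected along the homotopy'' — more precisely, the homotopy $H$ shows that for each point $p$ the whole orbit $\{H(t,p):t\in I\}$ lies in a single definably connected subset joining $p$ to $r(p)$, so $p$ and $r(p)$ lie in the same definably connected component of $\widehat X$. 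Conversely, restricting a $\vgs$-clopen subset (equivalently, using Lemma~\ref{lem vgs and def conn}) of $\widehat X$ to $\mathfrak X$ gives a clopen subset of $\mathfrak X$, and two points in the same component of $\mathfrak X$ remain in the same component of $\widehat X$ since the subspace topology / definable structure is compatible. Hence the number of definably connected components of $\widehat X$ equals that of $\mathfrak X$, which is finite.

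The main obstacle I anticipate is making the ``homotopy moves points within a single definably connected component'' argument precise in the pro-definable category: one must check that a strict pro-definable clopen subset $U$ of $\widehat X$ is invariant under $H$, i.e. if $p\in U$ then $H(t,p)\in U$ for all $t\in I$. This should follow because $t\mapsto H(t,p)$ traces out a definably connected (indeed, the continuous image of the generalized interval $I$, which is definably connected) pro-definable subset of $\widehat X$, and a definably connected set cannot meet both $U$ and its complement nontrivially; but one needs to verify that $\{H(t,p):t\in I\}$ is genuinely a strict pro-definable subset to which the clopen-decomposition dichotomy applies, and that $I$ being a generalized interval is definably connected in the relevant sense. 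Alternatively, and perhaps more cleanly, I would phrase the whole argument at the level of $\vgs$-clopen subsets of $X$ using the compatibility of $H$ with the $\hvgs$-site (which is established later in the paper, but the needed instance — that $H$ is continuous and pro-definable — is exactly Fact~\ref{fact main HrLo}), thereby showing directly that the map $U\mapsto U\cap\mathfrak X$ on $\vgs$-clopen subsets is a bijection onto clopen definable subsets of $\mathfrak X$. Once this bijection is in hand, finiteness follows immediately from Remark~\ref{nrmk def conn}.
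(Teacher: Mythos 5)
Your proof is correct, but it departs from the paper's argument at both of its two stages, so it is worth recording the differences. For the reduction to the quasi-projective case, the paper passes to a Nagata compactification and then uses Chow's lemma to produce a surjection from (a pullback over) a projective variety, transporting $\vgs$-clopen subsets along that map; you instead take a finite affine open cover $V=\bigcup_j V_j$ and observe that a $\vgs$-clopen subset of $X$ restricts to a $\vgs$-clopen subset of each $X_j=X\cap(V_j\times\nGin)$, so distinct components of $\widehat{X}$ contain disjoint nonempty unions of components of the $\widehat{X_j}$ — a more elementary counting argument that works equally well. For the main step, the paper shows that the retract image $g(U)$ of each component $U$ lands in a single component $\mathcal{X}_i$ of the skeleton, then partitions each $\mathfrak{X}_i$ into definably path-connected cells and proves injectivity of $U\mapsto j_U$ by concatenating the tracks $t\mapsto H(t,x_U)$ with a path in the skeleton and invoking definable connectedness of the resulting generalized interval. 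You instead show directly that $U\cap\mathfrak{X}$ is a nonempty clopen definable subset of the skeleton: nonemptiness comes from pulling the relatively definable clopen set $U=\widehat{W}$ (Lemma \ref{lem vgs and def conn}) back along $t\mapsto H(t,p)$ to a clopen definable subset of the definably connected generalized interval $I$ (exactly the point you flag as the main obstacle, and your resolution via Lemma \ref{lem images and pullbacks} is the right one); clopen definability of $U\cap\mathfrak{X}$ in $\mathcal{X}$ follows from Lemmas \ref{lem cap with rel-iso-def} and \ref{lem images and pullbacks}. Disjointness of components then bounds their number by the number of components of $\mathcal{X}$, so the cell-decomposition/path-connectedness machinery of the paper's proof is avoided entirely. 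Both routes rest on the same ingredients (Fact \ref{fact main HrLo}, relative definability of $\widehat{W}$, iso-definability of the skeleton); yours is somewhat shorter, while the paper's path-based argument is the one it reuses verbatim in later sections. One cosmetic caveat: you only need, and only really prove, an injection from components of $\widehat{X}$ into disjoint nonempty unions of components of $\mathfrak{X}$, not the full bijection you assert; the injection suffices for finiteness.
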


\begin{proof}
Suppose that $V$ is quasi-projective. Then, by Fact \ref{fact main HrLo},  let $H\colon I\times \widehat{X}\to \widehat{X}$ be a continuous pro-definable deformation retraction with image an iso-definable subset ${\mathfrak X}$ of $\widehat{X}$ and let $h\colon {\mathfrak X}\to {\mathcal X}$ be a pro-definable homeomorphism of ${\mathfrak X}$ with a definable subset ${\mathcal  X}$ of $\nGik$. Let also $i_I, e_I\in I$ be the initial and the last endpoint  of $I$. Let $f\colon \widehat{X}\to {\mathfrak X}$ and $g\colon \widehat{X}\to {\mathcal X}$ be the continuous pro-definable maps given by $f(x)=H(e_I, x)$ and $g=h\circ f$.

Since definable subsets of $\nGik$ have finitely many definably connected components, let ${\mathcal X}_1, \ldots , {\mathcal X}_k$ be the finitely many definably connected components of ${\mathcal X}$. Let ${\mathfrak X}_1, \ldots , {\mathfrak X}_k$ be the corresponding (under $h$) finitely many definably connected components of ${\mathfrak X}$ and  let $\cU$ be the set of definably connected components of $\widehat{X}$.

Now if $U\in \cU$ is a definably connected component of $\widehat{X},$ then there is $i\in \{1, \ldots,  k\}$ such that $g(U)\subseteq {\mathcal X}_{i}$. Indeed, if $i,i'$ are distinct such that $g(U)\cap {\mathcal X}_i\neq \emptyset$ and $g(U)\cap  {\mathcal X}_{i'}\neq \emptyset ,$ then since $g^{-1}({\mathcal X}_i)$ and $g^{-1}({\mathcal X}_{i'})$ are relatively definable (Lemma \ref{lem images and pullbacks}) by Lemma \ref{lem cap with rel-iso-def} (a) and continuity of $g,$ both $g^{-1}({\mathcal X}_i)\cap U$ and $g^{-1}({\mathcal X}_{i'})\cap U$  would be clopen strict pro-definable subsets of $U$ distinct from $\emptyset $ and $U$.

Fix $i$ and let $\cU _i=\{U\in \cU: f(U)\subseteq {\mathfrak X}_i\}$. For each $U\in \cU_i$ choose $x_U\in U$ and consider the definable path $\gamma _U\colon I\to \widehat{X}$ given by  $\gamma _U(t)=H(t, x_U)$. Let $x'_U=\gamma (e_I)\in f(U)\subseteq {\mathfrak X}_{i}$.  Since cells in $\nGk$ are definably path connected (\cite[Chapter 6, Proposition 3.2]{vdd}) and cells in $\nGik$ are definably homeomorphic to cells in $\nGk,$ by cell decomposition, any definable subset of $\nGik$ can be partitioned into finitely many definably path connected subsets. So ${\mathfrak X}_i$ can be partitioned into finitely many ${\mathfrak X}_{i,1}\ldots , {\mathfrak X}_{i, l}$ definably path connected subsets. 

Let $j_U$ be such that $x'_U\in {\mathfrak X}_{i, j_U}$. We claim that the map $\cU_i\to \{1,\ldots , l\}\colon U\mapsto j_U$ is injective and so $\cU_i$ is finite. Hence $\cU$ is finite as well. Indeed, suppose that $U,V\in \cU$ are distinct and $j_U=j_V=j$. Then $x'_U, x'_V\in {\mathfrak X}_{i, j}$ and so there is a definable path in ${\mathfrak X}_{i,j}$ from $x'_U$ to $x'_V$. It follows that there is a definable path $\delta \colon J\to \widehat{X}$ from $x_U\in U$ to $x_V\in V$. But by Lemma \ref{lem vgs and def conn}  both $U$ and $V$ are the stable completions of $\vgs$-clopen subsets of $X$ and so they are relatively definable subsets of $\widehat{X}$. It follows by Lemma \ref{lem images and pullbacks} and continuity of $\delta ,$ that both $\delta ^{-1}(U)$ and $\delta ^{-1}(V)$  would be clopen (relatively) definable subsets of $J$ distinct from $\emptyset $ and $J,$ contradicting the definable connectedness of $J$.

If $V$ is not quasi-projective, consider an open immersion $V\rightarrow W$ where $W$ is a complete variety and $V$ is Zariski dense. By Chow's lemma, there is an epimorphism $f\colon W'\to W$ where  $W'$ is a projective variety. Consider the quasi-projective variety $V'=f^{-1}(V)$. Now if  $X'=(f,\id)^{-1}(X) \subseteq V'\times \nGin,$ then $\widehat{X'}$ has finitely many definably connected components. Since $(f,\id)$ pulls back  $\vgs$-clopen subset to $\vgs$-clopen subsets, by Lemma  \ref{lem vgs and def conn}, we see that $\widehat{X}$ has finitely many definably connected components.
\end{proof}

\begin{prop}\label{prop vgs t-space}
Let $V$ be an algebraic variety over $K$  and let $X \subseteq V\times \nGin$ be a definable set. Then the $\hvgs$-site $\widehat{X}_{\hvgs}$ is a $\T$-space. 
\end{prop}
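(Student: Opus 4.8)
The plan is to verify the three defining conditions of Definition~\ref{def:T-topology} for the topological space $\widehat{X}$ together with the family $\T=\{\widehat U : U\in \op(X_{\vgs})\}$. First I would observe that the $\hvgs$-site on $\widehat X$ is by construction the same as the category $\op(\widehat X_{\hvgs})$, so that $\T$ as above is exactly the family of open sets of the site; the content of the proposition is that $\T$ satisfies (i), (ii) and (iii). Conditions (i) and (ii) are essentially immediate from what has been recalled: Fact~\ref{fact top on hat and vgs} says that $\{\widehat U : U\in \op(X_{\vgs})\}$ is a basis for the topology on $\widehat X$ and that each such $\widehat U$ is open, and $\varnothing=\widehat{\varnothing}\in\T$; closure under finite unions and intersections follows since the $\vgs$-open subsets of $X$ are closed under finite unions and intersections (a finite union of basic $\vgs$-opens is again $\vgs$-open, and a finite intersection of $\vgs$-opens is $\vgs$-open because intersecting basic $\vgs$-opens just enlarges the finite index set $J$), together with the compatibility $\widehat{U\cup W}=\widehat U\cup\widehat W$ and $\widehat{U\cap W}=\widehat U\cap\widehat W$ coming from the fact that $\widehat{\cdot}$ on relatively definable subsets of $\widehat X$ preserves Boolean operations (cf.\ the bijection between $\T$-subsets and constructible subsets in the analogous settings).

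The real work is condition (iii): every $\widehat U\in\T$ has finitely many $\T$-connected components, and these components lie in $\T$. Here a $\T$-connected subset of $\widehat X$ is a $\T$-subset which is not the disjoint union of two proper clopen $\T$-subsets. I would first identify $\T$-connectedness of $\widehat U$ with definable connectedness of $\widehat U$ in the sense of Definition~\ref{defn def connected in hat}: by Fact~\ref{fact top on hat and vgs}, clopen $\T$-subsets of $\widehat U$ are exactly the sets $\widehat W$ with $W$ a $\vgs$-clopen subset of $U$, and by Lemma~\ref{lem vgs and def conn} (applied to $U$ in place of $X$) the strict pro-definable clopen subsets of $\widehat U$ are precisely these $\widehat W$; so $\widehat U$ is $\T$-connected if and only if the only $\vgs$-clopen subsets of $U$ are $\varnothing$ and $U$, which is definable connectedness of $\widehat U$. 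Then Lemma~\ref{lem vgs components}, applied to the definable set $U\subseteq V\times\nGin$, gives that $\widehat U$ has finitely many definably connected components; and the last sentence of Lemma~\ref{lem vgs and def conn} says that each such component is of the form $\widehat W$ for a $\vgs$-clopen subset $W$ of $U$ with $\widehat W$ definably connected. Since $\widehat W$ is then in particular an open $\T$-subset (a $\vgs$-clopen subset of $U$ is $\vgs$-open, hence $W\in\op(X_{\vgs})$), each component lies in $\T$, and moreover being definably connected it is $\T$-connected by the identification just made. This establishes (iii).

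The step I expect to require the most care is the passage from ``definably connected component of $\widehat U$'' to ``$\T$-connected component of $\widehat U$'': one must check that the finitely many definably connected pieces $\widehat W_1,\dots,\widehat W_k$ furnished by Lemmas~\ref{lem vgs components} and~\ref{lem vgs and def conn} are genuinely the maximal $\T$-connected subsets in the combinatorial sense of Definition~\ref{def:T-topology}, i.e.\ that they are pairwise disjoint, clopen, $\T$-connected, and that their union is $\widehat U$. Disjointness and covering follow from the corresponding statement for the definable connected components of $U$ itself (Remark~\ref{nrmk def conn} type argument, transported through $\widehat{\cdot}$), clopenness from Fact~\ref{fact top on hat and vgs} applied to the $\vgs$-clopen $W_i\subseteq U$, and $\T$-connectedness from the identification in the previous paragraph. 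With (i), (ii) and (iii) all verified, $\widehat X$ equipped with $\T$ is a $\T$-space, which is the assertion of Proposition~\ref{prop vgs t-space}; one may then add the routine remark that, by Definition~\ref{defn vgs site on hat}, the associated site $\widehat X_{\T}$ coincides with $\widehat X_{\hvgs}$.
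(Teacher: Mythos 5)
Your proof is correct and follows essentially the same route as the paper: conditions (i) and (ii) are read off from Fact \ref{fact top on hat and vgs} and the closure properties of $\vgs$-opens, while condition (iii) is deduced from Lemmas \ref{lem vgs and def conn} and \ref{lem vgs components} via the identification of clopen $\hvgs$-subsets with $\hvgs$-clopen subsets. Your more detailed justification of that identification is exactly the point the paper compresses into one sentence, so nothing is missing.
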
 

\begin{proof}
As we already saw that the $\hvgs$-open sets of  $\widehat{X}$  forms a basis for the topology and that they are closed under finite unions and intersections, which shows conditions (i) and (ii) in Definition \ref{def:T-topology}. Since clopen $\widehat{\vgs}$-subsets are exactly the $\widehat{\vgs}$-clopen subsets, condition (iii) follows from Lemmas \ref{lem vgs and def conn} and \ref{lem vgs components} 
\end{proof}

Note that the previous argument does not hold for $X_{\vgs}$ because there are more clopen $\vgs$-subsets than $\vgs$-clopen subsets (for example, the maximal ideal is a clopen $\vgs$-subset but not a $\vgs$-clopen). 

\subsection{Definable compactness and~\texorpdfstring{$\hvgs$}{h}-normality}\label{subsection  vgs-comp and vgs-normal}
In this subsection we recall  the notion of definable compactness in the stable completion, we show that the notion of weakly $\vgs$-normality (Definition \ref{defn wtnormal}) corresponds to $\hvgs$-normality (Definition \ref{defn tnormal}) and we prove that every $\vgs$-locally closed  subset of $V\times \nGin$ is the union of finitely many $\vgs$-open subsets which are weakly  $\vgs$-normal.\\

Since the notion of a definably type on a definable set can be extended naturally to pro-definable sets, definable compactness of pro-definable topological spaces is defined replacing curves by definable types, see \cite[Section 4.1]{HrLo}. Recall also that if  $V$ is an algebraic variety over  a valued field, then we say that a subset $W$ of $V$ is {\it bounded} if there exists an affine cover $V=\bigcup _{i=1}^mU_i,$ and subsets $W_i\subseteq U_i$ such that $W=\bigcup _{i=1}^mW_i$ and each $W_i$ is contained in a closed $m$-ball with radius in $\nG;$ we say that a subset of $\nGim$ is {\it bounded} if it is contained in $[a, \infty ]^m$ for some $a$. More generally, we say that a subset $W$ of $V\times \nGim$ is {\it bounded} if its pull back under ${\rm id}\times \val \colon V\times \Aa ^m\to V\times \nGim$ is bounded. 

A  pro-definable topological space $Z$ is {\it definably compact} if and only if every definable type $p$ on $Z$ has a limit in $Z,$ i.e. there is a point $a\in Z$ such that for every open definable subset $U$ of $Z,$ the definable type $p$ concentrates on $U$. 

\begin{nrmk}[Definable compactness in the stable completion]\label{nrmk def compact in hat}
By \cite[Corollary 4.2.22]{HrLo}, if $V$ is an algebraic variety over  a valued field $F$ and $X$ is a definable subset of $V\times \nGim,$ then:

\begin{itemize}
\item
$\widehat{X}$ is definably compact if and only if $X$ is {\it bounded} and $\vgs$-closed. 
\end{itemize}
Furthermore (\cite[Proposition 4.2.30]{HrLo}) an algebraic variety over  a valued field is complete if and only if its stable completion is definably compact. 
\end{nrmk}






The following  is a special case of \cite[Lemma 4.2.23]{HrLo}: 

\begin{nrmk}\label{nrmk vgs closed maps} 
Let $V, V'$ be varieties over $K$, $X\subseteq V\times \nGim$ and $Y\subseteq V'\times \nGin$ be definable subsets and let $f\colon X\to Y$ be a definable map which is a morphism of $\vgs$-sites. If $X$ is bounded and $\vgs$-closed, then $f$ is {\it $\vgs$-closed} i.e. $f$ maps $\vgs$-closed subsets of $X$ to $\vgs$-closed subsets of $Y$. 


Indeed, if $Z\subseteq X$ is a $\vgs$-closed subset, then $\widehat{Z}\subseteq \widehat{X}$ is a closed relatively definable subset and $\widehat{f}(\widehat{Z})\subseteq \widehat{Y}$ is a closed subset (\cite[Lemma 4.2.23]{HrLo}). On the other hand, applying \cite[Lemma 4.2.6]{HrLo} to the surjective definable map $f\colon X\to f(X),$ we get that the map $\widehat{f}\colon \widehat{X}\to \widehat{f(X)}$ is also surjective, which implies that  $\widehat{f(Z)}=\widehat{f}(\widehat{Z})$ and so $f(Z)$ is $\vgs$-closed (Fact \ref{fact top on hat and vgs}). 
\end{nrmk} 

Recall the notion of weakly $\T$-normal from Definition \ref{defn wtnormal}. Then:

\begin{nrmk}\label{nrmk t+}
Since open (resp. closed) $\widehat{\vgs}$-subsets of $\widehat{X}$ are exactly the $\widehat{\vgs}$-open (resp. $\widehat{\vgs}$-closed) subsets of $\widehat{X}$ we have that $X$ is weakly $\vgs$-normal if and only if  $\widehat{X}$ is $\hvgs$-normal (as in Definition \ref{defn tnormal}).
\end{nrmk}

For the next result we need to  recall the schematic distance function from \cite[Section 3.12]{HrLo}. First, given  $g(x_0,\ldots,x_m)$ a homogeneous polynomial with coefficients in the valuation ring $\cR$ we define the function $\val(g)\colon \Pp^m\to [0,\infty]$ by 
\[
\val(g)([x_0 \hspace{3pt}{:}\hspace{3pt} \cdots \hspace{3pt}{:}\hspace{3pt} x_m])=\val(g(x_0/x_i,\ldots, x_m/x_i))
\]
for some (any) $i$ such that $\val(x_i)=\min\{\val(x_j): j=0,\ldots, m\}$. 

Let $V$ be a projective variety over an algebraically closed valued field $K$ and $Z$ be a closed subvariety. Fix some embedding $\iota\colon  V\to \Pp^m$ and let $f=(f_1,\ldots, f_r)$ be   tuple of homogeneous polynomials in $\cR[X_0,\ldots, X_m]$ such that $Z$ is defined by the zero locus of $f$ in $V$ (note that by rescaling one can always assume that the polynomials have coefficients in $\cR$). The \emph{schematic distance function to $Z$} is the function $\varphi_{\iota,f}\colon V\to [0,\infty]$ defined by
\[
\varphi_{\iota,f}(x) = \min\{\val(f_i)(x):1\leq i\leq r\}. 
\]
Note that the function $\varphi_{\iota,f}$ is $K$-definable, it is a morphism of $\vgs$-sites and, in addition, $\varphi_{\iota,f}^{-1}(\infty)=Z$.

\begin{thm}\label{thm quasi-normal} 
Let $V$ be a  variety over $K$ and let $U$ be a basic $\vgs$-open subset of $V$. Then $U$ is weakly $\vgs$-normal. 
\end{thm}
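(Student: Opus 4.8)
The plan is to verify condition~(1) of Definition~\ref{defn wtnormal} for the $\hvgs$-site on $\widehat{U}$, which by Remark~\ref{nrmk t+} is equivalent to the statement: \emph{any two disjoint $\vgs$-closed subsets $C,D$ of $U$ lie in disjoint $\vgs$-open subsets $O\supseteq C$ and $W\supseteq D$ of $U$}. First I would normalize the situation. Intersecting the Zariski opens $U_j$, restricting, and --- for a general variety $V$ --- passing to the projective or affine case by a Chow's-lemma reduction as in the proof of Lemma~\ref{lem vgs components} (using that $(f,\mathrm{id})$-type morphisms pull $\vgs$-open and $\vgs$-closed subsets back to subsets of the same kind and induce surjections on stable completions), one may assume $V$ is projective and, by Fact~\ref{fact:vg_closed}, that
\[
U=\bigcap_{j\in J}\{u\in V:\val(F_j(u))<\val(G_j(u))\}
\]
for homogeneous $F_j,G_j$ of a common degree with coefficients in $\cR$. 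On $U$ each $F_j$ is nonvanishing, so $\theta_j:=\val(G_j/F_j)$ is a morphism of $\vgs$-sites on the Zariski open $V_j'=\{F_j\neq 0\}\supseteq U$, and $U=\{u\in V'':\theta_j(u)>0\text{ for all }j\}$ with $V''=\bigcap_j V_j'$; meanwhile the schematic distance functions $\varphi_j:=\val(F_j)\colon V\to[0,\infty]$ to the subvarieties $\{F_j=0\}$ are $\vgs$-morphisms which are finite on $U$. Thus the two ways in which $U$ can fail to be $\vgs$-closed in $V$ --- a $\theta_j$ falling to $0$, or an $F_j$ vanishing (where $\varphi_j\to\infty$) --- are both governed by $\vgs$-morphisms into $\nGi$.

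The argument would then follow the pattern of the proof of Theorem~\ref{thm basis of open normal} (based on \cite{ep4}). For positive $\gamma_j,M_j\in\nG$ written as $\gamma_j=\val(c_j)$ and $M_j=\val(d_j)$, the conditions $\theta_j(u)\ge\gamma_j$ and $\varphi_j(u)\le M_j$ become $\val(c_jF_j(u))\le\val(G_j(u))$ and $\val(F_j(u))\le\val(d_j)$, which by Fact~\ref{fact:vg_closed} are $\vgs$-closed on the \emph{complete} variety $V$; hence a ``core'' such as $\{u:\theta_j(u)\ge\gamma_j,\ \varphi_j(u)\le M_j\ \forall j\}$ is a bounded $\vgs$-closed subset of $V$ contained in $U$, its stable completion is definably compact (Remark~\ref{nrmk def compact in hat}) and therefore $\hvgs$-normal, i.e.\ it is weakly $\vgs$-normal. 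Surrounding such cores by overlapping ``collar'' pieces --- half-open in the $\theta_j$-direction near $0$ and in the $\varphi_j$-direction near $\infty$, again reduced to the definably compact case by the same rewriting --- one obtains a finite cover of $U$ by weakly $\vgs$-normal subsets, the collars being $\vgs$-open in $U$ and overlapping a weakly $\vgs$-normal $\vgs$-closed core exactly as the sets $X$, $Y$, $Z$ do in the $\bGi$ proof. One then separates $C$ from $D$ inside each member of the cover and amalgamates the local separators --- using the collars together with an auxiliary $\vgs$-closed set to make the glued sets $\vgs$-open in $U$ --- as in the final paragraph of the proof of Theorem~\ref{thm basis of open normal}, producing the required $O$ and $W$.

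The step I expect to be the main obstacle is this amalgamation. It is already delicate in the $\bGi$ setting, and here it must be carried out intrinsically over the $\vgs$-site (equivalently, in the normal spectral space attached to $\widehat{U}$ by Proposition~\ref{prop main normal def normal}); moreover, in contrast with the $\bGi$ case --- where the relevant ``bounded'' pieces sit inside $\nGm$ and are definably normal for free --- here every piece of the cover must instead be reduced to the definably compact case, and there are two directions of ``bad'' boundary to track rather than one, so the combinatorics of overlaps is heavier. The remaining input used throughout is that the stable completion of a bounded $\vgs$-closed set is $\hvgs$-normal, the ACVF analogue of ``definably compact implies definably normal'', which follows from the definable-compactness results of \cite{HrLo} together with the machinery of Section~\ref{section more on t-sheaves}.
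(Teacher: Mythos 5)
Your reduction to the projective case and the reformulation of $U$ via Fact~\ref{fact:vg_closed} match the paper, but the core of your argument has a circularity I do not see how to repair. Your ``collar'' pieces --- half-open in the $\theta_j$-direction near $0$ or in the $\varphi_j$-direction near $\infty$ --- are themselves basic $\vgs$-open subsets of $V$ of exactly the same shape as $U$ (e.g.\ $\{u: 0<\theta_j(u)<\gamma_j'\}$ is again a finite intersection of conditions $\val(f)<\val(g)$), so ``reducing them to the definably compact case by the same rewriting'' is not available: they are not $\vgs$-closed, and proving they are weakly $\vgs$-normal is the problem you started with. Relatedly, your base input --- that a bounded $\vgs$-closed subset of a variety is weakly $\vgs$-normal because its stable completion is definably compact --- is not established by \cite{HrLo} or by the machinery of Section~\ref{section more on t-sheaves}; in this paper it is a \emph{consequence} of the present theorem (via Corollary~\ref{cor:mix-normal} and Remark~\ref{nrmk c in ACVF and hats}). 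Definable compactness does not by itself yield separation by \emph{definable} opens (the o-minimal analogue requires a genuine argument with Skolem functions), and no such ACVF statement is available prior to this point. Finally, the amalgamation you defer to is precisely the part of the proof of Theorem~\ref{thm basis of open normal} that exploits the ambient o-minimal group structure ($f$, $2f$, $\tfrac{3}{2}f$, $\tfrac{5}{2}f$, and the fact that pieces lying inside $\nGm$ are definably normal for free), none of which transfers to subsets of $V$.

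The paper's proof avoids covering and gluing altogether. Given two disjoint $\vgs$-closed subsets $C'$, $D'$ of $U$, one assembles \emph{all} the relevant data --- the schematic distance to $V'\setminus V$, the valuations of the $g_l,h_l$ defining $U$, and the valuations of the polynomials defining $C$ and $D$ --- into a single definable map $d\colon V'\to[0,\infty]^k$, which is a morphism of $\vgs$-sites and is $\vgs$-closed because $V'$ is projective (Remarks~\ref{nrmk def compact in hat} and~\ref{nrmk vgs closed maps}). One checks that $d(C')$ and $d(D')$ are disjoint closed subsets of a suitable definable $Y\subseteq\nGik$ containing $d(U)$, proves $Y$ is definably normal using the o-minimal result \cite[Theorem 2.20]{ep4}, separates the images there, and pulls back. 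The separation thus happens entirely in $\Gamma_\infty$, where normality is already available, and no normality statement about subsets of $V$ is needed as input; this pushforward is the idea missing from your proposal.
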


\begin{proof}
First we consider the case where $V$ is a quasi-projective variety, then using Chow's lemma we treat the case $V$ is a complete variety and the general case will follow by Nagata's theorem. 

So suppose that $V$ is a quasi-projective variety. Let $V'$ be a projective variety over $K$ such that $V$ is an open subset of $V'$. By fixing an embedding of $\iota \colon V'\to \Pp^m,$ we may suppose without loss of generality that $V'\subseteq \Pp^m$. Denote by $Z$ the closed subvariety of $V'$ such that $V=V'\setminus Z,$ and let $f=(f_1,\ldots, f_r)$ denote a finite tuple of homogeneous polynomials in $\cR [X_0, \ldots , X_m]$ defining $Z$ in $V'$. 

By Definition \ref{def vg site} and Fact \ref{fact:vg_closed}, let $L$ be a finite index set such that $U$ is defined by 
\[
U=\bigcap_{l\in L} \{x\in V: \val(g_l(x))<\val(h_l(x))\}, 
\]
where $g_l, h_l$ are homogeneous polynomials on $V'$ of the same degree for each $l\in L$. By multiplying by a suitable constant, we may suppose for every $l\in L$ that $g_l, h_l$ have coefficients in $\cR$ and their composition under the valuation take values on $[0,\infty]$. 

Let $C', D'$ be disjoint $\vgs$-closed subsets of $U$ and $C,D$ be $\vgs$-closed subsets of $V'$ such that $C\cap U=C'$ and $D\cap U=D'$. Again by Fact \ref{fact:vg_closed}, we may suppose  that for some finite index sets $I$ and $J$ we have 
\[
C=\bigcup _{i\in I}\bigcap _{j\in J}\{x\in V':p_{ij}(x)=0,\,\,\val(q_{ij}(x))\leq \val(r_{ij}(x))\},
\] 
where $p_{ij}, q_{ij}$ and $r_{ij}$ are homogeneous polynomial on $V'$ with the $q_{ij}$'s and $r_{ij}$'s  of the same degree. As before, we may assume they all such polynomials have coefficients in $\cR$ and their composition under the valuation take values on $[0,\infty]$.

Let $k =1+2|L|+3|I\times J|$ and consider the definable map  $d\colon V'\to [0,\infty]^k$ given by 

{\small
\[
d(x)=(\varphi_{\iota,f}(x), (\val(g_{l}(x), \val(h_l(x)))_{l\in L},(\val(p_{ij}(x)), \val(r_{ij}(x)), \val(q_{ij}(x)))_{i\in I, j\in J})  
\]
}
where $\varphi_{\iota, f}\colon  V'\to [0,\infty]$ is the schematic distance to $Z$ as defined above. (Recall that $\varphi_{\iota,f}(x)=\infty$ if and only if $x \in Z$).
 
The function $d$ is a morphism of $\vgs$-sites as every coordinate function has this property. It is also $\vgs$-closed by Remarks \ref{nrmk def compact in hat} and \ref {nrmk vgs closed maps}.

Consider the set
\[
A\coloneqq \{(\alpha_l,\gamma_l)_{l\in L} \in ([0,\infty)\times[0,\infty])^{|L|}: \bigwedge_{l\in L} \alpha_l<\gamma_l\}, 
\]
and let $Y\coloneqq [0,\infty)\times A\times [0,\infty]^{3|I\times J|}$. Note that by the choice of $d$ we have $d(U)\subseteq Y$. 

\begin{clm}\label{clm:vgnormal} 
The sets $d(C')$ and $d(D')$ are closed in $Y$ and, in addition, $d(C')\cap d(D')=\emptyset$. 
\end{clm} 

\begin{proof}
Since $d$ is a $\vgs$-closed morphism of $\vgs$-sites, $d(C)$ and $d(D)$ are closed. Moreover, by the choice of $d$
\begin{equation}
d(C')=d(C)\cap d(U)=d(C)\cap Y \text{ and } d(D')=d(D)\cap d(U)=d(D)\cap Y.
\end{equation}
This show $d(C')$ and $d(D')$ are closed in $Y$. For the second part, suppose there were $x\in C'$ and $y\in D'$ such that $d(x)=d(y)$. This implies $y\in C'$, contradicting that $C'\cap D'=\emptyset$. This completes the claim. \end{proof}

To conclude the case $V$ is quasi-projective it suffices to show that $Y$ is definably normal. Indeed, the definable normality of $Y$ and Claim \ref{clm:vgnormal} imply there are disjoint open definable sets $U_1, U_2$ of $Y$ such that $d(C')\subseteq U_1$ and $d(D')\subseteq U_2$. We obtain that $C'\subseteq d^{-1}(U_1)$, $D'\subseteq d^{-1}(U_2)$ and $d^{-1}(U_1)\cap d^{-1}(U_2)=\emptyset$. In addition both $d^{-1}(U_1)$ and $d^{-1}(U_2)$ are $\vgs$-open since $d$ is a morphism of $\vgs$-sites, which shows $U$ is weakly $\vgs$-normal. 

To show the definable normality of $Y,$ first note that the definable map  $h\colon A\to A'$ given by  
$h((\alpha_l,\gamma_l)_{l\in L})=(\alpha_l, \gamma_l-\alpha_l)_{l\in L},$ where $A'\coloneqq ([0,\infty)\times[0,\infty])^{|L|}$ is a definable homeomorphism. Therefore, $Y$ is definably normal if and only if the set 
\[
Y'\coloneqq [0,\infty)\times A'\times [0,\infty]^{3|I\times J|}
\]
is definably normal, and $Y'$ is definably normal by \cite[Theorem 2.20]{ep4}). \\

Now suppose that $V$ is a complete variety. By Chow's lemma, there is a projective variety $V'$ over $K$ and a surjective morphism $g\colon V'\to V$. Consider the quasi-projective variety $Y\coloneqq g^{-1}(V)$ and the basic $\vgs$-open subset $W\coloneqq g^{-1}(U)$ of $Y$. 

Let $U_1'$ and $U_2'$ be $\vgs$-open subsets of $U$ such that $U_1'\cup U_2'=U$. By (3) of  Definition \ref{defn wtnormal} it suffices to find $D_1\subseteq U_1'$ and $D_2\subseteq U_2'$ $\vgs$-closed subsets of $U$ such that  $D_1\cup D_2=U$. By definition of being $\vgs$-open in $U,$ there are $U_1$ and $U_2$  $\vgs$-open subsets of $V$ such that $U_1'=U_1\cap U$ and $U_2'=U_2\cap U$. Since $g$ is a morphism of $\vgs$-sites, $g^{-1}(U_1)$ and $g^{-1}(U_2)$ are $\vgs$-open. For $i=1,2$, set $W_i=g^{-1}(U_i')$. Note that $W_i=g^{-1}(U_i)\cap W$ for $i=1,2$. In particular, $W_1\cup W_2=W$ and $W_i$ is $\vgs$-open in $W$ for $i=1,2$. By the quasi-projective case and (3) of  Definition \ref{defn wtnormal}, there are $\vgs$-closed subsets $C_1'\subseteq W_1$ and $C_2'\subseteq W_2$ of $W$ such that $C_1'\cup C_2'=W$. By definition this implies there are $\vgs$-closed subsets $C_i\subseteq Y$ such that $C_i'=C_i\cap W$. 

By Remark \ref{nrmk vgs closed maps}, the map $g$ is $\vgs$-closed. Then $g(C_1), g(C_2)$ are $\vgs$-closed subsets of $V$. We let the reader check that the sets $D_i\coloneqq g(C_i)\cap U$ defined for $i=1,2$ are $\vgs$-closed subsets of $U$ which satisfy in addition that $D_i\subseteq U_i'$ and $D_1\cup D_2=U$. \\

For the general case, by Nagata's theorem, there is an open immersion $f\colon V\to V'$ where $V'$ is a complete variety. Therefore, $V$ is homeomorphic to the open subset $f(V)$ of $V'$ and $f\colon V\to f(V)$ is an isomorphism of $\vgs$-sites.  In particular, since $f(U)$ is a basic $\vgs$-open subset of $V'$ which, by the case for complete varieties, is weakly $\vgs$-normal, the result follows. 
\end{proof}

We now need to extend Theorem \ref{thm quasi-normal} to basic $\vgs$-open subset of $V\times \nGin$. In the order to do that we require a couple of preliminaries.\\

Recall that if $\VV$ is an elementary extension of $\UU$ and $p\in S_z(B)$ is a type over a subset $B$ of $\UU,$ then $p$ {\it is realized in $\VV$} if and only if there is $a\in \VV^{|z|}$ such that $p={\rm tp}(a/B)$
where 
\[{\rm tp}(a/B)=\{\psi (z): \psi (z)\,\,\,\textrm{an $\cL_{\cG}$-formula over $B$ and  $\psi (a)$ holds in $\VV$}\}\] 
is the {\it type of $a$ over $B$}. One says in this case that {\it $a$ is a realization of $p$.} Recall also that any type $p\in S_z(B)$ is realized in some elementary extension of $\UU$. 

Given definable global types $p\in S_x(\UU)$ and $q\in S_y(\UU),$ the tensor product $p\otimes q$ is the definable global type ${\rm tp}(a,b/\UU)$ where $a$ realizes $p$ and $b$ realizes $q|\UU\cup \{a\}$. Recall that, for any $B\supseteq \UU$, one denotes by $q|B$ {\it the canonical extension} of $q$ to $S_y(B)$ given by $\phi(y,b)\in q|B$ if and only if $d_q(\phi)(b)$ holds in some elementary extension $\VV$ of $\UU$ containing $B$.

Note that $p\otimes q$ is indeed a definable global type with $d_{p\otimes q}(\theta )$ given by $d_p(d_q(\theta ))$. On the other hand, by \cite[Proposition 2.9.1]{HrLo} and \cite[Proposition 3.2]{hp1}, if the definable global types $p\in S_x(\UU)$ and $q\in S_y(\UU)$ are orthogonal to $\Gamma ,$ then $p\otimes q$ is orthogonal to $\Gamma $. 

\begin{lem}\label{lem:val_basis} 
Let $(\gamma_1,\ldots,\gamma_n)\in \Gamma ^n$ and $b=(b_1,\ldots,b_n)$ be a realization of $\eta_{\gamma_1}\otimes \cdots\otimes \eta_{\gamma_n}$, where $\eta _{\gamma _i}\coloneqq \eta_{B(0,\gamma_i)}$ is as defined in Example \ref{exa:gen_ball}. Then for every polynomial $\sum a_jy^j\in K[y]$ with $y=(y_1,\ldots,y_n)$ (in multi-index notation), it holds that
\[
\val(\sum_j a_jb^j) = \min_j\{\val(a_jb^j)\}.
\]  
\end{lem}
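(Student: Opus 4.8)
The plan is to induct on $n$, with the case $n=1$ being the heart of the matter, namely the classical fact that the generic type $\eta_{B(0,\gamma)}$ of a closed ball is a "generic point" for the valuation in the sense that no nontrivial $\cR$-polynomial combination gains valuation. First I would reduce to the case $\gamma_i = 0$ for all $i$: after the substitution $y_i \mapsto t_i y_i$ with $\val(t_i)=\gamma_i$ (working in an elementary extension where such $t_i$ exist), the generic type $\eta_{B(0,\gamma_i)}$ is the pushforward of $\eta_{\cR}=\eta_{B(0,0)}$ under multiplication by $t_i$, and the claimed identity is invariant under this rescaling (it just shifts every $\val(a_j b^j)$ by $\sum_i j_i \gamma_i$). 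So it suffices to prove the statement for $b$ realizing $\eta_{\cR}\otimes \cdots \otimes \eta_{\cR}$.

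For $n=1$: let $P(y)=\sum_j a_j y^j \in K[y]$. Dividing by the coefficient of minimal valuation, we may assume $\min_j \val(a_j)=0$, so $P\in \cR[y]$ and its reduction $\bar P \in k[y]$ is a nonzero polynomial. The valuation topology statement $\val(P(b)) = 0$ then amounts to $\res(P(b))\neq 0$, i.e. $\bar P(\res(b)) \neq 0$ in $k$. Now $\res(b)$, for $b$ realizing $\eta_{\cR}$, is the generic type of the residue field $k$ (a one-variable type over $k$ with no algebraic content over $K$, by stable embeddedness of $k$ and the description of $\eta_{\cR}$ via cofinitely-many translates of $\cM$ in $\cR$); since $\bar P$ is a nonzero polynomial over $k$ and $k$ is infinite, $\bar P$ does not vanish at the generic point of $k$, which is exactly the assertion $\bar P(\res(b))\neq 0$. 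This gives $\val(P(b))=0 = \min_j\{\val(a_j b^j)\}$ (using $\val(b)=0$), as desired.

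For the inductive step, write $b = (b', b_n)$ with $b' = (b_1,\ldots,b_{n-1})$ realizing $\eta_0 \otimes \cdots \otimes \eta_0$ ($n-1$ factors) and $b_n$ realizing $\eta_0 | \UU\cup\{b'\}$, i.e. $\tp(b_n/\,\UU b')$ is the canonical extension of $\eta_{\cR}$. Group the polynomial by powers of $y_n$: $P(y) = \sum_{\ell} c_\ell(y') y_n^\ell$ with $c_\ell \in K[y']$, and set $c_\ell(b') = \sum_{j'} a_{j',\ell} (b')^{j'}$. By the induction hypothesis applied over $K$ (more precisely, $\tp(b'/\UU)$ is $\eta_0\otimes\cdots\otimes\eta_0$, so the hypothesis applies with the base field enlarged to any model containing the coefficients, in particular to $K$), we have $\val(c_\ell(b')) = \min_{j'} \val(a_{j',\ell} (b')^{j'})$ for each $\ell$. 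Now apply the $n=1$ case to the one-variable polynomial $Q(y_n) = \sum_\ell c_\ell(b')\, y_n^\ell$ over the field $K(b')$ (or rather over a model of ACVF containing $b'$): since $b_n$ realizes the canonical extension of $\eta_{\cR}$, it behaves as a generic point of the valuation ring over $K(b')$, so $\val(Q(b_n)) = \min_\ell \val(c_\ell(b') b_n^\ell) = \min_\ell \big(\val(c_\ell(b')) + \ell\cdot\val(b_n)\big)$. Combining the two displayed equalities and using $\val(b_n)=0$ gives $\val(P(b)) = \min_{j',\ell} \val(a_{j',\ell} (b')^{j'} b_n^\ell) = \min_j \val(a_j b^j)$.

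The main obstacle is making precise, in the $n=1$ base case and in the inductive step's use of it over $K(b')$, the claim that a realization of (the canonical extension of) $\eta_{\cR}$ reduces to a generic point of the residue field over the relevant base — this is where one needs stable embeddedness of $k$, the explicit "cofinite union of cosets of $\cM$" description of $\eta_{\cR}$ from Example \ref{exa:gen_ball}, and care that $\tp(b_n/\UU b')$ really does restrict to the expected type of $\res(b_n)$ over the residue field of $K(b')$. Everything else is bookkeeping with the ultrametric inequality and multi-index notation.
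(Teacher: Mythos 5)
Your argument is correct, and it rests on the same key fact as the paper's proof: after rescaling each $b_i$ by an element $c_i\in K$ with $\val(c_i)=\gamma_i$ (which exists in $K$ itself, no elementary extension needed), the residues $\res(c_i^{-1}b_i)$ of a realization of $\eta_{\cR}^{\otimes n}$ are algebraically independent over the residue field. Where you diverge is in how you extract the conclusion from that fact. The paper does it in one shot: joint algebraic independence of the residues makes the distinct monomials $(c^{-1}b)^j$ have $k$-linearly independent residues, and then a standard valued-field lemma (cited as \cite[Lemma 3.2.2]{ep-val}) immediately gives $\val(\sum_j e_j(c^{-1}b)^j)=\min_j\val(e_j(c^{-1}b)^j)$. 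You instead re-prove that statement by induction on $n$, peeling off one variable at a time and using in the base case that a nonzero polynomial over the residue field of the base cannot vanish at a transcendental element; your inductive step correctly uses that $b_n$ realizes the canonical extension $\eta_{\cR}|\UU b'$, so its residue is transcendental over the residue field of $K(b')$. The trade-off is that your version is self-contained and more elementary (it amounts to an inline proof of the special case of the Engler--Prestel-type lemma that is needed), at the cost of the bookkeeping with canonical extensions that you flag; the paper's version is shorter but leans on an external reference. Both are sound.
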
 

\begin{proof} 
Let $c_i\in K$ be such that $\val(c_i)=\gamma_i$. In particular, $\val(c_i^{-1}b_i)=0$ for all $i\in\{1,\ldots,n\}$. The definition of the tensor product of definable global types yields that the set
\[
A=\{{\rm Res}(b_1,c_1),\ldots,{\rm Res}(b_n,c_n)\}
\]
is algebraically independent over the residue field $k$. It follows that every finite subset of distinct finite products of elements in $A$ is $k$-linearly independent. Let $\sum_j e_j (c_1^{-1}b_1\cdots c_n^{-1}b_n)^j$ be a $K$-linear combination of distinct finite products of elements in $\{c_1^{-1}b_1,\cdots, c_n^{-1}b_n\}$, where $e_j\in K$ and we use the multi-index notation 
\[
(c_1^{-1}b_1\cdots c_n^{-1}b_n)^j=(c_1^{-1}b_1)^{j_1}\cdots (c_n^{-1}b_n)^{j_n}, 
\] 
with $j=(j_1,\ldots,j_n)\in \NN^n$. By \cite[Lemma 3.2.2]{ep-val}, every such $K$-linear combination satisfies
\begin{equation}\label{val_equality}
\val(\sum_j e_j (c_1^{-1}b_1\cdots c_n^{-1}b_n)^j) = \min_j\{\val(e_j (c_1^{-1}b_1\cdots c_n^{-1}b_n)^j)\}.
\end{equation}
\noindent To conclude, given a polynomial $\sum a_jy^j$ in $K[y]$, consider the polynomial $\sum_j a_j' y^j$ where $a_j'= a_j(c_1^{j_1}\cdots c_n^{j_n})$. By \eqref{val_equality}, 
\begin{align*}
\val(\sum a_jb^j)	& =\val(\sum_j a_j'(c_1^{-1}b_1\cdots c_n^{-1}b_n)^j) = \min_j\{\val(a_j'(c_1^{-1}b_1\cdots c_n^{-1}b_n)^j)\}\\
					&	= \min_j\{\val(a_jb^j)\}. 
\end{align*}

\end{proof}

A minor modification of \cite[Lemma 3.5.2]{HrLo} shows the following:

\begin{lem}\label{lem:section} 
Let $V$ be an algebraic variety over $K$. The map 
\begin{align*}
s\colon & \widehat{V}\times \nGin \to \widehat{V\times \Aa ^n} \\    
& (p, \gamma_1,\ldots,\gamma_n) \mapsto p\otimes \eta_{\gamma_1}\otimes \cdots\otimes \eta_{\gamma_n}
\end{align*}
where $\eta _{\gamma _i}\coloneqq \eta_{B(0,\gamma_i)}$ is as in Example \ref{exa:gen_ball}, is an  injective morphism of $\hvgs$-sites which is a section of   
\[
\widehat{\id_V\times \val}\colon \widehat{V\times \Aa ^n} \to \widehat{V\times \nGin}\cong \widehat{V}\times \nGin.
\] 
\end{lem}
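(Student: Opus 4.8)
The plan is to prove the three assertions of Lemma~\ref{lem:section} in turn: that $s$ is a well-defined map into $\widehat{V\times \Aa^n}$, that it is a section of $\widehat{\id_V\times\val}$, and that it is an injective morphism of $\hvgs$-sites. First I would check that $s$ lands in the right target. Given $p\in \widehat{V}$ (a $K$-definable global type orthogonal to $\Gamma$ concentrated on $V$) and $\gamma_1,\ldots,\gamma_n\in\nGi$, the tensor product $p\otimes\eta_{\gamma_1}\otimes\cdots\otimes\eta_{\gamma_n}$ is a $K$-definable global type concentrated on $V\times\Aa^n$; the remark preceding the lemma (citing \cite[Proposition 2.9.1]{HrLo} and \cite[Proposition 3.2]{hp1}) gives that a tensor product of definable global types orthogonal to $\Gamma$ is again orthogonal to $\Gamma$, so this type lies in $\widehat{V\times\Aa^n}$. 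When some $\gamma_i=\infty$, $\eta_{\gamma_i}$ is the simple type at $0$, and the formula still makes sense; in fact one should just note $s$ is pro-definable, being built coordinatewise from the pro-definable tensor operation, so it is a morphism in $\hDf_K$.

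Next, the section property. One must show $\widehat{\id_V\times\val}\circ s = \id_{\widehat{V}\times\nGin}$ under the identification $\widehat{V\times\nGin}\cong\widehat{V}\times\nGin$ of Fact~\ref{fact top on hat}. The map $\widehat{\id_V\times\val}$ sends a type $q$ on $V\times\Aa^n$ to the pushforward along $\id_V\times\val$; applied to $p\otimes\eta_{\gamma_1}\otimes\cdots\otimes\eta_{\gamma_n}$, the first factor is unchanged (the $V$-coordinate projection of the tensor product is $p$), and the $i$-th valuation coordinate is $\val_*(\eta_{\gamma_i}) = \val_*(\eta_{B(0,\gamma_i)})$, which is the type concentrated at $\gamma_i\in\nGi$ — this is precisely the content recorded in Example~\ref{exa:gen_ball} that $\eta_{B(0,\gamma_i)}$ is orthogonal to $\Gamma$ with $\val$-pushforward equal to $\gamma_i$. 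Hence $\widehat{\id_V\times\val}(s(p,\gamma_1,\ldots,\gamma_n)) = (p,\gamma_1,\ldots,\gamma_n)$. Injectivity of $s$ then follows formally, since $\widehat{\id_V\times\val}$ is a left inverse.

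The main work, and the expected obstacle, is showing $s$ is a morphism of $\hvgs$-sites, i.e. that $s^{-1}$ of a $\widehat{\vgs}$-open subset of $\widehat{V\times\Aa^n}$ is $\widehat{\vgs}$-open in $\widehat{V}\times\nGin$. By Fact~\ref{fact top on hat and vgs} and Definition~\ref{def vg site}, it suffices to pull back a basic $\vgs$-open set $\{u\in U_0 : \val(f(u))<\val(g(u))\}$ with $f,g$ regular on a Zariski open $U_0\subseteq V\times\Aa^n$; writing $f,g$ in affine charts as polynomials in the $\Aa^n$-variables $y=(y_1,\ldots,y_n)$ with coefficients regular functions on (an open of) $V$, the condition $\val(f)<\val(g)$ becomes a comparison of valuations of polynomial expressions evaluated at a realization. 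This is exactly where Lemma~\ref{lem:val_basis} enters: for a realization $b$ of $\eta_{\gamma_1}\otimes\cdots\otimes\eta_{\gamma_n}$, the valuation of $\sum_j a_j b^j$ is $\min_j \val(a_j b^j) = \min_j(\val(a_j) + \sum_i j_i\gamma_i)$, so that after applying $s$ the condition $\val(f)<\val(g)$ translates into a Boolean combination of conditions of the form $\val(a_j(p)) + \langle j,\gamma\rangle < \val(a_{j'}(p)) + \langle j',\gamma\rangle$ on $(p,\gamma)\in\widehat{V}\times\nGin$; each such condition is $\vgs$-open in the $p$-variable (it compares valuations of regular functions) and semi-linear open in the $\gamma$-variable, hence defines a $\vgs$-open subset of $\widehat{V}\times\nGin$ by Remark~\ref{nrmk v+g-topology} together with Fact~\ref{fact top on hat}. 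Care is needed with the chart bookkeeping (the $i$ in $\val(g)([x_0{:}\cdots{:}x_m])$ and the homogeneity normalization as in the schematic distance discussion) and with the locus where $\gamma_i=\infty$, i.e. where $b_i$ specializes to $0$, so that the ``$\min$'' formula must be read with the conventions of $\nGi$; but these are routine once the polynomial identity of Lemma~\ref{lem:val_basis} is in hand. This is the "minor modification of \cite[Lemma 3.5.2]{HrLo}" referred to in the statement, and I would organize the proof so that the generic-flag/tensor computation via Lemma~\ref{lem:val_basis} does all the real work, with the $\hvgs$-site statement being an immediate corollary.
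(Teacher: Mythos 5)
Your proposal is correct and follows essentially the same route as the paper: the section property and injectivity are dispatched as formal consequences of the pushforward computation, and the real work of showing $s$ is a morphism of $\hvgs$-sites is done by expanding the regular functions as polynomials in the fiber variables and applying Lemma~\ref{lem:val_basis} to a realization of the tensor type. The only cosmetic difference is that you unfold the resulting $\min$-comparison into a finite Boolean combination of pairwise conditions $\val(h_{1,j}(a))+\langle j,\gamma\rangle<\val(h_{2,j'}(a))+\langle j',\gamma\rangle$, whereas the paper packages the minima into continuous piecewise-linear functions $P_l$ and checks $\vgs$-openness of the resulting set by pulling back under $\id\times\val$; both are valid and equivalent.
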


\begin{proof} 
That $s$ is  a section of   $\widehat{\id_V\times \val}$ and is injective is straightforward. Let us show it is a morphism of $\hvgs$-sites. Since taking pre-images preserves Boolean combinations, it is enough to consider the case $V=\Aa ^m$ and show that if $U$ is a $\vgs$-open subset of $\Aa ^{m+n}$ of the form 
\[
U=\{(x,y)\in \Aa^{m+n}\mid \val (f_1(x,y))<\val (f_2(x,y))\}
\] 
with $f_1,f_2\in K[x,y]$, then we have $s^{-1}(\widehat{U})=\widehat{W}$ for a $\vgs$-open subset $W$ of $\Aa^m\times \nGin$. 

Suppose that $f_l(x,y)=\sum_j h_{l,j}(x)y^{j}$ with $y=(y_1, \ldots, y_n)$ (in multi-index notation). Let $p\otimes \eta_{\gamma_1}\otimes \cdots\otimes \eta_{\gamma_n}\in s(\widehat{V}\times \Gamma_\infty^n)\cap \widehat{U}$ and let $(a,b)$ be a realization of it, $b=(b_1,\ldots, b_n)$ and $\val (b_i)=\gamma _i$. By Lemma \ref{lem:val_basis} (applied to the field $K(a)$) we have
\begin{align*}
\val(f_l(a,b)) & =\min_{j} \val(h_{l,j}(a)b^{j})\\
                     & =P_l( (\val (h_{l,j}(a)))_j,\gamma_1,\ldots,\gamma_n)
\end{align*}
where the function $P_l\colon \Gamma_\infty^{d_l+m}\to \Gamma_\infty$ is obtained by composition of the natural continuous extensions of $\min$ and $+$. Here $d_l$ is the number of terms in the variable $y$ in the polynomial $f_l$.

It follows that, if $(p,\gamma)\in \widehat{V}\times \nGin $ with $\gamma =(\gamma _1, \ldots, \gamma _n),$ then  we have  
\begin{align*}
(p,\gamma )\in s^{-1}(\widehat{U})	& \Leftrightarrow p\otimes \eta_{\gamma_1}\otimes \cdots\otimes \eta_{\gamma_n} \in \widehat{U}\\         
& \Leftrightarrow \val (f_1(a,b))<\val (f_2(a,b))\,\,\,\textrm{for all}\,\,\, (a,b)\models p\otimes \eta_{\gamma_1}\otimes \cdots\otimes \eta_{\gamma_n} \\
					& \Leftrightarrow P_1((\val (h_{1,j}(a)))_j,\gamma)<P_2(\val ((h_{2,j}(a)))_j,\gamma)\,\,\,\textrm{for all}\,\,\, a\models p.
\end{align*}
Therefore, setting  
\[W=\{(a,\gamma)\in \Aa ^m\times \nGin : P_1((\val (h_{1,j}(a)))_j,\gamma)<P_2((\val (h_{2,j}(a)))_j,\gamma)\},\] 
$W$ is a definable  subset and  $s^{-1}(\widehat{U})=\widehat{W}$. On the other hand, since the pullback of $W$ under $\id \times \val $ is 
\begin{align*}
\{(a,b)\in V\times \Aa^n\mid P_1(\val ((h_{1,j}(a)))_j,\val(b))<P_2((\val (h_{2,j}(a)))_j,\val (b))\},
\end{align*}
and $\min$ and $+$ are continuous, it follows that $(\id \times \val )^{-1}(W)$ is $\vgs$-open subset and so, by definition,  $W$ is $\vgs$-open as required.
\end{proof}

\begin{lem}\label{lem:t-section} 
Let $f\colon (X,\T) \to (Y, \T')$ be a morphism in $\fT$ with a section $s\colon (Y,\T')\to (X,\T)$ which is also a morphism in $\fT$.  If $U$ is a $\T$-open subset  of $X$  which is $\T$-normal, then $s^{-1}(U)$ is a $\T'$-open subset of $Y$ which is  $\T'$-normal. In particular, if every $\T$-open subset of $X$ is a finite union of $\T$-open which are $\T$-normal subsets, then every  $\T'$-open subset of $Y$ is a finite union of $\T'$-open which are  $\T'$-normal subsets.
\end{lem}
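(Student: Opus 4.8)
The plan is to prove the displayed statement for a single $U$ directly, and then to read off the ``In particular'' clause formally. I would first dispose of $\T'$-openness: since $s$ is continuous, $s^{-1}(U)$ is open in $Y$, and since $s\colon (Y,\T')\to (X,\T)$ is a morphism of $\fT$ it pulls $\T$-subsets of $X$ back to $\T'$-subsets of $Y$; hence $s^{-1}(U)$ is an open $\T'$-subset of $Y$.

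For $\T'$-normality of $W:=s^{-1}(U)$ I would argue as follows. The composite $s\circ f\colon (X,\T)\to(X,\T)$ is again a morphism of $\fT$, so $\Omega:=(s\circ f)^{-1}(U)=f^{-1}(W)$ is a $\T$-open subset of $X$; note that $s(W)=s(Y)\cap U\subseteq \Omega$, and that $f\circ s=\mathrm{id}_Y$ forces $f$ to be surjective and to map $\Omega$ onto $W$. Given disjoint closed $\T'$-subsets $C',D'$ of $W$, their $f$-preimages $C:=f^{-1}(C')$ and $D:=f^{-1}(D')$ are disjoint $\T$-subsets of $X$, each closed in $\Omega$ (as preimages, under $f|_\Omega\colon\Omega\to W$, of closed subsets of $W$); moreover $f\circ s=\mathrm{id}$ forces $s(C')\subseteq C$ and $s(D')\subseteq D$, while $s(C'),s(D')\subseteq U\cap\Omega$. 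Hence $C\cap(U\cap\Omega)$ and $D\cap(U\cap\Omega)$ are disjoint, closed $\T$-subsets of the $\T$-open set $U\cap\Omega$. Once one knows that $U\cap\Omega$ is $\T$-normal, one can separate them by disjoint $\T$-open subsets $V_1,V_2$ of $U\cap\Omega$ — which are then $\T$-open subsets of $X$ contained in $U$ — and then $P':=s^{-1}(V_1)$ and $Q':=s^{-1}(V_2)$ are disjoint $\T'$-open subsets of $W$ (they lie in $s^{-1}(U)=W$ since $V_i\subseteq U$), with $C'=s^{-1}(C)\subseteq P'$ and $D'=s^{-1}(D)\subseteq Q'$, which is exactly what is needed. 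Equivalently, in the language of Section \ref{section more on t-sheaves}: passing to $\T$-spectra and using Remark \ref{tilde on sets and maps}, $\widetilde W=\tilde s^{-1}(\widetilde U)$ gets realised as a continuous retract of $\widetilde{U\cap\Omega}$ via $\tilde f$ (with section $\tilde s$), and a continuous retract of a normal spectral space is normal, by the closed-point criterion used in the proof of Proposition \ref{prop main normal def normal}.

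The ``In particular'' clause is then purely formal: if $W'$ is a $\T'$-open subset of $Y$, then $f^{-1}(W')$ is a $\T$-open subset of $X$ (again because $f$ is a morphism of $\fT$), hence a finite union $\bigcup_i U_i$ of $\T$-normal $\T$-open subsets by hypothesis; applying the first part to each $U_i$ and using $s^{-1}(f^{-1}(W'))=(f\circ s)^{-1}(W')=W'$ gives $W'=\bigcup_i s^{-1}(U_i)$ as a finite union of $\T'$-normal $\T'$-open subsets.

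The hard part is the reduction step, i.e.\ verifying that $U\cap\Omega=U\cap(s\circ f)^{-1}(U)$ is $\T$-normal (equivalently, that $\widetilde{U\cap\Omega}$ is a normal spectral space): the obstruction is that $f^{-1}(C')$ only lands inside $\Omega$, not inside $U$ itself, so the separation cannot be carried out in the $\T$-normal set $U$ directly but only in the a priori merely $\T$-open subset $U\cap\Omega$. The role of the idempotent morphism $s\circ f$ (and of the quasi-compactness of the $\T$-spectra, via the retract-of-normal argument above) is precisely to control this intersection, and this is where the bulk of the argument goes.
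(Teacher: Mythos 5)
Your treatment of $\T'$-openness and of the ``In particular'' clause is correct and matches the paper. The problem is the normality argument: everything funnels into the claim that $U\cap\Omega$ (where $\Omega=(s\circ f)^{-1}(U)$) is $\T$-normal, and you never establish this --- you explicitly defer it as ``where the bulk of the argument goes''. That claim does not follow from the $\T$-normality of $U$: the set $U\cap\Omega$ is merely an open $\T$-subset of $U$, and open $\T$-subsets of $\T$-normal sets need not be $\T$-normal in this setting (this failure is exactly the phenomenon of Example \ref{expl def normal}, and it is the reason Theorem \ref{thm basis of open normal} and Corollary \ref{cor:mix-normal} require genuine work). Your retract argument on $\T$-spectra is sound as far as it goes --- $\widetilde{s^{-1}(U)}$ is a retract of $\widetilde{U\cap\Omega}$ via $\tilde f$ and $\tilde s$, and a retract of a normal space is normal --- but it only transfers normality \emph{from} $\widetilde{U\cap\Omega}$, which is precisely the input that is missing. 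So the proposal replaces the statement to be proved by an unproven statement that is at least as strong, and the proof is incomplete.

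For comparison, the paper's proof makes no detour through $\Omega$: given disjoint relatively closed $\T'$-subsets $C_1,C_2$ of $s^{-1}(U)$ it sets $D_i=f^{-1}(C_i)\cap U$, separates $D_1,D_2$ inside $U$ by disjoint open $\T$-subsets $U_1,U_2\subseteq U$ using the $\T$-normality of $U$, and takes $s^{-1}(U_1),s^{-1}(U_2)$; the identity $f\circ s=\id$ gives $C_i\subseteq s^{-1}(D_i)\subseteq s^{-1}(U_i)$. Your underlying concern --- that $f^{-1}(C_i)$ is a priori only closed in $\Omega$ rather than in $X$, so that the closedness of $D_i$ in $U$ needs justification --- is a legitimate point about where care is required in that step. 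But the cure you propose does not close the argument: to finish along your route you would have to actually prove the $\T$-normality of $U\cap\Omega$, and there is no reason to expect that to be any easier than the lemma itself.
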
 

\begin{proof} 
Let $C_1,C_2$ be two disjoint closed $\T'$-subsets of $s^{-1}(U)$. For $i=1,2$, let $D_i\coloneqq f^{-1}(C_i)\cap U$. Since $f$ is a morphism in $\fT,$ both $D_1$ and $D_2$ are closed $\T$-sets. Hence, there are disjoint open $\T$-subsets $U_1, U_2\subseteq U$ such that $D_i\subseteq U_i$ for $i=1,2$. Since $s$ is a morphism in $\fT,$ the sets $W_1\coloneqq s^{-1}(U_1)$ and $W_2\coloneqq s^{-1}(U_1)$ are disjoint open $\T'$-subsets of $s^{-1}(U)$ and $C_i\subseteq W_i$ for $i=1,2$ since $f\circ s=\id $. 

For the last part, just note that if $f^{-1}(W)=W'_1\cup \ldots \cup W'_l,$ then $W=s^{-1}(W'_1)\cup \ldots \cup s^{-1}(W'_l)$.
\end{proof}



\begin{cor}\label{cor:mix-normal} 
Let $V$ be a variety over $K$. Then every  $\vgs$-locally closed subset $X$ of $V\times \nGin$ is the union of finitely many basic $\vgs$-open subsets of $X$ which are weakly $\vgs$-normal. In fact, every basic $\vgs$-open subset of $V\times \nGin$ is weakly $\vgs$-normal.
\end{cor}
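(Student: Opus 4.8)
The plan is to reduce Corollary~\ref{cor:mix-normal} to Theorem~\ref{thm quasi-normal} by pulling back along the section $s$ of Lemma~\ref{lem:section}. First I would observe that a $\vgs$-locally closed subset $X$ of $V\times\nGin$ is a $\vgs$-closed subset of a $\vgs$-open subset, hence a $\vgs$-closed subset of a finite union of basic $\vgs$-open subsets; and by Remark~\ref{nrmk t+}, weak $\vgs$-normality of $X$ corresponds to $\hvgs$-normality of $\widehat X$. So, exactly as in the proof of Theorem~\ref{thm basis of open normal}, it suffices to treat the case where $X$ itself is a basic $\vgs$-open subset of $V\times\nGin$: if $X=X'\cap S$ with $X'$ basic $\vgs$-open and $S$ $\vgs$-closed in $V\times\nGin$, then $X$ is $\vgs$-closed in $X'$, and a $\T$-closed subset of a weakly $\T$-normal set is weakly $\T$-normal. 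Thus the content of the corollary is precisely the second sentence, and the first sentence follows from Proposition~\ref{prop infty open gen cells}-style decomposition of $\vgs$-open sets into basic ones together with this reduction.

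Next I would set up the pullback. By Lemma~\ref{lem:section}, the map
\[
s\colon \widehat V\times\nGin \to \widehat{V\times\Aa^n},\qquad (p,\gamma_1,\dots,\gamma_n)\mapsto p\otimes\eta_{\gamma_1}\otimes\cdots\otimes\eta_{\gamma_n},
\]
is an injective morphism of $\hvgs$-sites and a section of $\widehat{\id_V\times\val}\colon\widehat{V\times\Aa^n}\to\widehat{V\times\nGin}$. In the language of $\fT$, this gives a morphism $f=\widehat{\id_V\times\val}$ with section $s$, both morphisms in $\fT$ between the relevant $\T$-spaces. Now let $X$ be a basic $\vgs$-open subset of $V\times\nGin$. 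By Definition~\ref{def vg site}, its preimage $(\id_V\times\val)^{-1}(X)$ is a basic $\vgs$-open subset of $V\times\Aa^n$, i.e. a basic $\vgs$-open subset of the variety $V\times\Aa^n$. By Theorem~\ref{thm quasi-normal}, $(\id_V\times\val)^{-1}(X)$ is weakly $\vgs$-normal, so by Remark~\ref{nrmk t+} its stable completion is $\hvgs$-normal, i.e. $\T$-normal in the $\fT$ sense.

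Finally I would apply Lemma~\ref{lem:t-section}: with $f=\widehat{\id_V\times\val}$, section $s$, and $U=\widehat{(\id_V\times\val)^{-1}(X)}$ a $\T$-normal $\T$-open subset of $\widehat{V\times\Aa^n}$, the lemma gives that $s^{-1}(U)$ is $\T'$-normal. It remains to identify $s^{-1}(U)$ with $\widehat X$: since $s$ is a section of $\widehat{\id_V\times\val}$ and $\widehat{(\id_V\times\val)^{-1}(X)}$ is the preimage of $\widehat X$ under $\widehat{\id_V\times\val}$ (by construction of the stable completion and Fact~\ref{fact top on hat and vgs}), we get $s^{-1}\big(\widehat{(\id_V\times\val)^{-1}(X)}\big)=\widehat X$ under the identification $\widehat{V\times\nGin}\cong\widehat V\times\nGin$. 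Hence $\widehat X$ is $\hvgs$-normal, so $X$ is weakly $\vgs$-normal by Remark~\ref{nrmk t+}, which is the second sentence of the corollary; the first sentence then follows from the reduction in the first paragraph. The only point requiring a little care — and the step I expect to be the main obstacle — is checking compatibility of all the identifications ($\widehat{V\times\nGin}\cong\widehat V\times\nGin$, the behaviour of $\widehat{(\cdot)}$ on $\vgs$-open preimages, and that $s$ and $f$ genuinely live in $\fT$ with the right $\T$-topologies), but these are all furnished by Fact~\ref{fact top on hat and vgs}, Lemma~\ref{lem:section} and Lemma~\ref{lem images and pullbacks}.
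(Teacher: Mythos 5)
Your proposal is correct and follows essentially the same route as the paper: reduce to basic $\vgs$-open subsets (since $\vgs$-closed subsets of weakly $\vgs$-normal sets are weakly $\vgs$-normal), apply Theorem \ref{thm quasi-normal} to the pullback under $\id_V\times\val$, which is a basic $\vgs$-open subset of the variety $V\times\Aa^n$, and then transport normality back to $V\times\nGin$ via the section $s$ of Lemma \ref{lem:section} together with Lemma \ref{lem:t-section} and Remark \ref{nrmk t+}. The identification $s^{-1}\bigl(\widehat{(\id_V\times\val)^{-1}(X)}\bigr)=\widehat X$ that you flag as the delicate point is exactly what $f\circ s=\id$ gives, and is how the paper itself invokes Lemma \ref{lem:t-section}.
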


\begin{proof}
It suffices to show the result for $\vgs$-open subset as every $\vgs$-closed subset of a weakly $\vgs$-normal set is again weakly $\vgs$-normal.  By the isomorphism of sites $(V\times \Aa^n)_{\vgs}$ and $(\widehat{V\times \Aa ^n})_{\vgs}$ and Theorem \ref{thm quasi-normal} and Remark \ref{nrmk t+} the result holds for $\hvgs$-open subsets of $\widehat{V\times \Aa ^n}$. By Lemmas \ref{lem:section}  and \ref{lem:t-section} the result holds for $\hvgs$-open subset of $\widehat{V\times \nGin}$. So by the isomorphism of sites $(V\times \nGin)_{\vgs}$ and $(\widehat{V\times \nGin})_{\vgs}$ and Remark \ref{nrmk t+}  the result holds in $V\times \nGin$. This prove actually shows that a basic $\vgs$-open subset of $V\times \nGin$ is weakly $\vgs$-normal.
\end{proof}

\section{Cohomology, finiteness, invariance and vanishing}\label{section cohomo finiteness and invariance}

In this section we deduce the Eilenberg-Steenrod axioms for the cohomology on the sites introduced in previous sections on definable subsets in $\bGi,$ on definable subsets in ACVF and on their stable completions. We also  show finiteness, invariance and vanishing results for these cohomologies. 

\subsection{The Eilenberg-Steenrod axioms}\label{subsection homotopy ax in bGi}
The main work in order to show the Eilenberg-Steenrod axioms consists in showing the homotopy axiom. To achieve this goal we will use the Vietoris-Begle theorem (Theorem \ref{thm vietoris-b}) applied to the tilde 
\[\tilde{\pi }\colon \widetilde{X\times [a,b]}\into \tilde{X}\] of the projection map. In particular we need to verify the assumptions of the Vietoris-Begle theorem in this context. In $\bGi$ these assumption have been verified in the literature. For definable sets $X$ and $\widehat{X}$ in ACVF, some work is required.



We start with the following lemma whose proof is exactly the same as that of  it's o-minimal version given in  \cite[Proposition 2.33]{ep4}:

\begin{lem}\label{lem obj tA}
Let $(X,\T)$ be an object of $\fT$. Suppose that for every $\alpha \in \tilde{X}$ any chain of specializations of $\alpha $ has finite length. If every $\T$-open subset of $X$ is the union of finitely many $\T$-open subsets which are $\T$-normal, then for every $\alpha \in \tilde{X},$ there is an open, normal  and constructible subset $U$ of $\tilde{X}$ such that $\alpha \in U$ and $\alpha $ is closed in $U$.\qed
\end{lem}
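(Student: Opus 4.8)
The plan is to carry everything over to the $\T$-spectrum $\widetilde{X}=\widetilde{X}_{\T}$ and argue there. Since $X\in\T$, $\widetilde{X}$ is a spectral space with a basis of quasi-compact open subsets $\{\widetilde{U}:U\in\T\}$ (Fact \ref{fact spec spaces}); by Remark \ref{tilde on sets and maps} the tilde correspondence identifies the $\T$-open $\T$-subsets of $X$ with the open constructible subsets of $\widetilde{X}$, and by Proposition \ref{prop main normal def normal} such a subset $\widetilde{O}$ is a normal subspace precisely when $O$ is $\T$-normal. Hence the normality hypothesis translates into: \emph{every open constructible subset of $\widetilde{X}$ is a finite union of open, normal, constructible subsets}; in particular $\widetilde{X}$ itself (being $\widetilde{O}$ for $O=X\in\T$) is such a finite union. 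The hypothesis on chains says that the specialization order on $\widetilde{X}$ has no infinite descending chains, so that every $\alpha\in\widetilde{X}$ carries a finite \emph{specialization rank} $n(\alpha)$, the supremum of the lengths of chains $\alpha=\alpha_0\rightsquigarrow\alpha_1\rightsquigarrow\cdots$ of pairwise distinct specializations of $\alpha$. I will also use freely that, closed subsets of a spectral space being stable under specialization, open subsets are stable under generization.

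The first step is a reduction: it is enough to produce, for a given $\alpha$, \emph{some} open subset $U'$ of $\widetilde{X}$ with $\alpha\in U'$ and $\overline{\{\alpha\}}\cap U'=\{\alpha\}$ (that is, with $\alpha$ closed in $U'$). Indeed, shrinking $U'$ within the basis we may take it open constructible, so $U'=\widetilde{O'}$ for a $\T$-open $\T$-subset $O'$; writing $O'$ as a finite union of $\T$-normal $\T$-open subsets and taking tildes yields $U'=\widetilde{O}_1\cup\cdots\cup\widetilde{O}_k$ with each $\widetilde{O}_j$ open, normal and constructible, and any $\widetilde{O}_j$ containing $\alpha$ then satisfies $\overline{\{\alpha\}}\cap\widetilde{O}_j\subseteq\overline{\{\alpha\}}\cap U'=\{\alpha\}$, so it is the sought neighbourhood $U$.

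The reduced statement I would prove by induction on $n(\alpha)$. If $n(\alpha)=0$ then $\alpha$ is a closed point of $\widetilde{X}$ and $U'=\widetilde{X}$ works. If $n(\alpha)\geq 1$, pick from a finite cover of $\widetilde{X}$ by open normal constructible subsets one such $W$ containing $\alpha$; then $W$ is quasi-compact and normal, and $Y:=\overline{\{\alpha\}}\cap W$ is a closed (hence quasi-compact, hence normal) subspace of $W$ with $\alpha$ as generic point. Every $\beta\in Y\setminus\{\alpha\}$ is a proper specialization of $\alpha$, so $n(\beta)<n(\alpha)$, and the induction hypothesis provides an open neighbourhood $V_\beta$ of $\beta$ in $\widetilde{X}$ in which $\beta$ is closed. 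The crucial, and most delicate, step is to use these $V_\beta$ together with the normality and quasi-compactness of $W$ and $Y$ to excise $Y\setminus\{\alpha\}$ from a neighbourhood of $\alpha$ — equivalently, to show that $\{\alpha\}$ is open in $Y$ — after which $U':=W\setminus(Y\setminus\{\alpha\})$ is open, contains $\alpha$, and satisfies $U'\cap\overline{\{\alpha\}}=\{\alpha\}$, completing the induction; combining $U'$ with the reduction of the second step then yields the required open, normal, constructible neighbourhood of $\alpha$ in which $\alpha$ is closed. This excision is the technical heart of the argument and the step I expect to be the main obstacle; it is carried out exactly as in the o-minimal version \cite[Proposition 2.33]{ep4}, the role of cell decomposition there being played here by the hypothesis that every $\T$-open $\T$-subset is a finite union of $\T$-normal ones, while the finiteness of specialization chains is precisely what makes both the induction and the reduction to the finitely many relevant specializations of $\alpha$ legitimate.
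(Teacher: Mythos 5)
Your first step (reduce to finding \emph{any} open $U'$ in which $\alpha$ is closed, then shrink to a basic constructible open and use the hypothesis to pass to a normal constructible piece containing $\alpha$) is correct, and since the paper itself only refers the proof to \cite[Proposition 2.33]{ep4}, your setup is consistent with it up to that point. The difficulty is that the step you defer --- showing $\{\alpha\}$ is open in $Y=\overline{\{\alpha\}}\cap W$ --- is the entire content of the lemma, and the ingredients you name do not combine in the way you indicate. Since $W$ is a normal spectral space, $\alpha$ has a \emph{unique} closed specialization $\beta_0$ in $W$ (existence from quasi-compactness, uniqueness from normality), and every $\beta\in Y\setminus\{\alpha\}$ specializes to $\beta_0$. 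Consequently, if $V_\beta$ is an open set in which $\beta$ is closed, then $\beta_0\in V_\beta$ would force $\beta_0\in\overline{\{\beta\}}\cap V_\beta=\{\beta\}$; so $\beta_0$ lies in \emph{no} $V_\beta$ with $\beta\neq\beta_0$. Any cover of $Y$ (or of $Y\setminus\{\alpha\}$, which contains $\beta_0$) by the $V_\beta$ must therefore use $V_{\beta_0}$, whose trace on $Y$ is completely uncontrolled (it can be all of $Y$), so extracting a finite subcover yields nothing. Note also that $Y\setminus\{\alpha\}$ is not known to be closed --- that is what you are proving --- so quasi-compactness cannot be applied to it directly, and $\overline{\{\alpha\}}$ may be infinite (the hypothesis bounds chains, not the number of specializations), so you cannot simply intersect finitely many $V_\beta\setminus\{\beta\}$. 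As written, the ``excision'' is a genuine gap.

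An argument that does close it dispenses with the $V_\beta$ and with the induction over all specializations, and instead iterates on $\alpha$ itself. Given an open normal constructible $W\ni\alpha$ with $\overline{\{\alpha\}}\cap W\neq\{\alpha\}$, let $\beta_0$ be the unique closed specialization of $\alpha$ in $W$; then $W\setminus\{\beta_0\}$ is open and contains $\alpha$, hence contains a basic constructible open $O\ni\alpha$, which by hypothesis is a finite union of normal open constructible sets, one of which, say $W_1$, contains $\alpha$. Any specialization chain $\alpha\rightsquigarrow\delta_1\rightsquigarrow\cdots\rightsquigarrow\delta_k$ inside $W_1$ extends inside $W$ by appending $\beta_0$ (the endpoint $\delta_k\neq\beta_0$ still has $\beta_0$ as its unique closed specialization in $W$), so the maximal length of specialization chains of $\alpha$ drops strictly from $W$ to $W_1$. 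Iterating, if the process never stopped, the removed points $\beta_0,\beta_1,\beta_2,\dots$ would satisfy $\alpha\rightsquigarrow\cdots\rightsquigarrow\beta_1\rightsquigarrow\beta_0$ and form an infinite chain of specializations of $\alpha$, contradicting the hypothesis. Hence after finitely many steps one reaches an open normal constructible $W_k\ni\alpha$ with $\overline{\{\alpha\}}\cap W_k=\{\alpha\}$, which is the required $U$. I recommend replacing your inductive excision by this descent.
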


In the o-minimal context, the assumption of finiteness of chains of specializations is verified in \cite[Lemma 2.11]{ejp}. We will now prove an analogue result in ACVF. Let us first make some general observations in this context.  

\begin{nrmk}\label{rem:types_pairs} 
Let $V$ be an affine varierty over  $K$, say $V=\mathrm{Spec}(A)$ for $A=K[T_1,\ldots,T_n]/J$, where $J$ is an ideal of $K[T_1,\ldots,T_n]$. Note that by quantifier elimination in ACVF, the set $\widetilde{V}$ is in bijection with the set of pairs $(I,v)$ where $I\in V$ and $v$ is a valuation on the fraction field $\mathrm{Frac}(A/I)$ which extends $\val$. Formally, the bijection sends $p\mapsto (\supp(p),v_p)$ where $\supp(p)\coloneqq \{f\in A: f(x)=0 \in p\}$ and $v_p$ is determined by setting $v_p(f/g)\geq 0$ if and only if the formula $\val(f(x))\geq \val(g(x))$ belongs to $p$ (where $x=(x_1,\ldots, x_n)$). We will denote the fraction field  $\mathrm{Frac}(A/I)$ by $F_p$ and $\cR_p$ denote the valuation ring of $F_p$ with respect to $v_p$.   
\end{nrmk}

As in Definition \ref{def:tilde}, recall that $\widetilde{V}$ is equipped with the topology generated by $\widetilde{U}$ where $U$ is $\vgs$-open in $V$. Given $p,q\in \widetilde{V}$, we write $p\prec q$ to state that $p$ is a specialization of $q$, that is, that $p$ belongs to the closure of $\{q\}$. Note that if $p\prec q$, then $\supp(q)\subseteq \supp(p)$ as the set $\{ x\in V: \val (f(x))=\infty\}$ is $\vgs$-closed. In particular, we may assume that $F_p\subseteq F_q$. 

\begin{lem}\label{lem spec length in acvf}
Let $V$ be a varierty over $K$. Consider $\widetilde{V}$ with the topology generated by $\widetilde{U}$ where $U$ is $\vgs$-open in $V$. Then any chain of specializations in $\widetilde{V}$ is bounded by $\dim(V)$.
\end{lem}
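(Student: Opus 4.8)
The statement bounds the length of a chain of specializations in $\widetilde{V}$ by $\dim(V)$. The plan is to reduce to the affine case, use the description of points of $\widetilde{V}$ as pairs $(I,v)$ from Remark \ref{rem:types_pairs}, and then prove the bound by combining two sources of ``drop'': the dimension of the support (a purely algebro-geometric quantity) and the rank/dimension of the valuation (via Abhyankar-type inequalities). First I would reduce to $V$ affine irreducible by covering $V$ with finitely many affine opens and noting that a specialization chain stays inside the closure of its top point, so we may replace $V$ by the Zariski closure of $\supp(q)$ for $q$ the generic point of the chain; thus we may assume $V=\mathrm{Spec}(A)$ is an irreducible affine variety and the top point $q$ of the chain is the generic point of $V$ in the sense that $\supp(q)=(0)$ (otherwise replace $V$ by $V(\supp(q))$, which only decreases dimension).

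\textbf{Key steps.} Given a chain $p_0 \prec p_1 \prec \cdots \prec p_\ell$ in $\widetilde{V}$, recall from the discussion before the lemma that $\supp(p_{i+1})\subseteq \supp(p_i)$, so the supports form a (weakly) descending chain of primes in $A$; correspondingly we get a tower of residue fields $F_{p_\ell} \subseteq \cdots \subseteq F_{p_0}$ with compatible valuations $v_{p_i}$. Whenever $\supp(p_{i+1}) \subsetneq \supp(p_i)$ strictly, the transcendence degree drops: $\mathrm{trdeg}_K F_{p_i} < \mathrm{trdeg}_K F_{p_{i+1}}$, and since $\mathrm{trdeg}_K F_{p_i} = \dim V(\supp(p_i)) \le \dim V$, there can be at most $\dim(V)$ such strict drops in total. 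It remains to bound the number of consecutive steps $p_i \prec p_{i+1}$ where $\supp(p_i)=\supp(p_{i+1})=:I$, i.e., where $p_i$ and $p_{i+1}$ live over the same fraction field $F:=F_{p_i}$ but have different valuations $v_{p_i} \ne v_{p_{i+1}}$ (with $v_{p_{i+1}}$ a specialization of $v_{p_i}$ in the valuation-theoretic sense, meaning $\cR_{p_{i+1}} \subsetneq \cR_{p_i}$... or rather the appropriate inclusion coming from topological specialization). For these, I would invoke the classical fact that a chain of proper coarsenings/specializations of valuations on a field $F$ finitely generated over $K$ has length bounded by the rational rank, hence by $\mathrm{trdeg}_K F \le \dim V$, via the Abhyankar inequality $\mathrm{rk}(v) \le \mathrm{ratrk}(v) \le \mathrm{trdeg}_K F$ (using that $\val$ on $K$ is fixed, so the value group of $v_{p_i}$ extends that of $K$ by finite rational rank). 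Combining: the total length $\ell$ is at most (number of support drops) $+$ (valuation-specialization steps at each fixed support) $\le \dim V$; the bookkeeping needs care so that the counts add correctly rather than double-count, which is why one organizes the chain by the locus where support is constant.

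\textbf{Main obstacle.} The delicate point is the precise relationship between topological specialization $p \prec q$ in $\widetilde{V}$ (defined via the $\vgs$-topology) and the algebraic data $(\supp, v)$: one must verify that $p\prec q$ forces $\supp(q)\subseteq\supp(p)$ \emph{and}, when supports are equal, that the valuation rings are nested in the right direction ($\cR_q \subseteq \cR_p$, so $v_p$ is a coarsening of $v_q$), using the generators $\widetilde{U}$ of the topology where $U=\{x: \val f(x) < \val g(x)\}$. I expect the right argument is: $p\prec q$ iff every basic $\vgs$-open containing $p$ contains $q$; unpacking this for the basic opens translates exactly into ``$\val_p(f)<\val_p(g)$ implies $\val_q(f)<\val_q(g)$'' for regular $f,g$, which is precisely the statement that $v_q$ refines $v_p$ on $\mathrm{Frac}(A/\supp(p))$ once one knows $\supp(q)\subseteq\supp(p)$. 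Once this dictionary is in place, the dimension count above goes through. I would also double-check the edge case where the value group of $K$ itself has infinite rank — but this does not affect the bound, since the relevant inequality only concerns the \emph{quotient} of value groups $\Gamma_{v_p}/\Gamma_{\val}$, whose rational rank is bounded by $\mathrm{trdeg}_K F_p$, exactly as in Abhyankar's inequality for extensions.
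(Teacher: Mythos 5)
Your dictionary between topological specialization and the algebraic data $(\supp,v)$ is correct and is exactly what the paper verifies: for $p\prec q$ one gets $\supp(q)\subseteq\supp(p)$ because $\{\val(f)=\infty\}$ is $\vgs$-closed, and the valuation rings nest ($\cR_q\subseteq\cR_p$) because $\{\val f<\val g\}$ is $\vgs$-open and every basic open containing $p$ must contain $q$. The paper then packages the entire chain as a chain of proper inclusions of valued subfields and invokes a single external result (a corollary in the cited valued-fields reference) bounding the length of such a chain by the transcendence degree over $K$, hence by $\dim V$. Your proposal instead tries to reprove that bound by hand, and it is precisely there that it has a genuine gap.

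The gap is the final count. You bound the number of strict support drops by $\dim V$ (transcendence degree strictly decreases) and the number of same-support coarsening steps in each segment by $\mathrm{ratrk}(\Gamma_v/\Gamma_K)\le \mathrm{trdeg}_K F\le\dim V$ (Abhyankar), and then assert the total is $\le\dim V$. But with $d$ support drops there are $d+1$ constant-support segments, each of which could a priori contribute up to $\dim V$ vertical steps, so the naive sum is of order $(\dim V)^2$, not $\dim V$. What is actually needed is the combined Abhyankar inequality $\mathrm{trdeg}_K F\ \ge\ \mathrm{trdeg}(k_v/k_K)+\mathrm{ratrk}(\Gamma_v/\Gamma_K)$ together with the fact that at a support drop the new fraction field embeds (as a valued field) into the residue field of a suitable coarsening of the previous valuation: the transcendence degree already ``spent'' on value-group rank at one level is therefore unavailable at lower levels, and the contributions telescope to at most $\mathrm{trdeg}_K F_{p_\ell}\le\dim V$. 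This nesting is the heart of the valuative-dimension argument (it is what the cited corollary encapsulates), and flagging that ``the bookkeeping needs care'' does not discharge it; as written, your argument only yields a bound quadratic in $\dim V$.
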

\begin{proof}
Since specialization is local, it suffices to work with the case when $V$ is affine. For affine $V$, let $A=\mathcal{O}_V(V)$ be the regular functions on $V$. Suppose that $p\prec q$ for $p,q\in \widetilde{V}$. As observed in the previous paragraph we may suppose $F_q\subseteq F_p$. In addition, note that $\cR_q\subseteq \cR_p$. Indeed, if there was $f\in A$ such that $\val(f(x))\geq 0\in q$ but $\val(f(x))<0\in p$, since the later formula defines a $\vgs$-open, we must have that $\val(f(x))<0\in q$, a contradiction. Hence, every chain $p_1\prec \cdots\prec p_n\prec p$ of specializations of $p\in \widetilde{V}$, induces a chain of valued subfields $(F_p,\cR_{p})\subseteq (F_{p_n},\cR_{p_n})\subseteq \cdots\subseteq (F_{p_1},\cR_{p_1})$. By \cite[Corollary 3.4.6]{ep-val}, the length of every such a chain is bounded by the dimension of $V$.   
\end{proof}

\begin{cor}\label{cor spec length}
Let $V$ be a variety over $K$ and let $X\subseteq V\times \nGim$ be a definable subset. Then any chain of specializations in $\widetilde{X}$ has finite length bounded by $\dim(V)+m$. The same holds for $\tilde{\widehat{X}}$.
\end{cor}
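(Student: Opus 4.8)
The plan is to reduce the statement for $\widetilde{X}$ to Lemma \ref{lem spec length in acvf} applied to the \emph{variety} $V\times\Aa^m$, which has dimension $\dim(V)+m$, and to deduce the statement for $\widetilde{\widehat X}$ formally. For the latter: by Definition \ref{defn vgs site on hat} and the remark following it, the $\vgs$-site $X_{\vgs}$ and the $\hvgs$-site $\widehat X_{\hvgs}$ are isomorphic, so by Remark \ref{nrmk site iso tilde} the induced map $\widetilde X\to\widetilde{\widehat X}$ is a homeomorphism; in particular it preserves and reflects specializations, and it suffices to bound specialization chains in $\widetilde X$.

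Set $\phi\coloneqq\mathrm{id}_V\times\val\colon V\times\Aa^m\to V\times\nGim$ and $X^{\flat}\coloneqq\phi^{-1}(X)$, a definable (equivalently $\vgs$-) subset of $V\times\Aa^m$. By Definition \ref{def vg site} the $\phi$-preimage of a $\vgs$-open (resp.\ $\vgs$-closed) set is again $\vgs$-open (resp.\ $\vgs$-closed), and since $K$ is a model $\val\colon\Aa^1\to\nGi$ is surjective; thus $\phi$ restricts to a surjective morphism of $\vgs$-sites $X^{\flat}_{\vgs}\to X_{\vgs}$, and it is moreover $\vgs$-\emph{closed}, i.e.\ it maps $\vgs$-closed sets to $\vgs$-closed sets (a feature of the $\vgs$-topology, implicit in the Hrushovski--Loeser formalism; cf.\ Fact \ref{fact top on hat and vgs}). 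Passing to $\T$-spectra, $\widetilde\phi\colon\widetilde{X^{\flat}}\to\widetilde X$ is then a continuous surjection that is also closed: it carries a closed constructible set $\widetilde Z$ to the closed constructible set $\widetilde{\phi(Z)}$, and by quasi-compactness of the basic opens this propagates to arbitrary closed subsets.

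Now invoke that a closed surjection of spectral spaces lifts specialization chains. Given a chain $\beta_0\prec\beta_1\prec\cdots\prec\beta_N$ of strict specializations in $\widetilde X$ (so $\beta_i\in\overline{\{\beta_{i+1}\}}$), pick $\alpha_N\in\widetilde\phi^{-1}(\beta_N)$ and, descending, use $\widetilde\phi(\overline{\{\alpha_{i}\}})=\overline{\{\widetilde\phi(\alpha_i)\}}=\overline{\{\beta_{i}\}}\ni\beta_{i-1}$ to pick $\alpha_{i-1}\in\overline{\{\alpha_{i}\}}$ with $\widetilde\phi(\alpha_{i-1})=\beta_{i-1}$; since $\beta_{i-1}\neq\beta_{i}$ we get $\alpha_{i-1}\neq\alpha_{i}$, so $\alpha_0\prec\cdots\prec\alpha_N$ is a chain of strict specializations in $\widetilde{X^{\flat}}$. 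The inclusion $X^{\flat}\hookrightarrow V\times\Aa^m$ is a morphism of $\vgs$-sites, hence induces an injective continuous map $\widetilde{X^{\flat}}\to\widetilde{V\times\Aa^m}$ carrying this to a chain of strict specializations of the same length; and the proof of Lemma \ref{lem spec length in acvf} bounds the length of any specialization chain in $\widetilde{V\times\Aa^m}$ by $\dim(V\times\Aa^m)=\dim(V)+m$ (that argument produces an ascending chain of valuation subrings and only uses the ambient variety, so it applies verbatim). Therefore $N\leq\dim(V)+m$, as claimed.

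The one non-formal ingredient is the $\vgs$-closedness of $\phi=\mathrm{id}_V\times\val$, which is where the geometry of the valuation is used; from it the closedness of $\widetilde\phi$, and hence the lifting of specialization chains, is immediate. I expect this to be the main obstacle, the remaining steps being routine bookkeeping with $\T$-spectra.
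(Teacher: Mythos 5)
Your overall skeleton (reduce everything to $\widetilde{V\times\Aa^m}$ and invoke Lemma \ref{lem spec length in acvf}, whose argument indeed only uses the ambient variety) matches the paper's, and your spectral-space bookkeeping is sound: a closed continuous surjection of spectral spaces does lift strict specialization chains, and closedness does propagate from closed constructible sets to arbitrary closed sets via quasi-compactness of fibers in the constructible topology. But the load-bearing claim — that $\phi=\id_V\times\val$ is $\vgs$-closed — is exactly the step you cannot wave through. It is not ``implicit'' in Fact \ref{fact top on hat and vgs}, which only characterizes when a set $W$ is $\vgs$-open/closed in terms of $\widehat W$ and says nothing about images under $\id\times\val$. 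The paper's only $\vgs$-closedness criterion for maps (Remark \ref{nrmk vgs closed maps}, via \cite[Lemma 4.2.23]{HrLo}) requires the domain to be \emph{bounded} and $\vgs$-closed, which $V\times\Aa^m$ is not. Since $\widehat{V\times\nGim}$ carries the quotient topology under $\widehat{\id\times\val}$, your claim amounts to showing that the saturation $\phi^{-1}(\phi(Z))$ of every $\vgs$-closed $Z$ is again $\vgs$-closed, and nothing in the paper delivers that. So as written the proof has a genuine gap at its central step, and you have flagged the hard point without supplying it.

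The paper sidesteps this entirely by going in the opposite direction: instead of \emph{lifting} chains along the projection (which needs closedness), it \emph{pushes chains forward} along the section $s\colon\widehat V\times\nGin\to\widehat{V\times\Aa^n}$, $(p,\gamma)\mapsto p\otimes\eta_{\gamma_1}\otimes\cdots\otimes\eta_{\gamma_n}$, of Lemma \ref{lem:section}. That $s$ is an injective morphism of $\hvgs$-sites is already proved there, so $\tilde s$ is continuous and injective; continuity preserves specializations and injectivity preserves strictness, which transports any chain in $\widetilde{\widehat{V\times\nGim}}\cong\widetilde{V\times\nGim}$ to one of the same length in $\widetilde{\widehat{V\times\Aa^m}}\cong\widetilde{V\times\Aa^m}$, where Lemma \ref{lem spec length in acvf} applies. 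If you want to rescue your route, you would have to prove the $\vgs$-closedness of $\id_V\times\val$ from scratch — likely at least as much work as Lemma \ref{lem:section}, which is available for free. Otherwise, replace the lifting step by the push-forward along $\tilde s$.
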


\begin{proof}
Consider the commutative diagram

\begin{equation*}
\xymatrix{
\widetilde{V\times \Aa ^m}  \ar[r]^{} \ar[d]^{}  & \widetilde{V\times \nGim} \ar[d]^{} \\
\widetilde{\widehat{V\times \Aa ^m}} \ar[r]^{}  & \widetilde{\widehat{V\times \nGim}}
}
\end{equation*}
where the top arrow is $\widetilde{\id \times \val}$, the bottom arrow is $\widetilde{\widehat{\id \times \val}}$ and the vertical arrows are homeomorphisms induced by isomorphisms of sites (Remark \ref{nrmk site iso tilde}).

If there were a chain of specializations in $\widetilde{V\times \nGim}$ of length $>\dim(V)+m$, then there would be  a chain of specializations in $\widetilde{\widehat{V\times \nGim}}$ of length $>\dim(V)+m.$ Since the bottom arrow has a section $\tilde{s}: \widetilde{\widehat{V\times \nGim}} \to \widetilde{\widehat{V\times \Aa ^m}}$  which is the tilde of an injective morphism of $\hvgs$-sites (Lemma \ref{lem:section}), we would also have a chain of specializations of length $>\dim(V)+m$ in $\widetilde{V\times \Aa ^m}$ which contradicts Lemma \ref{lem spec length in acvf}.
\end{proof}

\begin{nrmk}\label{nrmk spec chains and dim} One could improve the previous result by bounding the length of the specialization chain by an appropriate notion of dimension for definable subsets of $K^n\times \Gamma_\infty^m$. We would like to point out that a good candidate to play such a role was introduced by F. Martin in his PhD thesis \cite{phd_martin}, where he more generally defined a dimension function for subsets of $K^n\times\Gamma_\infty^m\times k_K^r$. 
\end{nrmk}

In the o-minimal context we know from Fact \ref{fact CxN is normal}, that if $W\subseteq \nGim$ is definably normal and $[a,b]\subseteq \nGi$ is a closed interval, then $W\times [a,b]$ is also definably normal. In the ACVF context we have:

\begin{lem}\label{lem UxI is normal}
Let $V$ be a variety over $K$. If $W$ is a basic $\vgs$-open subset of $V\times \nGim$ and $[a, b]\subseteq \nGi$ is a closed interval, then $W\times [a,b]$ is weakly $\vgs$-normal.
\end{lem}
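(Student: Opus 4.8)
The statement says: if $W \subseteq V\times\nGim$ is a basic $\vgs$-open subset and $[a,b]\subseteq\nGi$ is a closed interval, then $W\times[a,b]$ is weakly $\vgs$-normal. My plan is to adapt the strategy used in the proof of Theorem \ref{thm quasi-normal}, where weak $\vgs$-normality of a basic $\vgs$-open subset $U$ of a variety was obtained by finding a suitable $\vgs$-closed morphism of $\vgs$-sites $d\colon V'\to [0,\infty]^k$ whose image of the ambient space lands inside a genuinely o-minimally definably normal set $Y\subseteq\Gamma_\infty^k$, so that separation of closed $\vgs$-sets in $U$ is pulled back from separation in $Y$.

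First I would reduce, exactly as in Theorem \ref{thm quasi-normal}, via Chow's lemma and Nagata's theorem, to the case where $V$ is projective (indeed quasi-projective suffices, with the complete and general cases following from the quasi-projective one by the same $\vgs$-closed pushforward argument already spelled out there, noting that the $[a,b]$-factor is carried along trivially since $\id_{[a,b]}$ is an isomorphism of $\vgs$-sites and products of $\vgs$-closed maps are $\vgs$-closed). So assume $V\subseteq\Pp^m$ is projective. Since $W$ is a basic $\vgs$-open subset of $V\times\nGin$ (I keep the ambient $\nGin$-factor abstract as in Definition \ref{def vg site}), I unwind its definition using Fact \ref{fact:vg_closed}: $W$ is cut out inside $V\times\nGin$ by finitely many conditions of the form $\val(g_l)<\val(h_l)$ together with conditions on the $\Gamma$-coordinates, which are themselves semilinear. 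Then, mimicking the map $d$ in Theorem \ref{thm quasi-normal}, I build a definable map $d\colon V\times\nGin\to[0,\infty]^k\times\nGin$ recording: the schematic distance values, the values $\val(g_l),\val(h_l)$ of the functions defining $W$, the corresponding data for any two disjoint $\vgs$-closed subsets $C',D'$ of $W\times[a,b]$ (extended to $\vgs$-closed $C,D$ of $V\times\nGin$), and the identity on the $\Gamma_\infty$-coordinates and on the $[a,b]$-coordinate. This $d$ is a morphism of $\vgs$-sites coordinatewise, and it is $\vgs$-closed by Remarks \ref{nrmk def compact in hat} and \ref{nrmk vgs closed maps} (after restricting to a bounded $\vgs$-closed piece, which is what makes the relevant projection closed). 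The image $d(W\times[a,b])$ lands in a set of the form $Y\times[a,b]$ where $Y$ is of the shape $[0,\infty)\times A\times[0,\infty]^{r}\times(\text{semilinear }\Gamma_\infty\text{-piece})$ as in Theorem \ref{thm quasi-normal}, with $A$ the region where the strict inequalities $\alpha_l<\gamma_l$ hold; after the homeomorphism $h$ straightening $A$ to a product of intervals, $Y$ is a definable subset of a product of closed intervals in $\nGim$.

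The key remaining point, which is where the hypothesis on $[a,b]$ enters, is that $Y\times[a,b]$ is (o-minimally) definably normal. Here I invoke Fact \ref{fact CxN is normal}: $Y$ is definably normal (it is a definable subset of a product of closed intervals, handled by \cite[Theorem 2.20]{ep4} / Theorem \ref{thm def normal ext} exactly as $Y'$ was in the proof of Theorem \ref{thm quasi-normal}) and $[a,b]$ is Hausdorff and definably compact, so $Y\times[a,b]$ is definably normal — this is precisely the content analogous to ``$W\times[a,b]$ is normal'' in the stated lemma, but now living in the genuinely o-minimal world where Fact \ref{fact CxN is normal} applies. Then, as in Theorem \ref{thm quasi-normal}: the two disjoint $\vgs$-closed sets $d(C'),d(D')$ are closed and disjoint in $Y\times[a,b]$ (closedness from $\vgs$-closedness of $d$, disjointness because $d(x)=d(y)$ with $x\in C'$, $y\in D'$ would force $y\in C'$), so definable normality of $Y\times[a,b]$ gives disjoint open definable $U_1\supseteq d(C')$, $U_2\supseteq d(D')$, and $d^{-1}(U_1),d^{-1}(U_2)$ are disjoint $\vgs$-open subsets of $W\times[a,b]$ separating $C',D'$. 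This verifies condition (1) of Definition \ref{defn wtnormal}, hence weak $\vgs$-normality.

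\textbf{Main obstacle.} The one genuinely new subtlety compared to Theorem \ref{thm quasi-normal} is handling the mixed ambient space $V\times\nGin$ together with the extra $[a,b]$-factor: I must make sure the map $d$ is $\vgs$-closed on the relevant bounded $\vgs$-closed sublocus, which requires checking that adding the $\Gamma_\infty$- and $[a,b]$-coordinates to $d$ does not destroy $\vgs$-closedness — this follows because a finite product of $\vgs$-closed maps between such spaces is $\vgs$-closed (Remark \ref{nrmk vgs closed maps} applied to each factor, the $[a,b]$- and $\nGin$-factors being handled by the identity and the o-minimal closedness of projections from definably compact fibers) — and that $Y\times[a,b]$ genuinely falls under the hypotheses of Fact \ref{fact CxN is normal}, i.e. that the ``straightened'' $Y$ really is definably normal as a subset of a product of closed intervals. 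Everything else is a faithful transcription of the argument already carried out for Theorem \ref{thm quasi-normal}, with $[a,b]$ along for the ride.
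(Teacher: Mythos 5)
Your proposal takes a different and much heavier route than the paper, and it contains two genuine gaps. For comparison, the paper's own proof is a short reduction: the pullback of $W\times[a,b]$ under $\id\times\val\colon V\times\Aa^{m+1}\to V\times\nG_{\infty}^{m+1}$ equals the intersection of the pullback of $W\times\nGi$ (a basic $\vgs$-open subset of the \emph{variety} $V\times\Aa^{m+1}$, hence weakly $\vgs$-normal by Theorem \ref{thm quasi-normal}) with the pullback of $V\times\nGim\times[a,b]$ (a $\vgs$-closed set); a $\vgs$-closed subset of a weakly $\vgs$-normal set is again weakly $\vgs$-normal, and Lemmas \ref{lem:section} and \ref{lem:t-section} together with Remark \ref{nrmk t+} push the conclusion back down to $W\times[a,b]$. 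No new construction of a map $d$ and no appeal to Fact \ref{fact CxN is normal} are needed.

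The first gap in your argument is the claim that a basic $\vgs$-open subset of $V\times\nGin$ is ``cut out by finitely many conditions $\val(g_l)<\val(h_l)$ together with conditions on the $\Gamma$-coordinates, which are themselves semilinear.'' That is not the definition: by Definition \ref{def vg site} such a set is one whose pullback under $\id\times\val$ is basic $\vgs$-open in $V\times\Aa^n$, so the defining inequalities involve regular functions mixing the $V$-variables and the $\Aa^n$-variables, and there is no a priori product decomposition. Disentangling them into conditions on the $\val(h_{l,j}(x))$ and on $\gamma$ requires precisely the generic-type computation of Lemmas \ref{lem:val_basis} and \ref{lem:section} (producing piecewise-linear conditions $P_1<P_2$), which you never carry out. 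The second gap is the assertion that your $Y$ is definably normal ``because it is a definable subset of a product of closed intervals.'' Example \ref{expl def normal} exhibits an open definable subset of $\Gamma^2_{\infty}$ that is not definably normal, so this inference is false; the proof of Theorem \ref{thm quasi-normal} succeeds only because its $Y'$ is \emph{literally} a product of intervals, to which \cite[Theorem 2.20]{ep4} applies, and the piecewise-linear regions produced by the $\min$-expressions above would not have that simple form. Both gaps disappear if you first pull everything back to the variety $V\times\Aa^{m+1}$, as the paper does.
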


\begin{proof}
We have that the pullback of $W\times [a,b]$ under $\id \times \val \colon V\times \Aa ^{m+1}\to V\times \nG_{\infty}^{m+1}$ is the intersection of the pullback of $W\times \nGi$ and the pullback of $V\times\nGim\times [a,b]$. The first pullback is a basic $\vgs$-open subset of $V\times \Aa ^{m+1},$ and so it is weakly $\vgs$-normal by Theorem \ref{thm quasi-normal}.  Since the second pullback is a $\vgs$-closed subset $V\times \Aa ^{m+1}$ it follows that $(\id \times \val)^{-1}(W\times [a,b])$ is weakly $\vgs$-normal (being a $\vgs$-closed subset of a weakly $\vgs$-normal set). From Lemmas \ref{lem:section}  and \ref{lem:t-section}  and Remark \ref{nrmk t+}  it follows that $W\times [a,b]$ is weakly $\vgs$-normal.
\end{proof}

In the o-minimal context, since $[a,b]$ is definably compact, the definable map $[a,b]\to {\rm pt}$ is definably proper (\cite[Definition 3.10 and Remark 3.11]{emp}), and since $\bSg$ has definable Skolem functions, the map $[a,b]\to  {\rm pt}$ is proper in $\Df$ (\cite[Definition 3.3 and Theorem 3.15]{emp}). Hence if $X$ is a definable subset of $\nGin,$ then  $\pi \colon X\times [a,b]\into X$ maps closed definable subsets to closed definable subsets. In the ACVF context we have:

\begin{lem}\label{lem proj is closed}
Let $V$ be a variety over $K$. Let $X$ be a definable subset of $V\times \nGin$ and  $[a,b]\subseteq \nGi$ be a closed interval. Then the projection $\pi \colon X\times [a, b]\to X$ is $\vgs$-closed i.e. maps $\vgs$-closed  subsets into $\vgs$-closed subsets. 
\end{lem}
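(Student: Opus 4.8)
The statement asserts that for a variety $V$ over $K$, a definable subset $X\subseteq V\times\nGin$ and a closed interval $[a,b]\subseteq\nGi$, the projection $\pi\colon X\times[a,b]\to X$ sends $\vgs$-closed subsets to $\vgs$-closed subsets. My plan is to transport the problem to the stable completion and use the fact (already recorded in this excerpt) that definable compactness of $\widehat{[a,b]}$ makes the corresponding projection a closed map, together with the dictionary between $\vgs$-closed subsets of definable sets and closed subsets of their stable completions.

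First I would reduce to the affine case: since being $\vgs$-closed is a local condition for the Zariski topology (one may test it on an affine cover $V=\bigcup U_i$, intersecting with each $U_i\times\nGin\times[a,b]$), it suffices to assume $V=\Aa^m$; in fact the statement only involves the product $X\times[a,b]$, so we may even absorb $V$ into the ambient affine space. Next, let $Z\subseteq X\times[a,b]$ be $\vgs$-closed. By Fact~\ref{fact top on hat and vgs}, $\widehat{Z}$ is a closed (relatively definable) subset of $\widehat{X\times[a,b]}$, which by Fact~\ref{fact top on hat} is canonically $\widehat{X}\times[a,b]$ (since $[a,b]\subseteq\nGi$ is a definable subset of $\nGi$, hence $\widehat{[a,b]}=[a,b]$). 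By Remark~\ref{nrmk def compact in hat}, $\widehat{[a,b]}=[a,b]$ is definably compact (it is bounded and $\vgs$-closed in $\nGi$). Now I would invoke the $\vgs$-closedness of projections off a definably compact factor: this is the content of \cite[Lemma~4.2.23]{HrLo} (the same result used in Remark~\ref{nrmk vgs closed maps}), applied to the projection $\widehat{X}\times[a,b]\to\widehat{X}$, whose fibres are the definably compact space $[a,b]$. Concretely, the map $\widehat{\pi}\colon\widehat{X\times[a,b]}\to\widehat{X}$ is a morphism of pro-definable spaces (it is $\widehat{\phantom{x}}$ applied to the definable projection), and since $[a,b]$ is definably compact, $\widehat\pi$ is a closed map; hence $\widehat{\pi}(\widehat{Z})$ is closed in $\widehat{X}$.

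It then remains to identify $\widehat{\pi}(\widehat{Z})$ with $\widehat{\pi(Z)}$. For this I would use surjectivity of $\widehat{\phantom{x}}$ on surjections: applying \cite[Lemma~4.2.6]{HrLo} to the surjective definable map $\pi\colon X\times[a,b]\to X$ (it is visibly surjective, as $[a,b]\neq\emptyset$), or more precisely to its corestriction $Z\to\pi(Z)$, yields that $\widehat{Z}\to\widehat{\pi(Z)}$ is surjective, so that $\widehat{\pi}(\widehat{Z})=\widehat{\pi(Z)}$; this is exactly the argument already used in Remark~\ref{nrmk vgs closed maps}. Combining, $\widehat{\pi(Z)}=\widehat{\pi}(\widehat{Z})$ is closed in $\widehat{X}$, and by Fact~\ref{fact top on hat and vgs} this means $\pi(Z)$ is $\vgs$-closed in $X$, as required.

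\textbf{Main obstacle.} The only delicate point is the verification that one is genuinely in the situation covered by \cite[Lemma~4.2.23]{HrLo}: there the projection is off a definably compact factor of the stable completion of a genuine algebraic variety, whereas here the second factor is $[a,b]\subseteq\nGi$, living on the $\Gamma_\infty$-side. This is harmless because $\widehat{[a,b]}=[a,b]$ (Fact~\ref{fact top on hat}) and the relevant closedness statement for definably compact spaces is proved in \cite{HrLo} in the generality of stable completions of definable subsets of $V\times\nGim$; alternatively, one may pull back along $\id\times\val$ to replace $[a,b]$ by a bounded $\vgs$-closed subset of $\Aa^1$ and then apply \cite[Lemma~4.2.23]{HrLo} directly, afterwards pushing the conclusion back down through $\widehat{\id\times\val}$ exactly as in the proof of Lemma~\ref{lem UxI is normal}. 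I expect this bookkeeping to be the bulk of the (short) write-up, the geometric content being entirely supplied by the cited lemmas.
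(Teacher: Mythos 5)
There is a genuine gap at the key step. You invoke \cite[Lemma~4.2.23]{HrLo} as ``the $\vgs$-closedness of projections off a definably compact factor,'' but that is not what the lemma says: as recorded in Remark~\ref{nrmk vgs closed maps}, it applies to a definable morphism of $\vgs$-sites whose \emph{source} is bounded and $\vgs$-closed, i.e.\ whose stable completion is definably compact. In the present lemma the source is $X\times[a,b]$ with $X$ an \emph{arbitrary} definable subset of $V\times\nGin$, so $\widehat{X}\times[a,b]$ need not be definably compact and the cited lemma simply does not apply. The statement you actually need --- that a projection whose fibres are definably compact is a closed map --- is a definable analogue of the tube lemma and is precisely the nontrivial content of Lemma~\ref{lem proj is closed}; it cannot be outsourced to \cite[Lemma~4.2.23]{HrLo}. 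Your ``main obstacle'' paragraph senses that something must be checked here, but misdiagnoses it as a bookkeeping issue about $[a,b]$ living on the $\Gamma_\infty$-side, when the real issue is the non-compactness of the base.

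The missing argument runs through definable types. Given a $\vgs$-closed $C\subseteq X\times[a,b]$, suppose $\widehat{\pi(C)}$ is not closed; by \cite[Proposition~4.2.13]{HrLo} there is a definable type $\mathfrak p$ concentrating on $\widehat{\pi(C)}$ with limit $p\notin\widehat{\pi(C)}$. Using \cite[Lemma~4.2.6~(2)]{HrLo} (applied to the surjection $C\to\pi(C)$, as you correctly do for the image identification) one lifts $\mathfrak p$ to a definable type $\mathfrak q$ concentrating on $\widehat{C}$ with $\widehat{\pi}_*(\mathfrak q)=\mathfrak p$. Since $\widehat{X\times[a,b]}\cong\widehat{X}\times[a,b]$ carries the product topology and every definable type on $[a,b]$ has a limit, $\mathfrak q$ has a limit $q$, which lies in the closed set $\widehat{C}$ by \cite[Lemma~4.2.4]{HrLo}; then $p=\widehat{\pi}(q)\in\widehat{\pi(C)}$, a contradiction. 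This is the step your proposal replaces with an inapplicable citation; the rest of your outline (passing to stable completions via Fact~\ref{fact top on hat and vgs} and identifying $\widehat{\pi}(\widehat{C})=\widehat{\pi(C)}$) is fine.
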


\begin{proof}
Let $C\subseteq X\times [a,b]$ be a $\vgs$-closed subset. Then $\widehat{C}$ is a closed relatively pro-definable subset of $\widehat{X\times [a,b]}$. It is enough to show that $\widehat{\pi (C)}$ is closed. If it is not closed, then by \cite[Proposition 4.2.13]{HrLo}, there is a definable type $\mathfrak{p}$ on $\widehat{X}$ concentrating on $\widehat{\pi (C)}$ with limit $p$ in the closure of $\widehat{\pi (C)}$ in $\widehat{X}$ such that $p\notin \widehat{\pi (C)}$. Applying twice \cite[Lemma 4.2.6 (2)]{HrLo} to the surjective definable map $\pi \colon C\to \pi (C)$, we have that there is a definable type $\mathfrak{q}$ on $\widehat{X\times [a,b]}$ concentrating on $\widehat{C}$ such that $\widehat{\pi}_\ast(\mathfrak{q})=\mathfrak{p}$. Since $\widehat{X\times [a,b]}\cong \widehat{X}\times [a,b]$, the topology is the product topology, and definable types on $[a,b]$ have limits, $\mathfrak{q}$ must have a limit $q$ in $\widehat{X\times [a,b]}$ (namely, the tensor product of the limits of the corresponding projections). By \cite[Lemma 4.2.4]{HrLo}, $q\in \widehat{C}$ and so $p=\widehat{\pi}(q)\in \widehat{\pi (C)}$.
\end{proof}

\begin{thm}[Homotopy axiom in stable completions]\label{thm homotopy ax stable}
Let $V$ be a variety over $K$. Let $X\subseteq V\times \nGim$ be a $\vgs$-locally closed subset  and let $\cF\in \mod (A_{\widehat{X}_{\hvgs}})$. Let $\Psi$ be a family of $\hvgs$-supports on $\widehat{X}$. Let $[a,b]\subseteq [0,\infty ]$ be a closed interval, $\pi \colon X\times [a,b]\to X$ be the projection, and  for $d\in [a,b]$ let 
\[i_d\colon X\into X\times [a,b]\]
be the continuous definable map given by $i_d(x)=(x,d)$ for all $x\in X$. Then
\[ \widehat{i_a}^*=\widehat{i_b}^*:H^n_{\Psi \times [a,b]}(\widehat{X}\times [a,b];\widehat{\pi} ^{-1}\cF)\into H^n_{\Psi }(\widehat{X};\cF) \]
for all $n\geq 0$.
\end{thm}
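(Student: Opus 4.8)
The plan is to deduce the homotopy axiom from the Vietoris-Begle theorem (Theorem \ref{thm vietoris-b}) applied to the map $\widehat{\pi}\colon\widehat{X\times[a,b]}\to\widehat{X}$, viewed on the corresponding $\T$-spectra. Concretely, since $\widehat{X}_{\hvgs}$ and $\widehat{X\times[a,b]}_{\hvgs}$ are $\T$-spaces (Proposition \ref{prop vgs t-space}), and $\widehat{X\times[a,b]}\cong\widehat{X}\times[a,b]$ with the product topology (Fact \ref{fact top on hat}), the projection $\pi\colon X\times[a,b]\to X$ is a morphism of $\vgs$-sites; hence $\widehat{\pi}$ induces a morphism in $\fT$ and a continuous map $\widetilde{\widehat{\pi}}\colon\widetilde{\widehat{X\times[a,b]}}\to\widetilde{\widehat{X}}$. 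First I would verify that $\widehat{\pi}$ (equivalently $\widetilde{\widehat{\pi}}$) satisfies the geometric hypotheses of Theorem \ref{thm vietoris-b}: (a) it maps constructible closed subsets onto closed subsets, which follows from Lemma \ref{lem proj is closed} ($\pi$ is $\vgs$-closed) together with surjectivity of $\widehat{\pi}$; (b) every $\alpha\in\widetilde{\widehat{X}}$ has an open constructible neighbourhood $W$ in which $\alpha$ is closed and such that $\widetilde{\widehat{\pi}}^{-1}(W)$ is normal and constructible — this is where Lemma \ref{lem obj tA} enters, using Corollary \ref{cor spec length} (chains of specializations in $\widetilde{\widehat{X}}$ are finite) and Corollary \ref{cor:mix-normal} (every $\vgs$-open is a finite union of weakly $\vgs$-normal basic $\vgs$-opens), so that one can take $W=\widetilde{\widehat{U}}$ for a basic $\vgs$-open $U$; then $\widehat{\pi}^{-1}(U)=U\times[a,b]$ up to the identification, and by Lemma \ref{lem UxI is normal} this is weakly $\vgs$-normal, hence its spectrum is normal and constructible by Proposition \ref{prop main normal def normal} and Remark \ref{nrmk t+}.

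The second block of hypotheses concerns the fibers: for each $\alpha\in\widetilde{\widehat{X}}$ the fiber $\widetilde{\widehat{\pi}}^{-1}(\alpha)$ should be connected with vanishing higher cohomology of the pulled-back sheaf. Since $\widehat{X\times[a,b]}=\widehat{X}\times[a,b]$ with the product topology and the site structure compatible with this product (Fact \ref{fact top on hat}), the fiber over $\alpha$ is homeomorphic (as a $\T$-spectrum) to $\widetilde{[a,b]}$, the o-minimal spectrum of the closed interval $[a,b]\subseteq\Gamma_\infty$. I would invoke the o-minimal computation that $H^q(\widetilde{[a,b]};\mathcal{G})=0$ for $q>0$ for any constant sheaf $\mathcal{G}$ — equivalently, that $[a,b]$ with its o-minimal site is acyclic, which is exactly the kind of statement established in \cite{ejp,ep1} and used in proving the homotopy axiom in the $\bGi$ case — together with connectedness of $\widetilde{[a,b]}$. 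Here one must be slightly careful that $\widehat{\pi}^{-1}\cF$ restricted to each fiber is (isomorphic to) a constant sheaf: this holds because $\cF$ is pulled back along $\widehat{\pi}$, so its restriction to $\widehat{\pi}^{-1}(\alpha)$ is the constant sheaf with stalk $\cF_\alpha$, as in the proof of Theorem \ref{thm vietoris-b}.

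Granting these verifications, Theorem \ref{thm vietoris-b} applied with $Y=\widehat{X}$, $X=\widehat{X\times[a,b]}$ and the constructible family of supports $\widetilde{\Psi}$ (whose pullback is $\widetilde{\Psi\times[a,b]}$, using Remark \ref{nrmk def supp and cons supp}) gives that
\[
\widehat{\pi}^*\colon H^n_{\Psi}(\widehat{X};\cF)\xrightarrow{\ \sim\ }H^n_{\Psi\times[a,b]}(\widehat{X}\times[a,b];\widehat{\pi}^{-1}\cF)
\]
is an isomorphism for all $n$. Finally, for $d\in[a,b]$ the sections $i_d\colon X\to X\times[a,b]$ satisfy $\pi\circ i_d=\id_X$, hence $\widehat{i_d}^*\circ\widehat{\pi}^*=\id$ on cohomology, so $\widehat{i_d}^*=(\widehat{\pi}^*)^{-1}$ is independent of $d$; in particular $\widehat{i_a}^*=\widehat{i_b}^*$, which is the assertion. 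The main obstacle is step (b): checking that the fibers have the required fundamental systems of normal constructible neighbourhoods and that $\widehat{\pi}^{-1}(\text{basic }\vgs\text{-open})$ is genuinely weakly $\vgs$-normal — this is precisely what Lemmas \ref{lem obj tA}, \ref{lem UxI is normal}, \ref{lem proj is closed} and Corollaries \ref{cor spec length}, \ref{cor:mix-normal} were set up to supply, so the proof is largely a matter of assembling them correctly, with the one genuinely external input being the acyclicity of the o-minimal interval $[a,b]$.
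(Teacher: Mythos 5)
Your proposal is correct and follows essentially the same route as the paper's proof: reduce to showing $\widehat{\pi}^*$ is an isomorphism, verify the hypotheses of the Vietoris--Begle theorem via Lemmas \ref{lem proj is closed}, \ref{lem obj tA}, \ref{lem UxI is normal} and Corollaries \ref{cor spec length}, \ref{cor:mix-normal}, and identify the fibers with spectra of closed intervals. The only imprecision is in the fiber identification: $\tilde{\widehat{\pi}}^{-1}(\alpha)$ is homeomorphic to the spectrum of $[a,b]$ computed over the structure obtained by adjoining a realization of the corresponding type $\beta\in\tilde{X}$ (this uses stable embeddedness of the value group, as in \cite[Claim 4.5]{ejp}), not $\widetilde{[a,b]}$ over the base structure — but connectedness and acyclicity of the interval hold over that larger o-minimal structure just as well, so the argument goes through.
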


\begin{proof}
The homotopy axiom will follow once we show that the projection map
$\pi \colon X\times [a,b]\into X$ induces an isomorphism
\[\widehat{\pi }^*\colon H^n_{\Psi }(\widehat{X};\cF)\into H^n_{\Psi \times [a,b]}(\widehat{X}\times [a,b];\widehat{\pi }^{-1}\cF)\]
since by functoriality we obtain
\[\widehat{i_a}^*=\widehat{i_b}^*=(\widehat{\pi }^*)^{-1}\colon H^n_{\Psi \times [a,b]}(\widehat{X}\times [a,b];\widehat{\pi }^{-1}\cF) \into H^n_{\Psi }(\widehat{X};\cF)\]
for all $n\geq 0$.  Equivalently we
need to show that
\[\tilde{\widehat{\pi }}^*\colon H^n_{\tilde{\Psi }}(\tilde{\widehat{X}};{\tilde \cF})\into H^n_{\tilde{\Psi \times [a,b]}}(\tilde{\widehat{X}\times [a,b]};\tilde{\widehat{\pi }}^{-1}{\tilde \cF})\]
is an isomorphism. For
this we need to verify the hypothesis of the Vietoris-Begle theorem (Theorem \ref{thm vietoris-b}). 

By Lemma \ref{lem proj is closed} (and the fact that $\widehat{\pi(Z)}=\widehat{\pi }(\widehat{Z})$ (\cite[Lemma 4.2.6]{HrLo})), $\widehat{\pi }$ maps closed subsets to closed subsets. It follows that $\tilde{\widehat{\pi }}$ maps closed constructible subset to closed (constructible) subsets. 

Let $\alpha \in \tilde{\widehat{X}}$. Given Corollaries \ref{cor spec length} and \ref{cor:mix-normal} together with Remark \ref{nrmk t+}, it follows from Lemma \ref{lem obj tA} that  there is an open, normal constructible subset $U$ of $\tilde{\widehat{X}}$ such that $\alpha \in U$ and $\alpha $ is closed in $U$. Furthermore we may assume that $U$ is of the form $\tilde{\widehat{W}}$ where $W$ is a basic $\vgs$-open subset of $X$.  From Lemma \ref{lem UxI is normal} it follows that $W\times [a,b]$ is weakly $\vgs$-normal, and hence, by Remark \ref{nrmk t+}, $\widehat{W\times [a,b]}\simeq \widehat{W}\times [a,b]$ is $\hvgs$-normal. Since $\tilde{\widehat{\pi }}^{-1}(\tilde{\widehat{W}})=\tilde{\widehat{\pi }^{-1}(\widehat{W})}=\tilde{\widehat{W}\times [a,b]},$  this set is normal and constructible by Proposition \ref{prop main normal def normal}. 

Since $\widehat{\pi ^{-1}(V)}=\widehat{\pi }^{-1}(\widehat{V})$, we have a commutative diagram 
\begin{equation*}
\xymatrix{
\widehat{X}\times [a,b]  \ar[r]^{\hat{\pi}} \ar[d]^{}  & \widehat{X} \ar[d]^{} \\
X\times [a,b] \ar[r]^{\pi}  & X
}
\end{equation*}
of morphisms of sites which induces a commutative diagram 
\begin{equation*}
\xymatrix{
\tilde{\widehat{X}\times [a,b]}  \ar[r]^{\tilde{\widehat{\pi}}} \ar[d]^{}  & \tilde{\widehat{X}} \ar[d]^{} \\
\tilde{X\times [a,b]} \ar[r]^{\tilde{\pi}}  & \tilde{X}
}
\end{equation*}
where the vertical arrows are homeomorphisms induced by isomorphisms of sites (Remark \ref{nrmk site iso tilde}). It follows that $\tilde{\widehat{\pi}}^{-1}(\alpha )$ is homeomorphic to $\tilde{\pi }^{-1}(\beta )$ for some $\beta \in \tilde{X}$.
On the other hand, as in the proof of  \cite[Claim 4.5]{ejp}, and since the value group $\bGi$ is stably embedded, $\tilde{\pi }^{-1}(\beta )$ is homeomorphic to $\tilde{[a,b](\bSg ')}$ where $\bSg '$ is the definable closure of $\nSg \cup \{b\}$ with $b$ an element realizing the type $\beta ,$ and therefore, $\tilde{\pi }^{-1}({\beta   })$ is connected and $H^q(\tilde{\pi }^{-1}({\beta  }); \tilde{\pi }^{-1}\tilde{\cF}_{|\tilde{\pi }^{-1}({\beta   })})=0$ for all $q>0$. So we conclude that $\tilde{\widehat{\pi}}^{-1}(\alpha )$ is connected and acyclic as required.

\end{proof}

\begin{cor}[Homotopy axiom in ACVF]\label{cor homotopy ax acvf}
Let $V$ be a variety over $K$. Let $X\subseteq V\times \nGim$ be a $\vgs$-locally closed subset  and let $\cF\in \mod (A_{X_{\vgs}})$. Let $\Psi $ a family of $\vgs$-supports on $X$. Let $[a,b]\subseteq [0,\infty ]$ be a closed interval, $\pi \colon X\times [a,b]\to X$ be the projection, and  for $d\in [a,b]$ let 
\[i_d\colon X\into X\times [a,b]\]
be the continuous definable map given by $i_d(x)=(x,d)$ for all $x\in X$. Then
\[ i_a^*=i_b^*\colon H^n_{\Psi \times [a,b]}(X\times [a,b];\pi ^{-1}\cF)\into H^n_{\Psi }(X;\cF) \]
for all $n\geq 0$.
\end{cor}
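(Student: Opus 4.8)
The plan is to deduce Corollary \ref{cor homotopy ax acvf} from Theorem \ref{thm homotopy ax stable} by transporting the statement through the isomorphism of sites between the $\vgs$-site and the $\hvgs$-site. Recall from Definition \ref{defn vgs site on hat} and the remark following it that the category $X_{\vgs}$ and the category $\widehat{X}_{\hvgs}$ are isomorphic, sending a $\vgs$-open subset $W$ of $X$ to $\widehat{W}$; moreover, by Remark \ref{nrmk site iso tilde}, this isomorphism of sites induces a homeomorphism $\tilde{X}_{\vgs}\iso \tilde{\widehat{X}}_{\hvgs}$ between the corresponding $\T$-spectra. Under the definition of $\fT$-sheaf cohomology with $\T$-supports (Definition \ref{defn sheaf cohomo Phi}), cohomology on $X_{\vgs}$ and on $\widehat{X}_{\hvgs}$ are computed on the same spectral space, so they agree.

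First I would set up the translation. The projection $\pi\colon X\times[a,b]\to X$ is a morphism of $\vgs$-sites (its pullback of a basic $\vgs$-open of $X$ is the evident basic $\vgs$-open of $X\times[a,b]$), and it corresponds under the site isomorphism to $\widehat{\pi}\colon \widehat{X}\times[a,b]\to\widehat{X}$, using the canonical identification $\widehat{X\times[a,b]}\cong\widehat{X}\times[a,b]$ from Fact \ref{fact top on hat}. Similarly, each $i_d\colon X\to X\times[a,b]$ corresponds to $\widehat{i_d}\colon\widehat{X}\to\widehat{X}\times[a,b]$. Given $\cF\in\mod(A_{X_{\vgs}})$, let $\widehat{\cF}\in\mod(A_{\widehat{X}_{\hvgs}})$ be its image under the equivalence $\mod(A_{X_{\vgs}})\simeq\mod(A_{\widehat{X}_{\hvgs}})$; then $\widehat{\pi}^{-1}\widehat{\cF}$ is the image of $\pi^{-1}\cF$, since taking inverse images commutes with the equivalences of categories (both are induced by continuous maps of the underlying spectral spaces, via Remark \ref{nrmk site iso tilde}). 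A family $\Psi$ of $\vgs$-supports on $X$ determines, via $\tilde{\ }$, a family of supports on $\tilde{X}_{\vgs}$; transported through the homeomorphism $\tilde{X}_{\vgs}\cong\tilde{\widehat{X}}_{\hvgs}$ this is exactly $\tilde{\Psi}$ for $\Psi$ viewed as a family of $\hvgs$-supports on $\widehat{X}$, and likewise $\Psi\times[a,b]$ on $X\times[a,b]$ corresponds to $\Psi\times[a,b]$ on $\widehat{X}\times[a,b]$ (here one uses Remark \ref{nrmk def supp and cons supp} and that the site isomorphism respects products with $[a,b]$).

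Then the conclusion is immediate: by Definition \ref{defn sheaf cohomo Phi},
\[
H^n_{\Psi\times[a,b]}(X\times[a,b];\pi^{-1}\cF)=H^n_{\tilde{\Psi\times[a,b]}}(\tilde{X\times[a,b]};\widetilde{\pi^{-1}\cF})
\]
and, transporting along the homeomorphism of $\T$-spectra induced by the site isomorphism (Remark \ref{nrmk site iso tilde}), this equals
\[
H^n_{\Psi\times[a,b]}(\widehat{X}\times[a,b];\widehat{\pi}^{-1}\widehat{\cF}),
\]
and similarly $H^n_\Psi(X;\cF)=H^n_\Psi(\widehat{X};\widehat{\cF})$. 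Under these identifications, the homomorphisms $i_a^*$ and $i_b^*$ become $\widehat{i_a}^*$ and $\widehat{i_b}^*$ respectively, because the induced maps in cohomology depend only on the maps of the underlying spectral spaces (Definition \ref{defn sheaf cohomo Phi}), and $\tilde i_d$ corresponds to $\tilde{\widehat{i_d}}$. Hence $i_a^*=i_b^*$ follows directly from the equality $\widehat{i_a}^*=\widehat{i_b}^*$ of Theorem \ref{thm homotopy ax stable}.

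The only genuine point requiring care — and the step I would expect to take the most attention — is verifying that all four pieces of structure (the sheaf $\cF$, the inverse-image sheaf, the support families $\Psi$ and $\Psi\times[a,b]$, and the maps $\pi$, $i_a$, $i_b$) are matched correctly under the pair of site isomorphisms $X_{\vgs}\cong\widehat{X}_{\hvgs}$ and $(X\times[a,b])_{\vgs}\cong(\widehat{X}\times[a,b])_{\hvgs}$, and that these two isomorphisms are compatible with $\pi$ and with $\widehat{\pi}$; all of this is bookkeeping with Remark \ref{nrmk site iso tilde}, Fact \ref{fact top on hat} and the definitions, and involves no new idea. Once the dictionary is in place, the corollary is a formal consequence of Theorem \ref{thm homotopy ax stable}.
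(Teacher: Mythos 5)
Your proposal is correct and follows essentially the same route as the paper: the paper's own proof also deduces the corollary from Theorem \ref{thm homotopy ax stable} via the commutative square of morphisms of sites whose vertical arrows are the isomorphisms $X_{\vgs}\cong\widehat{X}_{\hvgs}$ and $(X\times[a,b])_{\vgs}\cong\widehat{X\times[a,b]}_{\hvgs}$. The only cosmetic difference is that the paper transports the isomorphism $\widehat{\pi}^*$ and recovers $i_a^*=i_b^*=(\pi^*)^{-1}$ by functoriality, whereas you transport the equality $\widehat{i_a}^*=\widehat{i_b}^*$ directly; both are the same argument.
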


\begin{proof}
As explained in the previous proof, the homotopy axiom will follow once we show that the projection map
$\pi \colon X\times [a,b]\into X$ induces an isomorphism
\[\pi ^*\colon H^n_{\Psi }(X;\cF)\into H^n_{\Psi \times [a,b]}(X\times [a,b];\pi ^{-1}\cF).\]
But this follows from the isomorphism
\[\widehat{\pi }^*\colon H^n_{\Psi }(\widehat{X};\cF)\into H^n_{\Psi \times [a,b]}(\widehat{X}\times [a,b];\widehat{\pi }^{-1}\cF)\]
 proven in Theorem \ref{thm homotopy ax stable} and the commutative diagram 
\begin{equation*}
\xymatrix{
\widehat{X}\times [a,b]  \ar[r]^{\hat{\pi}} \ar[d]^{}  & \widehat{X} \ar[d]^{} \\
X\times [a,b] \ar[r]^{\pi}  & X
}
\end{equation*}
of morphisms of sites with the vertical arrows being isomorphisms of sites. 
\end{proof}

Using Theorem \ref{thm basis of open normal} and the $\bGi$ analogues of Lemmas \ref{lem spec length in acvf}, \ref{lem UxI is normal} and \ref{lem proj is closed} mentioned above, arguing as in the proof of Theorem \ref{thm homotopy ax stable} we obtain:

\begin{thm}[Homotopy axiom in $\bGi$]\label{thm homotopy ax bgi}
Let $\bG=(\nG, <, +,\ldots )$ be an arbitrary o-minimal expansion of an ordered group.  Suppose that $X\subseteq \nGim$ is a definably locally closed  set and $\cF\in \mod (A_{X_{\df}})$. Let $\Psi $ a family of definable supports on $X$. Let $[a,b]\subseteq [0,\infty ]$ be a closed interval, $\pi \colon X\times [a,b]\to X$ the projection, and  for $d\in [a,b]$ let 
\[i_d\colon X\into X\times [a,b]\]
be the continuous definable map given by $i_d(x)=(x,d)$ for all $x\in X$. Then
\[ i_a^*=i_b^*\colon H^n_{\Psi \times [a,b]}(X\times [a,b];\pi ^{-1}\cF)\into H^n_{\Psi }(X;\cF) \]
for all $n\geq 0$.
\end{thm}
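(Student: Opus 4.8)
The plan is to mimic exactly the argument used to prove Theorem \ref{thm homotopy ax stable}, but now over the o-minimal structure $\bGi$ (or, more conveniently, its end-point-free companion $\bSg$), replacing every input about stable completions in ACVF by its $\bGi$ analogue. As in the stable-completion case, the homotopy axiom follows formally once we show that the projection $\pi\colon X\times[a,b]\to X$ induces an isomorphism
\[
\pi^*\colon H^n_{\Psi}(X;\cF)\iso H^n_{\Psi\times[a,b]}(X\times[a,b];\pi^{-1}\cF),
\]
or equivalently that the tilde map $\tilde{\pi}\colon\widetilde{X\times[a,b]}\to\widetilde X$ satisfies the hypotheses of the Vietoris-Begle theorem (Theorem \ref{thm vietoris-b}). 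So the whole proof reduces to verifying those hypotheses: (a) $\tilde{\pi}$ sends closed constructible subsets to closed subsets; (b) every $\alpha\in\widetilde X$ has an open, normal, constructible neighbourhood $U$ in which $\alpha$ is closed and such that $\tilde{\pi}^{-1}(U)$ is normal and constructible; (c) each fibre $\tilde{\pi}^{-1}(\alpha)$ is connected and acyclic for $\tilde\pi^{-1}\tilde\cF$.

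First I would record the three $\bGi$-analogues of the ACVF lemmas used in the proof of Theorem \ref{thm homotopy ax stable}. The analogue of Lemma \ref{lem proj is closed} is the statement that $\pi\colon X\times[a,b]\to X$ is a closed definable map: this is exactly the o-minimal fact quoted in the paragraph preceding Lemma \ref{lem proj is closed}, namely that $[a,b]\to\pt$ is definably proper and, $\bSg$ having definable Skolem functions, proper in $\Df$ in the sense of \cite{emp}; hence it maps closed definable subsets to closed definable subsets, and so $\tilde\pi$ maps closed constructible subsets to closed subsets. This gives (a). The analogue of Lemma \ref{lem UxI is normal} is that if $W\subseteq\nGim$ is definably normal and $[a,b]\subseteq\nGi$ is a closed interval then $W\times[a,b]$ is definably normal, which is precisely Fact \ref{fact CxN is normal} (with $K=[a,b]$, which is Hausdorff and definably compact). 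The analogue of Lemma \ref{lem spec length in acvf}/Corollary \ref{cor spec length} — finiteness of specialization chains in $\widetilde X$ — is \cite[Lemma 2.11]{ejp} (as noted in the excerpt, applied in $\bSg$ via Remark \ref{nrmk top and cohom}).

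Next I would assemble these into the verification of (b) and (c), following the proof of Theorem \ref{thm homotopy ax stable} line by line. Given $\alpha\in\widetilde X$: since $X$ is definably locally closed, Theorem \ref{thm basis of open normal} shows every definable open subset of $X$ is a finite union of definable open subsets which are definably normal; together with finiteness of specialization chains, Lemma \ref{lem obj tA} produces an open normal constructible neighbourhood $U\ni\alpha$ in which $\alpha$ is closed, and we may take $U=\widetilde W$ with $W$ a definably normal definable open subset of $X$ (shrinking $W$ if necessary, or rather choosing $W$ from the finite cover). By the $\bGi$-analogue of Lemma \ref{lem UxI is normal} (i.e. Fact \ref{fact CxN is normal}), $W\times[a,b]$ is definably normal, and since $\tilde\pi^{-1}(\widetilde W)=\widetilde{W\times[a,b]}$, this preimage is normal and constructible by Proposition \ref{prop main normal def normal}; this gives (b). For (c), the fibre $\tilde\pi^{-1}(\alpha)$ is, exactly as in the proof of \cite[Claim 4.5]{ejp} and using that $\bGi$ is stably embedded in $\bSg$, homeomorphic to $\widetilde{[a,b](\bSg')}$ where $\bSg'$ is the definable closure of $\nSg$ together with a realization of $\alpha$; the interval $[a,b]$ over any o-minimal structure has a spectrum that is connected and acyclic for (the restriction of) $\tilde\pi^{-1}\tilde\cF$ (this is the o-minimal computation of the cohomology of a closed interval). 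Then Theorem \ref{thm vietoris-b} applies and yields the isomorphism $\pi^*$, whence by functoriality $i_a^*=i_b^*=(\pi^*)^{-1}$ for all $n\ge0$.

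The main obstacle is not any single hard new argument but rather the bookkeeping of moving between $\bG$, $\bGi$ and $\bSg$ and checking that each cited o-minimal result (properness of $[a,b]\to\pt$, Fact \ref{fact CxN is normal}, the fibre computation of \cite[Claim 4.5]{ejp}, finiteness of specialization chains) genuinely transfers to $\bGi$; the remarks \ref{nrmk top}, \ref{nrmk top and cohom}, \ref{nrmk def comp}, \ref{nrmk def normal} are precisely what make these transfers legitimate, so I would invoke them explicitly at each step. A secondary subtlety is ensuring that the neighbourhood $U$ supplied by Lemma \ref{lem obj tA} can be taken of the form $\widetilde W$ with $W$ definably normal (not merely a finite union of such), which is needed so that Fact \ref{fact CxN is normal} applies directly to $W\times[a,b]$; this is handled just as in the proof of Theorem \ref{thm homotopy ax stable}, where the same reduction to a single basic $\vgs$-open $W$ is performed.
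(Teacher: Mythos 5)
Your proposal is correct and is essentially identical to the paper's proof, which is stated in one line as "using Theorem \ref{thm basis of open normal} and the $\bGi$ analogues of Lemmas \ref{lem spec length in acvf}, \ref{lem UxI is normal} and \ref{lem proj is closed}, arguing as in the proof of Theorem \ref{thm homotopy ax stable}". You have correctly identified each of the required $\bGi$ analogues (properness of $[a,b]\to\pt$ for closedness of $\tilde\pi$, Fact \ref{fact CxN is normal} for normality of $W\times[a,b]$, \cite[Lemma 2.11]{ejp} for finiteness of specialization chains, and the fibre computation via \cite[Claim 4.5]{ejp}) and assembled them into the Vietoris--Begle argument exactly as the paper intends.
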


\begin{thm}\label{thm:Eilenberg-Steenrod} The sheaf cohomology associated to 
\begin{enumerate}
    \item the category of definable sets in $\bGi$ with their associated o-minimal site, 
    \item the full sub-category of definable sets in $\mathrm{ACVF}$ with their associated $\vgs$-site, 
    \item the category of stable completions with their associated $\hvgs$-site, 
\end{enumerate} 
satisfies the Eilenberg-Steenrod axioms. 
\end{thm}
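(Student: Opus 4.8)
The plan is to verify each of the Eilenberg–Steenrod axioms in turn, treating the three settings uniformly whenever possible by passing to the associated $\T$-spectra, where everything becomes ordinary sheaf cohomology with a (normal and constructible) family of supports on a spectral space. Recall that by the tilde isomorphism $\mod(A_{X_\T})\simeq\mod(A_{\tilde X})$ and Definition \ref{defn sheaf cohomo Phi}, the cohomology groups $H^*_\Phi(X;\cF)$ are by definition the topological sheaf cohomology groups $H^*_{\tilde\Phi}(\tilde X;\tilde\cF)$. So most axioms will follow directly from the classical topological statements in \cite[Chapter II]{b} once we know that the spaces and families of supports involved fall within the scope of the results of Section \ref{section more on t-sheaves} — that is, that the relevant $X$ are objects of $\fT$ whose $\T$-locally closed sets are finite unions of $\T$-open, $\T$-normal subsets (Theorem \ref{thm basis of open normal} in $\bGi$, Corollary \ref{cor:mix-normal} together with Remark \ref{nrmk t+} in ACVF and in stable completions), and that chains of specializations in $\tilde X$ are finite (Lemma \ref{lem spec length in acvf}, Corollary \ref{cor spec length}, and \cite[Lemma 2.11]{ejp} in the o-minimal case). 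These two conditions feed Lemma \ref{lem obj tA}, which is the technical engine used throughout.

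First I would dispatch the easy axioms. \emph{Functoriality}: a morphism $f$ in any of the three categories induces a continuous map $\tilde f$ of $\T$-spectra (the constructions preceding Remark \ref{nrmk site iso tilde}), and the induced map on cohomology is defined to be $\tilde f^*$, so functoriality is immediate from the topological case. \emph{Exactness (long exact sequence of a pair)}: for $Z\subseteq X$ a closed $\T$-subset with open complement $U=X\setminus Z$, one takes the standard short exact sequence $0\to A_U\to A_X\to A_Z\to 0$ of sheaves on $\tilde X$, tensors with $\tilde\cF$, and passes to the long exact cohomology sequence with supports in $\tilde\Phi$; identifying $H^*_{\tilde\Phi}(\tilde X;(\tilde\cF)_{\tilde Z})$ with $H^*_{\tilde\Phi|\tilde Z}(\tilde Z;\tilde\cF|_{\tilde Z})$ uses Corollary \ref{cor hphi and !}. (For the locally closed and support-sensitive versions one invokes Corollary \ref{cor hphi and !} in the form valid for normal and constructible $\Phi$.) \emph{Excision} follows formally from the identification of $H^*$ with sheaf cohomology, since the relevant groups depend only on the sheaf restricted to the open piece. \emph{Additivity / dimension} (value on a point) is a direct computation: $\widetilde{\mathrm{pt}}$ is a point, $H^0=\cF$, $H^{>0}=0$.

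The real content, as flagged in Subsection \ref{subsection homotopy ax in bGi}, is the \emph{homotopy axiom}, and this has already been done: Theorem \ref{thm homotopy ax stable} for stable completions, Corollary \ref{cor homotopy ax acvf} for definable sets in ACVF, and Theorem \ref{thm homotopy ax bgi} in $\bGi$. So for the homotopy axiom I would simply cite these. The only genuine assembly step is to note that the Eilenberg–Steenrod package is usually stated for pairs $(X,A)$ with $A$ closed (or for the support-free reduced theory), so I would be careful to phrase the statement for the classes of pairs that are actually available — $\vgs$-locally closed (resp. definably locally closed) inclusions — and to observe that all the structural results used (Corollary \ref{cor hphi and !}, Theorem \ref{thm base change}, Theorem \ref{thm vietoris-b}) have been established precisely under the hypotheses guaranteed by Theorem \ref{thm basis of open normal} / Corollary \ref{cor:mix-normal} and the finite-specialization-chain lemmas. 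The main obstacle is therefore not in the final theorem at all but was the homotopy axiom handled earlier; here the only care needed is bookkeeping: making sure that in each of the three cases the ambient space is an object of $\fT$ to which Lemma \ref{lem obj tA} applies, so that the constructible family of all closed subsets (or a chosen $\T$-normal family of supports) is normal and constructible, which is what licenses the topological citations. With that verified in each of the three settings, the Eilenberg–Steenrod axioms — functoriality, exactness, homotopy, excision, and dimension — all hold, completing the proof.
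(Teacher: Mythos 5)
Your proposal is correct and follows essentially the same route as the paper's proof: the homotopy axiom is exactly the content of Theorem \ref{thm homotopy ax bgi}, Corollary \ref{cor homotopy ax acvf} and Theorem \ref{thm homotopy ax stable}, and the remaining axioms (exactness, excision, dimension) are obtained by passing through the tilde isomorphism to the $\T$-spectrum and invoking the standard topological arguments from \cite[Chapter II, Section 12]{b}. Your extra bookkeeping about normal and constructible families of supports and Corollary \ref{cor hphi and !} is consistent with what the paper relies on implicitly.
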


\begin{proof} The homotopy axiom corresponds to Theorem \ref{thm homotopy ax bgi} for (1), to Corollary \ref{cor homotopy ax acvf} for (2) and to Theorem \ref{thm homotopy ax stable} for (3). For all (1)-(3), the remaining axioms follow by standard arguments. Indeed, once we apply the tilde functor in each case, the proofs of the exactness (\cite[Chapter II, Section 12, (22)]{b}) and excision axioms (\cite[Chapter II, Section 12, 12.8 or 12.9]{b}) are  purely algebraic. The dimension axiom is also immediate. 
\end{proof}

\begin{nrmk}\label{nrmk exact triple mv}
From the Eilenberg-Steenrod axioms for cohomology one obtains as usual  the exactness for triples (\cite[Chapter II, Section 12, (24)]{b})  and the Mayer-Vietoris long exact sequence  (\cite[Chapter II, Section 13, (32)]{b}). 
\end{nrmk}

\subsection{Finiteness and invariance results in ~\texorpdfstring{$\bGi$}{G}}\label{subsection finite and inv in bGi}
Here we will show finiteness and invariance  results for o-minimal cohomology with definably compact supports of definably locally closed subsets of $\nGim$. Using Hrushovski and Loeser's main theorem (\cite[Theorem 11.1.1]{HrLo}) we will deduce in the next subsection finiteness and invariance results for cohomology in ACVF and in stable completions. \\

For the finiteness and invariance results in $\bGi$ we first show that the arguments from  \cite{bf} for  $\bG =(\nG, < , +, \ldots )$ and for definably compact sets can be extended to $\bGi$ and to definably locally closed sets with only minor modifications. We include the details in the main technical lemma (Lemma \ref{lem bf cells}) and refer the reader to \cite{bf} for the other results which simply follow from the Eilenberg-Steenrod axioms. Concerning invariance, the extension to arbitrary sheaves adapts techniques from \cite{ep3} and, we include the details as the same argument will be used in the ACVF case.\\

Below we let  $L$ be  an $A$-module, where $A$ is a commutative ring with unit. \\


In \cite[Lemma 3.2]{bf} it is shown that for every {\it bounded} cell in $\nGm$  there is a deformation retraction of $C$ onto a cell of strictly lower dimension, via a $\nG$-definable homotopy.   
 As observed in \cite{bf} the proof extends to $\bGi$.

\begin{lem}\label{lem cells contract}
If $C\subseteq [0,\infty ]^m$ is a cell of dimension $n>0,$ then there is a deformation retraction of $C$ onto a cell of strictly lower dimension. So by induction every cell in $[0,\infty ]^m$ is $\bGi$-definably contractible to a point.
\end{lem}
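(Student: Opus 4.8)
The plan is to proceed by induction on the dimension $n$ of the cell $C$, reducing at each stage to a cell of strictly smaller dimension via an explicit $\bGi$-definable homotopy, exactly as in the proof of \cite[Lemma 3.2]{bf}, but carried out inside $[0,\infty]^m$ rather than inside a bounded box of $\nGm$. First I would recall the inductive description of a cell in $\nGim$: writing $C$ over its domain $C'\subseteq \nGi^{m-1}$ (itself a cell), $C$ is either the graph $\Gamma(h)$ of a continuous definable $h\colon C'\to\nGi$, or a band $(f,g)_{C'}$, or a set of the form $(-\infty,g)_{C'}$. In the graph case, $C$ is definably homeomorphic to $C'$, which has dimension $n-1$ (if $\dim C'<n$) or one applies induction directly; so the interesting case is when $C$ is a band $(f,g)_{C'}$ (the half-infinite case $(-\infty,g)_{C'}$ is handled the same way, or first transported into a band by the homeomorphism $p$ of Remark \ref{nrmk def completions} composed with rescaling, since we are working in $[0,\infty]^m$ and $\bG$ expands an ordered group).

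The key step is then to contract the fibres of the projection $\pi\colon C\to C'$. On the band $(f,g)_{C'}$, since $\bG$ expands an ordered group and $C\subseteq[0,\infty]^m$, the functions $f,g$ take values in $[0,\infty]$; I would use the straight-line (affine) homotopy in the last coordinate sliding the fibre $(f(x),g(x))$ down onto the graph of $f$ (or onto the graph of $(f+g)/2$ to stay strictly inside $C$ at intermediate times — one must be slightly careful here because the endpoints may be $\infty$). Concretely, define $G\colon I\times C\to C$ by $G(t,(x,y))=(x,(1-t)y+t\,\lambda(x))$ for an appropriate continuous definable $\lambda\colon C'\to\nGi$ with $f\le\lambda< g$ when $g<\infty$ and with $\lambda$ finite chosen below $g$ otherwise; here $I$ is a generalized interval and the arithmetic $(1-t)y+ty'$ is interpreted via the ordered-group structure and extended continuously to $\infty$. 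One checks $G$ is $\bGi$-definable and continuous, $G(0,-)=\mathrm{id}$, and $G(1,C)=\Gamma(\lambda)\cong C'$, a cell of dimension $n-1$. Then induction gives a chain of deformation retractions contracting $C$ all the way to a point (a zero-dimensional cell), using that composition of deformation retractions is a deformation retraction.

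The main obstacle I anticipate is the careful handling of the coordinate value $\infty$: the ordered-group arithmetic used to write the straight-line homotopy does not literally make sense at $\infty$, so one must either (i) first apply the $\bGi$-definable homeomorphism of $\nGim$ with a subset of $[0,\infty]^{2m}$ from Remark \ref{nrmk def completions} together with a rescaling to push everything into a genuinely bounded region where \cite[Lemma 3.2]{bf}'s argument applies verbatim, or (ii) verify directly that the relevant homotopy formulas extend continuously across $\infty$ using Remark \ref{nrmk top} on the topology of $\nGi$. Option (i) is cleaner and is presumably what is intended by the remark that ``the proof extends to $\bGi$''; in that case the only new content beyond \cite{bf} is the observation that bounded cells suffice, which is guaranteed since every cell in $[0,\infty]^m$ is already bounded in $\nGim$ in the sense of Remark \ref{nrmk def comp}. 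A secondary routine point is that the domain cell $C'$ may fail to have dimension exactly $n-1$ when $C=\Gamma(h)$ with $\dim C'=n$; but then $C\cong C'$ and we simply apply the inductive hypothesis to $C'$, so the induction on dimension still goes through.
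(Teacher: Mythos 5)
Your overall induction (graph case reduces to the domain cell, band case requires contracting the fibres of the projection) matches the paper's, but the key step --- the fibrewise homotopy --- has a genuine gap. The formula $G(t,(x,y))=(x,(1-t)y+t\lambda(x))$ is an affine combination with a \emph{varying} coefficient $t$, and this is not definable in an o-minimal expansion of an ordered group: $\bG=(\nG,<,+,\ldots)$ has no multiplication, so $(1-t)y$ has no meaning, and no interpretation ``via the ordered-group structure'' exists (in a pure divisible ordered abelian group every definable function is piecewise affine with fixed rational slopes, so no such family of contractions can be written down this way). Your fallback option (i) does not rescue this: the map $p$ of Remark \ref{nrmk def completions} only reflects negative coordinates into $[0,\infty]$, it does not bound anything above, and over an ordered group there is no definable ``rescaling'' of $(0,\infty)$ onto a bounded interval. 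Relatedly, you conflate two notions of boundedness: a cell such as $(f,\infty)_B\subseteq[0,\infty]^m$ is bounded in $\nGim$ in the sense of Remark \ref{nrmk def comp} (bounded below), but it is \emph{not} a bounded cell of $\nGm$ in the sense required by \cite[Lemma 3.2]{bf} (bounded above and below), so that lemma does not apply verbatim to it. This unbounded-above band is exactly the case that needs new content.

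The paper handles it with a clamping (truncation) homotopy rather than an affine one: for $C=(f,\infty)_B$ set $h=f+1$ and define $H\colon[0,\infty]\times C\to C$ by sending $(t,(x,y))$ to $(x,y)$ if $h(x)-t\le y\le h(x)+t$, to $(x,h(x)-t)$ if $y<h(x)-t$, and to $(x,h(x)+t)$ if $y>h(x)+t$. This uses only $+$, $-$ and the order, is continuous (becoming the identity as $t\to\infty$), and at $t=0$ retracts $C$ onto the cell $\Gamma(h)$ of strictly smaller dimension. The cases with $g<\infty$ are then dispatched either by \cite[Lemma 3.2]{bf} (when $C$ is of maximal dimension it lies in $\nGm$ and is bounded) or by induction. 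Replacing your affine homotopy by this clamping homotopy repairs the argument; the rest of your outline (graph case, composition of deformation retractions, induction down to a point) is fine.
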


\begin{proof}
If $C$ is the graph of a function, we can reason by induction on $m$. So assume $C=(f,g)_B$. If $g<\infty$ and $C$ is of maximal dimension, then $C\subseteq \nGm$  and we can apply \cite[Lemma 3.2]{bf}. If $g<\infty$ and $C$ is not of maximal dimension, then one can apply induction. If $g=\infty$, let $h=f+1$. Then $f<h<g$ and the $\bGi$-definable map given $H\colon [0, \infty]\times C\to C$  by
\[H(t, (x,y))=\begin{cases}
(x, h(x)-t),  &\text{ if $y< h(x)-t$}\\
(x,y), &\text{ if $ h(x)-t \le y\le h(x)+t$}\\
(x, h(x)+t),  &\text{ if $y> h(x)+t$}
\end{cases}
\]
is a deformation retract of $C$ onto the cell $\Gamma(h)$. 
\end{proof}

As observed after the proof of \cite[Corollary 3.3]{bf} that result (for bounded cell in $\nGn$) extends to $\bGi$ due to Lemma \ref{lem cells contract} and the homotopy axiom (Theorem \ref{thm homotopy ax bgi}), recall that cells are definably locally closed (Remark \ref{nrmk local def comp}):

\begin{lem}\label{lem bf acyclic cells}
Let $C\subseteq [0,\infty ]^m$ be a cell. Then $C$ is  acyclic, i.e. $H^p(C;L_C)=0$ for $p>0$ and $H^0(C; L_C)=L$.\qed
\end{lem}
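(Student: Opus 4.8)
\textbf{Proof plan for Lemma \ref{lem bf acyclic cells}.} The plan is to argue by induction on $m$ (the ambient dimension) and, within each step, to exploit the inductive structure of cells in $[0,\infty]^m$ together with the homotopy axiom (Theorem \ref{thm homotopy ax bgi}) and Lemma \ref{lem cells contract}. The base case $m=0$ is trivial since the only cell is a point, for which $H^0(\pt;L)=L$ and $H^p(\pt;L)=0$ for $p>0$. For the inductive step I would first observe that, by Lemma \ref{lem cells contract}, any cell $C$ of positive dimension admits a $\bGi$-definable deformation retraction onto a cell $C'$ of strictly smaller dimension, where $C'$ is again (definably homeomorphic to) a cell sitting inside some $[0,\infty]^k$; iterating, $C$ deformation retracts onto a point.

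The key technical input is that a $\bGi$-definable deformation retraction induces an isomorphism on $\hvgs$-cohomology (here: o-minimal cohomology) of the relevant constant sheaf. Concretely, if $H\colon [0,\infty]\times C\to C$ is such a retraction of $C$ onto $C'$, with $H_0=\id_C$, $H_1$ the retraction $r\colon C\to C'$, and $i\colon C'\hookrightarrow C$ the inclusion, then the homotopy axiom (Theorem \ref{thm homotopy ax bgi}, applied with the interval $[a,b]=[0,\infty]$, the family of all definable supports, and the constant sheaf $L$) gives $i_0^*=i_1^*\colon H^n_{}(C\times[0,\infty];\pi^{-1}L_C)\to H^n_{}(C;L_C)$. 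Combining this with the fact that $\pi\colon C\times[0,\infty]\to C$ induces an isomorphism $\pi^*$ in cohomology (the content of that theorem), one deduces in the standard way that $r^*\colon H^n(C';L_{C'})\to H^n(C;L_C)$ and $i^*\colon H^n(C;L_C)\to H^n(C';L_{C'})$ are mutually inverse isomorphisms; in particular $H^*(C;L_C)\cong H^*(C';L_{C'})$. One must check here that $C$ is $\vgs$-locally closed so that Theorem \ref{thm homotopy ax bgi} applies: this is precisely Remark \ref{nrmk local def comp} / the remark that cells are definably locally closed, which is why the statement recalls that fact. By induction on dimension $\dim C' < \dim C$, we get $H^*(C;L_C)\cong H^*(\pt;L)$, i.e. $H^0(C;L_C)=L$ and $H^p(C;L_C)=0$ for $p>0$.

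The main obstacle to be careful about is the bookkeeping needed to keep the homotopy axiom applicable at each stage of the iteration: after retracting $C$ onto $C'$ one needs $C'$ to again be a definably locally closed cell (sitting in a possibly lower-dimensional $[0,\infty]^k$), and one needs to know that the deformation retraction of Lemma \ref{lem cells contract} is a morphism of o-minimal sites (i.e. a continuous $\bGi$-definable map), so that its tilde is continuous and the Vietoris--Begle machinery behind Theorem \ref{thm homotopy ax bgi} applies. The first point follows from the inductive description of cells: the target $C'$ in the proof of Lemma \ref{lem cells contract} is either a graph $\Gamma(h)$ (definably homeomorphic, via the domain projection, to a lower-dimensional cell) or is obtained by lowering the dimension of a cylinder, and in either case remains a cell, hence definably locally closed. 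The second point is immediate since the homotopies exhibited in Lemma \ref{lem cells contract} are given by explicit continuous $\bGi$-definable formulas. With these checks in place the induction goes through routinely; no new ideas beyond the homotopy axiom and Lemma \ref{lem cells contract} are required. Finally, for the statement $H^0(C;L_C)=L$ one may alternatively note directly that $C$ is definably connected (cells are definably connected), which pins down $H^0$ without appeal to the homotopy argument, but the inductive argument above already delivers it.
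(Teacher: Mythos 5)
Your proposal is correct and follows essentially the same route as the paper, which derives the lemma from Lemma \ref{lem cells contract} (inductive definable contraction of a cell to a point) combined with the homotopy axiom (Theorem \ref{thm homotopy ax bgi}), noting that cells are definably locally closed so the axiom applies. The checks you flag (the retract is again a cell, the homotopies are continuous definable, hence morphisms of the o-minimal site) are exactly the points the paper relies on implicitly.
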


We also have the analogue of \cite[Lemma 7.1]{bf} with small modifications in the construction:

\begin{lem}\label{lem bf cells}
Let $C\subseteq [0,\infty ]^m$ be a cell of dimension $r$. There is a $\bGi$-definable family $\{C_{(t,s)} : 0<t,s<\infty \}$ of closed and bounded subsets $C_{(t,s)} \subseteq C$ such that:
\begin{enumerate}
\item
$C = \bigcup_{(t,s)}C_{(t,s)}$.
\item If $r>0$ and $0 < t' < t<\frac{s}{2}<s<s',$  then $C_{(t,s)}
\subseteq C_{(t',s')}$ and this inclusion induces an isomorphism 
\[H^{p}(C \backslash C_{(t,s)}; L_C) \simeq  H^{p}(C \backslash C_{(t',s')}; L_C).\]
\item
If $r>0$, then the o-minimal cohomology of $C \backslash C_{(t,s)}$ is given by
\begin{equation*}
H^{p}(C \backslash C_{(t,s)}; L_C) =
\begin{cases}
L ^{1+\chi _{1}(r)}\qquad \textmd{if} \qquad p\in \{0, r-1\}\\
\\
\,\,\,\,\,\,\,\,\,\,\,\,\,\,\,\,\,\,\,\,\,\,\,\,\,\,\,\,\,\,\,
\\
0\qquad \,\,\,\,\,\,\, \,\,\,\,\,\,\,\,\,\,\, \, \textmd{if} \qquad p\notin \{0, r-1\}
\end{cases}
\end{equation*}
where $\chi _1\colon {\mathbb Z}\to \{0,1\}$ is the characteristic function of the subset $\{1\}$.
\item
If $K\subseteq C$ is a definably compact subset, the there are $0<t,s$ such that $K\subseteq C_{(t,s)}$.\\
\end{enumerate}
\end{lem}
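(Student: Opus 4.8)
The plan is to adapt the proof of \cite[Lemma 7.1]{bf} to the present setting, the two new features being that a cell in $[0,\infty]^m$ may have $\infty$ as an upper wall --- so that shrinking by a parameter $t$ must be complemented by a truncation at a finite height $s$ --- and that all the sheaf‑theoretic input used in \cite{bf}, namely the homotopy axiom, the acyclicity of cells, and the Mayer--Vietoris sequence, is now available over $\bGi$ by Theorem \ref{thm homotopy ax bgi}, Lemma \ref{lem bf acyclic cells} and Remark \ref{nrmk exact triple mv}, while ``definably compact'' means ``closed and bounded'' by Remark \ref{nrmk def comp}. Recall from Remark \ref{nrmk cd}(a) that every wall occurring in the construction of $C$ is a $\bGi$-definable continuous map which is either constantly $\infty$ or finite-valued; in particular, on a band cell $(f,g)_B$ the lower wall $f$ is finite-valued (since $f<g\le\infty$ rules out $f\equiv\infty$), and the upper wall $g$ is either finite-valued or identically $\infty$.

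First I would argue by induction on the ambient dimension $m$. If $C=\Gamma(h)$ is the graph of a function over a cell $B\subseteq[0,\infty]^{m-1}$, then the omitted-coordinate projection restricts to a $\bGi$-definable homeomorphism $C\iso B$ preserving closed bounded subsets, $\dim B=\dim C=r$, and one transports the family $\{B_{(t,s)}\}$ given by the induction hypothesis to $C$; properties (1)--(4) are inherited through the homeomorphism. If $C=(f,g)_B$ is a band cell over a cell $B\subseteq[0,\infty]^{m-1}$ of dimension $r-1$, with family $\{B_{(t,s)}\}$ from the induction hypothesis, I would set
\[
C_{(t,s)}=\{(x,y)\in C: x\in B_{(t,s)},\ f(x)+t\le y\le g(x)-t\}
\]
when $g$ is finite-valued, and
\[
C_{(t,s)}=\{(x,y)\in C: x\in B_{(t,s)},\ f(x)+t\le y\le s\}
\]
when $g\equiv\infty$. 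In both cases $C_{(t,s)}$ is $\bGi$-definable, and it is closed and bounded in $[0,\infty]^m$: $B_{(t,s)}$ is closed and bounded, $f$ (and $g$, in the first case) is continuous hence bounded on the definably compact set $B_{(t,s)}$, and in the second case the top is cut off at the finite level $s$.

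Properties (1) and (4) are then routine: (1) is the exhaustion $C=\bigcup_{(t,s)}C_{(t,s)}$ obtained by sending a point of $C$ into some $B_{(t_0,s_0)}$ by the induction hypothesis and then choosing $t$ small and $s$ large using the nesting in property (2) for $B$; and for (4), given a definably compact $K\subseteq C$ one projects $K$ onto $B$ to get a definably compact $K'\subseteq B_{(t_0,s_0)}$, notes that on $K'$ the vertical gap between $K$ and the walls is a continuous positive function, hence bounded below by some $t_1>0$, and that $K$ is bounded above by some finite $s_1$, so that $K\subseteq C_{(t,s)}$ for $t=\min(t_0,t_1)$, $s=\max(s_0,s_1)$. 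The substance is (2) and (3). Exactly as in \cite{bf}, the key point is that $C\setminus C_{(t,s)}$ is, up to the homotopy axiom, homotopy equivalent to a set with the cohomology of an $(r-1)$-sphere: one decomposes $C\setminus C_{(t,s)}$ by Mayer--Vietoris into the region near the lower wall, the region near the upper wall (resp.\ above the height $s$ when $g\equiv\infty$), and the cylinder over $B\setminus B_{(t,s)}$; the first two regions are cell-like bands over cells, hence acyclic by Lemma \ref{lem bf acyclic cells} and the homotopy axiom, the overlaps are again cell-like, and the cylinder piece has cohomology given by the induction hypothesis applied to the $(r-1)$-dimensional cell $B$. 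Assembling the Mayer--Vietoris sequences yields $L^{1+\chi_1(r)}$ in degrees $0$ and $r-1$ and $0$ otherwise, the $\chi_1$ correction reflecting the case $r=1$, where $C\setminus C_{(t,s)}$ has two contractible components. The nesting statement in (2) follows because for $t'<t<\tfrac{s}{2}<s<s'$ the inclusion $C_{(t,s)}\subseteq C_{(t',s')}$ is covered by a $\bGi$-definable deformation of $C\setminus C_{(t,s)}$ onto $C\setminus C_{(t',s')}$ --- sliding the $y$-coordinate and retracting $B\setminus B_{(t,s)}$ onto $B\setminus B_{(t',s')}$ via the induction hypothesis --- so that the two complements have canonically isomorphic cohomology.

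I expect the main obstacle to be the bookkeeping of step (2)--(3) in the $g\equiv\infty$ case: one must verify that truncating at the finite level $s$, rather than shrinking symmetrically by $t$, still produces a complement with the homotopy type of $S^{r-1}$, and that the precise coupling $t<\tfrac{s}{2}$ between the two parameters is exactly what makes both $C_{(t,s)}$ thick enough to be nonempty and the nesting deformation well-defined inside $C$. This is the one point where the unbounded walls genuinely change the argument of \cite{bf}; everything else is a transcription, replacing ``bounded cell in $\nGm$'' by ``cell in $[0,\infty]^m$'' and invoking the $\bGi$ versions of the homotopy axiom and the Mayer--Vietoris sequence established earlier.
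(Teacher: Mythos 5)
Your construction of the family $\{C_{(t,s)}\}$ is essentially the paper's: the same induction on the cell structure, shrinking by $t$ at finite walls and truncating at the finite level $s$ when the upper wall is $\infty$ (the paper writes the truncated fiber as $[f+\gamma(t,s),(f+s)-\gamma(t,s)]$ with $\gamma(t,s)=\min\{\tfrac{s}{2},t\}$, which is where the coupling $t<\tfrac{s}{2}$ comes from, but this is only a cosmetic difference from your version). Where you diverge is in how (2) and (3) are proved. The paper does not run a three-piece Mayer--Vietoris: it builds, for each $(t,s)$, a cover of $C\setminus C_{(t,s)}$ by $2r$ relatively open cell-like sets indexed by the closed faces of an $r$-cube, with all nonempty intersections again cells (hence acyclic by Lemma \ref{lem bf acyclic cells}), so that the nerve is that of the boundary of an $r$-cube; property (3) is then read off from the nerve theorem \cite[Corollary 5.2]{bf}, and property (2) follows from \cite[Lemma 5.5]{bf} by comparing the two nerves, with no homotopy between the complements ever constructed. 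Your route --- a suspension-style Mayer--Vietoris over the caps and the cylinder $(f,g)_{B\setminus B_{(t,s)}}$, with the cylinder's cohomology supplied by the induction hypothesis --- is viable and arguably more self-contained, but it costs you separate low-dimensional cases ($r=1,2$, where $B\setminus B_{(t,s)}$ is disconnected or empty of interest) and, more seriously, it makes (2) depend on actually exhibiting a $\bGi$-definable deformation retraction of $C\setminus C_{(t,s)}$ onto $C\setminus C_{(t',s')}$; gluing the vertical push toward the walls over $B_{(t,s)}$ with the inductively given retraction over the base annulus $B_{(t',s')}\setminus B_{(t,s)}$ is exactly the bookkeeping the nerve comparison is designed to avoid, and you should not leave it at one sentence. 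For (4) the paper also argues differently, exhibiting $C$ as a directed union of the relative interiors $U_{(t,s)}$ and citing \cite[Corollary 2.2 (ii)]{PePi07}, whereas your projection-plus-positive-gap argument is a correct direct alternative using that a positive continuous definable function on a definably compact set is bounded below.
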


\begin{proof}
We define the definable family $\{C_{(t,s)} : 0<t, s <\infty \}$ by induction on $l\in \{1,\ldots , m-1\}$ and the definition of the $\bGi$-cell $C$ in the following way.
\begin{enumerate}
\item[$\bullet$]
If $l=1$ and $C$ is a singleton $\{d\}$ in $\nGi$, we define $C_{(t,s)}=C$. Clearly $C_{(t,s)}$ is a  closed and bounded subset.
\item[$\bullet$]
If $l= 1$ and $C = (d, e)\subseteq \nGi,$ then 
\begin{align*}
C_{(t,s)} =
\begin{cases}
[d + \gamma (t,s), e -\gamma (t,s)]\quad \quad \quad \quad \quad \quad   \textrm{if}\quad e< \infty\\
\,\, \\
[d+\gamma (t,s), (d+s)-\gamma(t,s)] \quad \quad \quad  \,\,\, \textrm{otherwise}
\end{cases}
 \end{align*}
 where 
 \begin{align*}
 \gamma (t,s) = 
 \begin{cases}
 \min\{\frac{e-d}{2}, t\} \quad \quad \quad  \textrm{if}\quad e< \infty\\
\,\, \\
\min \{\frac{s}{2}, t\} \quad \quad \quad \quad  \textrm{otherwise}
\end{cases}
 \end{align*}
 (in this way $C_{(t,s)}$ is non empty). Clearly $C_{(t,s)}$ is a  closed and bounded subset.
\item[$\bullet$]
If $l> 1$ and $C = \Gamma(f)$ where  $B\subseteq [0,\infty ]^{l}$ is $\bGi$-cell. By induction $B_{(t,s)}$ is defined and is a  closed and bounded subset. We put $C_{(t,s)}=\Gamma (f_{|B_{(t,s)}})$. Clearly $C_{(t,s)}$ is a  closed and bounded subset.

\item[$\bullet$]
If $l > 1$ and $C = (f, g)_{B}$ where $B\subseteq [0,\infty ]^{l}$ is $\bGi$-cell and  $f<g$. By induction $B_{(t,s)}$ is defined and is a  closed and bounded subset. Recall that either $g<\infty $ or $g=\infty $. We put 
\begin{align*}
C_{(t,s)} =
\begin{cases}
[f +\gamma (t,s), g - \gamma (t,s)]_{B_{(t,s)}}\quad \quad \quad \quad \quad \quad   \textrm{if}\quad g< \infty\\
\,\, \\
[f+\gamma (t,s), (f+s)-\gamma(t,s)]_{B_{(t,s)}} \quad \quad \quad \,\, \textrm{otherwise}
\end{cases}
 \end{align*}
 where 
 \begin{align*}
 \gamma (t,s) = 
 \begin{cases}
 \min\{\frac{g-f}{2}, t\} \quad \quad \quad  \textrm{if}\quad g< \infty\\
\,\, \\
\min \{\frac{s}{2}, t\} \quad \quad \quad \quad  \textrm{otherwise.}
\end{cases}
 \end{align*}
 By induction   $B_{(t,s)}$ is  a closed and bounded subset of $B$. Also,  for each $x\in B_{(t,s)}$, the fiber $(\pi _{|C})^{-1}(x)\cap C_{(t,s)}$ is closed and bounded. Let $(x,y)\in C$ be an element in the closure of $C_{(t,s)}$. Then $x\in B_{(t,s)}$ and $(x,y)\in (\pi _{|C})^{-1}(x)\cap C_{(t,s)}\subseteq  C_{(t,s)}$. So  $C_{(t,s)}$ is a closed and bounded subset of $C$. 

\end{enumerate}

We observe that from this construction we obtain:

\begin{clm}\label{clm bf cover cells}
For $(t,s)$ as above there is a covering ${\mathcal{U}}_{C} = \{U_i : i \in I\}$ of $C \backslash C_{(t,s)}$ by relatively open bounded subsets of $C$ such that:
\begin{enumerate}
\item
The index set $I$ is the family of the closed faces of an $r$-dimensional cube. (So $|I| = 2r$).
\item
If $E \subseteq I,$ then $U_{E} \coloneqq  \bigcap_{i \in E}U_{i}$ is either empty or a $\bGi$-cell. (So in particular $H^{p}(U_{E}; L_C) = 0$ for $p > 0$ and, if $U_{E} \neq \emptyset, H^{0}(U_{E}; L_C) = L$.)
\item
For $E \subseteq I,$ $U_{E} \neq \emptyset$ iff the faces of the cubes belonging to $E$ have a non-empty intersection.
\end{enumerate}
So the nerve of ${\mathcal U}_C$ is isomorphic to the nerve of a covering of an $r$-cube by its closed faces.
\end{clm}

\begin{proof}
To show that there is a covering satisfying the properties above, we define ${\mathcal{U}}_C$ by induction on $l\in \{1, \ldots , m-1\}$. We distinguish four cases
according to definition of the $C_{(t,s)}$.

\begin{enumerate}
\item[$\bullet$]
If $l= 1$ and $C$ is a singleton in $\nGi$, then $\mathcal{U}_{C}=\{C\}$.
\item[$\bullet$]
If $l= 1$ and $C = (d, e)\subseteq \nGi,$ then 
\begin{align*}
\mathcal{U}_C =
\begin{cases}
\{(d,d + \gamma (t,s)) ,(e -\gamma (t,s),e)\}\quad \quad \quad \quad \quad \quad   \textrm{if}\quad e< \infty\\
\,\, \\
\{(d,d+\gamma (t,s)), ((d+s)-\gamma(t,s),\infty)\} \quad \quad \quad   \textrm{otherwise}
\end{cases}
 \end{align*}
\item[$\bullet$]
If $l > 1$ and $C = \Gamma(f)$ where $B\subseteq [0,\infty ]^{l}$ is $\bGi$-cell. By definition $C_{(t,s)}=\Gamma (f_{|B_{(t,s)}})$. By induction we have a
covering $\mathcal{V}_B$ of $B \backslash B_{(t,s)}$ with the stated
properties, and we define ${\mathcal{U}}_C$ to be a covering of $C \backslash C_{(t,s)}$ induced by the natural homeomorphism between the graph of $f$ and its domain.
\item[$\bullet$]
If $l > 1$ and $C = (f, g)_{B}$ where $B\subseteq [0,\infty ]^{l}$ is $\bGi$-cell and  $f<g$. By definition 
\begin{align*}
C_{(t,s)} =
\begin{cases}
[f +\gamma (t,s), g - \gamma (t,s)]_{B_{(t,s)}}\quad \quad \quad \quad \quad \quad   \textrm{if}\quad g< \infty\\
\,\, \\
[f+\gamma (t,s), (f+s)-\gamma(t,s)]_{B_{(t,s)}} \quad \quad \quad \,\, \textrm{otherwise.}
\end{cases}
 \end{align*}
 By induction we have that $B \backslash B_{(t,s)}$ has a covering
${\mathcal V}_B = \{V_{j} : j \in J\}$ with the stated properties, where $J$ is the set of closed faces of the cube $[0, 1]^{r-1}$.
Define a covering ${\mathcal{U}}_C = \{U_i : i \in I\}$ of $C \backslash C_{(t,s)}$ as follows. As index set $I$ we take the closed faces of the cube $[0, 1]^r$. Thus $|I|=|J|+2,$ with the two extra faces  corresponding to the ``top" and ``bottom" face of $[0, 1]^r$. We
associate to the top face the open set $(g - \gamma (t,s), g)_{B_{(t,s)}}$  if $g<\infty $ or $((f+s) - \gamma (t,s), \infty )_{B_{(t,s)}}$  if $g=\infty $ and the bottom face the open set $(f, f
+ \gamma (t,s))_{B_{(t,s)}}$.  The other open sets of the covering are the preimages of the sets $V_j$ under the restriction of the projection $\Gamma ^{l+1}_{\infty } \rightarrow \nGil$. This defines a covering of $C \backslash C_{(t,s)}$ with the stated properties.
\end{enumerate}
\end{proof}

Property (1) of the lemma is clear. By (the proof of) Claim \ref{clm bf cover cells} there are open covers ${\mathcal U}'_C$ of $C \backslash C_{(t',s')}$ and ${\mathcal U}_C$ of $C \backslash C_{(t,s)}$ satisfying the assumptions of \cite[Lemma 5.5]{bf}. Hence property (2) of the lemma holds. Finally, if $r>1$, then property (3) follows from  Claim \ref{clm bf cover cells}  and \cite[Corollary 5.2]{bf}. On the other hand, if $r=1$, then $C\setminus C_{(t,s)}$ is by construction a disjoint union $D\sqcup E$ of two $\bGi$-cells in $[0,\infty ]^m$  of dimension $r=1$. Therefore, in this case, the result follows from Lemma \ref{lem bf acyclic cells}, since $H^{*}(C \backslash C_{(t,s)}; L_C) \simeq H^*(D;L_D)\oplus H^*(E;L_E)$.

It remains to show property (4). Note that if  $0< t'< t<s<s',$ then by construction $C_{(t',s')}$ contains the interior relative to $C$ of  $C_{(t,s)}$. Thus $C$ a directed union $\bigcup_{(t,s)}U_{(t,s)}$ of a $\bGi$-definable family of  relatively open $\bGi$-definable subsets. In particular, if $K\subseteq C$ is a $\bGi$-definably compact subset, then $\{K\cap U_{(t,s)} : 0<t<s\}$ is a $\bGi$-definable family of open $\bGi$-definable subsets of $K$  with the property that every finite subset of $K$ is contained in one of the $K\cap U_{(t,s)}$. Therefore, since $K$ is closed and bounded (Remark \ref{nrmk def comp}), by \cite[Corollary 2.2 (ii)]{PePi07},  there are  $0<t<s$  such that $K\subseteq U_{(t,s)}\subseteq C_{(t,s)}$.
\end{proof}

From Lemma \ref{lem bf cells} and computations using  excision axiom (\cite[Chapter II, 12.9]{b}) and long exactness sequence (\cite[Chapter II, Section 12 (22)]{b}) we obtain just like in \cite[Lemma 7.2]{bf}: 

\begin{lem}\label{lem bf x bd1}
Let $X\subseteq [0,\infty ]^m$ be a definable set  and $C\subseteq X$ be a cell of maximal dimension. Then for every $0<t'<t<\frac{s}{2}<s<s'$  we have isomorphisms induced by inclusions:
\[
\pushQED{\qed} 
H^{*}(X \backslash C_{(t,s)}; L_X) \simeq H^{*}(X \backslash C_{(t',s')}; L_X).\qedhere
\popQED
\]
\end{lem}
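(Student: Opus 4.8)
The plan is to reduce Lemma~\ref{lem bf x bd1} to Lemma~\ref{lem bf cells}(2) via excision and the long exact sequence of a pair, exactly as in the proof of \cite[Lemma 7.2]{bf}, with only cosmetic changes to accommodate the presence of points at infinity. First I would fix $0<t'<t<\frac{s}{2}<s<s'$ and set $U=C\setminus C_{(t,s)}$ and $U'=C\setminus C_{(t',s')}$, so that by Lemma~\ref{lem bf cells}(2) the inclusion $U\hookrightarrow U'$ induces an isomorphism $H^p(U';L_C)\simeq H^p(U;L_C)$ for all $p$. The key observation is that $C_{(t,s)}$ (resp.\ $C_{(t',s')}$) is a \emph{closed} bounded subset of $C$, hence a $\vgs$-closed (equivalently, closed definable) subset of $X$ since $C$ is a cell of maximal dimension and is therefore open in $X$; thus $X\setminus C_{(t,s)}$ and $X\setminus C_{(t',s')}$ are open definable subsets of $X$ and the inclusion $X\setminus C_{(t,s)}\hookrightarrow X\setminus C_{(t',s')}$ is a morphism of sites.

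Next I would run the excision argument. Because $C$ is open in $X$ and $C_{(t,s)}\subseteq \operatorname{int}_X C_{(t',s')}$ (this containment is what condition $t'<t$, $s<s'$ in the construction buys us, as noted in the proof of Lemma~\ref{lem bf cells}(4)), one has $C_{(t,s)}\subseteq C_{(t',s')}\subseteq C$ with $C_{(t,s)}$ contained in the interior of $C_{(t',s')}$ relative to $X$. The excision axiom (\cite[Chapter~II, 12.9]{b}), valid in our setting by Theorem~\ref{thm:Eilenberg-Steenrod}, then gives
\[
H^*_{C_{(t,s)}}(X;L_X)\simeq H^*_{C_{(t,s)}}(C;L_C),\qquad
H^*_{C_{(t',s')}}(X;L_X)\simeq H^*_{C_{(t',s')}}(C;L_C),
\]
compatibly with the maps induced by $C_{(t,s)}\subseteq C_{(t',s')}$. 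Now apply the long exact cohomology sequence of the pair (\cite[Chapter~II, Section~12, (22)]{b}) to $(X,X\setminus C_{(t,s)})$ and to $(X,X\setminus C_{(t',s')})$, obtaining a morphism of long exact sequences
\[
\cdots \to H^p_{C_{(t,s)}}(X;L_X)\to H^p(X;L_X)\to H^p(X\setminus C_{(t,s)};L_X)\to\cdots
\]
mapping to the analogous sequence for $C_{(t',s')}$.

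Finally I would invoke the five lemma. In this morphism of long exact sequences the map on $H^p(X;L_X)$ is the identity, and the map $H^p_{C_{(t',s')}}(X;L_X)\to H^p_{C_{(t,s)}}(X;L_X)$ is an isomorphism: by the excision identifications above it is the map $H^*_{C_{(t',s')}}(C;L_C)\to H^*_{C_{(t,s)}}(C;L_C)$, which fits into the same long exact sequence picture for the pair $(C,\,C\setminus C_{(\cdot,\cdot)})$, where the outer terms $H^p(C;L_C)$ agree and the terms $H^p(C\setminus C_{(\cdot,\cdot)};L_C)$ are identified by Lemma~\ref{lem bf cells}(2); a second application of the five lemma (or simply the five lemma applied once to the combined diagram) yields that $H^p(X\setminus C_{(t,s)};L_X)\to H^p(X\setminus C_{(t',s')};L_X)$ is an isomorphism, which is the claim. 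The only point requiring a little care — and the main (minor) obstacle compared to the purely o-minimal bounded case of \cite{bf} — is checking that all the sets involved ($C_{(t,s)}$, $C_{(t',s')}$, and their complements in $X$) are genuinely definable and closed/open in the $\bGi$-topology when cells are allowed to run off to $\infty$; this is exactly what the ``closed and bounded'' assertions built into the construction in Lemma~\ref{lem bf cells} guarantee, together with Remark~\ref{nrmk local def comp} which tells us cells are definably locally closed. Everything else is formal manipulation with the Eilenberg--Steenrod axioms established in Theorem~\ref{thm:Eilenberg-Steenrod}.
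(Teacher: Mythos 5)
Your overall route --- the long exact sequences of $H^*_{C_{(t,s)}}(X)\to H^*(X)\to H^*(X\setminus C_{(t,s)})$ for the two parameter values, excision down to $C$, Lemma~\ref{lem bf cells}(2), and the five lemma --- is exactly the argument the paper intends (its proof is just a pointer to \cite[Lemma 7.2]{bf}). But there is a genuine gap at the point that makes the excision legitimate: you assert that a cell $C\subseteq X$ of maximal dimension ``is therefore open in $X$'', and this is false in general. Take $X=\bigl(\{0\}\times(0,1)\bigr)\cup\Gamma(h)_{(0,1)}\subseteq\Gamma_\infty^2$ with $h\colon(0,1)\to\Gamma$ continuous and $h(x)\to 1/2$ as $x\to 0^+$, and let $C=\{0\}\times(0,1)$. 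Both cells have dimension $1=\dim X$, yet $(0,1/2)\in C$ lies in the closure of the other cell, so $C$ is not open in $X$; worse, $(0,1/2)\in C_{(t,s)}$, so $C$ is not even a neighborhood of $C_{(t,s)}$ in $X$, which is what excision of $X\setminus C$ requires. And the conclusion of that step really does fail here: $X$ and $C$ are definably connected and acyclic while $X\setminus C_{(t,s)}$ has three definably connected components and $C\setminus C_{(t,s)}$ has two, so the long exact sequences give $H^1_{C_{(t,s)}}(X;L)\cong L^2$ but $H^1_{C_{(t,s)}}(C;L)\cong L$. Openness of a maximal-dimensional cell does hold once the decomposition is refined to satisfy the frontier condition (closures of cells are unions of cells), which is the situation in which the lemma is actually used in Lemma~\ref{lem bf x bd2} and Theorem~\ref{thm fg coho def groups}; but it has to be secured, not deduced from maximality of the dimension alone.

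Two smaller points. First, your reason for $C_{(t,s)}$ being closed in $X$ is doubly off: it leans on the openness claim above, and in any case a closed subset of an open subset of $X$ is only locally closed in $X$. The correct reason is that $C_{(t,s)}$ is closed and bounded in $[0,\infty]^m$, hence definably compact by Remark~\ref{nrmk def comp}, hence closed in $[0,\infty]^m$ and a fortiori in $X$ --- no hypothesis on $C$ is needed for this. Second, for closed sets $Z\subseteq Z'$ the induced map on cohomology with supports goes $H^p_Z(X)\to H^p_{Z'}(X)$, so the arrow $H^p_{C_{(t',s')}}(X)\to H^p_{C_{(t,s)}}(X)$ in your five-lemma diagram points the wrong way; this is cosmetic. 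With ``$C$ open in $X$'' added as a (justified) hypothesis, the rest of your argument goes through as in \cite[Lemma 7.2]{bf}.
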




The next result is the analogue of \cite[Corollary 7.3]{bf}, but we have to explain how to use \cite[Lemma 6.7]{bf} in our context:

\begin{lem}\label{lem bf x bd2}
Let $X\subseteq [0,\infty ]^m$ be a closed definable set and $C\subseteq X$ be a cell of maximal dimension. Then for every $0<t<s$  we have isomorphisms induced by inclusions:
\[H^{*}(X \backslash C_{(t,s)}; L_X) \simeq H^{*}(X \backslash C; L_X).\] 
\end{lem}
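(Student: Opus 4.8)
The statement claims that for a closed definable $X\subseteq[0,\infty]^m$ and a cell $C\subseteq X$ of maximal dimension, the inclusion $X\setminus C_{(t,s)}\hookrightarrow X\setminus C$ induces isomorphisms in cohomology for every $0<t<s$. The idea is to combine the stabilization given by Lemma \ref{lem bf x bd1} (which shows that the groups $H^*(X\setminus C_{(t,s)};L_X)$ do not depend on $(t,s)$ once the parameters are suitably ordered) with a continuity/colimit argument identifying the stable value with $H^*(X\setminus C;L_X)$. First I would recall from the construction in the proof of Lemma \ref{lem bf cells} that $C=\bigcup_{(t,s)}U_{(t,s)}$ is a directed union of relatively open definable subsets with $U_{(t,s)}\subseteq C_{(t,s)}\subseteq U_{(t',s')}$ whenever $0<t'<t<\tfrac{s}{2}<s<s'$; hence $X\setminus C=\bigcap_{(t,s)}(X\setminus U_{(t,s)})=\bigcap_{(t,s)}(X\setminus C_{(t,s)})$, and dually the open sets $X\setminus C_{(t,s)}$ form a directed system of $\vgs$-open (here: definable open) subsets whose union is $X\setminus C$.

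\textbf{Key steps.} The plan is to pass to the o-minimal spectrum (the tilde functor of Section \ref{section more on t-sheaves}), where $\widetilde{X}$ is a spectral space, each $\widetilde{X\setminus C_{(t,s)}}$ is a quasi-compact open subset, and $\widetilde{X\setminus C}=\bigcap_{(t,s)}\widetilde{X\setminus C_{(t,s)}}$ since the tilde functor preserves Boolean operations (Remark \ref{tilde on sets and maps}). Then I would invoke the continuity of sheaf cohomology on spectral spaces: for a filtered decreasing family of quasi-compact open subsets of a spectral space with intersection $Y$, the canonical map $\varinjlim H^q(\widetilde{X\setminus C_{(t,s)}};\widetilde{L_X})\to H^q(Y;\widetilde{L_X}_{|Y})$ is an isomorphism — this is precisely the kind of statement furnished by Corollary \ref{cor coho around} (taking $Z=X$, $\Phi$ the family of all closed subsets, and $Y=\widetilde{X\setminus C}$), once one checks $Y$ is of the required form, i.e. that $Y$ has a fundamental system of normal and constructible neighbourhoods. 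By Lemma \ref{lem bf x bd1} the transition maps in the colimit $\varinjlim_{(t,s)}H^q(\widetilde{X\setminus C_{(t,s)}};\widetilde{L_X})$ are all isomorphisms (for cofinal $(t,s)$ ordered so that $t<\tfrac{s}{2}$), so the colimit equals any single term $H^q(X\setminus C_{(t,s)};L_X)$; combining with the continuity isomorphism gives $H^q(X\setminus C_{(t,s)};L_X)\simeq H^q(X\setminus C;L_X)$, and one checks this isomorphism is the one induced by the inclusion by chasing through the definition of the edge maps.

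\textbf{Alternative, following \cite{bf}.} Since the excerpt references \cite[Lemma 6.7]{bf} and \cite[Corollary 7.3]{bf}, the intended argument is likely more self-contained: \cite[Lemma 6.7]{bf} presumably provides exactly the colimit/continuity statement for definable open covers of the relevant form, so that one need only verify that $\{X\setminus C_{(t,s)}\}$ is a definable directed family of open sets with union $X\setminus C$ (done above, using the construction in Lemma \ref{lem bf cells}) and that the stabilization hypothesis of that lemma is met, which is Lemma \ref{lem bf x bd1}. I would phrase the proof this way: apply \cite[Lemma 6.7]{bf} to the directed system $\{X\setminus C_{(t,s)}\}_{0<t<s}$, whose union is $X\setminus C$, using Lemma \ref{lem bf x bd1} to see that for $(t,s)$ ranging over the cofinal set with $0<t<\tfrac{s}{2}<s$ the restriction maps $H^*(X\setminus C_{(t',s')};L_X)\to H^*(X\setminus C_{(t,s)};L_X)$ are isomorphisms, and conclude that each such restriction map factors the isomorphism $H^*(X\setminus C_{(t,s)};L_X)\xrightarrow{\sim}H^*(X\setminus C;L_X)$.

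\textbf{Main obstacle.} The technical heart is the continuity statement: one must be certain that $X\setminus C$ (equivalently $\widetilde{X\setminus C}$) satisfies the hypotheses under which Corollary \ref{cor coho around} applies — namely that it is a quasi-compact subspace with a fundamental system of normal and constructible locally closed neighbourhoods — or, in the \cite{bf}-route, that the precise combinatorial hypotheses of \cite[Lemma 6.7]{bf} are met by our family. The neighbourhood family is supplied by the sets $\widetilde{X\setminus C_{(t,s)}}$ themselves provided they are normal, which follows since each $X\setminus C_{(t,s)}$ is an open definable subset of $[0,\infty]^m$ and hence, by Theorem \ref{thm basis of open normal}, a finite union of definably normal opens — but Corollary \ref{cor coho around} wants a single normal constructible neighbourhood, so one may need to intersect with a small normal open box around $\widetilde{X\setminus C}$ or else argue locally; handling this bookkeeping carefully is where the real work lies, everything else being formal consequences of Lemma \ref{lem bf x bd1} and the Eilenberg–Steenrod machinery.
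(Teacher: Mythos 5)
Your overall strategy is the one the paper uses: realize $H^{*}(X\setminus C;L_X)$ as the colimit of the groups $H^{*}(X\setminus C_{(t,s)};L_X)$ via a continuity/tautness statement, then use Lemma \ref{lem bf x bd1} to see the colimit is attained at any single term. But the step by which you feed the family $\{X\setminus C_{(t,s)}\}$ into the continuity statement has a genuine gap. You assert $\widetilde{X\setminus C}=\bigcap_{(t,s)}\widetilde{X\setminus C_{(t,s)}}$ ``since the tilde functor preserves Boolean operations''; Remark \ref{tilde on sets and maps} only gives this for \emph{finite} Boolean combinations, and the identity is false for the infinite intersection at hand: a type concentrated on $C$ but infinitesimally close to its frontier (e.g.\ for a one-dimensional cell $C=(d,e)$, the type of elements infinitesimally greater than $d$) avoids every $C_{(t,s)}$, so it lies in $\bigcap_{(t,s)}\widetilde{X\setminus C_{(t,s)}}$ but not in $\widetilde{X\setminus C}$. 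Since Corollary \ref{cor coho around} computes $H^{q}(\widetilde{X\setminus C};\cdot)$ as a colimit over \emph{all} open constructible neighborhoods of $\widetilde{X\setminus C}$, what you actually need is that the subfamily $\{\widetilde{X\setminus C_{(t,s)}}\}$ is cofinal among these; this neither follows from the (false) intersection identity nor is automatic. Relatedly, in two places you say the sets $X\setminus C_{(t,s)}$ have \emph{union} $X\setminus C$: they form a decreasing family with \emph{intersection} $X\setminus C$, which is the setting of tautness, not of an exhaustion by opens.

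The missing ingredient is a definable-level cofinality argument, and it is the first half of the paper's proof: given any open definable neighborhood $V$ of $Z=X\setminus C$ in $X$, the complement $K=X\setminus V$ is a closed, hence definably compact, subset of $C$, so by Lemma \ref{lem bf cells}(4) there are $0<t<s$ with $K\subseteq C_{(t,s)}$, i.e.\ $Z\subseteq X\setminus C_{(t,s)}\subseteq V$. With that in hand, your ``main obstacle'' also dissolves more simply than you fear: $X$ is closed and bounded in $[0,\infty]^m$, hence definably compact, hence definably normal, so the family of \emph{all} closed subsets of $\widetilde{X}$ is a normal and constructible family of supports and Corollary \ref{cor coho around} applies directly to the closed subspace $\widetilde{Z}$; no appeal to Theorem \ref{thm basis of open normal} or to ad hoc normal boxes is needed. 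Once cofinality and tautness are in place, the rest of your argument (stabilization via Lemma \ref{lem bf x bd1} over the cofinal set of parameters with $t<\frac{s}{2}$, and the identification of the resulting isomorphism with the map induced by inclusion) goes through as in the paper.
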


\begin{proof}
Let $Z=X\setminus C$ and for $0<t<s$  let $Y_{(t,s)}=X\setminus C_{(t,s)}$.  Let $V$ be an  open definable neighborhood of $Z$ in $X$. Since $K=X\setminus V\subseteq C$ is a definably compact subset, by Lemma \ref{lem bf cells} (4), there are  $0<t<s$  such that $K\subseteq C_{(t,s)}\subseteq C$. Therefore, $Z\subseteq Y_{(t,s)}\subseteq V$. On the other hand, $\tilde{Y_{(t,s)}}$ for all $0<t<s$ are taut in $\tilde{X}$ being open subsets (\cite[page 73]{b}) and, since $X$ is definably normal (being definably compact) and $Z$ is a closed subset, the family of all closed subsets of $\tilde{X}$ is a normal and constructible family of supports on $\tilde{X}$ and so by Corollary \ref{cor coho around},  $\tilde{Z}$ is taut in $\tilde{X}$.  Therefore, by the purely topological \cite[Lemma 6.4]{bf}, together with Lemma \ref{lem bf x bd1} and \cite[Remark 6.6]{bf} we have
\[H^{*}(X \backslash C; L_X) \simeq \lind{0<t<s}H^{*}(X \backslash C_{(t,s)}; L_X)\simeq H^{*}(X \backslash C_{(t,s)}; L_X).\] 
\end{proof}

\begin{nrmk}\label{nrmk c in Gi}
Let $X\subseteq \nGim$ be a definably locally closed subset.  By Remark \ref{nrmk def completions} there is a definably compact subset $P\subseteq \nGim$ such that  $X$ is an open definable subset of $P$. Since by Remark \ref{nrmk def normal} $P$ is definably normal and every definably compact subset of $X$ is a closed definable subset of $P$, it follows from Definition \ref{defn tnormal} (2) that the family, which we will denote by  $c$, of all definably compact definable subsets of $X$ is a family of definably normal supports on $X$.
\end{nrmk}

\begin{thm}\label{thm fg coho def groups}
Let $A$ be a  noetherian ring and let $L$ be a  finitely generated  $A$-module. If $X\subseteq \nGim$ is a definably locally closed subset, then $H^p_c(X;L_X)$ is  finitely generated for each $p$. Moreover,   $H^p_c(X;L_X)=0$ for $p>\dim X$.
\end{thm}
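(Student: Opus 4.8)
The plan is to prove both statements simultaneously by induction on $\dim X$, using the combinatorial lemmas on cells established above (Lemmas \ref{lem bf acyclic cells}, \ref{lem bf cells}, \ref{lem bf x bd1}, \ref{lem bf x bd2}) together with the Eilenberg-Steenrod axioms in $\bGi$ (Theorem \ref{thm:Eilenberg-Steenrod}) and the fact (Remark \ref{nrmk c in Gi}) that the family $c$ of definably compact definable subsets of $X$ is a family of definably normal supports, so that cohomology with $c$-supports is well-behaved (Mayer-Vietoris, long exact sequences of pairs, excision). Using the definable completion of Remark \ref{nrmk def completions}, write $X$ as an open definable subset of a definably compact set $P$, and set $Z = P\setminus X$, a closed definable subset of $P$; then for any closed definable $C\subseteq X$ one has the long exact sequence relating $H^*_c(X\setminus C; L)$, $H^*_c(X; L)$ and $H^*_{C}(C; L) = H^*(C;L)$ (since $C$ is definably compact, $c$-supports on $C$ are all supports), which is the engine of the induction.

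First I would handle the base case $\dim X = 0$: then $X$ is finite, so $H^0_c(X;L_X) = L^{|X|}$ is finitely generated and $H^p_c = 0$ for $p>0$, settling both claims. For the inductive step, assume $\dim X = r > 0$ and the theorem holds for all definably locally closed subsets of smaller dimension. By the cell decomposition theorem (Remark \ref{nrmk cd}) applied inside $\bar X \subseteq [0,\infty]^{2m}$ (after moving $X$ by the homeomorphism $p$ of Remark \ref{nrmk def completions} into a bounded set), and by Remark \ref{nrmk def loc closed}, decompose $X$ into finitely many cells; let $C\subseteq X$ be a cell of maximal dimension $r$, which is relatively open in $X$ (since $X$ is definably locally closed and $C$ is a top-dimensional cell of a compatible decomposition, $C$ is open in $X$). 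Then $Y := X\setminus C$ is a definably locally closed subset of dimension $< r$, so by induction $H^p_c(Y;L_Y)$ is finitely generated for all $p$ and vanishes for $p > \dim Y$. The excision/long-exact-sequence argument exactly as in \cite[Lemma 7.2, Corollary 7.3]{bf}, using Lemma \ref{lem bf x bd1} and Lemma \ref{lem bf x bd2} (the latter applied to the definably compact $P$ in place of $X$, with $C$ a top cell of $P$ — here one must be careful that $C$ need only be a cell of $X$, not of $P$; one arranges this by choosing the cell decomposition to be compatible with both $X$ and $P$), together with Lemma \ref{lem bf cells}(3) computing $H^*(C\setminus C_{(t,s)};L_C)$, yields a long exact sequence
\[
\cdots \to H^{p}_c(X;L_X) \to H^{p}_c(Y;L_Y) \to H^{p-r+1}(\mathrm{pt};L)^{1+\chi_1(r)} \to H^{p+1}_c(X;L_X)\to\cdots
\]
where the middle term arises from the cohomology of the "link'' of $C$ computed by Lemma \ref{lem bf cells}, and $H^p_c(C;L_C) = 0$ for $p \ne r$, $H^r_c(C;L_C) \cong L$ by Lemma \ref{lem bf acyclic cells} combined with Poincaré--Lefschetz-type duality for the cell $C$ (equivalently, by direct computation of $H^*_c$ of a product of half-open intervals via the homotopy axiom and excision). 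Since $A$ is noetherian and $L$ finitely generated, finite generation of $H^*_c(X;L_X)$ follows from finite generation of the two neighbouring terms and the exactness of the sequence (submodules and quotients of finitely generated modules over a noetherian ring are finitely generated). The vanishing for $p > r$ follows because in that range $H^p_c(Y;L_Y) = 0$ (as $p > \dim Y$, noting $\dim Y < r \le p$) and $H^{p-r+1}(\mathrm{pt};L) = 0$ for $p > r$ (as $p - r + 1 \ge 2 > 0$), so the sequence forces $H^p_c(X;L_X) = 0$.

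The main obstacle I anticipate is the bookkeeping in the inductive step: one needs a single cell decomposition of $[0,\infty]^{2m}$ simultaneously compatible with $X$, with $P = \bar X$, and with $Z = P\setminus X$, so that the chosen top-dimensional cell $C$ of $X$ is relatively open in $X$ and Lemma \ref{lem bf x bd2} (which is stated for $C$ a maximal cell of a \emph{closed} definable set) can be applied to $P$ rather than to $X$; and then one must verify that the iso $H^*_c(X\setminus C; L)\cong H^*_c(Y;L)$ and the identification of the connecting maps really match the abstract long exact sequence of the pair $(X, X\setminus C)$ with $c$-supports. This requires invoking tautness of the relevant subsets in $\tilde X$ via Corollary \ref{cor coho around} (using that $P$ is definably normal and hence the family of all closed subsets of $\tilde P$ is normal and constructible), exactly as in the proof of Lemma \ref{lem bf x bd2}. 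Once these compatibility and tautness points are pinned down, the rest is the standard dévissage of \cite{bf}, now carried out with $c$-supports and in $\bGi$ rather than $\bG$, with the homotopy axiom supplied by Theorem \ref{thm homotopy ax bgi}.
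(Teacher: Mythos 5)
Your route is genuinely different from the paper's, and as written it has gaps. The paper never computes compactly supported cohomology of an open cell: it first establishes finite generation and vanishing of the \emph{ordinary} cohomology $H^*(P;L_P)$ for $P$ a \emph{closed} (hence definably compact, hence $H^*_c=H^*$) definable subset of $[0,\infty]^{2m}$, by the Mayer--Vietoris dévissage of \cite[Theorem 7.4]{bf} using Lemmas \ref{lem bf acyclic cells}, \ref{lem bf cells} and \ref{lem bf x bd2}; it then treats the locally closed case in one stroke, applying Corollary \ref{cor hphi and !} to the sequence $0\to L_X\to L_P\to L_{P\setminus X}\to 0$ for a definable completion $P$ of $X$, and reading off both claims from the resulting long exact sequence together with $\dim X=\dim P$ and $\dim(P\setminus X)<\dim P$. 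Your plan instead runs the dévissage directly on $X$ with $c$-supports, which forces you to know $H^*_c(C;L_C)$ for an open cell $C$. That computation is nowhere in the paper: Lemma \ref{lem bf acyclic cells} concerns ordinary cohomology only, no duality theory is developed in this setting (so ``Poincaré--Lefschetz-type duality'' is not available), and $H^*_c$ is not invariant under deformation retraction, so the ``direct computation via the homotopy axiom and excision'' is itself a nontrivial lemma (one would need, e.g., the proper Vietoris--Begle theorem for $[f,g]_B\to B$ plus the pair sequence for $(f,g)_B\subseteq [f,g]_B$, iterated along the cell structure). This is the main missing piece.

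Two further problems. First, your induction on $\dim X$ does not close: if $X$ has several cells of maximal dimension $r$, then $Y=X\setminus C$ still has dimension $r$, so the inductive hypothesis does not apply to $Y$; you need a secondary induction on the number of cells (which is what \cite{bf} actually does). Second, your displayed long exact sequence conflates two different objects: the third term of the pair sequence for $C\subseteq X$ is $H^{p+1}_c(C;L_C)$, whereas $L^{1+\chi_1(r)}$ in degrees $\{0,r-1\}$ is the \emph{link} cohomology $H^*(C\setminus C_{(t,s)};L_C)$ of Lemma \ref{lem bf cells}(3); these do not agree (already for $r=1$ one is $L$ in degree $1$ and the other $L^2$ in degree $0$). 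If the term really were the link cohomology placed in degree $p-r+1$, it would be nonzero at $p=2r-2>r$ for $r>2$ and your vanishing argument would fail. I would recommend abandoning the cell-by-cell approach with supports and following the paper's reduction to the definably compact case, where all the cell lemmas you cite apply verbatim.
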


\begin{proof}
Let $i\colon X\hookrightarrow P\subseteq [0,\infty ]^{2m}$ be  a definable completion of $X$  (Remark \ref{nrmk def completions}). Since $P$ is a closed subset of $[0,\infty ]^{2m},$ from Lemma \ref{lem bf x bd2} and computations using  Mayer-Vietoris sequence (\cite[Chapter II, Section 13 (32)]{b}, see Remark \ref{nrmk exact triple mv}) we obtain just like in \cite[Theorem 7.4]{bf} that $H^p(P;L_P)$ is  finitely generated for each $p$ and  $H^p(P;L_P)=0$ for $p>\dim P$. Since $X$ is an open definable subset of $P,$ $P\setminus X$ is a closed subset of $[0,\infty ]^{2m}$ and similarly we have  $H^p(P\setminus X;L_{P\setminus X})$ is  finitely generated for each $p$ and  $H^p(P\setminus X;L_{P\setminus X})=0$ for $p>\dim P\setminus X$. 

By Equation (\ref{loc closed rest}) on page \pageref{loc closed rest}, if $Z\subseteq P$ is a definably locally closed subset, then we have $L_Z=i_{!}(L_{P\,|Z})$. Then  Corollary \ref{cor hphi and !} together with the short exact coefficients sequence 
\[0\rightarrow L_X\stackrel{}\rightarrow L_P\stackrel{}\rightarrow L_{P\setminus X}\rightarrow 0\]
yields the long exact cohomology sequence
\[\ldots \rightarrow H^{l-1}(P\setminus X;L_{P\setminus X})\stackrel{\alpha}\rightarrow H^l_c(X;L_X)\stackrel{\beta}\rightarrow H^l(P;L_P)\rightarrow H^l(P\setminus X;L_{P\setminus X})\rightarrow  \ldots .\]
Since $P$ is the closure of $X$ we have $\dim X=\dim P$  and $\dim P\setminus X<\dim P$ (\cite[Chapter 4, (1.8)]{vdd}). It follows that $H^p_c(X;L_X)=0$ for all $p>\dim X$. 
It  follows also that $\ker \beta ={\rm Im}\,\alpha $ is finitely generated and ${\rm Im}\,\beta $ is finitely generated. Thus 
we see that $H^p_c(X;L_X)$ is finitely generated for each $p$. \end{proof}

Below we will require the following, which is obtained going to $\tfT$  and applying the corresponding criterion in the topological case (\cite[Chapter II, 16.12]{b}) (point (iii) is a little bit stronger here) observing that $\widetilde{U}$ with $U\in \op(X_{\T})$ forms a filtrant basis for the topology of $\tilde{X}_{\T}.$ 

\begin{lem}\label{lem criteria for T-sheaves}
Let  $(X,\T)$ be an object of $\fT$  and let $\mathfrak{R}$ be a class of objects of $\mod (A_{X_{\T}}).$ Suppose that $\mathfrak{R}$ satisfies:
    \begin{itemize}
    \item[(i)] for each exact sequence $\exs{\cF'}{\cF}{\cF''}$ with $\cF' \in \mathfrak{R}$ we have $\cF \in \mathfrak{R}$ if and only if $\cF'' \in \mathfrak{R}$;
    \item[(ii)] $\mathfrak{R}$ is stable under filtrant $\Lind$;
    \item[(iii)] $A_V \in \mathfrak{R}$ for any $V \in \op(X_{\T}).$ 
    \end{itemize}
Then $\mathfrak{R}=\mod(A_{X_{\T}})$. \qed 
\end{lem}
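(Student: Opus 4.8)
The plan is to run a three–step reduction inside the category $\mod(A_{X_\T})$, using only the structure theory of $\T$‑sheaves already recalled above (or, alternatively, to transport everything to the $\T$‑spectrum and quote a topological criterion; see the last paragraph). The first step is to observe that $\mathfrak{R}$ is closed under finite direct sums of sheaves $A_V$ with $V\in\op(X_\T)$. Indeed, for $V_1,\dots,V_k\in\op(X_\T)$ there is a split short exact sequence
\[
\exs{A_{V_1}}{\bigoplus_{i=1}^{k}A_{V_i}}{\bigoplus_{i=2}^{k}A_{V_i}};
\]
since $A_{V_1}\in\mathfrak{R}$ by (iii), hypothesis (i) forces $\bigoplus_{i=1}^{k}A_{V_i}\in\mathfrak{R}$ as soon as $\bigoplus_{i=2}^{k}A_{V_i}\in\mathfrak{R}$, and an immediate induction (base case $k=1$ being (iii)) gives $\bigoplus_{i=1}^{k}A_{V_i}\in\mathfrak{R}$. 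Equivalently, $\rho_*\cG\in\mathfrak{R}$ for every $\cG\in\K$.

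The second step treats $\T$‑coherent sheaves. Let $\cF\in\coh(\T)$ and choose, by Fact \ref{fact coh stable}, a finite resolution $0\to\cG^1\to\cdots\to\cG^n\to\cF\to 0$ with each $\cG^j\in\K$; since $\coh(\T)$ is stable under cokernels this resolution, and all its truncations, live in $\coh(\T)$, where $\rho_*$ is exact. Applying $\rho_*$ and splitting into short exact sequences
\[
\exs{\rho_*\cG^1}{\rho_*\cG^2}{C_2},\quad \exs{C_2}{\rho_*\cG^3}{C_3},\quad\dots,\quad \exs{C_{n-1}}{\rho_*\cG^n}{\rho_*\cF},
\]
we apply hypothesis (i) successively: in each sequence the left term already lies in $\mathfrak{R}$ (by the first step, resp. by the previous application of (i)) and the middle term $\rho_*\cG^{j}$ lies in $\mathfrak{R}$, whence the right term lies in $\mathfrak{R}$. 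After $n-1$ steps this yields $\rho_*\cF\in\mathfrak{R}$. Thus $\rho_*\cF\in\mathfrak{R}$ for all $\cF\in\coh(\T)$.

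The third step is the general case: by the equivalence ${\rm Ind}(\coh(\T))\simeq\mod(A_{X_\T})$ of \cite[Corollary 2.2.10]{ep2}, an arbitrary $\cF\in\mod(A_{X_\T})$ can be written $\cF\simeq\lind i \rho_*\cF_i$ for a small filtrant inductive system $\{\cF_i\}_{i\in I}$ in $\coh(\T)$; each $\rho_*\cF_i\in\mathfrak{R}$ by the second step, so $\cF\in\mathfrak{R}$ by (ii), and hence $\mathfrak{R}=\mod(A_{X_\T})$. Alternatively, one may push $\mathfrak R$ through the equivalence $\mod(A_{X_\T})\simeq\mod(A_{\tilde X_\T})$ of Fact \ref{isosites} and invoke the topological criterion \cite[Chapter II, 16.12]{b} on the spectral space $\tilde X_\T$, using that $\{\tilde V:V\in\T\}$ is a basis of $\tilde X_\T$ closed under finite intersections by Fact \ref{fact spec spaces}; this is the route indicated in the remark preceding the statement, and its content is the same. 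The only mildly delicate point in either approach is the same: recognising that $\T$‑coherence (equivalently, constructibility on $\tilde X_\T$) is precisely what supplies a finite‑length resolution by objects of $\K$, so that hypothesis (i) — which only propagates membership in $\mathfrak{R}$ one short exact sequence at a time — can be iterated to its conclusion; everything else is formal.
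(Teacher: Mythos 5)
Your primary argument --- first closing $\mathfrak{R}$ under finite sums of sheaves $A_V$, then reducing to $\coh(\T)$ via the finite resolutions of Fact \ref{fact coh stable}, then to arbitrary sheaves via the indization equivalence and hypothesis (ii) --- is correct, and it is genuinely different from what the paper does: the paper simply transports $\mathfrak{R}$ to the $\T$-spectrum through Fact \ref{isosites} and invokes the topological criterion \cite[Chapter II, 16.12]{b} there, using that the sets $\tilde V$, $V\in\T$, form a basis closed under finite intersections (Fact \ref{fact spec spaces}). Your route has the merit of staying entirely inside the $\T$-sheaf formalism and of making visible exactly where each hypothesis is consumed: (iii) and the first half of (i) for objects of $\K$, the second half of (i) for the coherent case, and (ii) only at the very last step.

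One point needs flagging, however. The lemma is stated for an arbitrary object $(X,\T)$ of $\fT$, i.e.\ only conditions (i) and (ii) of Definition \ref{def:T-topology} together with $X\in\T$ are assumed; but Fact \ref{fact coh stable} and the equivalence ${\rm Ind}(\coh(\T))\simeq\mod(A_{\T})$ are stated in the paper only under the stronger hypothesis that $X$ is a $\T$-space, and condition (iii) of Definition \ref{def:T-topology} is genuinely used there (for instance, to see that $\K\subseteq\coh(\T)$ one needs kernels of morphisms between objects of $\K$ to be $\T$-finite, which can fail when opens in $\T$ have infinitely many $\T$-connected components). So your second and third steps are only licensed when $X$ is a $\T$-space. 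This covers every place the lemma is actually applied in the paper, but for the statement in its stated generality you must fall back on the spectral route you sketch at the end, since Facts \ref{fact spec spaces} and \ref{isosites} hold for all objects of $\fT$.
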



Recall that if $\bG '$ is an elementary extension of $\bG$ or an o-minimal expansion of $\bG$ and  $Y\subseteq \nGim$ is a definable subset, then we have a natural morphism of sites
\[Y(\bG '_{\infty})_{\df} \to Y_{\df}.\]
If $\cF \in \mod(A_{Y_{\df}})$, then we have a corresponding  $\cF(\bG' _{\infty})\in \mod (A_{Y(\bG'_{\infty })_{\df}})$ given by inverse image. \\

\begin{thm}\label{thm bf inv gp-int}
Let $\bG '$ be an elementary extension of $\bG$ or an o-minimal expansion of $\bG$. If $X\subseteq \nGim$ is a definably locally closed subset, then  for every $\cF\in \mod(A_{X_{\df}})$ we have
an isomorphism
\[H^*_c(X;\cF)\simeq H^*_c(X(\bG '_{\infty}); \cF(\bG '_{\infty})).\]
\end{thm}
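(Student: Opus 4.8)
The plan is to reduce the invariance statement to the case of the constant sheaf $L_X$ on a definably locally closed set, where the cohomology with compact supports has been computed concretely via the machinery of Lemmas \ref{lem bf cells}, \ref{lem bf x bd1} and \ref{lem bf x bd2}, and then to bootstrap to an arbitrary sheaf $\cF$ using the criterion of Lemma \ref{lem criteria for T-sheaves}. First I would set up the base change morphism: given the morphism of sites $X(\bG'_\infty)_{\df}\to X_{\df}$ (equivalently, on the level of spectra, the natural continuous map $\widetilde{X(\bG'_\infty)}\to \widetilde X$ coming from restriction of types), pullback of sheaves induces a canonical homomorphism
\[
H^*_c(X;\cF)\longrightarrow H^*_c(X(\bG'_\infty);\cF(\bG'_\infty)),
\]
where $c$ is the family of definably compact supports (normal and constructible by Remark \ref{nrmk c in Gi}). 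The goal is to show this is an isomorphism. I would first observe that $c$ is compatible with the extension: a definably compact (i.e. closed and bounded) subset of $X$ stays closed and bounded after base change, and conversely every closed bounded definable subset of $X(\bG'_\infty)$ is contained in one of the form $C(\bG'_\infty)$ for $C\in c$; this is the $\bGi$-analogue of the usual fact that the family of supports passes to elementary extensions, and it uses the characterization of definable compactness in Remark \ref{nrmk def comp}.

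The core computation is the case $\cF=L_X$ for $L$ a finitely generated module over a noetherian ring $A$. Here I would follow the same strategy as in Theorem \ref{thm fg coho def groups}. Pick a definable completion $i\colon X\hookrightarrow P\subseteq[0,\infty]^{2m}$ (Remark \ref{nrmk def completions}); this base-changes to a definable completion of $X(\bG'_\infty)$. By Corollary \ref{cor hphi and !} and the coefficient sequence $0\to L_X\to L_P\to L_{P\setminus X}\to 0$, it suffices to prove invariance of $H^*(P;L_P)$ and $H^*(P\setminus X;L_{P\setminus X})$ for $P$ (resp. $P\setminus X$) closed in $[0,\infty]^{2m}$, and then invoke the five lemma applied to the resulting long exact sequences and the naturality of the base change map. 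For a closed definable $Y\subseteq[0,\infty]^{2m}$, Lemmas \ref{lem bf x bd1} and \ref{lem bf x bd2} reduce the computation of $H^*(Y;L_Y)$, via Mayer--Vietoris and induction on the number of maximal-dimensional cells in a cell decomposition, to the acyclicity of cells (Lemma \ref{lem bf acyclic cells}) — and every step here (cell decomposition, the open covers ${\mathcal U}_C$, the nerve arguments, the shrinking process) is expressed in a first-order way and so commutes with the extension $\bG\rightsquigarrow\bG'$. Thus one gets $H^*(Y;L_Y)\simeq H^*(Y(\bG'_\infty);L_{Y(\bG'_\infty)})$ compatibly with restriction maps, which yields the constant-sheaf case.

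Next I would extend from $L_X$ to $A_V$ for arbitrary $V\in\op(X_{\df})$: writing $V$ as a relatively open definable subset of $X$, we have $A_V=i_{V!}(A_{V})$ with $i_V\colon V\hookrightarrow X$, and by Corollary \ref{cor hphi and !} $H^*_c(X;A_V)\simeq H^*_{c|V}(V;A_V)$; since $V$ is itself definably locally closed in $\nGim$ and $c|_V$ is again the family of definably compact supports on $V$, the constant-sheaf case applied to $V$ (with $A$ noetherian, $L=A$) gives invariance for $A_V$. Then I would apply Lemma \ref{lem criteria for T-sheaves} to the class $\mathfrak R$ of sheaves $\cF\in\mod(A_{X_{\df}})$ for which the base change map in cohomology with compact supports is an isomorphism in every degree: property (iii) is the $A_V$ case just established; property (ii), stability under filtrant $\Lind$, follows because both $H^*_c(X;-)$ and $H^*_c(X(\bG'_\infty);-)$ commute with filtrant inductive limits (the underlying spectra are spectral spaces, $c$ and its base change are constructible families of supports, and cohomology with supports in a constructible family commutes with filtrant colimits — this is the $\tfT$-translate of the classical fact on spectral spaces, available via the tilde isomorphism), together with exactness of pullback; and property (i), the two-out-of-three property, is immediate from the long exact cohomology sequences with compact supports and the five lemma, using naturality of the base change map. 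Lemma \ref{lem criteria for T-sheaves} then gives $\mathfrak R=\mod(A_{X_{\df}})$, which is the claimed isomorphism for arbitrary $\cF$; the case of a general (not necessarily noetherian) $A$ is recovered by writing $A$ as a filtrant colimit of finitely generated subrings and $\cF$ accordingly, and again using property (ii).

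The main obstacle I anticipate is property (ii) for the class $\mathfrak R$, i.e. establishing cleanly that $H^*_c$ over the possibly larger o-minimal structure $\bG'_\infty$ still commutes with filtrant inductive limits of sheaves — one must be careful that the family $c(\bG'_\infty)$ of supports is genuinely constructible (a colimit of the $\widetilde{C(\bG'_\infty)}$ for $C\in c$) and normal, so that the colimit-exchange lemma on spectral spaces applies, and that the base change map is compatible with these colimits; packaging this correctly, rather than the (routine) cell-by-cell invariance computation, is where the real work lies. A secondary subtlety is checking that the definable completion $P$ and the covers appearing in the proofs of Lemmas \ref{lem bf cells}–\ref{lem bf x bd2} can be chosen uniformly so that base change genuinely intertwines all the maps involved; this is a matter of observing that every construction there is given by an explicit formula in the language, hence survives passing to $\bG'_\infty$.
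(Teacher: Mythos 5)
Your overall architecture is the paper's: compute the constant-sheaf case via a definable completion and the machinery of Lemmas \ref{lem bf x bd1}--\ref{lem bf x bd2}, then bootstrap to arbitrary $\cF$ with Lemma \ref{lem criteria for T-sheaves}. The one substantive divergence is where you run the d\'evissage, and it is exactly there that your argument has a gap. You apply Lemma \ref{lem criteria for T-sheaves} on $X$ itself, to the functor $H^*_c$, which forces you to (a) identify the family of definably compact supports on $X(\bG'_\infty)$ with the base change of the family $c$ on $X$, and (b) prove that $H^*_c$ over $\bG'_\infty$ commutes with filtrant inductive limits. Point (a) is false for elementary extensions: your claim that every closed bounded definable subset of $X(\bG'_\infty)$ is contained in some $C(\bG'_\infty)$ with $C\in c$ fails already for $X=(0,1)\subseteq\nG$ and $K=[\epsilon,1-\epsilon]$ with $\epsilon\in\nG'$ positive and below every positive element of $\nG$; no $\bG$-definable definably compact subset of $(0,1)$ contains $K$. (It does hold for expansions, where the underlying ordered set is unchanged.) Point (b) you explicitly leave unresolved, and it is genuinely delicate precisely because $\Gamma_c$ is not of the form $\Gamma(U;\bullet)$ for an object $U$ of the site, so the cited facts about filtrant colimits and $\T$-flabby sheaves do not apply directly.

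The paper sidesteps both issues with one move: it first rewrites $H^*_c(X;\cF)=H^*(P;i_!\cF)$ and $H^*_c(X(\bG'_\infty);\cF(\bG'_\infty))=H^*(P(\bG'_\infty);(i_!\cF)(\bG'_\infty))$ by applying Corollary \ref{cor hphi and !} \emph{separately in each structure} (using that $P$ is definably compact, hence definably normal, and that $i_!$ commutes with base change by Equation \eqref{loc closed base change}), and then runs Lemma \ref{lem criteria for T-sheaves} on $P$ for \emph{ordinary} cohomology $H^*(P;\bullet)$. There, property (ii) is immediate from the facts that sections over objects of the site commute with filtrant $\Lind$, that filtrant $\Lind$ of $\T$-flabby sheaves is $\T$-flabby, and that $\T$-flabby sheaves are $\Gamma(U;\bullet)$-injective; and no comparison of families of supports across the extension is ever needed. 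Property (iii) on $P$ is handled by the exact sequence $0\to A_V\to A_P\to A_{P\setminus V}\to 0$ and the closed-set case, rather than by re-running the locally-closed argument on each $V$. I would also note that your detour through noetherian $A$ and finitely generated $L$ is unnecessary: the invariance computation for constant sheaves on closed subsets of $[0,\infty]^{2m}$ needs no finiteness hypotheses (those enter only in Theorem \ref{thm fg coho def groups}).
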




\begin{proof}
Let $i\colon X\hookrightarrow P\subseteq [0,\infty ]^{2m}$ be  a definable completion of $X$  (Remark \ref{nrmk def completions}). Since $P$ is a closed subset of $[0,\infty ]^{2m},$ for any closed definable subset $Z\subseteq P$, from Lemma \ref{lem bf x bd2} and computations using  Mayer-Vietoris sequence (\cite[Chapter II, Section 13 (32)]{b}, see Remark \ref{nrmk exact triple mv}) we obtain just like in  \cite[Theorems 8.1 and 8.3]{bf} an isomorphism
\[H^*(Z;A_{Z})\simeq H^*(Z(\bG '_{\infty}); A_{Z(\bG '_{\infty})}).\]

Since $P$ is definably compact it is definably normal and so the family of closed definable subsets of $P$ is a family of definably normal supports on $P$. Also $X$ is an open definable subset of $P,$ hence by Corollary \ref{cor hphi and !}, if $\cF\in \mod (A_{X_{\df}})$, then
\[H^*(P;i_{!}\cF)=H_{c}^*(X; \cF).\]
By invariance of definably compactness (see Remark \ref{nrmk def comp}) and  Corollary \ref{cor hphi and !} in $\bG '_{\infty }$ we also have
\[H^*(P(\bG '_{\infty});i^{\bG '_{\infty}}_{!}\cF(\bG '_{\infty}))=H_{c}^*(X(\bG '_{\infty}); \cF(\bG '_{\infty})).\]
Moreover by Equation (\ref{loc closed base change}) on page \pageref{loc closed base change} applied to the comutative diagram (with tildes omitted) 
\begin{equation*}
\xymatrix{
X(\bG '_{\infty})  \ar@{^{(}->}[r]^{i^{\bG '_{\infty }}}  \ar[d]^{r_|} & P(\bG '_{\infty }) \ar[d]^{r} \\
X \ar@{^{(}->}[r]^{i} & P
}
\end{equation*}
we get  $(i_{!}\cF)(\bG '_{\infty })=i^{\bG '_{\infty}}_{!}\cF(\bG '_{\infty})$ and so
\[H^*(P(\bG '_{\infty});(i_{!}\cF)(\bG '_{\infty}))=H_{c}^*(X(\bG '_{\infty}); \cF(\bG '_{\infty})).\]

The result  will follow once we show that, for every $\cG\in \mod(A_{P_{\df}})$, we have
an isomorphism
\[H^*(P;\cG)\simeq H^*(P(\bG '_{\infty}); \cG(\bG '_{\infty})).\]
But this is obtained  applying Lemma \ref{lem criteria for T-sheaves} in the following way. For (i)  the exact sequence $0 \to \cF' \to \cF \to \cF'' \to 0$ implies the following chain of morphisms in cohomology ($j \in \ZZ$)
{\tiny
$$
\xymatrix{
\cdots \ar[r] & H^j(P;\cF') \ar[r] \ar[d] & H^j(P;\cF) \ar[d] \ar[r] & H^j(P;\cF'') \ar[d] \ar[r] & \cdots \\
\cdots \ar[r] & H^j(P(\bG '_{\infty}); \cF'(\bG '_{\infty})) \ar[r] & H^j(P(\bG '_{\infty}); \cF(\bG '_{\infty})) \ar[r] & H^j(P(\bG '_{\infty}); \cF''(\bG '_{\infty})) \ar[r] & \cdots
}
$$
}
\noindent
Therefore, if we have  isomorphims for  $\cF',$ then using the five lemma  we have isomormphisms for $\cF$ if and only if we have isomorphisms for $\cF'' $. 

For  (ii) first note that: (a) sections commute with filtrant $\Lind$ (\cite[Example 1.1.4 and Proposition 1.2.12]{ep2}); (b) filtrant $\Lind$ of $\T$-flabbly sheaves is $\T$-flabby (\cite[Proposition 2.3.4 (i)]{ep2}); (c) the full additive subcategory of $\T$-flabby objects is $\Gamma (U;\bullet)$-injective for every $U\in \op(P_{\T})$ (\cite[Proposition 2.3.5]{ep2}). This implies that $H^*(P;\Lind \cF_i)=\Lind H^*(P;\cF_i).$ Since the same facts holds in $\bG '_{\infty}$, we have $H^*(P(\bG '_{\infty });\Lind (\cF_i(\bG '_{\infty })))=\Lind H^*(P(\bG '_{\infty });\cF_i(\bG '_{\infty }))$ and (ii) follows since $\Lind (\cF_i(\bG '_{\infty }))=(\Lind \cF_i)(\bG '_{\infty })$ (inverse image commutes with $\Lind$).

For (iii), if $V\in \op(P_{\T})$, then   the exact sequence $0 \to A_V \to A_P \to A_{P\setminus V} \to 0$ implies the following chain of morphisms in cohomology ($j \in \ZZ$)
{\tiny
$$
\xymatrix{
\cdots \ar[r] & H^j(P;A_V) \ar[r] \ar[d] & H^j(P;A_P) \ar[d] \ar[r] & H^j(P;A_{P\setminus V}) \ar[d] \ar[r] & \cdots \\
\cdots \ar[r] & H^j(P(\bG '_{\infty}); A_{V(\bG '_{\infty})}) \ar[r] & H^j(P(\bG '_{\infty}); A_{P(\bG '_{\infty})}) \ar[r] & H^j(P(\bG '_{\infty}); A_{(P\setminus V)(\bG '_{\infty})}) \ar[r] & \cdots
}
$$
}
\noindent
where we are using that $P(\bG '_{\infty})\setminus V(\bG '_{\infty})=(P\setminus V)(\bG '_{\infty})$. Since $P\setminus V$ is a closed definable subset of $P$ we have as observed above  $H^*(P\setminus V; A_{P\setminus V})\simeq H^*((P\setminus V)(\bG '_{\infty}); A_{(P\setminus V)(\bG '_{\infty})})$. But by Corollary \ref{cor hphi and !} $H^*(P; A_{P\setminus V})\simeq H^*(P\setminus V; A_{P\setminus V})$ and by the same result in $\bG '_{\infty }$ we have $H^*(P(\bG '_{\infty}); A_{(P\setminus V)(\bG '_{\infty})})\simeq H^*((P\setminus V)(\bG '_{\infty}); A_{(P\setminus V)(\bG '_{\infty})})$. So (iii) follows now using the five lemma.
 \end{proof}

\subsection{Finiteness and invariance results in ACVF and stable completions}\label{subsection finite and inv in acvf}
Here we will use the  finiteness and invariance  results for o-minimal cohomology with definably compact supports of definably locally closed subsets of $\nGim$ and  Hrushovski and Loeser's main theorem (\cite[Theorem 11.1.1]{HrLo}) to deduce  finiteness and invariance results for cohomology in ACVF and in stable completions. \\

As before, we let $K$ be a model of ACVF. First we recall a few things. Let $V$ and $W$ be a algebraic varieties over $K$. Let $Y\subseteq V\times \nGim$ and $Z\subseteq W\times \nGin$ be definable subsets. Then:  
\begin{enumerate}
\item[(1)]
A pro-definable map $g\colon Y\to \widehat{Z}$ is {\it $\vgs$-continuous} if and only if the pullback of an open definable subset of $\widehat{Z}$  is a $\vgs$-open subset of $Y$ (see page 48 in \cite{HrLo}). Since open definable subsets of $\widehat{Z}$ are exactly the $\hvgs$-open subsets of $\widehat{Z},$ in our terminology, $g\colon Y\to \widehat{Z}$ is $\vgs$-continuous if and only if $g$ is a morphism  $g\colon Y_{\vgs} \to \widehat{Z}_{\hvgs}$ of sites. 

\item[(2)] 
A pro-definable map $g\colon Y\to \widehat{W}$ is {\it g-continuous} (resp. {\it v-continuous}) if and only if for any regular function $h\in \cO_W(W)$ the pullback under $\widehat{(\val \circ h)}\circ g $ of a g-open (resp. v-open) subset of $\nGi$ is g-open (resp. v-open) subset of $Y$, where a subset of $\nGim$ is g-open (resp. v-open) if its pullback under $\val \colon \Aa ^m\to \nGim$ is g-open (resp. v-open). It follows that if $g\colon Y\to \widehat{W}$ is both g-continuous and v-continuous, then $g$ is $\vgs$-continuous.

\item[(3)]
Any pro-definable map $g\colon Y\to \widehat{Z}$  has  a {\it canonical extension} $G\colon \widehat{Y}\to \widehat{Z}$ which is a pro-definable map (\cite[Lemma 3.8.1]{HrLo}) $G$ is given by: if $p\in \widehat{Y}(K)$ and $p|K={\rm tp}(c/K),$ then $G(p)\in \widehat{Z}(K)$ is such that $G(p)|K(c)={\rm tp}(d/K)$ with $d$ a realization of ${\rm tp}(g(c)/K(c))$. It follows that, if $g\colon Y\to \widehat{Z}$ is $\vgs$-continuous  and $U$ is a $\vgs$-open subset of $Z$ then $\widehat{g^{-1}(\widehat{U})})=G^{-1}(\widehat{U})$ and the canonical extension $G$ is a morphism of $\hvgs$-sites.
\end{enumerate}

Let $V$ be a quasi-projective variety over $K$. Let $X\subseteq V\times \nGim$ be a definable subset. By 
\cite[Theorem 11.1.1]{HrLo} (see Fact \ref{fact main HrLo}), let $H\colon I\times \widehat{X}\to \widehat{X}$ be  the continuous pro-definable deformation retraction  with image an iso-definable subset ${\mathfrak X} $ of $\widehat{X}$ definably homeomorphic to a definable subset of some $\nGik$.  Let $h\colon {\mathfrak X}\to \mathcal{X}$ be a pro-definable homeomorphism of ${\mathfrak X}$ with a definable subset of $\nGik$. 

We now make a couple of observations required below which are not explicitly stated in \cite{HrLo}.

\begin{lem}\label{lem retration is vgs}
The continuous pro-definable deformation retraction $H\colon I\times \widehat{X}\to \widehat{X}$ is a morphism of $\hvgs$-sites. 
\end{lem}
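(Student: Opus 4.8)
The goal is to show that the deformation retraction $H\colon I\times\widehat{X}\to\widehat{X}$ from Fact~\ref{fact main HrLo} is a morphism of $\hvgs$-sites, i.e.\ that $H^{-1}$ sends $\hvgs$-open subsets of $\widehat{X}$ to $\hvgs$-open subsets of $I\times\widehat{X}$ (where the latter carries the product of the $\hvgs$-site on $\widehat{X}$ with the o-minimal site on the generalized interval $I$). First I would reduce to the affine case: since $\hvgs$-open subsets are finite unions of basic $\hvgs$-open subsets, and since the $\hvgs$-site behaves well under restriction to the pieces of an affine cover of $V$, it suffices to check the pullback condition locally, for $\widehat{U}$ with $U$ a basic $\vgs$-open subset of an affine chart, i.e.\ $U=\bigcap_{j\in J}\{u\in U_j:\val(f_j(u))<\val(g_j(u))\}$. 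Intersecting with such basic sets commutes with finite intersections, so one is reduced to a single condition of the form $\val(f)<\val(g)$ with $f,g$ regular on a Zariski open.

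The key point is then to relate $\hvgs$-continuity of $H$ to the $\vgs$-continuity notions recalled just before the lemma. Here I would invoke the fact, which is part of the construction in \cite[Chapter~11]{HrLo}, that the homotopy $H$ is built out of maps that are simultaneously v-continuous and g-continuous (the elementary homotopies along the skeleton, the retractions onto fibers of finite morphisms to $\Aa^1$, etc.), and that v-continuity together with g-continuity implies $\vgs$-continuity (item (2) in the list preceding the lemma). Concretely, for a regular function $h$, the composite $\widehat{\val\circ h}\circ H\colon I\times\widehat{X}\to\nGi$ is obtained by composition with the continuous extensions of $\min$ and $+$ applied to finitely many functions of the form $\widehat{\val\circ h'}$, exactly as in the proof of Lemma~\ref{lem:section}; since the pullback of a g-open (resp.\ v-open) subset of $\nGi$ under each such coordinate is g-open (resp.\ v-open) in $I\times\widehat{X}$, and these notions are stable under the finite Boolean operations and under $\min,+$, the pullback of $\{\val(f)<\val(g)\}$ is both v-open and g-open, hence $\vgs$-open. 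Taking finite unions recovers the general $\hvgs$-open set.

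The main obstacle I anticipate is bookkeeping rather than conceptual: making precise that the explicit homotopies assembled in Hrushovski--Loeser's proof are v- and g-continuous in the strong sense needed here (as opposed to merely continuous for the stable-completion topology), and checking that the product site $I\times\widehat{X}_{\hvgs}$ is the right target — in particular that a subset of $I\times\widehat{X}$ whose trace on each $\{t\}\times\widehat{X}$ and $I\times\{p\}$ is open, and which is built from the $\vgs$-open data, is genuinely $\hvgs$-open. I would handle the latter by noting, as in the treatment of $\widehat{X}\times Y$ in Fact~\ref{fact top on hat} and in Remark~\ref{nrmk simple pts}, that $\widehat{X\times I}\cong\widehat{X}\times I$ and the product topology/site coincide, so the $\hvgs$-site on $I\times\widehat{X}$ is generated by products $J\times\widehat{W}$ with $J$ an open subinterval and $W$ basic $\vgs$-open; then $H^{-1}(\widehat{W})$ being a finite union of such boxes follows from the coordinatewise computation above. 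Finally, once the pullback of $\hvgs$-opens is $\hvgs$-open, the fact that $H$ is a morphism in $\fT$ (hence induces $\widetilde{H}$ on $\T$-spectra) is immediate from the definition of morphisms in $\fT$.
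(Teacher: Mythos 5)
Your overall strategy — decompose $H$ into the constituent homotopies of Hrushovski--Loeser's construction and verify each is a morphism of $\hvgs$-sites via the ``v-continuous $+$ g-continuous $\Rightarrow$ $\vgs$-continuous'' criterion — is the same as the paper's, and your identification of the product site on $I\times\widehat{X}$ with the $\hvgs$-site of $\widehat{X\times I}$ is correct. But two steps in your argument do not go through as stated.

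First, it is not true that all four homotopies in the concatenation $H_{\Gamma}^{\alpha}\circ((H_{\widetilde{base}}\circ H_{curves})\circ H_{inf})$ are canonical extensions of v- and g-continuous maps. The inflation homotopy is defined \emph{piecewise}: it is the canonical extension $F$ of a $\vgs$-continuous map on $\widehat{V\setminus D}$ and the identity on $\widehat{D}$ for a closed subvariety $D$. For such a map one cannot directly apply the criterion in item (2)--(3) of the preamble; the paper instead shows that $H_{inf}^{-1}(\widehat{U})=H_{inf}^{-1}(\widehat{U}\setminus\widehat{D})\cup([0,\infty]\times\widehat{U\cap D})$ is a $\hvgs$-\emph{subset} (a Boolean combination), and then uses continuity of $H_{inf}$ (from \cite[Lemma 10.3.2]{HrLo}) together with Fact \ref{fact top on hat and vgs} to upgrade ``open $\hvgs$-subset'' to ``$\hvgs$-open''. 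Likewise the base homotopy $H_{\widetilde{base}}$ is only known to be a morphism of sites by \emph{induction on $\dim V$} (it is produced by the same construction one dimension down), so the argument must be organized as an induction; your proposal has no inductive structure.

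Second, your central computation — that $\widehat{\val\circ h}\circ H$ is a composition of continuous extensions of $\min$ and $+$ applied to finitely many functions $\widehat{\val\circ h'}$, ``exactly as in the proof of Lemma \ref{lem:section}'' — is unjustified and in general false. That tropical-polynomial formula is available for the section $s(p,\gamma)=p\otimes\eta_{\gamma_1}\otimes\cdots\otimes\eta_{\gamma_n}$ only because of Lemma \ref{lem:val_basis} (genericity of the tensored types forces $\val(\sum a_jb^j)=\min_j\val(a_jb^j)$); the Hrushovski--Loeser homotopy is built through curve fibrations, finite covers and resolution steps and admits no such closed form. Replacing this step by the component-by-component analysis above (with the piecewise argument for $H_{inf}$ and the induction for $H_{\widetilde{base}}$) is exactly what is needed to close the gap.
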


\begin{proof}
The statement of \cite[Theorem 11.1.1]{HrLo} is more general: the deformation retraction respects  finitely many definable maps $\zeta _i\colon X\to \nGi$ with canonical extension $\zeta _i\colon \widehat{X}\to \nGi$ and respects the action of a finite algebraic group acting on $V$ leaving $X$ globally invariant. For this reason one can make several reductions such as assume that: $m=0$; $X=V$ and $V$ is a projective and equidimensional variety.  Then the proof proceeds by induction on $\dim (V)$. The case $\dim (V)=0$ being trivial, for the case $\dim (V)>0$ several preliminary reductions are performed allowing to essentially reduce the proof to the case of curve fibration. In the end the homotopy  $H$ is the concatenation $H_{\Gamma }^{\alpha}\circ ((H_{\widetilde{base}}\circ H_{curves})\circ H_{inf})$ of the relative curve homotopy $H_{curves}$, the liftable base homotopy $H_{\widetilde{base}}$, the purely combinatorial tropical homotopy $H_{\nG}^{\alpha }$ and the inflation homotopy $H_{inf}$. 

Furthermore we have: 
\begin{itemize}
\item[(i)]
The inflation homotopy $H_{inf}\colon [0,\infty ]\times \widehat{V}\to \widehat{V}$ is given by
\begin{eqnarray*}
H_{inf}(t,x)=
\begin{cases}
F(t,x) & x\in \widehat{V\setminus D}\\
x & x\in \widehat{D}
\end{cases}
\end{eqnarray*}
where $D$ is a closed subvariety of $V$, $F\colon [0,\infty ]\times \widehat{V\setminus D}\to \widehat{V\setminus D}$ is the canonical extension of a $\vgs$-continuous pro-definable map $f\colon [0,\infty ]\times V\setminus D\to \widehat{V\setminus D}$.  

Let $U\subseteq V$ be a $\vgs$-open subset. Then $U\setminus D$ is a $\vgs$-open subset of $V\setminus D$. Also 
\begin{eqnarray*}
H_{inf}^{-1}(\widehat{U}\setminus \widehat{D})&=& H_{inf}^{-1}(\widehat{U\setminus D})\\
&=&F^{-1}(\widehat{U\setminus D})\\
&=& \widehat{f^{-1}(\widehat{U\setminus D})}
\end{eqnarray*}
and so $H_{inf} ^{-1}(\widehat{U}\setminus \widehat{D})$ is a $\hvgs$-open subset of $[0,\infty ]\times \widehat{V\setminus D}$. It follows that $H_{inf}^{-1}(\widehat{U})=H_{inf} ^{-1}(\widehat{U}\setminus \widehat{D})\cup ([0,\infty ]\times (\widehat{U\cap D}))$ is a $\hvgs$-subset of $[0,\infty ]\times \widehat{V}$. By \cite[Lemma 10.3.2]{HrLo} $H_{inf}$ is continuous and so $H_{inf}^{-1}(\widehat{U})$ is an open subset of $[0,\infty ]\times \widehat{V}$. Therefore, by Fact \ref{fact top on hat and vgs}, $H_{inf}^{-1}(\widehat{U})$ is $\hvgs$-open subset. Hence $H_{inf}$ is a morphism of $\hvgs$-sites.

\item[(ii)]
The relative curve homotopy $H_{curve}\colon [0,\infty ]\times \widehat{V_0\cup D_0}\to \widehat{V_0\cup D_0}$ is the canonical extension of a g-continuous and v-continuous homotopy $h_{curves}\colon [0,\infty ]\times V_0\cup D_0\to \widehat{V_0\cup D_0}$.  Hence by the remarks make above, $H_{curves}$ is a morphism of $\hvgs$-sites.

\item[(iii)] 
The base homotopy $H_{\widetilde{base}}$ is obtained from the canonical extension of a homotopy $h_{base}\colon I\times U\to \widehat{U}$ which is obtained by the induction procedure. Hence, by induction, $H_{\widetilde{base}}$ is also a morphism. By the remarks make above, $H_{curves}$ is a morphism of $\hvgs$-sites.

\item[(iv)]
The tropical homotopy $H_{\nG}^{\alpha }$ is constructed in the $\nGi$ setting where $\vgs$-continuous is the same as definable and continuous, so this homotopy is a also a morphism of $\hvgs$-sites.
\end{itemize}
We can therefore conclude that $H$ is a morphism of $\hvgs$-sites as required.
\end{proof}

\begin{lem}\label{lem homeo of retraction is vgs}
 The  pro-definable homeomorphism  $h\colon {\mathfrak X}\to \mathcal{X}$ of ${\mathfrak X}$ with a definable subset of $\nGik$ is a morphism of $\hvgs$-sites
\end{lem}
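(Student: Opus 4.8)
The plan is to exploit the very explicit description of the skeleton $\mathfrak{X}$ and the retraction $H$ recalled in the proof of Lemma \ref{lem retration is vgs}, together with the characterization of the $\vgs$-topology on $\nGik$ (Remark \ref{nrmk v+g-topology}). Recall that $\mathfrak{X}$ is an iso-definable subset of $\widehat{X}$ and $h\colon \mathfrak{X}\to \mathcal{X}\subseteq\nGik$ is a pro-definable homeomorphism; since on $\nGik$ the $\hvgs$-site is the o-minimal site (Remark \ref{nrmk v+g-topology}), and on $\mathfrak{X}$ the $\hvgs$-site is the one induced from $\widehat{X}_{\hvgs}$, we must show that $h$ and $h^{-1}$ pull back $\hvgs$-open sets to $\hvgs$-open sets.

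First I would reduce to a statement about the composite $g=h\circ f\colon\widehat{X}\to\mathcal{X}$, where $f=H(e_I,\cdot)\colon\widehat{X}\to\mathfrak{X}$ is the final-time retraction. Indeed, $f$ is a morphism of $\hvgs$-sites by Lemma \ref{lem retration is vgs} (it is a restriction of $H$), and $f$ restricted to $\mathfrak{X}$ is the identity; so if $g$ is a morphism of $\hvgs$-sites then, given a $\hvgs$-open $O$ of $\widehat{X}$, the set $h(O\cap\mathfrak{X})=g(O\cap\mathfrak{X})$ equals $g(O)$ intersected with $\mathcal{X}$... — more carefully: for the direction $h^{-1}$ one uses that $g^{-1}(W)$ is $\hvgs$-open in $\widehat{X}$ for $W$ o-minimal open in $\mathcal{X}$, hence $g^{-1}(W)\cap\mathfrak{X}=h^{-1}(W)$ is $\hvgs$-open in $\mathfrak{X}$. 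For the direction $h$ one needs that $f$ is a \emph{retraction}: an open $\hvgs$-subset $U$ of $\mathfrak{X}$ extends to the open $\hvgs$-subset $f^{-1}(h^{-1}(h(U)))$ of $\widehat{X}$... this is circular, so instead I would argue directly that $h(U)$ is o-minimal open by showing $h$ is continuous for the two topologies, using that $h$ is a homeomorphism for the \emph{stable completion} topologies (given) and that on both $\mathfrak{X}$ and $\mathcal{X}$ a basis of the stable-completion topology is given by $\hvgs$-opens (Fact \ref{fact top on hat and vgs} on $\widehat{X}$, and on $\nGik$ the stable-completion topology is the order topology whose basic opens are definable, by Fact \ref{fact top on hat}). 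Concretely: $h$ is a homeomorphism, the $\hvgs$-open subsets of $\mathfrak{X}$ form a basis of its topology, their images under $h$ are open in $\mathcal{X}$, and one checks these images are \emph{definable} (hence o-minimal open), because $h$ is pro-definable and $\mathfrak{X}$ is iso-definable so images of relatively definable subsets are relatively definable; conversely the o-minimal opens of $\mathcal{X}$ are definable and their preimages under the pro-definable map $h$ are relatively definable subsets of $\mathfrak{X}$ which are open, hence (being a basis of opens of a space where $\hvgs$-opens form a basis, and being definable) $\hvgs$-open.

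The main step, and the one I expect to be the real obstacle, is to verify that a definable subset of $\mathfrak{X}$ which is open in the stable-completion topology is actually $\hvgs$-open, i.e. that it is the trace on $\mathfrak{X}$ of a $\widehat{\vgs}$-open subset of $\widehat{X}$. On $\widehat{X}$ the basic opens are $\widehat{U}$ with $U\in\op(X_{\vgs})$ (Fact \ref{fact top on hat and vgs}); so one needs: every relatively open relatively-definable subset of $\mathfrak{X}$ is a finite union of sets of the form $\widehat{U}\cap\mathfrak{X}$. This should follow from Fact \ref{fact main HrLo} together with the fact that $h\colon\mathfrak{X}\to\mathcal{X}$ being a \emph{pro-definable homeomorphism} means both $h$ and $h^{-1}$ are pro-definable and continuous, combined with the explicit $\vgs$-continuity properties of the pieces $H_{inf},H_{curves},H_{\widetilde{base}},H^{\alpha}_{\nG}$ of the homotopy established in Lemma \ref{lem retration is vgs}; in particular the last-stage map and the identification $h$ are built from morphisms of $\hvgs$-sites, so $g=h\circ f$ is a morphism of $\hvgs$-sites, and then $h=g\!\restriction\!\mathfrak{X}$ in the sense that $h^{-1}(W)=g^{-1}(W)\cap\mathfrak{X}$, giving continuity of $h$ for the sites; the reverse continuity comes from $h^{-1}$ being itself realized, via the homeomorphism $\mathfrak X\cong\mathcal X$ and the order topology on $\nGik$, by a definable (hence $\vgs$-continuous in the $\nGi$-sense, cf. item (iv) in the proof of Lemma \ref{lem retration is vgs}) map, whose relevant preimages are $\hvgs$-open by Fact \ref{fact top on hat and vgs}. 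I would then spell out that these two continuity statements together say exactly that $h$ is an isomorphism of the sites $\mathfrak{X}_{\hvgs}$ and $\mathcal{X}_{\df}$, which is the assertion of the lemma.
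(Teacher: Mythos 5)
Your proposal circles the right ingredients but does not close the argument. The step you yourself flag as ``the real obstacle'' --- that an open, relatively definable subset of ${\mathfrak X}$ is actually $\hvgs$-open, i.e.\ a finite union of traces $\widehat{U}\cap {\mathfrak X}$ with $U\in \op(X_{\vgs})$ --- is exactly the hard direction, and deferring it to ``this should follow from Fact \ref{fact main HrLo} together with the explicit $\vgs$-continuity of the pieces $H_{inf}, H_{curves},\dots$'' does not work: those pieces give information about $H$ and hence about the retraction $f=H(e_I,\cdot)$, not about the homeomorphism $h$, which is a separate datum of the Hrushovski--Loeser theorem; this is the same circularity you already ran into in your first reduction via $g=h\circ f$. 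Your final ``hence ($\hvgs$-open)'' for preimages $h^{-1}(W)$ rests on ``open + relatively definable $\Rightarrow$ $\hvgs$-open'' on the ${\mathfrak X}$ side, which is precisely what is unproven.

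The paper sidesteps this by checking the morphism-of-sites condition in the direction where the verification lands inside ${\mathcal X}\subseteq \nGik$, where $\hvgs$-open literally means open and definable (Fact \ref{fact top on hat} and Remark \ref{nrmk v+g-topology}), so no reduction to finite unions of basic opens is needed. Concretely: for $U$ a $\vgs$-open of $X$, $\widehat{U}$ is relatively definable in $\widehat{X}$, so $\widehat{U}\cap{\mathfrak X}$ is iso-/relatively definable in ${\mathfrak X}$ by Lemma \ref{lem cap with rel-iso-def}\,(c) (this is where iso-definability of ${\mathfrak X}$ enters); then $(h^{-1})^{-1}(\widehat{U}\cap{\mathfrak X})=h(\widehat{U}\cap{\mathfrak X})$ is relatively definable in ${\mathcal X}$ by Lemma \ref{lem images and pullbacks} applied to the pro-definable map $h^{-1}$, hence a definable subset of $\nGik$ which is open because $h$ is a homeomorphism, hence $\hvgs$-open. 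This shows $h^{-1}$ is a morphism of $\hvgs$-sites, from which the statement for $h$ is deduced. You do gesture at this computation in your middle paragraph (``images of relatively definable subsets are relatively definable''), but you treat it as only one half of a two-sided verification and then get stuck on the other half; the fix is to recognize that Lemmas \ref{lem cap with rel-iso-def} and \ref{lem images and pullbacks} carry the whole proof once you choose the direction whose target is ${\mathcal X}$.
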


\begin{proof}
Let $U$ be a $\vgs$-open subset of $X$. Then $\widehat{U}$ is a relatively definable subset of $\widehat{X}$. So, since ${\mathfrak X}$ is an iso-definable subset,   $ \widehat{U}\cap {\mathfrak X}$ is a relatively definable subset of ${\mathfrak X}$ (Lemma \ref{lem cap with rel-iso-def} (c)). By Lemma \ref{lem images and pullbacks}, $(h^{-1})^{-1}(\widehat{U})$ is a relatively definable subset of ${\mathcal X}$, hence it is an open definable subset, so a $\hvgs$-open subset (Facts \ref{fact top on hat and vgs} and  \ref{fact top on hat}). Since $h^{-1}:{\mathcal X}\to {\mathfrak X}$ is a morphism of $\hvgs$-sites, so is $h.$
\end{proof}

\begin{lem}\label{lem P and Q for loc closed}
If $X\subseteq V\times \nGim$ is a $\vgs$-locally closed subset, then there is a bounded $\vgs$-closed subset $P\subseteq V\times \nGim$ such that $X\subseteq P$ and $X$ is $\vgs$-open in $P$.
\end{lem}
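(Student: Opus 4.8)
The statement to prove is Lemma~\ref{lem P and Q for loc closed}: a $\vgs$-locally closed subset $X\subseteq V\times\nGim$ sits as a $\vgs$-open subset inside some bounded $\vgs$-closed $P\subseteq V\times\nGim$. The plan is to first treat the quasi-projective case by hand using the schematic distance functions of \cite[Section 3.12]{HrLo} recalled before Theorem~\ref{thm quasi-normal}, and then reduce the general case to it via Chow's lemma and Nagata's theorem, exactly paralleling the structure of the proof of Theorem~\ref{thm quasi-normal}.

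First I would unwind the hypothesis: $X$ being $\vgs$-locally closed means $X=O\cap S$ with $O$ a $\vgs$-open and $S$ a $\vgs$-closed subset of $V\times\nGim$; equivalently $X$ is $\vgs$-open in the $\vgs$-closed set $\bar X^{\vgs}$ (the smallest $\vgs$-closed set containing $X$, which exists by Fact~\ref{fact:vg_closed} / Fact~\ref{fact top on hat and vgs} since $\widehat X$ is open in $\overline{\widehat X}$ and the latter is the stable completion of a $\vgs$-closed set). So the only real content is \emph{boundedness}: we must enlarge $V\times\nGim$-coordinates into a bounded region without destroying $\vgs$-closedness or the $\vgs$-open embedding. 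For the $\nGim$-factor this is the easy move already used in Remark~\ref{nrmk def completions}: the map $p\colon\nGi\to[0,\infty]^2$ there is $\vgs$-bi-continuous (it is piecewise given by valuation-type inequalities), so it replaces $\nGim$ by a bounded subset of $[0,\infty]^{2m}$ preserving $\vgs$-open and $\vgs$-closed subsets. For the variety factor, when $V$ is quasi-projective, embed $V$ as a locally closed subvariety of some $\Pp^N$; on $\Pp^N$ every $\vgs$-closed (equivalently $\vgs$-open) subset is automatically bounded since $\Pp^N$ itself is bounded (it is covered by the standard affine charts, each a closed ball). Thus $V\hookrightarrow\Pp^N$ realizes $X$ inside $\Pp^N\times[0,\infty]^{2m}$, and taking $P$ to be the $\vgs$-closure of the image of $X$ there gives a bounded $\vgs$-closed set containing the image of $X$ as a $\vgs$-open subset; since the embedding $V\hookrightarrow\Pp^N$ and $\nGim\hookrightarrow[0,\infty]^{2m}$ are isomorphisms of $\vgs$-sites onto their images, pulling back $P$ (or rather, recording that $X$ is isomorphic as a $\vgs$-site to a $\vgs$-open subset of the bounded $\vgs$-closed $P$) suffices — though for the cleanest statement one should observe, as in Remark~\ref{nrmk def completions}, that $V\times\nGim$ itself is already $\vgs$-isomorphic to a subset of $\Pp^N\times[0,\infty]^{2m}$, so $P$ can genuinely be taken inside $V\times\nGim$ after transporting back, or the lemma can simply be stated up to $\vgs$-isomorphism (which is how it is used in Remark~\ref{nrmk c in ACVF and hats} and Theorem~\ref{thm finiteness in acvf}).

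For the general case I would follow the three-step reduction of Theorem~\ref{thm quasi-normal}. If $V$ is complete, apply Chow's lemma to get a surjective $g\colon V'\to V$ with $V'$ projective; then $X'=(g\times\id)^{-1}(X)$ is $\vgs$-locally closed in $V'\times\nGim$, hence by the quasi-projective case lies as a $\vgs$-open subset in a bounded $\vgs$-closed $P'$, and one pushes forward using that $g$ restricted to the relevant $\vgs$-closed subsets is $\vgs$-closed (Remark~\ref{nrmk vgs closed maps}) and surjective on stable completions (\cite[Lemma 4.2.6]{HrLo}) to obtain $P=g(P')\cap(\text{suitable }\vgs\text{-closed set})$ — here one must check $X$ is $\vgs$-open in $P$, which follows because $g$ is a morphism of $\vgs$-sites so pullbacks of $\vgs$-opens are $\vgs$-open and boundedness is preserved under images of $\vgs$-closed maps. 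Finally, for arbitrary $V$, Nagata's theorem gives an open immersion $V\hookrightarrow V''$ into a complete variety; since an open immersion is an isomorphism of $\vgs$-sites onto its $\vgs$-open image, $X$ is $\vgs$-locally closed in $V''\times\nGim$ and the complete case applies. The main obstacle I anticipate is bookkeeping in the complete case: one needs to be careful that $g(P')$ stays $\vgs$-closed and bounded and that intersecting with $\overline{X}^{\vgs}$ (to cut back down to something containing $X$ as a $\vgs$-open) does not lose boundedness — but boundedness is clearly inherited by $\vgs$-closed subsets, and $\vgs$-open-ness of $X$ in $P$ is inherited along the morphism of sites, so this is routine once set up correctly.
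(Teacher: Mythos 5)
Your quasi-projective case is essentially the paper's proof: there too one embeds $V$ as an open subvariety of a projective $V'$, composes with the map $l=\id_{V'}\times p^{m}$ built from the map $p$ of Remark \ref{nrmk def completions} so as to land in $V'\times[0,\infty]^{2m}$, picks a $\vgs$-closed subset $Z$ of the image in which $l(X)$ is $\vgs$-open (no closure operator is needed --- the witness coming from the definition of locally closed suffices, which also spares you from justifying that the topological closure of $\widehat{X}$ is the stable completion of a definable set), and sets $P=l^{-1}(Z)$. Where you genuinely differ is the reduction for arbitrary $V$: the paper's proof begins with ``let $V'$ be a projective variety such that $V$ is an open subset'' and so, as written, only covers quasi-projective $V$, even though the lemma is later invoked for arbitrary varieties (Theorem \ref{thm inv in acvf}, Remark \ref{nrmk c in ACVF and hats}); your Chow-plus-Nagata reduction, modelled on Theorem \ref{thm quasi-normal}, supplies exactly the missing cases, and your device of intersecting $g(P')$ with a $\vgs$-closed set in which $X$ is $\vgs$-open does correctly restore $\vgs$-openness of $X$ in the pushed-forward set (your aside that this ``follows because pullbacks of $\vgs$-opens are $\vgs$-open'' points the wrong way; the intersection is what actually does the work).

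The one step you should not leave implicit is why the transported-back $P$ is still bounded. The map $l$ is an isomorphism of $\vgs$-sites onto its image, but $l^{-1}$ does not preserve boundedness: already $p^{-1}(p(\nGi))=\nGi$ is unbounded while $p(\nGi)\subseteq[0,\infty]^{2}$ is bounded. What rescues the argument is that $Z$ is both bounded and $\vgs$-closed, so $\widehat{Z}$ is definably compact (Remark \ref{nrmk def compact in hat}); the image of a definably compact set under the continuous pro-definable map $\widehat{l^{-1}}$ is again definably compact (\cite[Lemma 4.2.6 and Proposition 4.2.9]{HrLo}), and definable compactness translates back into bounded and $\vgs$-closed. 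This is precisely how the paper concludes, and it is the only non-formal point in the transport; the same remark applies to the boundedness of $g(P')$ in your Chow step.
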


\begin{proof}
Indeed, let $V'$ be a projective variety over $K$ such that $V$ is an open subset of $V'$. Then $X$ is still a $\vgs$-locally closed  subset of $V'\times \nGim$. Consider the natural definable map $l\colon V'\times \nGim \to V'\times [0,\infty ]^{2m}$ which is $\id _{V'}$ on the first  coordinate and on the second  coordinate is given as in Remark \ref{nrmk def completions}. Then $l\colon V'\times \nGim\to l(V'\times \nGim)\subseteq V'\times [0,\infty ]^{2m}$ is a definable homeomorphism, a morphism of $\vgs$-sites and  $l(X)$ is $\vgs$-locally closed in $l(V'\times \nGim)$. So there is a $\vgs$-closed subset $Z$ of 
 $l(V'\times \nGim)$ such that $l(X)$ is $\vgs$-open in $Z$. Since $l(V'\times \nGim)$ is $\vgs$-closed in $V'\times [0,\infty ]^{2m}$, and $Z$ and $Z\setminus l(X)$  are bounded and $\vgs$-closed subset of $V'\times \nG _{\infty}^{2m}$. Let $P=l^{-1}(Z)$ and $Q=l^{-1}(Z\setminus l(X))$. Then applying \cite[Lemma 4.2.6]{HrLo} to the surjective definable maps $l^{-1}:Z\to l^{-1}(Z)$ and $l^{-1}:Z\setminus l(X)\to l^{-1}(Z\setminus l(X))$ we see that $\widehat{P}=\widehat{l^{-1}}(\widehat{Z})$ and $\widehat{Q}=\widehat{l^{-1}}(\widehat{Z\setminus l(X)})$, hence they are definably compact by Remark \ref{nrmk def compact in hat} and \cite[Proposition 4.2.9]{HrLo}. Hence,  $P$ and $Q$ are bounded $\vgs$-closed subsets  (Remark \ref{nrmk def compact in hat}), $X$ is $\vgs$-open in $P$ and $Q=P\setminus X$. 
\end{proof}

\begin{lem}\label{lem retraction of vgs-locally closed}
If $X\subseteq V\times \nGim$ is a $\vgs$-locally closed subset, then ${\mathcal X}\subseteq \nGik$ is a definably-locally closed subset. 
\end{lem}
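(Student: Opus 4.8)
\textbf{Proof proposal for Lemma \ref{lem retraction of vgs-locally closed}.}

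The plan is to deduce the statement from the already-established case of $\vgs$-closed (equivalently, by Remark \ref{nrmk def compact in hat}, bounded $\vgs$-closed, hence definably compact stable completion) sets, using the deformation-retraction apparatus of Fact \ref{fact main HrLo} together with Lemmas \ref{lem retration is vgs} and \ref{lem homeo of retraction is vgs}. First, by Lemma \ref{lem P and Q for loc closed}, choose a bounded $\vgs$-closed $P\subseteq V\times\nGim$ with $X$ a $\vgs$-open subset of $P$, and set $Q=P\setminus X$, which is also bounded and $\vgs$-closed. Since $V$ may be taken quasi-projective (replacing it by a projective $V'$ as in the proof of Lemma \ref{lem P and Q for loc closed} does not change the $\vgs$-structure), apply Fact \ref{fact main HrLo} to $P$ and the two distinguished definable subsets $X$ and $Q$: we obtain a continuous pro-definable deformation retraction $H\colon I\times\widehat{P}\to\widehat{P}$ with image an iso-definable $\mathfrak{P}\subseteq\widehat{P}$, a pro-definable homeomorphism $h\colon\mathfrak{P}\to\mathcal{P}$ onto a definable subset of some $\nGik$, and such that the restrictions $H_{|}\colon I\times\widehat{X}\to\widehat{X}$ and $H_{|}\colon I\times\widehat{Q}\to\widehat{Q}$ are again continuous pro-definable retractions. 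Consequently the skeleton decomposes: $\mathfrak{P}\cap\widehat{X}$ and $\mathfrak{P}\cap\widehat{Q}$ are the images of the restricted retractions, and $\mathfrak{X}:=\mathfrak{P}\cap\widehat{X}$, $\mathfrak{Q}:=\mathfrak{P}\cap\widehat{Q}$ partition $\mathfrak{P}$.

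The key point is then that $\mathcal{X}=h(\mathfrak{X})$ is definably locally closed in $\mathcal{P}\subseteq\nGik$, hence in $\nGik$. To see this, note that $\widehat{X}$ is a $\vgs$-open subset of $\widehat{P}$ (Fact \ref{fact top on hat and vgs}), so $\mathfrak{X}=\widehat{X}\cap\mathfrak{P}$ is a $\hvgs$-open subset of the skeleton $\mathfrak{P}$ in the induced site; since $h$ is an isomorphism of $\hvgs$-sites by Lemma \ref{lem homeo of retraction is vgs}, $\mathcal{X}$ is $\hvgs$-open in $\mathcal{P}$, i.e. open definable in $\mathcal{P}$ for the o-minimal site (the $\hvgs$-site on a definable subset of $\nGik$ being the o-minimal site, by Remark \ref{nrmk v+g-topology} and Fact \ref{fact top on hat}). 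Dually, $\widehat{Q}$ is $\vgs$-closed in $\widehat{P}$, so $\mathfrak{Q}=\widehat{Q}\cap\mathfrak{P}$ is $\hvgs$-closed in $\mathfrak{P}$ and hence $\mathcal{Q}:=h(\mathfrak{Q})$ is closed definable in $\mathcal{P}$; but $\mathcal{X}=\mathcal{P}\setminus\mathcal{Q}$, so $\mathcal{X}$ is open in $\mathcal{P}$. Finally $\mathcal{P}$ itself, being the homeomorphic image of the skeleton $\mathfrak{P}$ of the definably compact space $\widehat{P}$, is a definably compact —in particular definably locally closed— subset of $\nGik$ (it is closed and bounded by Remark \ref{nrmk def comp}, since $\widehat{P}$ is definably compact and $h$ a homeomorphism onto it, using Remark \ref{nrmk def compact in hat} and Fact \ref{fact main HrLo}). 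An open subset of a definably locally closed set is definably locally closed, so $\mathcal{X}$ is definably locally closed in $\nGik$.

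The main obstacle I anticipate is checking carefully that the restriction of the retraction $H$ to $\widehat{X}$ has image exactly $\mathfrak{X}=\mathfrak{P}\cap\widehat{X}$ and that this set is genuinely iso-definable and relatively definable inside $\widehat{X}$ — this is where the "preserving each of the definable subsets" clause of Fact \ref{fact main HrLo} must be invoked precisely, together with Lemma \ref{lem cap with rel-iso-def}(b),(c) to intersect the relatively definable set $\widehat{X}$ with the iso-definable skeleton $\mathfrak{P}$. Everything else — that $h$ and $H$ are morphisms of $\hvgs$-sites, and the identification of the $\hvgs$-site on a subset of $\nGik$ with its o-minimal site — is available from Lemmas \ref{lem retration is vgs}, \ref{lem homeo of retraction is vgs} and the remarks of Section \ref{section t-top in acvf}. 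One should also record, for later use in the finiteness and invariance theorems, that under $h$ the pair $(\mathcal{P},\mathcal{Q})$ corresponds to the "completion" pair of $\mathcal{X}$, so that $\vgs$-cohomology of $X$ with definably compact supports is computed by the o-minimal $H^*_c$ of $\mathcal{X}$; but that is beyond the present statement.
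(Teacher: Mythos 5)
Your proposal is correct and follows essentially the same route as the paper: pass to $P$ and $Q=P\setminus X$ via Lemma \ref{lem P and Q for loc closed}, use the "preserving distinguished subsets" clause of Fact \ref{fact main HrLo} so that the skeleton of $\widehat{P}$ decomposes into the skeleta of $\widehat{X}$ and $\widehat{Q}$, and conclude that $\mathcal{X}=\mathcal{P}\setminus\mathcal{Q}$ is open in the closed (definably compact) set $\mathcal{P}$. The only cosmetic difference is that the paper gets closedness of $\mathcal{P}$ and $\mathcal{Q}$ directly from preservation of definable compactness under the continuous pro-definable retraction (\cite[Proposition 4.2.9]{HrLo}), whereas you additionally route the openness of $\mathcal{X}$ in $\mathcal{P}$ through the morphism-of-sites property of $h$; both are valid.
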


\begin{proof}
 Let $V'$ be a projective variety (over the same algebraically closed field) such that $V$ is an open subset of $V'$. Then $X$ is still a $\vgs$-locally closed  subset of $V'\times \nGim$. By Lemma \ref{lem P and Q for loc closed} let $P$ and $Q$ be bounded $\vgs$-closed subsets such that  $X$ is $\vgs$-open in $P$ and $Q=P\setminus X$.

 Now we may assume that the pro-definable deformation retraction $H:I\times \widehat{X}\to \widehat{X}$ is the restriction of a  pro-definable deformation  retraction $H:I\times \widehat{V'}\to \widehat{V'}$ preserving $V$, $P$, $X$ and $Q$. Let ${\mathfrak P}$, ${\mathfrak Q}$ (and ${\mathfrak X}$) be the images 
 under $H$  of $\widehat{P}$, $\widehat{Q}$ (and $\widehat{X}$) respectively. Then ${\mathfrak P}$, ${\mathfrak Q}$ and ${\mathfrak X}$ are iso-definable subset of $\widehat{V'}$ and so ${\mathfrak Q}$ and ${\mathfrak X}$ are iso-definable subset of  ${\mathfrak P}$. Let $h:{\mathfrak P}\to {\mathcal P}$ be a pro-definable 
 homeomorphism between ${\mathfrak P}$ and a definable subset ${\mathcal P}$ of $\nGik$. Then ${\mathcal Q}=h({\mathfrak Q})$ and ${\mathcal X}=h({\mathfrak X})$ 
 are definable subsets of ${\mathcal P}$. By \cite[Proposition 4.2.9]{HrLo} ${\mathcal P}$ and ${\mathcal Q}$ are definably compact definable subsets of $\nGik$, in particular they are closed. Thus, ${\mathcal X}={\mathcal P}\setminus {\mathcal Q}$ is an open definable subset of a closed definable subset, i.e. it is  definably locally closed.
 \end{proof}

As above, let $H\colon I\times \widehat{X}\to \widehat{X}$ be a continuous pro-definable deformation retraction with image an iso-definable 
subset ${\mathfrak X} $ of $\widehat{X}$ for which there is  a pro-definable homeomorphism $h\colon {\mathfrak X}\to \mathcal{X}$  with a definable subset
 ${\mathcal X}$ of $\nGik$.  Then we have a retraction $r=H\circ i_1\colon \widehat{X}\to {\mathfrak X}$ where $i_1\colon \widehat{X}\to I\times {\widehat X}$ is  the natural inclusion $\widehat{X}\to \{e_I\}\times \widehat{X}$ where $e_I$ is the end point of $I$.

As in \cite[Remark 11.1.3 (5)]{HrLo} we have:

\begin{nrmk}\label{nrmk proper retraction}
The pro-definable retraction $\widehat{X}\to {\mathfrak X}$ and the pro-definable homeomorphism  $h\colon {\mathfrak X}\to \mathcal{X}$  can be assumed to be definably proper i.e., the pullback of a definably compact set is definably compact.

Indeed, since $h\colon{\mathfrak X} \to {\mathcal X}$ is a pro-definable homeomorphism, it is definably proper. On the hand, take $V'$ a projective variety (over the same algebraically closed field) such that $V$ is an open subset of $V'$ and assume that the pro-definable deformation retraction $H:I\times \widehat{X}\to \widehat{X}$ is the restriction of a pro-definable deformation retraction $H:I\times \widehat{V'}\to \widehat{V'}$. If ${\mathfrak V}'$ is the image of $H:I\times \widehat{V'}\to \widehat{V'}$, then the retraction $\widehat{V'}\to {\mathfrak V}'$ is definably proper, hence its restrition $\widehat{X}\to {\mathfrak X}$ is also.
\end{nrmk}

\begin{nrmk}\label{nrmk c in ACVF and hats}
Let $X\subseteq V\times \nGim$ be a definable subset. Below we denote by $c$ the family of $\vgs$-supports on $X$ whose elements are the closed $\vgs$-subsets of $X$ each contained in some bounded $\vgs$-closed subset of $X$. We will also denote by $c$ the family of $\hvgs$-supports on $\widehat{X}$ whose elements are the definably compact definable subsets of $\widehat{X}$. 

Note the following:
\begin{itemize}
\item[$\bullet$] 
The isomorphism of sites $\widehat{X}_{\hvgs}\to X_{\vgs}$ is a morphism in $\fT$ and the inverse image of the family $c$ of $\vgs$-supports of $X$ is the family $c$ of $\hvgs$-supports on $\widehat{X}$. The later follows from Remark \ref{nrmk def compact in hat} and the fact that definable subset of $\widehat{X}$ is the same as $\hvgs$-subset of $\widehat{X}$. Hence the isomorphism of sites $\widehat{X}_{\hvgs}\to X_{\vgs}$ induces an isomorphism in cohomology 
$$H^*_c(X;\cF)\simeq H^*_c(\widehat{X};\widehat{\cF}).$$
\item[$\bullet$]
If $X$ is a $\vgs$-locally closed subset, then  the family $c$ of $\hvgs$-supports on $\widehat{X}$ is a family of $\hvgs$-normal supports on $\widehat{X}$. Indeed, by Corollary \ref{cor:mix-normal} $V\times \nGim$ is weakly $\vgs$-normal, so $P$ (of Lemma \ref{lem P and Q for loc closed})  being a $\vgs$-closed subset of $V\times \nGim$ is also weakly $\vgs$-normal. Since $X$ is a $\vgs$-open subset of $P$ and every bounded $\vgs$-closed subset of $X$ is also a $\vgs$-closed subset of $P$ the result follows from Definition \ref{defn wtnormal} (2).
\end{itemize}
\end{nrmk}

It follows from the last two remarks that:

\begin{nrmk}\label{nrmk r* and h* are iso}
If $X\subseteq V\times \nGim$ is a $\vgs$-locally closed definable subset, then we have induced homomorphisms  
$$r^*\colon H^*_c({\mathfrak X};L_{{\mathfrak X}})\to H^*_c(\widehat{X};L_{\widehat{X}})$$ 
and
$$h^*\colon H^*_c(\mathcal{X};L_{\mathcal{X}})\to H^*_c({\mathfrak X};L_{{\mathfrak X}})$$ 
which are isomorphisms. In particular, 
$$H^*_c(\widehat{X};L_{\widehat{X}})\simeq  H^*_c(\mathcal{X};L_{\mathcal{X}}).$$

Indeed, let $j\colon {\mathfrak X}\to \widehat{X}$ be the inclusion. Since ${\mathfrak X}=H(e_I, \widehat{X})$ we have $r\circ j=\id _{{\mathfrak X}}$; and as usual $H$ is a homotopy between $j\circ r$ and $\id _{\widehat{X}}$. Moreover, since $H$ is a morphism of $\hvgs$-sites (Lemma \ref{lem retration is vgs}), we also have that  $r\circ j$ and $j\circ r$ are morphisms of $\hvgs$-sites. Since ${\mathfrak X}=r(\widehat{X})$ we get that $r^*$ is an isomorphism by homotopy axiom (Theorem \ref{thm homotopy ax stable}); $h^*$ is an isomorphism since $h$ is a pro-definable homemorphism.

Note that by the same argument, taking the family of $\vgs$-supports  given by the $\vgs$-closed subsets, we get, for any definable subset $X\subseteq V\times \nGim$, an isomorphism 
$$H^*(\widehat{X};L_{\widehat{X}})\simeq  H^*(\mathcal{X};L_{\mathcal{X}}).$$
\end{nrmk}

\begin{thm}\label{thm finiteness in acvf}
Let $A$ be a  noetherian ring and let $L$ be a  finitely generated  $A$-module. Let $V$ be a variety over $K$.  If $X\subseteq V\times \nGim$ is a $\vgs$-locally closed definable subset, then $H^p_c(X;L_X)\simeq H^p_c(\widehat{X};L_{\widehat{X}})$ is  finitely generated for each $p$. 
\end{thm}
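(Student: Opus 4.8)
\emph{Overview of the plan.} The idea is to reduce the statement, via Hrushovski--Loeser's main theorem, to the finiteness result already proved for $\bGi$ in Theorem \ref{thm fg coho def groups}. The isomorphism $H^p_c(X;L_X)\simeq H^p_c(\widehat{X};L_{\widehat{X}})$ is immediate from the first bullet of Remark \ref{nrmk c in ACVF and hats} (applied to the constant sheaf $L_X$, whose inverse image on $\widehat{X}_{\hvgs}$ is $L_{\widehat{X}}$), so only finite generation needs proof, and one may work on either side.

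\emph{The quasi-projective case.} Assume first that $V$ is quasi-projective. By Fact \ref{fact main HrLo}, $\widehat{X}$ admits a pro-definable deformation retraction onto an iso-definable subset $\mathfrak{X}$ which is definably homeomorphic, via some $h$, to a definable subset $\mathcal{X}\subseteq\nGik$; by Lemma \ref{lem retraction of vgs-locally closed} the set $\mathcal{X}$ is definably locally closed. By Remark \ref{nrmk r* and h* are iso} the retraction and $h$ induce isomorphisms
\[H^p_c(\widehat{X};L_{\widehat{X}})\cong H^p_c(\mathfrak{X};L_{\mathfrak{X}})\cong H^p_c(\mathcal{X};L_{\mathcal{X}}).\]
Since the value group $\nG$ of $K$ is an ordered divisible abelian group, hence (a trivial) o-minimal expansion of an ordered group, Theorem \ref{thm fg coho def groups} applies to $\mathcal{X}\subseteq\nGik$ and shows that $H^p_c(\mathcal{X};L_{\mathcal{X}})$ is finitely generated for each $p$. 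This settles the quasi-projective case; in particular, for any \emph{affine} variety the conclusion holds.

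\emph{General $V$ via Mayer--Vietoris over an affine cover.} I would then prove, by induction on $N$, that whenever $V$ admits a cover $V=\bigcup_{i=1}^{N}V_i$ by affine open subvarieties with every intersection $\bigcap_{i\in S}V_i$ affine, then $H^p_c(X;L_X)$ is finitely generated for every $\vgs$-locally closed $X\subseteq V\times\nGim$; as a variety is separated and of finite type, such a cover always exists, so this proves the theorem. The case $N=1$ is the quasi-projective case above. For $N>1$, put $X_i=X\cap(V_i\times\nGim)$, a $\vgs$-open subset of $X$ lying over $V_i$, and set $U=\bigcup_{i<N}X_i$, $W=X_N$, so that $X=U\cup W$ and $U\cap W=\bigcup_{i<N}(X_i\cap X_N)$. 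Each of $U$, $W$, $U\cap W$ is $\vgs$-open in $X$, hence $\vgs$-locally closed; moreover $U$ lies over the variety $\bigcup_{i<N}V_i$ with its $(N-1)$-element affine cover $\{V_i\}_{i<N}$, $U\cap W$ lies over $\bigcup_{i<N}(V_i\cap V_N)$ with the $(N-1)$-element affine cover $\{V_i\cap V_N\}_{i<N}$ (whose intersections $V_i\cap V_j\cap V_N$ are affine), and $W=X_N$ lies over the affine $V_N$. On $\widehat{X}$ the extension-by-zero short exact sequence
\[0\to (L_{\widehat{X}})_{\widehat{U}\cap\widehat{W}}\to (L_{\widehat{X}})_{\widehat{U}}\oplus (L_{\widehat{X}})_{\widehat{W}}\to L_{\widehat{X}}\to 0\]
(valid since $\widehat{U}\cup\widehat{W}=\widehat{U\cup W}=\widehat{X}$ and $\widehat{U}\cap\widehat{W}=\widehat{U\cap W}$, a $\vgs$-open being sent by $\widehat{(\cdot)}$ to an $\hvgs$-open, and a point of $\widehat{X}$ being a complete type) gives, after applying $H^*_c$ and identifying $H^l_c(\widehat{X};(L_{\widehat{X}})_{\widehat{Z}})\cong H^l_c(\widehat{Z};L_{\widehat{Z}})\cong H^l_c(Z;L_Z)$ for $Z\in\{U,W,U\cap W\}$ through Corollary \ref{cor hphi and !} and Remark \ref{nrmk c in ACVF and hats} (here one uses that the family $c$ on $\widehat{X}$ is normal and constructible, since $X$ is $\vgs$-locally closed, and that it restricts to the family $c$ on each $\widehat{Z}$), the Mayer--Vietoris sequence
\[\cdots\to H^l_c(U\cap W;L_{U\cap W})\to H^l_c(U;L_U)\oplus H^l_c(W;L_W)\to H^l_c(X;L_X)\to H^{l+1}_c(U\cap W;L_{U\cap W})\to\cdots.\]
By the induction hypothesis (for $U$ and $U\cap W$) and the quasi-projective case (for $W$), the outer terms are finitely generated, so, $A$ being noetherian, $H^p_c(X;L_X)$ is finitely generated.

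\emph{Main obstacle.} The essential input is Theorem \ref{thm fg coho def groups}, and the only genuinely non-formal step is the quasi-projective case, which rests on Hrushovski--Loeser's main theorem through Lemma \ref{lem retraction of vgs-locally closed} and Remark \ref{nrmk r* and h* are iso}. In the general case the delicate point is the compatibility of the families of $\hvgs$-supports $c$ under restriction to the $\vgs$-open pieces $\widehat{U},\widehat{W},\widehat{U\cap W}$, so that Corollary \ref{cor hphi and !} really returns the cohomology of $U$, $W$, $U\cap W$ with their \emph{intrinsic} supports; this holds because definable compactness of a definable subset is an absolute notion, independent of the ambient $\vgs$-set in which it is viewed. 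Granting that, the rest is a routine Mayer--Vietoris induction over an affine cover.
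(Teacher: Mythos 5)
Your proof is correct and follows essentially the same route as the paper: the quasi-projective case via the Hrushovski--Loeser deformation retraction, Lemma \ref{lem retraction of vgs-locally closed}, Remark \ref{nrmk r* and h* are iso} and Theorem \ref{thm fg coho def groups}, followed by a Mayer--Vietoris induction over an open cover using Corollary \ref{cor hphi and !}. The only cosmetic difference is that the paper covers $V$ by quasi-projective opens and applies the base case directly to $Y\cap Z\subseteq V_{n+1}\times\nGim$, whereas you insist on an affine cover with affine intersections and invoke the induction hypothesis for $U\cap W$; both work.
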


\begin{proof}
The variety $V$ is a finite union of open quasi-projective sub-varieties $V_1, \ldots , V_n$ and the result follows by induction on $n.$ In the case $n=1$ the variety  $V$ is quasi-projective and,  since $\mathcal{X}$ is definably locally  closed by Lemma \ref{lem retraction of vgs-locally closed}, the result follows from Theorem \ref{thm fg coho def groups} and the isomorphism of Remark \ref{nrmk r* and h* are iso}.

Now suppose that $V=V_1\cup \cdots \cup V_{n+1}$ and, by the induction hypothesis, the result holds for $U=V_1\cup \cdots \cup V_n$. Let $Y=X\cap (U\times \nGim)$ and $Z=X\cap (V_{n+1}\times \nGim)$. Then $Y$ (resp $Z$) is a   $\vgs$-locally closed definable subset of $U\times \nGim$ (resp. $V_{n+1}\times \nGim$) and  $H^p_c(Y;L_Y)$ (resp. is $H^p_c(Z;L_Z)$)  finitely generated for each $p$. Note also that $Y\cap Z$ is a  $\vgs$-locally closed definable subset of  $V_{n+1}\times \nGim$ and  $H^p_c(Y\cap Z;L_{Y\cap Z})$  finitely generated for each $p$.

Since $Y, Z$ and $Y\cap Z$ are $\vgs$-locally closed subsets of $X$, the short exact coefficients sequence 
$$0\to L_{Y\cap Z}\to L_Y\oplus L_Z\to L_X\to 0$$
together with Corollary \ref{cor hphi and !} give the Mayer-Vietoris  sequence
\[\ldots \rightarrow H^l_c(Y;L_Y)\oplus H^l_c(Z;L_Z)\stackrel{\alpha}\rightarrow H^l_c(X;L_X)\stackrel{\beta}\rightarrow H^{l+1}_c(Y\cap Z;L_{Y\cap Z})\rightarrow  \ldots .\]
It  follows also that $\ker \beta ={\rm Im}\,\alpha $ is finitely generated and ${\rm Im}\,\beta $ is finitely generated. Thus 
we see that $H^p_c(X;L_X)$ is finitely generated for each $p$.
\end{proof}

Let $K'$ be an elementary extension of $K$. Let as before $H\colon I\times \widehat{X}\to \widehat{X}$ be a continuous pro-definable deformation retraction with image an iso-definable 
subset ${\mathfrak X} $ of $\widehat{X}$ for which there is  a pro-definable homeomorphism $h\colon {\mathfrak X}\to \mathcal{X}$  with a definable subset
 ${\mathcal X}$ of $\nGik$. Then we have commutative diagrams of morphisms of $\hvgs$-sites
 
\begin{equation*}
\xymatrix{
I(K')\times \widehat{X}(K') \ar[r]^{\,\,\,\,\,\,\,\,\,\,\,\,\,\,\,H^{K'}}  \ar[d]^{} & \widehat{X}(K') \ar[d]^{} \\
I\times \widehat{X} \ar[r]^{H} & \widehat{X}
}
\end{equation*} 

and 

\begin{equation*}
\xymatrix{
{\mathfrak X}(K') \ar[r]^{h^{K'}}  \ar[d]^{} & \mathcal{X}(K') \ar[d]^{} \\
{\mathfrak X} \ar[r]^{h} & \mathcal{X}
}
\end{equation*} 
where the vertical arrows are the morphisms of $\hvgs$-sites taking $\widehat{U}$ to $\widehat{U}(K')$.

Note that $H^{K'}\colon I(K')\times \widehat{X}(K') \to \widehat{X}(K')$ is a continuous a pro-definable deformation retraction with image an iso-definable  subset ${\mathfrak X}(K')$ of $\widehat{X}(K')$, $h^{K'}\colon {\mathfrak X}(K')\to \mathcal{X}(K')$ is a pro-definable homeomorphism  to a definable subset ${\mathcal X}(K')$ of $\nGik(K')$. Furthermore, if $X$ is a $\vgs$-locally closed definable subset, then $X(K')$ is also.

\begin{thm}\label{thm inv in acvf}
Let $V$ be a  variety over $K$. Let $K'$ be an elementary extension of $K$.  If $X\subseteq V\times \nGim$ is a $\vgs$-locally closed definable subset, then  for every $\cF\in \mod(A_{X_{\vgs}})$ we have
an isomorphism
\[H^*_c(X;\cF)\simeq H^*_c(X(K'); \cF(K')).\]
\end{thm}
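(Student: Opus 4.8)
The plan is to transport the statement, via Hrushovski--Loeser's deformation retraction, back to the o-minimal invariance result already established in Theorem \ref{thm bf inv gp-int}, following closely the pattern of the proofs of Theorems \ref{thm finiteness in acvf} and \ref{thm bf inv gp-int}. First I would reduce to the case where $V$ is quasi-projective: writing $V=V_1\cup\cdots\cup V_n$ as a finite union of quasi-projective Zariski-open subvarieties and inducting on $n$, one sets $Y=X\cap((V_1\cup\cdots\cup V_{n-1})\times\nGim)$ and $Z=X\cap(V_n\times\nGim)$ — both, together with $Y\cap Z$, again $\vgs$-locally closed, and with $Y\cup Z=X$ — and uses the short exact coefficient sequence $0\to\cF_{Y\cap Z}\to\cF_Y\oplus\cF_Z\to\cF\to 0$ together with Corollary \ref{cor hphi and !} to obtain a Mayer--Vietoris long exact sequence in $H^*_c(-;\cF|_{-})$ exactly as in the proof of Theorem \ref{thm finiteness in acvf}; this sequence is natural in the extension $K\to K'$, so the five lemma reduces us to $n=1$.

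For $V$ quasi-projective I would next pass to a definably compact completion. By Lemma \ref{lem P and Q for loc closed} there is a bounded $\vgs$-closed $P\subseteq V\times\nGim$ with $X$ $\vgs$-open in $P$; then $\widehat P$ is definably compact and $\hvgs$-normal (Remarks \ref{nrmk def compact in hat} and \ref{nrmk c in ACVF and hats}), so the family of all closed $\hvgs$-subsets of $\widehat P$ is normal and constructible and agrees with the family $c$ on $\widehat P$. Writing $i\colon\widehat X\hookrightarrow\widehat P$ for the open inclusion, Remark \ref{nrmk c in ACVF and hats} together with Corollary \ref{cor hphi and !} give $H^*_c(X;\cF)\simeq H^*_c(\widehat X;\widehat\cF)\simeq H^*(\widehat P;i_!\widehat\cF)$, and similarly over $K'$, with $(i_!\widehat\cF)(K')\simeq i^{K'}_!\,\widehat\cF(K')$ by the base change formula (Equation \ref{loc closed base change}). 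So it will be enough to prove that for every $\cG\in\mod(A_{\widehat P_{\hvgs}})$ one has $H^*(\widehat P;\cG)\simeq H^*(\widehat P(K');\cG(K'))$ compatibly with the canonical maps. To reduce this to constant coefficients I would invoke Lemma \ref{lem criteria for T-sheaves}: letting $\mathfrak R$ be the class of $\cG$ for which the isomorphism holds, conditions (i) and (ii) go through verbatim as in the proof of Theorem \ref{thm bf inv gp-int} (five lemma on the long exact cohomology sequences; filtrant $\Lind$ of $\T$-flabby sheaves is $\T$-flabby, so cohomology commutes with such limits), while for (iii), given $\widehat W$ $\hvgs$-open in $\widehat P$ with $W$ $\vgs$-open in $P$, the complement $\widehat P\setminus\widehat W=\widehat{P\setminus W}$ is the stable completion of a bounded $\vgs$-closed set, so from $0\to A_{\widehat W}\to A_{\widehat P}\to A_{\widehat{P\setminus W}}\to 0$ and the five lemma it suffices to treat $\cG=A_{\widehat Z}$ for every bounded $\vgs$-closed $Z\subseteq V\times\nGim$.

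For this last, constant-coefficient case I would use the deformation retraction. Choosing a projective compactification $V'\supseteq V$, as in the proof of Lemma \ref{lem retraction of vgs-locally closed} one may take $H$ on $\widehat{V'\times\nGim}$ to preserve $\widehat Z$, with iso-definable image $\mathfrak Z$ and a definable homeomorphism $h\colon\mathfrak Z\iso\mathcal Z$ onto a definable subset $\mathcal Z\subseteq\nGik$; since the retraction is definably proper (Remark \ref{nrmk proper retraction}) and $\widehat Z$ is definably compact, $\mathcal Z$ is definably compact, hence a definably locally closed subset of $\nGik$. By the ordinary-cohomology form of the retraction isomorphism at the end of Remark \ref{nrmk r* and h* are iso} (using Lemma \ref{lem homeo of retraction is vgs}) one gets $H^*(\widehat Z;A_{\widehat Z})\simeq H^*(\mathfrak Z;A_{\mathfrak Z})\simeq H^*(\mathcal Z;A_{\mathcal Z})$, and the analogous isomorphism holds over $K'$ via $H^{K'}$ and $h^{K'}$; Theorem \ref{thm bf inv gp-int} applied to $\mathcal Z$ gives $H^*(\mathcal Z;A_{\mathcal Z})\simeq H^*(\mathcal Z(K');A_{\mathcal Z(K')})$, and the commutative squares relating $H,h$ to $H^{K'},h^{K'}$ displayed just before the statement show that all these identifications are compatible, yielding $H^*(\widehat Z;A_{\widehat Z})\simeq H^*(\widehat Z(K');A_{\widehat Z(K')})$. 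This verifies (iii), so $\mathfrak R=\mod(A_{\widehat P_{\hvgs}})$ and the theorem follows.

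I expect the main obstacle to be the bookkeeping of base change along $K\to K'$: one must check that $H^{K'}$ and $h^{K'}$ really form a deformation retraction and a homeomorphism of the same combinatorial type as $H$ and $h$, that $\mathcal Z(K')$ is the $K'$-locus of the very definable set cut out over $K$, and that the identifications of $c$-supports, of the operation $\widehat{(\cdot)}$, and of extension by zero are all natural in $K\to K'$, so that the repeated five-lemma arguments genuinely apply; the remaining steps are essentially a transcription of the arguments already carried out over $\bGi$ and, for the reduction scheme, of the proof of Theorem \ref{thm finiteness in acvf}.
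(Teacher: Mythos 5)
Your proof is correct and follows essentially the same route as the paper's: Mayer--Vietoris with the five lemma to handle non-quasi-projective $V$, Corollary \ref{cor hphi and !} with the definably compact completion $P$ of Lemma \ref{lem P and Q for loc closed} and Lemma \ref{lem criteria for T-sheaves} to reduce arbitrary sheaves to constant coefficients on $\vgs$-closed subsets, and the Hrushovski--Loeser retraction (Remark \ref{nrmk r* and h* are iso}) combined with Theorem \ref{thm bf inv gp-int} for that base case. The only difference is a cosmetic reordering: the paper first runs the Mayer--Vietoris induction for constant coefficients over arbitrary varieties and only afterwards invokes the criteria lemma, whereas you reduce to quasi-projective $V$ for arbitrary sheaves up front; both orderings go through.
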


\begin{proof}
First let us proof that for every $\vgs$-locally closed definable subset $X\subseteq V\times \nGim$  we have an isomorphism
\[H^*_c(X;A_{X})\simeq H^*_c(X(K'); A_{X(K')}).\]

The variety $V$ is a finite union of open quasi-projective sub-varieties $V_1, \ldots , V_n$ and the result follows by induction on $n.$ In the case $n=1$ the variety  $V$ is quasi-projective and the result  follows from the observations above, Remark \ref{nrmk r* and h* are iso} and Theorem \ref{thm bf inv gp-int} taken in $K$ and in $K'$. 

The inductive step is obtained from the following commutative diagram with the Mayer-Vietoris sequences as in the proof of Theorem \ref{thm finiteness in acvf} taken in $K$ and in $K'$, the inductive hypothesis and the five lemma
{\tiny 
\begin{equation*}
\xymatrix{
 \ar[r]^{}   & H^l_c(Y;A_{Y})\oplus H^l_c(Z;A_{Z})\ar[r]^{}  \ar[d]^{} & H^l_c(X;A_{X})\ar[r]^{}  \ar[d]^{} & H^{l+1}_c(Y\cap Z;A_{Y\cap Z}) \ar[r]^{} \ar[d]^{} &  \\
\ar[r]^{}  & H^l_c(Y(K');A_{Y(K')})\oplus H^l_c(Z(K');A_{Z(K')})\ar[r]^{}  & H^l_c(X(K');A_{X(K')})\ar[r]^{}  & H^{l+1}_c((Y\cap Z)(K');A_{(Y\cap Z)(K')}) \ar[r]^{}  & 
}.
\end{equation*}
}

By Lemma \ref{lem P and Q for loc closed} let $P$ be a definably compact $\vgs$-closed subset such that $X\subseteq P$ is a $\vgs$-open subset. It follows from the above that for every $\vgs$-closed subset $Z$ of $P$ we have   
\[H^*(Z;A_{Z})\simeq H^*(Z(K'); A_{Z(K')}).\]

With this set up, the rest of the proof is exactly the same as that of Theorem \ref{thm bf inv gp-int} using Corollary \ref{cor hphi and !}, Remark \ref{nrmk def compact in hat} instead of Remark \ref{nrmk def comp} and Lemma \ref{lem criteria for T-sheaves}.
\end{proof}

\begin{nrmk}\label{rem cohom general sheaves}
Note that for the cohomologies $H^*(X;L_X)$ and $H^*(X;\cF)$ without supports we could also get finiteness and  invariance results respectively if we knew the corresponding results in $\bGi$. Indeed, by the isomorphism $H^*(X;L_X)\simeq H^*(\mathcal{X};L_{\mathcal{X}})$ of Remark \ref{nrmk r* and h* are iso}, in the quasi-projective case, we would get the finiteness and invariance results for any definable subset $X\subseteq V\times \nGim$. For the general case $X$ would have to assumed $\vgs$-normal (in order to use Corollary \ref{cor hphi and !}) and the results would follows by exactly the same arguments as above.
\end{nrmk}

\subsection{Vanishing of cohomology}\label{sec vanishing acvf}

Below are a couple of quick remarks about vanishing of cohomology in our setting.

By the isomorphisms of Remark \ref{nrmk r* and h* are iso} and Theorem \ref{thm fg coho def groups} we immeadiately obtain:

\begin{thm}
Let $V$ be a quasi-projective variety over $K$. If $X\subseteq V\times \nGim$ be a $\vgs$-locally closed subset, then $H^p_c(X;L_X)=0$ for all $p> \dim {\mathcal X}$.
\qed
\end{thm}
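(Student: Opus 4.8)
The plan is to reduce the vanishing statement for $H^p_c(\widehat{X};L_{\widehat{X}})$ to the corresponding vanishing statement for o-minimal cohomology with definably compact supports on the skeleton $\mathcal{X}$, which is already available from Theorem \ref{thm fg coho def groups}. The key input is the chain of isomorphisms
\[
H^p_c(X;L_X)\simeq H^p_c(\widehat{X};L_{\widehat{X}})\simeq H^p_c(\mathcal{X};L_{\mathcal{X}})
\]
established in Remark \ref{nrmk r* and h* are iso} (using, in turn, Remark \ref{nrmk c in ACVF and hats} for the first isomorphism and the homotopy axiom, Theorem \ref{thm homotopy ax stable}, together with $h^*$ being an isomorphism since $h$ is a pro-definable homeomorphism, for the second). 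This chain requires $V$ quasi-projective (so that Fact \ref{fact main HrLo} applies and the deformation retraction $H\colon I\times\widehat{X}\to\widehat{X}$ with iso-definable image $\mathfrak{X}\simeq\mathcal{X}\subseteq\nGik$ exists), and it requires $X$ to be $\vgs$-locally closed (so that $\mathcal{X}$ is definably locally closed by Lemma \ref{lem retraction of vgs-locally closed}, and so that $c$ is a family of $\hvgs$-normal supports on $\widehat{X}$ by Remark \ref{nrmk c in ACVF and hats}) — exactly the hypotheses in the statement.

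First I would invoke Lemma \ref{lem retraction of vgs-locally closed} to conclude that $\mathcal{X}\subseteq\nGik$ is a definably locally closed subset. Then I would apply Theorem \ref{thm fg coho def groups} to $\mathcal{X}$, which gives that $H^p_c(\mathcal{X};L_{\mathcal{X}})=0$ for all $p>\dim\mathcal{X}$ (the theorem requires $A$ noetherian and $L$ finitely generated over $A$, but the hypotheses of the present theorem can be taken to inherit this — or one simply states the vanishing part, which in fact holds for an arbitrary $A$-module since the proof of Theorem \ref{thm fg coho def groups} via the long exact sequence degrades gracefully; in any case the intended reading is that $L$ is a finitely generated module over a noetherian ring, as in the section's running conventions). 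Finally I would transport this vanishing across the isomorphism $H^p_c(\widehat{X};L_{\widehat{X}})\simeq H^p_c(\mathcal{X};L_{\mathcal{X}})$ of Remark \ref{nrmk r* and h* are iso} to obtain $H^p_c(\widehat{X};L_{\widehat{X}})=0$ for $p>\dim\mathcal{X}$.

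There is essentially no obstacle here: the statement is, by design, an immediate corollary of the machinery assembled in the preceding subsections, and the proof is a two-line citation of Lemma \ref{lem retraction of vgs-locally closed}, Theorem \ref{thm fg coho def groups} and Remark \ref{nrmk r* and h* are iso}. The only point that deserves a word of care is making sure the quasi-projectivity hypothesis is genuinely used and recorded — unlike in Theorem \ref{thm finiteness in acvf}, where the general (non-quasi-projective) case is recovered by a Mayer-Vietoris induction over an affine quasi-projective cover, here the bound $\dim\mathcal{X}$ is attached to the skeleton produced by Hrushovski-Loeser's theorem, so one should not attempt to state it for arbitrary $V$; the quasi-projective hypothesis is what guarantees the existence of $\mathcal{X}$ in the first place. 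One might also remark in passing that $\dim\mathcal{X}\leq\dim\widehat{X}$ (indeed $\mathcal{X}$ is a deformation retract of $\widehat{X}$ embedded in $\nGik$), so the bound is at least as sharp as the naive dimension bound, but this is not needed for the statement.
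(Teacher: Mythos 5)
Your proposal is correct and follows exactly the paper's route: the theorem is stated there as an immediate consequence of the isomorphism $H^*_c(X;L_X)\simeq H^*_c(\widehat{X};L_{\widehat{X}})\simeq H^*_c(\mathcal{X};L_{\mathcal{X}})$ from Remark \ref{nrmk r* and h* are iso} (with Lemma \ref{lem retraction of vgs-locally closed} guaranteeing $\mathcal{X}$ is definably locally closed) combined with the vanishing in Theorem \ref{thm fg coho def groups}. Your side remarks on where quasi-projectivity and local closedness enter are accurate and match the paper's use of these hypotheses.
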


When there is a good notion of dimension, recall Remark \ref{nrmk spec chains and dim}, we can obtain the following results.

\begin{thm}\label{thm vanish vg-normal}
Let $X$ be a definable subset of a variety $V$ over $K$. If $X$ is  $\vgs$-normal, then $H^p(X;\cF)=0$ for every $p>\dim(\overline{X}^\mathrm{Zar})$ and every $\cF\in \mod (A_{X_{\vgs}})$, where $\overline{X}^\mathrm{Zar}$ denotes the Zariski closure of $X$.
\end{thm}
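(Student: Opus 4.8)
The plan is to transport the problem to the $\vgs$-spectrum $\widetilde{X}$ of $X$ and then invoke a vanishing theorem for sheaves on a normal spectral space of bounded Krull dimension. Set $d=\dim(\overline{X}^{\mathrm{Zar}})$. First I would note that $(X,\T)$, with $\T=\{U\in\op(X):U\text{ is }\vgs\text{-open}\}$, is an object of $\fT$: indeed $\varnothing,X\in\T$, the $\vgs$-open subsets of $X$ form a basis of the subspace valuation topology on $X$ (Remark \ref{nrmk v+g-topology}), and $\T$ is stable under finite unions and intersections. Hence Fact \ref{isosites} yields an equivalence $\mod(A_{X_{\vgs}})\simeq\mod(A_{\widetilde{X}})$ which preserves global sections, so it identifies the derived functors of $\Gamma$ and gives $H^p(X;\cF)=H^p(\widetilde{X};\widetilde{\cF})$ for all $p$. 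By Fact \ref{fact spec spaces}, $\widetilde{X}$ is a spectral space; and since $X$ is $\vgs$-normal, Proposition \ref{prop main normal def normal} shows that $\widetilde{X}$ is moreover a normal topological space. Thus it suffices to bound the cohomological dimension of the normal spectral space $\widetilde{X}$ by $d$.

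The next step is to bound the lengths of specialization chains in $\widetilde{X}$ by $d$. Since $X$ is a definable subset of the subvariety $\overline{X}^{\mathrm{Zar}}\subseteq V$, and passing from $V$ to $\overline{X}^{\mathrm{Zar}}$ changes neither $X$ as a $\vgs$-topological space nor its $\vgs$-spectrum, Corollary \ref{cor spec length} applied with ambient variety $\overline{X}^{\mathrm{Zar}}$ and $m=0$ gives exactly this bound. (Concretely one reruns the argument of Lemma \ref{lem spec length in acvf}: a chain of specializations in $\widetilde{X}$ induces a chain of valued subfields of $F_p$ for the largest point $p$ of the chain, and $F_p$ — being the function field of a subvariety of $\overline{X}^{\mathrm{Zar}}$ — has transcendence degree at most $d$ over $K$, so \cite[Corollary 3.4.6]{ep-val} bounds the length by $d$.)

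It then remains to prove that $H^p(\widetilde{X};\cG)=0$ for every $\cG\in\mod(A_{\widetilde{X}})$ and every $p>d$; applying this to $\cG=\widetilde{\cF}$ concludes. For this I would use that $\widetilde{X}$ is quasi-compact with a basis of quasi-compact opens closed under finite intersections, so that $\Gamma(\widetilde{X};-)$ commutes with filtrant inductive limits (Fact \ref{UlimU}) and one may reduce to sheaves of the form $A_{\widetilde{U}}$ with $U\in\T$; normality then makes the shrinking lemma (Corollary \ref{cor shrinking lemma}) available, and one runs the standard induction on $d$ via Mayer–Vietoris and the cohomological-dimension estimates of Section \ref{section more on t-sheaves}, localizing around closed points of $\widetilde{X}$ (whose punctured constructible neighbourhoods have strictly shorter specialization chains), exactly as in the topological, semi-algebraic and o-minimal analogues and in the theory of normal spectral spaces (\cite{cr}). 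The two reductions above are routine; I expect the real work to lie in this last step, namely deriving the cohomological-dimension bound of a normal spectral space from the bound on the lengths of its specialization chains in the precise form needed here.
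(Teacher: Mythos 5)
Your proposal is correct and follows essentially the same route as the paper: pass to the (normal, by Proposition \ref{prop main normal def normal}) spectral space given by the $\T$-spectrum (the paper uses $\widetilde{\widehat{X}}$, homeomorphic to your $\widetilde{X}$), bound the length of specialization chains by $\dim(\overline{X}^{\mathrm{Zar}})$ via Corollary \ref{cor spec length}, and conclude from the fact that the cohomological dimension of a normal spectral space is bounded by its Krull dimension. The only difference is that the paper simply cites \cite[Corollary 6]{cr} for this last fact rather than redoing the induction you sketch, so the step you flag as ``the real work'' is already available in the literature.
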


\begin{proof}
Going to $\widetilde{\widehat{X}}$ we obtain a normal spectral space and the result follows from \cite[Corollary 6]{cr}, since $\dim _{{\rm Krull}}(\widetilde{\widehat{X}})$ i.e., the maximal length of a chain of proper specializations of points in $\widetilde{\widehat{X}}$, is bounded by $\dim(\overline{X}^\mathrm{Zar})$  by Corollary \ref{cor spec length}.
\end{proof}

Based on Propositions \ref{prop phi soft}, \ref{prop injective for soft and flabby} and \ref{prop soft open and phi-dim}  together with Theorem \ref{thm vanish vg-normal} the prove of the following result is classical. Compare with \cite[Theorem 3.12]{ep1} in the o-minimal case, with \cite[Corollary 9.4]{D3} in the locally semi-algebraic case, or with \cite[Chapter II, 16.2 and 16.4]{b} in the topological case.

\begin{thm}\label{thm vanish vg-loc closed}
Let $V$ be a variety over $K$. If $X$ is  $\vgs$-locally closed subset of $V$, then $H^p_c(X;\cF)=0$ for every $p>\dim(\overline{X}^\mathrm{Zar})$ and every $\cF\in \mod (A_{X_{\vgs}})$.
\end{thm}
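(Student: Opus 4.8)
The statement is the $\vgs$-locally closed analogue of Theorem \ref{thm vanish vg-normal}, and the strategy is the classical one used to pass from vanishing over normal spaces to vanishing with supports over locally closed subspaces: reduce to the $\Phi$-soft/flabby machinery developed in Section \ref{section more on t-sheaves}. Concretely, I would first pass to the spectral picture via the tilde isomorphism $\mod(A_{X_{\vgs}})\simeq \mod(A_{\widetilde{\widehat{X}}})$, so that the claim becomes $H^p_{\tilde c}(\widetilde{\widehat{X}};\tilde\cF)=0$ for $p>\dim(\overline{X}^{\mathrm{Zar}})$, where $\tilde c$ is the family of normal and constructible supports corresponding to the family $c$ of bounded $\vgs$-closed subsets (this is a family of $\hvgs$-normal supports by Remark \ref{nrmk c in ACVF and hats}, so $\tilde c$ is normal and constructible by Proposition \ref{normal supp}). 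Set $n=\dim(\overline{X}^{\mathrm{Zar}})$.

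The core of the argument is a dimension bound on $\Phi$-soft resolutions. By Proposition \ref{prop injective for soft and flabby} applied with $Z=Y=\widetilde{\widehat{X}}$ and $\Phi=\tilde c$ (all the hypotheses hold since $X$ is $\vgs$-locally closed, hence $\widetilde{\widehat{X}}$ sits inside the normal constructible $\widetilde{\widehat{P}}$ of Lemma \ref{lem P and Q for loc closed}, and each $D\in\tilde c$ is contained in such a $P$ so has a fundamental system of normal constructible locally closed neighborhoods), every $\cF\in\mod(A_{\widetilde{\widehat{X}}})$ has a resolution by $\tilde c$-soft sheaves computing $H^*_{\tilde c}$. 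So it suffices to show that for any $\tilde c$-soft sheaves sitting in a long exact resolution, the sheaf $\mathcal{Z}^n=\ker(\cI^n\to\cI^{n+1})$ is again $\tilde c$-soft; then the complex of global sections with supports in $\tilde c$ is exact in degrees $>n$. By Proposition \ref{prop soft open and phi-dim} this reduces to showing $H^1_{\tilde c|U}(U;\mathcal{G}_{|U})=0$ for all open constructible $U$ and all sheaves $\mathcal{G}$ that are $n$-th syzygies; and by induction on the codimension / excision this in turn rests on the vanishing of $H^p_{\tilde c}$ in degree $p>n$ on constructible \emph{closed} pieces, which by Corollary \ref{cor hphi and !} is $H^p$ (ordinary cohomology, no supports) of a $\vgs$-closed $\vgs$-normal subset of a variety — and that is exactly Theorem \ref{thm vanish vg-normal}, since a $\vgs$-closed subset $Z$ of $X$ has $\dim(\overline{Z}^{\mathrm{Zar}})\le\dim(\overline{X}^{\mathrm{Zar}})=n$. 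I would run this as the standard "cohomological dimension $\le n$" dévissage: writing $X\subseteq P$ with $P$ bounded $\vgs$-closed and $X$ $\vgs$-open in $P$ (Lemma \ref{lem P and Q for loc closed}), use the exact sequence $0\to \cF \to i_!\,i^{-1}\mathcal{J}\to \dots$ with $\mathcal{J}$ an injective (hence flabby, hence soft) resolution on $P$, and Corollary \ref{cor hphi and !} to identify $H^*_{c}(X;\cF)$ with $H^*(P;\cdot)$ of sheaves supported off $P\setminus X$, where the unsupported vanishing theorem applies to the closed pieces $P$ and $P\setminus X$, both of whose Zariski closures have dimension $\le n$.

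The routine part is the diagram-chasing / dévissage; the one genuine point to check carefully — and the place I expect the main obstacle — is verifying that all the \emph{normality and constructibility} hypotheses needed to invoke Propositions \ref{prop injective for soft and flabby}, \ref{prop phi soft}, \ref{prop soft open and phi-dim} and Corollary \ref{cor hphi and !} really hold for $X$, $P$, $P\setminus X$ and the family $c$ simultaneously. This is where Corollary \ref{cor:mix-normal} (giving weak $\vgs$-normality of $V\times\nGin$ and its $\vgs$-closed subsets) and Remark \ref{nrmk c in ACVF and hats} (giving that $c$ on $\widehat{X}$ is a family of $\hvgs$-normal supports when $X$ is $\vgs$-locally closed) are indispensable; once those are in hand the statement follows formally, exactly as in \cite[Theorem 3.12]{ep1}, \cite[Corollary 9.4]{D3} and \cite[Chapter II, 16.2 and 16.4]{b}.
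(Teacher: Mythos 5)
Your proposal is correct and follows essentially the same route as the paper's proof: pass to $\widetilde{\widehat{X}}$, use the $\Gamma_c$-injectivity of $c$-soft sheaves (Proposition \ref{prop injective for soft and flabby}) to reduce to showing that the $n$-th term of a truncated soft resolution is again $c$-soft, apply Proposition \ref{prop soft open and phi-dim} and dimension shifting to turn this into an $H^{n+1}$ vanishing statement for $\cF_U$ on a normal constructible piece, and conclude by Theorem \ref{thm vanish vg-normal}; the normality and constructibility hypotheses are supplied exactly by Corollary \ref{cor:mix-normal}, Remark \ref{nrmk c in ACVF and hats} and Lemma \ref{lem P and Q for loc closed}, as you say. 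The only cosmetic difference is that the paper localizes to a constructible neighborhood $Y\in c$ of each support before dimension shifting, rather than invoking the $P$, $P\setminus X$ dévissage you sketch, but both variants rest on the same ingredients.
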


\begin{proof}
We work in $\widetilde{\widehat{X}}$  and we let $n=\dim(\overline{X}^\mathrm{Zar})$. Since the functor $\Gamma _c(X; \bullet )$ is left exact and the full additive subcategory of $\mod(A_{\widetilde{\widehat{X}}})$ of $c$-soft sheaves is $\Gamma _c(X; \bullet )$-injective (Proposition \ref{prop injective for soft and flabby}), by the general result of homological algebra \cite[Exercise I.19]{ks1} it is enough to show that if   $0\into {\mathcal F}\into {\mathcal I}^0\into {\mathcal I}^1\into \cdots \into {\mathcal I}^n\into 0$ is an exact sequence of sheaves in $\mod (A_{\widetilde{\widehat{X}}})$ such that ${\mathcal I}^k$ is $c$-soft  for $0\leq k\leq n-1$, then ${\mathcal I}^n$ is $c$-soft.

By Proposition  \ref{prop phi soft} (4) it suffices to prove that ${\mathcal I}^n_{|Z}$ is soft for every constructible subset $Z$ of $\widetilde{\widehat{X}}$ which is in the normal and constructible family of supports $c$.  Since $c$ is normal, there is a constructible neighborhood $Y$ of $Z$ in $\widetilde{\widehat{X}}$ which is in $c$. If we show that ${\mathcal I}^n_{|Y}$ is soft, then it will follow that ${\mathcal I}^n_{|Z}$ is soft (Proposition  \ref{prop phi soft} (2)).

Let $U$ be an open and constructible subset of $Y$. By hypothesis and Proposition \ref{prop soft open and phi-dim} each $({\mathcal I}^k_{|Y})_U$ is acyclic for $0\leq k \leq n-1$. Let ${\mathcal Z}^k={\rm ker}(({\mathcal I}^k_{|Y})_U\into ({\mathcal I}^{k+1}_{|Y})_U).$ Then the long exact cohomology sequences of the short exact sequences $0\into {\mathcal Z}^k\into ({\mathcal I}^k_{|Y})_U\into {\mathcal Z}^{k+1}\into 0$ show that
{\small
$$H^q(Y;({\mathcal I}^n_{|Y})_U)=H^q(Y;{\mathcal Z}^n)=H^{q+1}(Y;{\mathcal Z}^{n-1})=\cdots =H^{q+n}(Y;{\mathcal Z}^0)=H^{q+n}(Y;({\mathcal F}_{|Y})_U).$$
}

Since $Y$ is a normal, constructible open subset of $\widetilde{\widehat{X}}$ we have $H^p(Y;{\mathcal G})=0$ for $p>n$ and every sheaf ${\mathcal G}$ on $Y$ by Theorem \ref{thm vanish vg-normal} applied to an appropriate $\vgs$-normal open definable subset of $X$ whose tilde of the hat is $Y$. Thus  $H^1(Y;({\mathcal I}^n_{|Y})_U)=0$. Since $U$ was an arbitrary  open and constructible subset of $Y$, it follows from Proposition \ref{prop soft open and phi-dim} that  ${\mathcal I}^n_{|Y}$ is soft as required.
\end{proof}

\section{Relation with Berkovich spaces}\label{sec:rel-berkovich}

In this section we relate our results to classical results of Berkovich spaces. Let us first recall the relation --already stablished in \cite{HrLo}-- between the stable completion of algebraic varieties and their analytification.

Let $(F,\val)$ be a non-archimedean valued field of rank 1 (i.e $\val (F)\subseteq \RR _{\infty}$). We allow $F$ to be trivially valued and do not assume it to be complete. Consider $\bF\coloneqq  (F, \RR)$ as a substructure of a model of ACVF. Let $V$ be an algebraic variety over $F$ and $X$ be an $\bF$-definable subset of $V$. As in \cite[Chapter 14]{HrLo}, we define $B_{\bF}(X)$, the {\it model-theoretic Berkovich space of $X$}, to be the space of types over $\bF$, concentrated on $X$, which are almost orthogonal to $\RR$. The last condition means that for any $\bF$-definable map $h\colon X\to \RR_{\infty }$ and for any $p\in B_{\bF}(X)$, the push-forward $h_*(p)$ is concentrated on a point of $\RR _{\infty }$ which is denoted by $h(p)$. The topology on $B_{\bF}(X)$ is the topology having as a basis finite unions of finite intersections of sets of the following form
\[
\{p\in X\cap U\mid f(p)\in I\}, 
\]
where $U$ is a Zariski open set of $V$, $f\in \cO_V^{\val}(U)$ and $I$ is an open  interval on $\RR_{\infty }$. In addition, the construction is functorial, that is, given an $\bF$-definable variety $V'$, and $\bF$-definable subset $X'$ of $V'$ and an $\bF$-definable morphism $f\colon X\to X'$, there is an induced map $B_{\bF}(f)\colon B_{\bF}(X)\to B_{\bF}(X')$.

Suppose now that $F$ is complete. As a set, Berkovich's analytification $V^\an$ of $V$ can be described as pairs $(x, u_x)$ with $x$ a point (in the schematic sense) of $V$ and $u_x\colon F(x)\to \RR_{\infty }$ a valuation extending $\val $ on the residue field $F(x)$ of the stalk at $x$. The topology on $V^\an$ is the coarsest topology such that for every $f\in \cO_V(U)$ the maps $x\mapsto u_x\circ f_x$ where $f_x\in \cO_{V, x}\simeq F(x)$ are all continuous. By results in \cite{ber90}, the topological space $V^\an$ is Hausdorff (recall $V$ is assume to be separated), locally compact and locally path connected. The induced topology on the subset $V(F)$ coincides with the valuation topology, and if $F$ is algebraically closed, then this subset is dense. 

As explained in \cite[Section 14.1]{HrLo}, we have:

\begin{fact}\label{fact B_F(V) and V^an}
If $F$ is complete, then the model-theoretic Berkovich space $B_{\bF}(V)$ is canonically homeomorphic to $V^\an$. Moreover, the image of $B_{\bF}(X)$ in $V^\an$ is a semi-algebraic subset in the sense of \cite{duc}, and every semi-algebraic subset of $V^\an$ is of this form. \qed
\end{fact}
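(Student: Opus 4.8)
The statement is due to Hrushovski and Loeser \cite[Section 14.1]{HrLo}; I indicate the argument. The plan is to produce the canonical map $\Phi\colon B_{\bF}(V)\to V^{\an}$ via quantifier elimination in ACVF, to check it is a homeomorphism, and then to match the two notions of semi-algebraicity. First, by quantifier elimination in $\cL_{k,\Gamma}$ a type $p$ over $\bF$ concentrated on $V$ is determined by its quantifier-free part, which — exactly as in Remark \ref{rem:types_pairs}, but with base $\bF$ in place of a model — amounts to the data of a schematic point $x=\supp(p)\in V$ together with a valuation $v_p$ on the residue field $F(x)$ extending $\val$ on $F$ (namely $v_p(f/g)\geq 0$ iff $\val(f(y))\geq\val(g(y))\in p$). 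I would then observe that $p$ is almost orthogonal to $\RR$ if and only if $v_p$ takes all its values in $\RR_\infty$: if some $v_p(f)\notin\RR$, then by o-minimality of the (divisible ordered abelian) value group sort the pushforward of $p$ under the $\bF$-definable map $u\mapsto \val(f(u))$ is a non-realized $1$-type over $\RR$ (one of $a^{+}$, $a^{-}$, $\pm\infty$), contradicting almost orthogonality; conversely, if $v_p$ is $\RR_\infty$-valued then, since every $\bF$-definable map $X\to\RR_\infty$ is piecewise a $\min/+$-combination of $\val$'s of regular functions with coefficients in $\RR$, its pushforward concentrates on a point of $\RR_\infty$. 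Hence $p\mapsto (x,v_p)$ is a bijection $\Phi\colon B_{\bF}(V)\iso V^{\an}$, with inverse sending $(x,u_x)$ to the complete $\bF$-type generated by the prime ideal of $x$ together with the comparison conditions $\val(f(y))\leq\val(g(y))$ dictated by $u_x$ (consistent and complete by quantifier elimination).

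For the topology, I would compare the given bases: a basic open of $B_{\bF}(V)$ has the form $\{p: f(p)\in I\}$ with $f=\val\circ g$ for $g\in\cO_V(U)$ and $I\subseteq\RR_\infty$ an open interval, and under $\Phi$ this is exactly $\{(x,u_x): x\in U,\ u_x(g_x)\in I\}$, a basic open of $V^{\an}$ for the coarsest topology making $x\mapsto u_x\circ g_x$ continuous. Thus $\Phi$ identifies the two bases, so it is a homeomorphism; the Hausdorff, local compactness and density statements for $V^{\an}$ and for $V(F)$ are then those of Berkovich \cite{ber90} recalled above.

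Finally, for the semi-algebraic part, note that by quantifier elimination over $\bF$ an $\bF$-definable subset $X\subseteq V$ is cut out by a finite Boolean combination of conditions of the form $h(y)=0$ and $\val(f(y))\leq\val(g(y))$ with $f,g,h$ rational functions over $F$ (after clearing denominators and rescaling, constants from $\RR$ being absorbed into the comparisons). Transporting through $\Phi$, the image of $B_{\bF}(X)$ in $V^{\an}$ is the locus of $(x,u_x)$ at which $(F(x),u_x)$ satisfies these same conditions, i.e. a finite Boolean combination of loci $\{h=0\}$ and $\{|f|\leq|g|\}$ — which is precisely the definition of a semi-algebraic subset of $V^{\an}$ in the sense of Ducros \cite{duc}; and conversely every semi-algebraic subset, being given by such a combination, arises this way. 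The step I expect to require the most care is this last dictionary between model-theoretic definability over $\bF=(F,\RR)$ and Ducros's semi-algebraic category — in particular the handling of real parameters coming from the value group — where one must stay faithful to the conventions of \cite{duc} and \cite[Section 14.1]{HrLo}.
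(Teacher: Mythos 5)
The paper gives no proof of this Fact --- it is imported directly from \cite[Section 14.1]{HrLo} with the preface ``As explained in \ldots'' --- and your reconstruction follows exactly the expected route: quantifier elimination identifies types over $\bF$ with pairs $(x,u_x)$, almost orthogonality to $\RR$ with $u_x$ being $\RR_\infty$-valued, the two bases of opens coincide under the resulting bijection, and the quantifier-free normal form over $\bF$ (with real parameters in the value-group sort) matches Ducros's semi-algebraic conditions $\{h=0\}$ and $\{|f|\leq\lambda|g|\}$. The only nit is that your dichotomy should read $v_p(f)\in\RR_\infty$ versus not, rather than $v_p(f)\notin\RR$: concentration of the pushforward at $+\infty$ (i.e.\ $f\in\supp(p)$) is a realized point of $\RR_\infty$ and does not violate almost orthogonality, but this slip does not affect the argument.
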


Let us now recall the relation between $B_{\bF}(V)$ and $\widehat{V}$. Let $F^{max}$ be the unique, up to isomorphism over $\bF$, maximally complete algebraically closed field, containing $F$, with value group $\RR$, and residue field equal to the algebraic closure of the residue field of $F$. Recall that maximally complete means that every family of balls with the finite intersection property has a non empty intersection. By \cite[Lemma 14.1.1 and Proposition 14.1.2]{HrLo} we have:

\begin{fact}\label{fact hat and berkovich}
There is a continuous surjective closed map
\[
\pi \colon \widehat{X}(F^{max})\to B_{\bF}(X).
\]
If $F=F^{max},$ then $\pi $ is a homeomorphism.\qed
\end{fact}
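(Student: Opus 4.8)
This is \cite[Lemma 14.1.1 and Proposition 14.1.2]{HrLo}; we outline the argument. The plan is to realize $\pi$ concretely as restriction of types and then verify its topological properties using the model theory of \cite{HHM} and \cite{HrLo}. First I would define $\pi$: a point of $\widehat{X}(F^{max})$ is an $F^{max}$-definable global type $p$ concentrated on $X$ and orthogonal to $\Gamma$, and I set $\pi(p)=p|_{\bF}$. One checks $\pi(p)\in B_{\bF}(X)$, i.e.\ that it is almost orthogonal to $\RR$: for any $\bF$-definable $h\colon X\to\RR_\infty$ the pushforward $h_*(p)$ concentrates on a single point of $\Gamma_\infty(F^{max})=\RR_\infty$ by orthogonality of $p$ to $\Gamma$ over the base $F^{max}\supseteq\bF$, so $h_*(\pi(p))$ concentrates on that same point. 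Continuity of $\pi$ is then immediate, since both topologies are generated by sets $\{\,\cdot\ :\ f_*(\cdot)\in I\}$ with $U$ Zariski open, $f\in\cO_V^{\mathrm{val}}(U)$, $I$ an open interval, such $f$ being defined already over $F$, and $\pi^{-1}(\{q:f(q)\in I\})=\{p:f_*(p)\in I\}$.

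Next I would treat surjectivity and closedness, which are the substantial points and both rely on the maximal completeness of $F^{max}$ (with value group $\RR$ and algebraically closed residue field). For surjectivity: given $q\in B_{\bF}(X)$, I would produce a definable extension $p\supseteq q$ over $F^{max}$ orthogonal to $\Gamma$, invoking the metastability theory of \cite{HHM} --- over a maximally complete base an almost-orthogonal-to-$\RR$ type admits a canonical stably dominated (equivalently, orthogonal-to-$\Gamma$) global extension definable over $F^{max}$ --- and then check this $p$ is concentrated on $X$ and satisfies $\pi(p)=q$. For closedness, I would argue by a properness-type argument using definable types: if $Z\subseteq\widehat{X}(F^{max})$ is closed and $q\in\overline{\pi(Z)}$, choose a definable type on $\pi(Z)$ with limit $q$, lift it along the surjection $\pi|_Z$ (in the manner of \cite[Lemma 4.2.6]{HrLo}) to a definable type on $Z$, whose limit exists in $\widehat{X}(F^{max})$, necessarily lies in $Z$ because $Z$ is closed, and maps to $q$; hence $q\in\pi(Z)$.

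Finally, when $F=F^{max}$ we have $\Gamma_\infty(F^{max})=\RR_\infty$, so almost orthogonality to $\RR$ and orthogonality to $\Gamma$ coincide, and a type over $\bF=(F^{max},\RR)$ almost orthogonal to $\RR$ is already a complete global type; thus the extension in the surjectivity step is forced, $\pi$ is injective, and a continuous closed bijection is a homeomorphism. I expect the main obstacle to be the surjectivity/closedness pair: these genuinely use maximal completeness together with stable domination and the metastability results of \cite{HHM}, whereas the definition of $\pi$, its continuity, and the final bijectivity reduction are routine.
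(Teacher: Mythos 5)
The paper offers no proof of this statement at all --- it is imported verbatim as a Fact from \cite[Lemma 14.1.1 and Proposition 14.1.2]{HrLo} --- so there is nothing internal to compare against, and your sketch is a correct outline of the cited argument: $\pi$ as restriction of types, continuity from the common basis of the two topologies, surjectivity from the existence of stably dominated extensions over the maximally complete base $F^{max}$ (the metastability theorem of \cite{HHM}), and injectivity when $F=F^{max}$ from uniqueness of that extension. The one step I would flag is closedness: the ``limit of a definable type'' description of topological closure that you invoke is established in \cite{HrLo} for (strict) pro-definable subsets of stable completions, and carrying it over to subsets of $B_{\bF}(X)$, which is a space of types over the small structure $\bF$ rather than a pro-definable set, requires separate justification --- but this concerns the external proof, not anything supplied by the present paper.
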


For the rest of this section we let $\widehat{X}$ denote $\widehat{X}(K)$ for some (any) model $K$ of ACVF containing $\bF$. This means in particular that statements about $\widehat{X}$ hold for $\widehat{X}(K)$ for any such model. The following remark provides an example of this.

\begin{nrmk}\label{rem:defconect} Let $K$ be a model of ACVF containing $\bF$. Observe that one can express (over $\bF$) the fact that $X(K)$ is covered by two disjoint $\vgs$-open subsets. Therefore, if two such disjoint $\vgs$-open subsets exist in $K$, they exist in every model of ACVF extending $\bF$. By Fact \ref{fact top on hat and vgs}, this shows that $\widehat{X}(K)$ is definably connected if and only if $\widehat{V}(K')$ is definable connected for any model of ACVF $K'$ containing $\bF$. 
\end{nrmk}

\begin{prop}\label{prop connected and def-connected}
Let $X$ be an $\bF$-definable subset of an algebraic variety $V$ over $F$. Then, the following are equivalent:
\begin{enumerate}
\item[(1)] $\widehat{X}$ is definably connected 
\item[(2)] $\widehat{X}(F^{max})$ is connected.
\end{enumerate}
In addition, if $\widehat{X}(F^{max})$ is connected so is $B_\mathbf{F}(X)$. In particular, when $F$ is complete, if $\widehat{V}(F^{max})$ is connected then $V^\an$ is connected.
\end{prop}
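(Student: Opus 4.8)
The plan is to prove the equivalence of (1) and (2) first, and then deduce the statements about $B_{\mathbf F}(X)$ and $V^{\an}$ from Fact \ref{fact hat and berkovich} together with the fact that a continuous surjection preserves connectedness.

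For the equivalence of (1) and (2), the bridge is Proposition \ref{prop vgs t-space}, which tells us that $\widehat{X}(K)_{\hvgs}$ is a $\T$-space for any model $K$ of ACVF containing $\mathbf F$, together with Remark \ref{tilde on sets and maps}, which says that $\widehat{X}(K)$ is $\hvgs$-connected if and only if its $\hvgs$-spectrum $\widetilde{\widehat{X}(K)}$ is connected (as a topological space). Now I would argue as follows. First, by Lemma \ref{lem vgs and def conn}, definable connectedness of $\widehat{X}(K)$ is equivalent to the only $\vgs$-clopen subsets of $X(K)$ being $\varnothing$ and $X(K)$, which is exactly the statement that $\widehat{X}(K)$ has no proper nontrivial clopen $\hvgs$-subset, i.e. that $\widehat{X}(K)$ is $\hvgs$-connected. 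So (1) is equivalent to $\widehat{X}(K)$ being $\hvgs$-connected for one (equivalently every, by Remark \ref{rem:defconect}) model $K$ of ACVF containing $\mathbf F$. Taking $K = F^{\max}$, (1) is equivalent to $\widehat{X}(F^{\max})$ being $\hvgs$-connected. It remains to relate $\hvgs$-connectedness of $\widehat{X}(F^{\max})$ to ordinary topological connectedness of the space $\widehat{X}(F^{\max})$ with its stable-completion topology. The point is that over $F^{\max}$ the value group is $\RR$, which \emph{is} archimedean, so the stable-completion topology is much better behaved: I expect that $\widehat{X}(F^{\max})$ is locally connected (indeed, via Fact \ref{fact hat and berkovich}, it is homeomorphic to $B_{\mathbf F}(X)$, which embeds as a semi-algebraic subset of a Berkovich analytic space, and Berkovich spaces are locally path connected). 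Granting local connectedness, the connected components of $\widehat{X}(F^{\max})$ are open, hence clopen; by Lemma \ref{lem vgs components} there are only finitely many definably connected components, each of the form $\widehat{U}$ for a $\vgs$-clopen $U$, and these are exactly the topological connected components. Thus $\widehat{X}(F^{\max})$ is topologically connected if and only if it has a single definably connected component, i.e. if and only if $\widehat{X}(F^{\max})$ is definably connected — giving (1) $\Leftrightarrow$ (2).

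For the last two assertions: Fact \ref{fact hat and berkovich} gives a continuous surjection $\pi\colon \widehat{X}(F^{\max}) \to B_{\mathbf F}(X)$; since the continuous image of a connected space is connected, (2) implies $B_{\mathbf F}(X)$ is connected. When $F$ is complete, Fact \ref{fact B_F(V) and V^an} identifies $B_{\mathbf F}(V)$ with $V^{\an}$ as topological spaces, so connectedness of $\widehat{V}(F^{\max})$ forces connectedness of $V^{\an}$, as claimed.

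The main obstacle is the step identifying $\hvgs$-connectedness of $\widehat{X}(F^{\max})$ with ordinary connectedness of the underlying topological space — equivalently, showing that the definably connected components (which are $\widehat{U}$ for $U$ a $\vgs$-clopen subset) coincide with the topological connected components in the maximally complete rank-$1$ case. The ``$\Rightarrow$'' direction is formal from Fact \ref{fact top on hat and vgs} (a $\vgs$-clopen subset gives a topologically clopen subset), but the converse direction needs that $\widehat{X}(F^{\max})$ is locally connected, so that topological components are open and therefore, being complements of finite unions of the other components, must be among the finitely many $\widehat{U}$'s. I would establish local connectedness either by invoking Fact \ref{fact hat and berkovich} to transport the question to $B_{\mathbf F}(X)$ and then Fact \ref{fact B_F(V) and V^an} to view it inside $V^{\an}$ (using that Berkovich spaces are locally path connected by \cite{ber90}, and that semi-algebraic subsets inherit enough of this structure), or directly from the structure of $\widehat{X}(F^{\max})$ via the Hrushovski--Loeser retraction of Fact \ref{fact main HrLo} onto a piecewise-linear skeleton in some $\RR_{\infty}^k$, which is manifestly locally connected, noting that the retraction is a homotopy equivalence and in particular induces a bijection on connected components.
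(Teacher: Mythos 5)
Your second route for the equivalence of (1) and (2) — deformation-retract $\widehat{X}(F^{max})$ onto a skeleton $\mathcal{X}\subseteq \RR_{\infty}^{k}$ via Fact \ref{fact main HrLo}, note that the retraction induces a bijection on both topological and definably connected components, and use that for definable subsets of $\RR_{\infty}^{k}$ connectedness and definable connectedness coincide — is precisely the paper's argument, but it only covers the case where $V$ is quasi-projective, since Fact \ref{fact main HrLo} is stated only for quasi-projective varieties. The proposition is asserted for an arbitrary variety $V$, and most of the paper's proof is devoted to that reduction: embed $V$ as a dense open of a complete variety, apply Chow's lemma to get a projective $V'$ with an epimorphism $f\colon V'\to V$, pull $X$ back to $X'$, push the finitely many definably connected components of $\widehat{X'}(F^{max})$ (which by the quasi-projective case are also its topological components) forward to connected, definably connected subsets $C_i$ covering $\widehat{X}(F^{max})$, and then run a maximality argument in the style of \cite[Chapter III, (2.18)]{vdd} to produce a union of the $C_i$ that is simultaneously a definably connected component and a topological component. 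Your proposal does not address this case at all.

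Your first route, via local connectedness, has a genuine gap even if local connectedness is granted. Openness of the topological components only shows that each topological component lies inside one of the finitely many clopen pieces $\widehat{U_i}$ coming from Lemmas \ref{lem vgs and def conn} and \ref{lem vgs components}; it does not show that each $\widehat{U_i}$ is itself topologically connected, which is exactly the hard direction $(1)\Rightarrow(2)$. A topologically clopen subset of $\widehat{U_i}$ need not be strict pro-definable, so definable connectedness of $\widehat{U_i}$ yields no contradiction, and local connectedness does not bound the number of topological components, so the claim that each component is the complement of a \emph{finite} union of the others is unjustified. There is also a small misstatement: Fact \ref{fact hat and berkovich} makes $\widehat{X}(F^{max})$ homeomorphic to $B_{F^{max}}(X)$, not to $B_{\bF}(X)$, unless $F=F^{max}$. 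Your treatment of the final assertions (continuous surjectivity of $\pi$ preserves connectedness; Fact \ref{fact B_F(V) and V^an} when $F$ is complete) agrees with the paper.
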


\begin{proof}
By Remark \ref{rem:defconect}, (1) above is equivalent to $\widehat{X}(F^{max})$ is definably connected. Suppose first $V$ is quasi-projective. By the main theorem of Hrushovski and Loeser \cite[Theorem 11.1.1]{HrLo}, there is a pro-definable deformation retraction of $\widehat{X}(F^{max})$ to a definable subset ${\mathcal X}$ of $\mathbb{R}_\infty^n$. Hence, $\widehat{X}(F^{max})$ is definably connected (resp. connected) if and only if ${\mathcal X}$ is definably connected (resp. connected). But ${\mathcal X}$ is definably connected if and only if it is connected (the proof in o-minimal expansions of $(\RR, <)$ given in \cite[Chapter III, (2.18) and (2.19) Exercise 7]{vdd} uses only cell decomposition and works in $\RR_{\infty}$).

If $V$ is not quasi-projective, consider an open immersion $V\rightarrow W$ where $W$ is a complete variety and $V$ is Zariski dense. By Chow's lemma, there is an epimorphism $f\colon W'\to W$ where  $W'$ is a projective variety. Consider the quasi-projective variety $V'=f^{-1}(V)$ and let  $X'=f^{-1}(X) \subseteq V'$. By Lemma \ref{lem vgs components},  $\widehat{X'}(F^{max})$ has finitely many definably connected components, say $C'_1, \ldots , C'_k$, which by the above are also the finitely many connected components of the topological space $\widehat{X'}(F^{max})$. The image $C_i$ of each connected component $C'_i$ of $\widehat{X'}(F^{max})$ under the restriction of $f_|$ is a connected subset of $\widehat{X}(F^{max})$. Note that these connected subsets of $\widehat{X}(F^{max})$ are also definably connected subsets. 

Now we follow the idea in the proof of \cite[Chapter III, (2.18)]{vdd}, as above in $\RR _{\infty}$ with cells for the $C_i$'s,  to show that some union of the $C_i$'s is both a definably connected component and a connected component of $\widehat{X}(F^{max})$. From which it follows that  $\widehat{X}(F^{max})$ is definably connected if and only if it is connected.  Let us construct inductively a sequence $i_1,\ldots, i_j,\ldots $ such that for each $j$, $\bigcup _{l\leq j}C_{i_l}$ is both definably connected and connected. Set  $i_1=1$ and if $i_1, \ldots , i_q$ are given let $i_{q+1}=\min \{i: i\notin \{i_1, \ldots , i_q\} \,\,\textrm{and}\,\, C_i\cap (\bigcup _{l\leq q}C_{i_l})\neq \emptyset \}$.  Let $p$ be the maximal length of such a sequence.  We claim that $C:=\bigcup _{j=1}^pC_{i_j}$ is both a definably connected component and a connected component of $\widehat{X}(F^{max})$. 

To see the claim it is enough to show that if $Y\subseteq \widehat{X}(F^{max})$ is a definably connected (resp. connected) subset such that $Y\cap C\neq \emptyset$, then $Y\subseteq C$. Indeed, if this holds, taking $Y$ a definably connected (resp. connected) open neighborhood of a point of $C$ (resp. a point of the complement of $C$) we see that $C$ is open (resp. closed). Now let $I=\{i:C_i\cap Y\neq \emptyset \}$ and $C_Y=\bigcup _{i\in I} C_i$. Since the $C_i$'s cover $\widehat{X}(F^{max})$ we have $Y\subseteq C_Y$. So $C_Y$ is definably connected (resp. connected) being of the form $Y\cup \bigcup _{i\in I}C_i$. Since  $\emptyset \neq C\cap Y\subseteq C\cap C_Y$, it follows that $C\cup C_Y$ is definably connected (resp. connected). By maximality of $C$ we have $C\cup C_Y=C$ and so $Y\subseteq C$ as required.

The last statement of the Proposition follows from Fact \ref{fact hat and berkovich}.  
\end{proof}

\begin{nrmk}\label{rem:connected}  
In general, the connectedness of $B_{\mathbf{F}}(X)$ does not imply $\widehat{X}(F^{max})$ is connected. Take $F$ to be $\mathbb{R}$ with the trivial valuation and let $X$ be defined by $x^2+x+1$. The space $B_\mathbf{F}(X)$ is connected consisting of a single point that corresponds to the Galois orbit of the two 3rd roots of unity. In contrast, $\widehat{X}(F^{max})$ consists of two isolated points, and is therefore not connected.
\end{nrmk}

As an application we recover  a result of Ducros \cite{duc} about the  number of connected components of semi-algebraic subsets of $V^\an$. Let $\rho$ denote the homeomorphism from $B_\bF(V)$ to $V^\an$. 

\begin{thm}[Ducros \cite{duc}] Suppose $F$ is complete. Let $V$  be a variety over $F$ and let $Y$ be a semi-algebraic subset of $V^\an$. Then $Y$ has finitely many connected components. 
\end{thm}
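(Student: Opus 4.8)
By Fact \ref{fact B_F(V) and V^an}, there is an $\bF$-definable subset $X$ of $V$ such that $\rho(B_\bF(X)) = Y$; equivalently, $Y$ corresponds under $\rho$ to $B_\bF(X)$. The strategy is to transfer the finiteness statement from $\widehat{X}(F^{max})$ to $B_\bF(X)$ via the closed continuous surjection $\pi\colon \widehat{X}(F^{max})\to B_\bF(X)$ of Fact \ref{fact hat and berkovich}, using the fact (Lemma \ref{lem vgs components}) that $\widehat{X}(F^{max})$ has finitely many definably connected components, together with Proposition \ref{prop connected and def-connected} identifying definable connectedness with topological connectedness for stable completions over $F^{max}$.

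First I would invoke Lemma \ref{lem vgs components} to write $\widehat{X}(F^{max})$ as a finite disjoint union of its definably connected components $\widehat{U_1},\dots,\widehat{U_r}$, where each $U_i$ is a $\vgs$-clopen $\bF$-definable subset of $X$ and each $\widehat{U_i}$ is definably connected (such a decomposition exists over $\bF$ by Remark \ref{rem:defconect}, since the statement that $X$ is covered by finitely many disjoint $\vgs$-clopen definably connected pieces is first-order over $\bF$). By Proposition \ref{prop connected and def-connected} applied to each $U_i$ (note $\widehat{U_i}$ definably connected implies $\widehat{U_i}(F^{max})$ connected, since $U_i$ is itself $\bF$-definable), each $\widehat{U_i}(F^{max})$ is a connected topological space, and since the $\widehat{U_i}$ are clopen in $\widehat{X}(F^{max})$ they are precisely the connected components of $\widehat{X}(F^{max})$. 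Thus $\widehat{X}(F^{max})$ has finitely many connected components $C_1,\dots,C_r$.

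Next I would push this through $\pi$. Since $\pi$ is surjective and continuous, $B_\bF(X) = \bigcup_{i=1}^r \pi(C_i)$, and each $\pi(C_i)$ is connected as the continuous image of a connected set; moreover each $\pi(C_i) = \pi(\widehat{U_i}(F^{max}))$ is closed in $B_\bF(X)$ because $\pi$ is a closed map and $\widehat{U_i}(F^{max})$ is clopen (hence closed) in $\widehat{X}(F^{max})$. So $B_\bF(X)$ is a finite union of closed connected subsets. A finite union of finitely many closed connected subsets has finitely many connected components: indeed, any connected component of $B_\bF(X)$ is a union of some of the $\pi(C_i)$ (being connected, if it meets $\pi(C_i)$ it is not separated from it, and one runs the same combinatorial argument as in the proof of Proposition \ref{prop connected and def-connected}, forming maximal chains of overlapping $\pi(C_i)$'s whose unions are simultaneously open and closed). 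Transporting through the homeomorphism $\rho$, the semi-algebraic subset $Y = \rho(B_\bF(X))$ of $V^\an$ has finitely many connected components.

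\textbf{Main obstacle.} The delicate point is the very last combinatorial step: a finite union of closed connected sets need not, in a general topological space, have open connected components, so one must argue carefully that the maximal-chain construction produces components that are \emph{both} open and closed in $B_\bF(X)$. This is handled exactly as in the proof of Proposition \ref{prop connected and def-connected}: one builds a maximal sequence $i_1,i_2,\dots$ with each partial union $\bigcup_{l\le j}\pi(C_{i_l})$ connected and each new term overlapping the previous union, shows the resulting set $C$ satisfies "any connected $Y'$ meeting $C$ lies in $C$", and deduces $C$ is clopen by taking $Y'$ to be a connected neighborhood of a point of $C$ or of its complement. The only other subtlety is ensuring the $\bF$-definable set $X$ with $\rho(B_\bF(X))=Y$ exists and that Proposition \ref{prop connected and def-connected} genuinely applies to each $\bF$-definable piece $U_i$ — both are supplied directly by Fact \ref{fact B_F(V) and V^an} and Remark \ref{rem:defconect}.
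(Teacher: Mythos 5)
Your proof is correct and takes essentially the same route as the paper: reduce to an $\bF$-definable $X$ with $Y=\rho(B_\bF(X))$, deduce from Lemma \ref{lem vgs components} and Proposition \ref{prop connected and def-connected} that $\widehat{X}(F^{max})$ has finitely many connected components, and push forward through the surjection $\pi$ of Fact \ref{fact hat and berkovich}. Your final clopen/maximal-chain step is superfluous for mere finiteness: each $\pi(C_i)$, being connected, lies in a single connected component of $B_\bF(X)$, and surjectivity of $\pi$ forces every component to contain some $\pi(C_i)$, so there are at most $r$ components — this is exactly the paper's (shorter) conclusion, and neither closedness of $\pi$ nor openness of the components is needed.
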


\begin{proof} 
Since $Y$ is a semi-algebraic subset of $V^\an$, there is an $\bF$-definable subset $X$ of $V$ such that $Y=\rho (B_\bF(X))$. 
By Lemma \ref{lem vgs components}, Proposition \ref{prop connected and def-connected} and Fact \ref{fact hat and berkovich}, $\widehat{X}(F^{max})$ has finitely many connected components. Now, the image under $\pi$ (as defined in Fact \ref{fact hat and berkovich}) of a connected component of $\widehat{X}(F^{max})$ lies in a connected component of $B_\bF(X)$. Since $\pi$ is surjective, this shows the result.  
\end{proof}

Note also that, if $F$ is algebraically closed, then each connected component of $Y$ is also a semi-algebraically connected component. Indeed, by Lemmas \ref{lem vgs and def conn} and \ref{lem vgs components}, $\widehat{X}(F^{max})$ has finitely many definably connected components each of which is of the form $\widehat{U_i}(F^{max})$ for some $\vgs$-clopen subsets $U_i$ ($i=1,\ldots, k$)  of $X$. By  Proposition \ref{prop connected and def-connected}, each  $\widehat{U_i}(F^{max})$ is also a connected component of $\widehat{X}(F^{max})$.  Since $\pi :\widehat{X}(F^{max})\to B_F(X)$ (as defined in Fact \ref{fact hat and berkovich}) is continuous, surjective and closed and, $\pi (\widehat{U_i}(F^{max}))=B_F(U_i)$, each $B_F(U_i)$ is a closed connected subset of $B_F(X)$. Let  $I_1,\ldots , I_q$ be a finite partition  of $\{1,\ldots, k\}$ such that for each $j\leq q$, $\bigcup _{l\in I_j}B_F(U_l)$ is connected and $I_j$ is maximal with this property. Then as in the last part of the proof of Proposition \ref{prop connected and def-connected}, each $\bigcup _{l\in I_j}B_F(U_l)=B_F(\bigcup _{l\in I_j}U_l)$ is  a connected component of $\widehat{X}(F^{max})$ with $V_j=\bigcup _{l\in I_j}U_l$ a $\vgs$-clopen subset of $X$. Since $F$ is a model of ACVF, by Remark \ref{rem:defconect},  each $V_j$ is $\bF$-definable.


\begin{fact}[{\cite[Proposition 14.1.2]{HrLo}}]\label{berk compact} 
Let $X$ be an $\bF$-definable subset of an algebraic variety $V$ over $F$. Then the following are equivalent:
\begin{itemize}
\item $\widehat{X}$ is definably compact, 
\item $\widehat{X}(F^{max})$ is compact,
\item $B_{\bF}(X)$ is compact. 
\end{itemize}
In particular, when $F$ is complete, $\widehat{V}$ is definably compact if and only if $V^\an$ is compact.\qed
\end{fact}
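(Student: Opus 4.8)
The statement to prove is Fact \ref{berk compact}, which asserts the equivalence of definable compactness of $\widehat{X}$, compactness of $\widehat{X}(F^{max})$, and compactness of $B_{\bF}(X)$, together with the complete-field corollary for $V^\an$. Since this is cited as a Fact from \cite[Proposition 14.1.2]{HrLo}, the proof proposal should be a brief indication of how the three conditions link up using the machinery already assembled in the excerpt, rather than a from-scratch argument.

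The plan is to argue in a cycle. First I would show that $\widehat{X}$ definably compact implies $\widehat{X}(F^{max})$ compact: by Remark \ref{nrmk def compact in hat}, $\widehat{X}$ is definably compact if and only if $X$ is bounded and $\vgs$-closed, and this is a statement expressible over $\bF$, so it holds in $F^{max}$ as well; then one invokes the characterization that for bounded $\vgs$-closed $X$ the spectral/tilde space $\widetilde{\widehat{X}}$ is quasi-compact, and combines with the fact (from Corollary \ref{cor spec length}) that $\widetilde{\widehat{X}}$ has bounded specialization chains plus $\vgs$-normality to pass from quasi-compactness of the spectrum to compactness of $\widehat{X}(F^{max})$ itself — here one uses that $\widehat{X}(F^{max})$ maps to its spectrum and that over a maximally complete model with value group $\RR$ the topology is well-behaved (this is really where one leans on \cite{HrLo}, specifically \cite[Proposition 4.2.9 and Corollary 4.2.22]{HrLo}). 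Second, I would show $\widehat{X}(F^{max})$ compact implies $B_{\bF}(X)$ compact: this is immediate from Fact \ref{fact hat and berkovich}, since $\pi\colon \widehat{X}(F^{max})\to B_{\bF}(X)$ is a continuous surjection and the continuous image of a compact space is compact (noting $B_{\bF}(X)$ is Hausdorff, being a subspace of the Hausdorff space $B_{\bF}(V)\cong V^{\an}$ when $F$ is complete, and in general one checks the Hausdorff property directly from the basis of the topology).

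Third, to close the cycle I would show $B_{\bF}(X)$ compact implies $\widehat{X}$ definably compact. The cleanest route is the contrapositive via the characterization of Remark \ref{nrmk def compact in hat}: if $\widehat{X}$ is not definably compact, then $X$ is either not bounded or not $\vgs$-closed. If $X$ is not bounded, there is an $\bF$-definable function $h\colon X\to \RR_\infty$ (built from a valuation of a regular function, or a coordinate on the $\nGi$-factor) whose image is unbounded in $\RR$, and by almost-orthogonality the induced $h\colon B_{\bF}(X)\to \RR_\infty$ would have image a non-closed (because unbounded, non-compact) subset of $\RR_\infty$, contradicting compactness and continuity. If $X$ is bounded but not $\vgs$-closed, one produces a definable type on $\widehat{X}$ (equivalently a point of the closure of $\widehat{X}$ inside $\widehat{\overline{X}^{\,\vgs}}$) witnessing the failure of the limit-of-definable-types criterion, and pushes it forward to exhibit $B_{\bF}(X)$ as failing to be closed in the compact space $B_{\bF}(\overline{X}^{\,\vgs})$, again contradicting compactness. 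Finally, the complete-field corollary is then immediate: when $F$ is complete, $B_{\bF}(V)$ is canonically homeomorphic to $V^{\an}$ by Fact \ref{fact B_F(V) and V^an}, so "$B_{\bF}(V)$ compact" and "$V^{\an}$ compact" are the same statement, and one combines with the already-established equivalences.

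The main obstacle I anticipate is the third implication, specifically the case where $X$ is bounded but not $\vgs$-closed: translating the failure of $\vgs$-closedness into a concrete obstruction to compactness of $B_{\bF}(X)$ requires care, because $B_{\bF}(X)$ is a space of types over the small model $\bF$ rather than over $F^{max}$, so one cannot directly use simple points, and one must instead work with the closed surjection $\pi$ and the behaviour of definable types under it (as developed in \cite[Section 4.2 and Section 14.1]{HrLo}). Everything else is either an application of Remark \ref{nrmk def compact in hat}, of Fact \ref{fact hat and berkovich}, or of elementary point-set topology (continuous surjections preserve compactness; a non-closed subspace of a compact Hausdorff space is not compact).
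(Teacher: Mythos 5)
There is nothing in the paper to compare your argument against: the statement is labelled as a \emph{Fact} and is quoted verbatim from \cite[Proposition 14.1.2]{HrLo}, with the \qed\ placed at the end of the statement itself --- the authors do not reprove it. So the only honest assessment is of your sketch on its own terms. As a reconstruction it is broadly plausible, but note that its load-bearing step is exactly the content of the cited proposition: the implication ``$X$ bounded and $\vgs$-closed (equivalently $\widehat{X}$ definably compact) $\Rightarrow$ $\widehat{X}(F^{\max})$ compact'' is precisely where one needs that over a maximally complete model with value group $\RR$ definable compactness coincides with topological compactness, and you explicitly lean on \cite{HrLo} for it rather than proving it. Likewise the hardest converse direction (compactness of $B_{\bF}(X)$ forces $X$ to be $\vgs$-closed) is only gestured at; producing the witnessing type over the small substructure $\bF$, rather than over $F^{\max}$, is the genuinely delicate point, as you yourself flag. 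So your proposal is a reasonable road map but not a self-contained proof, which is consistent with the paper's decision to import the result.

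Two smaller remarks. First, the paper \emph{does} supply one ingredient you touch on: immediately after the Fact it proves, in a separate lemma, that $B_{\bF}(X)$ is Hausdorff (reducing to a complete variety via Nagata, showing points are closed via quantifier elimination on affine opens, and then separating points using the closed surjection $\pi$ from the compact space $\widehat{V}(F^{\max})$). Your suggestion that Hausdorffness can be read off ``directly from the basis of the topology'' works for affine $V$ but not in general; for a non-affine variety the paper needs the compactness-plus-closed-map argument. Second, in your unboundedness argument be careful that $\RR_\infty$ itself is not compact (it is unbounded below), so the contradiction you want is that the continuous image of the compact space $B_{\bF}(X)$ under $\widehat{\val\circ f}$ would have to be compact, hence bounded below in $\RR_\infty$ --- stated that way the step is fine.
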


The proof of the previous fact in \cite{HrLo} does not seem to include the proof that $B_{\bf{F}}(X)$ is Hausforff. For the reader's convenience, we include here an argument. 

\begin{lem}
Let $X$ be an $\bF$-definable subset of an algebraic variety $V$ over $F$. Then $B_\bF(X)$  is a Hausdorff topological space.
\end{lem}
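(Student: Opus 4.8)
The plan is to prove Hausdorffness of $B_{\bF}(X)$ directly from the definition, by separating two distinct types $p\neq q$ in $B_{\bF}(X)$ using basic open sets of the form $\{r : f(r)\in I\}$ with $f\in\cO^{\val}_V(U)$ and $I$ an open interval in $\RR_\infty$. First I would reduce to the affine case: since $V$ is separated, if $p$ and $q$ concentrate on a common affine open $U\subseteq V$ we may replace $X$ by $X\cap U$; and if no affine open contains both, then already the (clopen) sets $\widehat{U}$-type conditions cut out disjoint basic opens separating them. So assume $V=\mathrm{Spec}(A)$ is affine with $A=F[T_1,\dots,T_n]/J$.

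Next, as in Remark \ref{rem:types_pairs}, a point of $B_{\bF}(X)$ concentrating near $p$ is described by its support (a prime of $A$) together with a rank-one valuation on the residue field extending $\val$; almost orthogonality to $\RR$ guarantees that for every $F$-definable $h\colon X\to\RR_\infty$ the value $h(p)$ is a well-defined element of $\RR_\infty$. Now $p\neq q$ means there is an $\cL_{\cG}$-formula over $\bF$ in $p$ but not in $q$; by quantifier elimination in ACVF this formula is, up to the Boolean operations, built from conditions of the form $g(x)=0$ and $\val(f(x))\leq\val(g(x))$ with $f,g$ polynomials over $F$. Since types are closed under Boolean combinations, at least one such atomic (or negated atomic) condition is in $p$ but not in $q$. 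In the first case, $g(x)=0\in p$ and $g(x)\neq0\in q$: then the $F$-definable function $x\mapsto\val(g(x))\colon X\to\RR_\infty$ has value $\infty$ at $p$ and value $<\infty$ at $q$ (here one must check $\val(g(x))$ is genuinely a definable function to $\RR_\infty$ after composing with $\val$, which it is since $g\in\cO_V(U)$ for a suitable Zariski open $U$ on which the relevant points concentrate, or $g\in A$ directly). Hence $\{r : \val(g(r))\in(\gamma,\infty]\}$ and $\{r : \val(g(r))\in[-\infty,\gamma)\}$ for suitable $\gamma\in\RR$ are disjoint basic opens separating $p$ and $q$. In the second case, the relevant datum is a condition $\val(f(x))\leq\val(g(x))$ (say) in $p$ but not $q$; consider the $F$-definable function $h\colon X\to\RR_\infty$, $h(x)=\val(f(x))-\val(g(x))$ (with the usual conventions at $\infty$, and restricting to where $g\neq 0$, the remaining locus being handled by the first case). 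Then $h(p)\leq 0<h(q)$ or similar, and again pulling back disjoint open intervals around $h(p)$ and $h(q)$ in $\RR_\infty$ gives the required separation.

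The main obstacle I expect is the bookkeeping around the value $\infty$ and around Zariski open sets: the basic opens in the topology of $B_{\bF}(X)$ involve $f\in\cO_V^{\val}(U)$ for $U$ Zariski open, so one must make sure the separating function is defined on a Zariski open containing (the supports of) both $p$ and $q$, or else first split off the locus where a denominator vanishes using a clopen condition of the form considered above. This is the same mechanism that makes $V^{\an}$ Hausdorff for separated $V$, and the almost-orthogonality to $\RR$ is exactly what guarantees that the pushforward $h_*(p)$ lands at a single point of $\RR_\infty$, so that the sets $\{r : h(r)\in I\}$ really are a separating family. Once the affine reduction and the $\infty$-conventions are in place the argument is routine, essentially the standard proof that a valuation spectrum (or its almost-orthogonal-to-$\RR$ quotient) is Hausdorff, so I would not belabour the computation.
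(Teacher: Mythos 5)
The affine part of your argument is essentially the paper's: by quantifier elimination, a formula lying in $p$ but not in $q$ yields a single regular function $g$ with $\val\circ g_*(p)\neq \val\circ g_*(q)$, and disjoint intervals around these two values of $\RR_\infty$ pull back to disjoint basic opens. (You do not even need the auxiliary function $h=\val(f)-\val(g)$, whose sublevel sets are not obviously open in this topology — the basis only allows pullbacks of intervals under single functions $\val\circ f$, and a threshold $\val(f)<\val(g)+c$ need not be expressible as $\val(f)<\val(c'g)$ for $c'\in F$; but if $\val(f)\leq\val(g)$ holds at $p$ and fails at $q$, then one of $\val\circ f$, $\val\circ g$ already takes different values at $p$ and at $q$, which is all you need.) The genuine gap is in your reduction to the affine case. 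The sets $B_\bF(U)$ for $U$ Zariski open are open but emphatically not closed: for instance the point at infinity of $\Pp^1$ lies in the closure of $B_\bF(\Aa^1)$, being a limit of generic types of balls $B(0,\gamma)$ as $\gamma\to-\infty$. So your fallback for two types whose supports do not lie in a common affine open — separating them by ``clopen $\widehat{U}$-type conditions'' — uses sets that are not clopen and are not disjoint. More structurally, a space covered by Hausdorff open subspaces need not be Hausdorff (line with doubled origin), so proving Hausdorffness of each $B_\bF(W)$ with $W$ affine does not give Hausdorffness of $B_\bF(V)$; and for a general separated variety two schematic points need not lie in a common affine open, so the ``common chart'' case does not exhaust the possibilities.

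The paper closes exactly this gap by a compactness argument rather than by direct separation. After reducing to $V$ complete via Nagata, the affine separation argument is used only to show that points of $B_\bF(V)$ are closed. Then, since $\widehat{V}(F^{\max})$ is compact Hausdorff and $\pi\colon \widehat{V}(F^{\max})\to B_\bF(V)$ is a closed continuous surjection (Facts \ref{fact hat and berkovich} and \ref{berk compact}), the fibers $\pi^{-1}(p_1)$ and $\pi^{-1}(p_2)$ over two distinct (hence closed) points are disjoint compact subsets; one separates them upstairs by opens $U_1,U_2$ and pushes the separation down as $B_\bF(V)\setminus \pi(\widehat{V}(F^{\max})\setminus U_i)$. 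Some such global input seems unavoidable: your purely local argument would at best prove the result for quasi-projective $V$, where any two points do lie in a common affine open.
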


\begin{proof} 
Note that it suffices to show that when $X=V$ is a complete variety over $F$, then $B_\bF(V)$ is Hausdorff. Indeed, by Nagata's theorem there is an open immersion $V\to V'$ where $V'$ is a complete variety. Since $B_\bF(X)$ is a subspace of $B_\bF(V')$ it is also Hausdorff.  

We first show that points in $B_\bF(V)$ are closed. Note that for each affine open subset $W\subseteq V$, $B_\bF(W)$ is an open subspace of $B_\bF(V)$. Hence it suffices to show that points in $B_\bF(W)$ are closed for affine $W$. Actually, this is true since $B_\bF(W)$ is even Hausdorff. Let $p,q$ be two distinct points in $B_\bF(W)$, by quantifier elimination in ACVF, there is a regular function $f$ on $W$ such that $\val \circ f_*(p)=r_1\neq \val \circ f_*(q)=r_2$ for some $r_1,r_2 \in \mathbb{R}_\infty$.  Then take two disjoint open subset $r_1\in U_1$ and $r_2\in U_2$. Their inverse image under $\val \circ f$ will be two disjoint opens in $B_\bF(W)$ separating $p,q$.

So suppose now that $V$ is a complete variety. By Remark \ref{nrmk def compact in hat}  $\widehat{V}$ is definably compact (and Hausdorff by \cite[Proposition 3.7.8]{HrLo}) so, by Fact \ref{berk compact}, $B_F(V)$ is quasi-compact. Consider the restriction map $\pi:\widehat{V}(F^{max})\to B_\bF(V)$, note that the map is closed. On the other hand, given $p_1,p_2 \in B_\bF(V)$, the fibers $\pi^{-1}(p_1), \pi^{-1}(p_2)$ are two disjoint compact subset of $\widehat{V}(F^{max})$, hence can be separated by open subsets $U_1$ and $U_2$, then $p_1\in B_\bF(V)\backslash \pi(\widehat{V}(F^{max})\backslash U_1)$ and $p_2 \in B_\bF(V)\backslash \pi(\widehat{V}(F^{max})\backslash U_2)$ are two disjoint open subsets of $B_\bF(V)$ by the fact that $\pi$ is closed. 
\end{proof}

Recall that in general $\widehat{X}$ may fail to be a locally compact space (Remark \ref{nrmk hat top not loc compact}). However,  since $F^{max}=(F^{max})^{max}$, by Fact \ref{fact hat and berkovich}, $\widehat{X}(F^{max})$ is homeomorphic to $B_{F^{max}}(X)$ and, by  the next result, $\widehat{X}(F^{max})$ is a locally compact topological space when $X$ is an $\bF$-definable $\vgs$-locally closed subset of an algebraic variety $V$ over $F$.

\begin{lem}\label{lem vgs loc closed and bf loc closed}
If $X$ is an $\bF$-definable $\vgs$-locally closed subset of an algebraic variety $V$ over $F$, then $B_\bF(X)$ is a locally closed subset of $B_\bF(V)$. In particular, $B_\bF(X)$ is a locally compact topological space.
\end{lem}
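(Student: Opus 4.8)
The plan is to exhibit $B_\bF(X)$ as the intersection of an open and a closed subset of $B_\bF(V)$, and in fact as an open subset of a compact Hausdorff subspace. First, since $X$ is $\vgs$-locally closed in $V$, I would apply Lemma~\ref{lem P and Q for loc closed} (with $m=0$, over $\bF$; recall that, $V$ being a variety over $F$, every Zariski open of $V$ and every regular function on it is defined over $F$, so by Definition~\ref{def vg site} and Fact~\ref{fact:vg_closed} all $\vgs$-open and $\vgs$-closed subsets of $V$ are automatically $\bF$-definable) to obtain a bounded $\vgs$-closed subset $P\subseteq V$ with $X\subseteq P$ and $X$ a $\vgs$-open subset of $P$; write $X=P\cap O$ for some $\vgs$-open subset $O$ of $V$. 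By Remark~\ref{nrmk def compact in hat}, $\widehat P$ is definably compact, hence by Fact~\ref{berk compact} the space $B_\bF(P)$ is compact; as $B_\bF(V)$ is Hausdorff by the preceding lemma, $B_\bF(P)$ is then a closed subspace of $B_\bF(V)$.

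Next I would verify, directly from the definition of the topology on $B_\bF(V)$, that $B_\bF(O)$ is open in $B_\bF(V)$. It is enough to treat a basic $\vgs$-open $O=\bigcap_{j}\{u\in U_j:\val(f_j(u))<\val(g_j(u))\}$ with $U_j\subseteq V$ Zariski open and $f_j,g_j\in\cO_V(U_j)$. For each $j$, the subspace $B_\bF(U_j)$ is open in $B_\bF(V)$, and the maps $p\mapsto\val(f_j)(p)$ and $p\mapsto\val(g_j)(p)$ from $B_\bF(U_j)$ to $\RR_\infty$ are continuous, since their preimages of open intervals are among the generating open sets of the topology on $B_\bF(V)$. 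As $\{(a,b)\in\RR_\infty^2:a<b\}$ is open, the set $\{p\in B_\bF(U_j):\val(f_j)(p)<\val(g_j)(p)\}$ is open in $B_\bF(U_j)$, hence in $B_\bF(V)$; a finite intersection over $j$ shows $B_\bF(O)$ is open for basic $O$, and a finite union then handles an arbitrary $\vgs$-open $O$.

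Finally I would assemble the pieces. Since a type over $\bF$ concentrates on $P\cap O$ precisely when it concentrates on both $P$ and $O$, we have $B_\bF(X)=B_\bF(P)\cap B_\bF(O)$ inside $B_\bF(V)$. This is the intersection of a closed set and an open set, so $B_\bF(X)$ is locally closed in $B_\bF(V)$; moreover the same identity presents $B_\bF(X)$ as an open subset of the compact Hausdorff space $B_\bF(P)$, and an open subspace of a locally compact Hausdorff space is locally compact, so $B_\bF(X)$ is locally compact.

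The only points requiring care, none of which seems to pose a genuine obstacle, are the bookkeeping that the $\vgs$-open and $\vgs$-closed subsets involved are $\bF$-definable (so that $B_\bF(-)$ applies and ``concentrating on'' these sets makes sense for types over $\bF$), the verification that a basic $\vgs$-inequality cuts out an open subset of $B_\bF(V)$, and checking that Lemma~\ref{lem P and Q for loc closed} indeed goes through over $\bF$; granting these, the statement reduces to the triviality that open subspaces of compact Hausdorff spaces are locally compact.
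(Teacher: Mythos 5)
Your proof is correct, and while its skeleton (write $X$ as a closed piece intersected with an open piece and push both through $B_\bF$) matches the paper's, the mechanisms differ at each step. The paper takes $X=Y\cap U$ with $Y$ an arbitrary $\bF$-definable $\vgs$-closed set, proves $B_\bF(Y)$ is closed by using that $\pi\colon\widehat{Y}(F^{max})\to B_\bF(Y)$ from Fact \ref{fact hat and berkovich} is a closed surjection, gets openness of $B_\bF(U)$ by passing to the complement $V\setminus U$, and obtains local compactness by a separate appeal to Nagata's theorem to place $B_\bF(X)$ as a locally closed subset of the compact space $B_\bF(V')$. You instead normalize the closed piece to be bounded via Lemma \ref{lem P and Q for loc closed}, so that $B_\bF(P)$ is compact by Fact \ref{berk compact} and hence closed in the Hausdorff space $B_\bF(V)$; you verify openness of $B_\bF(O)$ directly from the generators of the topology; and local compactness then comes for free since $B_\bF(X)$ is open in the compact Hausdorff $B_\bF(P)$. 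Both routes are valid: the paper's use of the closed map $\pi$ yields closedness of $B_\bF(Y)$ for \emph{every} $\vgs$-closed $Y$, bounded or not, while your version is more elementary on the topological side and folds the Nagata step into the single use of Lemma \ref{lem P and Q for loc closed}.

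One parenthetical justification is false, though it does not sink the argument: it is not true that every $\vgs$-open or $\vgs$-closed subset of $V$ is automatically $\bF$-definable. The regular functions $f_j,g_j$ in Definition \ref{def vg site} may have coefficients in the ambient model $K$, so for instance $\{x\in\Aa^1:\val(x-a)>0\}$ with $a\in K\setminus F^{\mathrm{alg}}$ is $\vgs$-open but not $\bF$-definable. What you actually need is the descent statement that an $\bF$-definable $\vgs$-locally closed set can be written as the intersection of an $\bF$-definable (bounded) $\vgs$-closed set with an $\bF$-definable $\vgs$-open set; the paper assumes the analogous statement without comment when it chooses $U$ and $Y$ over $\bF$, so you are at the same level of rigor as the source, but you should flag this as an assumption rather than deduce it from the incorrect claim that all $\vgs$-opens of $V$ are defined over $F$.
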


\begin{proof}
Let $U$ be an $\bF$-definable $\vgs$-open subset of $V$ and let $Y$ be an $\bF$-definable $\vgs$-closed subset of $V$ such that $X=Y\cap U$. Then  $\widehat{Y}(F^{max})$ is a closed subset of $\widehat{V}(F^{max})$.  By Fact \ref{fact hat and berkovich}, $\pi (\widehat{Y}(F^{max}))=B_\bF(Y)$ and $B_{\bF}(Y)$  is a closed subset of $B_\bF(V)$ since $\pi$ is a closed map.

On the other hand, since $V\setminus U$ is an $\bF$-definable $\vgs$-closed subset of $V$, by the above and the fact that $B_\bF(V)\setminus B_\bF(U)=B_\bF(V\setminus U)$,  $B_\bF(U)$ is an open subset of $B_\bF(V).$ Therefore, $B_\bF(X)=B_\bF(U)\cap B_\bF(Y)$ is a locally closed subset of $B_\bF(V)$. 

By Nagata's theorem there is an open immersion $V\to V'$ where $V'$ is a complete variety (over $F$). Since $V'$ is complete, $\widehat{V'}$ is definably compact (Remark \ref{nrmk def compact in hat}) and so, by Fact \ref{berk compact}, $B_\bF(V')$ is compact. Thus the result follows since (the homeomorphic image of) $B_\bF(X)$ is also locally closed in $B_\bF(V')$.
\end{proof}

\begin{nrmk}\label{rem:paracompact} Let $V$ be an algebraic variety over $F$. Then $B_\bF(V)$ is paracompact. Since $V$ is a scheme of finite type, we may assume $V$ is affine. Furthermore, since $B_\bF(V)$ is locally compact by Lemma \ref{lem vgs loc closed and bf loc closed}, it suffices to it is $\sigma$-compact (see \cite[Page 21]{b}). Now, $B_\bF(V)$ is covered by the sets $B_\bF(X)\subseteq U$ where $X$ is a closed ball with radius in $\mathbb{Q}$. Each such set is compact by Fact \ref{berk compact}. 
\end{nrmk}

\begin{fact}\label{deformation for Bf}
{\em
Let $V$ a quasi-projective variety over $F$ and $X$ an $\bF$-definable subset. Then  there is a strong deformation retraction $\bI\times B_\bF(X)\to B_\bF(X)$ with image a subset $\mathbf{\mathfrak{X}}$ homeomorphic to a semi-algebraic subset $\mathbf{\mathcal{X}}$ of some $\RR ^k$. This deformation retraction is induced by a strong deformation retraction $H:I\times \widehat{X}\to \widehat{X}$ (of \cite[Theorem 11.1.1]{HrLo}) in the sense that there is a commutative diagram 
\begin{equation*}
\xymatrix{
I(F^{max})\times \widehat{X}(F^{max}) \ar[r]^{}  \ar[d]^{i\times \pi} & \widehat{X}(F^{max}) \ar[d]^{\pi}   \\
\bI\times B_\bF(X) \ar[r]^{}  & B_\bF(X)
}
\end{equation*}
of continuous maps, where $i:I(F^{max})=\widehat{I}(F^{max})\to \bI=I(\bF)=I(\RR _{\infty})$ is the canonical identification. See  \cite[Corollary 14.1.6, Proposition 14.1.3 and Theorem 14.2.1]{HrLo}

If $F\leq F''$ is a field extension then we have a commutative diagram of strong deformation retractions 
\begin{equation*}
\xymatrix{
\bI\times B_{\bF ''}(X) \ar[r]^{}  \ar[d]^{} & B_{\bF ''}(X) \ar[d]^{}   \\
\bI\times B_\bF(X) \ar[r]^{}  & B_\bF(X)
}
\end{equation*}
where the vertical arrows are the canonical restrictions. Furthermore, there is a finite Galois extension $F\leq F'$ such that:
\begin{enumerate}
    \item 
    If $F'\leq F''$ then the image $\mathbf{\mathfrak{X}}''$ of $\bI\times B_{\bF ''}(X)\to B_{\bF ''}(X)$ is homeomorphic to the image $\mathbf{\mathfrak{X}}'$ of $\bI\times B_{\bF '}(X)\to B_{\bF '}(X)$.
    \item
    the image $\mathbf{\mathfrak{X}}$ of $\bI\times B_{\bF }(X)\to B_{\bF }(X)$ is homeomorphic to $\mathbf{\mathfrak{X}}'/{\rm Gal}(F'/F)$.
\end{enumerate}

See \cite[Theorem 14.2.3 and the discussion at the begining of Section 14.2]{HrLo}. \qed
}
\end{fact}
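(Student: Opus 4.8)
The plan is to read this statement off from Hrushovski and Loeser's construction, specializing their results to the rank-one base $\bF$. First, the existence of a strong deformation retraction $H\colon I\times\widehat{X}\to\widehat{X}$ onto an iso-definable subset $\mathfrak{X}$ definably homeomorphic to a definable subset $\mathcal{X}$ of some $\nGik$ (here taken over the model $F^{max}$) is exactly Fact \ref{fact main HrLo}, i.e. \cite[Theorem 11.1.1]{HrLo}; moreover $H$ may be chosen $\mathrm{Aut}(F^{max}/F)$-equivariantly and compatibly with base change, as this equivariance and functoriality are part of that theorem. The next step is to descend $H$ along the continuous, surjective, closed map $\pi\colon\widehat{X}(F^{max})\to B_\bF(X)$ of Fact \ref{fact hat and berkovich}: one checks that $H$ sends fibres of $\pi$ to fibres of $\pi$, so that it passes to a well-defined continuous map $\bI\times B_\bF(X)\to B_\bF(X)$ fitting into the first commutative square; that the induced map is again a strong deformation retraction onto a subset $\mathbf{\mathfrak{X}}$ is then \cite[Corollary 14.1.6, Proposition 14.1.3 and Theorem 14.2.1]{HrLo}.

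For the description of the image: over $F^{max}$ the image is $\mathfrak{X}\cong\mathcal{X}\subseteq\RR^k$, and $\mathbf{\mathfrak{X}}=\pi(\mathfrak{X})$. The key point is that the Galois group acts on $\mathfrak{X}$ (equivalently on $\mathcal{X}$) through a \emph{finite} quotient, so one invokes \cite[Theorem 14.2.3]{HrLo} to produce a finite Galois extension $F'/F$ such that the skeleton stabilizes, giving $\mathbf{\mathfrak{X}}''\cong\mathbf{\mathfrak{X}}'$ for all $F''\supseteq F'$ and $\mathbf{\mathfrak{X}}\cong\mathbf{\mathfrak{X}}'/\mathrm{Gal}(F'/F)$. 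Since $\mathbf{\mathfrak{X}}'$ is homeomorphic to a semi-algebraic subset of some $\RR^k$ and the quotient of a semi-algebraic set by a finite group of semi-algebraic homeomorphisms is again semi-algebraic, it follows that $\mathbf{\mathfrak{X}}$ is homeomorphic to a semi-algebraic subset $\mathbf{\mathcal{X}}$ of some $\RR^{k'}$, which establishes the first assertion. The compatibility diagrams for a field extension $F\le F''$ then follow formally from the canonicity of the deformation retraction of \cite[Theorem 11.1.1]{HrLo} together with the stabilization statement just recalled.

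The only genuine work here is the descent along $\pi$, and this is where I expect the main subtlety to lie: one must know that $H$ respects the Galois action and preserves almost-orthogonality to $\RR$, so that it really is constant on $\pi$-fibres and pushes forward to a morphism of the Berkovich-type spaces. This is, however, precisely the content of \cite[Propositions 14.1.2--14.1.3 and the discussion of Section 14.2]{HrLo}, so the proof is ultimately a bookkeeping reorganization of \cite[Chapter 14]{HrLo} adapted to our notation rather than a new argument; accordingly I would present it as a short citation-driven deduction, spelling out only the quotient-of-semi-algebraic-set step and the two commutative squares.
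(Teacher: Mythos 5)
Your proposal is correct and follows the same route as the paper: this statement is presented as a Fact imported wholesale from \cite[Chapter 14]{HrLo}, with exactly the citations you invoke (Theorem 11.1.1 for $H$, Corollary 14.1.6/Proposition 14.1.3/Theorem 14.2.1 for the descent along $\pi$, and Theorem 14.2.3 for the finite Galois extension and the quotient description of $\mathbf{\mathfrak{X}}$). The only content you add beyond the paper's citations is the observation that a quotient of a semi-algebraic set by a finite group of semi-algebraic homeomorphisms is again semi-algebraic, which is a reasonable way to justify the ``homeomorphic to a semi-algebraic subset of some $\RR^k$'' clause.
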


An immediate consequence of this fact are the following results about cohomology:

\begin{thm}\label{thm finiteness and inv for Bf}
If $X$ is an $\bF$-definable subset of a quasi-projective variety $V$ over $F$. Then:
\begin{enumerate}
    \item 
    $H^p(B_\bF(X);L_{B_\bF(X)})$ is finitely generated for every $p\geq 0$.
    \item
    There is a finite Galois extension $F\leq F'$ such that for every $F'\leq F''$  we have isomoprhisms
    $$H^*(B_{\bF '}(X); L_{B_{\bF '}(X)})\simeq  H^*(B_{\bF ''}(X); L_{B_{\bF ''}(X)}).$$
\end{enumerate}
\end{thm}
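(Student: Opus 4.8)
The plan is to derive everything from Fact \ref{deformation for Bf}, which provides the deformation retraction $\pi\colon \widehat{X}(F^{\max})\to B_\bF(X)$ compatible with the Hrushovski--Loeser homotopy, and with image homeomorphic to a semi-algebraic subset of some $\RR^k$; the two statements are then reductions to the o-minimal (here, semi-algebraic over $\RR$) picture where the corresponding finiteness and invariance results are already available.

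First I would prove (1). By Fact \ref{deformation for Bf}, $B_\bF(X)$ deformation retracts onto a subset $\mathbf{\mathfrak X}$ homeomorphic to a semi-algebraic set $\mathbf{\mathcal X}\subseteq \RR^k$. Since $B_\bF(X)$ is Hausdorff, locally compact and paracompact (the Lemma preceding this statement together with Remark \ref{rem:paracompact}), the deformation retraction induces an isomorphism $H^*(B_\bF(X);L_{B_\bF(X)})\simeq H^*(\mathbf{\mathcal X};L_{\mathbf{\mathcal X}})$ in topological (equivalently, by comparison theorems, semi-algebraic) cohomology; here one invokes the homotopy invariance of sheaf cohomology under a deformation retraction of sufficiently tame spaces, exactly as in the homotopy axiom used throughout Section \ref{section cohomo finiteness and invariance}. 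It then remains to quote the finiteness result for the constant sheaf on a semi-algebraic subset of $\RR^k$; this is essentially \cite{bf} in the real closed field $\RR$ (or the comparison with singular cohomology of a locally compact semi-algebraic set, which has the homotopy type of a finite CW complex). So $H^p(B_\bF(X);L_{B_\bF(X)})$ is finitely generated for all $p$.

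For (2), the point is that Fact \ref{deformation for Bf} also gives a finite Galois extension $F\le F'$ such that for $F'\le F''$ the image $\mathbf{\mathfrak X}''$ of the retraction of $B_{\bF''}(X)$ is homeomorphic to $\mathbf{\mathfrak X}'$, compatibly with the restriction maps $B_{\bF''}(X)\to B_{\bF'}(X)$. Concretely, the skeleton $\mathbf{\mathcal X}'$ is a semi-algebraic subset of $\RR^k$ defined over the (now fixed) parameters, and passing from $F'$ to $F''$ only enlarges the base field of the ambient valued field while leaving the value group $\RR$ and this semi-algebraic set unchanged. Hence $H^*(B_{\bF'}(X);L_{B_{\bF'}(X)})\simeq H^*(\mathbf{\mathcal X}';L)\simeq H^*(B_{\bF''}(X);L_{B_{\bF''}(X)})$, the middle term being literally the same semi-algebraic cohomology group. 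The comparison across $F'$ and $F''$ is clean because there is no change of the real closed coefficient structure; had we tried to compare $B_\bF(X)$ directly with $B_{\bF'}(X)$ this would fail in general, which is exactly why the finite Galois extension and the quotient description $\mathbf{\mathfrak X}\simeq \mathbf{\mathfrak X}'/\mathrm{Gal}(F'/F)$ in Fact \ref{deformation for Bf}(2) are needed; but for (2) we only assert invariance above $F'$, so the quotient subtlety does not intervene.

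The main obstacle I anticipate is making rigorous the statement that the pro-definable/continuous deformation retraction $H$ descends to an honest topological deformation retraction of $B_\bF(X)$ onto $\mathbf{\mathfrak X}$ inducing an isomorphism in \emph{sheaf} cohomology: one must check that the homotopy $\bI\times B_\bF(X)\to B_\bF(X)$ is continuous (this is in \cite[Theorem 14.2.1]{HrLo}, quoted in Fact \ref{deformation for Bf}), that $B_\bF(X)$ is paracompact so that sheaf cohomology is homotopy invariant and agrees with \v{C}ech and singular cohomology (Remark \ref{rem:paracompact} and Lemma \ref{lem vgs loc closed and bf loc closed}, in the $\vgs$-locally closed case; in general one uses that $B_\bF(V)$ for $V$ affine is $\sigma$-compact), and that the homeomorphism $\mathbf{\mathfrak X}\simeq \mathbf{\mathcal X}$ transports the constant sheaf to the constant sheaf. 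Once these tameness inputs are in place, the cohomological statements are formal consequences of the corresponding semi-algebraic results, so I would organize the write-up as: (i) reduce to $\mathbf{\mathcal X}$ via the retraction; (ii) invoke finiteness of semi-algebraic cohomology for (1); (iii) invoke the homeomorphism $\mathbf{\mathfrak X}''\simeq\mathbf{\mathfrak X}'$ over $F'$ for (2).
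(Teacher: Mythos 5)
Your proposal is correct and follows essentially the same route as the paper: the paper's proof is exactly the reduction via Fact \ref{deformation for Bf} to the semi-algebraic skeleton $\mathbf{\mathcal X}\subseteq\RR^k$, combined with the topological homotopy axiom and the fact that a semi-algebraic subset of $\RR^k$ is homeomorphic to a finite simplicial complex (giving (1)), and with the homeomorphism $\mathbf{\mathfrak X}''\simeq\mathbf{\mathfrak X}'$ above the finite Galois extension $F'$ (giving (2)). The tameness concerns you raise (continuity of the homotopy, behaviour of the constant sheaf under the homeomorphism) are handled in the paper simply by citing the quoted facts, so no further work is needed.
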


\begin{proof}
The result  follows from Fact \ref{deformation for Bf}, the homotopy axiom in topology (\cite[Chapter II, 11.8]{b}) and the fact that a semi-algebraic subset of $\RR ^k$ is homeomorphic to a finite simplicial complex.
\end{proof}

Note that if $B_{\bF}(X)$ is paracompact (see Remark \ref{rem:paracompact}), then these results can be extended to an arbitrary algebraic variety $V$ over $F$ using the topological analogue of Corollary \ref{cor hphi and !} (\cite[Chapter II, 10.2]{b}) and the corresponding Mayer-Vietoris sequence \cite[Chapter II, (27)]{b} as in the proof of Theorem \ref{thm finiteness in acvf} and Theorem \ref{thm inv in acvf} respectively.

\begin{thm}\label{thm finiteness and inv for Bf with comp supp}
If $X$ is an $\bF$-definable $\vgs$-locally closed subset of an algebraic variety $V$ over $F$. Then:
\begin{enumerate}
    \item 
    $H^p_c(B_\bF(X);L_{B_\bF(X)})$ is finitely generated for every $p\geq 0$.
    \item
    There is a finite Galois extension $F\leq F'$ such that for every $F'\leq F''$  we have isomoprhisms
    $$H^*_c(B_{\bF '}(X); L_{B_{\bF '}(X)})\simeq  H^*_c(B_{\bF ''}(X); L_{B_{\bF ''}(X)}).$$
\end{enumerate}
\end{thm}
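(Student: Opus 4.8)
The plan is to mimic the proof of Theorem \ref{thm finiteness and inv for Bf}, replacing cohomology without supports by cohomology with compact supports, and handling the non-quasi-projective case by a Mayer--Vietoris argument as in the proofs of Theorems \ref{thm finiteness in acvf} and \ref{thm inv in acvf}. First I would reduce to the quasi-projective case. Write $V=V_1\cup\cdots\cup V_n$ as a finite union of open quasi-projective subvarieties and argue by induction on $n$. For the base case $V$ quasi-projective, apply Fact \ref{deformation for Bf}: the strong deformation retraction $\bI\times B_\bF(X)\to B_\bF(X)$ has image $\mathbf{\mathfrak X}$ homeomorphic to a semi-algebraic subset $\mathbf{\mathcal X}$ of some $\RR^k$. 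By Lemma \ref{lem vgs loc closed and bf loc closed}, $B_\bF(X)$ is locally closed in $B_\bF(V)$, hence locally compact, and (Remark \ref{rem:paracompact}) paracompact; the retraction being definably proper (as in Remark \ref{nrmk proper retraction}, the argument transfers to $B_\bF$ via $\pi$), the retraction $r\colon B_\bF(X)\to\mathbf{\mathfrak X}$ is proper, so $r^*$ and the inclusion $j^*$ are mutually inverse isomorphisms $H^*_c(B_\bF(X);L)\cong H^*_c(\mathbf{\mathcal X};L)$ by the homotopy axiom for cohomology with compact supports on locally compact (paracompact) spaces (\cite[Chapter II, Section 11]{b}); here I use that a proper homotopy equivalence induces an isomorphism on $H^*_c$. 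Since $\mathbf{\mathcal X}$ is a locally closed semi-algebraic subset of $\RR^k$, it is homeomorphic to a locally finite simplicial complex, whence $H^p_c(\mathbf{\mathcal X};L)$ is finitely generated for all $p$; this gives (1) for $n=1$.

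For the inductive step, set $Y=B_\bF(X)\cap B_\bF(U\times\{*\})$ with $U=V_1\cup\cdots\cup V_{n-1}$ and $Z=B_\bF(X)\cap B_\bF(V_n)$, so that $Y$, $Z$ and $Y\cap Z$ are locally closed subsets of $B_\bF(X)$ of the form $B_\bF(X')$ for $\bF$-definable $\vgs$-locally closed $X'$ in quasi-projective or lower-complexity varieties, to which the inductive hypothesis applies. The short exact sequence of sheaves $0\to L_{Y\cap Z}\to L_Y\oplus L_Z\to L_{B_\bF(X)}\to 0$ together with the topological analogue of Corollary \ref{cor hphi and !} (namely \cite[Chapter II, 10.2]{b}, using paracompactness) yields a Mayer--Vietoris long exact sequence in $H^*_c$; finite generation of the two outer terms forces finite generation of $H^p_c(B_\bF(X);L)$ for all $p$. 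This proves (1).

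For (2), invariance under extension of $F$: by Fact \ref{deformation for Bf} there is a finite Galois extension $F\le F'$ such that for $F'\le F''$ the image $\mathbf{\mathfrak X}''$ of the retraction over $\bF''$ is homeomorphic to $\mathbf{\mathfrak X}'$ over $\bF'$, compatibly with the canonical restriction maps $B_{\bF''}(X)\to B_{\bF'}(X)$. In the quasi-projective case, combining this with the isomorphisms $H^*_c(B_{\bF''}(X);L)\cong H^*_c(\mathbf{\mathcal X}'';L)$ and $H^*_c(B_{\bF'}(X);L)\cong H^*_c(\mathbf{\mathcal X}';L)$ from the homotopy axiom, and the homeomorphism $\mathbf{\mathcal X}''\cong\mathbf{\mathcal X}'$, gives the desired isomorphism. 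The general case follows by the same Mayer--Vietoris induction as for (1), using the five lemma on the commutative ladder of Mayer--Vietoris sequences over $\bF'$ and $\bF''$, exactly as in the proof of Theorem \ref{thm inv in acvf}. The main obstacle I anticipate is the careful bookkeeping needed to justify the homotopy axiom for $H^*_c$ in this setting: one must check that $B_\bF(X)$ is paracompact and that the retraction of Fact \ref{deformation for Bf} restricts to a \emph{proper} map on $B_\bF(X)$ (not merely on $B_\bF(V)$ for $V$ complete), so that the standard topological homotopy invariance of cohomology with compact supports applies; this is where Remarks \ref{rem:paracompact} and \ref{nrmk proper retraction}, transported through $\pi$, are essential. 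Once this is in place, the finiteness of the $H^p_c$ of a locally finite simplicial complex and the Mayer--Vietoris formalism make the rest routine.
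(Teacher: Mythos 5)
Your proposal follows essentially the same route as the paper: the quasi-projective case is handled via the deformation retraction of Fact \ref{deformation for Bf} onto a (locally closed) semi-algebraic subset of $\RR^k$ together with homotopy invariance of compactly supported cohomology on locally compact spaces, and the general case by Mayer--Vietoris induction using the topological analogue of Corollary \ref{cor hphi and !} exactly as in Theorems \ref{thm finiteness in acvf} and \ref{thm inv in acvf}. One small correction: a semi-algebraic subset of $\RR^k$ is homeomorphic to a \emph{finite} union of open simplices of a finite simplicial complex, not merely a locally finite one — it is this finiteness of the triangulation (not local finiteness, which would not suffice) that yields finite generation of $H^p_c$.
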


\begin{proof}
Suppose first that $V$ is a quasi-projective variety over $F$. Note that since $X$ is an $\bF$-definable $\vgs$-locally closed subset, then $\mathbf{\mathcal{X}}$ is locally closed. The proof of this is similar to that of Lemma  \ref{lem retraction of vgs-locally closed} using $B_{\bF}(X)$ instead of $\widehat{X}$ and Fact \ref{berk compact} instead of \cite[Prposition 4.2.9]{HrLo}.  In this case  the result follows from the homotopy axiom in topology (\cite[Chapter II, 11.8]{b}) and the fact that a semi-algebraic subset of $\RR ^k$ is homeomorphic to a finite simplicial complex.  

In the general case, since the family of compact supports is paracompactifying,   the results are obtained using the topological analogue of Corollary 
\ref{cor hphi and !} (\cite[Chapter II, 10.2]{b}) and the corresponding Mayer-Vietoris sequence \cite[Chapter II, (27)]{b} as in the proof of Theorem \ref{thm finiteness in acvf} and Theorem \ref{thm inv in acvf} respectively.
\end{proof}

Let $K$ be a model of ACVF extending $\bF$. Below when we write $\widehat{X}(K)$ we mean $\widehat{X}(K)$ equipped with the $\hvgs$-site and when we write $\widehat{X}(K)_\top$ we mean the underlying topological space. Observe that since we have a homeomorphism between $\widetilde{X(K)}$ and $\widetilde{\widehat{X}(K)}$ induced by the isomorphism of sites (Remark \ref{nrmk site iso tilde}), the natural inclusion $\widehat{X}(K)\to \widetilde{X(K)}$ induces a natural inclusion $i:\widehat{X}(K)\to \widetilde{\widehat{X}(K)}$ and $\widehat{X}(K)$ with the induced topology is $\widehat{X}(K)_{\top}$.

Given $\cF \in \mod(A_{\widehat{X}(K)_{\hvgs}})$ we let $\cF _{\top}\in \mod(A_{\widehat{X}(K)_{\top}})$ be the sheaf induced by the natural morphism of sites  $\widehat{X}(K)_{\top} \to \widehat{X}(K)_{\hvgs}$. Note that this is the same as $\tilde{\cF}_{|\widehat{X}(K)_{\top}}$. \\

As observed above, when $X$ is an $\bF$-definable $\vgs$-locally closed subset of an algebraic variety $V$ over $F$, then $\widehat{X}(F^{max})_\top$ is a locally compact topological space. Below we use $c$ to denote either the family of compact supports in $\widehat{X}(K)_{\top}$, as usual or, the family of definably compact supports in $\widehat{X}(K)$ as before.  

\begin{nrmk}\label{nrmk cohom berk and hvgs}
If $D\in \widetilde{c}$ is constructible, then the following hold: (1)  $D\cap \widehat{X}(F^{max})_{\top}$ is a quasi-compact subset of $\widetilde{\widehat{X}(F^{max})}$; (2) $D\cap \widehat{X}(F^{max})_{\top}$ has a fundamental system of normal and constructible locally closed neighborhoods in $\widetilde{\widehat{X}(F^{max})}$ and (3) $D\cap \widehat{X}(F^{max})_{\top}$ is a compact subset of $\widehat{X}(F^{max})_{\top}$. 

Indeed, there is $C\in c$ (in $\widehat{X}(F^{max})$) such that $D=\widetilde{C}$ and hence, $D\cap \widehat{X}(F^{max})_{\top}=\widetilde{C}\cap X=C$. Since $C$ is a definably compact subset of $\widehat{X}(F^{max})$ it is a compact subset of $\widehat{X}(F^{max})_{\top}$ (Fact \ref{berk compact}). Hence $C$ is a quasi-compact subset of $\widetilde{\widehat{X}(F^{max})}$. Finally, since $c$ (in $\widehat{X}(F^{max})$) is a definably normal family of supports on $\widehat{X}(F^{max})$, it follows that $C$ has a fundamental system of normal and constructible locally closed neighborhoods in $\widetilde{\widehat{X}(F^{max})}$.

The above verifies the assumptions of Corollary \ref{cor coho around} for the space $\widetilde{\widehat{X}(F^{max})}$ and the subspace $\widehat{X}(F^{max})_{\top}$. Therefore for every $\cF \in \mod(A_{\widehat{X}(F^{max})_{\hvgs}})$ we have an isomorphism 
\[H^*_c(\widehat{X}(F^{max});\cF) \simeq H^*_c(\widehat{X}(F^{max})_{\top}; \cF _{\top}).\]

Note also that if $X$ is $\vgs$-normal, then considering the family of $\vgs$-supports given by the $\vgs$-closed subsets, we get a family of $\vgs$-normal supports on $X$ and, as above, we verify the assumptions of  Corollary \ref{cor coho around} for the space $\widetilde{\widehat{X}(F^{max})}$ and the subspace $\widehat{X}(F^{max})_{\top}$. Hence for every $\cF \in \mod(A_{\widehat{X}(F^{max})_{\hvgs}})$ we have an isomorphism 
\[H^*(\widehat{X}(F^{max});\cF) \simeq H^*(\widehat{X}(F^{max})_{\top}; \cF _{\top}).\]
\end{nrmk}

In the o-minimal context, $\RR$ plays the analogous role of $F^{max}$ in Remark \ref{nrmk cohom berk and hvgs} and in a similar way we have:

\begin{nrmk}
Let $\bG=(\RR, < , +, \ldots )$ be an o-minimal expansion of $(\RR, <, + )$ and let $X\subseteq \RR^m_{\infty }$ be a definably locally closed subset.  Let $X_{\top}$ be $X$ equipped with the topology generated by the open definable subsets. Then $X_{\top}$ is a locally closed subset, in particular, is a locally compact topological space. Note also that if we consider  the natural inclusion $i:X\to \widetilde{X}$ where $\widetilde{X}=\widetilde{X}_{\df}$ is the o-minimal spectrum of $X$, then $X$ with the induced topology is $X_{\top}$. 

Given $\cF \in \mod(A_{X_{\df}})$  let $\cF _{\top}\in \mod(A_{X_{\top}})$ denote the sheaf induced by the natural morphism of sites $X_\top \to X_\df$. Note that this is the same as  $\tilde{\cF}_{|X_{\top}}$. Let also $c$ denote either  the family of compact supports on $X_{\top}$, as usual or, the family of definably compact supports on $X$ as before.  Using Remark \ref{nrmk def comp} instead of Fact \ref{berk compact}, we verify the assumptions of Corollary \ref{cor coho around} for $\widetilde{X}$ and the subspace $X_\top$ exactly in the same way as in Remark \ref{nrmk cohom berk and hvgs}. Therefore  for every $\cF\in \mod(A_{X_{\df}})$ we have an isomorphism
\[H^*_c(X;\cF)\simeq H^*_c(X_{\top}; \cF _{\top}).\]

Similary, as above, if $X$ is definably normal, then 
for every $\cF\in \mod(A_{X_{\df}})$ we have an isomorphism
\[H^*(X;\cF)\simeq H^*(X_{\top}; \cF _{\top}).\]
\end{nrmk}


The topological cohomology of the model-theoretic Berkovich spaces and the $\vgs$-cohomology of their corresponding stable completions are related by the following results:

\begin{thm}\label{thm spec seq bf and hvgs c-sup}
Let $X$ be an $\bF$-definable $\vgs$-locally closed subset of an algebraic variety $V$ over $F$ and let $\cF\in \mod(A_{\widehat{X}(F^{max})_{\hvgs}})$. Then we have a spectral sequence
\[H^p_c(B_\bF(X);R^q\pi _*(\cF_{\top}))\Rightarrow  H^{p+q}_c(\widehat{X}(F^{max});\cF)\] 
which, when $F=F^{max}$, induces isomomrphisms
\[H^p_c(B_{\bF }(X);\pi _*(\cF _{\top}))\simeq   H^{p}_c(\widehat{X}(F^{max});\cF).\]
\end{thm}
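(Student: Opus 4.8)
The plan is to deduce the spectral sequence from the Leray spectral sequence of the continuous closed surjection $\pi\colon \widehat{X}(F^{max})_{\top}\to B_\bF(X)$ with respect to the family $c$ of compact supports. First I would record that $B_\bF(X)$ is locally compact (Lemma \ref{lem vgs loc closed and bf loc closed}) and paracompact (Remark \ref{rem:paracompact}), so that $c$ is a paracompactifying family of supports on $B_\bF(X)$; moreover $\pi$ being closed and proper (with compact fibers, since fibers of $\pi$ are compact by the argument in the proof that $B_\bF(X)$ is Hausdorff, or directly because $\widehat{X}(F^{max})_{\top}$ is locally compact and $\pi$ is closed) implies $\pi^{-1}$ of a compact set is compact. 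Hence $\pi^{-1}(c)=c$ as families of supports on $\widehat{X}(F^{max})_{\top}$. The general Leray spectral sequence with supports (\cite[Chapter IV, Theorem 4.1]{b}, valid since $\pi$ is $c$-proper and $c$ is paracompactifying) then gives
\[
H^p_c(B_\bF(X);R^q\pi_*(\cF_{\top}))\Rightarrow H^{p+q}_c(\widehat{X}(F^{max})_{\top};\cF_{\top}).
\]
Finally I invoke Remark \ref{nrmk cohom berk and hvgs}, which identifies $H^{p+q}_c(\widehat{X}(F^{max})_{\top};\cF_{\top})$ with $H^{p+q}_c(\widehat{X}(F^{max});\cF)$, the $\hvgs$-cohomology; this yields the displayed spectral sequence.

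\textbf{The case $F=F^{max}$.} Here $\pi$ is a homeomorphism by Fact \ref{fact hat and berkovich}. Thus $R^q\pi_*(\cF_{\top})=0$ for $q>0$ and $R^0\pi_*(\cF_{\top})=\pi_*(\cF_{\top})$, so the spectral sequence degenerates at the $E_2$-page and collapses to the asserted isomorphisms
\[
H^p_c(B_{\bF}(X);\pi_*(\cF_{\top}))\xrightarrow{\ \sim\ } H^p_c(\widehat{X}(F^{max});\cF).
\]
Alternatively, one observes directly that a homeomorphism preserves compactly-supported cohomology and pushes forward the coefficient sheaf, combined again with the identification from Remark \ref{nrmk cohom berk and hvgs}.

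\textbf{Main obstacle.} The routine part is the formal manipulation of Leray spectral sequences; the genuine point requiring care is verifying that the hypotheses of the topological Leray spectral sequence with supports are met --- specifically that $\pi$ is $c$-proper (equivalently $\pi^{-1}(c)\supseteq c$, which follows from closedness of $\pi$ plus compactness of its fibers) and that $c$ is a paracompactifying family on $B_\bF(X)$ (which is where local compactness of Lemma \ref{lem vgs loc closed and bf loc closed} and $\sigma$-compactness/paracompactness of Remark \ref{rem:paracompact} are used). A secondary subtlety is the compatibility: one must check that $\cF_{\top}$ as defined via the morphism of sites $\widehat{X}(F^{max})_{\top}\to\widehat{X}(F^{max})_{\hvgs}$ is indeed the sheaf to which the topological Leray sequence applies, and that the edge identification matches the iso-site isomorphism $\mod(A_{\widehat{X}_{\hvgs}})\simeq\mod(A_{\widetilde{\widehat{X}}})$ used in Remark \ref{nrmk cohom berk and hvgs}; this is straightforward but should be stated. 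I expect no serious difficulty beyond bookkeeping, since all the topological inputs have been assembled in the preceding lemmas and remarks.
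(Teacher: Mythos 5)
Your proposal is correct and follows essentially the same route as the paper: the Leray spectral sequence with compact supports for $\pi\colon \widehat{X}(F^{max})_{\top}\to B_\bF(X)$, the identification $H^*_c(\widehat{X}(F^{max})_{\top};\cF_{\top})\simeq H^*_c(\widehat{X}(F^{max});\cF)$ from Remark \ref{nrmk cohom berk and hvgs}, and degeneration when $\pi$ is a homeomorphism. Your explicit verification that $c$ is paracompactifying and that $\pi$ is $c$-proper is a useful elaboration of hypotheses the paper leaves implicit, but it does not change the argument.
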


\begin{proof}
Considering  the continuous map $\pi \colon\widehat{X}(F^{max})_\top \to B_\bF(X)$  of locally compact topological spaces  (Fact \ref{fact hat and berkovich} and Lemma \ref{lem vgs loc closed and bf loc closed})  we have the corresponding Leray spectral sequence (\cite[Chapter IV, 6.1]{b}) 
\[H^p_c(B_\bF(X);R^q\pi _*(\cF _{\top}))\Rightarrow  H^{p+q}_c(\widehat{X}(F^{max})_\top ;\cF _\top )\] 
and,  by Remark \ref{nrmk cohom berk and hvgs}, we have an isomorphism
\[H^*_c(\widehat{X}(F^{max})_{\top}; \cF _{\top}) \simeq H^*_c(\widehat{X}(F^{max});\cF ).\]
On the other hand, when $F=F^{max}$, we get isomorphisms 
\[H^p_c(B_{\bF }(X);\pi _*(\cF _{\top}))\simeq H^p_c(\widehat{X}(F^{max})_{\top}; \cF _{\top}) \]
since $\pi$ is a homeomorphism (Fact \ref{fact hat and berkovich}).
\end{proof}

Combining Theorems \ref{thm spec seq bf and hvgs c-sup} and \ref{thm finiteness and inv for Bf with comp supp} we have:

\begin{cor}\label{cor spec seq bf and hvgs c-sup}
Let $X$ be an $\bF$-definable $\vgs$-locally closed subset of an algebraic variety $V$ over $F$. Then there is a finite Galois extension $F\leq F'$ such that  we have an isomorphism
\[H^*_c(B_{\bF '}(X);L_{B_{\bF '}(X)})\simeq   H^{*}_c(\widehat{X}(F^{max});L_{\widehat{X}(F^{max})}).\]
In particular, when  $F$ is complete,  we have an isomorphism
\[H^*_c(V^\an\widehat{\otimes }_FF'; {\mathbb Z}) \simeq  H^*_c(\widehat{V}(F^{max});{\mathbb Z}).\]
\end{cor}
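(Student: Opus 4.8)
\textbf{Proof plan for Corollary \ref{cor spec seq bf and hvgs c-sup}.}
The plan is to combine the invariance result of Theorem \ref{thm finiteness and inv for Bf with comp supp}(2) with the spectral-sequence comparison of Theorem \ref{thm spec seq bf and hvgs c-sup}, applied over a maximally complete base. First I would observe that $F^{\max}$ satisfies $F^{\max}=(F^{\max})^{\max}$, so Fact \ref{fact hat and berkovich} gives a homeomorphism $\pi\colon\widehat{X}(F^{\max})_{\top}\to B_{\bF^{\max}}(X)$; hence Theorem \ref{thm spec seq bf and hvgs c-sup}, taken with base $F^{\max}$ rather than $F$ and applied to the constant sheaf $\cF=L_{\widehat{X}(F^{\max})}$ (whose associated $\cF_{\top}$ is $L_{\widehat{X}(F^{\max})_{\top}}$ and whose pushforward $\pi_*(\cF_{\top})$ is $L_{B_{\bF^{\max}}(X)}$), yields
\[
H^*_c(B_{\bF^{\max}}(X);L_{B_{\bF^{\max}}(X)})\simeq H^*_c(\widehat{X}(F^{\max});L_{\widehat{X}(F^{\max})}).
\]
Next I would invoke Theorem \ref{thm finiteness and inv for Bf with comp supp}(2): there is a finite Galois extension $F\leq F'$ such that for every $F'\leq F''$ one has $H^*_c(B_{\bF'}(X);L_{B_{\bF'}(X)})\simeq H^*_c(B_{\bF''}(X);L_{B_{\bF''}(X)})$. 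Since $F^{\max}$ is algebraically closed and of infinite rank it certainly contains a copy of $F'$ over $F$, so taking $F''=F^{\max}$ we obtain $H^*_c(B_{\bF'}(X);L_{B_{\bF'}(X)})\simeq H^*_c(B_{\bF^{\max}}(X);L_{B_{\bF^{\max}}(X)})$. Chaining the two isomorphisms gives the first assertion.

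For the second assertion, assume $F$ is complete. Then by Fact \ref{fact B_F(V) and V^an} the space $B_{\bF'}(V)$ is canonically homeomorphic to $(V^{\an})'\coloneqq V^\an\widehat{\otimes}_F F'$ — more precisely, $B_{\bF'}(V)$ is the analytification over the completion $F'$ of $V_{F'}$, which for $V$ an algebraic variety is exactly the base change $V^{\an}\widehat{\otimes}_F F'$ of Berkovich spaces; since $X=V$ here, $B_{\bF'}(V)$ is homeomorphic to $V^\an\widehat{\otimes}_F F'$. (I should note that $F'$ being a finite extension of the complete field $F$ is itself complete, so Fact \ref{fact B_F(V) and V^an} applies over $F'$.) Substituting $X=V$, $L=\ZZ$ into the first assertion and transporting along this homeomorphism gives $H^*_c(V^\an\widehat{\otimes}_F F';\ZZ)\simeq H^*_c(\widehat{V}(F^{\max});\ZZ)$, as required.

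The main obstacle I anticipate is purely bookkeeping rather than mathematical depth: one must make sure that Theorem \ref{thm spec seq bf and hvgs c-sup} and Theorem \ref{thm finiteness and inv for Bf with comp supp} are legitimately applicable with $F^{\max}$ in the role of the ground field — i.e.\ that the hypotheses ``$\bF$-definable $\vgs$-locally closed subset of an algebraic variety over $F$'' survive base change to $F^{\max}$, which they do because $\vgs$-local closedness is expressible over the original parameters and is preserved under extension of the model — and that in the second assertion the identification of $B_{\bF'}(V)$ with $V^\an\widehat{\otimes}_F F'$ is the one induced by Fact \ref{fact B_F(V) and V^an} applied over $F'$ (so that the coefficient sheaves match up). Once these compatibilities are spelled out, the corollary is a two-line diagram chase.
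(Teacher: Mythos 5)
Your proposal is correct and follows essentially the same route as the paper: combine the invariance statement of Theorem \ref{thm finiteness and inv for Bf with comp supp}(2) (taking $F''=F^{\max}=F'^{\max}$) with the $F=F^{\max}$ case of Theorem \ref{thm spec seq bf and hvgs c-sup}, using $F^{\max}=(F^{\max})^{\max}$ and Fact \ref{fact hat and berkovich}. One small slip: $F^{\max}$ has value group $\RR$ and is therefore of rank $1$, not of infinite rank — the reason it contains a copy of $F'$ is simply that it is an algebraically closed extension of $F$.
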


\begin{proof}
By Theorem \ref{thm finiteness and inv for Bf with comp supp} we have \[H^*_c(B_{\bF '}(X);L_{B_{\bF '}(X)})\simeq   H^*_c(B_{F '^{max}}(X);L_{B_{F '^{max}}(X)}).\] Since $F'^{max}=F^{max}$  and $F^{max}=(F^{max})^{max}$, by Theorem \ref{thm spec seq bf and hvgs} we have
\[H^*_c(B_{F '^{max}}(X);L_{B_{F '^{max}}(X)})\simeq   H^{*}_c(\widehat{X}(F^{max});L_{\widehat{X}(F^{max})}).\]
\end{proof}

Similarly, using the second part of Remark \ref{nrmk cohom berk and hvgs} and the  Leray spectral sequence (\cite[Chapter IV, 6.1]{b}) we have:

\begin{thm}\label{thm spec seq bf and hvgs}
Let $X$ be an $\bF$-definable $\vgs$-normal subset of an algebraic variety $V$ over $F$ and let $\cF\in \mod(A_{\widehat{X}(F^{max})_{\hvgs}})$. Then we have a spectral sequence
\[H^p(B_\bF(X);R^q\pi _*(\cF_{\top}))\Rightarrow  H^{p+q}(\widehat{X}(F^{max});\cF)\] 
which, when $F=F^{max}$, induces isomomrphisms
\[
\pushQED{\qed} 
H^p(B_{\bF }(X);\pi _*(\cF _{\top}))\simeq   H^{p}(\widehat{X}(F^{max});\cF).
\qedhere
\popQED
\]
\end{thm}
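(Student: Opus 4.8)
The plan is to mimic the proof of Theorem \ref{thm spec seq bf and hvgs c-sup}, but replacing the family of (definably) compact supports by the family of all (definably) closed supports, and replacing Remark \ref{nrmk cohom berk and hvgs} by its second part. Recall that if $X$ is $\vgs$-normal, then by Remark \ref{nrmk t+} the space $\widehat{X}(F^{max})$ is $\hvgs$-normal, so considering the family of $\vgs$-supports given by the $\vgs$-closed subsets of $X$ we obtain a family of $\vgs$-normal supports on $X$; passing to the $\T$-spectrum, the family $\tilde\Phi$ of all closed subsets of $\widetilde{\widehat{X}(F^{max})}$ is a normal and constructible family of supports. Hence, by the second part of Remark \ref{nrmk cohom berk and hvgs}, for every $\cF\in \mod(A_{\widehat{X}(F^{max})_{\hvgs}})$ there is an isomorphism
\[
H^*(\widehat{X}(F^{max});\cF)\simeq H^*(\widehat{X}(F^{max})_{\top};\cF_{\top}).
\]

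Next I would invoke Fact \ref{fact hat and berkovich} and Lemma \ref{lem vgs loc closed and bf loc closed}: since a $\vgs$-normal subset is in particular $\vgs$-locally closed, $B_\bF(X)$ is a locally closed (hence locally compact) subset of $B_\bF(V)$, and $\pi\colon \widehat{X}(F^{max})_{\top}\to B_\bF(X)$ is a continuous closed surjection between locally compact spaces. For such a map we have the Leray spectral sequence (\cite[Chapter IV, 6.1]{b}) without supports,
\[
H^p(B_\bF(X);R^q\pi_*(\cF_{\top}))\Rightarrow H^{p+q}(\widehat{X}(F^{max})_{\top};\cF_{\top}),
\]
which, combined with the above isomorphism, gives the asserted spectral sequence converging to $H^{p+q}(\widehat{X}(F^{max});\cF)$.

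Finally, when $F=F^{max}$, Fact \ref{fact hat and berkovich} asserts that $\pi$ is a homeomorphism, so $R^q\pi_*(\cF_{\top})=0$ for $q>0$ and the spectral sequence degenerates, yielding the isomorphisms
\[
H^p(B_{\bF}(X);\pi_*(\cF_{\top}))\simeq H^p(\widehat{X}(F^{max})_{\top};\cF_{\top})\simeq H^p(\widehat{X}(F^{max});\cF)
\]
for all $p\geq 0$. I expect the only delicate point to be making sure that the hypotheses of \cite[Chapter IV, 6.1]{b} are met, i.e.\ that $\pi$ is a map of locally compact (hence, being closed and surjective with quasi-compact base, paracompactifying-friendly) spaces; but this is exactly what Lemma \ref{lem vgs loc closed and bf loc closed} and Fact \ref{fact hat and berkovich} provide, and everything else is a routine transcription of the argument already given for $H_c^*$. $\qed$
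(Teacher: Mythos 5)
Your proposal is correct and is essentially the paper's own argument: the paper proves this theorem exactly by combining the second part of Remark \ref{nrmk cohom berk and hvgs} (the isomorphism $H^*(\widehat{X}(F^{max});\cF)\simeq H^*(\widehat{X}(F^{max})_{\top};\cF_{\top})$ for $\vgs$-normal $X$) with the Leray spectral sequence of \cite[Chapter IV, 6.1]{b} for $\pi$, and then uses that $\pi$ is a homeomorphism when $F=F^{max}$. One caveat: your side claim that ``a $\vgs$-normal subset is in particular $\vgs$-locally closed'' is not established anywhere in the paper and should not be asserted; fortunately it is also not needed, since the Leray spectral sequence with closed supports does not require the local compactness that Lemma \ref{lem vgs loc closed and bf loc closed} would provide, which is why the paper omits that step in this (non-compactly-supported) version.
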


Combining this with Theorem \ref{thm finiteness and inv for Bf} we obtain:

\begin{cor}\label{cor spec seq bf and hvgs}
Let $X$ be an $\bF$-definable $\vgs$-normal subset of a quasi-projective variety $V$ over $F$. Then there is a finite Galois extension $F\leq F'$ such that  we have an isomorphism
\[H^*(B_{\bF '}(X);L_{B_{\bF '}(X)})\simeq   H^{*}(\widehat{X}(F^{max});L_{\widehat{X}(F^{max})}).\]
In particular, when  $F$ is complete,  we have an isomorphism
\[
\pushQED{\qed} 
H^*(V^\an\widehat{\otimes }_FF'; {\mathbb Z}) \simeq  H^*(\widehat{V}(F^{max});{\mathbb Z}).
\qedhere
\popQED
\]
\end{cor}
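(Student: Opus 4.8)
The statement to prove is Corollary~\ref{cor spec seq bf and hvgs}. The plan is to deduce it directly by combining the isomorphism provided by Theorem~\ref{thm spec seq bf and hvgs} (in the case $F = F^{\max}$) with the invariance result of Theorem~\ref{thm finiteness and inv for Bf} applied to the constant sheaf $L_{B_{\bF}(X)}$. The argument is a short chaining of known isomorphisms, exactly parallel to the proof of Corollary~\ref{cor spec seq bf and hvgs c-sup}, so the main work has already been done in Theorems~\ref{thm finiteness and inv for Bf} and~\ref{thm spec seq bf and hvgs}; what remains is bookkeeping about which maximally complete field appears where.

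First I would invoke Theorem~\ref{thm finiteness and inv for Bf}(2): since $V$ is quasi-projective and $X$ is $\bF$-definable, there is a finite Galois extension $F\leq F'$ such that for every field extension $F'\leq F''$ one has an isomorphism
\[
H^*(B_{\bF'}(X);L_{B_{\bF'}(X)})\simeq H^*(B_{\bF''}(X);L_{B_{\bF''}(X)}).
\]
Taking $F'' = F'^{\max}$ (the maximally complete algebraically closed extension of $F'$ with value group $\RR$ as described before Fact~\ref{fact hat and berkovich}), this gives
\[
H^*(B_{\bF'}(X);L_{B_{\bF'}(X)})\simeq H^*(B_{F'^{\max}}(X);L_{B_{F'^{\max}}(X)}).
\]
Here one uses that $F'^{\max}$ is indeed an extension of $F'$ to which the invariance statement applies, and that $X$, being $\bF$-definable, is a fortiori $\bF'$-definable.

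Next I would observe that $F'^{\max}=F^{\max}$. This is because $F'$ is a finite extension of $F$, so the maximally complete algebraically closed field containing $F'$ with value group $\RR$ and residue field the algebraic closure of that of $F'$ coincides with the one containing $F$ (the algebraic closures of the residue fields agree, and maximal completions are unique up to isomorphism over the base). In particular $F^{\max} = (F^{\max})^{\max}$, so Theorem~\ref{thm spec seq bf and hvgs} applies with base field $F^{\max}$: since $X$ is $\vgs$-normal, taking $\cF = L_{\widehat{X}(F^{\max})}$ and noting $\pi_*(L_{\top}) = L_{B_{\bF}(X)}$ (as $\pi$ is a homeomorphism when the base is $F^{\max}$, by Fact~\ref{fact hat and berkovich}), one gets
\[
H^*(B_{F^{\max}}(X);L_{B_{F^{\max}}(X)})\simeq H^*(\widehat{X}(F^{\max});L_{\widehat{X}(F^{\max})}).
\]
Chaining the three isomorphisms yields the first displayed isomorphism of the corollary.

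Finally, for the ``in particular'' clause, assume $F$ is complete. Then $B_\bF(V)$ is canonically homeomorphic to $V^{\an}$ by Fact~\ref{fact B_F(V) and V^an}, and likewise $B_{\bF'}(V)$ is homeomorphic to $(V\otimes_F F')^{\an} = V^{\an}\widehat{\otimes}_F F'$; the constant sheaf $\mathbb{Z}$ corresponds under these homeomorphisms, so the isomorphism specializes to $H^*(V^{\an}\widehat{\otimes}_F F';\mathbb{Z})\simeq H^*(\widehat{V}(F^{\max});\mathbb{Z})$ with $X = V$ (which is trivially $\vgs$-normal as a scheme, or one restricts to the quasi-projective hypothesis as stated). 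I do not expect a genuine obstacle here: the only subtlety is the identification $F'^{\max}=F^{\max}$ and checking that $X$ remains $\vgs$-normal and $\bF'$-definable after the finite base change, both of which are routine. The heavy lifting — the homotopy-invariance machinery and the Leray spectral sequence comparison — is entirely contained in the cited results.
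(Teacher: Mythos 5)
Your proposal is correct and follows exactly the route the paper intends: the paper gives no separate proof for this corollary but derives it just as in Corollary \ref{cor spec seq bf and hvgs c-sup}, namely by applying Theorem \ref{thm finiteness and inv for Bf}(2) with $F''=F'^{\max}$, using $F'^{\max}=F^{\max}=(F^{\max})^{\max}$, and then invoking Theorem \ref{thm spec seq bf and hvgs} over $F^{\max}$ where $\pi$ is a homeomorphism. Your bookkeeping about the maximally complete fields and the identification of $\pi_*(L_{\top})$ with the constant sheaf matches the paper's argument.
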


We conclude the paper with a couple of remarks


Let $X$ be an $\bF$-definable subset of an algebraic variety $V$ over $F$. If $F\leq F''\leq F^{max}$ is a  field extension, then the natural restriction 
$\pi _{F'',F}: B_{\bF ''}(X)\to B_{\bF}(X)$ is a proper map. Indeed, let $V'$ be a complete variety over $F$ such that $V$ is an open subset. Then $\pi _{F'',F}: B_{\bF ''}(V')\to B_\bF(V')$ is proper (by Fact \ref{berk compact}) and we have a commutative diagram 
\begin{equation*}
\xymatrix{
\widehat{V'}(F^{max})\ar[d]_{\pi }\ar[rd]^{\pi }  &   \\
B_{\bF ''}(V') \ar[r]^{\pi _{F'',F}}  & B_\bF(V')
}
\end{equation*}
of surjective restrictions such that $\pi ^{-1}(B_\bF(X))=\pi ^{-1}(B_{\bF ''}(X))=\widehat{X}(F^{max}) $ (\cite[Proposition 14.1.2]{HrLo}). If $C\subseteq B_{\bF}(X)$ is compact, then it is a compact subset of $B_\bF(V')$ and so, $\pi _{F'',F}^{-1}(C)$ is a compact subset of $B_{\bF ''}(V')$ and $\pi ^{-1}(\pi _{F'',F}(C))$ is a compact subset of $\widehat{X}(F^{max})$. Therefore, $\pi (\pi ^{-1}(\pi _{F'',F}^{-1}(C)))=\pi _{F'',F}^{-1}(C)$ is a compact subset of $B_{\bF ''}(X)$ since $\pi :\widehat{X}(F^{max})\to B_{\bF ''}(X)$ is surjective. It follows that:

\begin{nrmk}\label{nrmk c-cohom Bf and Bfalg}
If $X$ is an $\bF$-definable  $\vgs$-locally closed subset of an algebraic variety $V$ over $F$ and $\cF \in \mod (A_{B_{\bF}(X)})$, then we have an isomorphism:
\[
\lind {F''/F} H^*_c(B_{\bF ''}(X); \cF _{F''})\simeq H^*_c(B_{\bF ^{{\rm alg}}}(X);\cF _{F^{{\rm alg}}})
\]
where the limit is taken over all finite Galois extensions $F\leq F''$ contained in $F^{{\rm alg}}$,  $\cF _{F^{{\rm alg}}}=\pi _{F^{{\rm alg}}, F}^{-1}\cF$ and $\cF _{F''}=\pi _{F'',F}^{-1}\cF$.

Indeed, since  $B_{\bF ^{{\rm alg}}}(X)$ and $\lpro {F''/F}B_{\bF ''}(X)$ are homeomorphic, the result  follows from  \cite[Chapter II, 14.5]{b}.
\end{nrmk}

Now let $G={\rm Aut}(F^{{\rm alg}}/F^h)$ be the absolute Galois group of the Henselization $F^h$ of $F$. Recall that this is  the group of valued field automorphism of $F^{{\rm alg}}$ over $F$. As observed in \cite[page 205]{HrLo},  $G$ acts continuously on $B_{\bF ^{{\rm alg}}}(V)$ and $B_\bF(V)=B_{\bF ^{{\rm alg}}}(V)/G$. Therefore, since this action leaves $B_{\bF ^{{\rm alg}}}(X)$ invariant,  $G$ acts continuously on $B_{\bF ^{{\rm alg}}}(X)$ and $B_\bF(X)=B_{\bF ^{{\rm alg}}}(X)/G$.

Since $G=\lpro {F''/F^h}{\rm Gal}(F''/F^h)$ where the limit is taken over all finite Galois extensions $F^h\leq F''$ contained in $F^{{\rm alg}}$, then when $F=F^h$ (i.e. it is Henselian),  the isomorphism of Remark \ref{nrmk c-cohom Bf and Bfalg} is an isomorphism of $G$-modules. However, unlike the \'etale cohomology, we do not have the Cartan-Leray spectral sequence relating the cohomologies  of $B_{\bF}(X)$ and $B_{\bF^{{\rm alg}}}(X)$ through the absolute Galois group $G=\mathrm{Gal}(F^{{\rm alg}}/F^h)$. Indeed, in this case, $G$ does not act freely and properly on $B_{\bF^{{\rm alg}}}(X)$. \\ 

Regarding vanishing of cohomology we have: 

\begin{nrmk}\label{nrmk vanishing B_F}
Let $X$ be an $\bF$-definable  subset of a quasi-projective variety $V$ over $F$. Since the finite simplicial complex ${\mathcal X}$ of Fact \ref{deformation for Bf} to can be assumed to have dimension less or equal to $\dim (\overline{X}^\mathrm{Zar})$, then by homotopy axiom in topology (\cite[Chapter II, 11.8]{b}),
we have: 
\[
H^p(B_{\bF }(X); L_{B_{\bF }(X)})=0 \,\,\textrm{for every}\,\, p>\dim (\overline{X}^\mathrm{Zar})
\]
and, similarly, when $X$ is a $\vgs$-locally closed subset, we have 
\[
H^p_c(B_{\bF }(X); L_{B_{\bF }(X)})=0 \,\,\textrm{for every}\,\, p>\dim (\overline{X}^\mathrm{Zar}).
\]

It is worthwhile to point out that the first these vanishing results was also observed in the paper \cite{basu} where it plays a crucial role.  
\end{nrmk}

\section{Some tameness results in families}\label{sec Betti}
In \cite{HrLo} Hrushovski and Loeser show finiteness of homotopy (even homeomorphism) types of  uniform families of (model-theoretic) Berkovich spaces (see \cite[Theorems 14.3.1 and 14.4.4]{HrLo}).  In higher ranks one has to replace topological homotopy (resp. homeomorphism) type by definable homotopy (resp. homeomorphism) type. As observed by Hrushovski and Loeser at the beginning \cite[Section 14.3]{HrLo}, in higher ranks we no longer have such finiteness results. In fact one  can have a uniform family of triangles in $\nG ^2$, corresponding to skeleta of a uniform family of elliptic curves,  without finitely many definable homotopy (equivalently definable homeomorphism) types. 

Taking advantage of the invariance results for our cohomology theories with definably compact supports, we can proof finiteness of cohomological complexity in uniform families. Namely:

\begin{thm}\label{thm: finite cohom complexity hat}
Let $K$ be an algebraically closed non-trivially valued non-archimedean field. Let $V$ and $V'$ be algebraic varieties over $K$. Let $W\subseteq V'\times \nGik$ be a definable subset and let $Z\subseteq V\times W$ be a $\vgs$-locally closed  subset. For $w\in W$ let $Z_w=\{v\in V: (v,w)\in Z\}$. Then as $w$ runs through $W$ there are finitely many possibilities for the  $\hvgs$-cohomology
\[
H^*_c(\widehat{Z}_{w};{\mathbb Q})
\]
with definably compact supports.
\end{thm}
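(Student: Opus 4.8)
The strategy is to reduce, via Hrushovski–Loeser's main theorem in families, to a statement about $\bGi$-cohomology with definably compact supports of a definable family of definably locally closed subsets of some $\nGin$, and then to deduce finiteness of isomorphism types in that setting by pushing everything into an elementary extension whose value group carries the structure of a real closed field, where one can invoke classical o-minimal triangulation/finiteness results. First I would use a relative version of Fact \ref{fact main HrLo} (the content of \cite[Theorems 14.3.1 and 14.4.4]{HrLo} and the discussion preceding them): after stratifying $W$ into finitely many definable pieces we may assume the family $\widehat{Z}_w$ admits a uniformly definable strong deformation retraction onto a family $\mathcal{X}_w\subseteq \nGin$ of definable sets, with $\mathcal{X}_w$ definably locally closed (using Lemma \ref{lem retraction of vgs-locally closed} fibrewise, and the fact that the retraction and the homeomorphism $h$ are morphisms of $\hvgs$-sites by Lemmas \ref{lem retration is vgs} and \ref{lem homeo of retraction is vgs}). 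By Remark \ref{nrmk r* and h* are iso} we then have $H^*_c(\widehat{Z}_w;\mathbb{Q})\simeq H^*_c(\mathcal{X}_w;\mathbb{Q})$ for each $w$, so it suffices to bound the number of isomorphism types of $H^*_c(\mathcal{X}_w;\mathbb{Q})$ as $w$ ranges over $W$.

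Next I would exploit the invariance Theorem \ref{thm bf inv gp-int}. Replacing $W$ by a single point of $W$ valued in a sufficiently saturated elementary extension (or rather: passing to $\bGi'$ an elementary extension in which all fibres are ``realized''), invariance of $H^*_c$ under elementary extensions and o-minimal expansions shows that it is enough to count isomorphism types over a fixed model. The key move is then to choose $\bGi'$ (equivalently $\bSg'$) large enough so that the tropical semigroup $\bGi'$ can be endowed with the structure of a real closed field — this is exactly the trick alluded to in application (I) of the introduction, via Basu's results \cite[Theorem 2.2]{basu-o-minimal}: in an o-minimal expansion of a real closed field, a uniformly definable family of definable sets has only finitely many definable homotopy types, hence only finitely many isomorphism types of $H^*_c(\,\cdot\,;\mathbb{Q})$ (using e.g. definable triangulation and the invariance of o-minimal cohomology with compact supports under the field extension, Theorem \ref{thm bf inv gp-int} again). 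Transferring this finiteness statement back down to the original $\bGi$ by elementarity yields the desired conclusion for the family $\mathcal{X}_w$, and hence for $H^*_c(\widehat{Z}_w;\mathbb{Q})$.

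Concretely the steps are: (i) apply the relative Hrushovski–Loeser retraction to get, after partitioning $W$, a uniformly definable family $\{\mathcal{X}_w\}_{w\in W}$ of definably locally closed subsets of $\nGin$ with $H^*_c(\widehat{Z}_w;\mathbb{Q})\simeq H^*_c(\mathcal{X}_w;\mathbb{Q})$; (ii) using Remark \ref{nrmk def completions}, realize $\{\mathcal{X}_w\}$ inside a uniformly definable family of definably compact sets $\{P_w\}$ with $\mathcal{X}_w$ open in $P_w$ and $P_w\setminus\mathcal{X}_w$ closed, so that by Corollary \ref{cor hphi and !} and the long exact sequence $H^*_c(\mathcal{X}_w;\mathbb{Q})$ is controlled by $H^*(P_w;\mathbb{Q})$ and $H^*(P_w\setminus\mathcal{X}_w;\mathbb{Q})$; (iii) base change to an elementary extension $\bGi'$ carrying a real closed field structure, apply Theorem \ref{thm bf inv gp-int} to identify the cohomologies with those computed over $\bGi'$, and invoke Basu's finiteness of definable homotopy types in o-minimal expansions of real closed fields together with o-minimal triangulation to conclude finiteness of isomorphism types of $H^*(P_w;\mathbb{Q})$ and $H^*(P_w\setminus\mathcal{X}_w;\mathbb{Q})$, hence of $H^*_c(\mathcal{X}_w;\mathbb{Q})$; (iv) descend by elementarity. \textbf{The main obstacle} is step (iii): one must set up carefully the passage to a model where $\bGi$ is (the $\infty$-completion of) a real closed field and check that Basu's finiteness theorem applies to the relevant family of definably locally closed sets — in particular handling the ``$\infty$'' points, which is why the results of Section \ref{section t-top in bGi} on definable normality in $\bGi$ and the reduction to bounded sets via Remark \ref{nrmk def completions} are needed — and that the cohomology with definably compact supports is genuinely invariant under this change of structure, which is provided by Theorem \ref{thm bf inv gp-int} but must be applied to $P_w$ and $P_w\setminus\mathcal{X}_w$ rather than to $\mathcal{X}_w$ directly.
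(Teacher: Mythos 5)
Your overall route is the paper's: retract uniformly onto a family of skeleta in $\nGim$, use invariance to pass to an elementary extension whose value group is a real closed field, and invoke o-minimal finiteness there. But there is a genuine gap at the hinge of step (iii). The uniform Hrushovski--Loeser theorem (the correct reference is \cite[Proposition 11.7.1]{HrLo}, not the rank-one Theorems 14.3.1/14.4.4, which the paper explicitly says fail in higher rank) produces skeleta $\mathcal{X}_w\subseteq\nGi^{\dim V}$ parameterized by $w\in W\subseteq V'\times\nGik$, and $W$ contains valued-field coordinates. The definable trivialization theorem (and Basu's results) only apply to families that are definable \emph{in the o-minimal structure} $\bGi$, i.e.\ whose parameter space is a definable subset of some $\nGi^{m'}$. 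You cannot apply them to $\{\mathcal{X}_w\}_{w\in W}$ as it stands. The paper bridges this by stable embeddedness of $\nGi$ together with elimination of imaginaries in $\nGi$: there is a definable $D\subseteq\nGi^{m'}$, a definable surjection $\tau\colon W\to D$ and a $\bGi$-definable family $\mathcal{Z}'_\gamma$ with $\mathcal{X}_w=\mathcal{Z}'_{\tau(w)}$; only then is the trivialization theorem applied over $D$. This re-parameterization step is missing from your argument and is not cosmetic. A secondary omission: the theorem is stated for arbitrary varieties $V$, whereas the uniform retraction exists only for quasi-projective $V$; you need the preliminary Mayer--Vietoris induction over a cover by quasi-projective opens (as in Theorem \ref{thm finiteness in acvf}), which works here because $\QQ$-vector spaces of bounded dimension have finitely many isomorphism types.

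Two smaller comments on where you diverge from the paper. First, you rely on finiteness of definable \emph{homotopy} types; since $H^*_c$ is not a homotopy invariant, this alone would not suffice — the paper uses Hardt-type trivialization, which yields definable \emph{homeomorphisms} of the fibers and hence directly isomorphisms of $H^*_c$. Your detour through definable completions $P_w$ and the long exact sequence relating $H^*_c(\mathcal{X}_w)$ to $H^*(P_w)$ and $H^*(P_w\setminus\mathcal{X}_w)$ is a legitimate way around this for $\QQ$-coefficients (bounded dimensions of the outer terms bound the dimension of the middle one), but it is unnecessary once one has homeomorphic trivialization, and it would not generalize as cleanly to arbitrary coefficient modules (cf.\ Remark \ref{rem:coefficients}).
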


\begin{proof}
First notice that it is enough to prove the result assuming that $V$ is a quasi-projective variety. In fact, in general, if $V$ is a variety, then $V$ is a finite union of open quasi-projective sub-varieties $V_1, \ldots, V_n$ and the result follows by induction on $n$ using the Mayer-Vietoris sequence as in the proof of Theorem \ref{thm finiteness in acvf}.

By the uniform version of the main theorem of Hrushovski and Loeser (\cite[Proposition 11.7.1]{HrLo}) there is a uniformly pro-definable family $H_{w}:I\times \widehat{Z_{w}}\to \widehat{Z_{w}}$, a definable subset ${\mathcal Z}_{w}\subseteq \nG_{\infty}^{\dim V}$, an iso-definable subset ${\mathfrak Z}_{w}$  of $\widehat{Z}_{w}$, and $h_{w}:{\mathfrak Z}_{w}\to {\mathcal Z}_{w}$, pro-definable uniformly in $w$, such that for each $w$, $H_{w}$ is a deformation retraction of $\widehat{Z}_{w}$ onto ${\mathfrak Z}_{w}$, and $h_{w}$ is a pro-definable homeomorphism.

By stable embeddedness of $\nGi$ (see for example \cite[Proposition 2.7.1]{HrLo}) and elimination of imaginaries in $\nGi$, there is a definable subset $D\subseteq  \Gamma_\infty^{m'}$ (for some $m'$), a definable surjection $\tau \colon W\to D$  and a definable family $\mathcal{Z}'_{\gamma}$ with $\gamma$ ranging over $D$, such that for every $w\in W$
\[
\mathcal{Z}_{w} = \mathcal{Z}'_{\tau (w)}. 
\]

Let $K'$ be a sufficiently saturated  elementary extension of $K$ such that its value group $\nG'$ expands to a model $\bG ''$ of real closed fields.  Then we have a commutative diagrams of morphisms of $\hvgs$-sites

\begin{equation*}
\xymatrix{
I(K')\times \widehat{Z}_{w}(K') \ar[r]^{\,\,\,\,\,\,\,\,\,\,\,\,\,\,\,H_{w}^{K'}}  \ar[d]^{} & \widehat{Z}_{w}(K') \ar[d]^{} \\
I\times \widehat{Z}_{w} \ar[r]^{H_{w}} & \widehat{Z}_{w}
}
\end{equation*} 

and 

\begin{equation*}
\xymatrix{
{\mathfrak Z}_{w}(K') \ar[r]^{h_{w}^{K'}}  \ar[d]^{} & \mathcal{Z}_{w}(K') \ar[d]^{} \\
{\mathfrak Z}_{w} \ar[r]^{h_{w}} & \mathcal{Z}_{w}
}
\end{equation*} 
uniformly in  $w$, with $H_{w}^{K'}$ a pro-definable deformation retraction, $h_{w}^{K'}$ a pro-definable homeomorphism and where  the vertical arrows are the morphisms of $\hvgs$-sites taking $\widehat{U}$ to $\widehat{U}(K')$. 

Note that since $Z$ is $\vgs$-locally closed, each $Z_{w}$ is also $\vgs$-locally closed subset. By Lemma \ref{lem retraction of vgs-locally closed}, each $\mathcal{Z}_w$ is a definably locally closed subset. It follows that we take  $\hvgs$-cohomologies with definably compact supports and obtain the following commutative diagram of isomorphisms

\begin{equation*}
\xymatrix{
& & H_c^*(\mathcal{Z}'_{\gamma }(\Gamma '');\QQ)\ar[d]^{\sim } \\
H_c^*(\widehat{Z}_{w}(K');\QQ) \ar[r]^{\sim }  \ar[d]^{\sim } & H_c^*(\mathcal{Z}_{w}(K');\QQ) \ar[d]^{\sim} \ar[r]^{=} & H_c^*(\mathcal{Z}'_{\gamma }(\Gamma ');\QQ)\ar[d]^{\sim }\\
H_c^*(\widehat{Z}_{w};\QQ) \ar[r]^{\sim} & H_c^*(\mathcal{Z}_{w};\QQ) \ar[r]^{=}& H_c^*(\mathcal{Z}'_{\gamma };\QQ)
}
\end{equation*} 
where $\gamma =\tau (w)$, the vertical isomorphisms are given by the invariance results (Theorems \ref{thm inv in acvf} and \ref{thm bf inv gp-int}) and the horizontal isomorphism are induced by the pro-definable deformation retractions (Remark \ref{nrmk r* and h* are iso}). 

By the definable trivialization theorem in the real closed field $\bG ''$ (\cite[Chapter 9]{vdd}), there is a partition of $D(\nG '')$ by finitely many $\bG''$-definable subsets $D_1,\ldots, D_r$ such that for every $1\leq i\leq r$ and for all $\gamma , \gamma '\in D_i$ we have that ${\mathcal Z}_{\gamma}$ is $\bG ''$-definably homeomorphic to $\mathcal{Z}_{\gamma '}$. In particular, we have \[
H^*_c(\mathcal{Z}_{\gamma }(\nG '');{\mathbb Q})\simeq H^*_c(\mathcal{Z}_{\gamma '}(\nG '');{\mathbb Q})
\]
for all $\gamma , \gamma '\in D_i$ and for every $1\leq i\leq r$ and the result follows.
\end{proof}

While  Theorem \ref{thm: finite cohom complexity hat} above can be see as a  higher rank analogue of  \cite[Theorem 14.3.1]{HrLo} concerning finiteness of homotopy types in uniform families in the rank one case, the next result is the analogue of \cite[Theorem 14.4.4]{HrLo}.

\begin{cor}\label{cor: finite cohom complexity hat}
Let $K$ be an algebraically closed non-trivially valued non-archimedean field. Let $V$ be an algebraic variety over $K$. Let $X\subseteq V$ be a $\vgs$-locally closed definable subset and let $G:X\to \nGi$ be a definable map. Then there is a finite partition of $\nGi$ into intervals such that the fibers of $\widehat{G}: \widehat{X}\to \nGi$ over each interval have canonically isomorphic  $\hvgs$-cohomology groups with definable compact supports.
\end{cor}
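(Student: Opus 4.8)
The plan is to deduce this corollary from Theorem \ref{thm: finite cohom complexity hat} by setting up the appropriate definable family indexed by $\nGi$. First, consider the graph $Z\coloneqq \{(x,w)\in X\times \nGi : G(x)=w\}\subseteq V\times (V'\times \nGi)$ where here we may take $V'$ to be a point, so that $W=\nGi$ and $Z$ is a $\vgs$-locally closed subset of $V\times \nGi$ (it is the intersection of $X\times \nGi$, which is $\vgs$-locally closed, with the $\vgs$-closed graph of $G$, since $G$ is a definable map hence a morphism of $\vgs$-sites by the characterization in Remark \ref{nrmk v+g open} applied coordinatewise). For $w\in \nGi$ the fiber $Z_w=\{x\in V : (x,w)\in Z\}$ equals $G^{-1}(w)$, and under the canonical extension $\widehat{G}\colon\widehat{X}\to\nGi$ one checks $\widehat{Z_w}=\widehat{G^{-1}(w)}=\widehat{G}^{-1}(w)$ (using surjectivity of the hat functor on fibers, e.g. \cite[Lemma 4.2.6]{HrLo}, as was done repeatedly in Section \ref{section cohomo finiteness and invariance}). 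Thus the fibers of $\widehat{G}$ are exactly the stable completions $\widehat{Z_w}$ of the family $Z$.

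Next, I would run the argument inside the proof of Theorem \ref{thm: finite cohom complexity hat} but keep track of the base more carefully. Applying the uniform Hrushovski--Loeser theorem (\cite[Proposition 11.7.1]{HrLo}) and stable embeddedness plus elimination of imaginaries in $\nGi$, we obtain a definable set $D\subseteq\nGi^{m'}$, a definable surjection $\tau\colon \nGi\to D$, and a definable family $\{\mathcal Z'_\gamma\}_{\gamma\in D}$ of definably locally closed subsets of $\nG_\infty^{\dim V}$ (locally closed by Lemma \ref{lem retraction of vgs-locally closed}) such that $H^*_c(\widehat{Z_w};\QQ)\simeq H^*_c(\mathcal Z'_{\tau(w)};\QQ)$ canonically, via the chain of isomorphisms (invariance, Theorems \ref{thm inv in acvf} and \ref{thm bf inv gp-int}, together with the retraction isomorphisms of Remark \ref{nrmk r* and h* are iso}) displayed in that proof. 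Now pass to a sufficiently saturated elementary extension $K'$ of $K$ whose value group $\nG'$ expands to a real closed field $\bG''$, and apply the definable trivialization theorem in $\bG''$ (\cite[Chapter 9]{vdd}) to the definable family $\{\mathcal Z'_\gamma\}$ over $D(\nG'')$: this yields a finite partition of $D(\nG'')$ into $\bG''$-definable pieces over each of which the family is definably trivial, hence has constant $H^*_c(\,\cdot\,;\QQ)$.

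The remaining, essentially formal, point is to transport this partition of $D$ back to a partition of $\nGi$ into intervals. Since $D\subseteq\nGi^{m'}$ and the partition pieces $D_1,\dots,D_r$ are $\nGi$-definable (they are $\emptyset$-definable over $\bG''$ in the family $\mathcal Z'$, which is itself $\nGi$-definable, so one may take the partition to be $\nGi$-definable after possibly refining), the preimages $\tau^{-1}(D_i)\subseteq\nGi$ are definable subsets of $\nGi$; by o-minimality of $\bGi$ each is a finite union of points and open intervals, so after refining we obtain a finite partition of $\nGi$ into intervals (and singletons) such that $\tau$, hence $w\mapsto H^*_c(\widehat{Z_w};\QQ)$, is constant on each piece, with the isomorphisms being the canonical ones coming from the commutative diagram in the proof of Theorem \ref{thm: finite cohom complexity hat}. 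I expect the main obstacle to be bookkeeping rather than anything deep: one must verify that the isomorphism $H^*_c(\widehat{Z_w};\QQ)\simeq H^*_c(\mathcal Z'_{\tau(w)};\QQ)$ is genuinely \emph{canonical} and compatible as $w$ varies within a piece (so that ``canonically isomorphic'' is justified), and that the definable-trivialization partition can be taken $\nGi$-definable so that pulling back along $\tau$ and intersecting with the intervals of $\nGi$ stays within the definable category; both follow from the uniformity built into \cite[Proposition 11.7.1]{HrLo} and standard o-minimal cell decomposition in $\bGi$, exactly as in the proof of the theorem above.
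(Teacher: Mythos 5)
Your proposal follows essentially the same route as the paper: realize the fibers of $\widehat{G}$ as the stable completions of the fibers of the graph $Z\subseteq V\times\nGi$ of $G$, run the proof of Theorem~\ref{thm: finite cohom complexity hat}, and use that the definable trivialization partition can be taken definable with parameters in $\nGi$ so that o-minimality turns it into a finite partition by intervals. The only (cosmetic) difference is that the paper observes that here $W=D=\nGi$ and $\tau$ is the identity, so nothing needs to be pulled back: the $D_i$ are already intervals in $\nG''_{\infty}$ with endpoints in $\nGi$ and one simply takes the traces $D_i(\nGi)$, whereas you keep a general $\tau\colon\nGi\to D$ and pull the partition back before refining -- both versions hinge on exactly the same key point about parameters. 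One caution: your parenthetical justification that the graph of $G$ is $\vgs$-closed ``since $G$ is a definable map hence a morphism of $\vgs$-sites'' is false as stated -- not every definable map into $\nGi$ is a morphism of $\vgs$-sites, just as not every definable open set is $\vgs$-open (Remark~\ref{nrmk v+g-topology}), and Remark~\ref{nrmk v+g open} does not give such a characterization of maps. The $\vgs$-local closedness of the graph is genuinely needed to invoke the theorem; the paper's own proof also takes it for granted, so this is a shared, not a new, gap, but your attempted justification should be removed or replaced.
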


\begin{proof}
Let $Z\subseteq V\times \nGi$ be the graph of $G:X\to \nGi$. Then for each $w\in \nGi$ we have $Z_w=G^{-1}(w)$ and $\widehat{Z}_w=\widehat{G}^{-1}(w)$.  Note that the $W$ and the $D$ of the proof of Theorem \ref{thm: finite cohom complexity hat} are both equal to $\nGi$ here, in particular, $\tau $ is the identity. So that proof  gives us that, by the definable trivialization theorem in the real closed field $\nG ''$ (\cite[Chapter 9]{vdd}), there is a partition of $\nG_{\infty} ''$ by finitely many $\bG''$-definable subsets $D_1,\ldots, D_r$ such that for every $1\leq i\leq r$ and for all $\gamma , \gamma '\in D_i$ we have that ${\mathcal Z}_{\gamma}$ is $\bG ''$-definably homeomorphic to $\mathcal{Z}_{\gamma '}$. In particular, we have 
\[
H^*_c(\mathcal{Z}_{\gamma }(\nG '');{\mathbb Q})\simeq H^*_c(\mathcal{Z}_{\gamma '}(\nG '');{\mathbb Q})
\]
for all $\gamma , \gamma '\in D_i$ and for every $1\leq i\leq r$. Now the definable trivialization theorem ensures that we can take the $D_i$'s to be $\bG ''$-definable with parameters in $\nGi$, hence by o-minimality, they are interval in $\nG_{\infty}''$ with endpoints in $\nGi$. To conclude take the intervals $D_1(\nGi), \ldots, D_r(\nGi)$ partitioning $\nGi$.
\end{proof}

\begin{nrmk}\label{rem:coefficients} 
Both in Theorem \ref{thm: finite cohom complexity hat} and Corollary \ref{cor: finite cohom complexity hat} one may replace $\QQ$ by any $A$-module, for $A$ a commutative ring with unit. 
\end{nrmk}

Given a definable set $X\subseteq V\times\nGin$, let $\pi _0^{\df}$ be the functor taking $\widehat{X}$ into its set of definably connected components. Note that by Lemma \ref{lem vgs components}, $\pi_0^\df(\widehat{X})$ is a finite set. Moreover, $\pi _0^{\df}$ is invariant under elementary extensions of $K$ (Lemma \ref{lem vgs and def conn}) and in $\bGi$ is also invariant under o-minimal expansions. Since $\pi _0^{\df}$ is furthermore invariant under pro-definable deformation retractions, using the same transfer arguments as above, we can prove the following higher rank analogue of results by A. Abbes and T. Saito (\cite[Theorem 5.1]{abbes-saito}) and by J. Poineau (\cite[Th\'eor\`eme 2]{poineau}).

\begin{cor}\label{cor: finite pi0 complexity hat}
Let $K$ be an algebraically closed non-trivially valued non-archimedean field. Let $V$ be an algebraic variety over $K$. Let $X\subseteq V$ be a definable subset and let $G:X\to \nGi$ be a definable map. Then there is a finite partition of $\nGi$ into intervals such that over each interval $I$, for any $\epsilon ', \epsilon \in I$, we have a canonical bijection between $\pi _0^{\df}(\widehat{G}^{-1}(\epsilon '))$ and $\pi _0^{\df}(\widehat{G}^{-1}(\epsilon ))$.
\end{cor}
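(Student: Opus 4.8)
The plan is to mimic the argument for Corollary \ref{cor: finite cohom complexity hat}, replacing the $\hvgs$-cohomology groups with definably compact supports by the functor $\pi_0^\df$, and using the same transfer through a highly saturated elementary extension whose value group is a real closed field. First I would let $Z\subseteq V\times\nGi$ be the graph of $G\colon X\to\nGi$, so that $\widehat{Z}_w=\widehat{G}^{-1}(w)$ for each $w\in\nGi$. Applying the uniform version of Hrushovski--Loeser's main theorem (\cite[Proposition 11.7.1]{HrLo}) — after reducing to the quasi-projective case exactly as at the start of the proof of Theorem \ref{thm: finite cohom complexity hat}, this time gluing the $\pi_0^\df$-sets rather than invoking Mayer--Vietoris — one obtains a uniformly pro-definable family of deformation retractions $H_w\colon I\times\widehat{Z}_w\to\widehat{Z}_w$ onto iso-definable subsets ${\mathfrak Z}_w$, together with pro-definable homeomorphisms $h_w\colon{\mathfrak Z}_w\to{\mathcal Z}_w$ onto definable subsets ${\mathcal Z}_w\subseteq\nGi^{\dim V}$, all uniform in $w$. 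By stable embeddedness and elimination of imaginaries in $\nGi$, the family $({\mathcal Z}_w)_{w\in\nGi}$ factors through a definable family $({\mathcal Z}'_\gamma)_{\gamma\in D}$ with $D\subseteq\nGi^{m'}$; here, since $W=D=\nGi$, the reindexing map $\tau$ is simply the identity.

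Next I would pass to a sufficiently saturated elementary extension $K'$ of $K$ whose value group $\nG'$ expands to a model $\bG''$ of the theory of real closed fields, and observe that $\pi_0^\df$ is invariant under this base change: this is the content of Lemma \ref{lem vgs and def conn} together with Remark \ref{rem:defconect}, which say that $\widehat{Z}_w$ is definably connected iff $\widehat{Z}_w(K')$ is, and more generally that its definably connected components are of the form $\widehat{U}$ for $\vgs$-clopen $U$ — and these are preserved under elementary extension. Combining this with the invariance of $\pi_0^\df$ under pro-definable deformation retractions (the components of $\widehat{Z}_w$ and of ${\mathfrak Z}_w$ are in canonical bijection, since ${\mathfrak Z}_w$ is a retract and $\widehat{Z}_w$ deformation retracts onto it, and $h_w$ is a homeomorphism) yields, for each $w$, a chain of canonical bijections
\[
\pi_0^\df(\widehat{Z}_w)\;\simeq\;\pi_0^\df({\mathcal Z}_w)\;\simeq\;\pi_0^\df({\mathcal Z}_w(\nG''))\;=\;\pi_0^\df({\mathcal Z}'_\gamma(\nG'')),
\]
where $\gamma=w$ viewed in $D(\nG'')$. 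Finally I would apply the definable trivialization theorem in the real closed field $\bG''$ (\cite[Chapter 9]{vdd}) to the family $({\mathcal Z}'_\gamma)_{\gamma\in D(\nG'')}$: this produces a partition of $D(\nG'')$ into finitely many $\bG''$-definable pieces over each of which the family is definably trivial, hence in particular the number of definably connected components is constant and the components of $\pi_0^\df({\mathcal Z}'_\gamma)$ are in canonical (trivialization-induced) bijection for $\gamma,\gamma'$ in the same piece. Since the trivialization can be taken with parameters in $\nGi$, by o-minimality these pieces are intervals of $\nG''_\infty$ with endpoints in $\nGi$, and restricting to $\nGi$-points gives the desired finite partition of $\nGi$ into intervals. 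Chasing through the diagram above, over each such interval $I$ and any $\epsilon',\epsilon\in I$ we get a canonical bijection $\pi_0^\df(\widehat{G}^{-1}(\epsilon'))\simeq\pi_0^\df(\widehat{G}^{-1}(\epsilon))$.

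The main obstacle, as in Theorem \ref{thm: finite cohom complexity hat}, is not any single hard lemma but rather bookkeeping the canonicity: one must check that all the bijections in the displayed chain are natural enough that they can be composed coherently across a fixed interval, which ultimately rests on the uniformity built into \cite[Proposition 11.7.1]{HrLo} and the functoriality of the trivialization on each piece. A secondary point needing care is that $\pi_0^\df$ genuinely commutes with the operations used — in particular that definably connected components of the fibers ${\mathcal Z}'_\gamma(\nG'')$ computed in the o-minimal structure $\bG''$ agree, after specialization, with those computed in $\nGi$; this follows because definable connectedness of a semi-linear set is detected by the existence of a $\vgs$-clopen decomposition, a first-order condition preserved under passing between $\nGi$ and $\bG''_\infty$, exactly as recorded in Remark \ref{rem:defconect} and used throughout Section \ref{section t-top in bGi}. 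Everything else is a routine adaptation of the proof of Corollary \ref{cor: finite cohom complexity hat}, replacing cohomology by $\pi_0^\df$; by Remark \ref{rem:coefficients} no choice of coefficients is involved here.
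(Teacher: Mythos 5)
Your proposal is correct and follows essentially the same route as the paper, which itself only sketches this corollary by appealing to the invariance of $\pi_0^{\df}$ under elementary extensions (Lemma \ref{lem vgs and def conn}), under o-minimal expansions in $\bGi$, and under pro-definable deformation retractions, combined with the transfer-and-trivialization argument of Theorem \ref{thm: finite cohom complexity hat} and Corollary \ref{cor: finite cohom complexity hat}. Your write-up fills in exactly those steps, so there is nothing to add.
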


We end with some remarks containing open questions which will be considered in the sequel to this paper:

\begin{nrmk}\label{nrmk: final questions 1}
One can ask if there is also a higher rank analogue of local contractability (\cite[Theorem 14.4.1]{HrLo}) e.g. definable local contractibility or local acyclicity with respect to $\vgs$-cohomology groups. Following the proof in \cite{HrLo} this question reduces to the corresponding question in $\bGi$.  See \cite{El} for such results in $\bG$.
\end{nrmk}

\begin{nrmk}\label{nrmk: final questions 2}
An issue that was not settled here is that of  finiteness and invariance  for the $\hvgs$-cohomology without supports. By Remark \ref{nrmk r* and h* are iso}, that reduces to knowing if we have such results for o-minimal cohomology without supports in $\bGi$. Such a result would gives us also  finiteness of cohomological complexity as above but without supports.

Moreover, we could also obtain in higher ranks uniform sharp bounds on $\hvgs$-Betti numbers as in a result obtained by S. Basu  and D. Patel (\cite[Theorem 2]{basu}) in the case where the valued field has  rank one, using the usual topological (singular) Betti numbers. Using the strategy above, such a result could be obtained by reduction of the problem to o-minimal expansions of real closed fields and then using the analogue result in that setting proved earlier by Basu 
(\cite[Theorem 2.2]{basu-o-minimal}).
\end{nrmk}

\subsection*{Acknowledgments} We wish to thank P. Eleftheriou and L. Prelli for interesting discussions on the results presented in Section \ref{section t-top in bGi}. We would also like to thank A. Ducros, E. Hrushovski and F. Loeser for interesting discussions. 

P. Cubides Kovacsics was partially supported by the ERC project TOSSIBERG (Grant Agreement 637027) and individual research grant
\emph{Archimedische und nicht-archimedische Stratifizierungen h\"oherer
Ordnung}, funded by the DFG. M.  Edmundo was supported by the National Funding from FCT - Funda\c{c}\~ao para a Ci\^encia e a Tecnologia, under the projects UIDB/04561/2020 and UIDP/04561/2020. J.Ye was partially supported by NSF research grant DMS1500671 and Fran\c cois Loeser's Institut Universitaire de France grant.

\bibliographystyle{siam}
\bibliography{biblio}

\begin{thebibliography}{10}

\bibitem{abbes-saito}
{\sc A.~Abbes and T.~Saito}, {\em Ramification of local fields with imperfect
  residue fields.{II}}, Doc. Math., Extra Vol. (2003), pp.~5--72.
\newblock Kazuya Kato's fiftieth birthday.

\bibitem{ba-bru-tsi}
{\sc B.~Bakker, J.~Brunenarbe, and J.~Tsimerman}, {\em {O}-minimal {GAGA} and a
  conjecture of {G}riffiths}.
\newblock arXiv:1811.12230 [math.AG], 2019.

\bibitem{basu-o-minimal}
{\sc S.~Basu}, {\em Combinatorial complexity in o-minimal geometry}, Proc.
  Lond. Math. Soc. (3), 100 (2010), pp.~405--428.

\bibitem{basu}
{\sc S.~Basu and D.~Patel}, {\em Vc density of definable families over valued
  fields}.
\newblock to appear in J. Eur. Math. Soc., arXiv:1804.06429 [math.LO], 2018.

\bibitem{benoi-witten-I}
{\sc O.~Benoist and O.~Wittenberg}, {\em On the integral {H}odge conjecture for
  real varieties, {I}}, Invent. Math., 222 (2020), pp.~1--77.

\bibitem{benoi-witten-II}
{\sc O.~Benoist and O.~Wittenberg}, {\em On the integral hodge conjecture for
  real varieties, ii}, Journal de l'\'Ecole polytechnique --- Math\'ematiques,
  7 (2020), pp.~373--429.

\bibitem{bf}
{\sc A.~Berarducci and A.~Fornasiero}, {\em o-minimal cohomology: finiteness
  and invariance results}, J. Math. Log., 9 (2009), pp.~167--182.

\bibitem{ber90}
{\sc V.~G. Berkovich}, {\em Spectral theory and analytic geometry over
  non-{A}rchimedean fields}, vol.~33 of Mathematical Surveys and Monographs,
  American Mathematical Society, Providence, RI, 1990.

\bibitem{ber-etale}
\leavevmode\vrule height 2pt depth -1.6pt width 23pt, {\em \'{E}tale cohomology
  for non-{A}rchimedean analytic spaces}, Inst. Hautes \'{E}tudes Sci. Publ.
  Math.,  (1993), pp.~5--161 (1994).

\bibitem{ber-etaleII}
\leavevmode\vrule height 2pt depth -1.6pt width 23pt, {\em On the comparison
  theorem for \'{e}tale cohomology of non-{A}rchimedean analytic spaces},
  Israel J. Math., 92 (1995), pp.~45--59.

\bibitem{Berkovich_contractible}
\leavevmode\vrule height 2pt depth -1.6pt width 23pt, {\em Smooth {$p$}-adic
  analytic spaces are locally contractible}, Invent. Math., 137 (1999),
  pp.~1--84.

\bibitem{ber-weight0}
\leavevmode\vrule height 2pt depth -1.6pt width 23pt, {\em A non-{A}rchimedean
  interpretation of the weight zero subspaces of limit mixed {H}odge
  structures}, in Algebra, arithmetic, and geometry: in honor of {Y}u. {I}.
  {M}anin. {V}ol. {I}, vol.~269 of Progr. Math., Birkh\"{a}user Boston, Boston,
  MA, 2009, pp.~49--67.

\bibitem{BCR}
{\sc J.~Bochnak, M.~Coste, and M.-F. Roy}, {\em Real algebraic geometry},
  vol.~36 of Ergebnisse der Mathematik und ihrer Grenzgebiete (3) [Results in
  Mathematics and Related Areas (3)], Springer-Verlag, Berlin, 1998.
\newblock Translated from the 1987 French original, Revised by the authors.

\bibitem{b}
{\sc G.~E. Bredon}, {\em Sheaf theory}, vol.~170 of Graduate Texts in
  Mathematics, Springer-Verlag, New York, second~ed., 1997.

\bibitem{cr}
{\sc M.~Coste and M.-F. Roy}, {\em La topologie du spectre r\'{e}el}, in
  Ordered fields and real algebraic geometry ({S}an {F}rancisco, {C}alif.,
  1981), vol.~8 of Contemp. Math., Amer. Math. Soc., Providence, R.I., 1982,
  pp.~27--59.

\bibitem{D3}
{\sc H.~Delfs}, {\em Homology of locally semialgebraic spaces}, vol.~1484 of
  Lecture Notes in Mathematics, Springer-Verlag, Berlin, 1991.

\bibitem{dk5}
{\sc H.~Delfs and M.~Knebusch}, {\em Locally semialgebraic spaces}, vol.~1173
  of Lecture Notes in Mathematics, Springer-Verlag, Berlin, 1985.

\bibitem{dem}
{\sc B.~Dinis, M.~Edmundo, and M.~Mamino}, {\em A general o-minimal fundamental
  group}.
\newblock to appear in Ann. Pure Appl. Logic, arXiv:1807.09496 [math.LO], 2018.

\bibitem{duc}
{\sc A.~Ducros}, {\em Parties semi-alg\'{e}briques d'une vari\'{e}t\'{e}
  alg\'{e}brique {$p$}-adique}, Manuscripta Math., 111 (2003), pp.~513--528.

\bibitem{e1}
{\sc M.~J. Edmundo}, {\em Structure theorems for o-minimal expansions of
  groups}, Ann. Pure Appl. Logic, 102 (2000), pp.~159--181.

\bibitem{ejp}
{\sc M.~J. Edmundo, G.~O. Jones, and N.~J. Peatfield}, {\em Sheaf cohomology in
  o-minimal structures}, J. Math. Log., 6 (2006), pp.~163--179.

\bibitem{emp}
{\sc M.~J. Edmundo, M.~Mamino, and L.~Prelli}, {\em On definably proper maps},
  Fund. Math., 233 (2016), pp.~1--36.

\bibitem{Edal17}
{\sc M.~J. Edmundo, M.~Mamino, L.~Prelli, J.~Ramakrishnan, and G.~Terzo}, {\em
  On {P}illay's conjecture in the general case}, Adv. Math., 310 (2017),
  pp.~940--992.

\bibitem{ep1}
{\sc M.~J. Edmundo and L.~Prelli}, {\em Poincar\'{e}-{V}erdier duality in
  o-minimal structures}, Ann. Inst. Fourier (Grenoble), 60 (2010),
  pp.~1259--1288.

\bibitem{ep3}
\leavevmode\vrule height 2pt depth -1.6pt width 23pt, {\em Invariance of
  {$o$}-minimal cohomology with definably compact supports}, Confluentes Math.,
  7 (2015), pp.~35--53.

\bibitem{ep2}
{\sc M.~J. Edmundo and L.~Prelli}, {\em Sheaves on $\mathcal{T}$-topologies},
  J. Math. Soc. Japan, 68 (2016), pp.~347--381.

\bibitem{ep4}
{\sc M.~J. Edmundo and L.~Prelli}, {\em The six {G}rothendieck operations on
  o-minimal sheaves}, Math. Z., 294 (2020), pp.~109--160.

\bibitem{El}
{\sc P.~E. Eleftheriou}, {\em Semilinear stars are contractible}, Fund. Math.,
  241 (2018), pp.~291--312.

\bibitem{ep-val}
{\sc A.~J. Engler and A.~Prestel}, {\em Valued fields}, Springer Monographs in
  Mathematics, Springer-Verlag, Berlin, 2005.

\bibitem{HHM06}
{\sc D.~Haskell, E.~Hrushovski, and D.~Macpherson}, {\em Definable sets in
  algebraically closed valued fields: elimination of imaginaries}, J. Reine
  Angew. Math., 597 (2006), pp.~175--236.

\bibitem{HHM}
{\sc D.~Haskell, E.~Hrushovski, and D.~Macpherson}, {\em Stable domination and
  independence in algebraically closed valued fields}, vol.~Lecture Notes in
  Logic, Cambridge Unversity Press, 2008.

\bibitem{Hr04}
{\sc E.~Hrushovski}, {\em Valued fields, metastable groups}.
\newblock http://www.ma.huji.ac.il/~ehud/mst.pdf, 2004.

\bibitem{HrLo}
{\sc E.~Hrushovski and F.~Loeser}, {\em Non-archimedean tame topology and
  stably dominated types}, vol.~192 of Annals of Mathematics Studies, Princeton
  University Press, Princeton, NJ, 2016.

\bibitem{hp1}
{\sc E.~Hrushovski and A.~Pillay}, {\em On {NIP} and invariant measures}, J.
  Eur. Math. Soc. (JEMS), 13 (2011), pp.~1005--1061.

\bibitem{Jo82}
{\sc P.~T. Johnstone}, {\em Stone spaces}, vol.~3 of Cambridge Studies in
  Advanced Mathematics, Cambridge University Press, Cambridge, 1982.

\bibitem{Ka07}
{\sc M.~Kamensky}, {\em Ind- and pro- definable sets}, Ann. Pure Appl. Logic,
  147 (2007), pp.~180--186.

\bibitem{ks1}
{\sc M.~Kashiwara and P.~Schapira}, {\em Sheaves on manifolds}, vol.~292 of
  Grundlehren der Mathematischen Wissenschaften [Fundamental Principles of
  Mathematical Sciences], Springer-Verlag, Berlin, 1990.
\newblock With a chapter in French by Christian Houzel.

\bibitem{ks2}
\leavevmode\vrule height 2pt depth -1.6pt width 23pt, {\em Ind-sheaves},
  Ast\'{e}risque,  (2001), p.~136.

\bibitem{ks3}
\leavevmode\vrule height 2pt depth -1.6pt width 23pt, {\em Categories and
  sheaves}, vol.~332 of Grundlehren der Mathematischen Wissenschaften
  [Fundamental Principles of Mathematical Sciences], Springer-Verlag, Berlin,
  2006.

\bibitem{MarStein}
{\sc D.~Marker and C.~I. Steinhorn}, {\em Definable types in o-minimal
  theories}, J. Symbolic Logic, 59 (1994), pp.~185--198.

\bibitem{phd_martin}
{\sc F.~Martin}, {\em Constructibilit\'e dans les espaces de Berkovich}, PhD
  thesis, Université Pierre et Marie Curie, October 2013.

\bibitem{martin}
\leavevmode\vrule height 2pt depth -1.6pt width 23pt, {\em Cohomology of
  locally closed semi-algebraic subsets}, Manuscripta Math., 144 (2014),
  pp.~373--400.

\bibitem{PePi07}
{\sc Y.~Peterzil and A.~Pillay}, {\em Generic sets in definably compact
  groups}, Fund. Math., 193 (2007), pp.~153--170.

\bibitem{ps}
{\sc Y.~Peterzil and C.~Steinhorn}, {\em Definable compactness and definable
  subgroups of o-minimal groups}, J. London Math. Soc. (2), 59 (1999),
  pp.~769--786.

\bibitem{p}
{\sc A.~Pillay}, {\em Sheaves of continuous definable functions}, J. Symbolic
  Logic, 53 (1988), pp.~1165--1169.

\bibitem{poineau}
{\sc J.~Poineau}, {\em Un r\'{e}sultat de connexit\'{e} pour les
  vari\'{e}t\'{e}s analytiques {$p$}-adiques: privil\`ege et
  noeth\'{e}rianit\'{e}}, Compos. Math., 144 (2008), pp.~107--133.

\bibitem{Pr1}
{\sc L.~Prelli}, {\em Sheaves on subanalytic sites}, Rend. Semin. Mat. Univ.
  Padova, 120 (2008), pp.~167--216.

\bibitem{Pr07}
\leavevmode\vrule height 2pt depth -1.6pt width 23pt, {\em Conic sheaves on
  subanalytic sites and {L}aplace transform}, Rend. Semin. Mat. Univ. Padova,
  125 (2011), pp.~173--206.

\bibitem{pre-ro-84}
{\sc A.~Prestel and P.~Roquette}, {\em Formally $p$-adic fields}, Lecture Notes
  in Mathematics, Springer-Verlag, Berlin, 1984.

\bibitem{tate1971}
{\sc J.~Tate}, {\em Rigid analytic spaces}, Invent. Math., 12 (1971),
  pp.~257--289.

\bibitem{vdd}
{\sc L.~van~den Dries}, {\em Tame topology and o-minimal structures}, vol.~248
  of London Mathematical Society Lecture Note Series, Cambridge University
  Press, Cambridge, 1998.

\end{thebibliography}

\end{document}